\title{Tightness for random walks driven by the two-dimensional Gaussian free field at high temperature}
\author[*]{Jian Ding}
\author[*]{Jiamin Wang}
\affil[*]{School of Mathematical Sciences, Peking University.}
\begin{document}
	\maketitle
	\newtheorem{thm}{Theorem}[section]
	\newtheorem{prop}[thm]{Proposition}
	\newtheorem{cor}[thm]{Corollary}
	\newtheorem{lem}[thm]{Lemma}
	\newtheorem{defi}[thm]{Definition}
	\newtheorem{rmk}[thm]{Remark}
	\numberwithin{equation}{section}
	
	\begin{abstract}	
		We study random walks in random environments generated by the two-dimensional Gaussian free field. More specifically, we consider a rescaled lattice with a small mesh size and view it as a random network where each edge is equipped with an electric resistance given by a regularization for the exponentiation of the Gaussian free field. We prove the tightness of random walks on such random networks at high temperature as the mesh size tends to 0. Our proof is based on a careful analysis of the (random) effective resistances as well as their connections to random walks.
	\end{abstract}
	
	\section{Introduction}
	In this paper, we consider a random walk on a random environment generated by the whole-plane Gaussian free field (see, e.g., \cite{sheffield2007gaussian,werner2020lecture,berestycki2024gaussian} for more details on the GFF). Since the GFF is only well-defined as a distribution, as usual a regularization procedure is required to make precise meaning of a random environment generated by the GFF. For convenience, we will work with the white noise approximation for the GFF, defined as follows:

	\begin{equation}
		\phi_{\delta}(x)=\sqrt{\pi}\int_{\delta^2}^{1}\int_{\mathbb{R}^2}p_{\frac{t}{2}}(x-y)W(dy,dt)
	\end{equation}
	for $x\in\mathbb{R}^2$ and $\delta\in(0,1)$, where $p_t(x)=\frac{1}{2\pi t}e^{-\frac{|x|^2}{2t}}$ is the heat kernel on $\mathbb{R}^2$ and $W$ is a standard space-time white noise.
	
	Write $\phi_n=\phi_{2^{-n}}$ for integers $n\geq 1$, and we consider the rescaled lattice $\mathbb Z_n^2=2^{-n}\zeta_n^{-1}\mathbb{Z}^2$ with edges given by naturally scaled nearest-neighboring edges in $\mathbb Z^2$, where $\zeta_n $ is a positive integer with $\zeta_n\geq \sqrt{n}$. (We remark that with the introduction of $\zeta_n$ the mesh size of the rescaled lattice is smaller than the scale at which the GFF is regularized, thus providing convenient smooth properties of the field; this is a sensible choice as justified in Proposition \ref{mesh}.) For $\gamma>0$, we will consider the following electric network on $\mathbb Z_n^2$: for each edge $e = (u, v)$ in $\mathbb Z_n^2$, we associate a resistance $r_e=\exp(\gamma\phi_n(m_e))$ where $m_e$ denotes the midpoint of $e$. Let $B(a,b)=[-a,a]\times [-b,b]\subset \mathbb{R}^2$ and $S^n_{a,b,\zeta_n}=\mathbb Z_n^2\cap B(a,b)$. In addition, let $R^{n}_{a,b,\zeta_n}$ be the effective resistance between the left and right sides of $S^n_{a,b,\zeta_n}$, i.e., $$R^{n}_{a,b,\zeta_n}=\inf_{\theta}\sum_{e\in S^n_{a,b,\zeta_n}} \theta(e)^2r_e,$$ where the infimum is taken over all unit flows $\theta$ from the left side of $S^n_{a,b,\zeta_n}$ to its right side. We will abbreviate it as left-right resistance for simplicity, and we may drop the subscript $\zeta_n$ for simplicity in the context of no ambiguity. Throughout the paper, $\gamma_0$ denotes a small but absolute constant and our main theorems hold for $\gamma < \gamma_0$ (i.e., at high temperature). Our first result is on the tightness of the effective resistance. We write $\mathbb{R}^+$ (or $\mathbb Z^+$) for the set of all positive real numbers (or integers).
	
	\begin{thm}\label{thm1}
		For $\gamma<\gamma_0$ and $q\in \mathbb{R}^+$, the family of distributions of $\{R^{n}_{a,qa,\zeta_n}\}_{n\in \mathbb{Z}^+, a\in \mathbb{R}^+, \zeta_n\geq \sqrt{n}}$ is tight.
	\end{thm}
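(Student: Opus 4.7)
My plan is to reduce tightness to a uniform upper-tail bound on $\mathbb{E}[R^n_{1,q,\zeta_n}]$, and then derive the lower tail by planar duality. A direct computation with the white-noise covariance yields the in-distribution identity $\phi_n(x/2^k) \stackrel{d}{=} \phi_{n-k}(x) + g_k(x)$, where $g_k$ is an independent smooth Gaussian field (arising from the white-noise integral over scales $[1,4^k]$). Applied to $B(a,qa)$, this scales it to $B(1,q)$ and rescales resistances as $r_e \mapsto r_e\, e^{\gamma g_k(m_e)}$; the smooth factor is spatially slowly varying and its contribution to $R$ is essentially multiplicative and (upon combining with the corresponding planar-duality argument for the lower tail below) controllable. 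Non-dyadic $a$ are handled by sandwiching, and Proposition \ref{mesh} deals with $\zeta_n$, reducing tightness of the full family to tightness of $R^n_{1,q,\zeta_n}$ over $n$.

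\textbf{Upper tail via an adaptive flow.} By Thomson's principle, $R = \inf_\theta \sum_e r_e\, \theta(e)^2$ over unit flows $\theta$ from the left to the right side, so a uniform bound on $\mathbb{E}[R^n_{1,q,\zeta_n}]$ reduces to exhibiting a random unit flow with bounded expected energy. The deterministic uniform flow gives only $\mathbb{E}[\sum r_e\theta^2] = \Theta(e^{(\gamma^2/2)n\log 2})$, which diverges, so one must let $\theta$ depend on the field. I would exploit the orthogonal decomposition $\phi_n = \sum_{k=1}^n \psi_k$ into independent white-noise scale increments and construct $\theta$ recursively across dyadic scales: at scale $k$, partition $B(1,q)$ into sub-rectangles of side $2^{-k}$ and route a larger share of the current through those sub-rectangles in which the scale-$k$ average of $\psi_k$ is smallest. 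Combining the series/parallel estimates (series in the horizontal direction, parallel in the vertical) gives a recursive bound for $\mathbb{E}[R^n_{1,q,\zeta_n}]$ in which each scale contributes a geometric factor controlled by $\mathbb{E}[e^{2\gamma\psi_k}]$; this factor is uniformly bounded for $\gamma<\gamma_0$ (the $L^2$ phase of Gaussian multiplicative chaos), yielding the sought uniform moment bound.

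\textbf{Lower tail via planar duality.} Standard planar duality on the square lattice gives $R_{LR}^{\mathrm{primal}}(G,r)\cdot R_{LR}^{\mathrm{rotated\ dual}}(G^*,1/r) = c$ for a purely geometric constant $c$ (modulo negligible mesh corrections), where $G^*$ is the planar dual on the rotated rectangle and $r_e^* = 1/r_e = e^{-\gamma\phi_n(m_e)}$. Since $-\phi_n \stackrel{d}{=} \phi_n$, the dual network has the same law as the primal on a rectangle of aspect ratio $1/q$; hence $\mathbb{P}(R^n_{a,qa,\zeta_n}<m) = \mathbb{P}(R^n_{qa,a,\zeta'_n}>c/m)$ up to lower-order corrections, and the lower tail at aspect ratio $q$ reduces to the upper tail at aspect ratio $1/q$, which is provided by the previous step (applied with $q$ replaced by $1/q$, since that argument works for arbitrary fixed aspect ratios).

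\textbf{Main obstacle.} The crux is the adaptive flow construction. The GFF has nontrivial fluctuations at every dyadic scale, so $\theta$ must simultaneously avoid anomalously high-resistance regions across all scales $1,\dots,n$, and the per-scale contributions to the expected energy must not compound into an $n$-dependent factor. The hypothesis $\gamma<\gamma_0$ is essential here: it is precisely what keeps the exponential moments $\mathbb{E}[e^{2\gamma\psi_k}]$ bounded and makes the per-scale gains geometrically summable. Carefully propagating the additive smooth corrections $g_k$ from the scale reduction (which can accumulate over $O(\log(1/a))$ scales when $a$ is small) and ensuring all estimates are truly uniform in $\zeta_n\ge\sqrt n$ will be the most delicate technical points.
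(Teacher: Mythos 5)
Your duality step for the lower tail is sound and matches the paper's own device (Lemma \ref{duallem}), but the heart of your argument --- the claim that an adaptive multiscale flow yields $\sup_n \mathbb{E}[R^n_{1,q,\zeta_n}]<\infty$ --- is a genuine gap, and it is exactly where the whole difficulty of the theorem sits. In your recursion each dyadic scale contributes a multiplicative factor ``controlled by $\mathbb{E}[e^{2\gamma\psi_k}]$''; but this factor equals $4^{\gamma^2}>1$ for every $\gamma>0$, and a per-scale factor strictly larger than $1$ compounds over the $n$ scales into a factor growing polynomially in $2^n$ unless the gain from routing current into low-field boxes \emph{exactly} cancels it --- and nothing in your sketch quantifies that cancellation. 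The appeal to the ``$L^2$ phase of Gaussian multiplicative chaos'' does not help: that statement concerns the measure normalized by its expectation, whereas here the claim is that the unnormalized resistance itself stays of order one; the reason this is believable is the self-duality pinning the median at $1$ (Lemma \ref{prio}), not any moment computation. Indeed, if a direct first-moment bound of the kind you propose were available, tightness would follow from Markov plus duality in a page, and the known results for this model (the discrete-GFF analogue in \cite{biskup2020return}) would not have been stuck with $e^{O(\sqrt{\log N\log\log N})}$ corrections. The paper's actual route is an intrinsic induction on scales via Efron--Stein: one must show that resampling the field in a $2^{-\kappa}$-box changes $\log R$ only slightly, which requires the exponential decay of the electric energy in small boxes (Lemma \ref{decaylem}), which in turn requires that resistances around and across annuli at \emph{all smaller} scales are already comparable with probability close to $1$ --- precisely the concentration one is trying to prove, hence the induction, the RSW input (Proposition \ref{rsw}) via approximate conformal invariance, and the percolation gluing for tails (Lemma \ref{taillem1}). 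Your proposal does not contain a substitute for this mechanism; it asserts the conclusion at the key step.

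A secondary but real issue is your scale reduction. Splitting off the coarse field $g_k$ when rescaling $B(a,qa)$ to $B(1,q)$ is not ``essentially multiplicative and controllable'': its supremum over the unit box fluctuates on the order of the number of scales removed, so brute-force rescaling costs a factor $e^{\gamma O(\log a)}$ in the worst case and destroys uniformity in $a$. The paper avoids this for large $a$ by gluing order-one crossings through a supercritical percolation argument (Lemma \ref{taillem1} and Proposition \ref{series}), and when it does trade off the coarse field against resistance quantiles it optimizes the cut-off scale against the Gaussian tail, which is what produces the $\exp(-c(\log T)^2/\log\log T)$ tail bounds in Propositions \ref{uptail} and \ref{tail}; some quantitative trade-off of this kind is unavoidable in your approach as well.
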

	
	We now consider a random walk on $\mathbb Z_n^2$, where at each step the walk moves to a neighbor chosen with probability inverse-proportional to the edge resistance. With Theorem \ref{thm1}, we can then derive the tightness for the traces of random walks. For notation clarity, we denote by $\mathbb P$ and $\mathbb E$ the probability and expectation with respect to the random field, and by $P$ and $E$ with respect to the random walk (we may also use superscripts to indicate the starting point of the random walk, e.g., we denote by $E^0$ the expectation with respect to the random walk started at the origin $0$). In what follows, we denote $B(a) = B(a, a)$ for short.

	\begin{thm}\label{thm2}
		For $\gamma<\gamma_0$, let $X^{(n)}$ be the random walk on $\mathbb Z_n^2$ started at the origin and stopped upon exiting $B(1)$. Then for any subsequence $\{n_m\}$, there exists a further subsequence $\{n_{m_k}\}$ such that $\mathbb P$-almost surely, $\{X^{(n_{m_k})}\}$ converges weakly with respect to the topology on curves modulo time parameterization.
	\end{thm}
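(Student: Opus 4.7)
My plan is to deduce tightness of the trace from Theorem~\ref{thm1} via the Aizenman--Burchard tightness criterion for random curves in the topology of curves modulo time parameterization. That criterion reduces the problem to a uniform estimate on the probability that the curve crosses an annulus $A(x, r, 2r) \subset B(1)$ at least $k$ times: for every $\varepsilon > 0$, one wants $k$ and a scale $\delta > 0$ such that this probability is at most $\varepsilon$ uniformly in $(x, r)$ with $r \ge \delta$ and in $n$.

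First I would upgrade Theorem~\ref{thm1} from left-right rectangle resistances to effective resistances across annuli at all scales and locations. By the translation invariance of $\phi_n$ and the approximate self-similarity of the white-noise construction (a standard decomposition of $\phi_n$ into a small-scale piece plus a coarse field that is essentially constant on that small scale), tightness of the rectangle resistance at one scale propagates to all scales and locations. A four-rectangle gluing then converts a left-right crossing estimate into a bound
\[
\mathbb P\bigl(M^{-1} \le R_{\mathrm{eff}}(\partial B(x, r),\, \partial B(x, 2r)) \le M\bigr) \ge 1-\varepsilon
\]
uniformly in $n$, in $x \in B(1)$, and in $r$ in a dyadic range. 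Second, by the electrical-network interpretation of hitting probabilities (effective resistance yields a lower bound on the probability of escaping an annulus before returning), on the high-probability event above each visit of the walk to $\partial B(x, 2r)$ has a uniformly positive probability $c > 0$ of being followed by a hit of $\partial B(1)$ before a return to $\partial B(x, r)$. Hence the number of crossings of $A(x, r, 2r)$ made by $X^{(n)}$ before exiting $B(1)$ is stochastically dominated by a geometric random variable with success probability $c$, whose tail decays fast enough to absorb via a union bound the $O(r^{-2})$ annuli at each dyadic scale. This verifies the Aizenman--Burchard hypothesis for the annealed law $\mathbb P \otimes P^0$.

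To promote annealed tightness to $\mathbb P$-a.s.\ subsequential convergence, I would view the quenched law $\mu_n := \mathrm{Law}(X^{(n)} \mid \phi_n)$ as a random element of the Polish space $\mathcal{M}$ of probability measures on curves modulo time parameterization. The crossing estimate shows $\{\mu_n\}$ is tight in $\mathcal{M}$, so along any given subsequence Prokhorov yields a further subsequence along which $\mu_n$ converges in distribution. A diagonal argument over a countable dense family of bounded continuous functionals $F$ on the curve space, combined with a Borel--Cantelli application to tail bounds for the differences $|F(\mu_n)-F(\mu_{n'})|$ derived from the resistance control of the first step, then upgrades this to $\mathbb P$-a.s.\ convergence along a sparse sub-subsequence.

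The main obstacle I anticipate is this final upgrade from convergence in distribution of the $\mathcal{M}$-valued variables $\mu_n$ to $\mathbb P$-a.s.\ convergence along a common subsequence $\{n_{m_k}\}$. That step requires a genuine comparison of $\mu_n$ and $\mu_{n'}$ for different indices rather than marginal tightness of each $\mu_n$; producing such a comparison from Theorem~\ref{thm1} will likely require joint tightness of effective resistances at different scales, which is somewhat stronger than the marginal statement given. A secondary issue is that the uniform lower bound $c$ in the crossing step must hold simultaneously across the $O(r^{-2})$ annuli at each scale, so one needs quantitative control on the lower tail of the annulus resistance rather than mere tightness.
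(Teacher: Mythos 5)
Your first step already hides a genuine difficulty. You assert that once the annulus resistances $R(\partial B(x,r),\partial B(x,2r))$ are bounded above and below, "the electrical-network interpretation" gives a uniformly positive probability $c$ that a visit to $\partial B(x,2r)$ is followed by a hit of $\partial B(1)$ before a return to $\partial B(x,r)$. The exact identity \eqref{return6} writes this probability as $\bigl(R(v,Z)+R(A,Z)-R(v,A)\bigr)/\bigl(2R(A,Z)\bigr)$: the numerator is a \emph{difference} of resistances, each of which you control only up to constant factors, and two-sided up-to-constant bounds on the individual terms give no lower bound on the difference. Indeed harmonic measure can change drastically when conductances are perturbed by bounded factors (see \cite{Benjamini1991,ding2013sensitivity}), so no generic network argument can close this step. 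The paper's Lemma \ref{returnlem3} (resting on the planar current decomposition of Lemma \ref{plainlem} and the flow-surgery argument of Lemma \ref{returnlem7}, and hence on planarity in an essential way) is precisely what produces the needed lower bound, and it is one of the main technical contributions; it cannot be invoked as a routine consequence of Theorem \ref{thm1}.

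The second and more serious gap is the one you flag yourself: the theorem asserts that along a deterministic further subsequence the \emph{quenched} laws converge weakly $\mathbb P$-a.s., and neither annealed tightness (Aizenman--Burchard) nor distributional tightness of $\mu_n$ as random elements of $\mathcal M$ can deliver this, since a.s. convergence along a subsequence requires a Cauchy-in-probability comparison of $\mu_n$ and $\mu_{n'}$ on the same probability space. Your proposed fix (Borel--Cantelli on $|F(\mu_n)-F(\mu_{n'})|$ "derived from the resistance control") does not work: marginal resistance/crossing bounds at each level say nothing about how the walks at levels $n$ and $n'$ relate for the same realization of the noise. The paper supplies exactly this missing mechanism with two ingredients absent from your plan: (i) quenched continuity of the exit measure in the starting point (Lemma \ref{conti}), proved by showing via resistances that the walk disconnects nearby points from the boundary (Lemma \ref{exitlem}) and applying the total-variation bound of Lemma \ref{exitlem1} from \cite{gwynne2022invariance}; and (ii) an inductive coupling of $X^{(n)}$ and $X^{(n')}$ through their successive exits of dyadic squares, using a diagonal subsequence along which the exit measures \eqref{weakcon} converge together with the bound on the number of such steps (Lemma \ref{exitlem2}, which again rests on the return estimate Lemma \ref{returnlem2}). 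This shows $d_{\mathrm{Prok}}(X^{(n)},X^{(n')})$ with respect to $d_{\mathrm{CMP}}$ is small with high $\mathbb P$-probability for all large $n,n'$ in the subsequence, i.e. the quenched laws are Cauchy in probability, whence convergence in probability and a.s. convergence along a further subsequence by completeness. Without (i) and (ii) your argument stops at tightness and does not reach the stated conclusion.
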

	
	\begin{rmk}\label{rmk1}
		The topology on curves modulo time parameterization in Theorem \ref{thm2} is induced by the following metric on the curve space. If $\beta_1:[0,1]\to \mathbb{C}$ and $\beta_2:[0,1]\to \mathbb{C}$ are continuous curves, we set
		$$
		d_{\mathrm{CMP}}(\beta_1,\beta_2):=\inf_{\alpha}\sup_{t\in[0,1]}|\beta_1(t)-\beta_2(\alpha(t))|,
		$$
		where the infimum is taken over all increasing homeomorphisms $\alpha:[0,1]\to[0,1]$.
	\end{rmk}
	
	In light of Theorem \ref{thm2}, we wish to further incorporate the time parameterization. To this end, it is natural to suspect that the suitable scaling has to do with the expected exit time from the unit ball, and as a result we need an a priori input on the tightness for such exit times. Let $\tau_n$ denote the first time the random walk $X^{(n)}$ exists $B(1)$, and let $\chi_n = 2^{(2+\gamma^2/2)n} \zeta_n^2$.
	
	\begin{thm}\label{exptime}
		The family of laws of $\{\log (\chi_n^{-1}E^0\tau_n)\}_{n\in \mathbb{Z}^+}$ is tight.
	\end{thm}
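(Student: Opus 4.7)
The plan is to exploit the Green's function identity
\[
E^0\tau_n = R_n \cdot S_n, \quad R_n := R_{\mathrm{eff}}(0, \partial B(1)), \quad S_n := \sum_{v \in B(1) \cap \mathbb Z_n^2} \pi(v)\,V_n(v),
\]
where $\pi(v) = \sum_{e\ni v}\exp(-\gamma\phi_n(m_e))$ is the stationary weight of the walk and $V_n(v) := P^v(\tau_0<\tau_n)$ is the (random) function that is harmonic on $B(1)\setminus(\{0\}\cup\partial B(1))$ with $V_n(0)=1$ and $V_n\equiv 0$ on $\partial B(1)$. This follows by summing the representation $g_n(0,v)=\pi(v)V_n(v)R_n$ of the Green's function of the walk killed at $\partial B(1)$, which is a consequence of reversibility together with the standard identity $g_n(w,w)=\pi(w)R_{\mathrm{eff}}(w,\partial B(1))$. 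I would then show separately that $R_n/n$ and $nS_n/\chi_n$ are each tight and bounded away from $0$ in probability, which gives tightness of $\log(E^0\tau_n/\chi_n)$.

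The bounds on $R_n$ come from a multiscale decomposition of $B(1)$ into $O(n)$ concentric dyadic annuli $\{2^{-k-1}\le |x|\le 2^{-k}\}$. Each annulus, modulo bounded aspect-ratio modifications, is a union of $O(1)$ rectangles of the type treated by Theorem \ref{thm1}, which yields a tight crossing resistance at that scale. Combining these crossing resistances in series by constructing a roughly uniform radial unit flow and invoking Thomson's principle yields $R_n\le Cn$ with high probability; Nash--Williams applied to the same disjoint annular cuts produces the matching lower bound $R_n\ge cn$. The union bound over the $O(n)$ scales requires moment tails on the individual annular resistances of the kind afforded by Theorem \ref{thm1}.

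For $S_n$, I would combine the same multiscale resistance bounds with the classical voltage identity
\[
V_n(v) = \frac{R_n + R_{\mathrm{eff}}(v,\partial B(1)) - \tilde R(0,v)}{2R_n},
\]
where $\tilde R(0,v)$ denotes the effective resistance from $0$ to $v$ in the network obtained by shorting $\partial B(1)$ to a single vertex. Since each of the three resistances on the right is of order $n$ with tight $\Theta(1)$ random fluctuations, the leading contributions cancel in the numerator, yielding $V_n(v)\asymp \log_2(1/|v|)/n$ for $v$ at macroscopic distance from the boundary. Partitioning $B(1)$ into dyadic annuli $A_j$ at radius $\asymp 2^{-j}$ from $0$, each contributes $\asymp\chi_n\,j\,2^{-2j}/n$ to $S_n$ (using $|A_j|\asymp 2^{-2j}(2^n\zeta_n)^2$, typical $\pi(v)\asymp 2^{\gamma^2 n/2}$, and $V_n|_{A_j}\asymp j/n$); the series $\sum_j j\,2^{-2j}$ converges, giving $S_n\asymp\chi_n/n$. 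First- and second-moment estimates for $\sum_{v\in A_j}\pi(v)$, combined with a union bound over the $O(n)$ annuli, upgrade these heuristics to high-probability bounds.

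The main obstacle is the uniform control of $V_n$ in the random environment. Because the voltage identity is a difference of three resistances each of order $n$, extracting the desired $\Theta(\log(1/|v|))$ correction requires tracking the subleading behavior of each $R_{\mathrm{eff}}$ across all $O(n)$ dyadic scales. This is where the tightness in Theorem \ref{thm1} (rather than a crude polynomial bound) becomes essential: the tight $O(1)$ annular fluctuations, iterated with care across scales and union bounds, should deliver the required uniform estimates on $V_n$, but the detailed bookkeeping is the heart of the argument.
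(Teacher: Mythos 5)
Your starting identity $E^0\tau_n = R_n S_n$, via $G_n(0,v)=\pi(v)V_n(v)R_n$, is the same Green's function representation the paper uses (its \eqref{greencha}), but the proposed split into two \emph{separately} tight factors fails at the first step. The point-to-boundary resistance $R_n=R(0,\partial B(1))$ is not of order $n$: every unit flow out of the origin must cross one of the four edges incident to $0$, whose resistances equal $\exp(\gamma\phi_n(m_e))=\exp(\gamma\phi_n(0)+O(1))$ by the oscillation bounds, and $\phi_n(0)\sim N(0,n\log 2)$. Hence with probability bounded away from $0$ uniformly in $n$ one has $R_n\ge c\,e^{\gamma\sqrt{n\log 2}}\gg Cn$, so the claim ``$R_n\le Cn$ with high probability'' is false, $\log(R_n/n)$ is not tight, and correspondingly $V_n(v)\asymp \log(1/|v|)/R_n$ rather than $\log(1/|v|)/n$, so $\log(nS_n/\chi_n)$ is not tight either; only the product $R_nS_n$ is tight, with the near-origin blow-up cancelling between the two factors. (A second, deterministic problem: the sub-mesh scales between $2^{-n}\zeta_n^{-1}$ and $2^{-n}$ contribute $\asymp e^{\gamma\phi_n(0)}\log\zeta_n$ to $R_n$, and $\zeta_n\ge\sqrt n$ may be arbitrarily large, so even the normalization by $n$ is wrong at the deterministic level.) This is precisely why the paper never isolates $R(0,\partial B(1))$: it bounds $\sum_y G_B(x,y)$ over annuli away from the starting point, using $R(x,Z)+R(y,Z)-R(x,y)\le [R(x,Z)-R(x,\partial B_x)]+[R(y,Z)-R(y,\partial B_y)]$ together with telescoping crossing resistances and second moments of the LQG mass for the upper bound (Lemma \ref{green}), and Lemma \ref{returnlem3} together with negative moments of the LQG mass for the lower bound (Lemma \ref{greenlem3}).

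Even apart from the origin issue, the step you defer as ``detailed bookkeeping'' is the actual mathematical obstacle. For $v$ at macroscopic distance from $0$ and from $\partial B(1)$, the numerator $R_n+R(v,\partial B(1))-\tilde R(0,v)$ is $\Theta(1)$ while each of the three terms is at least $\Theta(n)$; per-scale $O(1)$-tight estimates from Theorem \ref{thm1}, accumulated over $O(n)$ scales with union bounds, leave errors of order $n$ in such a difference and cannot yield a two-sided $\Theta(\log(1/|v|))$ estimate for $V_n$. Already the one-sided lower bound of the form $R(x,Z)+R(y,Z)-R(x,y)\ge c\,R(\mbox{around an annulus separating } x \mbox{ and } y)$ is the content of the paper's Lemma \ref{returnlem3}, proved by a rather involved flow-decomposition/surgery argument, and the paper explicitly notes that such comparisons ``cannot be directly implied by up-to-constant estimates.'' So the core of your plan is not a missing elaboration but a missing idea; to repair it you would need to abandon the factorization through $R(0,\partial B(1))$ and work, as the paper does, with the differenced resistance quantity inside the Green's function, controlled by Proposition \ref{series} on one side and Lemma \ref{returnlem3} on the other.
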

	
	With Theorem \ref{exptime}, we can derive the tightness with time scaling $t\to t/\chi_n$. For $x\in \mathbb Z_n^2$, let $X^{(n,x)}$ denote the random walk started from the vertex $x$. Let $\hat{X}^{(n,x)}:[0,\infty)\to \mathbb{R}^2$ be the piecewise linear interpolation of the process $t\to X^{(n,x)}_{\chi_nt}$. That is (we denote $\lfloor x\rfloor$ as the maximal integer that is not greater than $x$), $$\hat{X}^{(n,x)}_{t}=X^{(n,x)}_{\lfloor \chi_n t \rfloor}+(\chi_n t-\lfloor \chi_n t \rfloor)(X^{(n,x)}_{\lfloor \chi_n t \rfloor+1}-X^{(n,x)}_{\lfloor \chi_n t \rfloor}).$$
	We define $P^{(n,x)}$ to be the law of $\hat{X}^{(n,x)}$ given $\phi_n$. We also extend this definition to general $x\in \mathbb R^2$ by linear interpolation as follows. Let $a,b,c\in \mathbb Z_n^2$ be the vertices of the isosceles right triangle containing $x$. There exist $\lambda_1,\lambda_2,\lambda_3\in [0,1]$ with $\lambda_1+\lambda_2+\lambda_3=1$ such that $x=\lambda_1a+\lambda_2b+\lambda_3c$. We then define 
	\begin{equation}\label{Pdef}
		P^{(n,x)}=\lambda_1P^{(n,a)}+\lambda_2P^{(n,b)}+\lambda_3P^{(n,c)}.
	\end{equation}
	We will prove the tightness of the random functions $P^{(n)}:x\to P^{(n,x)}$ for $n\geq1$.

	\begin{thm}\label{thm3}
		For $\gamma < \gamma_0$, $\mathbb{P}$-almost surely, $P^{(n)}$ is continuous and the law of $\{P^{(n)}\}$ is tight, with respect to the local uniform metric on the probability measure space $\mathrm{Prob}(C([0,\infty),\mathbb{R}^2)$, associated with the Prokhorov topology induced by the local uniform metric on curves.
	\end{thm}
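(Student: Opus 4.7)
The plan is to prove tightness of $\{P^{(n)}\}$ in $C(\mathbb{R}^2, \mathrm{Prob}(C([0,\infty),\mathbb{R}^2)))$ by combining pointwise tightness of the random laws $P^{(n,x)}$ at each fixed $x$ with equicontinuity in $x$ that is uniform in $n$, and then invoking an Arzel\`a--Ascoli type argument. Almost-sure continuity of $P^{(n)}$ in $x$ for each fixed $n$ is essentially built into \eqref{Pdef}: within each triangle $P^{(n)}$ is affine in barycentric coordinates, and on shared edges the formulas from adjacent triangles agree, so $P^{(n)}$ is piecewise linear and hence continuous.

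For pointwise tightness, Theorem \ref{thm2} gives tightness of the trace of $\hat{X}^{(n,x)}$ modulo time parameterization, while Theorem \ref{exptime} anchors the time scale at $\chi_n$. To upgrade these to tightness of the random law $P^{(n,x)}$, I would establish a uniform modulus of continuity in time:
$$\sup_n \mathbb{P}\Big(P^{(n,x)}\big(\sup_{|s-t|\le \delta,\,s,t\le T}|\hat{X}_t-\hat{X}_s|\ge \epsilon\big)\ge \epsilon\Big)<\epsilon.$$
This can be obtained by applying Theorem \ref{exptime} at all dyadic scales $2^{-k}$: using the approximate translation and scaling covariance of $\phi_n$ (with boundary corrections absorbed via Theorem \ref{thm1}), the exit time from a ball of radius $2^{-k}$ centered at a generic lattice point admits the same tight distribution as that from $B(1)$, rescaled by $2^{-(2+\gamma^2/2)k}\chi_n$. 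An iterative strong Markov argument then rules out large displacements in short time windows; combined with Theorem \ref{thm2} this yields tightness of $P^{(n,x)}$.

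For equicontinuity in $x$, the linearity of \eqref{Pdef} within each triangle reduces the task to bounding $d_{\mathrm P}(P^{(n,a)},P^{(n,b)})$ uniformly in $n$ for lattice vertices $a,b\in \mathbb Z_n^2$ with $|a-b|\le \delta$. I would use the coupling that runs $X^{(n,a)}$ until the hitting time $\tau_{a\to b}$ of $b$ and thereafter sets it equal to $X^{(n,b)}$ via the strong Markov property, which gives
$$d_{\mathrm P}(P^{(n,a)},P^{(n,b)})\le P(\tau_{a\to b}>\epsilon \chi_n)+\epsilon+P\big(\sup_{t\le \tau_{a\to b}}|X^{(n,a)}_t-a|>\epsilon\big).$$
The commute-time identity yields $E[\tau_{a\to b}]\le 2R_{\mathrm{eff}}(a,b)\sum_e c_e$, where the sum ranges over edges in a suitable surrounding domain. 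Theorem \ref{thm1}, applied at microscopic scale in the appropriate rescaled form, controls $R_{\mathrm{eff}}(a,b)$, while the total conductance sum is of order $\chi_n$ up to constants by the very choice of normalization. The net bound is $E[\tau_{a\to b}]=O(\delta^2\chi_n)$ with high $\mathbb P$-probability, in accord with diffusive scaling of the rescaled walk, and the displacement before $\tau_{a\to b}$ is controlled analogously via a Chebyshev-type bound on the walk's excursions.

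The main obstacle is achieving the equicontinuity with $n$-uniform constants: controlling $R_{\mathrm{eff}}$ and conductance sums at \emph{all} microscopic scales and locations inside a macroscopic region simultaneously demands a multi-scale union bound, and the rare regions where $\phi_n$ is anomalously large must be truncated or pruned without disturbing overall tightness. I would expect the argument to parallel the multi-scale structure underlying the proof of Theorem \ref{thm1}, augmented by the coupling analysis above to handle the time component. Once both pointwise tightness and equicontinuity are established, a Prokhorov-level Arzel\`a--Ascoli criterion delivers tightness of the laws of $\{P^{(n)}\}$ in the claimed function space.
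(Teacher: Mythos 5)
There is a genuine gap, and it sits in your equicontinuity-of-laws step. Your coupling runs $X^{(n,a)}$ until it \emph{hits the vertex} $b$ and then glues the walks, but in two dimensions with mesh $2^{-n}\zeta_n^{-1}\to 0$ this coupling degenerates: the probability that the walk started at $a$ hits the single vertex $b$ before leaving a ball of fixed radius $\epsilon$ around $a$ is comparable to $\log(\epsilon/\delta)/\log(\epsilon\,2^{n}\zeta_n)\to 0$ (the resistance from a point to a macroscopic boundary is a series of order-one crossing resistances over all $\asymp n$ dyadic scales, by Theorem \ref{thm1}), so with probability tending to $1$ the walk wanders a macroscopic distance before coalescence; the term $P\bigl(\sup_{t\le\tau_{a\to b}}|X^{(n,a)}_t-a|>\epsilon\bigr)$ in your Prokhorov bound tends to $1$, not $0$. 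For the same reason the commute-time bound does not give $E[\tau_{a\to b}]=O(\delta^2\chi_n)$: the point-to-point resistance $R_{\mathrm{eff}}(a,b)$ diverges like the number of scales (order $n$), so the expected hitting time is not of diffusive order uniformly in $n$. This is exactly why the paper avoids coupling at a hitting time of a point and instead couples the \emph{exit measures} from a small common box: Lemma \ref{exitlem1} bounds the total variation distance between exit points by one minus the probability that the trace of $X^{(n,a)}$ disconnects $b$ from the boundary, and Lemma \ref{exitlem} (built on the planarity-based return estimate, Lemma \ref{returnlem}) shows this disconnection occurs with high probability; after the exit points are matched, the residual discrepancy is absorbed by showing the time spent before exiting the small box is negligible (Green's function upper bounds, Lemma \ref{greenlem1}) together with the path modulus of continuity (Lemma \ref{equilem2}), as in Lemma \ref{equilem3}.

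A secondary gap concerns your pointwise time-modulus step: Theorem \ref{exptime}, even applied at all dyadic scales by scaling, only controls the \emph{expected} exit time, and a lower bound on the expectation does not by itself rule out that the quenched exit time is very small with substantial probability. The paper's argument needs both first- and second-moment Green's function estimates (Lemmas \ref{greenlem3}, \ref{green}, \ref{exitsec}) to run Paley--Zygmund at each scale, and then an independence-across-scales argument with Hoeffding (Lemma \ref{equilem5}) so that, simultaneously for all small boxes, a positive fraction of scales are good; iterating the strong Markov property over those good scales is what converts the constant-probability lower bound into the high-probability lower tail \eqref{equi12} that survives the union bounds. You correctly flagged a multi-scale union bound as the main obstacle, but as written your plan attributes the needed input to a rescaled Theorem \ref{exptime}, which is not sufficient; the missing ingredients are the second-moment control and the scale-decoupling step.
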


	\begin{rmk}\label{rmk2}
		We now remind the reader the detailed definition of the topology as mentioned in Theorem \ref{thm3}.
		We emphasize that in this context our metric on the curve space does depend on the time parameterization.
		\begin{itemize}
			\item The space $C([0,\infty), \mathbb{R}^2)$ is equipped with the local uniform metric
			$$d_{\mathrm{Unif}}(X,X^{\prime})=\int_{0}^{\infty}e^{-T}\wedge \sup_{t\in [0,T]}|X_t-X^{\prime}_t|dT.
			$$
			\item The space $\mathrm{Prob}(C([0,\infty), \mathbb{R}^2))$ is equipped with the prokhorov distance $d_{\mathrm{Prok}}$ induced by $d_{\mathrm{Unif}}$ on $C([0,\infty), \mathbb{R}^2)$.
			\item The space of functions $P:\mathbb{R}^2\to \mathrm{Prob}(C([0,\infty), \mathbb{R}^2))$ is equipped with the metric $d$, where
			\begin{equation}\label{metric}
				d(P,Q)=\int_0^{\infty}e^{-L}\wedge\sup_{x\in B(L)}d_{\mathrm{Prok}}(P^x,Q^x)dL.
			\end{equation}
		\end{itemize}
	\end{rmk}
	
	\begin{rmk}
		We conjecture that the scaling limit will be drastically different from a Brownian motion with time change (so, in particular different from the Liouville Brownian motion). For instance, we conjecture that the exit measure from the unit ball (i.e., the distribution of the exit point) should be supported on a set with dimension strictly less than 1.
	\end{rmk}

	\begin{rmk}
		It is natural to ask what to expect for large $\gamma$. In a way, the question is two-folded: whether the tightness of the resistance as in Theorem \ref{thm1} still holds and whether the tightness of the rescaled random walk as in Theorem \ref{thm3} still holds. Our feeling is that the analogue of Theorem \ref{thm1} probably extends to all $\gamma$ (partly based on the analogous result in the supercritical Liouville quantum gravity metric as in \cite{ding2020tightness1}). As for the random walk, possibly there is a phase transition in $\gamma$ (essentially governed by the phase transition of the Liouville quantum gravity measure).
	\end{rmk}
	
	\subsection{Background and related work}
	In this subsection, we discuss previous works related to the present paper.
	
	The most closely related work is \cite{biskup2020return}, which conceptually considers the same model but with the following difference on technical choices: (1) in \cite{biskup2020return} the random environment is given by the discrete  Gaussian free field whereas we consider (the regularization of) the (continuum) Gaussian free field; this allows us to take advantage of the approximate conformal covariance property and thus to derive an RSW estimate in a different way from \cite{biskup2020return}. (2) In \cite{biskup2020return} the conductance on an edge is given by the exponential of the sum of GFF values on the two endpoints whereas we replace this by the exponential of the midpoint; this conveniently allows us to derive a sharp duality relation (see Lemma \ref{duallem}).  In terms of results, for both effective resistances and random walk exit times, in \cite{biskup2020return} correct exponents were computed (for all $\gamma$), whereas we obtain up-to-constant estimates (for small $\gamma$). As a result, we obtain a complete tightness result as in Theorem \ref{thm3}. We emphasize that, while our technical choices above are of help, such improvements are mostly due to conquering major obstacles encountered in \cite{biskup2020return}, as we elaborate next.
	
	The first major obstacle is the concentration of the effective resistance. In \cite{biskup2020return}, this was achieved by the general Gaussian concentration inequality, which implies that the variance for the logarithm of the resistance at most diverges logrithmically. In this work, we prove in Theorem \ref{thm1} that the logarithm of the effective resistance has an $O(1)$ fluctuation. Our proof follows the general framework for the tightness of Liouville quantum gravity (LQG) metrics \cite{ding2020tightness,ding2020tightness1}, but we have to deal with some fundamental challenge arising from the fact that the effective resistance does not grow in the scale, in contrast to the scaling growth for the LQG distance. To address this challenge, we need to prove that the energy for the electric current restricted in a small box only constitutes a small fraction of the total energy (see Lemma \ref{decaylem}), which then allows us to derive that resampling the field locally has a small contribution to the variability of the global resistance (the analogue for the LQG distance is the key for proving LQG tightness).
	
	The second major obstacle is to obtain random walk estimates from resistance estimates. In \cite{biskup2020return}, some rather non-trivial techniques were developed in order to accomplish this, but in various places these estimates lose diverging factors. In this paper, we substantially further develop such techniques, thereby obtaining up-to-constant estimates. This is highly non-trivial, and  for instance this seems to require a very good understanding of the electric current (or equivalently, the harmonic measure of the random walk), which can be highly sensitive with resistances (it is well-known that the harmonic measure can change drastically even if each edge resistance is changed up to a factor of 2; see \cite{Benjamini1991,ding2013sensitivity} for applications of this property). To this end, we have to develop some delicate comparisons between effective resistances (see Lemma \ref{returnlem3}), which cannot be directly implied by up-to-constant estimates.
	
	Another related model is the Liouville Brownian motion (LBM), which was constructed in \cite{garban2016liouville, Berestycki2013DiffusionIP} as a time-change of Brownian motion where the time-change (very roughly speaking) is given by the LQG. Afterwards, the heat kernel was constructed in \cite{garban2014on} and the short-time off-diagonal heat kernel was related to the LQG metric in \cite{ding2019heat}. In addition, in \cite{gwynne2022invariance,berestycki2022random} it was shown that random walks on mated-CRT maps converge to the LBM. We believe that there is a conceptual difference between our model and the LBM, as we conjecture that the scaling limit for the trace of our walk is drastically different from that of the Brownian motion.
	
	Our Theorem \ref{thm1} can be seen from a metric perspective, and is thus related to the LQG metric; for instance, Theorem \ref{thm1} is an analogue of \cite[Theorem~20]{ding2020tightness} and \cite[Proposition~3.2]{ding2020tightness1}. In fact, as aforementioned, our proof for the tightness of resistance draws inspiration from the tightness of LQG metrics developed in \cite{ding2019lioville,ding2020subsequential,dubedat2020liouville,ding2020tightness,ding2020tightness1}. One important remaining problem is to derive the convergence of the resistance, i.e., the uniqueness, and it is natural to try to again draw inspiration from the framework of LQG uniqueness as initiated in \cite{gwynne2021existence} and developed in \cite{ding2023uniqueness}. However, there seems to be a fundamental challenge in adapting such a framework since, for instance, the effective resistance is not a length metric.
	
	Finally, we point out that our model can also be seen as a random conductance model where the conductance is generated by exponentiating the GFF. There has been extensive study on the random conductance model, mostly focusing on the elliptic case where the conductances are uniformly bounded from below and above.  We refer to \cite{biskup2011recent} for a survey on this topic. An important method to study the random conductance model (and random walk in random environment in general) is to consider the operator perspective, along which line the important method of homogenization has been developed (see, e.g., the recent book \cite{Armstrong_2019} for an excellent account on this topic). Notably, recent progress on homogenization allows to deal with highly-nonuniform case \cite{armstrong2024superdiffusive,armstrong2024renormalization}. In particular, \cite{armstrong2024superdiffusive,armstrong2024renormalization} considered the random diffusion with random drift given by a divergence free vector field generated from the GFF and proved an invariance principle with a superdiffusive scaling. While the model in \cite{armstrong2024superdiffusive,armstrong2024renormalization} is conceptually different from ours (e.g., it converges to the Brownian motion and it does not see traps), it is quite possible that the method therein is robust enough to treat more general models including ours (likely with additional and substantial work) and to derive results that seem out of reach by tools we are familiar with. 
	\footnote{Informal note to readers: In a conversation with Scott Armstrong, we see the optimism in extending methods in \cite{armstrong2024superdiffusive,armstrong2024renormalization} to prove \emph{the full convergence} for random walks/diffusions with drifts given by the gradients of the Gaussian free field (the two models are similar in the following sense: in each step, the bias of the random walk is asymptotic to $a_\gamma \times \mbox{square of step size} \times \mbox{gradient}$, where $a_\gamma$ is a constant depending on $\gamma$). More importantly, their method seems to be not limited to the two-dimensional case, whereas our work employs planarity in various places in a crucial way. Finally, it is also surprising to us that their methods seem to suggest a possibility of coarse-graining the electric flows and proving their convergence directly.  With all the potential (amazing, in our opinion) power, somewhat interestingly, it seems the methods of \cite{armstrong2024superdiffusive,armstrong2024renormalization} would also be restricted to the case for small $\gamma$, as in our case.}
	
	\subsection{Outline}
	The proof in this paper can be naturally divided into two parts: the resistance estimate and the random walk estimate (provided with the resistance estimate). In the previous subsection, we have briefly mentioned major obstacles in these two parts when making comparison with \cite{biskup2020return}, and in what follows we elaborate this further. We also provide a detailed outline by describing the main content in each subsequent section.
	
	\subsubsection{Resistance estimate}
	
	We first discuss the proof of Theorem \ref{thm1}, the tightness of effective resistance through a rectangle. To this end, we (as mentioned before) borrow the framework for tightness of LQG metrics as in \cite{ding2020tightness,ding2020tightness1}. In the overview level, the idea was to combine multi-scale analysis with the Efron-Stein inequality, and to show that resampling the field locally only makes a small perturbation to the global distance. Along the way, percolation-type arguments were extensively employed, among which the RSW estimate plays a crucial role. 
	
	In order to argue that local resampling only slightly perturbs the LQG metric, a key ingredient is that the LQG distance crossing through a box grows polynomially with the side length of the box; note that this holds for all $\gamma > 0$ (see \cite{ding2021distance}). To the contrary, by duality of the effective resistance and by symmetry of the GFF, the logarithm of the effective resistance centers around 0 and thus the effective resistance stays bounded as the side length of the box grows. 
	
	In order to address the above challenge, our intuition is that the electric current that flows into a small box should have a polynomial decay in its side length, and thus the energy generated therein only contributes to a small fraction of the total Dirichlet energy. The implementation of this requires an intrinsic induction between the effective resistance and the electric current for the following reason: we can only expect that polynomial decay of the electric current provided with the assumptions that resistances in different scales are comparable with high probability, which in turn calls for a concentration bound on the effective resistance. The necessity of such intrinsic induction is the main reason why our method can only work for small $\gamma$. We next describe the main contents for Section \ref{sec2}-\ref{sec6}, which altogether complete the proof of Theorem \ref{thm1}.
	
	In \textbf{Section \ref{sec2}}, we prove properties for general (deterministic) electric networks. We review the generalized parallel and series law for resistances, from which we derive the duality between the resistance and the conductance. We also prove a triangle inequality in Proposition \ref{series} which allows us to concatenate paths later (and thus apply percolation techniques). In addition, we prove Lemma \ref{resdif} which will be applied in Section \ref{sec6} to infer the decay of electric energy between successive scales. Roughly speaking, Lemma \ref{resdif} bounds the difference of the effective resistance after removing a sub-graph $\mathrm{D}$ in terms of the ``energy around $\mathrm{D}$'' and the ``energy within $\mathrm{D}$''.
	
	In \textbf{Section \ref{sec3}}, we introduce another natural approximation $\psi_\delta$ for the GFF which has finite-range dependence. We prove some basic properties of $\phi_{\delta}$ and $\psi_{\delta}$, including the comparison between $\phi_{\delta}$ and $\psi_{\delta}$, as well as estimates of their maxima and oscillations. We will also give a bound comparing effective resistances with different mesh sizes. In fact, for two different mesh sizes $2^{-n}\zeta_n^{-1}$ and $2^{-n}(\zeta_n^\prime)^{-1}$, we will show that whenever $\zeta_n, \zeta_n^\prime \geq \sqrt{n}$, the effective resistances $R^{n}_{a,b,\zeta_n}$ and $R^{n}_{a,b,\zeta_n^\prime}$ are comparable (see Proposition \ref{mesh}).
	
	In \textbf{Section \ref{sec4}}, we prove the following RSW estimate for the effective resistance: the quantiles of the effective resistances between the sides of rectangles with different aspect ratios are comparable. Our RSW proof follows that for the LQG metric as in \cite{dubedat2020liouville}, which employs the conformal invariance of the GFF. In the context of the LQG metric, the optimization is over all continuous paths and thus this is more naturally consistent with the composition of a conformal map. In our context, we have a lattice structure (which is the underlying graph for the random walk), and one difficulty is that a conformal map destroys the lattice structure. In order to address this, we need to compare the Dirichlet energy in the continuous case and in the discrete case. This is where we use our assumption that $\zeta_n \geq \sqrt{n}$, which ensures that the oscillation of the regularized field within the mesh size is under control.
	
	In \textbf{Section \ref{sec5}}, we give tail estimates for effective resistances by quantiles of resistances crossing through a square. A supercritical percolation argument, combined with the triangle inequality, will be applied to obtain such tail bounds.
	
	In \textbf{Section \ref{sec6}}, we put together estimates in previous sections and \emph{inductively} bound the variance for the logarithm of the crossing resistance. As aforementioned, the key here is to control the change of the resistance when resampling a local field. To this end, we first apply Lemma \ref{resdif} to obtain a bound on the change of resistance in Lemma \ref{Efron}, which in turn gives a bound in Lemma \ref{intervar} on the total variability from resampling fields in a local scale. In order for the bound to be effective, it is crucial that the maximal energy restricted to a local box is very small compared to the total energy, which is proved in Lemma \ref{decaylem} with an important application of the induction hypothesis on bounded variation for resistances in previous scales. Note that, it is for the reason of verifying the initial basis of this intrinsic induction that we need the assumption on small $\gamma$. 
	
	\subsubsection{Random walk estimate}
	We now explain how to derive random walk estimates provided with Theorem \ref{thm1}. 
	
	In order to compute the correct exponents for random walk estimates, the authors of \cite{biskup2020return} sometimes consider two scales that are significantly different so that the respective resistances can be compared more easily. While it is already rather nontrivial to implement this in \cite{biskup2020return}, in our paper we have to compare successive scales since we aim at up-to-constant estimates. To this end, we  need to carefully investigate the electric current.

	In \textbf{Section \ref{sec7}}, by taking advantage of planarity, we prove in Lemma \ref{returnlem} an estimate for the return probability provided with assumptions on effective resistances around and across a few annuli. Combining Theorem \ref{thm1} and Lemma \ref{returnlem}, we can get that with high $\mathbb{P}$-probability, the random walk stays in a tube with positive $P$-probability. This allows us to prove that the exit measure is continuous with respect to the starting point, which then yields the tightness for random walk traces.
	
	In \textbf{Section \ref{sec8}}, we prove the tightness of the suitably normalized expected exit time. It turns out that the appropriate normalizing constant is given by the LQG measure; the proof of this crucially employs Lemma \ref{returnlem3} which provides a delicate comparison between effective resistances. At this point, we can prove the tightness for the expected exit time by the tightness of the LQG measure as well as the tightness of the effective resistance as in Theorem \ref{thm1}. 
	
	In \textbf{Section \ref{sec9}}, we use the result in Section \ref{sec8} as a main input to prove the tightness of our random walks with suitable time parameterization. Similar to \cite{berestycki2022random}, we will use Arzela-Ascoli's Theorem to obtain the tightness. This will be divided into two steps: the equicontinuity of paths and the equicontinuity of laws. Here we will also need the tightness of the second moment of the exit time, which can be obtained in the same way as that for the first moment (by considering the Green's function). Altogether, this completes the proof of Theorem \ref{thm3}.
	
	\subsection*{Convention on Constants}
	In this paper, we use the convention that $C, c$ are for absolute constants whose values may change from line to line, and we use the convention that $C, c$ with decorations such as $C_1, c^\prime$ are constants whose values are fixed within each section but may vary from section to section. We will often specify the dependence of a constant upon its introduction. For instance, there exists $\delta = \delta(K, M, \epsilon) >0$ means that the constant $\delta$ depends on $K, M, \epsilon$. For two sequence of positive numbers $(a_n)$ and $(b_n)$, we write $a_n = O(b_n)$ or $a_n \preceq b_n$ if there exists a constant $C>0$ such that $a_b \leq C b_n$ for all $n\geq 1$.
	
	\subsection*{Acknowledgement}
	We warmly and sincerely thank Subhajit Goswami and Ewain Gwynne for many stimulating discussions during various stages of this project. J. Ding is partially supported by NSFC Key Program Project No. 12231002 and the Xplorer prize.
	
	\section{Preliminaries on electric networks}\label{sec2}
	We will briefly review some basic definitions for electric networks, and we refer readers to e.g. \cite{doyle1984random,Lyons_Peres_2017} for more details. Let $\mathrm{G}$ be a finite, undirected, and connected simple graph, where the vertex set is $V(\mathrm{G})$ and the edge set is $E(\mathrm{G})$. We will consider $\mathrm{G}$ as an electric network, where we associate to each edge $e\in E(\mathrm{G})$ a resistance $r_e\in \mathbb R^+$ (and thus correspondingly a conductance $c_e = 1/r_e$). 
	
	We say two vertices $u,v\in V(\mathrm{G})$ are adjacent (denoted by $u\sim v$) if $(u,v)\in E(\mathrm{G})$, and we say two edges $e, e^{\prime}$ are adjacent if their endpoints share a vertex. A path is a sequence of vertices such that any two successive vertices are adjacent. For $e\in E(\mathrm{G})$ and a path $P$, we write $e\in P$ if $e$ is an edge between two successive vertices in $P$. A cycle in $\mathrm{G}$ is a path where the starting and ending points coincide.
	
	Let $\overrightarrow{E}(\mathrm{G}) \subset V(\mathrm{G}) \times V(\mathrm{G})$ be the collection of (directed) neighboring pairs, which can be obtained from $E$ by assigning each edge two orientations. In addition, for all $A\subset E$, we denote by $\overrightarrow{A}$ the collection of neighboring pairs in $A$. A flow from $u$ to $v$ is then a function $\theta: \overrightarrow{E} (\mathrm{G}) \to \mathbb{R}$ such that $\theta(x,y)=-\theta(y,x)$ for every $(x, y)\in \overrightarrow{E}(G)$, $\sum_{y:y\sim x}\theta(x,y)=0$ for every $x\in V(G) \setminus \{u, v\}$, and $\sum_{x: x\sim u} \theta(u, x) \geq 0$. The strength of the flow $\theta$ is defined by $\sum_{x:x\sim u}\theta(u,x)$. A flow with strength 1 is called a unit flow. For a sub-graph $\mathrm{H}$ of $\mathrm{G}$ and a function $\theta:E(\mathrm{G})\to \mathbb R$, we write
	\begin{equation}
		\mathcal{E}(\theta,\mathrm{H})=\sum_{e\in E(\mathrm{H})} \theta(e)^2 r_e
	\end{equation}
	to be the energy of $\theta$ on $\mathrm{H}$. We denote by $\mathcal{E} (\theta) = \mathcal{E} (\theta, \mathrm{G})$ for short. With these notations, we can define the effective resistance between $u$ and $v$ in $\mathrm{G}$ by
	\begin{equation}\label{resistancedef}
		R_{\mathrm{G}}(u,v)=\inf_{\theta}\sum_{e\in E(\mathrm{G})} \theta(e)^2r_e,
	\end{equation}
	where the infimum is taken over all unit flows from $u$ to $v$. The unit flow which achieves the minimum in \eqref{resistancedef} is called an electric current (or a current for short). Sometimes the sum is over $e\in \overrightarrow{E}(\mathrm{G})$ which then comes with a factor of 1/2. 
	
	By definition, if $C$ is a cycle in $\mathrm{G}$, we can define a flow $\theta_C$ supported on $C$ such that $\theta_C(e)=1$ for every $e\in C$ oriented in the direction of $C$. If $\theta$ is a unit current, then $\theta+a\theta_C$ is a unit flow for all $a\in\mathbb{R}$. By taking derivatives, we have $\sum_{e\in C}\theta(e)r_e=0$, which is called Kirchhoff's law. Thus, we can assign a potential $f:E(\mathrm{G})\to\mathbb{R}$ at each vertex such that $f(x)-f(y)=\theta(x,y)r(x,y)$ (such a potential function is also referred to as a \emph{voltage}). Direct calculations show that $f(u)-f(v)=R_{\mathrm{G}}(u,v)$.
	
	For a unit electric current $\theta$ from $u$ to $v$ and for $A \subset E$, define the current through $A$ by
	\begin{equation}\label{setcur}
		\theta(A) = 1-\varphi(A,\theta),
	\end{equation}
	where $\varphi(A,\theta)$ is the maximal flow $\tau$ one can send from $u$ to $v$ such that $|\tau(e)|\leq |\theta(e)|$ for all $e\in \overrightarrow{E}(\mathrm{G})$ and $\tau(e)=0$ for all $e\in A$.
	
	Finally, sometimes we consider flows from a set $A$ to $B$, and this is generalized naturally by identifying $A$ (and respectively $B$) as a single vertex.
	
	\subsection{Characterization of effective resistance}
	We review the generalized parallel and series laws for effective resistances below. We will provide the proof for Proposition \ref{res} for completeness, and omit the proof for Proposition \ref{conchar} due to similarity. One may consult e.g. \cite{biskup2020return} for thorough discussions and omitted proofs.
	
	\begin{prop}\label{res}
		Let $\mathcal{P}_{u,v}$ denote the collection of self-avoiding paths from $u$ to $v$. Then we have 
		\begin{equation}\label{reschar}
			R_{\mathrm{G}}(u,v)=\inf \sum_{k}\alpha_k^2\sum_{e\in P_k}r_{e,P_k},
		\end{equation}
		where the infimum is taken over all finite sets $\left\{P_1,\cdots,P_n\right\}\subset \mathcal{P}_{u,v}$, $\{\alpha_k\}\subset \mathbb{R}^+$ and $\{r_{e, P_k}\}\subset \mathbb{R}^+$ such that
		\begin{equation}
			\sum_k\alpha_k=1\And\sum_{k:e\in P_k}\frac{1}{r_{e,P_k}}\leq\frac{1}{r_e} \mbox{ for each edge } e.
		\end{equation}
		The infima in \eqref{reschar} are (jointly) achieved.
	\end{prop}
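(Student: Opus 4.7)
The plan is to prove the two inequalities $R_{\mathrm{G}}(u,v)\leq \mathrm{RHS}$ and $R_{\mathrm{G}}(u,v)\geq \mathrm{RHS}$, where along the way an explicit minimizing tuple will be constructed, settling both equality and attainment.

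For the direction $R_{\mathrm{G}}(u,v)\leq \mathrm{RHS}$, I would start from an arbitrary admissible triple $(\{P_k\},\{\alpha_k\},\{r_{e,P_k}\})$ and form the convex combination of unit path-flows $\theta=\sum_k\alpha_k\theta_{P_k}$, where $\theta_{P_k}$ is the unit flow supported on $P_k$ (oriented from $u$ to $v$). Since $\sum_k\alpha_k=1$, this is again a unit flow from $u$ to $v$. On each edge $e$, $\theta(e)=\sum_{k:e\in P_k}\pm\alpha_k$ (the signs coming from the orientation of $e$ inside $P_k$), so $|\theta(e)|\leq\sum_{k:e\in P_k}\alpha_k$, and applying Cauchy--Schwarz to the splitting $\alpha_k=(\alpha_k\sqrt{r_{e,P_k}})\cdot(1/\sqrt{r_{e,P_k}})$ gives
\[
\theta(e)^2\leq\Bigl(\sum_{k:e\in P_k}\alpha_k^2 r_{e,P_k}\Bigr)\Bigl(\sum_{k:e\in P_k}\tfrac{1}{r_{e,P_k}}\Bigr)\leq\frac{1}{r_e}\sum_{k:e\in P_k}\alpha_k^2 r_{e,P_k},
\]
where the admissibility constraint is invoked in the last step. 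Multiplying by $r_e$ and summing over $e$ yields $\mathcal{E}(\theta)\leq\sum_k\alpha_k^2\sum_{e\in P_k}r_{e,P_k}$, so by \eqref{resistancedef} one gets $R_{\mathrm{G}}(u,v)\leq\mathrm{RHS}$.

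For the reverse inequality and attainment, let $\theta^*$ be the unique optimal unit current (existence/uniqueness follow from strict convex minimization of $\mathcal{E}$ over the affine space of unit flows). Orient each edge in the direction in which $\theta^*>0$, so that $\theta^*\geq 0$ on directed edges. By the standard flow-decomposition theorem on the resulting directed graph, there exist self-avoiding $u$-to-$v$ paths $P_1,\dots,P_n$, simple cycles $C_1,\dots,C_m$, and positive weights $\alpha_k,\beta_j$ with $\sum_k\alpha_k=1$ such that $\theta^*=\sum_k\alpha_k\theta_{P_k}+\sum_j\beta_j\theta_{C_j}$. In particular $\sum_{k:e\in P_k}\alpha_k\leq\theta^*(e)$ for every edge in the support. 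Now set $r_{e,P_k}:=r_e\theta^*(e)/\alpha_k$ for each $e\in P_k$. Admissibility is immediate since $\sum_{k:e\in P_k}1/r_{e,P_k}=(\sum_{k:e\in P_k}\alpha_k)/(r_e\theta^*(e))\leq 1/r_e$, and a direct manipulation gives
\[
\sum_k\alpha_k^2\sum_{e\in P_k}r_{e,P_k}=\sum_e r_e\theta^*(e)\sum_{k:e\in P_k}\alpha_k\leq\sum_e r_e\theta^*(e)^2=R_{\mathrm{G}}(u,v).
\]
This simultaneously proves $\mathrm{RHS}\leq R_{\mathrm{G}}(u,v)$ and, combined with the first half, shows that the infimum is attained by the explicit tuple just constructed.

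The main technical point I anticipate is the flow-decomposition step, which one can carry out by a greedy algorithm: orient edges by $\theta^*$, extract any simple $u$-to-$v$ path $P$ in the support, subtract $\min_{e\in P}\theta^*(e)$ copies of the unit flow along $P$, which strictly reduces the number of positively flowing edges, and iterate; once the residual strength reaches zero, any remaining mass decomposes into cycles. Modulo this (well-known) ingredient, the cleverness of the argument is entirely concentrated in the single algebraic choice $r_{e,P_k}=r_e\theta^*(e)/\alpha_k$, which encodes a parallel-law partition of each edge's conductance among the paths that cross it; everything else is sign bookkeeping and Cauchy--Schwarz.
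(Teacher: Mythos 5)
Your argument is correct and takes essentially the same route as the paper's proof: the upper bound by converting an admissible tuple into a unit flow (your Cauchy--Schwarz step is exactly the parallel-law edge-splitting the paper uses), and the lower bound by greedily path-decomposing the unit current and choosing $r_{e,P_k}=r_e|\theta(e)|/\alpha_k$. The only cosmetic difference is in closing the lower bound: you swap the order of summation and invoke the already-proved upper bound to get attainment, while the paper telescopes the voltage along each extracted path to obtain the exact identity $\sum_{e\in P_k}r_e|\theta_0(e)|=R_{\mathrm{G}}(u,v)$; both are valid.
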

	
	\begin{proof}
		The proof is taken from \cite{biskup2020return}. On the one hand, for every $\left\{P_1,\cdots,P_n\right\}\subset \mathcal{P}$ and $\{r_{e,P_k}\}\subset \mathbb{R}^+$ such that $\sum_{k:e\in P_k}1/r_{e,P_k}\leq1/r_e$, we can view an edge $e$ in the network $\mathrm{G}$ as a parallel of edges $e_1, \ldots, e_n$ and $\tilde e$, where $e_k$ has resistance $r_{e, P_k}$ (for $1\leq k\leq n$) and $\tilde e$ has resistance $r_{\tilde e}$ such that $1/r_{\tilde{e}}=1/r_e-\sum_{k:e\in P_k}1/r_{e,P_k}$. (We allow the resistance of $\tilde{e}$ to be infinity, in which case it is equivalent to delete the edge $\tilde{e}$.) Then we can send a unit flow from $u$ to $v$ by assigning a flow of strength $\alpha_k$ along each path $P_k$ for $1\leq k\leq n$. Thus, by \eqref{resistancedef}, it holds that 
		\begin{equation}
			R_{\mathrm{G}}\leq \sum_k\alpha_k^2\sum_{e\in P_k}r_{e,P_k}.
		\end{equation}
		
		On the other hand, let $\theta_0$ be the unit electric current. First we observe that if a flow $\theta$ has nonzero strength, then there must be a path $P$ from $u$ to $v$ with $\theta(e)>0$ for each $e\in P_k$ oriented in the same direction of $P_k$. Note that this can be proved by an induction on the number of vertices in $\mathrm{G}$.
		
		With the observation, we define $P_k$, $\alpha_k$ and $\theta_k$ inductively in the following way. If $\theta_{k-1}$ has nonzero strength, then there is a path $P_k$ such that $\theta_{k-1}(e)>0$ for each $e\in P$ oriented in the same direction of $P$. Let $\alpha_k=\min_{e\in P_k}\theta_{k-1}(e)>0$, and let
		\begin{equation}
			\theta_k(e)=\theta_{k-1}(e)-\alpha_k \mbox{sign}(\theta_{k-1}(e))\mathds{1}_{\left\{e\in P_k\right\}}.
		\end{equation}
		Since the number of edges with nonzero values in $\theta_k$ is decreasing, the induction must stop in finite steps. Then $\sum_k\alpha_k=1$. Setting $r_{e,P_k}=r_e|\theta_0(e)|/\alpha_k$, we then have that 
		\begin{equation}
			\sum_{k}\alpha_k^2\sum_{e\in P_k}r_{e,P_k}=\sum_k\alpha_k\sum_{e\in P_k}r_e|\theta_0(e)|=\sum_k\alpha_k U=U=R_{\mathrm{G}}(u,v),
		\end{equation}
		where $U$ denotes the potential (i.e., voltage) difference between $u$ and $v$.
	\end{proof}
	
	Taking infimum with respect to $\alpha_k$, we can deduce the following corollary.
	
	\begin{cor}\label{rescharcor}
		Let $\mathcal{P}_{u,v}$ denote the collection of paths from $u$ to $v$. Then we have
		\begin{equation}\label{reschar1}
			R_{\mathrm{G}}(u,v)=\inf \left(\sum_{k}\frac{1}{\sum_{e\in P_k}r_{e,P_k}}\right)^{-1},
		\end{equation}
		where the infimum is taken over all finite set $\left\{P_1,\cdots,P_n\right\}\subset \mathcal{P}_{u,v}$ and $\{ r_{e, P_k}\} \subset \mathbb{R}^+$ such that
		\begin{equation}
			\sum_{k:e\in P_k}\frac{1}{r_{e,P_k}}\leq\frac{1}{r_e} \mbox{ for each edge } e.
		\end{equation}
		The infima in \eqref{reschar1} are (jointly) achieved.
	\end{cor}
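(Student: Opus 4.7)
The plan is to derive \eqref{reschar1} from Proposition \ref{res} by performing the explicit minimization over the convex coefficients $\alpha_k$ appearing in \eqref{reschar}. Fix paths $P_1,\ldots,P_n$ and admissible resistances $\{r_{e,P_k}\}$, and set $L_k:=\sum_{e\in P_k}r_{e,P_k}$. The core computation is the Cauchy--Schwarz inequality
$$\Bigl(\sum_k\alpha_k^2 L_k\Bigr)\Bigl(\sum_k \tfrac{1}{L_k}\Bigr)\geq \Bigl(\sum_k \alpha_k\Bigr)^2=1,$$
with equality iff $\alpha_k\propto 1/L_k$. Hence the infimum of $\sum_k\alpha_k^2 L_k$ over probability vectors $(\alpha_k)$ equals $\bigl(\sum_k 1/L_k\bigr)^{-1}$, attained at the explicit minimizer $\alpha_k^{\ast}:=(1/L_k)/\sum_j(1/L_j)$ which indeed lies in the simplex.

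Substituting this minimization into the characterization \eqref{reschar} immediately yields the identity \eqref{reschar1} with the outer infimum taken over \emph{self-avoiding} paths. To upgrade to general paths as in the statement, I would observe the two inequalities separately. On the one hand, since self-avoiding paths form a subfamily of $\mathcal{P}_{u,v}$, the infimum over general paths is bounded above by that over self-avoiding paths, which equals $R_{\mathrm G}(u,v)$ by the preceding paragraph. On the other hand, for any admissible choice of general paths and weights, assigning a flow of strength $\alpha_k^\ast$ along each $P_k$ and cancelling traversals of any internal loop produces a unit flow from $u$ to $v$; the parallel constraint $\sum_{k:e\in P_k}1/r_{e,P_k}\leq 1/r_e$ together with the explicit form of $\alpha_k^\ast$ controls its edge values sufficiently to bound its Dirichlet energy by $\bigl(\sum_k 1/L_k\bigr)^{-1}$, so the variational definition \eqref{resistancedef} forces $R_{\mathrm G}(u,v)\leq\bigl(\sum_k 1/L_k\bigr)^{-1}$. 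Taking the infimum over general path configurations gives the reverse inequality.

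Realization of the joint infimum is inherited directly from Proposition \ref{res}: the minimizer in $\alpha_k$ is available in closed form, and the minimizers in $(P_k)$ and $(r_{e,P_k})$ come from the construction in the proof of Proposition \ref{res}. The argument is essentially a convex-optimization reformulation, and I do not anticipate any real obstacle; the only mildly delicate point is the loop-pruning/cancellation accounting in the second step needed to replace self-avoiding paths by general paths.
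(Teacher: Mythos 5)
Your proposal is correct, and its core step---the explicit minimization over the simplex in $\alpha_k$ via Cauchy--Schwarz, with minimizer $\alpha_k^\ast=(\sum_{e\in P_k}r_{e,P_k})^{-1}/\sum_j(\sum_{e\in P_j}r_{e,P_j})^{-1}$---is exactly the paper's proof of the corollary. The extra loop-pruning step addresses the self-avoiding-versus-general-path discrepancy that the paper passes over silently; your sketch works, but it can be streamlined by simply replacing each $P_k$ by a self-avoiding subpath $P_k'\subset P_k$ from $u$ to $v$, which preserves the admissibility constraint and only decreases $\sum_{e\in P_k}r_{e,P_k}$, hence only decreases the objective, so no flow construction or cancellation accounting is needed.
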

	
	\begin{proof}
		For fixed $\{P_k\}$ and $\{r_{e,P_k}\}$, \eqref{reschar} can be minimized by taking 
		\begin{equation}
			\alpha_k=\frac {\left(\sum_{e\in P_k}r_{e,P_k}\right)^{-1}}{\sum_{j} \left(\sum_{e\in P_j}r_{e,P_j}\right)^{-1}} \mbox{ for } 1\leq k\leq n,
		\end{equation}
		and we complete the proof.
	\end{proof}

	We next generalize the series law, which can be seen as a generalization for the Nash-Williams' criterion. We define the effective conductance between $u$ and $v$ by (below $e^+, e^-$ denote the two endpoints of an edge $e$)
	\begin{equation}\label{condef}
		C_{\mathrm{G}}(u,v)=\inf_{f}\sum_{e\in E}c_e(f(e^+)-f(e^-))^2,
	\end{equation}
	where the infimum is taken over all $f:V(\mathrm{G})\to \mathbb{R}$ that satisfies $f(u)=1$ and $f(v)=0$. The fundamental electrostatic duality implies that (see e.g. \cite{Lyons_Peres_2017})
	\begin{equation}
		C_{\mathrm{G}}(u,v)=\frac{1}{R_{\mathrm{G}}(u,v)}.
	\end{equation}
	The Nash-Williams' criterion provides a bound on the effective conductance via considering disjoint cut sets, where $\pi\subset E(\mathrm{G})$ is a cut set between $u$ and $v$ if every path from $u$ to $v$ intersects with $\pi$. We next characterize the effective conductance via cut sets, in the same spirit of Proposition \ref{res}.
	
	\begin{prop}\label{conchar}
		\textup{(\hspace{-0.15mm}\cite[Proposition~2.3]{biskup2020return})}
		Let $\Pi_{u,v}$ be the collection of cut sets between $u$ and $v$. Then we have
		\begin{equation}\label{reschar3}
			C_{\mathrm{G}}(u,v)=\inf\left(\sum_k\frac{1}{\sum_{e\in\pi_k}c_{e,\pi_k}}\right)^{-1},
		\end{equation}
		where the infimum is taken over all finite set $\left\{\pi_1,\cdots,\pi_n\right\}\subset\Pi_{u,v}$, $\{c_{e, \pi_k}\} \subset \mathbb{R}^+$ such that
		\begin{equation}
			\sum_{k:e\in \pi_k}\frac{1}{c_{e,\pi_k}}\leq\frac{1}{c_e} \mbox{ for every edge }e.
		\end{equation}
		The infima in \eqref{reschar3} are (jointly) achieved.
	\end{prop}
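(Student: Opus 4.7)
The approach is to mirror the proof of Proposition \ref{res}, exchanging the roles of flows, paths, and parallel combinations with those of potentials, cut sets, and series combinations via electrostatic duality. I will prove the two inequalities separately. For the upper bound $C_{\mathrm{G}}(u,v) \leq (\sum_k 1/\sum_{e \in \pi_k} c_{e,\pi_k})^{-1}$: given an admissible family $\{\pi_k\}, \{c_{e,\pi_k}\}$, I replace each edge $e$ of $\mathrm{G}$ by a series concatenation of sub-edges of conductances $c_{e,\pi_k}$ (one per $k$ with $e \in \pi_k$), followed by a residual sub-edge $\tilde e$ with $1/c_{\tilde e} = 1/c_e - \sum_{k : e \in \pi_k} 1/c_{e,\pi_k}$; if this is zero, I allow $c_{\tilde e} = \infty$, which amounts to identifying the two endpoints of $\tilde e$. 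Series additivity of resistances ensures that the modified network $\tilde{\mathrm{G}}$ has $C_{\tilde{\mathrm{G}}}(u,v) = C_{\mathrm{G}}(u,v)$, while the $k$-th sub-edges of all $e \in \pi_k$ form a cut set $\tilde \pi_k$ in $\tilde{\mathrm{G}}$, and these cut sets are pairwise edge-disjoint. Nash-Williams' criterion applied to $\tilde{\mathrm{G}}$ then yields $R_{\tilde{\mathrm{G}}}(u,v) \geq \sum_k 1/\sum_{e \in \pi_k} c_{e,\pi_k}$, giving the desired bound.

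For the reverse inequality I will construct an admissible family attaining equality by dualizing the path-peeling argument used for Proposition \ref{res}. Let $h$ be the harmonic function on $\mathrm{G}$ with $h(u) = 1$ and $h(v) = 0$, so $\sum_e c_e (h(e^+) - h(e^-))^2 = C_{\mathrm{G}}(u,v)$. Set $f_0 = h$; at step $k$, choose $c_k \in (f_{k-1}(v), f_{k-1}(u))$, let $A_k = \{x : f_{k-1}(x) \geq c_k\}$, let $\pi_k$ be the edge boundary of $A_k$ (a cut set between $u$ and $v$), set $\beta_k = \min_{(x,y) \in \pi_k,\, x \in A_k}\bigl(f_{k-1}(x) - f_{k-1}(y)\bigr) > 0$, and put $f_k = f_{k-1} - \beta_k \mathds{1}_{A_k}$. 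Choosing $\beta_k$ as a minimum ensures that at least one edge of $\pi_k$ becomes equipotential under $f_k$, and the sign of $f_k(e^+) - f_k(e^-)$ agrees with that of $h(e^+) - h(e^-)$ on every remaining edge; hence the iteration terminates in finitely many steps with $f_N$ constant on $V(\mathrm{G})$, and $\sum_k \beta_k = f_0(u) - f_0(v) = 1$. Setting $c_{e,\pi_k} := c_e|h(e^+) - h(e^-)|/\beta_k$, a telescoping computation gives $\sum_{k : e \in \pi_k} 1/c_{e,\pi_k} = 1/c_e$, verifying admissibility. Moreover $\sum_{e \in \pi_k} c_e|h(e^+) - h(e^-)|$ is precisely the total current across the cut $\pi_k$, which by conservation of current for the harmonic potential equals $C_{\mathrm{G}}(u,v)$; thus $\sum_k 1/\sum_{e \in \pi_k} c_{e,\pi_k} = \sum_k \beta_k / C_{\mathrm{G}}(u,v) = 1/C_{\mathrm{G}}(u,v)$, attaining the infimum.

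The main obstacle is the dual iteration: one must verify both that the sign of $f_k(e^+) - f_k(e^-)$ never flips throughout the procedure (so that the $c_{e,\pi_k}$ are well defined with consistent orientation and the admissibility constraint becomes an equality in the telescope) and that each intermediate cut set $\pi_k$ carries the same total current $C_{\mathrm{G}}(u,v)$. Both points follow from choosing $\beta_k$ to be the minimum voltage drop along $\pi_k$ combined with the harmonicity of $h$ at interior vertices, but articulating these two points cleanly is the technical step that requires the most care.
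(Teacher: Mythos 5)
Your proposal is correct and is essentially the argument the paper has in mind: the paper omits this proof precisely because it is the dual of Proposition \ref{res}, and your two halves (series subdivision of edges plus Nash--Williams for the inequality $C_{\mathrm{G}}(u,v)\leq Q$, and level-set peeling of the harmonic potential with $c_{e,\pi_k}=c_e|h(e^+)-h(e^-)|/\beta_k$ for achievability) are exactly that dualization, with the cut-current identity and the no-sign-flip invariant being the right key points. The only places to tighten in a write-up are the ones you flag yourself: the induction showing that every boundary edge of $A_k$ has its $h$-higher endpoint inside (so differences in the $h$-orientation stay nonnegative and the absolute current across $\pi_k$ equals the net current $C_{\mathrm{G}}(u,v)$), and the fact that $\beta_k\leq f_{k-1}(u)-f_{k-1}(v)$ (e.g.\ via an $h$-nonincreasing path from $u$ to $v$), which yields $\sum_k\beta_k=1$; note also that only the inequality $\sum_{k:e\in\pi_k}1/c_{e,\pi_k}\leq 1/c_e$ is needed, so the claim that $f_N$ is constant on all of $V(\mathrm{G})$ can be dispensed with.
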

	
	\subsection{Triangle inequality for effective resistance}
	Following Proposition \ref{res} which characterizes the effective resistance via collections of paths, we now generalize the definition of effective resistance. For $\mathcal{A} \subset \{A: A \subset E(\mathrm{G})\}$, let
	\begin{equation}
		R_{\mathrm{G}}(\mathcal{A})=\inf \left(\sum_{k}\frac{1}{\sum_{e\in P_k}r_{e,P_k}}\right)^{-1},
	\end{equation}
	where the infimum is taken over all finite sets $\left\{P_1,\cdots,P_n\right\}\subset \mathcal{A}$ and $\{r_{e, P_k}\} \subset \mathbb{R}^+$ such that 
	\begin{equation}
		\sum_{k:e\in P_k}\frac{1}{r_{e,P_k}}\leq\frac{1}{r_e}\mbox{ for all } e\in E(\mathrm{G}).
	\end{equation}
	
	It is easy to see that $\mathcal{A}\to R_{\mathrm{G}}(\mathcal{A})$ is non-increasing. In addition, this generalizes the effective resistance in the following sense: for disjoint vertex sets $A,B\subset V(\mathrm{G})$, the effective resistance between $A,B$ is $R_{\mathrm{G}}(\mathcal{P}_{A,B})$, where $\mathcal{P}_{A,B}$ denotes the collection of self-avoiding paths joining $A$ and $B$ (note that this includes, in particular, the case when $A$ and $B$ are singletons). We next generalize the parallel and series law to this generalized notion of resistance.
	\begin{prop}\label{parallel}
		Let $\mathcal{A}_1,\cdots,\mathcal{A}_n$ and $\mathcal{P}$ be collections of edge sets. Suppose that $\mathcal{P}\subset \bigcup_i \mathcal{A}_i$. Then we have
		\begin{equation}\label{paralleleq}
			R_{\mathrm{G}}(\mathcal{P})^{-1}\leq\sum_i R_{\mathrm{G}}(\mathcal{A}_i)^{-1}.
		\end{equation}
	\end{prop}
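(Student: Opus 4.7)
The plan is to argue directly from the characterization of $R_{\mathrm{G}}(\mathcal{A})^{-1}$ as a supremum. Unpacking the definition, $R_{\mathrm{G}}(\mathcal{A})^{-1}$ equals the supremum of $\sum_k \bigl(\sum_{e\in P_k} r_{e,P_k}\bigr)^{-1}$ over all admissible configurations $(\{P_k\}\subset \mathcal{A},\; \{r_{e,P_k}\}\subset \mathbb{R}^+)$ satisfying $\sum_{k:\,e\in P_k} r_{e,P_k}^{-1} \le r_e^{-1}$ for every edge $e$. It therefore suffices to show that every admissible configuration for $\mathcal{P}$ produces a conductance sum dominated by $\sum_i R_{\mathrm{G}}(\mathcal{A}_i)^{-1}$, and the key idea is to use the covering $\mathcal{P} \subset \bigcup_i \mathcal{A}_i$ to partition such a configuration along the index $i$.

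Concretely, I would fix $\varepsilon > 0$ and pick an admissible configuration $\{(P_k, r_{e,P_k})\}_{k=1}^{N}$ for $\mathcal{P}$ whose conductance sum is within $\varepsilon$ of $R_{\mathrm{G}}(\mathcal{P})^{-1}$. For each $k$ the hypothesis $\mathcal{P} \subset \bigcup_i \mathcal{A}_i$ supplies an index $i(k) \in \{1,\ldots,n\}$ with $P_k \in \mathcal{A}_{i(k)}$, chosen arbitrarily in case of ties. For each fixed $i$, the restricted sub-configuration $\{(P_k, r_{e,P_k}) : i(k) = i\}$ is itself admissible for $\mathcal{A}_i$: its paths lie in $\mathcal{A}_i$ by construction, and the edge constraint
$$\sum_{k:\,i(k)=i,\;e\in P_k} \frac{1}{r_{e,P_k}} \;\le\; \sum_{k:\,e\in P_k} \frac{1}{r_{e,P_k}} \;\le\; \frac{1}{r_e}$$
holds at once as a sub-sum of the original constraint. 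By the definition of $R_{\mathrm{G}}(\mathcal{A}_i)$, this sub-configuration is then bounded by the supremum:
$$\sum_{k:\,i(k)=i} \Bigl(\sum_{e\in P_k} r_{e,P_k}\Bigr)^{-1} \;\le\; R_{\mathrm{G}}(\mathcal{A}_i)^{-1}.$$
Summing these inequalities over $i = 1,\ldots,n$ reassembles the full sum over $k$, yielding $R_{\mathrm{G}}(\mathcal{P})^{-1} - \varepsilon \le \sum_i R_{\mathrm{G}}(\mathcal{A}_i)^{-1}$, and sending $\varepsilon \to 0$ completes the proof.

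No serious obstacle arises here: the argument is essentially a subadditivity observation whose only non-trivial input is the monotonicity of the edge-capacity constraint under restricting to a sub-collection of paths, which is immediate from the non-negativity of the summands $r_{e,P_k}^{-1}$. If one prefers to avoid the $\varepsilon$-approximation, Corollary \ref{rescharcor} guarantees that the infimum is attained when $\mathcal{P}$ consists of self-avoiding paths, in which case the same partition argument applies verbatim to an optimizer. The degenerate case where $\mathcal{P}$ is empty makes $R_{\mathrm{G}}(\mathcal{P})^{-1} = 0$, and the inequality is trivial.
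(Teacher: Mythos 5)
Your proof is correct and follows essentially the same route as the paper: restrict an admissible configuration for $\mathcal{P}$ to each $\mathcal{A}_i$, note that the edge constraint survives as a sub-sum, bound each restricted conductance sum by $R_{\mathrm{G}}(\mathcal{A}_i)^{-1}$, and sum over $i$. The only cosmetic differences are that you partition the paths via a choice of index $i(k)$ (the paper allows overlapping membership, which only helps) and use an $\varepsilon$-approximation instead of taking the supremum at the end.
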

	
	\begin{proof}
		For each $\left\{P_1,\cdots,P_n\right\}\subset \mathcal{P}$ and $r_{e,P_k}\in \mathbb{R}^+$ such that $\sum_{k:e\in P_k}1/r_{e,P_k}\leq1/r_e$ for every edge $e$, we have $\sum_{k: P_k \in \mathcal A_i} 1/r_{e, P_k} \leq 1/r_e$ for each $i$ and every edge $e$, and thus we have
		\begin{equation}
			\sum_{k:P_k\in \mathcal{A}_i}\frac{1}{\sum_{e\in P_k}r_{e,P_k}} \leq R_{\mathrm{G}}(\mathcal{A}_i)^{-1}, \mbox{ for each } i.
		\end{equation}
		Summing the above over $i$ and taking the supremum over all legitimate choices for ${P_1, \ldots, P_n}$ and $\{r_{e, P_k}\}$, we conclude the proof of \eqref{paralleleq}.
	\end{proof}

	The generalized series law is somewhat more complicated, and we will prove it using the Max-Flow-Min-Cut Theorem (see e.g. \cite[Section~3.1]{Lyons_Peres_2017}), which is stated below for convenience.
	
	\begin{lem}\label{maxflowmincut}
		Let $\mathrm{G}$ be a finite network where each edge $e\in E(\mathrm{G})$ is equipped with a capacity $w(e) \geq 0$. Then for any disjoint vertex sets $A$ and $Z$, we have
		\begin{equation}\label{maxflowmincuteq}
			\begin{aligned}
				\max \bigg\{\mbox{Strength}(\theta); \theta \mbox{ flows from } A\mbox{ to }Z \mbox{ such that } |\theta(e)| \leq w(e) \mbox{ for all }  e\in \overrightarrow{E} (\mathrm{G})\bigg\}\\
				=
				\min \bigg\{\sum_{e\in \pi}w(e); \pi \mbox{ is a cut set separating } A \mbox{ and } Z \bigg\}.
			\end{aligned}
		\end{equation}
	\end{lem}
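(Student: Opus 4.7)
The plan is to prove the two inequalities in \eqref{maxflowmincuteq} separately. The easy direction $\max \le \min$ is a direct computation, and the hard direction follows from a residual-graph / augmenting-path argument in the spirit of Ford--Fulkerson.

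A convenient preliminary reduction is to collapse $A$ into one super-source $a$ and $Z$ into one super-sink $z$ by introducing infinite-capacity edges from $a$ to each vertex of $A$ and from each vertex of $Z$ to $z$; this preserves admissible flows, cut sets separating the two ends, and both sides of \eqref{maxflowmincuteq}. For the easy direction, fix any admissible flow $\theta$ and any cut set $\pi$ separating $a$ from $z$; letting $S$ denote the connected component of $a$ in the graph obtained after removing $\pi$, the divergence-free condition at interior vertices telescopes to $\mathrm{Strength}(\theta) = \sum_{u \in S,\, v \notin S,\, (u,v) \in \overrightarrow{E}} \theta(u,v)$, which is bounded in absolute value by $\sum_{e \in \pi} w(e)$ using $|\theta(e)| \le w(e)$.

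For the hard direction, I would first note that the feasible set of admissible flows is a compact (bounded, closed) subset of $\mathbb{R}^{\overrightarrow{E}}$ and the strength functional is linear, so a maximum-strength flow $\theta^*$ exists. Define residual capacities $w^*(u,v) := w(u,v) - \theta^*(u,v) \ge 0$ on $\overrightarrow{E}$, and let $S^*$ be the set of vertices reachable from $a$ via directed paths whose edges all satisfy $w^*(\cdot) > 0$. The crucial claim is $z \notin S^*$: otherwise a residual directed path $P$ from $a$ to $z$ exists, and pushing $\epsilon := \min_{e \in P} w^*(e) > 0$ units of flow along $P$ (while subtracting the same amount on the reverse orientation) produces an admissible flow of strength strictly larger than that of $\theta^*$, contradicting maximality. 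Taking $\pi^* := \{e \in E(\mathrm{G}) : e \text{ has one endpoint in } S^*, \text{ the other in } V \setminus S^*\}$, by the definition of $S^*$ every forward edge across $\pi^*$ is saturated while every backward edge carries full flow in the opposite orientation; substituting into the cut-crossing identity for $\mathrm{Strength}(\theta^*)$ yields $\mathrm{Strength}(\theta^*) = \sum_{e \in \pi^*} w(e)$, establishing equality.

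The main obstacle is the bookkeeping in the augmenting-path step: one must verify that pushing $\epsilon$ along $P$ respects antisymmetry and both orientations of the capacity constraint $|\theta(e)| \le w(e)$, and remains divergence-free at interior vertices. This is routine, but the super-source/super-sink reduction is what keeps it clean by reducing to the singleton case where $A, Z$ are merely two vertices.
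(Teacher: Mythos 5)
Your argument is correct, but note that the paper does not prove this lemma at all: it is quoted as a classical fact with a citation to \cite[Section~3.1]{Lyons_Peres_2017}, so there is no ``paper proof'' to match. What you supply is the standard Ford--Fulkerson argument (existence of a maximal flow by compactness, residual capacities $w^*(u,v)=w(u,v)-\theta^*(u,v)$, the reachable set $S^*$, and the observation that every directed edge leaving $S^*$ is saturated, so the cut-crossing identity gives $\mathrm{Strength}(\theta^*)=\sum_{e\in\pi^*}w(e)$), and it adapts correctly to the paper's undirected, antisymmetric setting: increasing $\theta(u,v)$ by $\epsilon\le w^*(u,v)$ automatically preserves $|\theta(v,u)|\le w(v,u)$ by antisymmetry, and each undirected cut edge contributes exactly $w(e)$. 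Two small bookkeeping points are worth making explicit. First, with your super-source/super-sink reduction the feasible set is not obviously bounded on the auxiliary infinite-capacity edges; boundedness follows because conservation at the vertices of $A$ and $Z$ (which become interior vertices) determines the auxiliary flow values from the finitely capacitated original edges. Alternatively, you could follow the paper's own convention of identifying $A$ (respectively $Z$) with a single vertex, which keeps all capacities finite and makes compactness immediate. Second, one should check that the cut $\pi^*$ produced by $S^*$ consists only of original edges and hence is a cut set separating $A$ and $Z$ in the original network: every $x\in A$ lies in $S^*$ (the auxiliary edge $(a,x)$ always has positive residual capacity) and every $y\in Z$ lies outside $S^*$ (otherwise $z$ would be reachable), so no auxiliary edge crosses the cut. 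With these two remarks your proof is complete and self-contained, which is more than the paper offers for this statement.
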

	
	\begin{prop}\label{series}
		Let $G$ be a network, and let $\mathrm{H}_1,\cdots,\mathrm{H}_n$ be sub-graphs of $\mathrm{G}$. For each $1\leq i\leq n$, let $A_i,Z_i$ be disjoint subsets in $\mathrm{H}_i$. Let $A,Z$ be disjoint subsets in $\mathrm{G}$. Suppose that $\cup_i P_i$ contains a path from $A$ to $Z$, where $P_i$ is an arbitrary path within $\mathrm{H}_i$ connecting $A_i$ and $Z_i$ for $1\leq i\leq n$. Then, we have
		\begin{equation}
			R_{\mathrm{G}}(A,Z)\leq \sum_{i=1}^{n}R_{\mathrm{H}_i}(A_i,Z_i).
		\end{equation}
	\end{prop}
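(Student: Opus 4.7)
My plan is to apply the generalized parallel-law characterization from Proposition \ref{res} inside each $\mathrm{H}_i$ and glue these layered decompositions into a single decomposition in $\mathrm{G}$, then verify both the energy bound and the conductance budget, with Max-Flow-Min-Cut (Lemma \ref{maxflowmincut}) entering to control the latter in the overlapping case. First, for each $i$ I would fix a (near-)optimal representation
$$R_{\mathrm{H}_i}(A_i, Z_i) \;=\; \sum_k (\alpha_k^{(i)})^2 \sum_{e \in P_k^{(i)}} r_{e, P_k^{(i)}},$$
where $\{P_k^{(i)}\}$ are self-avoiding $A_i$-$Z_i$ paths in $\mathrm{H}_i$, $\alpha_k^{(i)} > 0$ with $\sum_k \alpha_k^{(i)} = 1$, and the virtual resistances satisfy $\sum_{k: e \in P_k^{(i)}} r_{e, P_k^{(i)}}^{-1} \le r_e^{-1}$ for every $e \in E(\mathrm{H}_i)$.

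To assemble a decomposition in $\mathrm{G}$, I would index $\mathrm{G}$-paths by multi-indices $\vec{k} = (k_1, \dots, k_n)$: the hypothesis provides, for every $\vec{k}$, a self-avoiding $A$-$Z$ path $Q_{\vec{k}} \subset \bigcup_i P_{k_i}^{(i)}$, which I equip with weight $\alpha_{\vec{k}} := \prod_i \alpha_{k_i}^{(i)}$ (so $\sum_{\vec{k}} \alpha_{\vec{k}} = 1$). The natural ``layered'' superposition
$$r_{e, Q_{\vec{k}}} \;=\; \sum_{i:\, e \in P_{k_i}^{(i)}} \frac{r_{e, P_{k_i}^{(i)}}}{\prod_{j \neq i} \alpha_{k_j}^{(j)}}$$
has the property that swapping the order of summation in $\sum_{\vec{k}} \alpha_{\vec{k}}^2 \sum_{e \in Q_{\vec{k}}} r_{e, Q_{\vec{k}}}$, together with $\sum_{\vec{k}_{-i}} \prod_{j \neq i} \alpha_{k_j}^{(j)} = 1$, directly delivers the target energy bound $\sum_i R_{\mathrm{H}_i}(A_i, Z_i)$.

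The hard part will be to verify the budget constraint $\sum_{\vec{k} : e \in Q_{\vec{k}}} r_{e, Q_{\vec{k}}}^{-1} \le r_e^{-1}$ when an edge $e$ lies in several $\mathrm{H}_i$ at once, since a naive estimate loses a factor of $|\{i : e \in E(\mathrm{H}_i)\}|$. The key auxiliary fact---dual to the hypothesis and proved by contrapositive---is that for every $A$-$Z$ cut $\pi$ in $\mathrm{G}$ there is some $i$ such that $\pi \cap E(\mathrm{H}_i)$ is an $A_i$-$Z_i$ cut in $\mathrm{H}_i$ (otherwise, picking for each $i$ an $A_i$-$Z_i$ path $P_i \subset \mathrm{H}_i$ avoiding $\pi$, the hypothesis would produce an $A$-$Z$ path inside $\bigcup_i P_i$ that avoids $\pi$, a contradiction). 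I would then combine this cut-lifting with Lemma \ref{maxflowmincut} applied inside each $\mathrm{H}_i$ to produce a principled assignment $i(e, \vec{k}) \in \{i : e \in P_{k_i}^{(i)}\}$ under which the contributions along different $\vec{k}$ do not double-book the conductance at $e$, yielding the desired constraint.

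Once both the sum bound and the budget constraint are in place, Proposition \ref{res} applied to $\mathrm{G}$ with the data $(\{Q_{\vec{k}}\}, \{\alpha_{\vec{k}}\}, \{r_{e, Q_{\vec{k}}}\})$ delivers $R_{\mathrm{G}}(A, Z) \le \sum_i R_{\mathrm{H}_i}(A_i, Z_i)$. If the combinatorics of the assignment prove unwieldy, a clean alternative is to bypass Proposition \ref{res} entirely: use the cut-lifting above as the Max-Flow-Min-Cut feasibility certificate to construct a unit $A$-$Z$ flow $\Theta$ in $\mathrm{G}$ with $|\Theta(e)| \le \sum_i |\theta_i(e)|$ (where $\theta_i$ is the unit current in $\mathrm{H}_i$), write $\Theta$ pointwise as a convex combination $\Theta(e) = \sum_i \lambda_{e, i} \theta_i(e)$ with $\sum_i \lambda_{e, i} = 1$, and apply Jensen's inequality $\Theta(e)^2 \le \sum_i \lambda_{e, i} \theta_i(e)^2$ to conclude $\mathcal{E}(\Theta) \le \sum_i R_{\mathrm{H}_i}(A_i, Z_i)$.
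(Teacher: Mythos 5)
Your primary route (gluing the Proposition \ref{res} decompositions over multi-indices $\vec k$) stalls exactly where you say it does: for an edge $e$ lying in several $\mathrm{H}_i$, the budget constraint $\sum_{\vec k: e\in Q_{\vec k}} r_{e,Q_{\vec k}}^{-1}\leq r_e^{-1}$ is never established, and the ``principled assignment $i(e,\vec k)$'' remains a hope rather than an argument. More seriously, your fallback contains a step that is false as stated. With capacity $w(e)=\sum_i|\theta_i(e)|$, Max-Flow-Min-Cut (justified by your cut-lifting observation, which is correct and is precisely the paper's key step) produces a unit flow $\Theta$ with $|\Theta(e)|\leq \sum_i|\theta_i(e)|$, but such a $\Theta$ need not admit a pointwise convex-combination representation $\Theta(e)=\sum_i\lambda_{e,i}\theta_i(e)$ with $\lambda_{e,i}\geq 0$ and $\sum_i\lambda_{e,i}=1$: the convex hull of $\{\theta_i(e)\}_i$ is only the interval $[\min_i\theta_i(e),\max_i\theta_i(e)]$, whereas the capacity constraint confines $\Theta(e)$ to the much larger interval $[-\sum_i|\theta_i(e)|,\sum_i|\theta_i(e)|]$. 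For instance, if $\theta_1(e)=\theta_2(e)=1$, the flow returned by Max-Flow-Min-Cut may well have $\Theta(e)=-1/2$ or $\Theta(e)=3/2$, neither of which is a convex combination of $1$ and $1$. Without that decomposition Jensen is unavailable, and the sum-capacity alone only gives $\Theta(e)^2\leq\bigl(\sum_i|\theta_i(e)|\bigr)^2\leq n\sum_i\theta_i(e)^2$, i.e.\ the bound with an extra factor of $n$.

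The repair is small and turns your fallback into the paper's proof: take the capacity to be $w(e)=\max_i|\theta_i(e)|$ rather than the sum. Feasibility is unchanged, since by your cut-lifting any $A$-$Z$ cut $\pi$ in $\mathrm{G}$ is an $A_i$-$Z_i$ cut in some $\mathrm{H}_i$, so $\sum_{e\in\pi}w(e)\geq\sum_{e\in\pi}|\theta_i(e)|\geq 1$; and then no pointwise decomposition is needed at all, because $\Theta(e)^2\leq w(e)^2=\max_i\theta_i(e)^2\leq\sum_i\theta_i(e)^2$, whence $\mathcal{E}(\Theta)\leq\sum_i\mathcal{E}(\theta_i)=\sum_i R_{\mathrm{H}_i}(A_i,Z_i)$. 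In short, the max-versus-sum choice of capacity is the missing idea; with it, the multi-index construction and all the conductance bookkeeping can be discarded.
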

	
	\begin{proof}
		Let $\theta_i$ be the unit electric current in the network $H_i$ from $A_i$ to $Z_i$. We extend $\theta_i$ to a flow on $G$ by defining $\theta_i(e) = 0$ for $e\in E(\mathrm G)\setminus E(\mathrm H_i)$. We consider a flow $\theta$ from $A$ to $Z$ with the restriction
		\begin{equation}\label{restric}
			|\theta(e)|\leq \max_{1\leq i\leq n}|\theta_i(e)|:=w(e).
		\end{equation}
		By Lemma \ref{maxflowmincut}, the maximal strength of $\theta$ is equal to the minimal cut (i.e., of the form on the right-hand side of \eqref{maxflowmincuteq}) separating $A$ and $Z$. For a cut set $\pi$ of $\mathrm{G}$ separating $A$ and $Z$, suppose that for each $i$ there exists $P_i$ in $\mathrm{H}_i$ connecting $A_i$ and $Z_i$ such that $\pi$ does not intersect with $P_i$. By our assumption in the lemma statement,  there exists a path $P\subset\cup_{1\leq i\leq n}P_i$ connecting $A$ to $Z$. We see that $P$ does not intersect with $\pi$, which contradicts the fact that $\pi$ is a cut set. Thus, there exists an $i$ such that $\pi$ must intersect with every path in $\mathrm{H}_i$ joining $A_i$ and $Z_i$. That is to say, $\pi$ is the cut set separating $A_i$ and $Z_i$ in $\mathrm{H}_i$. By Lemma \ref{maxflowmincut}, we then get that
		\begin{equation}
			\sum_{e\in \pi} w(e)\geq \sum_{e\in \pi} |\theta_i(e)|\geq 1.
		\end{equation}
		Thus, the minimal cut between $A$ and $Z$ (i.e., the right-hand side of \eqref{maxflowmincuteq} with $w$ given in \eqref{restric}) is at least 1. By Lemma \ref{maxflowmincut}, there exists a unit flow $\theta$ from $A$ to $Z$ with $|\theta(e)|\leq w(e)$. Therefore, we have
		\begin{equation}
			R_{\mathrm{G}}(A,Z)\leq\sum_{e\in E(\mathrm{G})}\theta(e)^2r(e)\leq \sum_{e\in E(\mathrm{G})}w(e)^2r(e)\leq \sum_{e\in E(\mathrm{G})}\sum_{i=1}^{n}\theta_i(e)^2r(e)= \sum_{i=1}^{n}R_{\mathrm{H}_i}(A_i,Z_i),
		\end{equation}
		completing the proof of the lemma.
	\end{proof}
	
	\subsection{Duality of effective resistance and effective conductance}
	In this subsection, we prove the duality between the effective resistance and the effective conductance for a network with $\mathbb Z^2$ (and by a natural scaling, this applies to our $\mathbb Z_n^2$) as its underlying graph, which essentially follows from that in \cite{biskup2020return}. Let $\{\eta_x\}_{x\in \mathbb{R}^2}$ be a field such that $\eta\overset{d}{=}-\eta$, where $X\overset{d}{=}Y$ means that $X$ and $Y$ have the same distribution. For each edge $e\in\mathbb{Z}^2$, recall that $m_e$ denotes the midpoint of $e$. If $S$ is a sub-graph of $\mathbb{Z}^2$, we can define $S_{\eta}$ as the network on $S$, where the edge set $E(S)$ contains all edges with both endpoints in $S$ and each edge $e\in E(S)$ is equipped with a resistance $\exp(\eta_{m_e})$.
	
	Taking advantage of the variational characterization of the effective resistance and the effective conductance, we in what follows deduce the self-duality between the effective resistance and the effective conductance with the observation on the duality between paths and cut sets.
	
	We consider the dual domain $(\mathbb{Z}^2)^*=(1/2,1/2)+\mathbb{Z}^2$, where the (dual) edge set is given by vertex pairs with Euclidean distance 1. More specifically, we say an edge $e\in \mathbb Z^2$ is dual to an edge $e^*\in (\mathbb Z^2)^*$ if $m_e = m_{e^*}$, and we say an edge set $E$ in $\mathbb Z^2$ is dual to an edge set $E^*$ in $(\mathbb Z^2)^*$ if each edge in $E$ is dual to an edge in $E^*$ and vice versa. Write $S=B(m,n)$. For the network $S_{\eta}$, we define the up-down dual network $S^*_{\eta}$ where the vertex set is given by $S^*=[-m-1/2,m+1/2]\times[-n+1/2,n-1/2]\cap(\mathbb{Z}^2)^*$, and the edge set is given by (dual) edges within $S^*$. We equip each edge $e^*\in E(S^*_{\eta})$ with a resistance $r_{e^*}=\exp(\eta_{m_{e^*}})$.
	
	If $S$ is a rectangle in $\mathbb{Z}^2$ with sides parallel to axises, we write $\partial_{\mathrm{left}}S, \partial_{\mathrm{right}}S, \partial_{\mathrm{up}}S,$ and $\partial_{\mathrm{down}}S$ as the vertex set of the left, right, up, and down side of $S$, respectively.
	
	\begin{lem}\label{duallem}
		Suppose that $\eta \overset{d}{=} -\eta$. Then for any rectangle $S$, we have
		\begin{equation}\label{dual1}
			R_{S_{\eta}}(\partial_{\mathrm{left}}S,\partial_{\mathrm{right}}S)\overset{d}{=}R_{S_{\eta}^*}(\partial_{\mathrm{up}}S^*,\partial_{\mathrm{down}}S^*)^{-1}.
		\end{equation}
	\end{lem}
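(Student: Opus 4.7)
The plan is to reduce the distributional identity to a pointwise one. Specifically, I would first establish the deterministic equality
\begin{equation*}
R_{S_\eta}(\partial_{\mathrm{left}}S,\partial_{\mathrm{right}}S) \;=\; R_{S_{-\eta}^*}(\partial_{\mathrm{up}}S^*,\partial_{\mathrm{down}}S^*)^{-1}
\end{equation*}
valid for \emph{every} realization of $\eta$. Once this is in hand, the hypothesis $\eta\overset{d}{=}-\eta$ immediately gives $R_{S_{-\eta}^*}\overset{d}{=}R_{S_\eta^*}$, and taking reciprocals yields the claim \eqref{dual1}.

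To prove the pointwise identity, I would expand both sides using the variational formulas developed earlier in the section. By Corollary \ref{rescharcor} (applied after collapsing $\partial_{\mathrm{left}}S$ and $\partial_{\mathrm{right}}S$ into single vertices), the left-hand side is an infimum over families $\{P_k\}$ of self-avoiding left-right paths in $S$ with auxiliary weights $\{r_{e,P_k}\}$ obeying $\sum_{k:e\in P_k}1/r_{e,P_k}\leq 1/r_e^{(\eta)}=e^{-\eta_{m_e}}$, of the quantity $(\sum_k 1/\sum_{e\in P_k}r_{e,P_k})^{-1}$. On the other hand, the reciprocal of the right-hand side is the effective conductance $C_{S_{-\eta}^*}(\partial_{\mathrm{up}}S^*,\partial_{\mathrm{down}}S^*)$, which by Proposition \ref{conchar} is an infimum over families $\{\pi_k^*\}$ of (minimal) up-down cut sets in $S^*$ with weights $\{c_{e^*,\pi_k^*}\}$ obeying $\sum_{k:e^*\in\pi_k^*}1/c_{e^*,\pi_k^*}\leq 1/c_{e^*}^{(-\eta)}=e^{-\eta_{m_{e^*}}}$, of the quantity $(\sum_k 1/\sum_{e^*\in\pi_k^*}c_{e^*,\pi_k^*})^{-1}$. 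Since $m_e=m_{e^*}$ for dual edges, the two constraint right-hand sides are literally equal.

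The geometric bridge between these two formulas is planar duality for rectangles: the offset dimensions chosen for $S^*$ are precisely what is needed so that self-avoiding paths in $S$ from $\partial_{\mathrm{left}}S$ to $\partial_{\mathrm{right}}S$ are in canonical one-to-one correspondence with minimal cut sets in $S^*$ separating $\partial_{\mathrm{up}}S^*$ from $\partial_{\mathrm{down}}S^*$, under the edge-duality map $e\leftrightarrow e^*$. I can restrict both infima to this common combinatorial index set without loss of generality, since any non-minimal cut set contains a minimal one that does at least as well in the optimization, and any non-self-avoiding path admits a self-avoiding sub-path that does at least as well. Under this bijection the change of variables $r_{e,P_k}:=c_{e^*,\pi_k^*}$ makes both the constraints and the objective values of the two optimization problems coincide term-by-term, which establishes the pointwise identity.

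The main subtle point I anticipate is the boundary bookkeeping in the planar duality step: one must check that the specific offsets $[-m-1/2,m+1/2]\times[-n+1/2,n-1/2]$ ensure that every dual of an edge used in a left-right path lies inside $S^*$ (so that no edge is lost) and that every minimal up-down cut of $S^*$ really arises as the dual of such a path (so that nothing is spurious or double-counted). This is an elementary but picture-dependent check; once it is in place, the rest of the argument reduces to the direct algebraic matching of the two variational formulas described above.
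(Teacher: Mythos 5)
Your proposal is correct and follows essentially the same route as the paper: establish the pointwise identity $R_{S_\eta}(\partial_{\mathrm{left}}S,\partial_{\mathrm{right}}S)=C_{S^*_{-\eta}}(\partial_{\mathrm{up}}S^*,\partial_{\mathrm{down}}S^*)$ by matching the variational formulas of Corollary \ref{rescharcor} and Proposition \ref{conchar} through planar path/cut-set duality (using $m_e=m_{e^*}$), and then invoke $\eta\overset{d}{=}-\eta$. The only cosmetic difference is that the paper phrases the duality as ``an edge set contains a left-right crossing if and only if its dual is an up-down cut set,'' which sidesteps the exact bijection between self-avoiding paths and minimal cuts whose boundary bookkeeping you flag.
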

	
	\begin{proof}
		Notice that in $S_{\eta}$, an edge set contains a left-right crossing path if and only if its dual set is a cut set separating $\partial_{\mathrm{up}}S^*$ and $\partial_{\mathrm{down}}S^*$ in $S^*_{-\eta}$. Since the edge resistance in $S_\eta$ is the same as the conductance for  its dual edge in $S^*_{-\eta}$, by Corollary \ref{rescharcor} and Proposition \ref{conchar} we get that
		\begin{equation}\label{dualeq}
			R_{S_{\eta}}(\partial_{\mathrm{left}}S,\partial_{\mathrm{right}}S)=C_{S^*_{-\eta}}(\partial_{\mathrm{up}}S^*,\partial_{\mathrm{down}}S^*).
		\end{equation}
		Since we have $\eta\overset{d}{=}-\eta$, it holds that
		\begin{equation}
			C_{S^*_{-\eta}}(\partial_{\mathrm{up}}S^*,\partial_{\mathrm{down}}S^*)=R_{S^*_{-\eta}}(\partial_{\mathrm{up}}S^*,\partial_{\mathrm{down}}S^*)^{-1}\overset{d}{=}R_{S^*_{\eta}}(\partial_{\mathrm{up}}S^*,\partial_{\mathrm{down}}S^*)^{-1},
		\end{equation}
		completing the proof of the lemma by combining \eqref{dualeq}.
	\end{proof}

	\subsection{Local bounds for effective resistances}
	When applying the Efron-Stein inequality, it is important to bound the change of the resistance after resampling the field in a local region. To this end, we prove the next lemma, which will be used in the proof of Proposition \ref{mainprop} later (more precisely, in Lemmas \ref{energy} and \ref{decaylem}). For notation convenience, in this subsection we consider some fixed source and sink in the network $\mathrm{G}$, and denote by $R$ and $\theta$ the corresponding effective resistance and the unit current, respectively. For a sub-graph $\mathrm{D}$ of $\mathrm{G}$, we let $R^{\setminus D}$ be the resistance after setting the resistance of each edge in D to be infinite. Recall \eqref{setcur} for the definition of electric current through a set.
	
	\begin{lem}\label{resdif}
		Let $\mathrm{D},\mathrm{H}$ be disjoint sub-graphs of $\mathrm{G}$. Let $\mathcal P$ be the collection of all paths from the source to the sink and let $\mathcal P^\prime$ be a collection of edge sets in $\mathrm H$. Suppose that for every $P\in\mathcal P$ and $P^\prime\in \mathcal P^\prime$, whenever $P\cap E(\mathrm{D})\neq \emptyset$, there exists a subset $Q_{P, P^\prime} \subset P\cup P^\prime$ such that $Q_{P, P^\prime}\in\mathcal P$ and $Q_{P, P^\prime}\cap E(\mathrm{D})=\emptyset$. Then we have
		\begin{equation}\label{resdifeq}
			R^{\setminus \mathrm{D}}-R\leq\mathcal{E}(\theta,\mathrm{H})+2\theta(\mathrm{D})^2R(\mathcal P^\prime)-\mathcal{E}(\theta,\mathrm{D}).
		\end{equation}
	\end{lem}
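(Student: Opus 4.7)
The plan is to upper-bound $R^{\setminus \mathrm{D}}$ by exhibiting an explicit unit flow $\theta'$ from source to sink whose support is disjoint from $E(\mathrm{D})$, since then $R^{\setminus \mathrm{D}} \le \mathcal{E}(\theta')$. I will build $\theta'$ by decomposing the unit current $\theta$ into weighted source-to-sink paths, keeping intact those paths that avoid $\mathrm{D}$, and rerouting those that meet $\mathrm{D}$ via detours through $\mathrm{H}$ supplied by the lemma's hypothesis. The key identity behind the rerouting cost is that the minimum of $\sum_{e} r_e \bigl(\sum_k \beta_k \mathds{1}\{e \in P_k\}\bigr)^2$ over path decompositions $\{P_k\} \subset \mathcal{P}'$ with $\sum_k \beta_k = \mu$ equals $\mu^2 R(\mathcal{P}')$, which follows from Corollary \ref{rescharcor} after optimizing out the auxiliary resistances $r_{e, P_k}$, and the infimum is attained.

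To construct $\theta'$, I first decompose $\theta$ as $\sum_k \alpha_k \theta_{P_k}$ with $\{P_k\} \subset \mathcal{P}$, $\alpha_k > 0$, $\sum_k \alpha_k = 1$, and a splitting $B = \{k : P_k \cap E(\mathrm{D}) = \emptyset\}$, $A = \{k : P_k \cap E(\mathrm{D}) \neq \emptyset\}$ satisfying $\sum_{k \in B} \alpha_k = 1 - \theta(\mathrm{D})$. This is achieved by extracting a maximal $\mathrm{D}$-avoiding sub-flow of $\theta$, whose strength equals $1 - \theta(\mathrm{D})$ by Lemma \ref{maxflowmincut} applied to the network obtained from $\mathrm{G}$ by deleting $E(\mathrm{D})$ with capacities $|\theta(e)|$, and then continuing the inductive path decomposition from the proof of Proposition \ref{res} on the residual flow, which by maximality carries no further $\mathrm{D}$-avoiding current. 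Next, using the attainment in Corollary \ref{rescharcor}, I choose paths $\{P'_{k,j}\}_{k \in A,\, j} \subset \mathcal{P}'$ and subweights $\alpha_{k,j} > 0$ with $\sum_j \alpha_{k,j} = \alpha_k$ such that
\[
\sum_{e \in E(\mathrm{H})} r_e \Bigl(\sum_{k \in A,\, j} \alpha_{k,j}\, \mathds{1}\{e \in P'_{k,j}\}\Bigr)^2 \;=\; \theta(\mathrm{D})^2\, R(\mathcal{P}').
\]
The hypothesis of the lemma then supplies, for each such pair $(k,j)$, a path $Q_{k,j} := Q_{P_k, P'_{k,j}} \in \mathcal{P}$ with $Q_{k,j} \subset P_k \cup P'_{k,j}$ and $Q_{k,j} \cap E(\mathrm{D}) = \emptyset$, and I set $\theta' = \sum_{k \in B} \alpha_k \theta_{P_k} + \sum_{k \in A, j} \alpha_{k,j} \theta_{Q_{k,j}}$, which by superposition is a unit flow from source to sink supported in $E(\mathrm{G}) \setminus E(\mathrm{D})$.

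The remainder is an energy bookkeeping, organized by the partition $E(\mathrm{G}) = E(\mathrm{D}) \sqcup E(\mathrm{H}) \sqcup \bigl(E(\mathrm{G}) \setminus E(\mathrm{D}) \setminus E(\mathrm{H})\bigr)$. The flow $\theta'$ vanishes on $E(\mathrm{D})$. For $e$ in the third piece, the containments $Q_{k,j} \subset P_k \cup P'_{k,j}$ and $P'_{k,j} \subset E(\mathrm{H})$ force $\mathds{1}\{e \in Q_{k,j}\} \leq \mathds{1}\{e \in P_k\}$, so $|\theta'(e)| \leq \sum_k \alpha_k \mathds{1}\{e \in P_k\} = |\theta(e)|$ and hence $\mathcal{E}(\theta', \mathrm{G} \setminus \mathrm{D} \setminus \mathrm{H}) \leq \mathcal{E}(\theta, \mathrm{G} \setminus \mathrm{D} \setminus \mathrm{H})$. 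For $e \in E(\mathrm{H})$ the same containment yields $|\theta'(e)| \leq |\theta(e)| + \sum_{k \in A,\, j} \alpha_{k,j} \mathds{1}\{e \in P'_{k,j}\}$, and applying $(a+b)^2 \leq 2a^2 + 2b^2$ together with the display above gives $\mathcal{E}(\theta', \mathrm{H}) \leq 2\,\mathcal{E}(\theta, \mathrm{H}) + 2\,\theta(\mathrm{D})^2 R(\mathcal{P}')$. Adding the two nonzero contributions and subtracting $R = \mathcal{E}(\theta, \mathrm{G} \setminus \mathrm{D} \setminus \mathrm{H}) + \mathcal{E}(\theta, \mathrm{H}) + \mathcal{E}(\theta, \mathrm{D})$ produces \eqref{resdifeq}.

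The delicate point is the first step, where I need $\sum_{k \in A} \alpha_k$ to equal $\theta(\mathrm{D})$ exactly; a generic decomposition from Proposition \ref{res} could yield any value in $[\theta(\mathrm{D}), 1]$, and the final estimate would be correspondingly weaker. This is what forces one to combine the greedy/inductive character of the path decomposition with the max-flow--min-cut identity in Lemma \ref{maxflowmincut}: extract first a maximal $\mathrm{D}$-avoiding sub-flow of strength $1 - \theta(\mathrm{D})$ and decompose it into the paths indexed by $B$, then finish on the residual flow, whose maximal $\mathrm{D}$-avoiding sub-flow has strength $0$ so that every remaining path must cross $\mathrm{D}$. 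Once this precise accounting is arranged, the optimal rerouting via Corollary \ref{rescharcor} and the $(a+b)^2$ energy estimate are routine.
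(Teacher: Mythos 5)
Your proof is correct in substance and rests on the same rerouting mechanism as the paper's: decompose the unit current into source-to-sink paths, keep the ones avoiding $\mathrm{D}$, reroute the $\mathrm{D}$-touching mass (of total strength $\theta(\mathrm{D})$) through a near-optimal system in $\mathcal P^\prime$ using the hypothesis, and pay $2\mathcal{E}(\theta,\mathrm{H})+2\theta(\mathrm{D})^2R(\mathcal P^\prime)$. The difference is one of formalism: the paper stays inside the path/auxiliary-resistance framework of Proposition \ref{res} and Corollary \ref{rescharcor}, defining modified resistances $r_{e,Q_{P,P^\prime}}$ as in \eqref{resdif1}--\eqref{resdif2} and verifying $\sum 1/r_{e,Q}\leq 1/r_e$ edge by edge, which is where its factors of $2$ enter; you instead build the rerouted unit flow $\theta^\prime$ explicitly and bound $\mathcal E(\theta^\prime)$ edgewise via $(a+b)^2\leq 2a^2+2b^2$, arriving at exactly the bound \eqref{resdif4} and hence \eqref{resdifeq}. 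A genuine merit of your write-up is that you actually prove the weight accounting $\sum_{k\in A}\alpha_k=\theta(\mathrm{D})$ by combining Lemma \ref{maxflowmincut} with a residual path decomposition, whereas the paper only asserts the corresponding inequality $\sum_{P\cap E(\mathrm{D})\neq\emptyset}\alpha_P\leq\theta(\mathrm{D})$ for its chosen optimal decomposition; as you correctly point out, a generic exact decomposition of $\theta$ only satisfies the reverse inequality, so a specific choice of decomposition is genuinely needed here.

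One step you should make explicit: when extracting the maximal $\mathrm{D}$-avoiding subflow $\tau$ with capacities $|\theta(e)|$, you need $\tau$ to be \emph{aligned} with $\theta$ (each edge used only in the direction of $\theta$), e.g.\ by running the max-flow in the network oriented along $\theta$ with zero reverse capacity. For a general maximizer allowed by the constraint $|\tau(e)|\leq|\theta(e)|$, the residual $\theta-\tau$ need not be dominated by $|\theta|$ edgewise and may contain circulations, in which case neither the claim that every residual path meets $\mathrm{D}$ nor your edgewise bound $|\theta^\prime(e)|\leq\sum_k\alpha_k\cdot 1_{\{e\in P_k\}}\leq|\theta(e)|$ off $E(\mathrm{H})\cup E(\mathrm{D})$ follows. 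With the aligned choice everything goes through: since $\theta$ is a potential flow there are no aligned cycles, so the residual decomposes into aligned source-to-sink paths dominated by $|\theta|$, each meeting $E(\mathrm{D})$ by maximality. What this implicitly uses is that the maximum in \eqref{setcur} is attained by a flow aligned with $\theta$; this same point is silently used by the paper (both in the inequality $\sum_{P\cap E(\mathrm{D})\neq\emptyset}\alpha_P\leq\theta(\mathrm{D})$ and in \eqref{resdif3}), so it is shared background rather than a defect specific to your argument, but a careful version of either proof should record it.
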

	\begin{proof}
		By Corollary \ref{rescharcor} we can choose $\left\{r_{e,P}:e\in E(\mathrm{G}),P\in\mathcal{P}\right\}$ satisfying that $\sum_{P\in\mathcal{P}}1/r_{e,P}\leq1/r_e$ for each $e\in E(\mathrm{G})$ such that $R(\mathcal{P})= \left(\sum_{P\in \mathcal{P}}(\sum_{e\in P}r_{e,P})^{-1}\right)^{-1}$. In addition,  an analogue of the prime version holds too, i.e., we can define $\{r^\prime_{e, P^\prime}: e\in E(H), P^\prime\in \mathcal P^\prime\}$ such that $\sum_{P^\prime\in\mathcal{P}^\prime}1/r^\prime_{e,P^\prime}\leq1/r_e$ for each $e\in E(\mathrm{G})$ and $R(\mathcal{P}^\prime)= \left(\sum_{P^\prime\in \mathcal{P}^\prime}(\sum_{e\in P^\prime}r^\prime_{e,P^\prime})^{-1}\right)^{-1}$.
		
		For $P\in \mathcal P$ and $P^\prime\in \mathcal P^\prime$, define $\alpha_{P}$ and $\alpha^\prime_{P^\prime}$ respectively by
		\begin{equation}
			\alpha_{P}=\frac{\left(\sum_{e\in P}r_{e,P}\right)^{-1}}{\sum_{\tilde P\in \mathcal{P}}\left(\sum_{e\in \tilde P}r_{e,\tilde P}\right)^{-1}} \And \alpha^\prime_{P^\prime}=\frac{\left(\sum_{e\in P^\prime}r^\prime_{e,P^\prime}\right)^{-1}}{\sum_{\tilde P^\prime\in \mathcal{P}^\prime}\left(\sum_{e\in \tilde P^\prime}r^\prime_{e,\tilde P^\prime}\right)^{-1}}.
		\end{equation}
		If we view an edge $e$ as a parallel of edges and assign a flow $\alpha_{P}$ to each path $P\in \mathcal{P}$, then the energy of this flow on this equivalent network is exactly $R$. Since the electric current is unique, it holds that $|\theta(e)|=\sum_{P:e\in P}\alpha_{P}$ and
		\begin{equation}\label{resdif3}
			|\theta(e)|r_e=\alpha_{P}r_{e,P}, \mbox{ for all } e\in P \mbox{ and } P\in \mathcal P.
		\end{equation}
		We wish to define $r_{e, Q_{P, P^\prime}}$ for every $P\in\mathcal P$ and $P^\prime\in\mathcal P^\prime$. To this end, we set $r_{e, Q_{P, P^\prime}} = \infty$ for $e\not\in Q_{P, P^\prime}$, and in what follows we consider $e\in Q_{P, P^\prime}$. If $P\cap E(D)\neq\emptyset$, define
		\begin{equation}\label{resdif1}
			r_{e,Q_{P,P^\prime}}=\left\{
			\begin{aligned}
				&\frac{r_{e,P}}{\alpha^\prime_{P^\prime}}, &e\in P\setminus (E(\mathrm{H})\cup E(\mathrm{D})),\\
				&2\frac{r_{e,P}}{\alpha^\prime_{P^\prime}}, &e\in P\cap E(\mathrm{H}),\\
				&2\frac{\theta(\mathrm{D})r_{e,P^\prime}}{\alpha_{P}}, &e\in P^\prime\setminus P.\\
			\end{aligned}
			\right.
		\end{equation}
		If $P\cap E(\mathrm{D})=\emptyset$, we set $Q_{P,P^\prime}=P$ and
		\begin{equation}\label{resdif2}
			r_{e,Q_{P,P^\prime}}=\left\{
			\begin{aligned}
				&\frac{r_{e,P}}{\alpha^\prime_{P^\prime}}, &e\in P\setminus (E(\mathrm{H})\cup E(\mathrm{D})),\\
				&2\frac{r_{e,P}}{\alpha^\prime_{P^\prime}}, &e\in P\cap E(\mathrm{H}).\\
			\end{aligned}
			\right.
		\end{equation}
		For $e\in E(\mathrm{G})\setminus E(\mathrm{D})$, we see that the following hold: if $e\in E(\mathrm{G})\setminus E(\mathrm{H})$,
		\begin{equation}
			\sum_{P\in\mathcal P, P^\prime\in \mathcal P^\prime}\frac{1}{r_{e,Q_{P,P^\prime}}}\leq\sum_{P\in\mathcal P, P^\prime\in \mathcal P^\prime}\frac{\alpha^\prime_{P^\prime}}{r_{e,P}}\leq\sum_{P\in\mathcal P}\frac{1}{r_{e,P}}\leq \frac{1}{r_e};
		\end{equation}
		if $e\in E(\mathrm{H})\setminus E(\mathrm{D})$,
		\begin{equation}
			\begin{aligned}
				\sum_{P\in\mathcal P, P^\prime\in \mathcal P^\prime}\frac{1}{r_{e,Q_{P,P^\prime}}}
				&\leq 
				\sum_{P^\prime\in \mathcal P^\prime, P\cap E(\mathrm{D})=\emptyset }\frac{\alpha^\prime_{P^\prime}}{2r_{e,P}}+\sum_{P^\prime\in\mathcal P^\prime,P\cap E(\mathrm{D})\neq\emptyset}\left(\frac{\alpha^\prime_{P^\prime}}{2r_{e,P}}+\frac{\alpha_{P}}{2\theta(\mathrm{D})r_{e,P^\prime}}\right)\\
				&\leq\sum_{P\in \mathcal P}\frac{1}{2r_{e,P}}+\sum_{P^\prime\in\mathcal P^\prime}\frac{1}{2r_{e,P^\prime}}\leq \frac{1}{r_e},
			\end{aligned}
		\end{equation}
		where we used the fact that $\sum_{P^\prime\in \mathcal P^\prime}\alpha^\prime_{P^\prime}=1$ and $\sum_{P\cap E(\mathrm{D})\neq \emptyset} \alpha_{P}\leq \theta(\mathrm{D})$ in the second inequality. Assigning a flow with strength $\alpha_{P}\alpha^\prime_{P^\prime}$ on each $Q_{P,P^\prime}$, we obtain that (note that the combination of these flows is a unit flow in $\mathrm G\setminus \mathrm D$)
		\begin{equation}\label{resdif4}
			\begin{aligned}
				R^{\setminus \mathrm{D}}
				&\leq
				\sum_{P\in\mathcal P, P^\prime\in \mathcal P^\prime}\alpha_{P}^2(\alpha^\prime_{P^\prime})^2\sum_{e\in Q_{P,P^\prime}}r_{e,Q_{P,P^\prime}}\\
				&\leq
				\sum_{P\in\mathcal P, P^\prime\in \mathcal P^\prime}\alpha_{P}^2(\alpha^\prime_{P^\prime})^2\left(\sum_{e\in P\setminus E(\mathrm{H})\cup E(\mathrm{D})}\frac{r_{e,P}}{\alpha^\prime_{P^\prime}}+\sum_{e\in P\cap E(\mathrm{H})}\frac{2r_{e,P}}{\alpha^\prime_{P^\prime}}+\sum_{e\in P^\prime}\frac{2\theta(\mathrm{D})r_{e,P^\prime}}{\alpha_{P}}1_{P\cap E(\mathrm{D})\neq\emptyset}\right)\\
				&=
				\mathcal{E}(\theta,G\setminus (\mathrm{H}\cup \mathrm{D}) )+2\mathcal{E}(\theta,\mathrm{H})+2\theta(\mathrm{D})^2R_{\mathrm{G}}(\mathcal P^\prime),
			\end{aligned}
		\end{equation}
		where we used \eqref{resdif1} and \eqref{resdif2} in the second inequality, and in the equality we used the fact from \eqref{resdif3} that for any sub-graph $\mathrm{F}$, 
		$$\sum_{e\in E(\mathrm{F})}\sum_{P:e\in P}\alpha_{P}^2r_{e,P}=\sum_{e\in E(\mathrm{F})}\theta(e)r_e\sum_{P:e\in P}\alpha_{P}=\sum_{e\in E(\mathrm{F})}\theta(e)^2r_e=\mathcal{E}(\theta, \mathrm{F}).$$ 
		Combining \eqref{resdif4} with the fact that
		\begin{equation}
			R=\mathcal{E}(\theta,\mathrm{G}\setminus (\mathrm{H}\cup \mathrm{D}))+\mathcal{E}(\theta,\mathrm{H})+\mathcal{E}(\theta,\mathrm{D}),
		\end{equation}
		we complete the proof of the lemma. 
	\end{proof}
	\section{Various approximations}\label{sec3}
	In this section, we review a few properties of $\phi_\delta$, most of which are taken from \cite{ding2020tightness}. We start with some basic properties of the field in Section \ref{subsec31}, and in Section \ref{subsec32} we introduce an approximation $\psi_{\delta}$ which satisfies the following properties.
	\begin{enumerate}
		\item The $L^{\infty}$-norm between $\psi_{0,n}$ and $\phi_{0,n}$ is sub-gaussian;
		\item $\psi_{\delta}$ has finite-range dependence;
		\item $\psi_{\delta}$ is invariant under Euclidean symmetries.
	\end{enumerate}
	In Section \ref{secmesh}, we compare random resistances (generated by these two fields) with different mesh sizes; in Section \ref{subsec34}, we compare different quantiles for the resistances. 
	\subsection{Basic properties of $\phi_{\delta}$}\label{subsec31}
	For integers $n>m>0$, we define
	\begin{equation}
		\phi_{m,n}=\sqrt{\pi}\int_{2^{-2n}}^{2^{-2m}}\int_{\mathbb{R}^2}p_{\frac{t}{2}}(x,y)W(dy,dt).
	\end{equation}
	Straightforward computations show that
	\begin{equation}
		\mathrm{Var}(\phi_{\delta}(x))=-\log \delta \And \mathrm{Var}(\phi_{m,n}(x))=(n-m)\log2.
	\end{equation}
	For $x\neq y$, it holds that
	\begin{equation}\label{cov1}
		\mathbb{E}(\phi_{\delta}(x)\phi_{\delta}(y))
		=\pi\int_{\delta^2}^{1}p_{t}(x-y)dt
		=\int_{|x-y|^2/2}^{|x-y|^2/2\delta^2}\frac{1}{2 s}e^{-s}ds \leq (-\log|x-y|+1)\vee 1.
	\end{equation}
	By (the equality in) \eqref{cov1} the scaling property of $\phi_{\delta}$ can be deduced:
	\begin{equation}\label{scaling}
		\mathbb{E}(\phi_{ra,rb}(rx)\phi_{ra,rb}(ry))=\mathbb{E}(\phi_{a,b}(x)\phi_{a,b}(y)),
	\end{equation}
	implying that $\phi_{ra,rb}(r\cdot)\overset{d}{=}\phi_{a,b}(\cdot)$. The following bounds on the maximum and the oscillation of $\phi_n$ are taken from \cite[Section~2.1]{ding2020tightness}. Here we write $\|\cdot \|_{B(1)}$ for the $L^{\infty}$-norm in $B(1)$.
	
	\begin{prop}\label{max}
		\textup{(\hspace{-0.15mm}\cite[Proposition~2]{ding2020tightness})} There exists an absolute constant C such that for any $n>0$, $x>0$, we have
		\begin{equation}
			\mathbb{P}(\|\phi_n\|_{B(1)}\geq (n+C\sqrt{n})x)\leq C4^ne^{-\frac{nx^2}{\log4}}.
		\end{equation}
	\end{prop}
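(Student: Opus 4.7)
The plan is to combine a union bound over a $2^{-n}$-lattice with a local oscillation estimate, which is the standard route for supremum bounds on smooth Gaussian fields. Let $\mathcal{N}_n = (2^{-n}\mathbb{Z}^2)\cap B(1)$, so $|\mathcal{N}_n| \leq C_0 \cdot 4^n$. I would decompose
\begin{equation*}
\|\phi_n\|_{B(1)} \;\leq\; \max_{z\in \mathcal{N}_n} |\phi_n(z)| \;+\; \max_{z\in \mathcal{N}_n}\mathrm{osc}_z, \qquad \mathrm{osc}_z := \sup_{|y-z|\leq 2^{-n}}|\phi_n(y)-\phi_n(z)|,
\end{equation*}
and then allocate the two pieces of the threshold $(n+C\sqrt{n})x$ to the two maxima: the $nx$ piece to the first maximum and the $C\sqrt{n}x$ piece to the oscillation.

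For the first maximum, since $\phi_n(z)$ is a centered Gaussian with variance $n\log 2$, the standard Gaussian tail gives
\begin{equation*}
\mathbb{P}(|\phi_n(z)|\geq nx) \;\leq\; 2\exp\!\Big(-\tfrac{(nx)^2}{2n\log 2}\Big) \;=\; 2\exp\!\Big(-\tfrac{nx^2}{\log 4}\Big),
\end{equation*}
and a union bound produces the desired $C\cdot 4^n e^{-nx^2/\log 4}$ factor exactly. This is the source of the constants that appear in the statement.

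For the oscillation, I would use the explicit white-noise representation in \eqref{cov1}: for $|y-z|\leq 2^{-n}$, write $\phi_n(y)-\phi_n(z) = \sqrt{\pi}\int_{2^{-2n}}^1\!\int (p_{t/2}(y-w)-p_{t/2}(z-w))\,W(dw,dt)$ and estimate, using $\|\nabla p_{t/2}\|_\infty\lesssim t^{-3/2}$, that $\mathrm{Var}(\phi_n(y)-\phi_n(z)) \leq C|y-z|^2 4^n \leq C$. In particular, on the ball of radius $2^{-n}$ around each $z$, the centered Gaussian process $y\mapsto \phi_n(y)-\phi_n(z)$ has uniformly bounded variance, and a chaining argument gives $\mathbb{E}[\mathrm{osc}_z]\leq C_1$. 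The Borell--TIS inequality then yields $\mathbb{P}(\mathrm{osc}_z \geq C_1 + u)\leq 2\exp(-cu^2)$ for some absolute $c>0$, and a union bound over $|\mathcal{N}_n|\leq C_0\cdot 4^n$ cells gives
\begin{equation*}
\mathbb{P}\!\Big(\max_{z\in\mathcal{N}_n}\mathrm{osc}_z \geq C_1 + u\Big) \;\leq\; 2C_0\cdot 4^n \exp(-cu^2).
\end{equation*}
Taking $u = C\sqrt{n}\,x - C_1$ with $C$ large enough that $cC^2 > 1/\log 4$ makes this term negligible compared to the target bound, and combining the two estimates via the decomposition above completes the plan.

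The main obstacle is the oscillation step: one must verify the quantitative increment variance bound $\mathrm{Var}(\phi_n(y)-\phi_n(z))\lesssim |y-z|^2 4^n$ and the matching Gaussian tail for $\mathrm{osc}_z$, so that the union bound over $4^n$ cells is absorbed by the Gaussian decay $e^{-cnx^2}$. Everything else is standard: once the variance structure of $\phi_n$ at scales $\leq 2^{-n}$ is controlled via the heat kernel, the proof reduces to plugging constants into Gaussian tail inequalities.
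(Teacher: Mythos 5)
Your argument is correct, and since the paper simply quotes this bound from \cite[Proposition~2]{ding2020tightness} without reproducing a proof, there is nothing to compare against beyond noting that your route (discretize $B(1)$ at scale $2^{-n}$, Gaussian tail with variance $n\log 2$ for the lattice maximum, and increment-variance control plus chaining/Borell--TIS for the within-cell oscillation) is exactly the standard argument and uses the same ingredients the paper itself records in Propositions \ref{distance}, \ref{gradient} and Corollary \ref{osc}. The only point to tidy up is the choice $u=C\sqrt{n}\,x-C_1$, which is negative when $x\lesssim 1/\sqrt{n}$; in that regime $nx^2=O(1)$, so the stated bound is trivial once the absolute constant $C$ is taken large enough, and for $x\gtrsim 1/\sqrt{n}$ one has $u\geq C\sqrt{n}\,x/2$ and your computation goes through.
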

	
	\begin{prop}\label{gradient}
		\textup{(\hspace{-0.15mm}\cite[Proposition~3]{ding2020tightness})} There exist absolute constants $C,c>0$ such that for any $n>0$, $x>0$, we have
		\begin{equation}
			\mathbb{P}(2^{-n}\|\nabla\phi_n\|_{B(1)}\geq x)
			\leq C4^ne^{-cx^2}.
		\end{equation}
	\end{prop}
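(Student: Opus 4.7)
The plan is to exploit the smoothness of $\phi_n$ coming from the fact that the white-noise integral is truncated at $t=2^{-2n}$, and to combine a pointwise Gaussian tail bound with a discretization argument. Write $\xi_n = 2^{-n}\nabla\phi_n$, a centered Gaussian vector field on $\mathbb{R}^2$. Differentiating the defining white-noise integral under the integral sign and using that $p_{t/2}$ is smooth for $t\geq 2^{-2n}$, one computes
\begin{equation}
\mathrm{Var}(\partial_i\phi_n(x))=\pi\int_{2^{-2n}}^{1}\!\int_{\mathbb{R}^2}\bigl(\partial_i p_{t/2}(x-y)\bigr)^{2}dy\,dt=\pi\int_{2^{-2n}}^{1}\frac{c_0}{t^{2}}\,dt=O(4^{n}),
\end{equation}
with an absolute constant $c_0$. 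Hence $\xi_n(x)$ is a bounded-variance centered Gaussian at every point, and the pointwise Gaussian tail gives $\mathbb{P}(|\xi_n(x)|\geq x)\leq C e^{-cx^{2}}$.

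Next I would discretize $B(1)$ by the lattice $\mathcal{G}_n=B(1)\cap 2^{-n}\mathbb{Z}^2$, of cardinality $O(4^{n})$. A union bound over $\mathcal{G}_n$ and the two components of $\nabla\phi_n$ yields
\begin{equation}
\mathbb{P}\Bigl(\max_{z\in\mathcal{G}_n}|\xi_n(z)|\geq x/2\Bigr)\leq C\cdot 4^{n}\cdot e^{-cx^{2}},
\end{equation}
which already produces the prefactor $4^n$ and the Gaussian tail. It remains to absorb the oscillation of $\xi_n$ inside each grid cell of side $2^{-n}$ into the remaining $x/2$. By the fundamental theorem of calculus the oscillation of $\partial_i\phi_n$ on a ball of radius $2^{-n}$ is bounded by $2^{-n}\|\nabla^2\phi_n\|$ on a slightly enlarged ball. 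Repeating the white-noise variance computation for a second derivative gives $\mathrm{Var}(\partial_i\partial_j\phi_n(x))=O(16^{n})$, so the same discretization–plus–union-bound argument (at the finer scale $2^{-2n}$, say) produces $\mathbb{P}(4^{-n}\|\nabla^2\phi_n\|_{B(1)}\geq M)\leq C\cdot 16^{n}e^{-cM^2}$; taking $M=x$ shows that, outside of the bad event already accounted for, $2^{-n}\cdot\|\nabla^2\phi_n\|_{B(1)}\cdot 2^{-n}\leq x/2$, which is exactly the oscillation control we need.

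The main technical obstacle is ensuring that this cascading discretization does not regress indefinitely: each derivative we control requires a bound on the next higher derivative. The clean way is to iterate only once, using the second derivative to control the oscillation between grid points in $\mathcal{G}_n$, and to control the second derivative itself by a single extra discretization where the oscillation within each finer cell is dominated by a crude uniform bound (for example, using Proposition~\ref{max} applied to $\nabla^{2}\phi_n$ directly, whose variance computation is identical in spirit). Once both levels are in place, one combines the two Gaussian tail bounds via a union bound, and the resulting probability is of the required form $C\cdot 4^{n}e^{-cx^{2}}$, possibly after adjusting the absolute constants $C,c$.
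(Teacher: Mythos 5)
This proposition is not proved in the paper; it is quoted from \cite[Proposition~3]{ding2020tightness}, so the comparison is with the standard argument for such bounds (the same Fernique plus Borell--TIS scheme the paper uses in Lemmas \ref{coarse}, \ref{n2l}, \ref{n3l}). Your first two steps are fine: the computation $\mathrm{Var}(\partial_i\phi_n(x))=\pi\int_{2^{-2n}}^1\|\partial_i p_{t/2}\|_{L^2}^2\,dt\asymp 4^n$ is correct, and the union bound over the $O(4^n)$ grid points of $B(1)\cap 2^{-n}\mathbb Z^2$ legitimately produces the prefactor $4^n$ with a Gaussian tail for the grid maximum.

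The genuine gap is the last step, where you must control the oscillation of $\nabla\phi_n$ inside a cell, i.e.\ a supremum of a Gaussian field over a continuum. Your proposed termination of the regress does not work: there is no ``crude uniform bound'' on the within-cell oscillation of $\nabla^2\phi_n$ (the field is a.s.\ unbounded over the continuum without further argument), and ``applying Proposition~\ref{max} to $\nabla^2\phi_n$'' is not a legitimate move --- that proposition is a statement about the specific field $\phi_n$ (whose pointwise variance is $n\log 2$, not $16^n$), it is itself only cited here, and proving its analogue for the Hessian field would require exactly the continuum-supremum control you are trying to establish, so the argument is circular at that point. The standard repair, and essentially the proof in the cited reference, is a chaining step: estimate the increment variance
\begin{equation}
\mathrm{Var}\bigl(\partial_i\phi_n(x)-\partial_i\phi_n(y)\bigr)\preceq \int_{2^{-2n}}^{1}\min\bigl(t^{-2},\,|x-y|^2t^{-3}\bigr)\,dt\preceq 4^n\min\bigl(1,(2^n|x-y|)^2\bigr),
\end{equation}
so that on each cell of side $2^{-n}$ Fernique's (or Dudley's) inequality gives $\mathbb{E}\bigl[\sup_{\mathrm{cell}}|\nabla\phi_n|\bigr]\preceq 2^n$, then Borell--TIS yields $\mathbb{P}(2^{-n}\sup_{\mathrm{cell}}|\nabla\phi_n|\geq x)\leq Ce^{-cx^2}$, and the union bound over the $O(4^n)$ cells finishes the proof. (A minor additional point: even granting your Hessian bound, the resulting prefactor $16^n$ only reduces to $4^n$ after observing that the claimed inequality is trivial unless $x^2\gtrsim n$; this is fixable, but the missing supremum control above is not.)
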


	The following corollary can be obtained directly from Proposition \ref{gradient}. We define the oscillation of $f$ in a domain $B$ by $$\mathrm{osc}(f,\epsilon,B)=\sup_{|x-y|\leq \epsilon, x,y\in B}|f(x)-f(y)|.$$
	
	\begin{cor}\label{osc}
		For $\zeta_n\geq \sqrt{n}$, there exist absolute constants $C,c>0$ such that for all $n>0$, $x>0$ we have
		\begin{equation}
			\mathbb{P}(\mathrm{osc}(\phi_n,2^{-n+1}\zeta_n^{-1},B(1))\geq x)
			\leq Ce^{-cx^2}.
		\end{equation}
	\end{cor}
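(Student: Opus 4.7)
The plan is to deduce the corollary directly from Proposition \ref{gradient} via the mean value theorem, using the hypothesis $\zeta_n \geq \sqrt n$ to absorb the polynomial prefactor $4^n$ into the Gaussian tail.

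First I would observe that since $\phi_n$ is smooth, for any two points $x,y \in B(1)$ with $|x-y|\le \epsilon$ one has
\[
|\phi_n(x)-\phi_n(y)| \le \epsilon\, \|\nabla \phi_n\|_{B(1)},
\]
so with $\epsilon = 2^{-n+1}\zeta_n^{-1}$ we get the deterministic bound $\mathrm{osc}(\phi_n,\epsilon,B(1)) \le \epsilon\, \|\nabla \phi_n\|_{B(1)}$. Consequently, the event $\{\mathrm{osc}(\phi_n,\epsilon,B(1)) \ge x\}$ is contained in $\{2^{-n}\|\nabla\phi_n\|_{B(1)} \ge x\zeta_n/2\}$.

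Next I would invoke Proposition \ref{gradient} with the threshold $x\zeta_n/2$ to obtain
\[
\mathbb{P}\bigl(\mathrm{osc}(\phi_n,\epsilon,B(1)) \ge x\bigr) \le C 4^n \exp\!\bigl(-c \zeta_n^2 x^2/4\bigr) \le C \exp\!\bigl(n\log 4 - c n x^2/4\bigr),
\]
where in the last step I use $\zeta_n^2 \ge n$. Now I would split into two regimes. If $x^2 \ge 8\log 4 / c$, then $n\log 4 - cnx^2/4 \le -cnx^2/8 \le -cx^2/8$ (using $n \ge 1$), which gives a bound of the desired form $Ce^{-cx^2/8}$. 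For $x^2 < 8\log 4/c$, the right-hand side of the corollary is at least some absolute positive constant, so the bound holds trivially by enlarging $C$ (since probabilities are bounded by $1$).

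There is no serious obstacle here; the only subtlety is verifying that the prefactor $4^n$ from Proposition \ref{gradient} is dominated by the Gaussian decay once the condition $\zeta_n \ge \sqrt n$ is applied, which is precisely why the mesh-size assumption appears. Adjusting the absolute constants $C,c$ at the end absorbs both regimes into a single clean statement.
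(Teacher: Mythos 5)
Your proof is correct and follows exactly the route the paper intends: the paper gives no written proof but states that the corollary "can be obtained directly from Proposition \ref{gradient}," which is precisely your argument of bounding the oscillation by $2^{-n+1}\zeta_n^{-1}\|\nabla\phi_n\|_{B(1)}$ (valid since $B(1)$ is convex), applying the gradient tail bound at threshold $x\zeta_n/2$, and using $\zeta_n^2\geq n$ to absorb the $4^n$ prefactor, with the small-$x$ regime handled trivially by enlarging $C$.
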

	The following corollary follows immediately from a union bound over unit squares.
	\begin{cor}\label{osc1}
		For $\zeta_n\geq \sqrt{n}$, there exist absolute constants $C,c>0$ such that for all $n>0$, $a,b\geq 1$ and $x>0$ we have
		\begin{equation}
			\mathbb{P}(\mathrm{osc}(\phi_n,2^{-n+1}\zeta_n^{-1},B(a,b))\geq x)
			\leq Cabe^{-cx^2}.
		\end{equation}
	\end{cor}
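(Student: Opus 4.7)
The plan is to combine the stationarity of $\phi_n$ with a union bound, reducing the oscillation bound on $B(a,b)$ to many copies of the bound on $B(1)$ furnished by Corollary \ref{osc}.

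First I would record the translation invariance: because $\phi_\delta$ is a stochastic integral of the spatial heat kernel $p_{t/2}(x-y)$ against space-time white noise $W$, the field $\phi_n(\cdot+z)$ has the same law as $\phi_n(\cdot)$ for every $z\in\mathbb R^2$. In particular, for any $z$,
\begin{equation*}
\mathrm{osc}\bigl(\phi_n,\,2^{-n+1}\zeta_n^{-1},\,z+B(1)\bigr)\overset{d}{=}\mathrm{osc}\bigl(\phi_n,\,2^{-n+1}\zeta_n^{-1},\,B(1)\bigr).
\end{equation*}

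Next I would cover $B(a,b)$ by translates of $B(1)$. Take $\mathcal Z=\mathbb Z^2\cap B(a+1,b+1)$; then $|\mathcal Z|\leq(2a+3)(2b+3)\leq 25\,ab$ since $a,b\geq 1$, and $B(a,b)\subset\bigcup_{z\in\mathcal Z}(z+B(1))$. Since $\zeta_n\geq\sqrt n\geq 1$ and $n\geq 1$, the scale $\varepsilon_n:=2^{-n+1}\zeta_n^{-1}\leq 1$. Therefore any pair $x,y\in B(a,b)$ with $|x-y|\leq\varepsilon_n$ lies inside a common translate $z+B(1)$ for some $z\in\mathcal Z$ (the integer lattice has spacing $1$ and the boxes $z+B(1)$ have side $2$, so balls of radius $1$ are contained entirely in some translate). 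This gives the pointwise domination
\begin{equation*}
\mathrm{osc}(\phi_n,\varepsilon_n,B(a,b))\leq\max_{z\in\mathcal Z}\mathrm{osc}(\phi_n,\varepsilon_n,z+B(1)).
\end{equation*}

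Finally, a union bound together with the translation invariance above and Corollary \ref{osc} yields
\begin{equation*}
\mathbb P\bigl(\mathrm{osc}(\phi_n,\varepsilon_n,B(a,b))\geq x\bigr)\leq\sum_{z\in\mathcal Z}\mathbb P\bigl(\mathrm{osc}(\phi_n,\varepsilon_n,z+B(1))\geq x\bigr)\leq|\mathcal Z|\cdot Ce^{-cx^2}\leq C'\,ab\,e^{-cx^2},
\end{equation*}
which is the desired estimate after absorbing the constant. There is no substantive obstacle here; the only point requiring a small amount of care is verifying that the covering radius is compatible with the oscillation scale, and this is immediate from $\zeta_n\geq\sqrt n\geq 1$.
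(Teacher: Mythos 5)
Your proposal is correct and matches the paper's proof, which likewise deduces the corollary from Corollary \ref{osc} by translation invariance and a union bound over $O(ab)$ unit squares covering $B(a,b)$. The only cosmetic point is your parenthetical justification of the covering step: a radius-$1$ ball centered at a generic point is not contained in a single $z+B(1)$ with $z\in\mathbb Z^2$, but the needed claim still holds since for any pair $x,y$ with $|x-y|\leq 1$ the lattice point $z$ nearest their midpoint satisfies $x,y\in z+B(1)$.
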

	\subsection{Finite-range dependent approximation}\label{subsec32}
	When applying percolation arguments and the Efron-Stein inequality later, it is convenient to assume finite-range dependence for the field under consideration. Let $\Phi$ be a smooth, non-negative, radially symmetric bump function on $\mathbb{R}^2$ such that $0\leq\Phi\leq1$ and that $\Phi$ is equal to 1 on $\{x:|x|<1\}$ and equal to 0 on $\{x:|x|\geq 2\}$. We fix a constant $\epsilon_0=1/100$ and define
	\begin{equation}\label{psidef}
		\sigma_t=\epsilon_0\sqrt{t}|\log t|^{\epsilon_0}
		\And \tilde{p}_t(x,y)=p_t(x,y)\Phi(|x-y|/\sigma_t).
	\end{equation}
	In the same way as defining $\phi_\delta$ but replacing $p$ with $\tilde{p}$,  we define our approximation $\psi_\delta$ by
	\begin{equation}
		\psi_{\delta}(x)=\sqrt{\pi}\int_{\delta^2}^{1}\int_{\mathbb{R}^2}\tilde{p}_{\frac{t}{2}}(x,y)W(dy,dt) \mbox{ for } x\in \mathbb R^2\,.
	\end{equation}
	In addition, for integers $n>m>0$, we define
	\begin{equation}
		\psi_{m,n}(x)=\sqrt{\pi}\int_{2^{-2n}}^{2^{-2m}}\int_{\mathbb{R}^2}\tilde{p}_{\frac{t}{2}}(x,y)W(dy,dt).
	\end{equation}
	Then $\psi_{m,n}$ is a centered Gaussian process. Since $\tilde{p}_t$ is supported on the Euclidean ball of radius $2\sigma_t$, we can see that if $|x-y|\geq 2^{-m}m^{\epsilon_0}$, then $\psi_{m,n}(x)$ and $\psi_{m,n}(y)$ are independent. 
	
	In the rest of this paper, we will write $R^{m,n}(\cdot)$ and $\tilde R^{m,n}(\cdot)$ for the effective resistances on $\mathbb Z_n^2$, associated with the field $\phi_{m,n}$ and $\psi_{m,n}$, respectively. For brevity we will write $R^n(\cdot)$ for $R^{0,n}(\cdot)$ and write $\tilde R^n(\cdot)$ for $\tilde R^{0,n}(\cdot)$.
	
	The following propositions from \cite{ding2020tightness} justify that $\psi(\cdot)$ is a good approximation for $\phi(\cdot)$.
	
	\begin{prop}\label{distance}
		\textup{(\hspace{-0.15mm}\cite[Lemma~4]{ding2020tightness})} There exists an absolute constant $C$ such that for all $\delta\in(0,1)$ and $x,x^{\prime}\in\mathbb{R}^2$, we have
		\begin{equation}
			\mathrm{Var}(\phi_{\delta}(x)-\phi_{\delta}(x^{\prime}))+\mathrm{Var}(\psi_{\delta}(x)-\psi_{\delta}(x^{\prime}))\leq C\frac{|x-x^{\prime}|}{\delta}.
		\end{equation}
	\end{prop}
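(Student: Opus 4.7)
The plan is to reduce both variances to explicit one-dimensional integrals via the It\^o isometry for the white noise integral, and then to estimate these integrals.

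For $\phi_\delta$, the isometry gives
\begin{equation*}
\mathrm{Var}(\phi_\delta(x) - \phi_\delta(x')) = \pi \int_{\delta^2}^1 \int_{\mathbb{R}^2} \big[p_{t/2}(x-y) - p_{t/2}(x'-y)\big]^2 \, dy\, dt.
\end{equation*}
The inner integral collapses via the semigroup identity $\int p_{t/2}(x-y) p_{t/2}(x'-y)\, dy = p_t(x-x')$ to $2p_t(0) - 2p_t(x-x') = \frac{1-e^{-r^2/(2t)}}{\pi t}$, where $r := |x-x'|$. Substituting $s = r^2/(2t)$ then yields
\begin{equation*}
\mathrm{Var}(\phi_\delta(x) - \phi_\delta(x')) = \int_{r^2/2}^{r^2/(2\delta^2)} \frac{1-e^{-s}}{s}\, ds.
\end{equation*}
Since $\frac{1-e^{-s}}{s} \leq \min(1, 1/s) \leq s^{-1/2}$, the right-hand side is at most $2\sqrt{r^2/(2\delta^2)} = \sqrt{2}\, r/\delta$, as required.

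For $\psi_\delta$, the same isometry yields an analogous formula with $p_{t/2}$ replaced by $\tilde p_{t/2}$, but the cutoff destroys the semigroup identity. I would split the $t$-integral at $t = r^2$. For $t \leq r^2 \wedge 1$, the elementary inequality $(a-b)^2 \leq 2a^2 + 2b^2$ combined with $\tilde p_s \leq p_s$ bounds the inner integral by $4\int p_{t/2}^2 = 2/(\pi t)$, contributing $O(\log(r/\delta))$ after integration over $t$ when $r \geq \delta$ (and nothing when $r < \delta$, since the range is then empty). For $t \geq r^2$, use the fundamental theorem of calculus to write
\begin{equation*}
\tilde p_{t/2}(x-y) - \tilde p_{t/2}(x'-y) = (x-x') \cdot \int_0^1 \nabla \tilde p_{t/2}\big(x' + u(x-x') - y\big)\, du,
\end{equation*}
and combine Cauchy--Schwarz with the estimate $\int |\nabla \tilde p_{t/2}|^2\, dy \leq C/t^2$. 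This gradient bound follows from the pointwise inequality $|\nabla \tilde p_s| \leq |\nabla p_s| + \|\Phi'\|_\infty p_s/\sigma_s$, together with the closed forms $\int |\nabla p_s|^2 = (4\pi s^2)^{-1}$, $\int p_s^2 = (4\pi s)^{-1}$, and the uniform lower bound $\sigma_{t/2}^2 \gtrsim t$ throughout $t \in [\delta^2, 1]$ (since $|\log(t/2)| \geq \log 2$ there). Integrating this contribution over $t \in [r^2 \vee \delta^2, 1]$ gives $O(1)$ when $r \geq \delta$ and $O(r^2/\delta^2)$ when $r < \delta$. Combining, the $\psi$-variance is at most $C(1 + \log(r/\delta))$ for $r \geq \delta$ and $C r^2/\delta^2$ for $r < \delta$; both are dominated by $C r/\delta$ using $\log u \leq u$ for $u \geq 1$.

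No substantial obstacle is expected, as the proof is a direct covariance calculation. The mild subtlety for $\psi_\delta$ is to split the $t$-integral at the correct threshold $t \asymp r^2$ and to verify that the cutoff scale $\sigma_{t/2}$ remains comparable to $\sqrt{t}$ throughout the integration range, so that the gradient bound $\int|\nabla\tilde p_{t/2}|^2 \leq C/t^2$ holds with an absolute constant.
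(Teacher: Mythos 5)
Your proof is correct. Note that the paper does not actually prove this proposition: it is quoted from \cite[Lemma~4]{ding2020tightness}, so there is no internal argument to compare against; your write-up is a self-contained verification along the standard lines of that reference. The $\phi_\delta$ part is an exact computation (the semigroup identity gives $\int_{r^2/2}^{r^2/(2\delta^2)}\frac{1-e^{-s}}{s}\,ds$, and the crude bound $\frac{1-e^{-s}}{s}\le s^{-1/2}$ indeed yields $\sqrt{2}\,r/\delta$), and for $\psi_\delta$ your split at $t\asymp r^2$ works: on $[\delta^2, r^2\wedge 1]$ the trivial bound $4\int p_{t/2}^2$ gives a logarithmic contribution, and on $[r^2\vee\delta^2,1]$ the gradient bound $\int|\nabla\tilde p_{t/2}|^2\le C/t^2$ is justified since $\sigma_{t/2}^2=\epsilon_0^2 (t/2)|\log(t/2)|^{2\epsilon_0}\ge \epsilon_0^2(\log 2)^{2\epsilon_0}t/2$ on the whole range $t\le 1$, so the extra term $\|\Phi'\|_\infty p_{t/2}/\sigma_{t/2}$ from the cutoff contributes at the same order as $|\nabla p_{t/2}|$. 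The final comparison $\log(r/\delta)+O(1)\le C\,r/\delta$ for $r\ge\delta$ and $r^2/\delta^2\le r/\delta$ for $r<\delta$ closes the argument with an absolute constant, as required.
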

	
	\begin{prop}\label{normprop}
		\textup{(\hspace{-0.15mm}\cite[Proposition~5]{ding2020tightness})} There exist absolute constants $C,c>0$ such that for all $x>0$, we have
		\begin{equation}
			\mathbb{P}(\sup_{n>m>0}\|\phi_{m,n}-\psi_{m,n}\|_{B(1)}\geq x)\leq Ce^{-cx^2}.
		\end{equation}
	\end{prop}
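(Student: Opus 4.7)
The plan is to reduce the supremum over pairs $(m,n)$ to a summable estimate over single dyadic scales. For integers $k\ge 1$, set
\begin{equation*}
\xi_k(x) := (\phi_{k-1,k}-\psi_{k-1,k})(x) = \sqrt{\pi}\int_{2^{-2k}}^{2^{-2(k-1)}}\int_{\mathbb{R}^2}(p_{t/2}-\tilde p_{t/2})(x-y)\,W(dy,dt).
\end{equation*}
Since the underlying white noise is integrated over disjoint time windows, the fields $\{\xi_k\}_{k\ge 1}$ are mutually independent, and $\phi_{m,n}-\psi_{m,n}=\sum_{k=m+1}^{n}\xi_k$ for every $n>m>0$. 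The triangle inequality thus gives $\sup_{n>m>0}\|\phi_{m,n}-\psi_{m,n}\|_{B(1)}\le \sum_{k\ge 1}\|\xi_k\|_{B(1)}$, so it suffices to establish a sub-Gaussian tail for the latter sum.

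Next I would estimate the pointwise variance $\sigma_k^2:=\sup_{x\in B(1)}\mathrm{Var}(\xi_k(x))$. Writing $p_{t/2}-\tilde p_{t/2}=p_{t/2}\bigl(1-\Phi(|\cdot|/\sigma_{t/2})\bigr)$ and noting that $1-\Phi$ vanishes on $\{r<\sigma_{t/2}\}$, polar integration yields
\begin{equation*}
\mathrm{Var}(\xi_k(x))\le\pi\int_{2^{-2k}}^{2^{-2(k-1)}}\int_{|y|\ge\sigma_{t/2}}p_{t/2}(y)^2\,dy\,dt=\int_{2^{-2k}}^{2^{-2(k-1)}}\frac{1}{2t}\exp\bigl(-2\sigma_{t/2}^2/t\bigr)\,dt.
\end{equation*}
Because $2\sigma_{t/2}^2/t=\epsilon_0^2|\log(t/2)|^{2\epsilon_0}\gtrsim k^{2\epsilon_0}$ on the window $t\in[2^{-2k},2^{-2(k-1)}]$, this gives the crucial decay $\sigma_k^2\le Ce^{-ck^{2\epsilon_0}}$. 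The analogous calculation applied to $\xi_k(x)-\xi_k(y)$, combined with a standard gradient bound on $p_{t/2}-\tilde p_{t/2}$, yields the smoothness estimate $\mathrm{Var}(\xi_k(x)-\xi_k(y))\le C\sigma_k^2\bigl((2^k|x-y|)^2\wedge 1\bigr)$.

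With these ingredients, a Dudley-type entropy bound produces $\mathbb{E}\|\xi_k\|_{B(1)}\le C\sigma_k\sqrt{k}$ (reflecting the roughly $4^k$ essentially independent cells of $\xi_k$ in $B(1)$), and the Borell--TIS inequality gives $\mathbb{P}(\|\xi_k\|_{B(1)}\ge\mathbb{E}\|\xi_k\|_{B(1)}+u)\le e^{-u^2/(2\sigma_k^2)}$ for all $u>0$. Since $\sum_k\sigma_k\sqrt{k}<\infty$ and $\sum_k\sigma_k^2<\infty$ by the super-polynomial decay of $\sigma_k^2$, the constant $M:=\sum_k\mathbb{E}\|\xi_k\|_{B(1)}$ is a finite absolute number, and the independent centered variables $\|\xi_k\|_{B(1)}-\mathbb{E}\|\xi_k\|_{B(1)}$, being sub-Gaussian with parameters $\sigma_k^2$, add up to a sub-Gaussian variable of parameter $\sum_k\sigma_k^2\le C$. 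For $x\ge 2M$ this yields $\mathbb{P}(\sum_k\|\xi_k\|_{B(1)}\ge x)\le e^{-cx^2}$, and the range $x<2M$ is absorbed into the constant.

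The main obstacle is extracting the super-polynomial factor $e^{-ck^{2\epsilon_0}}$ in $\sigma_k^2$; this is precisely where the choice $\sigma_t=\epsilon_0\sqrt{t}|\log t|^{\epsilon_0}$ (slightly above the natural diffusion scale $\sqrt{t}$) is used, since a truncation exactly at $\sqrt{t}$ would leave $\sigma_k^2$ of constant order and render the series $\sum_k\sigma_k\sqrt{k}$ divergent. Once this per-scale Gaussian estimate is in place, the assembly of the dyadic pieces into a sub-Gaussian sup norm bound via Borell--TIS and a union bound is routine.
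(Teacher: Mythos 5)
Your argument is correct. Note that the paper does not prove this statement itself but quotes it from \cite[Proposition~5]{ding2020tightness}, and your proof is essentially a self-contained reconstruction of the argument there: decompose $\phi_{m,n}-\psi_{m,n}$ into the independent single-scale pieces $\xi_k$, exploit that the truncation radius $\sigma_t=\epsilon_0\sqrt t|\log t|^{\epsilon_0}$ sits just above the diffusive scale to get $\sup_x\mathrm{Var}(\xi_k(x))\le Ce^{-ck^{2\epsilon_0}}$, then combine a Fernique/Dudley bound on $\mathbb E\|\xi_k\|_{B(1)}$ with Borell--TIS and sum over scales. The only points to tidy in a full write-up are routine: the gradient computation produces harmless polylogarithmic factors (so the modulus bound holds with a slightly smaller constant in the exponential), and to sum the scales via Chernoff you should invoke the two-sided (centered) form of Borell--TIS, or equivalently the resulting sub-Gaussian moment generating function bound, rather than only the upper-tail inequality you quoted.
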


	We will also need an estimate on the ``coarse field''.
	\begin{lem}\label{coarse}
		For $n, m\geq 1$ and disjoint domains $U_1,\ldots, U_m$,  we define $$\phi^i=\int_{2^{-2n}}^1\int_{U_i}p_{\frac{t}{2}}(x,y)W(dy,dt) \And \phi_{\mathrm c}^i=\phi_n-\phi^i.$$ 
		Then for all $\mathsf C, a>0$ and $\mathsf c\in (0,1)$, there exists a constant $C_1=C_1(\mathsf C,\mathsf c,a)$ such that the following holds. Suppose that $V_1,\ldots,V_m$ are domains such that $V_i+B(a)=\{x+y:x\in V_i,y\in B(a)\}\subset U_i$ for $1\leq i\leq m$. Let $N$ be the number of $i$ with $1\leq i\leq m$ such that
		\begin{equation}
			\max_{u_i\in V_i}|\phi_{\mathrm c}^i(u_i)|<C_1.
		\end{equation}
		Then we have
		\begin{equation}
			\mathbb P(N\geq \mathsf c m)\geq 1-\exp(-\mathsf Cm).
		\end{equation}
	\end{lem}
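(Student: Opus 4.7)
The plan is to combine a uniform variance bound for $\phi_{\mathrm c}^i$ on $V_i$ with a Gaussian supremum estimate, and then to upgrade to exponential concentration in $m$ by exploiting the block-independence structure coming from the disjointness of the $U_i$.

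First, since $V_i + B(a) \subset U_i$, for every $x\in V_i$ and every $y\notin U_i$ we have $|x-y|\ge a$, so
\[
\mathrm{Var}(\phi_{\mathrm c}^i(x)) = \pi\int_{2^{-2n}}^1\!\!\int_{\mathbb R^2 \setminus U_i}p_{t/2}(x-y)^2\,dy\,dt \le \tfrac{1}{2}\int_0^1\frac{e^{-2a^2/t}}{t}\,dt =: C(a),
\]
and an identical computation bounds $\mathrm{Var}(\nabla\phi_{\mathrm c}^i(x))$ by some $C'(a)$. Thus $\phi_{\mathrm c}^i|_{V_i}$ is a smooth Gaussian field whose pointwise variance and gradient variance are independent of $n$. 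Applying Borell--TIS to both $|\phi_{\mathrm c}^i(x_0)|$ (for any $x_0\in V_i$) and $\sup_{V_i}|\nabla\phi_{\mathrm c}^i|$, and combining with the mean-value inequality $\max_{V_i}|\phi_{\mathrm c}^i|\le|\phi_{\mathrm c}^i(x_0)|+\operatorname{diam}(V_i)\sup_{V_i}|\nabla\phi_{\mathrm c}^i|$, I obtain, for $C_1$ large enough depending on $a$ (and on any ambient diameter bound implicit in the context of the paper),
\[
\mathbb P(A_i^c):=\mathbb P\bigl(\max_{V_i}|\phi_{\mathrm c}^i|\ge C_1\bigr) \le \eta,
\]
with $\eta>0$ to be chosen small in terms of $\mathsf c$ and $\mathsf C$.

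Second, to boost this to an $m$-level exponential bound, I would use the decomposition $\phi_{\mathrm c}^i = \phi^0 + \sum_{j\ne i}\phi^j$, where $\phi^j$ ($j=1,\ldots,m$) is built from $W|_{U_j}$ and $\phi^0$ from $W|_{(\bigcup_k U_k)^c}$. These fields are mutually independent, and $\mathbf 1_{A_i^c}$ is measurable with respect to $(\phi^0,(\phi^j)_{j\ne i})$, i.e., it does not depend on $\phi^i$. The same kernel-decay computation as in Step~1 gives $\sup_{x\in V_i}\mathrm{Var}(\phi^j(x))\le C(a)$ for every $j\ne i$. Using these ingredients, I would establish a sub-multiplicative bound
\[
\mathbb P\Bigl(\bigcap_{i\in S} A_i^c\Bigr) \le (C\eta)^{|S|} \quad \text{for every } S\subset \{1,\ldots,m\},
\]
via iterated Gaussian concentration along the filtration $\mathcal F_k=\sigma(\phi^0,\phi^1,\ldots,\phi^k)$ (equivalently by realizing each $\mathbf 1_{A_i^c}$ as a $\sqrt{C(a)}$-Lipschitz truncation of a Gaussian functional and invoking Talagrand-type deviations on the independent blocks). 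A union bound over subsets of size $\lceil(1-\mathsf c)m\rceil$ then yields
\[
\mathbb P(N<\mathsf c m) \le \binom{m}{\lceil(1-\mathsf c)m\rceil}(C\eta)^{\lceil(1-\mathsf c)m\rceil} \le e^{-\mathsf C m}
\]
for $\eta$ sufficiently small relative to $\mathsf c,\mathsf C$.

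The main obstacle is the sub-multiplicative estimate: the events $A_i^c$ are positively correlated through the common coarse field $\phi^0$ and the shared fine blocks $(\phi^j)_{j\ne i}$, so neither an FKG-type comparison (which gives the wrong direction for bounding the joint probability of rare events from above) nor a naive bounded-differences/Azuma argument (which would be a factor of $m$ too lossy per increment) suffices directly. The technical heart is to quantitatively decouple the indicators by exploiting the independence of white noise on the disjoint $U_j$ together with the fact that the single-block perturbation $\max_{V_i}|\phi^j|$ has bounded variance uniformly in $i,j$, as established by the same kernel integrability estimate used in Step~1.
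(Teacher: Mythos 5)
There is a genuine gap, and you have named it yourself: the sub-multiplicative estimate $\mathbb P\bigl(\bigcap_{i\in S}A_i^c\bigr)\le (C\eta)^{|S|}$ is the entire content of the lemma (the per-block bound $\mathbb P(A_i^c)\le\eta$ is routine), and your proposal does not prove it. None of the tools you list closes it: the events $A_i^c$ are not adapted to the filtration $\mathcal F_k=\sigma(\phi^0,\phi^1,\ldots,\phi^k)$ (each $A_i^c$ depends on \emph{all} blocks $j\ne i$, including future ones), Azuma/bounded differences loses the needed rate, and FKG points the wrong way, as you note. More importantly, the estimate cannot follow from the ingredients you actually establish, namely uniform single-block variance bounds: those are compatible with the fields $\phi_{\mathrm c}^i|_{V_i}$ sharing a common Gaussian component with correlation bounded below, and in that situation $\mathbb P(\mbox{all }m\mbox{ blocks bad})$ decays only at a fixed exponential rate (condition on the common part being moderately large), so the conclusion $\mathbb P(N<\mathsf c m)\le e^{-\mathsf C m}$ with $\mathsf C$ \emph{arbitrary} (at the price of enlarging $C_1$) would fail. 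Any correct argument must quantitatively use the decay of the cross-covariances $\mathbb E[\phi_{\mathrm c}^i(x_i)\phi_{\mathrm c}^j(x_j)]$ in $|x_i-x_j|$ together with the packing forced by the disjoint $U_i\supset V_i+B(a)$; this input never appears in your proposal. (A secondary issue: your Step 1 bound via $\operatorname{diam}(V_i)\sup_{V_i}|\nabla\phi_{\mathrm c}^i|$ introduces a dependence on $\operatorname{diam}(V_i)$ that is not among the allowed parameters of $C_1$.)

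For comparison, the paper avoids decoupling indicators altogether. It observes that $\{N<\mathsf c m\}\subset\{\sum_{i}\max_{V_i}|\phi_{\mathrm c}^i|\ge C_1(1-\mathsf c)m\}$, removes the absolute values by fixing signs $s_i\in\{-1,1\}$ and paying a $2^m$ union bound, and then treats $\max_{(x_i)_i}\sum_i s_i\phi_{\mathrm c}^i(x_i)$ as a single Gaussian supremum. Fernique (via the increment bound $\mathrm{Var}(\phi_{\mathrm c}^i(x)-\phi_{\mathrm c}^i(x'))\preceq|x-x'|$) gives expectation $\preceq m$, and the cross-covariance bound $\mathbb E[\phi_{\mathrm c}^i(x_i)\phi_{\mathrm c}^j(x_j)]\preceq e^{-|x_i-x_j|^2}$ (using $|x_i-x_j|\ge$ const from the $a$-separation, and summability over the packed blocks) gives $\sup_{(x_i)}\mathrm{Var}\bigl(\sum_i s_i\phi_{\mathrm c}^i(x_i)\bigr)\preceq m$; Borell--TIS then yields probability at most $e^{-\mathsf C' m}$ with $\mathsf C'$ as large as desired once $C_1=C_1(\mathsf C,\mathsf c,a)$ is large, which absorbs the factor $2^m$. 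If you want to salvage your outline, the fix is to replace the unproved product bound by exactly this ``sum of maxima plus Borell--TIS'' step, which is where the correlation decay does its work.
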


	\begin{proof}
		We fix $s_i\in \{-1,1\}$ for $1\leq i\leq m$. For $x,x^{\prime}\in V_i$, a direct calculation shows that
		\begin{equation}
			\mathrm{Var}(\phi_{\mathrm c}^i(x)-\phi_{\mathrm c}^i(x^{\prime}))=\int_{2^{-2n}}^{1}\int_{\mathbb{R}^2\setminus U_i}|p_{\frac{t}{2}}(x,y)-p_{\frac{t}{2}}(x^{\prime},y)|^2dydt\preceq |x-x^{\prime}|,
		\end{equation}
		where the constants in $\preceq$ throughout this proof depend only on $a$. Thus, by Fernique's inequality \cite{fernique} (see also \cite[Theorem~4.1]{adler1990introduction}),
		\begin{equation}\label{coar1}
			\mathbb{E}\left[\max_{x\in V_i}s_i\phi_{\mathrm c}^i(x)\right]\preceq 1
			\And
			\mathbb{E}\left[\max_{x_i\in V_i}\sum_{i=1}^ms_i\phi_{\mathrm c}^i(x_i)\right]\preceq m.
		\end{equation}
		Furthermore, for $x_i\in V_i$
		\begin{equation}
			\begin{aligned}
				\mathrm{Var}(s_i\phi_{\mathrm c}(x_i))
				\leq\int_{2^{-2n}}^{1}\int_{\mathbb{R}^2\setminus U_i}p_{\frac{t}{2}}(x_i-y)^2dydt\preceq \int_{2^{-2n}}^{1}\frac{1}{2\pi t}e^{-\frac{1}{t}}dt
				\preceq\int_1^{2^{2n}}\frac{1}{s}e^{-s}ds\preceq 1;
			\end{aligned}
		\end{equation}
		for $i\neq j$, $x_i\in V_i$ and $x_j\in V_j$,
		\begin{equation}
			\begin{aligned}
				\mathbb{E}(s_i\phi_{\mathrm c}^i(x_i)s_j\phi_{\mathrm c}^j(x_j))
				&\leq \int_{2^{-2n}}^{1}\int_{\mathbb{R}^2}p_{\frac{t}{2}}(x_i-y)p_{\frac{t}{2}}(x_j-y)dydt\\
				&\preceq \int_{2^{-2n}}^{1}\frac{1}{t}e^{-\frac{|x_i-x_j|^2}{t}}dt=\int_{|x_i-x_j|^2}^{2^{2n}|x_i-x_j|^2}\frac{1}{s}e^{-s}ds\preceq e^{-|x_i-x_j|^2}.
			\end{aligned}
		\end{equation}
		This implies that for $x_i \in V_i$ with $1\leq i\leq m$,
		\begin{equation}
			\mathrm{Var}\left(\sum_{i=1}^ms_i\phi_{\mathrm c}^i(x_i)\right)\preceq m.
		\end{equation}
		Thus by Borell-TIS inequality \cite{Borell1975-lg,Sudakov1978} (see also, e.g., \cite[Theorem~2.1.1]{adler2009random} and \cite[Theorem~3.25]{van2014probability}), there exists $C_2=C_2(\mathsf C,\mathsf c)$ such that
		\begin{equation}\label{coar}
			\mathbb{P}\left(\max_{x_i\in V_i}\sum_{i=1}^ms_i\phi_{\mathrm c}^i(x_i)\geq C_2 m\right)\leq \exp(-\mathsf Cm).
		\end{equation}
		Thus, by taking $C_1>C_2/(1-\mathsf c)$ we have
		\begin{equation}
			\begin{aligned}
				\mathbb P\left(N< \mathsf c m\right)
				&\leq \mathbb{P}\left(\sum_{i=1}^m\max_{x_i\in V_i}|\phi_{\mathrm c}^i(x_i)|\geq C_1(1-\mathsf c)m\right)\\
				&\leq \sum_{s_i\in \{-1,1\}}\mathbb{P}\left(\sum_{i=1}^m\max_{x_i\in V_i}s_i\phi_{\mathrm c}^i(x_i)\geq C_1(1-\mathsf c)m\right)\\
				&\leq 2^m\exp(-\mathsf C m),
			\end{aligned}
		\end{equation}
		which then completes the proof (by properly adjusting $\mathsf C$).
	\end{proof}
	
	\subsection{Comparison between different mesh sizes}\label{secmesh}
	The following proposition shows that the effective resistances with different mesh sizes are comparable provided that their mesh sizes are sufficiently small. This justifies the assumption $\zeta_n\geq \sqrt{n}$ since it is equivalent to consider the case where the mesh size is infinitesimal.
	\begin{prop}\label{mesh}
		There exist absolute constants $C,c$, such that for all $n\geq 1$ and $a,b\geq 1$ and $\zeta_n,\zeta_n^{\prime}\geq \sqrt{n}$
		\begin{equation}
			\mathbb{P}(|\log R^{n}_{a,b,\zeta_n}-\log R^{n}_{a,b,\zeta_n^{\prime}}|\geq x)\leq Cabe^{-cx^2}.
		\end{equation}
	\end{prop}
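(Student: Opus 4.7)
The strategy is to combine the oscillation bound for $\phi_n$ at the mesh scale (Corollary \ref{osc1}) with a discrete-to-continuous Dirichlet energy comparison: on the event that $\phi_n$ has small oscillation at the mesh scale, both lattice effective conductances are within $e^{O(x)}$ of a common continuous reference, hence within $e^{O(x)}$ of each other.

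Without loss of generality assume $\zeta_n\leq\zeta_n^\prime$, so the coarser mesh size is $h:=2^{-n}\zeta_n^{-1}$. Define
\[
A_x:=\bigl\{\mathrm{osc}(\phi_n,\,2^{-n+1}\zeta_n^{-1},\,B(a+1,b+1))\leq x/\gamma\bigr\};
\]
by Corollary \ref{osc1}, $\mathbb P(A_x^c)\leq Cab\,e^{-cx^2/\gamma^2}$, and on $A_x$ the weight $e^{-\gamma\phi_n}$ varies by a factor at most $e^x$ on any ball of radius $2h$ in $B(a+1,b+1)$. Introduce the continuous effective conductance
\[
C^n_{\mathrm{cont}}:=\inf_{f}\int_{B(a,b)}|\nabla f(y)|^2 e^{-\gamma\phi_n(y)}\,dy
\]
over $f\in H^1(B(a,b))$ with $f=1$ on $\{-a\}\times[-b,b]$ and $f=0$ on $\{a\}\times[-b,b]$. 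The key claim is that, on $A_x$, for each $\zeta\in\{\zeta_n,\zeta_n^\prime\}$,
\[
e^{-Cx}C^n_{\mathrm{cont}}\leq C^n_{a,b,\zeta}\leq e^{Cx}C^n_{\mathrm{cont}},
\]
where the multiplicative factor between the discrete and continuous Dirichlet energies is a universal $\mathbb Z^2$-lattice constant (equal to $1$ for the standard triangulation) that is the same for both mesh sizes, since both lattices are linear rescalings of $\mathbb Z^2$. Combining the bounds for $\zeta_n$ and $\zeta_n^\prime$ yields $|\log R^{n}_{a,b,\zeta_n}-\log R^{n}_{a,b,\zeta_n^\prime}|\leq 2Cx$ on $A_x$; rescaling $x$ and absorbing the factor $\gamma<\gamma_0$ into the absolute constant then gives the stated tail bound.

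The two-sided comparison between $C^n_{a,b,\zeta}$ and $C^n_{\mathrm{cont}}$ is proved via test functions. For the upper bound, take a near-minimizer $f^*$ of $C^n_{\mathrm{cont}}$, mollify it at scale $h$ so that it is Lipschitz at the mesh scale, and restrict it to the lattice vertices: each lattice edge $e=(u,v)$ contributes $e^{-\gamma\phi_n(m_e)}(f^*(v)-f^*(u))^2\approx e^{-\gamma\phi_n(m_e)} h^2(\partial_e f^*)^2$, and summing over edges yields a Riemann sum for $\int|\nabla f^*|^2 e^{-\gamma\phi_n}$ whose error is controlled on $A_x$ by the oscillation of $e^{-\gamma\phi_n}$ at scale $h$. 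The lower bound is the reverse construction: the lattice optimizer $g_\zeta$ is extended by piecewise linear interpolation on a triangulation of the lattice, giving an admissible continuous test function whose Dirichlet energy is bounded by $e^{Cx}\mathcal E_{\mathrm{lat}}(g_\zeta)$ via the same Riemann-sum identification. The main technical obstacle is the boundary treatment: the discrete left/right sides of $S^n_{a,b,\zeta}$ are slightly misaligned with $\{\mp a\}\times[-b,b]$, so one must extend test functions as constants across a thin strip of width $h$ adjacent to the rectangle boundary; this contributes only a $1+o(1)$ multiplicative error that is absorbed into the $e^{Cx}$ factor and does not affect the conclusion.
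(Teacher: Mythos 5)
Your route is genuinely different from the paper's. The paper never introduces a continuum Dirichlet problem here: it compares each mesh $\zeta_n,\zeta_n'$ to the common refinement $\eta_n=\zeta_n\zeta_n'$ by taking the optimal current on the coarser lattice, decomposing it into weighted crossings via Proposition \ref{res}, and spreading each crossing over $\gtrsim \eta_n/\zeta_n$ edge-disjoint nearby crossings of the finer lattice, which gives $R^n_{a,b,\eta_n}\leq C e^{\gamma\,\mathrm{osc}}R^n_{a,b,\zeta_n}$; the reverse inequality then comes for free by running the same argument on the dual network (Lemma \ref{duallem}), so only the ``easy'' direction of comparison is ever needed. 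Your scheme replaces the refinement lattice by a single continuum conductance $C^n_{\mathrm{cont}}$ and sandwiches each discrete conductance around it on the oscillation event of Corollary \ref{osc1}. The lattice-to-continuum half is essentially Lemma \ref{diri} (which the paper proves for the RSW section), and the conclusion would indeed follow, with the universal lattice constants and the $e^{O(x)}$ oscillation factors absorbed into $C,c$ exactly as you say; conceptually this buys a mesh-free reference object, and it does not use the exact resistance--conductance duality, whereas the paper's argument is shorter given the tools it already has and stays entirely discrete.

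The one step you should shore up is the continuum-to-discrete direction at the Dirichlet sides, which is precisely the half the paper avoids via duality. Extending the continuum near-minimizer by constants on a strip \emph{outside} $B(a,b)$ and mollifying does not make its lattice restriction equal to $1$ (resp.\ $0$) on the discrete left (resp.\ right) column, because those columns lie \emph{inside} $B(a,b)$, within distance $h$ of $\{\mp a\}\times[-b,b]$; so the restricted function is not admissible for $C^n_{a,b,\zeta}$ as written. The fix is to force the values $1$ and $0$ on the extreme columns and control the cost of the modified edges by a one-dimensional Poincar\'e/trace estimate across the width-$O(h)$ boundary strip: for a row at height $y$, $|f^*_h(v)-1|^2\leq Ch\int |\partial_1 f^*_h|^2$ along that row, and summing over rows (spaced $h$ apart, with $|\nabla f^*_h|^2$ regularized at scale $h$ by the mollification) bounds the extra energy by a constant multiple of the weighted strip energy, hence by $Ce^{Cx}C^n_{\mathrm{cont}}$. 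Note this is a constant-factor loss, not $1+o(1)$ as you claim, but since the statement tolerates absolute multiplicative constants (they are absorbed by adjusting $C$ for small $x$), this does not affect the conclusion; similarly, mollification near the top and bottom (Neumann) sides should be done after an even reflection so as not to leave the domain. With these adjustments your argument goes through.
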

	\begin{figure}
		\centering
		\includegraphics{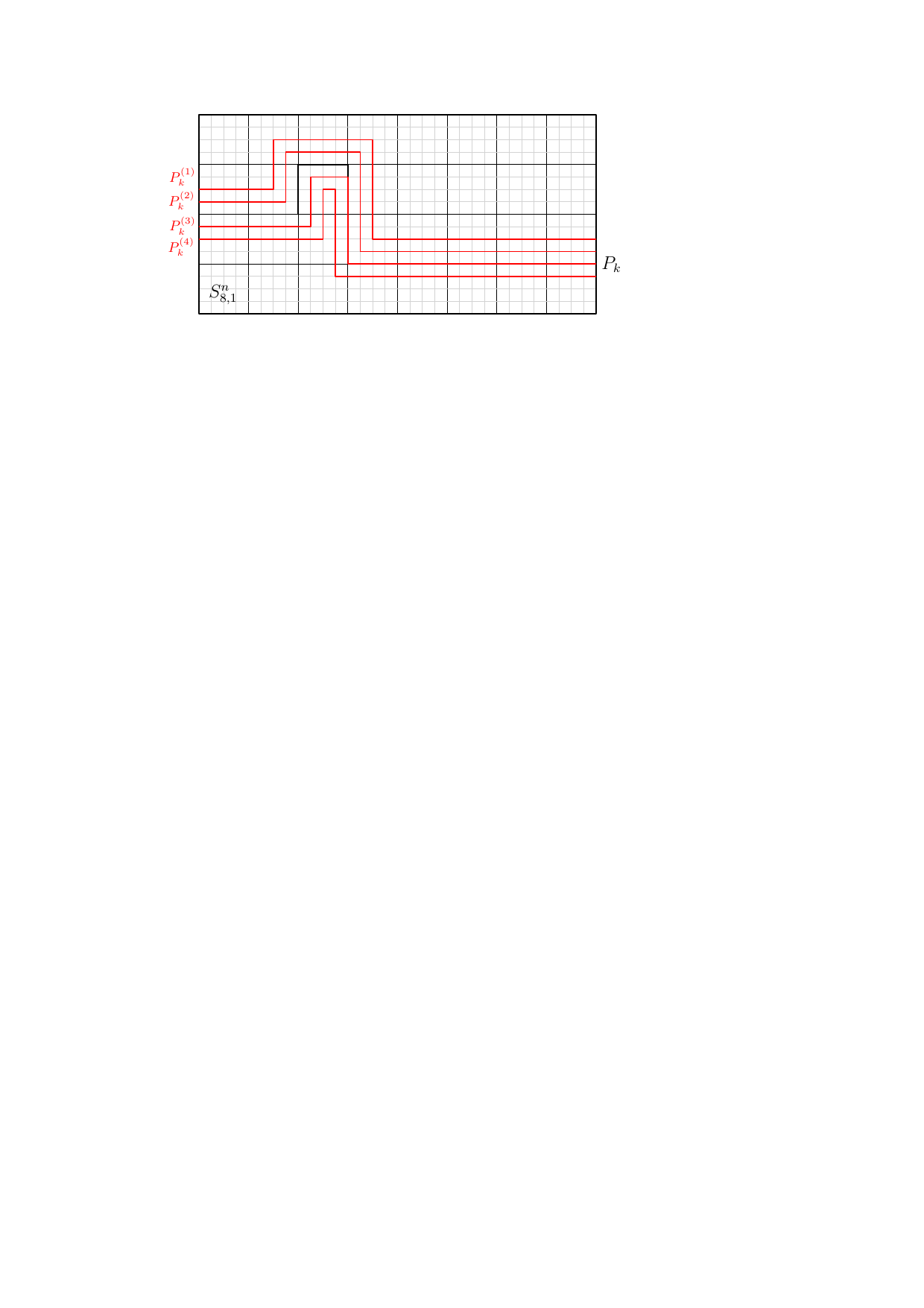}
		\caption{Illustration of left-right crossings.}
		\label{fig15}
	\end{figure}
	\begin{proof}
		Let $\eta_n=\zeta_n\zeta_n^{\prime}$, it is sufficient to prove that
		\begin{equation}\label{mesh1}
			\mathbb{P}(|\log R^{n}_{a,b,\zeta_n}-\log R^{n}_{a,b,\eta_n}|\geq x)\leq Cabe^{-cx^2}.
		\end{equation}
		Let $\theta$ be the electric current corresponding to $R^{n}_{a,b,\zeta_n}$. By Proposition \ref{res} we can choose self-avoiding crossings $P_1,\ldots,P_m$ in $S^n_{a,b,\zeta_n}$ and $\alpha_1,\ldots, \alpha_m \in \mathbb R^+$, such that $\sum_{1\leq k\leq m}1/r_{e,P_k}\leq1/r_e$ for each $e\in E(\mathrm{G})$ as well as $\sum_{1\leq k\leq m}\alpha_i=1$ and $R^{n}_{a,b,\zeta_n}= \sum_{1\leq k\leq m}\alpha_k^2\sum_{e\in P_k}r_{e,P_k}$. We see that $\theta$ can be decomposed as the sum of flows along $P_k$ with strength $\alpha_k$ (for $1\leq k\leq m$). For each $P_k$ with $1\leq k\leq m$, there exist $l>0.01 \eta_n/\zeta_n$ edge-disjoint left-right crossings of $S^{n}_{a,b,\eta_n}$ with Hausdorff distances at most $2^{-n-1}\zeta_n^{-1}$ from $P_k$, which we denote as $P_k^{(1)},\ldots, P_k^{(l)}$. (See Figure \ref{fig15} for an illustration.) We send a flow of strength $\alpha_k/l$ along $P_k^{(i)}$ for each $1\leq k\leq m$ and $1\leq i\leq l$, and as a result we get a unit flow $\theta^\prime$ from the left side to the right side of $S^{n}_{a,b,\eta_n}$. For each $e^\prime\in S^{n}_{a,b,\eta_n}$, we have $$|\theta^\prime(e^\prime)|\leq \sum_{e\in S^{n}_{a,b,\zeta_n}:|m_e-m_{e^\prime}|\leq 2^{-n}\zeta_n^{-1}}|\theta(e)|/l.$$		
		Note that for fixed $e^\prime \in S^{n}_{a,b,\eta_n}$, 
		\begin{equation}\label{mesh4}
			|\{e\in S^{n}_{a,b,\zeta_n}:|m_e-m_{e^\prime}|\leq 2^{-n}\zeta_n^{-1}\}|\leq 100.
		\end{equation}
		Thus, we have
		\begin{equation}\label{mesh2}
			\begin{aligned}
				R^{n}_{a,b,\eta_n}
				&\leq \sum_{e^{\prime}\in  S^{n}_{a,b,\eta_n}}\theta^\prime(e^{\prime})^2\exp(\gamma \phi_{n}(m_{e^{\prime}}))\\
				&\leq \sum_{e^{\prime}\in S^{n}_{a,b,\eta_n}}\frac{100^2}{l^2}\sum_{\substack{e\in S^{n}_{a,b,\zeta_n}:\\|m_e-m_{e^\prime}|\leq 2^{-n}\zeta_n^{-1}}}|\theta(e)|^2\exp(\gamma \phi_{n}(m_{e^{\prime}}))\\
				&\leq \exp(\gamma \mathrm{osc}(\phi_{n},2^{-n+1}\zeta_n^{-1},B(a,b)))\sum_{e\in S^{n}_{a,b,\zeta_n}}|\theta(e)|^2\exp(\gamma \phi_n(m_e))\sum_{\substack{e^\prime\in S^{n}_{a,b,\eta_n}:\\|m_e-m_{e^\prime}|\leq 2^{-n}\zeta_n^{-1}}}\frac{100^2}{l^2}\\
				&\leq C\exp(\gamma \mathrm{osc}(\phi_{n},2^{-n+1}\zeta_n^{-1},B(a,b)))R^{n}_{a,b,\zeta_n},
			\end{aligned}
		\end{equation}
		where we applied \eqref{mesh4} and Cauchy-Schwarz in the second inequality, and $C$ is an absolute constant. By the duality as in \eqref{dual1}, we also have
		\begin{equation}\label{mesh3}
			C^{n}_{a,b,\eta_n}\leq C\exp(\gamma \mathrm{osc}(\phi_{n},2^{-n+1}\zeta_n^{-1},B(a,b)))C^{n}_{a,b,\zeta_n}.
		\end{equation}
		Combining \eqref{mesh3} and \eqref{mesh2}, we can then infer \eqref{mesh1} from Corollary \ref{osc1}.
	\end{proof}
	
	\subsection{Quantiles of the effective resistance}\label{subsec34}
	Recall that in $\mathbb Z_n^2=2^{-n}\zeta_n^{-1}\mathbb{Z}^2$, $R^{n}_{a,b}$ denotes the left-right resistance of the rectangular domain $S^{n}_{a,b}$ equipped with resistances associated with the field $\phi_{n}$. Similarly, in $S^n_{a,b}$, if we associate the resistance according to the field $\psi_{n}$ instead of $\phi_{n}$, we get the left-right resistance $\tilde{R}^{n}_{a,b}$.
	
	We introduce the notation of the quantiles of $R^{n}_{a,b}$ and $\tilde{R}^{n}_{a,b}$, as follows:
	\begin{equation}
		l^{(n)}_{a,b}(\phi,p)=\inf\{x:\mathbb{P}(R^{n}_{a,b}\leq x)\geq p\},
	\end{equation}
	\begin{equation}
		\tilde{l}^{(n)}_{a,b}(\psi,p)=\inf\{x:\mathbb{P}(\tilde{R}^{n}_{a,b}\leq x)\geq p\}.
	\end{equation}
	By Cameron-Martin formula, since we can add a suitable bump function to the field, there is no Dirac mass in the laws of either $R^{n}_{a,b}$ or $\tilde{R}^{n}_{a,b}$. So in fact we have $\mathbb{P}(R^{n}_{a,b}\leq l^{(n)}_{a,b}(\phi,p))=p$ and $\mathbb{P}(\tilde{R}^{n}_{a,b}\leq \tilde l^{(n)}_{a,b}(\psi,p))=p$. For simplicity, we write $l^{(n)}(\phi,p)=l^{(n)}_{1,1}(\phi,p)$ and $\tilde{l}^{(n)}(\psi,p)=\tilde{l}^{(n)}_{1,1}(\psi,p)$. For the proof of the tightness, it would be convenient to consider ratios between various quantiles. To this end, for $p\in(0,1/2)$ we define 
	\begin{equation}\label{quantile}
		\Lambda_n(\phi,p)=\sup_{m\leq n } \frac{l^{(m)}(\phi,1-p)}{l^{(m)}(\phi,p)} \And \tilde{\Lambda}_n(\psi,p)=\sup_{m\leq n } \frac{\tilde{l}^{(m)}(\psi,1-p)}{\tilde{l}^{(m)}(\psi,p)}.
	\end{equation}

	\begin{lem}\label{comp}
		For a domain $U$ and a function $f:\mathbb{R}^U\to \mathbb R$ which is $1$-Lipschitz with respect to the $L^\infty$-norm on $U$, we have the following: for two random fields $\eta,\mu\in \mathbb{R}^U$ and for all $x,y\in \mathbb{R}$,
		\begin{equation}
			\mathbb{P}(f(\eta)\geq x)\leq \mathbb{P}(f(\mu)\geq x-y)+\mathbb{P}(\|\eta-\mu\|_U\geq y).
		\end{equation}
	\end{lem}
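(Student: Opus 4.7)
The plan is to prove the inequality by first establishing the set inclusion
\begin{equation*}
\{f(\eta) \geq x\} \subset \{f(\mu) \geq x - y\} \cup \{\|\eta - \mu\|_U \geq y\}
\end{equation*}
pointwise on the underlying probability space, and then applying a union bound to the right-hand side. The key observation is that the $1$-Lipschitz hypothesis on $f$ with respect to the $L^\infty$-norm yields, for every pair of deterministic configurations, the pointwise bound $|f(\eta) - f(\mu)| \leq \|\eta - \mu\|_U$. In particular, on the event $\{\|\eta - \mu\|_U < y\}$ one automatically has $f(\mu) \geq f(\eta) - y$.

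With this in hand, I would split any sample realization in $\{f(\eta) \geq x\}$ into two cases. If $\|\eta - \mu\|_U \geq y$ then the realization already belongs to the second event on the right-hand side of the inclusion. Otherwise $\|\eta - \mu\|_U < y$, and combining the Lipschitz bound with $f(\eta) \geq x$ gives $f(\mu) \geq f(\eta) - y \geq x - y$, placing the realization in the first event. Once the inclusion is established, the stated inequality follows immediately from $\mathbb{P}(A \cup B) \leq \mathbb{P}(A) + \mathbb{P}(B)$.

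Because the argument is just the Lipschitz property followed by a union bound, there is no substantive obstacle to overcome; the lemma is really a general-purpose comparison device. I anticipate that it will be invoked later in Section \ref{sec3} and beyond, together with the sub-Gaussian control on $\|\phi_{m,n} - \psi_{m,n}\|_{B(1)}$ provided by Proposition \ref{normprop}, to transfer tail estimates between functionals of the finite-range field $\psi$ and the original field $\phi$ (for instance, between the quantiles $l^{(n)}(\phi,\cdot)$ and $\tilde l^{(n)}(\psi,\cdot)$ introduced just above), so the form of the inequality with explicit parameters $x,y$ is chosen precisely to match such applications.
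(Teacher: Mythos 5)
Your argument is correct and matches the paper's proof, which simply notes that the lemma follows from a union bound (with the Lipschitz property supplying the case split exactly as you describe). No issues.
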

	\begin{proof}
		This lemma follows directly from a union bound.
	\end{proof}
	\begin{prop}\label{quancom}
		For all $p\in (0,1)$, there exists a constant $C_p$ depending only on $p$ such that for all $n>0$,
		\begin{equation}\label{quancomeq}
			\tilde l^{(n)}(\psi,p)\geq C_p^{-1} l^{(n)}(\phi,p/2)
			\And \tilde l^{(n)}(\psi,1-p)\leq C_pl^{(n)}(\phi,1-p/2).
		\end{equation}
	\end{prop}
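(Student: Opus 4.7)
The approach is to leverage two ingredients: (i) a deterministic comparison showing that the left-right resistances built from the fields $\phi_n$ and $\psi_n$ differ at most by an exponential factor in $\|\phi_n-\psi_n\|_{B(1)}$, and (ii) Proposition \ref{normprop}, which tells us that this $L^\infty$-difference has sub-Gaussian tails uniformly in $n$. Combining these via a simple union bound translates directly into the quantile comparison \eqref{quancomeq}.

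\textbf{Step 1 (deterministic comparison).} I would first observe that if $\|\phi_n-\psi_n\|_{B(1)}\leq y$, then every edge $e$ in $S^n_{1,1}$ satisfies $e^{-\gamma y}\exp(\gamma\phi_n(m_e))\leq \exp(\gamma\psi_n(m_e))\leq e^{\gamma y}\exp(\gamma\phi_n(m_e))$. Using the same unit flow $\theta$ in the variational formula \eqref{resistancedef} for both fields, summing $\theta(e)^2 r_e$ edgewise yields
\[
e^{-\gamma y}R^n_{1,1}\ \leq\ \tilde R^n_{1,1}\ \leq\ e^{\gamma y}R^n_{1,1}.
\]

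\textbf{Step 2 (tail bound on the field difference).} By Proposition \ref{normprop}, for any $p\in(0,1)$ there exists $y_p>0$ such that
\[
\mathbb P\bigl(\|\phi_n-\psi_n\|_{B(1)}>y_p\bigr)\leq p/2 \quad \text{for every } n\geq 1.
\]
This $y_p$ depends only on $p$, and I will set $C_p:=e^{\gamma y_p}$.

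\textbf{Step 3 (translating to quantiles).} For the lower bound in \eqref{quancomeq}, let $L_\psi=\tilde l^{(n)}(\psi,p)$, so that $\mathbb P(\tilde R^n_{1,1}\leq L_\psi)\geq p$ (by continuity of the law, which the authors note via Cameron-Martin). By Step 1,
$\{\tilde R^n_{1,1}\leq L_\psi\}\cap\{\|\phi_n-\psi_n\|_{B(1)}\leq y_p\}\subset\{R^n_{1,1}\leq C_p L_\psi\}$,
and thus
\[
\mathbb P(R^n_{1,1}\leq C_p L_\psi)\ \geq\ p - p/2\ =\ p/2.
\]
By the definition of $l^{(n)}(\phi,p/2)$ this gives $l^{(n)}(\phi,p/2)\leq C_p \tilde l^{(n)}(\psi,p)$, which is the first inequality. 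The second inequality is proved in the mirror fashion: starting from $L_\phi=l^{(n)}(\phi,1-p/2)$, Step 1 and the tail bound yield $\mathbb P(\tilde R^n_{1,1}\leq C_p L_\phi)\geq 1-p$, hence $\tilde l^{(n)}(\psi,1-p)\leq C_p l^{(n)}(\phi,1-p/2)$.

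The argument is entirely routine once Steps 1 and 2 are in place; there is no serious obstacle, only bookkeeping of inequalities. The only mild subtlety is ensuring that $p/2$ on the right-hand side is achievable with a constant $C_p$ depending only on $p$ and not on $n$, which is exactly what Proposition \ref{normprop} provides since its tail bound is uniform in $n$.
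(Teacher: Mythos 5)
Your proposal is correct and follows essentially the same route as the paper: the Lipschitz bound $|\log R^n_{1,1}-\log\tilde R^n_{1,1}|\leq\gamma\|\phi_n-\psi_n\|_{B(1)}$ plus the uniform tail bound of Proposition \ref{normprop}, with your Step 3 just carrying out by hand the union-bound transfer that the paper delegates to Lemma \ref{comp}. (Only cosmetic remark: since $\gamma\leq 1$ you can take $C_p=e^{y_p}$ so the constant depends on $p$ alone, exactly as the paper does.)
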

	\begin{proof}
		By the definition of the effective resistance as in \eqref{resistancedef}, we have (recall $\gamma \leq 1$)
		$$
			|\log R^{n}_{1,1}-\log \tilde{R}^n_{1,1}|\leq \gamma\|\phi_{n}-\psi_n\|_{B(1)}\leq \|\phi_{n}-\psi_n\|_{B(1)}.
		$$
	Combined with Proposition \ref{normprop} and Lemma \ref{comp}, this yields the desired estimate.
	\end{proof}
	
	The next a priori bounds on quantiles follow from Lemma \ref{duallem}.
	\begin{lem}\label{prio}
		For all $n>0$ and $p\in(0,1/2)$, we have
		\begin{equation}
			l^{(n)}(\phi,1-p)\leq \Lambda_n(\phi,p)
			\And \tilde l^{(n)}(\psi,1-p)\leq \tilde \Lambda_n(\psi,p).
		\end{equation}
	\end{lem}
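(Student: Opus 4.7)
The plan is to reduce both inequalities in Lemma \ref{prio} to the assertion that the median of the left-right resistance across the unit square is at most $1$, i.e., $l^{(n)}(\phi, 1/2) \leq 1$ and $\tilde l^{(n)}(\psi, 1/2) \leq 1$. Once established, the monotonicity of quantiles in $p$ gives $l^{(n)}(\phi, p) \leq 1$ for every $p \in (0, 1/2)$, from which the desired bound follows immediately:
\[
\Lambda_n(\phi, p) \;\geq\; \frac{l^{(n)}(\phi, 1-p)}{l^{(n)}(\phi, p)} \;\geq\; l^{(n)}(\phi, 1-p),
\]
and likewise for $\tilde\Lambda_n(\psi, p)$.

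To bound the median, I would combine Lemma \ref{duallem} with the rotational symmetry of the driving field. Since $\gamma\phi_n$ is centered Gaussian it satisfies $\eta \overset{d}{=} -\eta$, so Lemma \ref{duallem} yields that $R^n_{1,1}$ is equidistributed with the reciprocal of the up-down effective resistance on the dual network $(S^n_{1,1})^*$. On the other hand, the defining convolutions of $\phi_n$ (and of $\psi_n$) use rotationally symmetric heat kernels against space-time white noise, so both fields are invariant in law under a $\pi/2$-rotation; consequently, the up-down effective resistance of a square equals in distribution its left-right effective resistance. Chaining these two observations gives $R^n_{1,1} \overset{d}{=} 1/R^n_{1,1}$, so $\log R^n_{1,1}$ is symmetric about $0$ and the median is exactly $1$.

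The main technical subtlety, and the point I would treat most carefully, is that $(S^n_{1,1})^*$ does not literally equal $S^n_{1,1}$: inspecting the vertex sets in Lemma \ref{duallem}, the dual differs from the original square by one lattice step in aspect ratio. I would handle this either by a direct flow comparison in the same spirit as the proof of Proposition \ref{mesh} --- constructing a near-unit flow on $(S^n_{1,1})^*$ from a crossing flow on the rotated image of $S^n_{1,1}$, paying only a factor controlled by the oscillation of $\phi_n$ at scale $2^{-n}\zeta_n^{-1}$ --- or by first passing to a slightly modified, exactly self-dual lattice variant of $S^n_{1,1}$ whose quantile profile differs from the original by a factor that vanishes as one deals with a proof by approximation. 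Either route is routine, and the same reasoning, using that $\psi_n$ also inherits the rotational symmetry and sign-flip invariance, yields the claim for $\tilde\Lambda_n(\psi, p)$.
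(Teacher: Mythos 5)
Your overall strategy --- reduce to $l^{(n)}(\phi,p)\le 1$, and get this from Lemma \ref{duallem} plus $\pi/2$-rotation invariance and absence of atoms --- is the same as the paper's, and the lattice subtlety you flag is real. The genuine gap is precisely in the step you dismiss as routine. The dual network $(S^n_{1,1})^*$ is not a rotated copy of $S^n_{1,1}$: it has one more column and one fewer row, so after rotation it is a strictly \emph{easier} crossing problem (crossing length shorter by one lattice step, one more transverse line). Hence the honest output of duality, rotation and Rayleigh-type monotonicity is $R^n_{1,1}\overset{d}{=}X^{-1}$ with $X$ stochastically dominated by $R^n_{1,1}$, which gives $\mathbb{P}(R^n_{1,1}\le 1)\le 1/2$ --- the \emph{opposite} direction from what your reduction requires, namely $\mathbb{P}(R^n_{1,1}\le 1)\ge p$ for every $p<1/2$. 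The exact median-one identity holds only for the self-dual rectangle with $k$ lattice steps in the crossing direction and $k+1$ transverse steps (this is exactly the object the paper's proof works with, taking dimensions $k2^{-n}\zeta_n^{-1}\times(k+1)2^{-n}\zeta_n^{-1}$ and $k=2^{n+1}\zeta_n$); but no such rectangle is a harder crossing problem than the square --- it would have to be at least as long in the crossing direction and no larger transversally, which is incompatible with being one step longer transversally --- so the available monotone comparison again only bounds $\mathbb{P}(R^n_{1,1}\le 1)$ from above by $1/2$. In particular your assertion that $\log R^n_{1,1}$ is symmetric about $0$ does not follow; the lattice asymmetry pushes the square's median weakly above $1$, i.e.\ the correction has the unfavorable sign.

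Neither of your proposed patches closes this. A flow comparison in the spirit of Proposition \ref{mesh} pays a multiplicative factor of the form $e^{\gamma\,\mathrm{osc}(\phi_n,2^{-n+1}\zeta_n^{-1},\cdot)}$, where the oscillation is a random variable with Gaussian tails rather than a vanishing quantity; such comparisons control quantiles only up to constants and up to exceptional events, which is useless for the exact inequality $l^{(n)}(\phi,p)\le 1$: the reduction through $\Lambda_n(\phi,p)\ge l^{(n)}(\phi,1-p)/l^{(n)}(\phi,p)$ leaves no room for any factor exceeding $1$. Passing to an exactly self-dual lattice variant is the paper's route, but then the entire content of the lemma sits in the transfer back to $R^n_{1,1}$, and as explained the pathwise comparison transfers in the wrong direction; this is the point that needs an actual argument (or a reformulation of the quantile one compares with), not a remark that either route is routine.
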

	\begin{proof}
		It suffices to show that $l^{(n)}(\phi,p)\leq1$ and $\tilde l^{(n)}(\psi,p)\leq1$. For any $k>0$ and a rectangle $S$ with dimensions $k2^{-n}\zeta_n^{-1} \times (k+1)2^{-n}\zeta_n^{-1}$, we can apply Lemma \ref{duallem} and derive that
		\begin{equation}
			\mathbb{P}(R_S(\partial_{\mathrm{left}}S,\partial_{\mathrm{right}}S)\leq 1)=\mathbb{P}(R_{S^*}(\partial_{\mathrm{up}}S^*,\partial_{\mathrm{down}}S^*)^{-1}\leq 1)
			=\mathbb{P}(R_S(\partial_{\mathrm{left}}S,\partial_{\mathrm{right}}S)\geq 1).
		\end{equation}
		Recalling the property of no Dirac mass, we then get that
		$$\mathbb{P}(R_S(\partial_{\mathrm{left}}S,\partial_{\mathrm{right}}S)\leq 1)=1/2,$$
		which implies that $\mathbb{P}(R^{n}_{1,1}\leq 1)\leq1/2$ by taking $k=2^{n+1}\zeta_n$. Thus, we have that $l^{(n)}(\phi,p)\leq1$, and similarly we have $\tilde l^{(n)}(\psi,p)\leq1$.
	\end{proof}
	
	\section{Russo-Seymour-Welsh estimates}\label{sec4}
	In this section, we will derive the following Russo-Seymour-Welsh estimates for quantiles of effective resistances, employing the proof idea of RSW results in \cite{dubedat2020liouville}. For a rectangle, we will refer to a crossing joining the two shorter sides as a \emph{hard} crossing, and refer to a crossing joining the two longer sides as an \emph{easy} crossing.
	\begin{prop}\label{rsw}
		For $(a,b),(\tilde a,\tilde b)$ with $a/b<1$ and $\tilde a/\tilde b>1$, there exists $C_1=C_1(a,b,\tilde a,\tilde b)$ such that for all $n\geq 1$ and $p\in (0,1)$ we have
		\begin{equation}
			l^{(n)}_{\tilde a,\tilde b}(\phi,p/C_1)\leq C_1l^{(n)}_{a,b}(\phi,p)e^{\gamma C_1\sqrt{\log(C_1/p)}},
		\end{equation}
		\begin{equation}
			l^{(n)}_{\tilde a,\tilde b}(\phi,1-p)\leq C_1l^{(n)}_{a,b}(\phi,C_1p)e^{\gamma C_1\sqrt{\log(C_1/p)}}.
		\end{equation}
	\end{prop}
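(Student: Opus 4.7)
The plan is to follow the RSW method of \cite{dubedat2020liouville}, upgrading easy-direction crossings of $B(a,b)$ (which is ``tall'' since $a/b<1$) into hard-direction crossings of $B(\tilde a,\tilde b)$ (which is ``wide'' since $\tilde a/\tilde b>1$) via approximate conformal invariance of the GFF and the generalized series law. First, by Propositions \ref{quancom} and \ref{mesh}, I may freely pass to the finite-range field $\psi_n$ and to any convenient mesh size at the cost of constants in both the quantile level and the multiplicative factor; the finite-range dependence of $\psi_n$ is crucial for the subsequent concatenation step.

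Next, construct a conformal map $f$ from (a neighborhood of) $B(a,b)$ onto a curved tube inside $B(\tilde a,\tilde b)$ whose image connects the two short sides. Using the white-noise representation, $\psi_n\circ f$ differs from $\psi_n$ restricted to the image by an explicit deterministic shift (roughly $\log|f'|$) plus a centered Gaussian remainder of bounded variance; a Cameron-Martin change of measure for the shift, combined with a standard Gaussian tail bound, produces the prefactor $e^{\gamma C_1\sqrt{\log(C_1/p)}}$, and the Gaussian remainder can be absorbed by replacing $p$ by a constant multiple of itself. On the continuous side the Dirichlet energy is exactly conformally invariant, so an easy-direction crossing of $B(a,b)$ of small energy transports to a low-energy crossing of the tube. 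To return to the lattice $\mathbb Z_n^2$, I would discretize the continuous flow by distributing it over those edges of $\mathbb Z_n^2$ that intersect the $f$-image of each edge of the source lattice, and bound the discretization error by $\exp(O(\gamma\,\mathrm{osc}(\phi_n,2^{-n+1}\zeta_n^{-1},\cdot)))$, which is $O(1)$ with overwhelming probability by Corollary \ref{osc1} thanks to the assumption $\zeta_n\geq\sqrt n$. Finally, a uniformly bounded number $N=N(a,b,\tilde a,\tilde b)$ of such transported easy crossings, placed in overlapping positions along the long direction of $B(\tilde a,\tilde b)$, are glued by the triangle inequality (Proposition \ref{series}) into a left-right crossing of $B(\tilde a,\tilde b)$ whose resistance is at most $N$ times the maximum individual transported easy-crossing resistance. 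A union bound over the $N$ events that each easy crossing lies below its own $(1-p)$-quantile, which are approximately independent by finite-range dependence of $\psi_n$, yields both displayed inequalities.

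The main obstacle is the interaction between conformal transport and re-discretization: a conformal map destroys the lattice structure, while the $\gamma$-weighting of the discrete resistance by $\phi_n$ is genuinely sensitive to small geometric perturbations of the mesh. Every error term must be sorted into either (i) a deterministic Cameron-Martin shift $\log|f'|$, which is what is responsible for the $e^{\gamma C_1\sqrt{\log(C_1/p)}}$ factor, or (ii) the mesh-scale oscillation of $\phi_n$, controlled by Corollary \ref{osc1}; the hypothesis $\zeta_n\geq\sqrt n$ is exactly what keeps the latter contribution $O(1)$ with the required Gaussian tail, and thus what makes the whole argument go through.
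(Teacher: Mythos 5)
Your overall strategy (conformal transport of an easy crossing plus discrete-vs-continuous Dirichlet energy comparison controlled by the mesh-scale oscillation) is the right family of ideas, but the final step breaks down and it is exactly the step that carries the content of the first displayed inequality. You propose to build a hard crossing of $B(\tilde a,\tilde b)$ by gluing $N$ transported easy crossings via Proposition \ref{series}, and then to conclude by ``a union bound over the $N$ events that each easy crossing lies below its own $(1-p)$-quantile.'' That bookkeeping only works in the high-quantile regime: each such event has probability $\geq 1-p$, so all $N$ hold with probability $\geq 1-Np$, which gives (a version of) the second inequality. For the first inequality you must work at the low quantile $l^{(n)}_{a,b}(\phi,p)$ with $p$ possibly small, so each individual transported-crossing event has probability only of order $p$; requiring $N$ of them \emph{simultaneously} cannot lose merely a constant factor --- even with exact independence or FKG (the events are decreasing in the field, so FKG applies) you only get probability of order $p^{N}$, i.e.\ a bound of the form $l^{(n)}_{\tilde a,\tilde b}(\phi,p^{N}/C)\leq\cdots$, which is genuinely weaker than $l^{(n)}_{\tilde a,\tilde b}(\phi,p/C_1)\leq\cdots$ and is not what Lemma \ref{rswlem4} (Claim 1) must deliver, since the second inequality is then deduced from the first by duality, not the other way around. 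The paper avoids this by putting all the pigeonholing on the \emph{source} side: any easy crossing of $B(a,b)$ must hard-cross one of $j$ translates of $B(a/2,b/2)$ (Lemma \ref{rswlem1} plus Proposition \ref{parallel}), this is iterated $k$ times, and then one pair among $m$ sub-arcs of the ellipse $\mathtt E$ carries the crossing --- each step divides the probability by a constant only --- so that a \emph{single} conformal transfer (Lemma \ref{rswlem3}, via the ellipse construction of Lemma \ref{rswlem2} with $|F'|\geq1$) already produces a complete left-right crossing of $B(\tilde a,\tilde b)$, with no gluing in the target and hence no simultaneity cost.

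Two secondary points. First, the initial reduction to $\psi_n$ is both unnecessary and problematic: the approximate conformal covariance input (Proposition \ref{CI}) is formulated for the white-noise field $\phi_\delta$, and the truncated kernel defining $\psi_\delta$ destroys the exact covariance computation behind it; in the paper the RSW section works directly with $\phi_n$, and finite-range dependence plays no role here (a union bound needs no independence, and your overlapping transported crossings would not be independent anyway). Second, the factor $e^{\gamma C_1\sqrt{\log(C_1/p)}}$ does not come from a Cameron--Martin shift by $\log|F'|$ (that shift is deterministic and bounded, contributing only constants); in the paper it arises from evaluating the Gaussian tails of $\|\phi_n^L\|_U$ and of the mesh-scale oscillations at level $x\asymp\sqrt{\log(1/\epsilon)}$ so that the exceptional probabilities are a small fraction of $\epsilon$, with the bounded-variance piece $\phi_n^H$ handled by Markov's inequality after conditioning.
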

	Our proof follows the RSW estimate for the LQG metric as in \cite{dubedat2020liouville}, with a notable difference that the effective resistance is defined with reference to a rescaled lattice (whereas in \cite{dubedat2020liouville} the metric is optimized over curves in $\mathbb R^2$). Such a difference causes an issue when applying a conformal map, as the lattice structure will not be preserved. As a result, we will first compare the Dirichlet energy on the rectangular network (i.e., the resistance) to its continuous analogue. It is for this reason that our assumption $\zeta_n \geq \sqrt{n}$ is important since this facilitates control of the oscillation of the field.
	
	\begin{lem}\label{diri}
		For a compact domain $K$, let $f$ be a function on $K\cap V(\mathbb Z_n^2)$. Consider a smooth field $\{\eta_x\}_{x\in \mathbb{R}^2}$. There exists an absolute constant $c$ such that $f$ can be extended to a function on $K$ such that
		\begin{equation}\label{dirieq}
			\sum_{e\in E(K)}e^{-\gamma \eta_{m_e}}(f(e^+)-f(e^-))^2
			\geq ce^{-\gamma\mathrm{osc}(\eta,2^{-n+1}\zeta_n^{-1},K)}\int_{K}\exp(-\gamma\eta_y)|\nabla f|^2dy.
		\end{equation}
	\end{lem}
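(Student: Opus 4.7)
\bigskip
\noindent\textbf{Proof plan for Lemma \ref{diri}.}

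The plan is to extend $f$ by \emph{piecewise linear interpolation on a triangulation} of the rescaled lattice $\mathbb Z_n^2$ and then compare, triangle by triangle, the continuous Dirichlet energy against the lattice Dirichlet energy. Concretely, let $h=2^{-n}\zeta_n^{-1}$ and subdivide each closed square of $\mathbb Z_n^2$ lying inside $K$ into two right triangles along one of its diagonals, chosen consistently so that every lattice edge in $E(K)$ belongs to exactly one triangle (say, always the SW--NE diagonal). On each such triangle $T$ with vertices $a,b,c$ (labelled so that $(a,b)$ and $(b,c)$ are the two lattice edges of $T$), set $f|_T$ to be the unique affine function with the prescribed values at $a,b,c$. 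Near $\partial K$ we simply freeze $f$ to be the value at the nearest lattice vertex, since those cells contribute nothing to either side of \eqref{dirieq} up to a negligible boundary strip.

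With this extension, $\nabla f$ is constant on $T$, and a direct computation using $a=(0,0)$, $b=(h,0)$, $c=(h,h)$ (after rotation) gives
\begin{equation*}
|T|\,|\nabla f|^2=\tfrac{1}{2}\bigl((f(b)-f(a))^2+(f(c)-f(b))^2\bigr).
\end{equation*}
Since $T$ has diameter $h\sqrt{2}\le 2^{-n+1}\zeta_n^{-1}$ and the two edge midpoints $m_{(a,b)},m_{(b,c)}$ lie in $T$ at distance $h/\sqrt 2$, denoting $\mathrm{Osc}:=\mathrm{osc}(\eta,2^{-n+1}\zeta_n^{-1},K)$ we have $\sup_{y\in T}e^{-\gamma\eta_y}\le e^{\gamma\,\mathrm{Osc}}\min\bigl(e^{-\gamma\eta_{m_{(a,b)}}},e^{-\gamma\eta_{m_{(b,c)}}}\bigr)$. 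The elementary inequality $\min(A,B)(u^2+v^2)\le Au^2+Bv^2$ then yields
\begin{equation*}
\int_T e^{-\gamma\eta_y}|\nabla f|^2\,dy\;\le\;\tfrac{1}{2}e^{\gamma\,\mathrm{Osc}}\sum_{e\in E(T)}e^{-\gamma\eta_{m_e}}(f(e^+)-f(e^-))^2,
\end{equation*}
where $E(T)$ is the pair of lattice edges contained in $T$.

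Summing over all triangles in the triangulation of $K$ and using that each edge in $E(K)$ belongs to exactly one triangle, we obtain \eqref{dirieq} with $c=2$ (and the oscillation factor as in the statement). The main conceptual point---the only place where real care is needed---is the passage from the continuous $|\nabla f|^2$ integral on a triangle to a sum of \emph{edge}-weighted squared differences: the naive bound $(e^{-\gamma\eta_{m_{(a,b)}}}+e^{-\gamma\eta_{m_{(b,c)}}})(u^2+v^2)$ fails to dominate $e^{-\gamma\eta_{m_{(a,b)}}}u^2+e^{-\gamma\eta_{m_{(b,c)}}}v^2$, so one really needs the $\min$-inequality together with the oscillation control coming from $\zeta_n\ge\sqrt n$. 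Handling the cells crossing $\partial K$ is routine (they can be discarded at the cost of shrinking $K$ by one mesh, or absorbed into $c$), so the triangle-wise estimate above constitutes the whole argument.
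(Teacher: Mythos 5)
Your proposal is correct and follows essentially the same route as the paper: extend $f$ by piecewise linear interpolation on the isosceles right-triangle subdivision of the mesh, then compare the continuous and discrete Dirichlet energies triangle by triangle, using the oscillation of $\eta$ at scale $2^{-n+1}\zeta_n^{-1}$ to replace $\sup_T e^{-\gamma\eta}$ by the edge-midpoint weights. The only slip is that an interior lattice edge belongs to two triangles (one from each adjacent square), not exactly one, but this merely changes your constant from $c=2$ to $c=1$ and is immaterial since the lemma only asks for an absolute constant.
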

	\begin{proof}
		We first extend $f$ to $\mathbb Z_n^2$ as follows: for $v \in \mathbb Z_n^2$, we let $v_K$ be the closest point to $v$ in $K\cap \mathbb Z_n^2$ (pick an arbitrary one when with multiple choices) and we let $f(v) = f(v_K)$. Now we interpolate $f$ linearly on $K$, as follows. For each mesh $B$ such that $K\cap B\neq \emptyset$, we divide it into two isosceles right triangles. For each $x\in K$, we let $u, v, w$ be the vertices of the isosceles right triangle that contains $x$. Then there exist $\lambda_1,\lambda_2,\lambda_3 \in [0,1]$ such that $x=\lambda_1u+\lambda_2v+\lambda_3w$, and we define $f(x)=\lambda_1f(u)+\lambda_2f(v)+\lambda_3f(w)$. Thus we have
		\begin{equation}\label{dirieq2}
			|\nabla f(x)|^2\leq 10\cdot2^{2n}\zeta_n^2\left((f(u)-f(v))^2+(f(u)-f(w))^2\right).
		\end{equation}
		Therefore, for some absolute constant $C$,
		\begin{equation}\label{dirieq1}
			\begin{aligned}
				\int_{K}\exp(-\gamma\eta_y)|\nabla f|^2dy
				&\leq \sum_{\Delta}\int_{\Delta}\exp(-\gamma\eta_y)|\nabla f|^2dy\\
				&\leq Ce^{\gamma \mathrm{osc}(\eta,2^{-n+1}\zeta_n^{-1},K)}\sum_{e\in E(K)}e^{-\gamma \eta_{m_e}}(f(e^+)-f(e^-))^2,
			\end{aligned}
		\end{equation}
		where $\sum_{\Delta}$ denotes summing over all isosceles right triangle $\Delta$ such that $\Delta\cap K \neq \emptyset$, and the second inequality follows from \eqref{dirieq2} (note that we denote by $e^\pm$ the two endpoints of $e$). The inequality \eqref{dirieq1} then implies \eqref{dirieq}.
	\end{proof}

	We now prove Proposition \ref{rsw} following \cite{dubedat2020liouville}. We will need an approximate conformal invariance property of $\phi_{\delta}$. Consider a conformal map $F$ between two bounded, convex and simply-connected open sets $U$ and $\tilde U$ such that $|F^{\prime}|\geq 1$ on $U$ and $\|F\|_{U}<\infty$. We consider another field
	\begin{equation}
		\tilde{\phi}_{\delta}(x)=\int_{\delta^2}^{1}\int_{\mathbb{R}^2}p_{\frac{t}{2}}(x,y)\tilde{W}(dy,dt),
	\end{equation}
	where $\tilde{W}$ is a space-time white noise that we will couple with $W$ in the following way. For $y\in U$, $t\in (0,\infty)$, let $\tilde{y}=F(y)\in \tilde U$ and $\tilde{t}=t|F^{\prime}(y)|^2$ and set $\tilde{W}(d\tilde{y},d\tilde{t})=|F^{\prime}(y)|^2W(d\tilde{y},d\tilde{t})$. The rest of the white noises are chosen to be independent, i.e., $W|_{U^c\times (0,\infty)}$, $W|_{U\times (0,\infty)}$ and $\tilde{W}|_{\tilde U^c\times (0,\infty)}$ are jointly independent.
	\begin{prop}\label{CI}
		\textup{(\hspace{-0.15mm}\cite[Lemma~6]{ding2020tightness})}
		For any compact set $K\subset U$, there exists a smooth Gaussian field $\phi_\delta^L$ whose $L^{\infty}$-norm on $K$ has uniform Gaussian tails, and there exists a smooth Gaussian field $\phi_{\delta}^H$ with uniformly bounded pointwise variance (in $\delta$ and $x\in K$). Furthermore, $\phi_{\delta}^H$ is independent of $(\phi_{\delta},\phi_{\delta}^L)$, and
		\begin{equation}
			\tilde{\phi}_{\delta}(F(x))-\phi_{\delta}(x)=\phi_{\delta}^L(x)+\phi_{\delta}^H(x).
		\end{equation}
	\end{prop}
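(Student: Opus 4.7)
The plan is to perform a change of variables in the spatial-temporal integral defining $\tilde\phi_\delta(F(x))$ using the coupling between $W$ and $\tilde W$, and then to identify the leading term as $\phi_\delta(x)$ while controlling the lower-order corrections. Concretely, first I would split $\tilde\phi_\delta(F(x))$ into the contribution from $\tilde U \times (0,\infty)$ (which by the coupling is determined by $W|_{U \times (0,\infty)}$) and the contribution from $\tilde U^c \times (0,\infty)$ (which by assumption is independent of $W$). The first piece will produce $\phi_\delta(x) + \phi_\delta^L(x)$, and the second, together with the analogous boundary contribution coming from $W|_{U^c \times (0,\infty)}$, will give $\phi_\delta^H(x)$.

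For the main term, I would apply the change of variables $\tilde y = F(y)$, $\tilde t = t |F'(y)|^2$. The Jacobian is $|F'(y)|^4$, and since $\tilde W(d\tilde y, d\tilde t) = |F'(y)|^2\, W(dy,dt)$ this converts the integral against $\tilde W$ into an integral against $W$. The heat kernel transforms via the conformal invariance of two-dimensional Brownian motion: a first-order Taylor expansion of $F$ at $y$ gives $|F(x)-F(y)|^2 = |F'(y)|^2 |x-y|^2 (1+O(|x-y|))$, whence
\begin{equation*}
p_{\tilde t/2}\bigl(F(x)-F(y)\bigr)\, |F'(y)|^2 = p_{t/2}(x-y) + K_\delta(x,y),
\end{equation*}
with a correction kernel $K_\delta$ that is smooth in $(x,y)$ on compact subsets of $U$, with bounds uniform in $t$. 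The time range $t \in [\delta^2/|F'(y)|^2, 1/|F'(y)|^2]$ differs from $[\delta^2,1]$ only by a factor depending on $|F'(y)|$; since $|F'|\geq 1$ this mismatch only affects an order-one range of times and can be absorbed into the bounded-variance piece.

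Then I would \emph{define} $\phi_\delta^L(x)$ as the integral against $W|_{U \times [\delta^2,1]}$ of the smooth correction kernel $K_\delta$ (plus the small time-range mismatch coming from the macroscopic range). Because $K_\delta$ is smooth and uniformly bounded together with its derivatives on compact $K \subset U$ uniformly in $\delta$, the resulting field $\phi_\delta^L$ is a smooth centered Gaussian field with uniformly bounded covariance and gradient covariance on $K$; Fernique's inequality (as already used in Lemma \ref{coarse}) then yields uniform Gaussian tails for $\|\phi_\delta^L\|_K$.

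Finally I would define $\phi_\delta^H(x)$ as the sum of (i) the integral of $p_{\tilde t/2}(F(x)-\tilde y)$ against $\tilde W|_{\tilde U^c \times [\delta^2,1]}$ and (ii) the subtraction of the integral of $p_{t/2}(x-y)$ against $W|_{U^c \times [\delta^2,1]}$, so that $\tilde\phi_\delta(F(x)) - \phi_\delta(x) = \phi_\delta^L(x) + \phi_\delta^H(x)$ by construction. Since $F(x)$ stays uniformly away from $\partial \tilde U$ for $x \in K$ and similarly $x$ stays away from $\partial U$, both integrands are bounded, and $p_t(\cdot)^2$ integrated against $t^{-1}\, dt$ is uniformly bounded in $\delta$; this gives the uniform pointwise variance bound for $\phi_\delta^H$. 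The required independence of $\phi_\delta^H$ from $(\phi_\delta,\phi_\delta^L)$ is immediate because both $\phi_\delta$ and $\phi_\delta^L$ are measurable with respect to $W|_{U \times (0,\infty)}$ (plus $W|_{U^c}$ entering only $\phi_\delta$, but not $\phi_\delta^H$, so one should be a bit careful: the $U^c$-part of $\phi_\delta$ must be grouped into $\phi_\delta^H$ rather than $\phi_\delta$ if one wants strict independence). The main obstacle I expect is precisely this bookkeeping, ensuring that $\phi_\delta^H$ is defined so as to isolate exactly the independent white-noise pieces while the Taylor remainder and time-range mismatch are confined to the smooth piece $\phi_\delta^L$, with all estimates uniform in the regularization parameter $\delta$.
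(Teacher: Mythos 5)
Your overall strategy (couple $\tilde W$ to $W$ via the conformal change of variables, Taylor-expand the transformed heat kernel, and split the discrepancy into a smooth piece with Gaussian sup-norm tails plus a bounded-variance piece coming from noise that $\phi_\delta$ never sees) is the same mechanism behind the cited result \cite[Lemma~6]{ding2020tightness}, which this paper imports without reproving. However, there is a genuine gap in your independence bookkeeping. Your $\phi_\delta^H$ contains the term $(ii)=-\sqrt\pi\int_{\delta^2}^1\int_{U^c}p_{t/2}(x-y)\,W(dy,dt)$, which is built from $W|_{U^c\times[\delta^2,1]}$ --- exactly part of the noise that defines $\phi_\delta(x)$ (the integral in the definition of $\phi_\delta$ runs over all of $\mathbb{R}^2$). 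Hence your $\phi_\delta^H$ is correlated with $\phi_\delta$, and the statement requires $\phi_\delta^H$ to be independent of $(\phi_\delta,\phi_\delta^L)$ with $\phi_\delta$ \emph{as given}; your suggested remedy of ``regrouping the $U^c$-part of $\phi_\delta$ into $\phi_\delta^H$'' is not available, since $\phi_\delta$ is fixed by the statement and cannot be redefined.

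The correct grouping is the opposite of what you propose for this term: put $(ii)$ into $\phi_\delta^L$, which is legitimate because for $x\in K$ with $\mathrm{dist}(K,U^c)>0$ the kernel is uniformly small and Lipschitz in $x$, so this exterior field has uniformly bounded increment variances and Fernique gives Gaussian tails for its supremum over $K$ (the same computation as for $\phi_{\mathrm c}^i$ in Lemma \ref{coarse}). The independent piece $\phi_\delta^H$ should consist only of (a) the $\tilde W|_{\tilde U^c\times[\delta^2,1]}$ contribution, which is independent of $W$ by the coupling, and (b) the small-time mismatch from the change of variables, i.e. the part of the pulled-back integral with $t<\delta^2$ (equivalently $\tilde t\in[\delta^2,\delta^2|F'(y)|^2)$), which uses $W$ only at times below $\delta^2$ and is therefore independent of $W|_{\mathbb{R}^2\times[\delta^2,1]}$, hence of $\phi_\delta$ and of a $\phi_\delta^L$ built solely from times in $[\delta^2,1]$; both pieces have uniformly bounded pointwise variance. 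One further caveat: your claim that the correction kernel $K_\delta$ is ``uniformly bounded together with its derivatives, uniformly in $t$'' is not literally true (its pointwise derivatives blow up as $t\to0$); what one actually proves is that the relative error $O(|x-y|)$ removes the logarithmic divergence so that the increment variance of the resulting field is $O(|x-x'|)$ uniformly in $\delta$ (as in Proposition \ref{distance}), after which Fernique yields the required uniform Gaussian tails.
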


	Let $K\subset U$ be a compact set and let $A,B$ be two boundary arcs of $K$. Let $\partial_{\mathrm{out}}(A) = \{x\in K^c\cap \mathbb Z_n^2: \exists e\in E(\mathbb Z_n^2) \mbox{ such that } x\sim e \And e\cap A\neq \emptyset\}$ be the corresponding outer boundary in $\mathbb Z_n^2$ and let $\partial_{\mathrm{in}}(A) = \{x\in K\cap \mathbb Z_n^2: \exists e\in E(\mathbb Z_n^2) \mbox{ such that } x\sim e \And e\cap A\neq \emptyset\}$ be the corresponding inner boundary in $\mathbb Z_n^2$. Here we denote by $x\sim e$ if $x$ is an endpoint of $e$, and in $e \cap A$ we view $e$ as the line segment on $\mathbb R^2$ with endpoints given by the endpoints of the edge $e$. Let $R$ be the effective resistance between $\partial_{\mathrm{out}} A$ and $\partial_{\mathrm{out}} B$ in $K\cap \mathbb Z_n^2$ with respect to the field $\phi_{n}$. We denote $\tilde{A}:=F(A)$, $\tilde{B}:= F(B)$ and $\tilde{K}:=F(K)$, and we let $\tilde R$ be
	the effective resistance between $\partial_{\mathrm{in}}\tilde{A}$ and $\partial_{\mathrm{in}}\tilde{B}$ in $\tilde{K}$ with respect to the field $\tilde{\phi}_{n}$. See Figure \ref{fig16} for an illustration.
	\begin{figure}
		\centering
		\includegraphics[width=120mm]{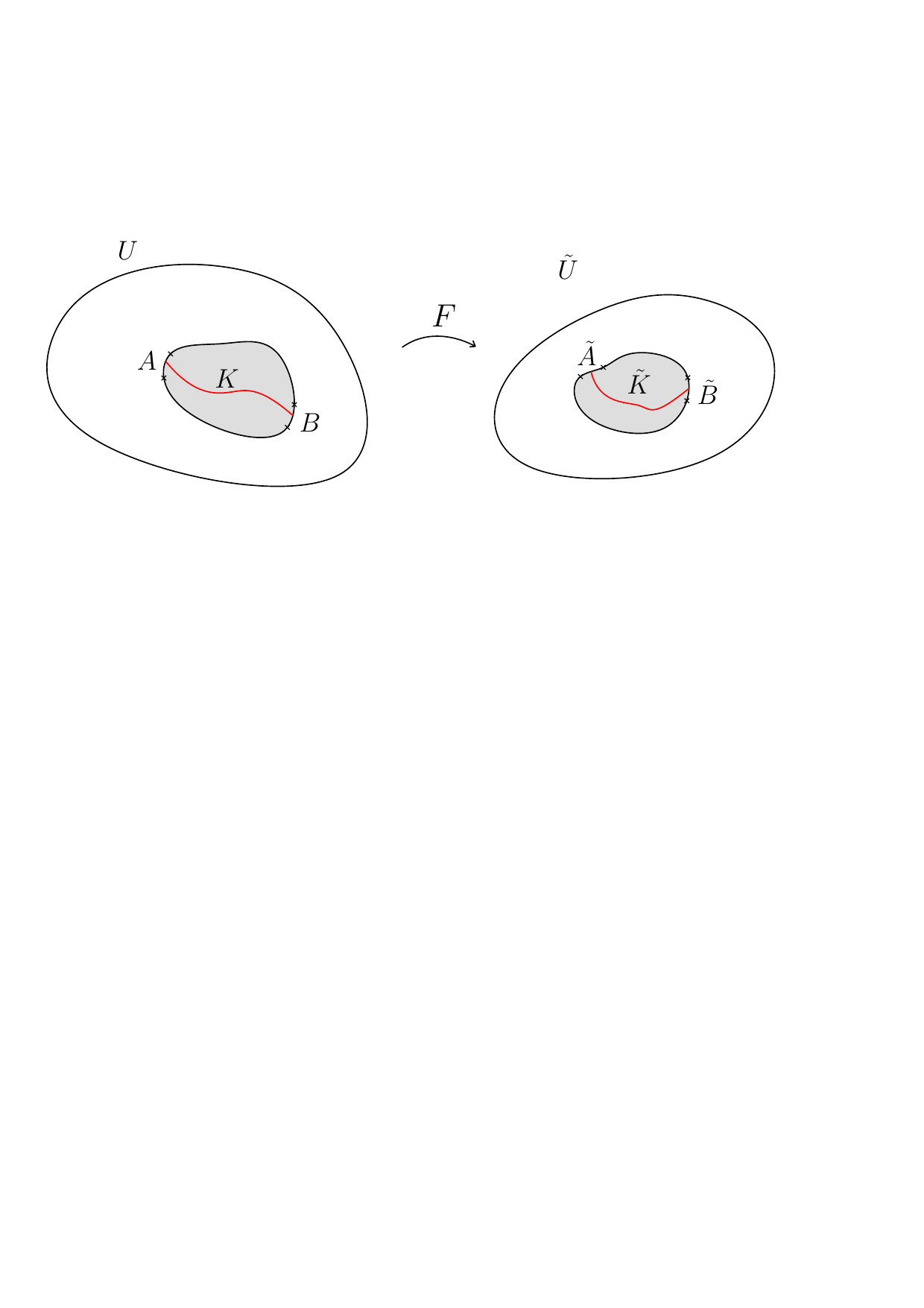}
		\caption{Illustration of domains and the conformal map.}
		\label{fig16}
	\end{figure}
	
	\begin{lem}\label{rswlem3}
		If $\mathbb{P}(R\leq l) \geq \epsilon$ for some $l>0$ and $\epsilon>0$, then 
		\begin{equation}
			\mathbb{P}(\tilde{R}\leq \tilde{l})\geq \epsilon/8.
		\end{equation}
		where $\tilde{l}=C_2le^{C_2\gamma\sqrt{\log(C_2/\epsilon)}}$ for some $C_2=C_2(U,\tilde U,A,B,K,F)$.
	\end{lem}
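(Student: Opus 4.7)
The plan is to transport the event $\{R\le l\}$ from $K$ to $\tilde K$ through a \emph{continuous} variational problem, using that the continuous Dirichlet integral $\int e^{-\gamma\eta}|\nabla f|^2\,dx$ is conformally invariant when the field $\eta$ is transported by $F$, and then to discretize back to $\tilde R$. Since $R=1/C$ and $\tilde R=1/\tilde C$ for the discrete conductances, the task becomes producing a lower bound on $\tilde C$ from the lower bound $C\ge 1/l$.

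Concretely, introduce the continuous conductance
\begin{equation*}
	C_{\mathrm{cont}}(K;\phi_n):=\inf_f\int_K e^{-\gamma\phi_n(x)}|\nabla f(x)|^2\,dx
\end{equation*}
over continuous $f$ with $f\equiv 1$ on $A$ and $f\equiv 0$ on $B$, and define $C_{\mathrm{cont}}(\tilde K;\tilde\phi_n)$ analogously. Lemma \ref{diri} applied on $\tilde K$ yields
\begin{equation*}
	\tilde C \;\ge\; c\,e^{-\gamma\,\mathrm{osc}(\tilde\phi_n,2^{-n+1}\zeta_n^{-1},\tilde K)}\,C_{\mathrm{cont}}(\tilde K;\tilde\phi_n),
\end{equation*}
while a converse comparison on $K$ (restricting the continuous minimizer to the lattice and bounding discrete differences by the gradient, with a small adjustment across the single lattice layer between $A,B$ and $\partial_{\mathrm{out}}A, \partial_{\mathrm{out}}B$) gives
\begin{equation*}
	C_{\mathrm{cont}}(K;\phi_n) \;\ge\; c\,e^{-\gamma\,\mathrm{osc}(\phi_n,2^{-n+1}\zeta_n^{-1},K)}\,C.
\end{equation*}
In the middle, the change of variables $\tilde f(y)=f(F^{-1}(y))$ combined with Proposition \ref{CI} produces
\begin{equation*}
	C_{\mathrm{cont}}(\tilde K;\tilde\phi_n) = \inf_f\int_K e^{-\gamma(\phi_n+\phi_n^L+\phi_n^H)(x)}|\nabla f(x)|^2\,dx \;\ge\; e^{-\gamma(\|\phi_n^L\|_K+\|\phi_n^H\|_K)}\,C_{\mathrm{cont}}(K;\phi_n).
\end{equation*}
Chaining the three inequalities and taking reciprocals,
\begin{equation*}
	\tilde R \;\le\; C_3\, l\, \exp\!\Bigl(\gamma\bigl(\|\phi_n^L\|_K+\|\phi_n^H\|_K+\mathrm{osc}(\phi_n,\cdot,K)+\mathrm{osc}(\tilde\phi_n,\cdot,\tilde K)\bigr)\Bigr).
\end{equation*}

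For the probability bookkeeping I fix thresholds $T_1,T_2,T_3,T_4$ of order $\sqrt{\log(C_2/\epsilon)}$ so that each of $\{\mathrm{osc}(\phi_n,\cdot,K)>T_1\}$, $\{\|\phi_n^L\|_K>T_2\}$, $\{\|\phi_n^H\|_K>T_3\}$, $\{\mathrm{osc}(\tilde\phi_n,\cdot,\tilde K)>T_4\}$ has probability at most $\epsilon/8$. The oscillation bounds follow from Corollary \ref{osc1}; the bound on $\|\phi_n^L\|_K$ from the uniform Gaussian tails given by Proposition \ref{CI}; and the bound on $\|\phi_n^H\|_K$ from Borell--TIS applied to the smooth Gaussian field $\phi_n^H$ on the compact set $K$ (its uniformly bounded pointwise variance yields finiteness of $\mathbb{E}[\|\phi_n^H\|_K]$ by Fernique, and then Gaussian concentration). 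Intersecting these four good events with $\{R\le l\}$ (probability $\ge\epsilon$) produces an event of probability at least $\epsilon/2\ge \epsilon/8$ on which $\tilde R \le \tilde l := C_2\,l\, e^{C_2\gamma\sqrt{\log(C_2/\epsilon)}}$ for a constant $C_2 = C_2(U,\tilde U,A,B,K,F)$ absorbing all prefactors.

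The main technical obstacle I anticipate is the converse of Lemma \ref{diri}: given the smooth continuous minimizer on $K$ (equal to $1$ on the arc $A$ and $0$ on $B$), one must produce a discrete voltage equal to $1$ on the adjacent lattice layer $\partial_{\mathrm{out}}A$ and to $0$ on $\partial_{\mathrm{out}}B$ whose discrete Dirichlet energy exceeds the continuous one by at most a factor of $e^{O(\gamma\cdot\mathrm{osc})}$. This calls for a careful boundary adjustment across one lattice layer and a Riemann-sum comparison of discrete differences with the gradient of a smooth function. Both the assumption $|F'|\ge 1$ (ensuring that the lattice $\tilde K\cap\mathbb Z_n^2$ is sufficiently fine relative to the conformal metric induced by $F$) and the mesh assumption $\zeta_n\ge\sqrt n$ (giving good oscillation control of $\phi_n,\tilde\phi_n$ at the lattice scale via Corollary \ref{osc1}) enter most crucially here.
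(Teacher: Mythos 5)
Your high-level strategy (transport the Dirichlet energy through $F$, compare the fields via Proposition \ref{CI}, and restrict to a good event of probability of order $\epsilon$) is the same as the paper's, but two steps in your chain do not hold as stated. The first is your ``converse comparison'' $C_{\mathrm{cont}}(K;\phi_n)\geq c\,e^{-\gamma\,\mathrm{osc}(\phi_n,2^{-n+1}\zeta_n^{-1},K)}\,C$ for the discrete conductance $C=R^{-1}$. Restricting the continuous minimizer $h$ to the lattice and ``bounding discrete differences by the gradient'' does not give this: Cauchy--Schwarz along an edge only yields $(h(e^+)-h(e^-))^2\leq |e|\int_e|\nabla h|^2\,ds$, and summing over edges produces line integrals of $|\nabla h|^2$ over the one-dimensional lattice skeleton, which are not controlled by the area integral $\int_K e^{-\gamma\phi_n}|\nabla h|^2\,dx$ without an elliptic-regularity input (the pointwise gradient of the weighted minimizer is not controlled by its energy) or some other device such as smearing the discrete current into a continuous flow. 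The paper never needs this direction: it takes the \emph{discrete} minimizer $f$ of $\tilde R^{-1}$ on $\tilde K$, applies Lemma \ref{diri} only on the $\tilde K$ side, and then plugs $g=f\circ F$ (set to $1$ on $\partial_{\mathrm{out}}A$ and $0$ on $\partial_{\mathrm{out}}B$) directly into the discrete Dirichlet form on $K$; because $f$ is piecewise linear and $|F'|\geq 1$, the discrete energy of $g$ is bounded by $C_3e^{C\gamma\,\mathrm{osc}(\phi_n,\cdot,U)}\sum_\Delta e^{-\gamma\phi_n(F^{-1}(x_\Delta))}|\nabla f(x_\Delta)|^2 2^{-2n}\zeta_n^{-2}$, so one never has to discretize an unknown continuous minimizer, nor introduce an intermediate continuous variational problem on $K$ with its boundary-admissibility issues.

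The second, more serious, gap is your control of $\phi_n^H$. Proposition \ref{CI} asserts uniform Gaussian tails for the sup-norm of $\phi_n^L$, but for $\phi_n^H$ it only gives uniformly bounded \emph{pointwise} variance; bounded pointwise variance does not imply a bound on $\mathbb{E}\|\phi_n^H\|_K$ (Fernique needs increment/entropy control), and one should not expect $\|\phi_n^H\|_K$ to be tight in $n$ at all: it is the supremum over $K$ of a high-frequency field decorrelating at scales comparable to $2^{-n}$, so it diverges (like $\sqrt{n}$) as $n\to\infty$. Hence your event $\{\|\phi_n^H\|_K\leq T_3\}$ with $T_3\asymp\sqrt{\log(1/\epsilon)}$ has probability tending to $0$ for large $n$, and the inequality $C_{\mathrm{cont}}(\tilde K;\tilde\phi_n)\geq e^{-\gamma(\|\phi_n^L\|_K+\|\phi_n^H\|_K)}C_{\mathrm{cont}}(K;\phi_n)$ degenerates. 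The paper's fix is structural: since the $\phi_n^H$-dependence enters through a sum with nonnegative coefficients, one exploits the independence of $\phi_n^H$ from $(\phi_n,\phi_n^L)$ together with its bounded pointwise variance to bound the conditional expectation of that sum term by term, and Markov's inequality then gives the required bound on an event of conditional probability at least $1/2$. That lost factor of $2$, combined with the $\phi_n^L$ and oscillation events, is exactly why the lemma concludes with $\epsilon/8$ rather than your $\epsilon/2$. As written, your argument would only prove the statement for $n$ bounded in terms of $\epsilon$.
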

	\begin{proof}
		We consider the function $f$ that achieves the infimum in \eqref{condef}, i.e., $f(\partial_{\mathrm{in}}(\tilde A))=1$, $f(\partial_{\mathrm{in}}(\tilde B))=0$, and
		\begin{equation}\label{diri1}
			\tilde{R}^{-1}=\sum_{e\in E(\tilde K)}e^{-\gamma \tilde \phi_n(m_e)}(f(e^+)-f(e^-))^2.
		\end{equation}
		Extending the function $f$ as in Lemma \ref{diri}, we then get from Lemma \ref{diri} and \eqref{diri1} that
		\begin{equation}\label{rswlemeq4}
			\begin{aligned}
				\tilde{R}^{-1}
				\geq ce^{-\gamma\mathrm{osc}(\tilde{\phi}_{n},2^{-n+1}\zeta_n^{-1},\tilde U)}\int_{\tilde K}\exp(-\gamma\tilde \phi_{n}(x))|\nabla f|^2dx.
			\end{aligned}
		\end{equation}
		We now define a reasonable approximation for the potential function on $K\cup \partial_{\mathrm{out}} A \cup \partial_{\mathrm{out}} B$ by
		\begin{equation}
			g(v)=
			\begin{cases}
				f\circ F(v), &\mbox{ if }v\in K,\\
				1, &\mbox{ if } v\in \partial_{\mathrm{out}} A,\\
				0, &\mbox{ if } v\in \partial_{\mathrm{out}} B.
			\end{cases}
		\end{equation}
		By Proposition \ref{CI}, we have the decomposition
		\begin{equation}\label{decom}
			\tilde{\phi}_n\circ F=\phi_n+\phi_n^L+\phi_n^H,
		\end{equation}
		where conditioned on $(\phi_n, \phi_n^L)$ we have $\phi_n^H$ is a Gaussian process with bounded point-wise variance, and in addition $\phi_n^L$ is a Gaussian field whose $L^\infty$-norm has uniform Gaussian tails. Then by \eqref{condef} and the mean value property,
		$$
			\begin{aligned}
				&R^{-1}
				\leq \sum_{e\in E}e^{-\gamma \tilde \phi_n(m_e)}(g(e^+)-g(e^-))^2
				=\sum_{e\in E}e^{-\gamma \tilde \phi_n(m_e)}|\nabla g(u_e)|^22^{-2n}\zeta_n^{-2}\\
				&\leq C_3e^{C\gamma \mathrm{osc}(\phi_{n},2^{-n+1}\zeta_n^{-1},U)}\sum_{\Delta}\exp(-\gamma \phi_{n}\circ F^{-1}(x_{\Delta}))|\nabla f(x_{\Delta})|^22^{-2n}\zeta_n^{-2}\\
				&=C_3e^{C\gamma \mathrm{osc}(\phi_{n},2^{-n+1}\zeta_n^{-1},U)}\sum_{\Delta}\exp\left(-\gamma \tilde{\phi}(x_{\Delta})+\gamma\phi_n^L\circ F^{-1}(x_{\Delta})+\gamma\phi_n^H\circ F^{-1}(x_{\Delta})\right)|\nabla f(x_{\Delta})|^22^{-2n}\zeta_n^{-2},
			\end{aligned}
		$$
		where $u_e$ is some point in $e\cap K$ (given by the mean value property), $C_3=C_3(F)$, $\sum_{\Delta}$ is summing over all isosceles right triangle $\Delta$ such that $\Delta\cap \tilde K\neq \emptyset$, and $x_{\Delta}$ is an arbitrary point in $\Delta$ (this is fine since the gradient is a constant function in each triangle). By \eqref{decom} and by Markov's inequality we see that given $(\phi_n, \phi_n^L)$, with conditional probability at least 1/2,
		\begin{equation}\label{rswlemeq5}
			R^{-1} \leq 2C_3e^{C\gamma \mathrm{osc}(\phi_{n},2^{-n+1}\zeta_n^{-1},U)}\sum_{\Delta}\exp\left(-\gamma \tilde{\phi}(x_{\Delta})+\gamma\phi_n^L\circ F^{-1}(x_{\Delta})\right)|\nabla f(x_{\Delta})|^22^{-2n}\zeta_n^{-2},
		\end{equation}	
		where we used the fact that $\gamma<1$ to bound the moment generating function. We write $\mathcal E$ as the event in \eqref{rswlemeq5}. By Proposition \ref{gradient} and (properties described after) \eqref{decom}, if we set $x=C_4\sqrt{\log(C_4/\epsilon)}$ (for some $C_4=C_4(U, \tilde U)$), we have the following bounds:
		\begin{equation}\label{rswlemeq1}
			\mathbb{P}(\|\phi_n^L\|_{U}\geq x)\leq \epsilon/4,
		\end{equation}
		\begin{equation}\label{rswlemeq2}
			\mathbb{P}(\mathrm{osc}(\phi_{n},2^{-n+1}\zeta_n^{-1},U)\geq x)\leq \epsilon/4,
		\end{equation}
		\begin{equation}\label{rswlemeq3}
			\mathbb{P}(\mathrm{osc}(\tilde{\phi}_{n},2^{-n+1}\zeta_n^{-1}, \tilde U)\geq x)\leq \epsilon/8.
		\end{equation}
		Now we define the event $\mathcal{F}$ by 
		$$\mathcal{F}=\{R\leq l\}\cap\{\|\phi_n^L\|_{U}< x\}\cap\{\mathrm{osc}(\phi_{n},2^{-n+1}\zeta_n^{-1},U)< x\}.$$
		Combining \eqref{rswlemeq1}, \eqref{rswlemeq2} with the fact $\mathbb P(R<l)>\epsilon$ we get that $\mathbb{P}(\mathcal{F})\geq \epsilon/2$. Since $\mathcal F$ depends only on $(\phi_n,\phi_n^L)$, by \eqref{rswlemeq5} we have $\mathbb{P}(\mathcal E\cap \mathcal F)\geq \epsilon/4$. Combining this with \eqref{rswlemeq3} we have
		\begin{equation}\label{rswlemeq6}
			\mathbb{P}(\mathcal E\cap \mathcal F\cap \{\mathrm{osc}(\tilde{\phi}_{n},2^{-n+1}\zeta_n^{-1}, \tilde U)< x\})\geq \epsilon/8.
		\end{equation}
		On the event as in \eqref{rswlemeq6}, by \eqref{rswlemeq4} we get that
		\begin{equation}
			\tilde{R}^{-1}\geq cC_3^{-1}e^{-C\gamma x}R^{-1}\geq cC_3^{-1}e^{-C\gamma x}l^{-1},
		\end{equation}
		which then completes the proof.
	\end{proof}
	
	\begin{lem}\label{rswlem1}
		\textup{(\hspace{-0.15mm}\cite[Lemma~4.8]{dubedat2020liouville})}
		For $0<a<b$, there exist $j = j(b/a)$ and $j$ rectangles isometric to $B(a/2,b/2)$ such that if $\pi$ is a left-right crossing of the rectangle $B(a,b)$, then at least one of the $j$ rectangles is crossed in the hard direction by a sub-path of that crossing.
	\end{lem}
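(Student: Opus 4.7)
The statement is a purely topological RSW-style combinatorial lemma with no dependence on the random field, so the proof is entirely deterministic; the key is to design an explicit cover of $B(a,b)$ by $j = j(b/a)$ rectangles isometric to $B(a/2,b/2)$ (possibly rotated by $90^\circ$), arranged so that any connected left-right crossing is forced to produce a hard sub-crossing of one of them.

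My plan is to use both natural copies of $B(a/2,b/2)$ (dimensions $a \times b$, hard direction vertical) and rotated copies (dimensions $b \times a$, hard direction horizontal). I first handle the base regime $b/a \leq 2$, where a rotated copy fits horizontally inside $B(a,b)$ since $b \leq 2a$. I place a family of rotated copies at vertical heights spaced by $a/2$, using $O(1)$ rectangles, and argue by the following dichotomy for any left-right crossing $\pi$: either some sub-arc of $\pi$ stays in a horizontal strip of height $a$ long enough to produce a horizontal crossing of length $b$ (a hard crossing of the corresponding rotated rectangle), or $\pi$ oscillates vertically enough that, combined with a few natural (non-rotated) copies placed at appropriate heights, the Jordan curve theorem forces a vertical sub-arc crossing one of them from top to bottom.

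For general $b/a$, I iterate the base case: subdivide $B(a,b)$ along its long direction into $\lceil b/(2a)\rceil$ horizontal substrips of height at most $2a$, apply the base-case argument within each substrip (after passing to an isometric copy with aspect ratio close to $1$), and take the union of the resulting covers. A sub-arc of $\pi$ contained in one substrip, combined with the base-case conclusion, yields a hard crossing of some member of the global cover, giving $j = O(b/a)$ small rectangles as claimed.

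The main obstacle will be the topological bookkeeping in the base step: when $\pi$ partially wanders, one must carefully trace its entry and exit points across strip boundaries to locate a \emph{connected} sub-arc (rather than a disconnected concatenation) that genuinely crosses one of the small rectangles from short side to short side. The original argument in \cite{dubedat2020liouville} treats this via a detailed Jordan curve analysis, and I would follow that blueprint closely, also using that the sub-arcs produced by the iteration in the general case glue consistently because successive substrips share a horizontal boundary line.
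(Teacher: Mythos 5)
The paper itself gives no proof of Lemma \ref{rswlem1}: it is imported verbatim from \cite[Lemma~4.8]{dubedat2020liouville} and used as a black box, so your attempt has to be judged on its own terms, and there it runs into a genuine problem. With the paper's convention that a hard crossing of a rectangle isometric to $B(a/2,b/2)$ joins its two \emph{shorter} sides (which lie on parallel lines at distance $b$), the statement you set out to prove is false as literally written. If $b>2a$, the segment $[-a,a]\times\{0\}$ is a left-right crossing of $B(a,b)$ of diameter $2a<b$, while any hard crossing of an $a\times b$ rectangle, in any position or orientation, has diameter at least $b$. Even for $a<b\leq 2a$, take a crossing made of two horizontal segments at height $0$ joined by a thin spike of height $h$ with $a<h<b$ and width less than $b-a$: copies with long side vertical would require vertical extent $b>h$, and copies with long side horizontal would require a sub-path of horizontal extent at least $b$ confined to a strip of height $a$, but any sub-path of horizontal extent at least $b$ must contain the spike and hence has vertical extent $h>a$. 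This example defeats both horns of your base-case dichotomy (no horizontal run of length $b$ inside a height-$a$ strip, no vertical run of length $b$ inside a width-$a$ strip), so the gap is not topological bookkeeping but the target itself. What is true, and what the paper actually uses in Lemma \ref{rswlem4} (where the conclusion is a bound on $R^{n}_{a/2,b/2}$, i.e.\ the left-right, \emph{easy} resistance of $B(a/2,b/2)$), is the version in which the $j$ copies (translates of $B(a/2,b/2)$ or of its $90^\circ$ rotation) are crossed between their two \emph{longer} sides; the word ``hard'' in the transcription should be read as the direction corresponding to the left-right crossing of $B(a/2,b/2)$.

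Two further structural issues would remain even after correcting the target. First, your reduction of general $b/a$ to the base case subdivides $B(a,b)$ into nearly square horizontal substrips and applies the base case there; the rectangles this produces are isometric to halves of those substrips, not to $B(a/2,b/2)$, so the union of the covers does not satisfy the lemma — and the exact shape matters, since the iteration in Lemma \ref{rswlem4} reapplies the same statement at each scale to boxes of the fixed aspect ratio $a:b$. Second, the one step that \emph{is} the content of the lemma (extracting a single connected sub-arc that crosses one fixed rectangle) is deferred in your proposal to ``following the blueprint of \cite{dubedat2020liouville} closely'', which, given that the paper already just cites that lemma, adds nothing. For the corrected (easy-direction) statement an elementary self-contained argument is available roughly as follows: take the sub-path of the crossing that crosses the vertical strip $[0,a]\times[-b,b]$; if its vertical extent is at most $b/2$ it is contained in, and crosses left-to-right, one of three fixed translates of $B(a/2,b/2)$; if its vertical extent is larger it crosses one of $O(b/a)$ fixed horizontal strips of height $a$ from top to bottom while staying within horizontal width $2a$, giving an easy crossing of a rotated copy when $b\geq 2a$ (aspect ratios with $b/a$ close to $1$ need a separate, but similar, adjustment of the thresholds).
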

	
	\begin{lem}\label{rswlem2}
		\textup{(Step 1 in the proof of \cite[Theorem~3.1]{dubedat2020liouville})}
		If $a/b<1$ and $\tilde a/\tilde b>1$, then there exist $m,k \geq 1$ and two ellipses $\mathtt E,\tilde{\mathtt E}$ with marked arcs $(XY),(ZW)$ for $\mathtt E$ and $(\tilde X\tilde Y),(\tilde Z\tilde W)$ for $\tilde{\mathtt E}$ such that:
		\begin{enumerate}
			\item Any left-right crossing of $B(a2^{-k},b2^{-k})$ is a crossing of $\mathtt E$.
			\item Any crossing in $\tilde{\mathtt E}$ from $(\tilde X\tilde Y)$ to $(\tilde Z\tilde W)$ is a left-right crossing of $B(\tilde a,\tilde b)$.
			\item When dividing the marked sides of $\mathtt E$ into m sub-arcs of equal length, for any pair of such subarcs (one on each side), there exists a conformal map $F:\mathtt E\to \tilde{\mathtt E}$ and the pair of subarcs is mapped to subarcs of the marked sides of $\tilde{\mathtt E}$.
			\item For each pair, the associated map $F$ extends to a conformal equivalence $U\to \tilde U$ where $\mathtt E\subset U$, $\tilde{\mathtt E}\subset \tilde U$ and $|F^{\prime}|\geq 1$ on $U$.
		\end{enumerate}
	\end{lem}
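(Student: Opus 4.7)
The plan is an explicit geometric construction of $\mathtt E, \tilde{\mathtt E}$, together with a finite family of conformal maps between them, using Riemann uniformization to reduce to the unit disk and exploiting the fact that the M\"obius automorphism group of the disk is three-parameter. The three-dimensional freedom, matched against the two aspect-ratio constraints and one scaling constraint, is just enough to realize the required sub-arc correspondences.

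\textbf{Construction of the ellipses (items (1) and (2)).} I would take $\mathtt E$ to be a tall ellipse inscribed in $B(a 2^{-k}, b 2^{-k})$ (for some large $k$), tangent to the top and bottom sides, with marked arcs $(XY), (ZW)$ given by the left and right halves of $\partial \mathtt E$. Since the tangency disconnects $B(a 2^{-k}, b 2^{-k}) \setminus \overline{\mathtt E}$ into a left and a right component, any left--right crossing of $B(a 2^{-k}, b 2^{-k})$ must pass through $\overline{\mathtt E}$, entering via $(XY)$ and leaving via $(ZW)$, which yields (1). Dually, I would take $\tilde{\mathtt E}$ to be a wide ellipse inscribed in $B(\tilde a, \tilde b)$, tangent to the left and right sides, with $(\tilde X \tilde Y), (\tilde Z \tilde W)$ chosen as small arcs of $\partial \tilde{\mathtt E}$ around these two tangent points. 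Any crossing of $\tilde{\mathtt E}$ joining these marked arcs then touches the left and right sides of $B(\tilde a, \tilde b)$, hence is a left--right crossing, giving (2).

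\textbf{Conformal maps and analytic extension (items (3) and (4)).} I would transport $\mathtt E$ and $\tilde{\mathtt E}$ to the unit disk via their Riemann maps, turning the four marked arcs into four arcs of the unit circle. Since the M\"obius group of the disk is three-dimensional, it can prescribe the image of any three distinct boundary points. Dividing each marked arc of $\mathtt E$ into $m$ sub-arcs of equal length gives a finite list of $m^2$ sub-arc pairs to handle; for each pair, two of the M\"obius parameters align the two endpoints of the chosen sub-arcs with the endpoints of the designated image sub-arcs on $\tilde{\mathtt E}$, while the remaining parameter (together with taking $m$ large) ensures that the full sub-arcs map into the marked arcs of $\tilde{\mathtt E}$. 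Since the boundary of every ellipse is real-analytic, each such $F$ extends by Schwarz reflection to a conformal equivalence on a strict neighborhood $U$ of $\mathtt E$. The lower bound $|F'| \geq 1$ on $U$ is arranged by post-composing with a uniform dilation, absorbed into a simultaneous rescaling of $\tilde{\mathtt E}$ and $B(\tilde a, \tilde b)$; compactness of the M\"obius parameter region and finiteness of $m$ allow a single dilation factor to work for all pairs.

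\textbf{Main obstacle.} The central difficulty is to handle items (3) and (4) simultaneously, i.e., to find, for each of the $m^2$ sub-arc pairs, a single conformal map realizing the prescribed sub-arc correspondence while also satisfying $|F'| \geq 1$ on a common-shaped neighborhood $U$. This requires the M\"obius parameters to stay in a compact region as the sub-arc pair varies, which in turn demands a careful choice of the aspect ratios of $\mathtt E$ and $\tilde{\mathtt E}$ so that no pair forces a near-parabolic (degenerate) M\"obius transformation; otherwise the dilation factors needed to guarantee $|F'| \geq 1$ could blow up with the pair, destroying the uniform conclusion.
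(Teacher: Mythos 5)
You should note first that the paper does not prove this lemma at all: it is imported verbatim from Step 1 of the proof of \cite[Theorem~3.1]{dubedat2020liouville}, so your construction must be judged on its own. It has a genuine gap, and the root cause is the decision to make the ellipses \emph{tangent} to the rectangles rather than crossing their sides. For $\tilde{\mathtt E}$: if $\tilde{\mathtt E}$ is inscribed in $B(\tilde a,\tilde b)$ and tangent to its left and right sides, then every point of $\partial\tilde{\mathtt E}$ other than the two tangency points lies strictly inside the rectangle; a path in $\overline{\tilde{\mathtt E}}$ joining your two small marked arcs therefore starts and ends strictly inside $B(\tilde a,\tilde b)$ and need never touch the left or right sides, so item (2) is false for your choice. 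For $\mathtt E$: with tangency at the top and bottom, the two marked arcs (left and right halves of $\partial\mathtt E$) share their endpoints, and a left--right crossing of $B(a2^{-k},b2^{-k})$ can meet $\overline{\mathtt E}$ only at a tangency point (slide along the top side through the pinch), so item (1) degenerates to a point ``crossing''---useless for the resistance comparison this lemma feeds into (Lemma \ref{rswlem4} needs every rectangle crossing to contain a genuine arc-to-arc path in $\mathtt E$, with the two arcs disjoint). Worse, item (3) becomes impossible for the pair of sub-arcs adjacent at a tangency point: two closed sub-arcs sharing an endpoint cannot be mapped by a homeomorphism $\overline{\mathtt E}\to\overline{\tilde{\mathtt E}}$ into two disjoint marked arcs of $\tilde{\mathtt E}$. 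The correct arrangement (the one in the cited construction) is the opposite: $\mathtt E$ taller but narrower than $B(a2^{-k},b2^{-k})$, crossing its top and bottom sides, with marked arcs the two disjoint pieces of $\partial\mathtt E$ inside the rectangle, so that $\overline{\mathtt E}$ separates its left side from its right side; and $\tilde{\mathtt E}$ wider but lower than $B(\tilde a,\tilde b)$, protruding through its left and right sides, with marked arcs the two pieces of $\partial\tilde{\mathtt E}$ lying outside the rectangle, so that any arc-to-arc path in $\overline{\tilde{\mathtt E}}$ runs from $\{x\le-\tilde a\}$ to $\{x\ge\tilde a\}$ inside the strip $\{|y|<\tilde b\}$ and hence contains a left--right crossing.

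Two further points. In item (4) you propose to obtain $|F^{\prime}|\geq 1$ by ``a simultaneous rescaling of $\tilde{\mathtt E}$ and $B(\tilde a,\tilde b)$''; this is not available, since $\tilde a,\tilde b$ are given data of the statement. The expansion must come from the parameter $k$, which is in the statement precisely for this purpose: writing $\mathtt E=2^{-k}\mathtt E_0$ for a fixed unit-scale ellipse, each of the finitely many maps $F=G\circ(2^k\cdot)$ with $G:\mathtt E_0\to\tilde{\mathtt E}$ has $|F^{\prime}|=2^k|G^{\prime}|$, and since the $G$'s extend by Schwarz reflection (that part of your argument is fine) with $|G^{\prime}|\geq c>0$ on a compact neighborhood, taking $2^k c\geq 1$ gives (4); you introduce $k$ but never use it. Finally, your M\"obius-matching for item (3) must respect a cross-ratio constraint: once one point of each sub-arc is pinned to a point of each target arc, the product of the boundary derivatives at the two pinned points is a fixed constant, so you cannot shrink both images at will. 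The argument can be salvaged because you only need each derivative to be $O(1)$ while the sources have length $\asymp 1/m$ and the targets are full marked arcs---but this is exactly where ``$m$ large'' and the relative eccentricities of $\mathtt E$ and $\tilde{\mathtt E}$ (equivalently, the comparison of extremal distances between the two arc pairs) do real work, and your sketch leaves it at the level of an acknowledged obstacle rather than resolving it.
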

	
	\begin{lem}\label{rswlem4}
		If $a/b<1$ and $\tilde a/\tilde b>1$, then there
		exists $C_5=C_5(a,b,\tilde a,\tilde b)$ such that for all $\epsilon>0$,
		\begin{enumerate}
			\item\label{rswclaim1} if $\mathbb{P}(R^{n}_{a,b}\leq l)\geq \epsilon$, then $\mathbb{P}(R^{n}_{\tilde a,\tilde b}\leq C_5le^{C_5\gamma\sqrt{\log(C_5/\epsilon)}})\geq \epsilon/C_5$;
			\item\label{rswclaim2} if $\mathbb{P}(R^{n}_{\tilde a,\tilde b}\leq l)\leq 1-\epsilon$, then $\mathbb{P}(R^{n}_{a,b}\leq C_5^{-1}le^{-C_5\gamma\sqrt{\log(C_5/\epsilon)}})\leq 1-\epsilon/C_5$.
		\end{enumerate}
	\end{lem}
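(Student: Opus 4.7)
I prove claim (\ref{rswclaim1}) in detail and deduce claim (\ref{rswclaim2}) from it by resistance-conductance duality. Specifically, combining Lemma \ref{duallem} with the $90^\circ$ rotational symmetry of the GFF yields the distributional identity $R^n_{a,b} \overset{d}{=} 1/R^n_{b,a}$ (up to negligible dual-lattice mesh adjustments, absorbed via Proposition \ref{mesh}). This converts the hypothesis $\mathbb P(R^n_{\tilde a, \tilde b} \leq l) \leq 1 - \epsilon$ into the lower-tail bound $\mathbb P(R^n_{\tilde b, \tilde a} \leq 1/l) \geq \epsilon$ for the tall rectangle $B(\tilde b, \tilde a)$; applying claim (\ref{rswclaim1}) with $(a, b, \tilde a, \tilde b)$ replaced by $(\tilde b, \tilde a, b, a)$ (the aspect-ratio orientations are preserved) yields a lower-tail bound for $R^n_{b, a}$, which dualizes back to the stated upper-tail bound for $R^n_{a, b}$.

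To prove claim (\ref{rswclaim1}), I follow the RSW framework of \cite{dubedat2020liouville} transplanted to our lattice setting via Lemma \ref{rswlem2}. Let $\mathtt E$ and $\tilde{\mathtt E}$ be the ellipses provided by Lemma \ref{rswlem2}, with marked arcs $(XY), (ZW)$ and $(\tilde X \tilde Y), (\tilde Z \tilde W)$ each divided into $m$ equal sub-arcs $(X_i Y_i), (Z_j W_j)$ and $(\tilde X_p \tilde Y_p), (\tilde Z_q \tilde W_q)$. The scale invariance \eqref{scaling} of $\phi_\delta$ (with Proposition \ref{mesh} absorbing the $\zeta_{n+k}$-vs-$\zeta_n$ mismatch) transfers the hypothesis $\mathbb P(R^n_{a, b} \leq l) \geq \epsilon$ to $\mathbb P(R^{n+k}_{a 2^{-k}, b 2^{-k}} \leq l) \geq \epsilon$. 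By Lemma \ref{rswlem2}(1), any left-right crossing of the small rectangle is already a crossing of $\mathtt E$ between $(XY)$ and $(ZW)$, so the effective resistance restricted to $\mathtt E$ satisfies $\mathbb P(R_{\mathtt E}((XY), (ZW)) \leq l) \geq \epsilon$.

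Next, the generalized parallel law (Proposition \ref{parallel}) applied to $\mathcal P_{(XY),(ZW)} \subset \bigcup_{i,j} \mathcal P_{(X_i Y_i),(Z_j W_j)}$ gives $R_{\mathtt E}((XY),(ZW))^{-1} \leq \sum_{i,j=1}^{m} R_{\mathtt E}((X_i Y_i),(Z_j W_j))^{-1}$, so on the event above at least one summand is $\geq (m^2 l)^{-1}$. A union bound isolates a deterministic pair $(i_0, j_0)$ with $\mathbb P(R_{\mathtt E}((X_{i_0} Y_{i_0}),(Z_{j_0} W_{j_0})) \leq m^2 l) \geq \epsilon/m^2$. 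Invoking Lemma \ref{rswlem3} with the conformal map $F: U \to \tilde U$ from Lemma \ref{rswlem2}(3) that sends this sub-arc pair to some $((\tilde X_p \tilde Y_p),(\tilde Z_q \tilde W_q))$ in $\tilde{\mathtt E}$ yields
\begin{equation*}
\mathbb P\bigl(\tilde R_{\tilde{\mathtt E}}((\tilde X_p \tilde Y_p),(\tilde Z_q \tilde W_q)) \leq \tilde l\bigr) \geq \epsilon/(8 m^2), \qquad \tilde l = C_2 m^2 l \exp\bigl(C_2 \gamma \sqrt{\log(8 C_2 m^2/\epsilon)}\bigr).
\end{equation*}
Enlarging the terminal sets from sub-arcs to full marked arcs only decreases resistance, and by Lemma \ref{rswlem2}(2) every marked-arc crossing of $\tilde{\mathtt E}$ is a left-right crossing of $B(\tilde a, \tilde b)$. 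Hence $\mathbb P(R^n_{\tilde a, \tilde b} \leq \tilde l) \geq \epsilon/(8m^2)$, and since $m, k, C_2$ depend only on $(a, b, \tilde a, \tilde b)$, absorbing them into a single constant $C_5$ gives claim (\ref{rswclaim1}).

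The technically heaviest step is the conformal transfer via Lemma \ref{rswlem3}: all of the GFF-specific distortion and the $\exp(C\gamma \sqrt{\log(C/\epsilon)})$ factor enter there through the approximate conformal invariance of $\phi_\delta$ (Proposition \ref{CI}) together with a Markov-type argument handling the high-frequency part of the discrepancy $\tilde \phi_n \circ F - \phi_n$. The key subtlety particular to our discrete setting---absent in the purely continuous LQG proof of \cite{dubedat2020liouville}---is the discrete-to-continuous energy comparison in Lemma \ref{diri}, which introduces the multiplicative error $e^{\gamma \operatorname{osc}(\phi_{n+k}, 2^{-(n+k)+1}\zeta_{n+k}^{-1}, \mathtt E)}$; the assumption $\zeta_n \geq \sqrt n$, through Corollary \ref{osc}, keeps this oscillation $O(1)$ with overwhelming probability, which is what lets the continuous conformal-invariance estimates survive the discretization.
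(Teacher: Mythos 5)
Your overall skeleton (Lemma \ref{rswlem2}, the generalized parallel law to pin down a deterministic pair of sub-arcs, Lemma \ref{rswlem3} for the conformal transfer, and duality via Lemma \ref{duallem} for Claim \ref{rswclaim2}) matches the paper, and the duality reduction for Claim \ref{rswclaim2} is essentially the paper's. However, there is a genuine gap in the very first step of Claim \ref{rswclaim1}: you replace the paper's box-shrinking argument by exact scale invariance, asserting that \eqref{scaling} transfers $\mathbb P(R^n_{a,b}\leq l)\geq\epsilon$ to $\mathbb P(R^{n+k}_{a2^{-k},b2^{-k}}\leq l)\geq\epsilon$. This is not what scaling gives: rescaling space by $2^{-k}$ identifies $R^{n}_{a,b,\zeta_n}$ in law with the resistance of $B(a2^{-k},b2^{-k})$ computed with the \emph{truncated} field $\phi_{k,n+k}$ (on mesh $2^{-n-k}\zeta_n^{-1}$), not with $\phi_{n+k}=\phi_{0,n+k}$; the missing coarse field $\phi_{0,k}$ must be controlled separately (its sup-norm costs exactly a factor of the form $e^{\gamma O(\sqrt{\log(1/\epsilon)})}$ with an extra probability loss), and you never account for it. More seriously, once you rescale, your whole chain ($R_{\mathtt E}$, Lemma \ref{rswlem3}, $\tilde R_{\tilde{\mathtt E}}$, the enlargement to the full marked arcs) lives at scale $n+k$, so what you actually obtain is a bound on $R^{n+k}_{\tilde a,\tilde b}$, whereas the lemma requires a bound on $R^{n}_{\tilde a,\tilde b}$ for the \emph{same} $n$ as in the hypothesis. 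Converting back would require comparing resistances over $B(\tilde a,\tilde b)$ with respect to $\phi_{n+k}$ and $\phi_n$ (i.e.\ controlling $\|\phi_{n,n+k}\|_{B(\tilde a,\tilde b)}$ and the mesh change via Proposition \ref{mesh}), a step that appears nowhere in your argument.

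The paper avoids all of this by never rescaling the field: it invokes Lemma \ref{rswlem1} (which you never use) to say that any left-right crossing of $B(a,b)$ contains a hard crossing of one of $j$ rectangles isometric to $B(a/2,b/2)$, applies Proposition \ref{parallel} to get $\mathbb P(R^n_{a/2,b/2}\leq jl)\geq\epsilon/j$, and iterates $k$ times to reach $\mathbb P(R^n_{a2^{-k},b2^{-k}}\leq j^k l)\geq\epsilon/j^k$ with the field $\phi_n$ and the lattice $\mathbb Z_n^2$ unchanged throughout; only then does the ellipse and conformal-map machinery enter, so the conclusion is automatically about $R^n_{\tilde a,\tilde b}$. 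Your scaling route could probably be repaired (since $k$ is a constant depending only on the aspect ratios, both the coarse-field correction and the $\phi_{n,n+k}$ sup-norm correction are of the allowed form), but as written these are unaddressed holes, and the Lemma \ref{rswlem1} route is the intended and cleaner fix.
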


	\begin{figure}
		\centering
		\includegraphics[width=80mm]{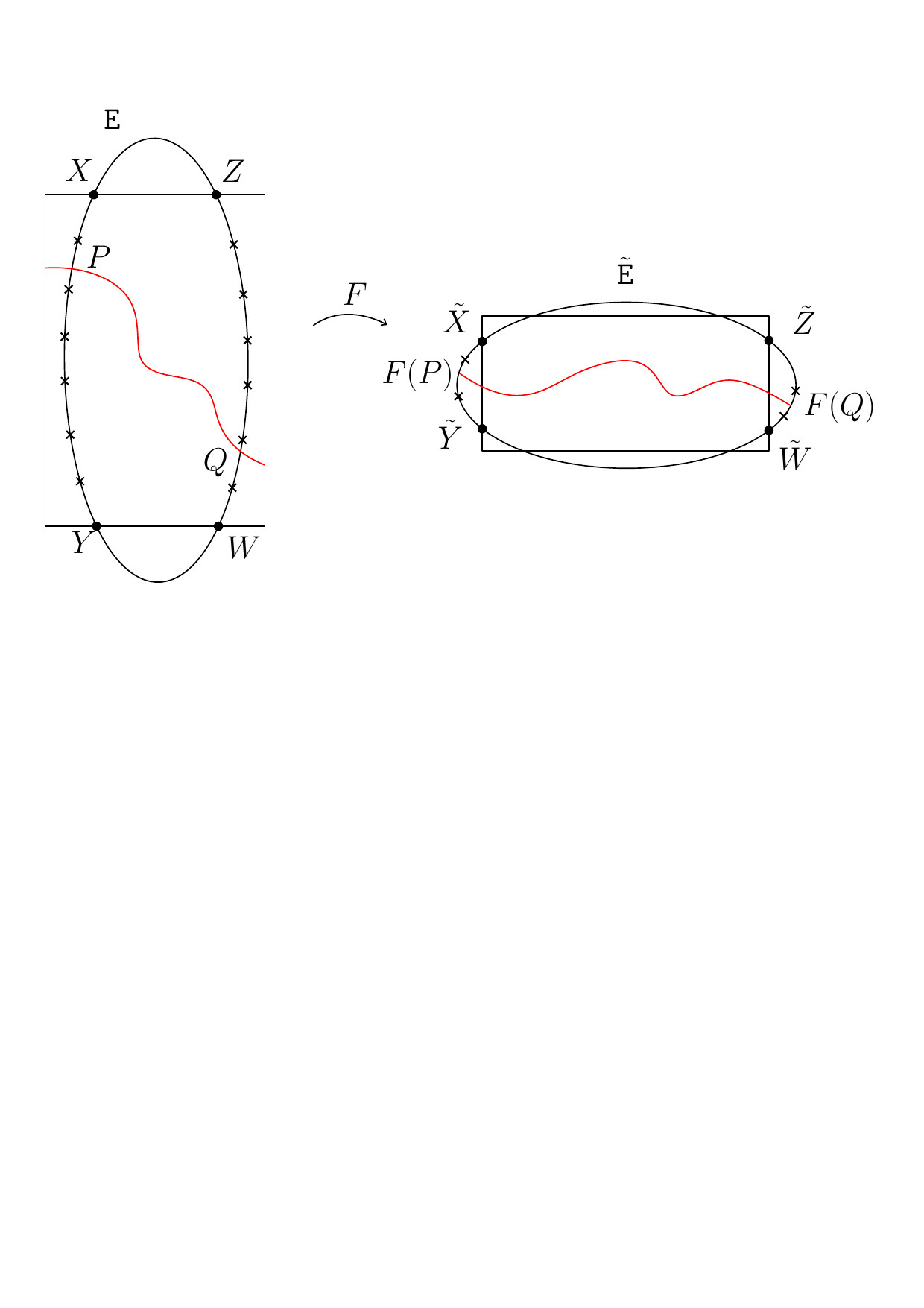}
		\caption{Illustration of Lemma \ref{rswlem4}.}
		\label{fig5}
	\end{figure}

	\begin{proof}
		We first prove Claim \ref{rswclaim1}. Let $m, k, \mathtt E, \tilde{\mathtt E}$ be chosen as in Lemma \ref{rswlem2}. By Lemma \ref{rswlem1}, there exists $j$ depending only on $a/b$ such that any left-right crossing of $B(a,b)$ must cross one of these $j$ rectangles isometric to $B(a/2,b/2)$, where these rectangles are chosen as in Lemma \ref{rswlem1}. By Proposition \ref{parallel}, it holds that $\mathbb{P}(R^{n}_{a/2,b/2}\leq jl)\geq \epsilon/j$. Iterating the preceding inequality, we have $\mathbb{P}(R_{a2^{-k},b2^{-k}}^{n}\leq j^kl)\geq \epsilon/j^k$. Thus by Lemmas \ref{parallel} and \ref{rswlem2}, one of the effective resistances between two sub-arcs $P,Q$ of $\mathtt E$ (one on each side, as illustrated in Figure \ref{fig5}) is at most $lj^km$ (recall that $\mathtt E, m, k$ are given in Lemma \ref{rswlem2}), with probability at least $\epsilon/j^km$. I.e., $$\mathbb{P}(R_{\mathtt E}(P,Q)\leq lj^km)\geq \epsilon/j^km,$$ where we denote by $R^n_{\mathtt E}(P, Q)$ the effective resistance between $P$ and $Q$ for the network on $\mathtt E\cap \mathbb Z^2_n$ with respect to the field $\phi_n$ (similar notations apply in what follows too). Thus by Lemma \ref{rswlem3}, for the conformal map $F$ from $\mathtt E$ to $\tilde{\mathtt E}$ and for some $C_5=C_5(a,b,\tilde a,\tilde b)$,
		$$\mathbb{P}(R_{\tilde{\mathtt E}}(F(P),F(Q))\leq C_5le^{C_5\gamma\sqrt{\log(C_5/\epsilon)}})\geq \epsilon/C_5.$$
		Since any crossing from $F(P)$ to $F(Q)$ in $\tilde{\mathtt E}$ is a left-right crossing of $B(\tilde a,\tilde b)$, we have
		\begin{equation}
			\mathbb{P}(R^{n}_{\tilde a,\tilde b}\leq C_5le^{C_5\gamma\sqrt{\log(C_5/\epsilon)}})\geq \epsilon/C_5.
		\end{equation}
		Thus we obtain Claim \ref{rswclaim1}. Claim \ref{rswclaim2} follows immediately by Lemma \ref{duallem} (modulo a slight change of $a,b,\tilde a,\tilde b$).
	\end{proof}
	
	\begin{proof}[Proof of Proposition \ref{rsw}]
		Proposition \ref{rsw} holds immediately from Lemma \ref{rswlem4}.
	\end{proof}
	
	\section{Tail estimates for resistances}\label{sec5}
	In this section, we prove various tail estimates for resistances in terms of their quantiles. Similar to tail estimates in \cite{ding2019lioville,dubedat2020liouville,ding2020tightness}, our proof employs percolation arguments, where we obtain finite-range dependence on the coarse-grained lattice after removing a ``coarse field''.

	\begin{lem}\label{taillem1}
		There exist absolute positive constants $p_0, C, c$ such that the following holds for all $p \leq p_0$. For all $n\geq 1$ and $k\geq 1$,
		\begin{equation}
			\mathbb{P}(R^{n}_{8k,k}>Cl^{(n)}(\phi,1-p))\leq C\exp(-ck).
		\end{equation}
	\end{lem}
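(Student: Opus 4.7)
The approach combines a supercritical percolation argument on a coarse grid covering $B(8k,k)$ with the series and parallel laws (Propositions \ref{series} and \ref{parallel}). The goal is to show that, with probability $1-\exp(-ck)$, one can find $\Omega(k)$ edge-disjoint left-right lattice paths in $B(8k,k)$, each of resistance at most $O(k)\cdot l^{(n)}(\phi, 1-p)$; their parallel combination then has resistance of order $l^{(n)}(\phi, 1-p)$, which gives the desired bound.

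\textbf{Good events and finite-range dependence.} First I would cover $B(8k,k)$ by a regular grid of $O(1)$-size squares $Q_{i,j}$. For each square, the good event $E_{i,j}$ demands that suitable left-right and up-down crossings of a slightly enlarged region around $Q_{i,j}$ (of fixed aspect ratios) have resistance at most $C'\, l^{(n)}(\phi, 1-p)$. By the RSW estimate (Proposition \ref{rsw}), $\mathbb P(E_{i,j}) \geq 1 - q$, where $q = q(C', p) \to 0$ as $C' \to \infty$ and $p \to 0$. To obtain genuine independence of distant events, I would switch from $\phi_n$ to the finite-range approximation $\psi_n$ via Proposition \ref{normprop}; this changes resistances by an $O(1)$ multiplicative factor with Gaussian tails, and the resulting good events depend only on the white noise in a bounded neighborhood of $Q_{i,j}$ (of radius bounded by an absolute constant coming from the support of $\tilde p_t$), hence are independent for well-separated squares.

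\textbf{Percolation and concatenation.} Once $q$ is sufficiently small, Liggett-Schonmann-Stacey domination places the (finite-range dependent) good events above supercritical Bernoulli site percolation on the grid. A standard supercritical estimate then produces $\Omega(k)$ vertex-disjoint left-right chains of good squares with probability $1 - \exp(-ck)$; for example, by decomposing $B(8k,k)$ into $O(k)$ horizontal bands of bounded height and applying a Peierls/Chernoff bound to the number of bad vertical cuts in each band. Within each chain, successive enlarged good regions overlap, so concatenating the good crossings produces a lattice path from the left to the right side of $B(8k,k)$ of resistance $O(k) \cdot l^{(n)}(\phi, 1-p)$ by Proposition \ref{series}. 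Because the chains are vertex-disjoint and each path uses only edges inside its constituent good squares, the lattice paths are edge-disjoint; applying Proposition \ref{parallel} (which for edge-disjoint paths gives $R \leq (\sum_i R_i^{-1})^{-1}$) then yields total effective resistance of order $l^{(n)}(\phi, 1-p)$.

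\textbf{Main obstacle.} The most delicate point is the percolation step: producing $\Omega(k)$ vertex-disjoint chains (not merely edge-disjoint in the coarse graph) with the correct exponential probability, and ensuring that the resulting lattice paths inherit edge-disjointness. Chains sharing a good square would share its internal lattice edges, breaking the parallel gain. This can be handled by working with a slightly fattened notion of good block (e.g., declaring a $3\times 3$ block of good squares ``super-good'' only when all nine constituent squares are good), which raises the per-block failure probability but keeps the problem supercritical for $p$ small, and provides a geometric buffer separating distinct chains' lattice edges. The remaining book-keeping, namely that the overlap of adjacent enlarged regions is enough to splice their internal good crossings into a single lattice path, is routine.
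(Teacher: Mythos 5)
Your overall skeleton (RSW-controlled good squares of $O(1)$ size, a supercritical percolation argument producing $\Omega(k)$ disjoint left--right chains, then Proposition \ref{series} to concatenate crossings and Proposition \ref{parallel} to combine the resulting edge-disjoint paths in parallel) is the same as the paper's. The genuine gap is in how you decouple distant good events. You propose to pass to the finite-range field $\psi_n$ at ``an $O(1)$ multiplicative cost with Gaussian tails'' and then invoke genuine independence plus Liggett--Schonmann--Stacey domination. But Proposition \ref{normprop} controls $\|\phi_n-\psi_n\|$ on a unit box with a Gaussian tail of order-one variance; here the comparison must hold on $B(8k,k)$ at the probability level $1-e^{-ck}$, and at that level you can only guarantee $\|\phi_n-\psi_n\|_{B(8k,k)}=O(\sqrt{k})$, i.e.\ a multiplicative factor $e^{C\gamma\sqrt{k}}$, not $O(1)$. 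Concretely: if your good events are defined through $\psi_n$ (so that they are finite-range and LSS applies), the chains you build control $\tilde R^{n}_{8k,k}$, and converting back to $R^{n}_{8k,k}$ along paths spanning the whole box costs $e^{C\sqrt{k}}$ in the threshold, which destroys the absolute constant $C$ in the statement (and is too lossy for the way the lemma is used in Case 1 of Proposition \ref{tail}). If instead your good events involve $\phi_n$, or $\psi_n$ together with a local bound on $\phi_n-\psi_n$, then they are no longer finite-range --- $\phi_n-\psi_n$ and the coarse part of $\phi_n$ are long-range correlated --- so LSS domination does not apply as you state it.

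The missing ingredient is a decoupling device that works with $\phi_n$ itself, at an $O(1)$ resistance threshold and failure probability $e^{-ck}$. The paper achieves this with a Peierls bound over dual ``bad'' paths: for each candidate bad path of length $L$ it fixes $\sim L/100$ well-separated certificate boxes, decomposes $\phi_n$ into independent local pieces (white noise inside each box) plus coarse remainders, controls each local piece by RSW, and uses the Borell--TIS/Fernique estimate of Lemma \ref{coarse} to show that simultaneously all but a small fraction of the coarse remainders are $O(1)$, except with probability $e^{-\mathsf C L}$; this is what makes the union bound over certificates and over dual paths close with an absolute constant in front of $l^{(n)}(\phi,1-p)$. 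Your proposal needs either this ingredient (a quantitative ``most well-separated boxes have $O(1)$ coarse field'' estimate) or a substitute for it; independence obtained by switching to $\psi_n$ alone does not suffice.
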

	\begin{figure}
		\centering
		\includegraphics[height=60mm,width=60mm]{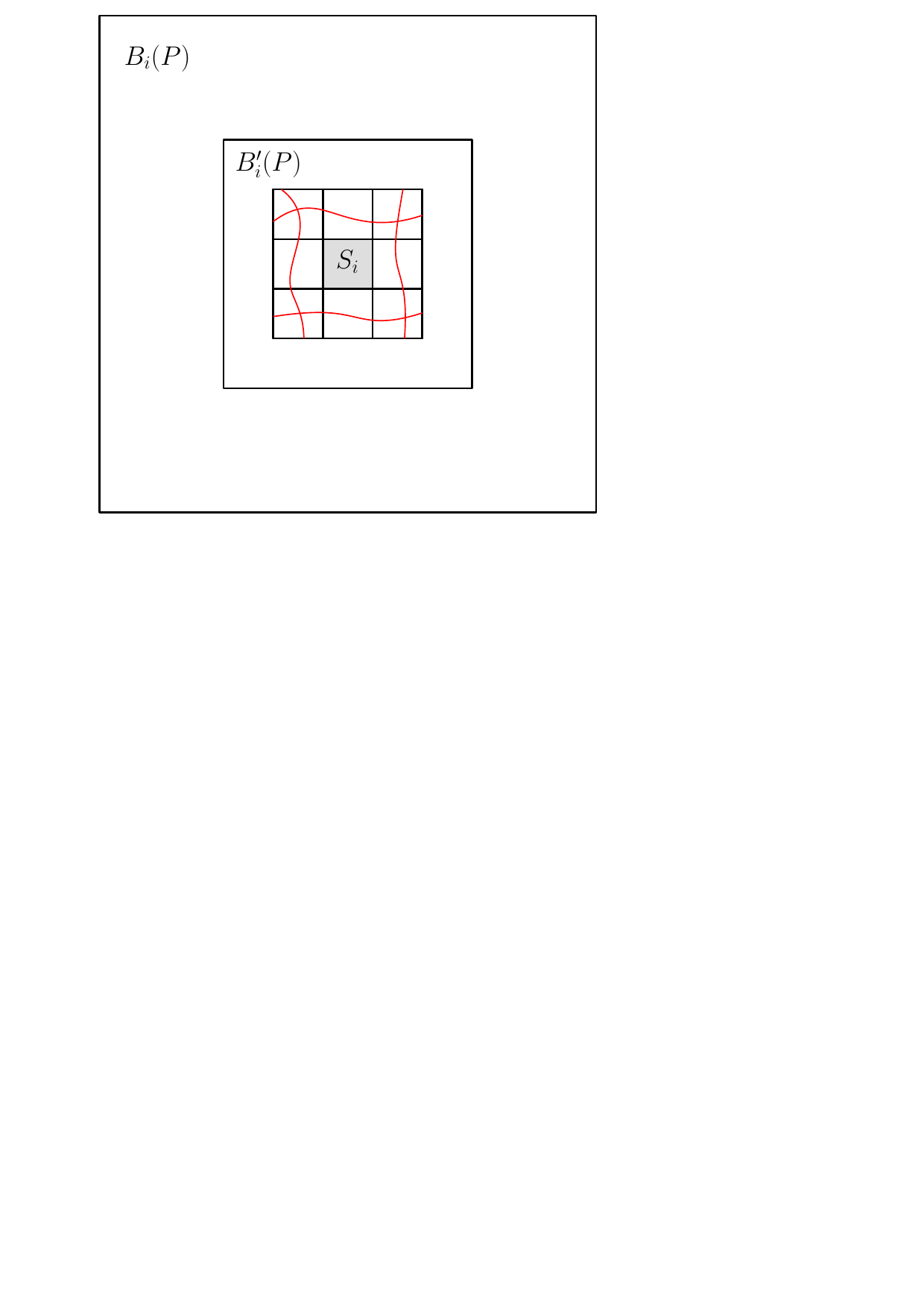}
		\caption{Illustration of $B_i(P)$ and $B_i^\prime(P)$.}
		\label{fig14}
	\end{figure}
	\begin{proof}
		By noting that $l^{(n)}(\phi,1-p)$ is decreasing in $p$, we may fix a sufficiently small $p$. For each unit square $S$ in $\mathbb{Z}^2$, we consider the four hard crossings of rectangles of dimensions $3\times 1$ surrounding $S$, and we write $A_n(\phi_n,S)$ as the sum of their effective resistances in $\mathbb Z_n^2$ associated with the field $\phi_{n}$ (that is, each effective resistance is between the two shorter sides of a surrounding rectangle). See Figure \ref{fig14} for an illustration.
		
		Consider a graph $\mathrm{H}$ with all unit squares in $\mathbb{Z}^2\cap B(8k, k)$ as its vertices, where two squares are adjacent if they share an edge in $\mathbb{Z}^2$. We fix a sufficiently large constant $C_p$ that depends only on $p$, with its value determined later. We say a square $S$ is open if $A_{n}(\phi_n,S)\leq C_p^2l^{(n)}(\phi,1-p)$, and we say an edge in $\mathrm{H}$ is open if both of its endpoints are open. With $w(e)=\mathds{1}_{\{e\mbox{ is open}\}}$, the maximal flow as in the left-hand side of \eqref{maxflowmincuteq} is the maximal number of disjoint left-right open paths in H. Thus, by Lemma \ref{maxflowmincut}, either there exist $k/2$ disjoint left-right open paths, or there exists an up-down path in the dual graph with at most $k/2$ open edges (see Figure \ref{fig6} for an illustration). We will prove that the latter event is rare via a percolation argument.
		
		Let $\Gamma$ be the collection of self-avoiding paths in $\mathrm{H}$ with lengths at least $k$. We say a path $P \in \Gamma$ with length $L$ is bad if it has at most $L/2$ open edges. We claim that for $P\in \Gamma$ with length $L$ and some absolute constant $C_0$,
		\begin{equation}\label{eq-prob-P-bad}
			\mathbb P(P \mbox{ is bad}) \leq \binom{2L}{L/100}(2^{L/100}(2C_0p)^{L/200}+e^{-10L}).
		\end{equation}
		Provided with \eqref{eq-prob-P-bad}, we now complete the proof of the lemma. By a simple union bound, we have
		\begin{equation}
			\mathbb{P}(\mbox{there exists a bad } P\in \Gamma)\leq \sum_{L\geq k} 2^{2L}(2^{L/100}(2C_0p)^{L/200}+e^{-10L})\cdot 8k^23^L\leq Ce^{-ck},
		\end{equation}
		which holds for $p$ sufficiently small. Thus with probability at least $1-Ce^{-ck}$, we can find $k/2$ disjoint open paths, and at least $k/4$ of them have lengths at most $100k$. By Proposition \ref{series} and the parallel's law, on the aforementioned event we have that $R^{n}_{8k,k}\leq Cl^{(n)}(\phi,1-p)$ where $C = 10^{4}C_p^2$, completing the proof of the lemma.
		
		We now prove \eqref{eq-prob-P-bad}. For each $P \in \Gamma$ with length $L$, let $\mathfrak B(P)$ be the collection of sets of boxes $\mathbf B(P) = \{B_i(P): 1\leq i\leq \ell\}$ with $\ell = L/100$ such that
		(i) $B_i(P)$ is a box with side length 5 concentric with a square $S_i$, where $S_i$ has a common edge with $P$;
		(ii) the mutual distances of these boxes are at least 1. In addition, we let $B'_i(P)$ be the concentric box of $B_i(P)$ with side length 3. 
		
		\begin{figure}
			\centering
			\includegraphics[height=50mm,width=100mm]{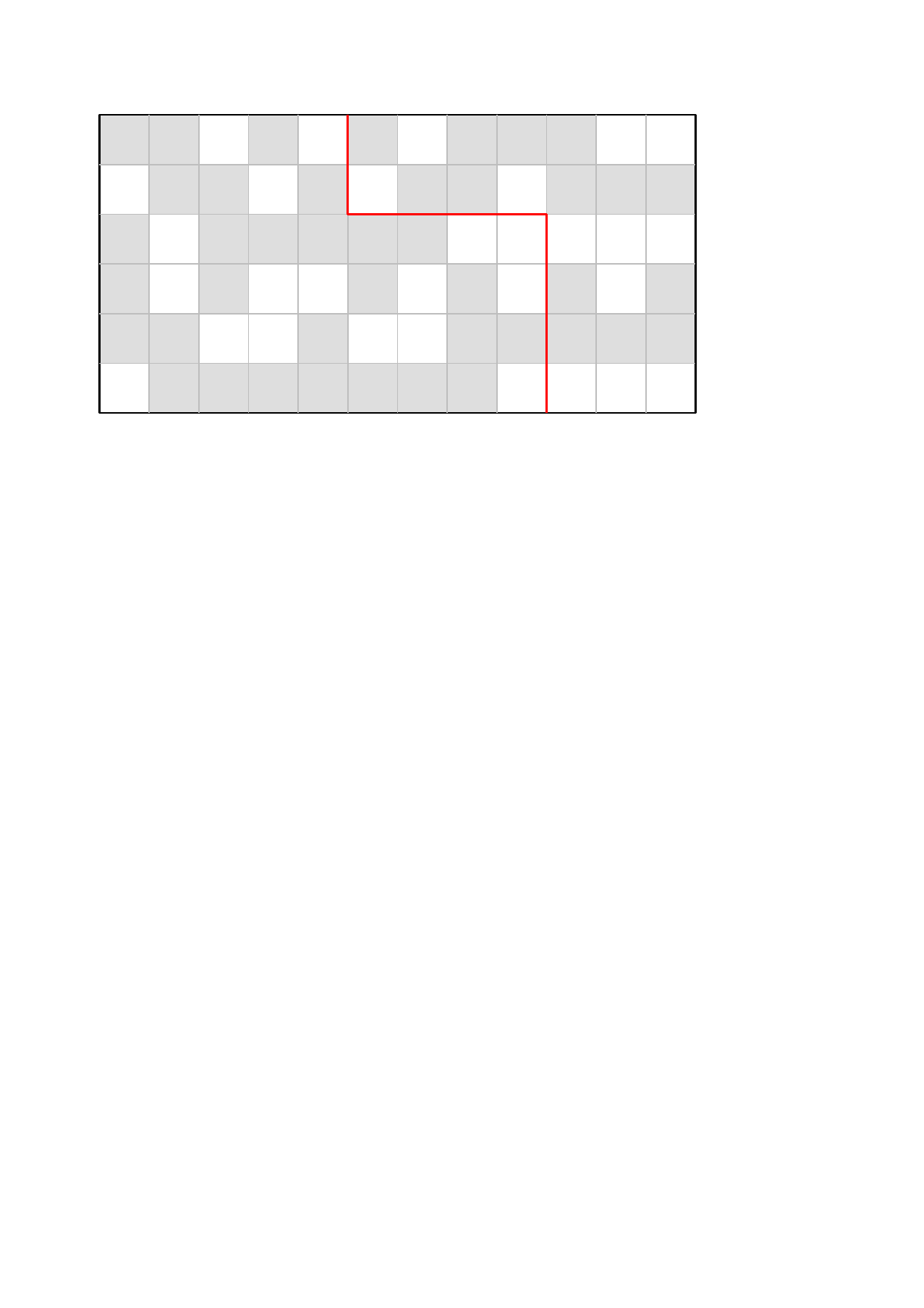}
			\caption{Illustration of the supercritical percolation. The gray squares are open, and the red path is one of the up-down paths we get from the Max-Flow-Min-Cut Theorem.}
			\label{fig6}
		\end{figure}
	
		If $P$ is bad, then there exists $\mathbf B(P) \in \mathfrak B(P)$ such that for $B_i(P)\in \mathbf B(P)$, we have $A_n(\phi_n,S_i)\geq C_p^2l^{(n)}(\phi,1-p)$ (in this case, we say that $\mathbf B(P)$ is a certificate for bad).	
		In light of this, we now proceed to prove \eqref{eq-prob-P-bad} by taking a union bound over $\mathbf B(P) \in \mathfrak B(P)$. The main ingredient is to bound the probability for a fixed $\mathbf B(P)$ to be a certificate for bad, where the key issue is to deal with the correlation among these boxes. To this end, we fix a $\mathbf B(P) = \{B_i(P): 1\leq i\leq \ell\}$ and we define
		\begin{equation}
			\phi^i_n(x)=\int_{2^{-2n}}^{1}\int_{B_i(P)}p_{\frac{t}{2}}(x,y)W(dy,dt) \And \phi^i_{\mathrm c}=\phi_{n}-\phi^i_n.
		\end{equation}		
		We define $A_n(\phi^i_n,S)$ as $A_n(\phi_n,S)$ but with $\phi^i_n$ in place of $\phi_n$. We see that, in order for a box $S_i$ to be not open, either of the following has to happen:		
		\begin{enumerate}
			\item[(1)] $\max_{x_i\in B_i^\prime(P)}\phi_{\mathrm c}^i(x_i)\geq \log C_p$;
			\item[(2)] $A_n(\phi^i_n,S_i)\geq C_pl^{(n)}(\phi,1-p)$.
		\end{enumerate}		
		Therefore, in order for $\mathbf B(P)$ to be a certificate for bad, either of the following has to happen:		
		\begin{enumerate}
			\item[(a)] at least half boxes satisfy $\max_{x_i\in B_i^\prime(P)}\phi_{\mathrm c}^i(x_i)\geq \log C_p$;
			\item[(b)] at least half boxes satisfy $A_n(\phi^i_n,S_i)\geq C_pl^{(n)}(\phi,1-p)$.
		\end{enumerate}
	
		We first bound the probability for (a). 
		We apply Lemma \ref{coarse} with $U_i=B_i(P)$, $V_i=B_i^\prime(P)$ and $a=1$, and we get that for $C_p$ larger than a certain absolute constant,
		\begin{equation}\label{tail12}
			\mathbb P(\mbox{event in (a)})\leq e^{-10^{3}\ell}\leq e^{-10L}.
		\end{equation}
		
		We now bound the probability for (b).
		Since $\phi_n=\phi^i_n+\phi^i_{\mathrm c}$, we have
		\begin{equation}\label{tail11}
			\mathbb{P}(A_n(\phi^i_n,S_i)\geq C_pl^{(n)}(\phi,1-p))\leq \mathbb{P}(A_n(\phi_n,S_i)\geq \sqrt{C_p}l^{(n)}(\phi,1-p))+\mathbb{P}(\|\phi^i_{\mathrm c}\|_{B_i^\prime(P)}\geq \frac{1}{2}\log C_p),
		\end{equation}
		where we used the fact that $\gamma<1$. By Proposition \ref{rsw}, for some absolute constant $C_0$, we may take $C_p$ sufficiently large, depending only on $p$, such that $\mathbb{P}(A_n(\phi_n,S_i)\geq \sqrt{C_p}l^{(n)}(\phi,1-p))<C_0p$. In addition, by \eqref{coar1} we may take $C_p$ sufficiently large, depending only on $p$, such that $\mathbb{P}(\|\phi^i_{\mathrm c}\|_{B_i^\prime(P)}\geq \frac{1}{2}\log C_p)<C_0p$. Combining \eqref{tail11} with a union bound over all choices of boxes, we have
		\begin{equation}\label{tail13}
			\mathbb P(\mbox{event in (b)} )\leq \binom{\ell}{\ell/2}(2C_0p)^{\ell/2}\leq 2^{L/100}(2C_0p)^{L/200}.
		\end{equation}
		Combining \eqref{tail12} and \eqref{tail13}, we complete the proof of \eqref{eq-prob-P-bad} by a union bound over all $\mathbf B(P)$. Thus, we complete the proof of the lemma.
	\end{proof}

	\begin{prop}\label{uptail}
		There exist absolute positive constants $C, c$ such that the following holds for all $n>1$, $p\leq p_0$ and $T>3$,
		\begin{equation}
			\mathbb{P}\left(R^{n}_{8,1}>T\Lambda_{n}(\phi,p)\right)\leq C\exp\left(-c\frac{(\log T)^2}{\log\log T}\right).
		\end{equation}
	\end{prop}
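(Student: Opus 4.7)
The plan is to combine Lemma \ref{taillem1} with a dyadic decomposition of the field via the scaling property \eqref{scaling}. Fix $T>3$ and take an integer $j\asymp \log\log T$ to be optimized, with $k=2^j$. Split $\phi_n=\phi_{0,j}+\phi_{j,n}$ into a coarse component $\phi_j:=\phi_{0,j}$ and a fine component $\phi_{j,n}$. Since the edge resistances factorize, we have the deterministic bound
\begin{equation*}
    R^n_{8,1}[\phi_n]\;\le\;\exp\bigl(\gamma\|\phi_j\|_{B(8,1)}\bigr)\cdot R^n_{8,1}[\phi_{j,n}],
\end{equation*}
where $R[\cdot]$ denotes the resistance computed with the specified field.

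Applying \eqref{scaling} with $r=2^j$ (and identifying $\phi_{j,n}$ with the scaled field of appropriate inner/outer cutoffs) gives $\phi_{j,n}(x)\stackrel{d}{=}\phi_{n-j}(2^jx)$ as processes on $B(8,1)$. Rescaling space by $2^j$ and absorbing the mesh mismatch between $\zeta_n$ and $\zeta_{n-j}$ via Proposition \ref{mesh}, we obtain the distributional identity $R^n_{8,1}[\phi_{j,n}]\stackrel{d}{=}R^{n-j}_{8k,k}[\phi_{n-j}]$. Then Lemma \ref{taillem1} at scale $n-j$ with parameter $k$, together with $l^{(n-j)}(\phi,1-p)\le\Lambda_n(\phi,p)$ from Lemma \ref{prio}, yields
\begin{equation*}
    \mathbb P\bigl(R^{n-j}_{8k,k}[\phi_{n-j}]>C\Lambda_n(\phi,p)\bigr)\;\le\; C\exp(-c\cdot 2^j).
\end{equation*}
Meanwhile, Proposition \ref{max} applied to $\phi_j$ on the bounded domain $B(8,1)$ (together with a union bound) yields $\mathbb P(\|\phi_j\|_{B(8,1)}>(\log T)/\gamma)\le C\exp(-c(\log T)^2/(\gamma^2 j))$. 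On the intersection of these two good events, $R^n_{8,1}\le T\cdot C\Lambda_n(\phi,p)\le T\Lambda_n(\phi,p)$ after a mild adjustment of absorbed constants.

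To balance the two tail contributions at the target level $\exp(-c(\log T)^2/\log\log T)$, one chooses $j=\lceil C_0\log\log T\rceil$ so that $2^j\ge(\log T)^2/\log\log T$ (controlling the Lemma \ref{taillem1} tail) and simultaneously $(\log T)^2/(\gamma^2 j)\ge(\log T)^2/\log\log T$ (controlling the coarse-field tail). These two requirements on $j$ are mutually compatible exactly when $\gamma^2$ lies below an absolute constant, which is where the hypothesis $\gamma<\gamma_0$ is invoked. The main obstacle is precisely this joint tail balance: the pointwise variance of $\phi_j$ grows linearly in $j$, so the coarse-field contribution deteriorates as $j$ increases, whereas Lemma \ref{taillem1} demands $j$ large enough that $2^j$ exceeds $(\log T)^2/\log\log T$; reconciling these competing demands is what drives the restriction to small $\gamma$. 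A minor additional point is the boundary case of small $n$ (where $n-j<1$): in that regime $\phi_n$ has controlled oscillation on $B(8,1)$ by Proposition \ref{max}, so $R^n_{8,1}$ is bounded by an absolute constant and the claim is trivial for $T$ above an absolute threshold.
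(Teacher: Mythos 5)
Your proposal follows essentially the same route as the paper's own proof: split $\phi_n=\phi_{0,m}+\phi_{m,n}$ at a scale tuned to $T$ (your $j\asymp\log\log T$ with $2^j\gtrsim(\log T)^2/\log\log T$ is exactly the paper's choice $2^m\asymp s^2$ with $s\asymp\log T/\sqrt{\log\log T}$), bound the coarse field via Proposition \ref{max}, and handle the fine-field resistance via the scaling property, Proposition \ref{mesh}, Lemma \ref{taillem1} and Lemma \ref{prio}. Two of your side claims, however, are wrong as stated. First, the boundary regime $n-j<1$: Proposition \ref{max} gives a Gaussian tail for $\|\phi_n\|_{B(1)}$, not a deterministic bound, and even for small $n$ the field has an unbounded ``zero mode'' (pointwise variance $n\log 2$), so $R^{n}_{8,1}$ is \emph{not} bounded by an absolute constant and the claim is not trivial there. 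The fix is the paper's final case: bound $R^{n}_{8,1}\leq Ce^{\gamma\|\phi_n\|_{B(8,1)}}$ deterministically (disjoint straight crossings with every edge resistance at most $e^{\gamma\|\phi_n\|}$) and use Proposition \ref{max}; since $n\lesssim\log\log T$ in this regime, $\mathbb{P}(\|\phi_n\|_{B(8,1)}\geq\log T)\leq C4^n\exp(-c(\log T)^2/n)$ is well below the target. Second, the assertion that the joint tail balance forces $\gamma<\gamma_0$ is a misdiagnosis: the constant $c$ in the conclusion is arbitrary, so with $j=C_0\log\log T$ and merely $\gamma\leq 1$ the coarse tail $C4^j\exp\bigl(-c(\log T)^2/(\gamma^2 j)\bigr)\leq C\exp\bigl(-(c/C_0)(\log T)^2/\log\log T\bigr)$ is already of the required form; in the paper the small-$\gamma$ restriction enters only through the induction basis in Section \ref{sec6}, not here.

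A further bookkeeping point: after rescaling, the exact distributional identity is $R^{n}_{8,1}[\phi_{j,n}]\overset{d}{=}R^{n-j}_{8k,k,\zeta_n}[\phi_{n-j}]$, with the mesh parameter $\zeta_n$ rather than $\zeta_{n-j}$; Proposition \ref{mesh} then gives only a comparison up to a factor $e^{C\sqrt{2^j}}$ with an extra failure probability $C2^{2j}e^{-c2^j}$, not an identity. With your choice of $j$ this factor is $\exp\bigl(C\log T/\sqrt{\log\log T}\bigr)=T^{o(1)}$, so it is harmless once you prove the bound at level, say, $T^2\Lambda_n(\phi,p)$ and relabel, but it should be tracked rather than absorbed silently.
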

	\begin{proof}
		For $m<n$, we consider the rescaled lattice $\mathbb{Z}_{n-m}^2$. By Lemma \ref{taillem1}, we can choose $C, c, p$ such that the following holds:
		\begin{equation}\label{tail14}
			\mathbb{P}\left(R^{n-m}_{8\cdot2^{m},2^{m},\zeta_{n-m}}\geq C2^{2m}l^{(n-m)}(\phi,1-p)\right)\leq Ce^{-c2^m}.
		\end{equation}
		By Proposition \ref{mesh}, since $\zeta_n, \zeta_{n-m} \geq \sqrt{n-m}$, we get that (we may increase the value of $C$ and decrease the value of $c$ if necessary)
			\begin{equation}\label{tail1}
			\mathbb{P}\left(R^{n-m}_{8\cdot2^{m},2^{m},\zeta_{n}}\geq Ce^{C\sqrt{2^m}}2^{2m}l^{(n-m)}(\phi,1-p)\right)\leq C2^{2m}e^{-c2^m}.
		\end{equation}
		By the scaling property of $\phi$ we have $R^{m,n}_{8,1,\zeta_n}\overset{d}{=}R^{n-m}_{8\cdot2^{m},2^{m},\zeta_{n}}$. Combined with \eqref{tail1}, this yields that
		\begin{equation}
			\mathbb{P}\left(R^{m,n}_{8,1,\zeta_n}\geq e^{C\sqrt{2^m}}\cdot C2^{2m}l^{(n-m)}(\phi,1-p)\right)\leq C2^{2m}e^{-c2^m}.
		\end{equation}
		Notice that $\phi_n=\phi_m+\phi_{m,n}$, and thus we have
		\begin{equation}
			\begin{aligned}
				&\mathbb{P}\left(R^{n}_{8,1}\geq e^{s\sqrt{m}+C\sqrt{2^m}}l^{(n-m)}(\phi,1-p)\right)\\
				&\leq
				\mathbb{P}\left(\|\phi_{0,m}\|_{B(8,1)}\geq Cm+s\sqrt{m}\right)+\mathbb{P}\left(R^{m,n}_{8,1}\geq e^{C\sqrt{2^{m}}}l^{(n-m)}(\phi,1-p)\right)\\
				&\leq Ce^{-cs^2}+Ce^{-c2^m},
			\end{aligned}
		\end{equation}
		where in the first inequality we used the fact that $\gamma<1$, and in the second inequality we applied Lemma \ref{max} and may have further adjusted the constant $C$ to absorb other factors. In the case that $s\in(2,2^{n/2})$, take $m=\max\{i:2^i\leq s^2\}$ and we get that
		\begin{equation}\label{tail3}
			\mathbb{P}\left(R^{n}_{8,1}\geq Ce^{Cs\sqrt{\log s}}\Lambda_n(\phi,p)\right)\leq e^{-cs^2},
		\end{equation}	
		where we used Lemma \ref{prio} to substitute $l^{(n-m)}$ by $\Lambda_n(\phi,p)$. In the case that $s>2^{n/2}$, by Proposition $\ref{max}$ we have (recalling that we have assumed $\gamma < 1$)
		$$
			\mathbb{P}\left(R^{n}_{8,1}\geq Ce^s\Lambda(\phi,p)\right)\leq\mathbb{P}\left(\|\phi_{n}\|_{B(8,1)}\geq s\right)\leq C4^n\exp \left(-\frac{s^2n}{(n+C\sqrt{n})^2}\right)\leq C\exp\left(-c\frac{s^2}{\log s}\right).
		$$
		Combined with \eqref{tail3}, it completes the proof.
	\end{proof}
	
	The lower tail estimate can now be derived using duality.
	
	\begin{prop}\label{lowtail}
		There exist absolute positive constants $C, c$ such that the following holds for all $n>1$, $p\leq p_0$ and $T>3$,
		\begin{equation}\label{taillow}
			\mathbb{P}\left(R^{n}_{1,8}\leq T^{-1}\Lambda_{n}(\phi,p)^{-1}\right)\leq C\exp\left(-c\frac{(\log T)^2}{\log\log T}\right).
		\end{equation}
	\end{prop}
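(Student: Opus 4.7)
The plan is to deduce the lower tail bound from the upper tail bound in Proposition \ref{uptail} via the duality relation of Lemma \ref{duallem}, combined with rotational symmetry of the field.

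First, I would apply Lemma \ref{duallem} to the rectangle $S = S^n_{1,8}$. This gives
$$R^n_{1,8} = R_{S_{\gamma\phi_n}}(\partial_{\mathrm{left}}S,\partial_{\mathrm{right}}S) \overset{d}{=} \bigl(R_{S^*_{\gamma\phi_n}}(\partial_{\mathrm{up}}S^*,\partial_{\mathrm{down}}S^*)\bigr)^{-1},$$
where I use that $\gamma\phi_n \overset{d}{=} -\gamma\phi_n$ since $\phi_n$ is centered Gaussian. Thus the task reduces to producing an \emph{upper} tail bound for the up-down resistance on the dual network $S^*$.

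Next I would identify this dual up-down resistance with $R^n_{8,1}$. The dual lattice $(\mathbb Z^2)^*$ is a translate of $\mathbb Z^2$, and after the natural rescaling, $S^*$ is (modulo a boundary shift of order $2^{-n}\zeta_n^{-1}$, which is at most the mesh size) a $B(1+o(1),\,8-o(1))$ rectangle on $\mathbb Z_n^2$. By rotational symmetry of the white noise $W$ (and hence of $\phi_n$), the up-down crossing resistance of $B(1,8)$ has the same law as the left-right crossing resistance of $B(8,1)$, namely $R^n_{8,1}$. The slight boundary mismatch amounts to adjoining or deleting a single layer of vertices along each side; one can absorb this by enlarging the rectangle slightly (via a trivial monotonicity/concatenation argument using Proposition \ref{parallel} and Proposition \ref{series}, or by invoking Proposition \ref{rsw} which handles aspect ratio perturbations). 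The net conclusion is
$$R^n_{1,8} \overset{d}{=} C'\bigl(R^n_{8,1}\bigr)^{-1}$$
up to a harmless multiplicative constant $C'$ (depending only on the geometry).

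Finally, Proposition \ref{uptail} gives
$$\mathbb{P}\bigl(R^n_{8,1} > T\Lambda_n(\phi,p)\bigr) \leq C\exp\!\left(-c\frac{(\log T)^2}{\log\log T}\right),$$
and rewriting this in terms of $R^n_{1,8}$ via the duality identification above yields \eqref{taillow} (possibly after adjusting the constant $C$ to absorb the factor $C'$ and the constants from the boundary comparison). The main (and really the only) obstacle is bookkeeping the minor boundary mismatch between $S^*$ and a standard rectangle in $\mathbb Z_n^2$; this is a routine application of either the triangle inequality for resistances (Proposition \ref{series}) together with an obvious monotonicity, or the RSW estimate (Proposition \ref{rsw}) since we only lose at most constants in rectangles of comparable aspect ratio.
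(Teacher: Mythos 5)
Your proposal is correct and follows essentially the same route as the paper: the paper's proof likewise invokes the duality of Lemma \ref{duallem} to turn the lower tail of $R^n_{1,8}$ into the upper tail of the (rotated) $8\times 1$ crossing resistance, handled ``modulo a slight modification on the boundary layer,'' and then applies Proposition \ref{uptail}. Your extra bookkeeping of the boundary mismatch and the rotation symmetry just makes explicit what the paper leaves as a remark.
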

	\begin{proof}
		By the duality as in \eqref{dual1}, we can convert the lower tail of the resistance to its upper tail, where the dimensions of the rectangle will be changed from $1\times 8$ to $8\times 1$, modulo a slight modification on the boundary layer when taking the dual graph.
	\end{proof}
	
	\begin{cor}\label{tailpsi}
		There exist absolute positive constants $C,c$ such that for all $n>0$, $p\leq p_0$ and $T>3$, we have
		\begin{equation}\label{uptail2}
			\mathbb{P}\left(\tilde{R}^n_{8,1}>T\tilde \Lambda_{n}(\psi,p)\right)\leq C\exp\left(-c\frac{(\log T)^2}{\log\log T}\right),
		\end{equation}
		\begin{equation}\label{lowtail2}
			\mathbb{P}\left(\tilde{R}^n_{1,8}\leq T^{-1}\tilde \Lambda_{n}(\psi,p)^{-1}\right)\leq C\exp\left(-c\frac{(\log T)^2}{\log\log T}\right).
		\end{equation}
	\end{cor}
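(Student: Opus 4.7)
The idea is to transfer the tail bounds for $R^n$ in Propositions \ref{uptail}--\ref{lowtail} to $\tilde R^n$ using the two comparison tools already available between $\phi_n$ and $\psi_n$.

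I would first establish two preparatory facts. (a) The variational characterization of the effective resistance, applied exactly as in the proof of Proposition \ref{quancom}, yields the almost-sure Lipschitz bound
$$\bigl|\log \tilde R^n_{a,b} - \log R^n_{a,b}\bigr| \leq \gamma \|\phi_n - \psi_n\|_{B(a,b)}$$
for every rectangle $B(a,b)$. (b) Combining the two inequalities of Proposition \ref{quancom} with the symmetric inequalities obtained by swapping the roles of $\phi$ and $\psi$ (the argument in Proposition \ref{quancom} is fully symmetric since $\phi_n - \psi_n$ and $\psi_n - \phi_n$ have the same distribution), one gets constants $c_p, C_p > 0$ such that for every $n$,
$$c_p \Lambda_n(\phi, 2p) \leq \tilde \Lambda_n(\psi, p) \leq C_p \Lambda_n(\phi, p/2).$$

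With these ingredients in hand, for the upper tail I would apply Lemma \ref{comp} to the functional $\log \tilde R^n_{8,1}$ (which is $\gamma$-Lipschitz in $\phi_n$ restricted to $B(8,1)$ by the symmetric version of (a)) to obtain, for any $y > 0$,
$$\mathbb P\bigl(\tilde R^n_{8,1} > T \tilde \Lambda_n(\psi, p)\bigr) \leq \mathbb P\bigl(R^n_{8,1} > T e^{-\gamma y} \tilde \Lambda_n(\psi, p)\bigr) + \mathbb P\bigl(\|\phi_n - \psi_n\|_{B(8,1)} > y\bigr).$$
The lower bound in (b) lets one replace $\tilde \Lambda_n(\psi, p)$ on the right by $c_p \Lambda_n(\phi, 2p)$, after which Proposition \ref{uptail} (with $T' = c_p T e^{-\gamma y}$ and $p' = 2p$, valid once $2p \leq p_0$) bounds the first term by $C\exp(-c(\log T')^2/\log\log T')$, while Proposition \ref{normprop} together with a union bound over finitely many unit squares covering $B(8,1)$ bounds the second by $Ce^{-cy^2}$. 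Choosing $y = \tfrac{1}{2}\log T$ keeps $\log T' = (1-\gamma/2)\log T + O(1)$ of order $\log T$ and makes $e^{-cy^2}\leq \exp(-c(\log T)^2/4)$, so both contributions match the target rate $\exp(-c(\log T)^2/\log\log T)$, establishing \eqref{uptail2}.

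For the lower tail \eqref{lowtail2}, the same decomposition applied on $B(1,8)$ reduces the event $\{\tilde R^n_{1,8}\leq T^{-1}\tilde \Lambda_n(\psi, p)^{-1}\}$ (up to the Gaussian error from $\|\phi_n - \psi_n\|_{B(1,8)}$) to $\{R^n_{1,8}\leq e^{\gamma y} T^{-1}\tilde \Lambda_n(\psi, p)^{-1}\}$; the upper-bound half of (b) gives $\tilde \Lambda_n(\psi, p)^{-1}\leq c_p^{-1}\Lambda_n(\phi, 2p)^{-1}$, after which Proposition \ref{lowtail} finishes the argument with the identical choice of $y$. The only real care needed is directional bookkeeping of the quantile comparison (the lower bound on $\tilde \Lambda_n$ is what is used for the upper tail, and the upper bound is what is used for the lower tail), which is the closest thing to an obstacle but is entirely routine.
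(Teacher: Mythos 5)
Your proposal is correct and follows essentially the same route as the paper: the $\gamma$-Lipschitz bound $|\log R^n_{a,b}-\log\tilde R^n_{a,b}|\leq\gamma\|\phi_n-\psi_n\|_{B(a,b)}$ combined with Lemma \ref{comp}, Proposition \ref{normprop} and Propositions \ref{uptail}--\ref{lowtail}, with the quantile comparison from Proposition \ref{quancom} (which the paper invokes explicitly only in Corollary \ref{rswcor}, there under the restriction $p<p_0/2$) supplying the needed bound $\tilde\Lambda_n(\psi,p)\geq c_p\Lambda_n(\phi,2p)$. One small bookkeeping slip: in the lower-tail step the inequality you display, $\tilde\Lambda_n(\psi,p)^{-1}\leq c_p^{-1}\Lambda_n(\phi,2p)^{-1}$, is the right one but it comes from the lower-bound half of your (b) --- both tails use the lower bound on $\tilde\Lambda_n(\psi,p)$ --- not from the upper-bound half as your closing remark asserts.
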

	\begin{proof}
		By Proposition \ref{normprop} and the observation that
		$$
		|\log R^{n}_{8,1}-\log \tilde{R}^n_{8,1}|\leq \gamma\|\phi_{n}-\psi_n\|_{B(8,1)}\leq \|\phi_{n}-\psi_n\|_{B(8,1)},
		$$
		we can deduce \eqref{uptail2} from Lemma \ref{comp} and Proposition \ref{uptail}. Similarly we can deduce \eqref{lowtail2} from Lemma \ref{comp} and Proposition \ref{lowtail}.
	\end{proof}
	
	We have the following bound on the tail probability of effective resistances around and across an annulus. For a square annulus, we denote its inner radius as a half of the side length of its inner boundary. Similarly, we denote its outer radius as a half of the side length of its outer boundary.
	\begin{prop}\label{taillem}
		For $n>i>0$ and $b>a>1$, consider an annulus $\mathcal{A}$ with inner radius $a$ and outer radius $b$. Then for all $0<q<1$, there exist constants $C_1=C_1(a,b)$ and $c_1=c_1(a,b)$, such that for all $n>0$, $p\leq p_0$ and $T>3$, 
		\begin{equation}\label{tailaround}
			\mathbb{P}\left(R^{i,n}(\mbox{around } \mathcal{A})\geq T\Lambda_{n-i}(\phi,p)\right)\leq C_1\exp\left(-c_1\frac{(\log T)^2}{\log\log T}\right),
		\end{equation}
		\begin{equation}\label{tailacross}
			\mathbb{P}\left(R^{i,n}(\mbox{across } \mathcal{A})\leq T^{-1}\Lambda_{n-i}(\phi,p)^{-1}\right)\leq C_1\exp\left(-c_1\frac{(\log T)^2}{\log\log T}\right).
		\end{equation}
	\end{prop}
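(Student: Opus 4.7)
Both estimates will be reduced to the rectangular crossing tail bounds already established in Propositions \ref{uptail} and \ref{lowtail} by decomposing each annular crossing into a bounded (in $a,b$) number of rectangular crossings of fixed aspect ratio. The scaling property \eqref{scaling}, namely $\phi_{i,n}(\cdot)\stackrel{d}{=}\phi_{n-i}(2^i\cdot)$, will be used to move from the field $\phi_{i,n}$ to $\phi_{n-i}$, after which the reference quantiles coincide with those comprising $\Lambda_{n-i}(\phi,p)$.

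For \eqref{tailaround}, I would cover the annular region by a fixed number $K=K(a,b)$ of axis-aligned rectangles $S_1,\dots,S_K$, e.g.\ four rectangles along the N, E, S, W sides of $\mathcal A$ with slight corner overlaps, such that any choice of hard crossings of the $S_j$'s can be concatenated into a loop around $\mathcal A$. Proposition \ref{series} then yields
$$
R^{i,n}(\text{around } \mathcal A) \;\le\; \sum_{j=1}^{K} R^{i,n}(\text{hard crossing of } S_j).
$$
After applying the scaling and handling the resulting change of mesh through Proposition \ref{mesh}, each summand on the right is distributed, up to a factor with sub-Gaussian tails, like a hard-crossing resistance of a rectangle of fixed aspect ratio on $\mathbb Z^2_{n-i}$. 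Proposition \ref{rsw} then relates its quantiles to those of the $8{:}1$ rectangle, which are controlled by Proposition \ref{uptail}; combined with Lemma \ref{prio} (to pass from $l^{(n-i)}(\phi,1-p')$ to $\Lambda_{n-i}(\phi,p)$) and a union bound over $j$, this delivers \eqref{tailaround}.

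For \eqref{tailacross}, I would instead cover $\mathcal A$ by a fixed number $K'=K'(a,b)$ of radial rectangles $R_1,\dots,R_{K'}$ with short side in the radial direction, arranged so that every self-avoiding path from the inner to the outer boundary of $\mathcal A$ contains an easy (short-direction) crossing of at least one $R_j$. Proposition \ref{parallel}, applied to the collection of inner-to-outer paths, then gives
$$
R^{i,n}(\text{across } \mathcal A)^{-1} \;\le\; \sum_{j=1}^{K'} R^{i,n}(\text{easy crossing of } R_j)^{-1}.
$$
Scaling, Proposition \ref{mesh}, Proposition \ref{rsw} (to pass to the $1{:}8$ aspect ratio) and Proposition \ref{lowtail} together lower-bound each easy-crossing resistance. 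Inverting and applying a union bound yields \eqref{tailacross}.

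\textbf{Main obstacle.} No single step is deep; the work is geometric bookkeeping and tracking tail shapes. The main care is in aligning the ad hoc geometry of the annular decomposition with the $8{:}1$ rectangles used in Propositions \ref{uptail}--\ref{lowtail} and in tracking how the spatial rescaling by $2^i$ interacts with the lattice mesh $2^{-n}\zeta_n^{-1}$, which will necessitate the invocation of Proposition \ref{mesh}. Both the aspect-ratio adjustment from Proposition \ref{rsw} and the mesh adjustment from Proposition \ref{mesh} come with at-most-sub-Gaussian tail corrections, which are comfortably absorbed into the target exponent $\exp\!\bigl(-c(\log T)^2/\log\log T\bigr)$. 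The remaining point of care is to check that all constants produced depend only on $a,b$ and not on $n,i,T,p$.
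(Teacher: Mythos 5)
Your proposal is correct and follows essentially the same route as the paper: the paper likewise glues hard crossings of four rectangles of dimensions $2b\cdot 2^{-i}\times (b-a)\cdot 2^{-i}$ via Proposition \ref{series} for \eqref{tailaround}, uses Proposition \ref{parallel} for \eqref{tailacross}, and combines the scaling property with Proposition \ref{mesh}, Propositions \ref{uptail}--\ref{lowtail} and Lemma \ref{prio}. The only cosmetic difference is that you route the aspect-ratio adjustment explicitly through Proposition \ref{rsw}, which the paper leaves implicit in its application of the rectangular tail bounds with constants depending on $(a,b)$.
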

	\begin{proof}
		By Proposition \ref{uptail}, for some constants $C_2=C_2(a,b)$ and $c_2=c_2(a,b)$ we have
		\begin{equation}
			\mathbb{P}\left(R^{n-i}_{2b,b-a,\zeta_{n-i}}\geq Tl^{(n-i)}(\phi,p)\right)\leq  C_2\exp\left(-c_2\frac{(\log T)^2}{\log\log T}\right).
		\end{equation}
		Thus by Proposition \ref{mesh}, we have
		\begin{equation}
			\mathbb{P}\left(R^{n-i}_{2b,b-a,\zeta_{n}}\geq Tl^{(n-i)}(\phi,p)\right)\leq  CC_2\exp\left(-c_2\frac{(\log T)^2}{\log\log T}\right).
		\end{equation}
		By the scaling property of $\phi$, we have $R^{n-i}_{2b,b-a,\zeta_{n}}\overset{d}{=}R^{i,n}_{2b\cdot 2^{-i},(b-a)\cdot 2^{-i},\zeta_{n}}$, and thus we get that
		\begin{equation}\label{tail15}
			\mathbb{P}\left(R^{i,n}_{2b\cdot 2^{-i},(b-a)\cdot 2^{-i},\zeta_{n}}\geq T\Lambda_{n-i}(\phi,p)\right)\leq CC_2\exp\left(-c_2\frac{(\log T)^2}{\log\log T}\right),
		\end{equation}
		where we applied Lemma \ref{prio} to substitute $l^{(n-i)}(\phi,p)$ by $\Lambda_{n-i}(\phi,p)$. Similarly, by Propositions \ref{lowtail} and \ref{mesh}, for $C_3=C_3(a,b)$ and $c_3=c_3(a,b)$ we get that
		\begin{equation}\label{tail16}
			\mathbb{P}\left(R^{i,n}_{(b-a)\cdot 2^{-i},2b\cdot 2^{-i},\zeta_{n}}\leq T^{-1}\Lambda_{n-i}(\phi,p)^{-1}\right)\leq C_3\exp\left(-c_3\frac{(\log T)^2}{\log\log T}\right).
		\end{equation}
		Notice that we can obtain contours around $\mathcal{A}$ by joining hard crossings through 4 rectangles of dimensions $2b\cdot 2^{-i}\times(b-a)\cdot 2^{-i}$ surrounding $\mathcal{A}$. 
		Then by Proposition \ref{series} and \eqref{tail15} we may obtain \eqref{tailaround}.
		Meanwhile, any crossing through the annulus $\mathcal{A}$ must cross at least one of these aforementioned rectangles in the easy direction. 
		Thus by Proposition \ref{parallel} and \eqref{tail16}, we obtain \eqref{tailacross}.
	\end{proof}
	
	\begin{cor}\label{rswcor}
		For $n>i>0$ and $b>a>1$, consider an annulus $\mathcal{A}$ with inner radius $a$ and outer radius $b$. Then for all $0<q<1$, there exist constants $C_4=C_4(p,a,b)$ and $c_4=c_4(p,a,b)$, such that for all $n>0$ and $p<p_0/2$ (where $p_0$ is the same as in Proposition \ref{rswlem1}), 
		\begin{equation}\label{tail5}
			\mathbb{P}\left(\tilde R^{i,n}(\mbox{around } \mathcal{A})\geq T\tilde \Lambda_{n-i}(\psi,p)\right)\leq C_4\exp\left(-c_4\frac{(\log T)^2}{\log\log T}\right),
		\end{equation}
		\begin{equation}\label{tail6}
			\mathbb{P}\left(\tilde R^{i,n}(\mbox{across } \mathcal{A})\leq T^{-1}\tilde \Lambda_{n-i}(\psi,p)^{-1}\right)\leq C_4\exp\left(-c_4\frac{(\log T)^2}{\log\log T}\right).
		\end{equation}
	\end{cor}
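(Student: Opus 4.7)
The proof proposal is to imitate the strategy used in Corollary \ref{tailpsi}, which transferred the Gaussian-like tail bounds from $\phi$-resistances (Propositions \ref{uptail}, \ref{lowtail}) to $\psi$-resistances using Proposition \ref{normprop} and Lemma \ref{comp}. Here the analogous input is Proposition \ref{taillem}, but on an annulus and with truncated fields $\phi_{i,n}$ and $\psi_{i,n}$.

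First I would record the pointwise comparison: since the resistance of each edge is $\exp(\gamma\phi_{i,n})$ (respectively $\exp(\gamma\psi_{i,n})$) at its midpoint, the variational formula \eqref{resistancedef} (together with the analogous characterization for ``around'' as a generalized resistance $R(\mathcal{P})$ in Section \ref{sec2}) yields
\begin{equation*}
    \bigl|\log R^{i,n}(\mathrm{around}\,\mathcal{A})-\log\tilde R^{i,n}(\mathrm{around}\,\mathcal{A})\bigr|
    \;\leq\;\gamma\,\|\phi_{i,n}-\psi_{i,n}\|_{B(b)},
\end{equation*}
and similarly for crossings. Since $\mathcal{A}\subset B(b)$ is of fixed diameter (depending only on $a,b$), Proposition \ref{normprop} combined with a union bound over a bounded number of unit squares covering $B(b)$ gives that $\|\phi_{i,n}-\psi_{i,n}\|_{B(b)}$ has Gaussian tails with constants depending only on $b$, uniformly in $i,n$.

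Next I would use Proposition \ref{quancom} to transfer quantiles between the two fields. Applied to the ratios defining $\Lambda$ and $\tilde\Lambda$, it gives $\tilde\Lambda_{n-i}(\psi,p)\leq C_p^2\,\Lambda_{n-i}(\phi,p/2)$ and $\Lambda_{n-i}(\phi,p/2)\leq C_p'\,\tilde\Lambda_{n-i}(\psi,p)$ (whenever $p<p_0/2$, which matches the hypothesis). Now apply Lemma \ref{comp}: taking $\eta=\log \tilde R^{i,n}(\mathrm{around}\,\mathcal{A})$, $\mu=\log R^{i,n}(\mathrm{around}\,\mathcal{A})$, $x=\log(T\tilde\Lambda_{n-i}(\psi,p))$, and $y=\tfrac{1}{2}\log T$,
\begin{equation*}
    \mathbb{P}\bigl(\tilde R^{i,n}(\mathrm{around}\,\mathcal{A})\geq T\tilde\Lambda_{n-i}(\psi,p)\bigr)
    \;\leq\;\mathbb{P}\bigl(R^{i,n}(\mathrm{around}\,\mathcal{A})\geq T^{1/2}C_p^{-2}\Lambda_{n-i}(\phi,p/2)\bigr)
    +\mathbb{P}\bigl(\gamma\|\phi_{i,n}-\psi_{i,n}\|_{B(b)}\geq \tfrac{1}{2}\log T\bigr).
\end{equation*}
The first term is controlled by \eqref{tailaround} of Proposition \ref{taillem} at parameter $p/2$, with $T$ replaced by a constant multiple of $T^{1/2}$; this still produces the required $\exp(-c(\log T)^2/\log\log T)$ bound (absorbing the $C_p^{-2}$ into $T$ only changes constants). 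The second term is bounded by Gaussian tails from Proposition \ref{normprop}, which is stronger than $\exp(-c(\log T)^2/\log\log T)$. Summing gives \eqref{tail5}; the argument for \eqref{tail6} is identical but starting from \eqref{tailacross}.

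There is no real obstacle here, only bookkeeping: one has to make sure the quantile shift $p\mapsto p/2$ does not break the hypothesis (which is why the statement restricts to $p<p_0/2$), check that the annulus lies in a box of bounded size so that Proposition \ref{normprop} applies after a trivial union bound, and verify the generalized resistance $R(\mathcal{P})$ for paths ``around'' satisfies the same pointwise $\gamma\|\phi-\psi\|$ comparison (which follows from its variational definition in Section \ref{sec2}). The mild potential subtlety is that Proposition \ref{normprop} is stated for $B(1)$; since the constants in \eqref{tail5}--\eqref{tail6} are allowed to depend on $a,b$, extending by a covering argument is harmless.
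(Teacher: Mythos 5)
Your route is the same as the paper's (compare $\tilde R^{i,n}$ with $R^{i,n}$ through $\gamma\|\phi_{i,n}-\psi_{i,n}\|$ via Proposition \ref{normprop} and Lemma \ref{comp}, transfer quantiles via Proposition \ref{quancom}, conclude by Proposition \ref{taillem}), but your quantile transfer goes in the wrong direction, and that step is where the content of the corollary lies. To include $\{\tilde R^{i,n}\geq T\tilde\Lambda_{n-i}(\psi,p)\}$ in an event of the form $\{\tilde R^{i,n}\geq cT\Lambda_{n-i}(\phi,q)\}$ you need a \emph{lower} bound $\tilde\Lambda_{n-i}(\psi,p)\geq c\,\Lambda_{n-i}(\phi,q)$. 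The union-bound mechanism behind Proposition \ref{quancom} always degrades the quantile level: transferring from $\psi$ to $\phi$ gives $l^{(m)}(\phi,q)\geq C^{-1}\tilde l^{(m)}(\psi,q/2)$ and $l^{(m)}(\phi,1-q)\leq C\,\tilde l^{(m)}(\psi,1-q/2)$, hence only $\Lambda_{n-i}(\phi,2p)\leq C^{2}\tilde\Lambda_{n-i}(\psi,p)$ is available, i.e.\ $q=2p$ (or larger). Your claimed inequality $\Lambda_{n-i}(\phi,p/2)\leq C_p'\,\tilde\Lambda_{n-i}(\psi,p)$ is strictly stronger, since $\Lambda_{n-i}(\phi,\cdot)$ increases as its argument decreases, and it does not follow from Proposition \ref{quancom}; nor can you wash out the difference with Proposition \ref{mainprop}, because Corollary \ref{rswcor} feeds (through Lemma \ref{n4l}) into the induction that proves Proposition \ref{mainprop}, so invoking it here would be circular.

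The repair is exactly what the paper does: use $\tilde\Lambda_{n-i}(\psi,p)\geq C_p^{-2}\Lambda_{n-i}(\phi,2p)$ and then apply \eqref{tailaround} and \eqref{tailacross} at parameter $2p$; this is precisely where the hypothesis $p<p_0/2$ is used, so that $2p\leq p_0$ as Proposition \ref{taillem} requires. Your stated reason for the hypothesis (``the quantile shift $p\mapsto p/2$ does not break the hypothesis'') is therefore off: a downward shift could never violate $p\leq p_0$; the restriction exists because the admissible shift is upward, to $2p$. The remaining ingredients of your write-up are fine and match the paper's intent: the Lipschitz comparison $|\log R-\log\tilde R|\leq\gamma\|\phi_{i,n}-\psi_{i,n}\|$ does hold for the generalized ``around''/``across'' resistances by their variational definitions, the covering of $B(b)$ by unit boxes to apply Proposition \ref{normprop} is harmless since constants may depend on $a,b$, and splitting $T$ as $T^{1/2}\cdot T^{1/2}$ preserves the $\exp(-c(\log T)^2/\log\log T)$ form.
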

	\begin{proof}
		By Proposition \ref{quancom}, we have $\tilde \Lambda(\psi,p)\geq C_p^{-2}\Lambda(\phi,2p)$. We may then deduce \eqref{tail5} and \eqref{tail6} directly from Propositions \ref{normprop} and \ref{taillem} (applied with $2p$, which is fine since we assumed that $p<p_0/2$).
	\end{proof}
	
	\section{Concentration for resistance}\label{sec6}
	The goal of this section is to prove the following proposition. 
	
	\begin{prop}\label{mainprop}
		Recall the maximal quantile ratio $\Lambda_n(\phi,p)$ as in \eqref{quantile}. There exists a positive constant $\gamma_0$ such that for all $\gamma\in(0, \gamma_0)$ and $p\in (0,1/2)$,
		\begin{equation}\label{mainineq}
			\sup_{n\in\mathbb{N}}\Lambda_n(\phi,p)<\infty.
		\end{equation}
	\end{prop}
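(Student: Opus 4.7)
The plan is to work with the finite-range approximation $\psi_n$, establish the uniform-in-$n$ boundedness of $V_n := \mathrm{Var}(\log \tilde R^n_{8,1})$, and transfer back to $\phi$ via Propositions \ref{normprop}--\ref{quancom}. Together with Chebyshev's inequality and the duality of Lemma \ref{duallem} (which centers $\log \tilde R^n_{8,1}$ near $0$), a variance bound is equivalent to bounded upper and lower quantiles, hence to $\sup_n \tilde\Lambda_n(\psi,p) < \infty$. I would prove the variance bound by an intrinsic induction on $n$: the induction hypothesis is that $V_k \leq V_*$ for all $k < n$ and some absolute constant $V_*$, and the goal is to re-establish the same bound at scale $n$. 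By Chebyshev the hypothesis yields a bounded quantile ratio $\tilde\Lambda_k(\psi, p) \leq M(V_*, p)$ for $k < n$, which by Corollary \ref{rswcor} upgrades to sub-Gaussian-type tail estimates for resistances around and across annuli at all these scales.

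The main tool is the Efron-Stein inequality applied to the white noise driving $\psi_n$. Decomposing $W$ into independent pieces $W_{i,B}$ indexed by scales $i \leq n$ and dyadic boxes $B$ of side length $\approx 2^{-i}$, each resampling affects only edges with midpoint in a slight enlargement $B^+$ (by the finite-range property of $\tilde p_t$). For such a resampling, writing $R, R'$ for the old and new resistance, Lemma \ref{resdif} applied with $D = B^+$ and $H$ a thin annulus around $B^+$ supporting disjoint around-$B^+$ crossings yields a bound of the form
\begin{equation*}
|R' - R| \;\preceq\; \mathcal E(\theta, H) \;+\; \theta(B^+)^2 \, R(\text{around } B^+)
\end{equation*}
(and a symmetric bound with $R, R'$ swapped). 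Hence the single-box contribution to the Efron-Stein sum is controlled by the local energy and the squared current flux through $B^+$, each normalized by $R$.

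The decisive step is to show that the total over all $(i, B)$ of these contributions is bounded by a constant absorbable into the induction. This reduces to the following \emph{energy-decay} statement: with high probability under the induction hypothesis, both $\mathcal E(\theta, B^+)/R$ and $\theta(B^+)^2 R(\text{around }B^+)/R$ decay like $2^{-\alpha(n-i)}$ for some $\alpha > 0$ on most scale-$i$ boxes. The mechanism is that an annulus around $B^+$ contains many parallel crossings whose joint conductance is of constant order (by the around-annulus tail of Corollary \ref{rswcor}), while the across-annulus resistance is also bounded, so one can explicitly reroute the electric current around $B^+$ at a cost that is a small fraction of the total Dirichlet energy. Planarity is used essentially here, since around-annulus crossings form dual cut sets; the quantitative comparison is executed via Propositions \ref{series} and \ref{parallel} together with Lemma \ref{resdif}. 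Summing the decay over the $O(4^{n-i})$ boxes at scale $i$ and then over $i$ produces a convergent geometric series, giving the desired bound on the Efron-Stein sum independent of $n$.

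The base case of the induction is where $\gamma < \gamma_0$ enters: for small $\gamma$ the map $W \mapsto \log \tilde R^{n_0}_{8,1}$ has small Gaussian-variance proxy (essentially $\gamma^2$ times a factor polynomial in $n_0$), so by Gaussian concentration $V_{n_0}$ can be made smaller than any prescribed threshold by choosing $\gamma$ small enough for a fixed initial scale $n_0$. This starts the bootstrap, and the inductive step then propagates $V_n \leq V_*$ to all $n$. The \textbf{main obstacle} I anticipate is the energy-decay step: passing from concentration of resistances (the induction hypothesis) to concentration of \emph{energies} requires an explicit, uniform-in-field-configuration rerouting of the current, with exponents tracked carefully so that the gain from energy decay beats the $O(4^{n-i})$ box count at each scale. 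The sub-Gaussian-in-$\log T$ (rather than truly exponential) rate in Corollary \ref{rswcor} is precisely the slack that must be matched against this combinatorics, which is what confines the argument to the small-$\gamma$ regime.
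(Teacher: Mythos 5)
Your skeleton (induction over scales, Efron--Stein for the white noise driving $\psi_n$, the rerouting bound of Lemma \ref{resdif}, energy decay extracted from the induction hypothesis via comparable around/across annulus resistances, and smallness of $\gamma$ for the base case) is exactly the paper's strategy, with one structural change: you resample independent pieces at \emph{every} scale $i\le n$, whereas the paper resamples only at one fixed scale $\kappa$ (the pieces $\psi_{\kappa,n,S}$, each carrying all frequencies below $2^{-\kappa}$ localized near $S$) plus a single coarse field $\psi_{0,\kappa}$. The decisive quantitative step, however, does not close as you state it. First, the bookkeeping: boxes of side $2^{-i}$ number $O(4^{i})$, not $O(4^{n-i})$, and the energy fraction of a box can only be shown to decay in the number of dyadic scales separating it from the unit box (i.e.\ in $i$), not in $n-i$; for a fixed macroscopic box that fraction certainly does not vanish as $n\to\infty$. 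More seriously, even with the indices repaired, the decay you can actually prove is a factor $(2C_1+1)/(2C_1+2)$ per ``good'' nested-annulus scale, where $C_1$ must be taken large because the good-scale probability has to be close to $1$ (so that Hoeffding beats the union bound over boxes); this factor is arbitrarily close to $1$, so ``(number of boxes)\,$\times$\,(per-box squared decay)'' summed over scales --- which is what your ``convergent geometric series'' sentence describes --- diverges: you would need per-level decay faster than $2^{-1}$ of the energy fraction, which the mechanism does not give. The paper never multiplies a per-box bound by the box count. It uses the bounded-overlap identity $\sum_{S}\mathcal{E}(\theta,\mathcal{A}_S)\le \kappa\,\tilde R^{n}_{1,1}$ (Lemma \ref{sum}) to write $\sum_S Z_S^2\le(\max_S Z_S)(\sum_S Z_S)$, so only the \emph{maximum} energy fraction must decay, and the box count enters only through the probability of the bad event (whence the need for failure probability $e^{-\mathsf C\kappa}$ with $\mathsf C$ large, which is precisely why near-$1$ good-scale probability, hence the induction rather than plain RSW, is required). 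Without this step your Efron--Stein sum is not bounded by anything small.

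The second gap is the coarse scales. In your all-scale decomposition, a box of side comparable to $1$ can carry a constant fraction of the Dirichlet energy and its around/across ratio is only $O(1)$, so the rerouting bound yields an $O(1)$ Efron--Stein increment for such pieces and no decay mechanism applies. These contributions must be made \emph{small}, not merely bounded, because the induction has to return the same constant it assumed (in the paper $\tilde\Lambda_n(\psi,p)\le 2$, obtained from the variance via Lemma \ref{quanvar}); an $O(1)$ variance contribution would break the bootstrap. The paper handles exactly these scales by lumping them into the single top field $\psi_{0,\kappa}$ and bounding its resampling effect through the $\gamma$-Lipschitz dependence of $\log\tilde R^n_{1,1}$ on the field together with Gaussian concentration, giving the term $C\gamma^2\kappa$ --- the second, essential, place where smallness of $\gamma$ enters, in addition to the base case you mention. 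Your proposal needs the analogous treatment (bound the coarse-box increments by $\gamma^2$ times the sup-norm of the resampled field rather than by the energy argument); with that in place, the order of choices ``first the cutoff scale large, then $\gamma_0$ small'' closes the induction as in Section \ref{subsec65}.
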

	In order to prove Proposition \ref{mainprop}, we will first prove it for quantiles with respect to the field $\psi$. Recall $p_0$ in Proposition \ref{taillem1} and fix $p<p_0/2$. We will inductively prove that
	\begin{equation}\label{induction}
		\tilde \Lambda_{n}(\psi,p)\leq 2.
	\end{equation}
	It is crucial that through our induction, we arrive at the same constant as assumed in the induction hypothesis. Our proof framework follows that for the tightness of the Liouville quantum gravity metric as in \cite{ding2019lioville,dubedat2020liouville,ding2020tightness,ding2023uniqueness}, which is based on the Efron-Stein inequality. We next set up the framework and describe the outline of the proof. Along the way, we will emphasize some major obstacle for dealing with the resistance and explain how this will be tackled in the following subsections. In what follows, we let $\kappa$ be an integer to be determined later. 
	
	\begin{defi}
		Recall $\epsilon_0=1/100$ from $\eqref{psidef}$. We define $\mathcal{S}_\kappa$ to be the collection of $2^{-\kappa}\times2^{-\kappa}$
		dyadic squares which are contained in the	$2^{-\kappa+1}\kappa^{\epsilon_0}$-neighborhood of $B(1)$.
	\end{defi}
	
	For $S\in \mathcal{S}_k$, we set (recall the definition of $\tilde p_{t}(x, y)$ as in \eqref{psidef})
	\begin{equation}
		\psi_{\kappa,n,S}(x)=\int_{2^{-2n}}^{2^{-2\kappa}}\int_{S}\tilde{p}_{\frac{t}{2}}(x,y)W(dy,dt).
	\end{equation}
	Since $\tilde{p}_{\frac{t}{2}}(x,\cdot)$ is supported on the $2^{-\kappa+1}\kappa^{\epsilon_0}$-neighborhood of $x$ for every $t\leq 2^{-2\kappa}$, it follows that $\psi_{\kappa,n,S}(\cdot)$ is supported on the $2^{-\kappa+1}\kappa^{\epsilon_0}$-neighborhood of $S$. In addition, it is obvious that
	\begin{equation}
		\psi_{\kappa,n}=\sum_{S\in \mathcal{S}_\kappa}\psi_{\kappa,n,S} \mbox{ on } B(1).
	\end{equation}
	
	In order to apply the Efron-Stein inequality, we resample $\psi_{\kappa,n,S}$ to get an independent copy $\hat{\psi}_{\kappa,n,S}$. Let $\psi_{n}^S=\psi_{n}-\psi_{\kappa,n,S}+\hat{\psi}_{\kappa,n,S}$. We also resample the top field (i.e., the coarse field) $\psi_{0,\kappa}$ to get an independent copy $\hat{\psi}_{0,\kappa}$ and let $\psi_{n}^{\mathrm{T}}=\psi_{n}-\psi_{0,\kappa}+\hat{\psi}_{0,\kappa}$. Then we write $\tilde{R}^{n,S}_{1,1}$ for the analogue of $\tilde R^n_{1, 1}$ where $\psi_n$ is replaced with $\psi_n^S$, and write $\tilde{R}^{n,\mathrm{T}}_{1,1}$ for the analogue of $\tilde R^n_{1, 1}$ where $\psi_n$ is replaced with $\psi_n^{\mathrm T}$.	
	Write
	$$
			X_S=\left(\log \tilde{R}^{n,S}_{1,1} -\log \tilde{R}^n_{1,1}\right)_+.
	$$
	Then, by the Efron-Stein inequality we have
	\begin{equation}\label{Efron}
			\mathrm{Var}\left[\log \tilde{R}^{n}_{1,1}\right] \leq\sum_{S\in\mathcal{S}_k}\mathbb{E}\left[X_S^2\right]+\mathbb{E}\left[\log \tilde{R}^{n,\mathrm{T}}_{1,1}-\log \tilde{R}^{n}_{1,1}\right]^2.
	\end{equation}
	
	Since $\log \tilde{R}^n_{1, 1}$ is a $\gamma$-Lipschitz function with respect to the $L^\infty$-norm and since the pointwise variance $\mathrm{Var}(\psi_\kappa(x)) \leq \kappa \log 2$, we apply the Gaussian concentration inequality (see \cite{Borell1975,Sudakov1978} and see also e.g. \cite[Theorem~3.25]{van2014probability}) and derive that
	\begin{equation}\label{Efron1}
			\mathbb{E}\left[\log \tilde{R}^{n,\mathrm{T}}_{1,1}-\log \tilde{R}^{n}_{1,1}\right]^2\leq C\gamma^2 \kappa.
	\end{equation}

	The key task is to bound the first term on the right-hand side of \eqref{Efron}. In the analogue of the LQG metric, this crucially relies on the fact that
	\begin{equation}\label{eq-LQG-Property}
		\mbox{ the crossing distance through a smaller box is smaller than that through a bigger box,}
	\end{equation} 
	which implies that the local perturbation of the field only results in a small perturbation of the metric. In our context, the effective resistance crossing through boxes of all sizes are all supposed to be of order 1, and as a result the analogous line of reasoning is invalid. In order to address this, an important observation is that the electric current flowing into a small box (respectively, the corresponding energy generated therein) should be insignificant compared to the strength of the global flow (respectively, the corresponding energy). This can be easily checked if each edge resistance is 1. In our setup, however, this is not at all trivial. A moment of thinking suggests that the key condition for the aforementioned flow/energy decay is that the resistance in neighboring scales are comparable. This raises two entangling issues which complicate our analysis: (i) when comparing resistances in two neighboring scales, we actually need that the crossing resistance in the hard direction is not so much larger than that in the easy direction; (ii) we need to show that \emph{simultaneously} in all spatial locations for a positive fraction of the scales the aforementioned comparison holds. While we can partially address Issue (i) by RSW estimates, in order for a uniformly bound to address Issue (2) it seems inevitable to employ some concentration bound on the resistance, which is exactly what we wish to prove to begin with. This naturally calls for an induction, as hinted at the beginning. We emphasize that our induction for the resistance is intrinsically more involved than that for the LQG metric, which is manifested by the fact that \eqref{eq-LQG-Property} is an a priori input for the LQG metric (which holds in fact throughout the entire regime as proved in \cite{ding2021distance}) instead of being a consequence of the induction. This is also why our proof can only work for small $\gamma$, since we need this to obtain the induction basis for the first $\kappa$ scales (for a fixed but large integer $\kappa$).
	
	In what follows, we describe the content for each later subsection, which should further elaborate our proof outline.
	
	In \textbf{Section \ref{subsec62}}, we decompose $X_S$ into two terms by Lemma \ref{resdif}. To be precise, we let $A_S$ be an annulus surrounding $S$ and we write $Y_S=2\tilde{R}^{n}(\mbox{around }\mathcal{A}_S)/\tilde{R}^{n}(\mbox{across }\mathcal{A}_S)+1$ and $Z_S=\mathcal{E}(\theta,\mathcal{A}_S)/\tilde R^n_{1,1}$, where $\theta$ is the electric current corresponding to $\tilde R^n_{1,1}$. Then by Lemma \ref{resdif} we have $X_S\leq Z_SY_S$ (see Lemma \ref{energy}) and furthermore,
	\begin{equation}
		\sum_{S\in \mathcal{S}_\kappa}X_S^2\leq \max_{S\in \mathcal{S}_\kappa}Y_S^2 \max_{S\in S_\kappa}Z_S \sum_{S\in \mathcal{S}_\kappa}Z_S.
	\end{equation}
	Since $\sum_{S\in \mathcal{S}_\kappa}Z_S$ can be easily bounded by noticing that $\tilde R^n_{1,1}=\sum_{S\in\mathcal{S}_\kappa}\mathcal{E}(\theta,S)$, it remains to bound $\max_{S\in \mathcal{S}_\kappa}Y_S^2$ and $\max_{S_\in \mathcal{S}_\kappa}Z_S$ separately.
	
	In \textbf{Section \ref{subsec63}}, we bound $\max_{S\in \mathcal{S}_\kappa}Z_S$, which is the hardcore bit of this proof. We will apply Lemma \ref{resdif} to bound the change of the resistance after resampling a local field, and the key task is to bound the right-hand side of \eqref{resdifeq}, which culminates at a \emph{simultaneous} exponential decay for the energy in a local box as in Lemma \ref{decaylem}. To this end, the important ingredient is to show that with probability that is fixed but closed to 1, in each neighboring scale the energy decays by some constant factor. We emphasize that we need this probability to be close to 1 such that we can apply Hoeffding's inequality later to show the following: in every spatial location (i.e., a box of size $2^{-\kappa}$), with probability except exponentially small (where the rate of the exponential is large, in alignment of the close to 1 probability in each scale) we have at least a positive fraction of the scales with energy decay---the fact that the rate in the exponential probability is large then allows us to afford a union bound over all boxes in $\mathcal S_{\kappa}$. It is in order to prove energy decay with probability close to 1 that we need concentration bounds on effective resistances coming from the induction. If, on the contrary, we only need energy decay with positive probability, we can then just apply the RSW estimate and then conceptually simplify our proof (as well as remove the assumption on small $\gamma$).
	
	In order to prove the aforementioned energy decay, we will prove in Lemma \ref{decaynumber} that with probability except exponentially small (with a large rate) there is a positive fraction of scales for which the neighboring annuli have comparable resistances. When proving Lemma \ref{decaynumber}, we will need to prove a few technical estimates including some approximations which facilitate ``independence'', and we need to prove some bounds on oscillations of the field, as incorporated in Lemmas \ref{n2l}, \ref{n3l} and \ref{n4l}.
	
	In \textbf{Section \ref{subsec64}}, we bound $\max_{S\in \mathcal{S}_\kappa}Y_S^2$ in a similar way as in \cite[Section~3.5]{ding2020tightness1}. By path gluing techniques for percolation, we can obtain tail estimates for effective resistances around and across $\mathcal{A}_S$ in Lemma \ref{unilem}. Then we can bound the tail of $\max_{S\in S_\kappa}Y_S$ by a union bound, which then also implies a sufficient upper bound on the second moment of $\max_{S\in \mathcal{S}_\kappa} Y_S^2$.
	
	In \textbf{Section \ref{subsec65}}, we combine the results in previous subsections to complete the induction. We reiterate that here we will crucially use our assumption that $\gamma$ is sufficiently small to prove our induction basis. Having proved Proposition \ref{mainprop}, we further use similar arguments as in Propositions \ref{uptail} and \ref{lowtail} to obtain tail estimates for effective resistances.
	\subsection{Efron-Stein bound in terms of electric energy}\label{subsec62}
	In this subsection we bound the Efron-Stein difference by the Dirichlet energy restricted to a local region where the resampling occurs. Our intuition is that if the effective resistances around and across an annulus are within a constant factor, then we would be able to reroute the electric current efficiently and derive the desired energy bound.
	
	Let $m_{\kappa}\in\mathbb{N}$ be chosen so that
	\begin{equation}
		2^{m_\kappa}\leq \kappa^{\epsilon_0}< 2^{m_\kappa+1}.
	\end{equation}
	For $S\in\mathcal{S}_{\kappa}$, we write $v_S$ as the center of $S$. We define $\mathcal{A}_S$ to be the square annulus centered at $v_S$ and with inner radius $2^{-\kappa+m_\kappa}$ and outer radius $2^{-\kappa+m_\kappa+1}$, and define $\mathcal{D}_S$ to be the box centered at $v_S$ with side length $2^{-\kappa+m_\kappa+1}$ (i.e., $\mathcal D_S$ is the ``hole'' in $\mathcal A_S$).
	
	\begin{lem}\label{energy}
		Let $\theta$ be the electric current corresponding to $\tilde R^{n}_{1,1}$. We have that 
		\begin{equation}
			X_S\leq\frac{\mathcal{E}(\theta,\mathcal{A}_S)}{\tilde{R}^{n}_{1,1}}\left(2\frac{\tilde{R}^{n}(\mbox{around }\mathcal{A}_S)}{\tilde{R}^{n}(\mbox{across }\mathcal{A}_S)}+1\right).
		\end{equation}
	\end{lem}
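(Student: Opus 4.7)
The plan is to apply Lemma~\ref{resdif} with $\mathrm{D}=\mathcal{D}_S$, $\mathrm{H}=\mathcal{A}_S$, and $\mathcal{P}'$ equal to the collection of cycles within $\mathcal{A}_S$ surrounding $\mathcal{D}_S$, so that $R(\mathcal{P}')=\tilde{R}^n(\mbox{around }\mathcal{A}_S)$. By the choice of $m_\kappa$, the support of $\psi_{\kappa,n,S}$ sits inside $\mathcal{D}_S$, so the resampled field $\psi_n^S$ agrees with $\psi_n$ outside $\mathcal{D}_S$ and the network obtained by deleting all edges inside $\mathcal{D}_S$ has the same effective resistance under either field; call this common value $R^{\setminus\mathcal{D}_S}$. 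Deleting edges can only increase resistance, so $\tilde{R}^{n,S}_{1,1}\le R^{\setminus\mathcal{D}_S}$.

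The geometric hypothesis of Lemma~\ref{resdif} is verified by planarity: any source-to-sink path $P$ that enters $\mathcal{D}_S$ must cross $\mathcal{A}_S$ at least twice, so any surrounding cycle $P'\in\mathcal{P}'$ meets $P$ at both crossings and splicing a portion of $P'$ into $P$ produces $Q_{P,P'}\subset P\cup P'$ lying in $\mathcal{P}$ and avoiding $\mathcal{D}_S$. Applying Lemma~\ref{resdif} to the original network with current $\theta$ and discarding the non-positive term $-\mathcal{E}(\theta,\mathcal{D}_S)$ yields
\begin{equation*}
\tilde{R}^{n,S}_{1,1}-\tilde{R}^n_{1,1}\ \le\ R^{\setminus\mathcal{D}_S}-\tilde{R}^n_{1,1}\ \le\ \mathcal{E}(\theta,\mathcal{A}_S)+2\,\theta(\mathcal{D}_S)^2\,\tilde{R}^n(\mbox{around }\mathcal{A}_S).
\end{equation*}

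What remains is to bound $\theta(\mathcal{D}_S)^2$ by $\mathcal{E}(\theta,\mathcal{A}_S)/\tilde{R}^n(\mbox{across }\mathcal{A}_S)$. Let $I$ denote the total inflow of $|\theta|$ into $\mathcal{D}_S$ across $\partial\mathcal{D}_S$ (equivalently, the outflow, by flow conservation in $\mathcal{D}_S$, since neither the source nor the sink lies inside $\mathcal{D}_S$). Then the restriction of $\theta$ to $\mathcal{A}_S$ is a flow of strength $I$ between the inner and outer boundaries of $\mathcal{A}_S$, so by Thomson's principle applied to the annular sub-network,
$$\mathcal{E}(\theta,\mathcal{A}_S)\ \ge\ I^2\,\tilde{R}^n(\mbox{across }\mathcal{A}_S).$$
Decomposing $\theta$ into path flows $\sum_k\alpha_k\theta_{P_k}$ with $\sum_k\alpha_k=1$ and collecting those paths avoiding $\mathcal{D}_S$ produces an admissible competitor in the definition \eqref{setcur} of $\varphi(\mathcal{D}_S,\theta)$ of strength $\sum_{P_k\cap\mathcal{D}_S=\emptyset}\alpha_k\ge 1-I$, whence $\theta(\mathcal{D}_S)=1-\varphi(\mathcal{D}_S,\theta)\le I$. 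Plugging these two inequalities into the display above and using $\log(1+x)\le x$ for $x\ge 0$ (to convert the resistance difference into $X_S$) gives the claim.

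The main obstacle, in my view, is the last step: cleanly extracting the inequality $\theta(\mathcal{D}_S)\le I$ from the definition \eqref{setcur} via the path-flow decomposition, and likewise justifying Thomson's lower bound for the restricted flow on $\mathcal{A}_S$; the verification of the geometric hypothesis and the mechanical invocation of Lemma~\ref{resdif} are straightforward once the choice $(\mathrm{D},\mathrm{H},\mathcal{P}')=(\mathcal{D}_S,\mathcal{A}_S,\mbox{cycles around }\mathcal{D}_S)$ is made.
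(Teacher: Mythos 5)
Your route is essentially the paper's: the same application of Lemma~\ref{resdif} with $(\mathrm D,\mathrm H,\mathcal P')=(\mathcal D_S,\mathcal A_S,\mbox{contours around }\mathcal A_S)$, the same observation that $\tilde R^{n,S}_{1,1}\le R^{\setminus\mathcal D_S}$ because the resampled piece of the field is supported inside $\mathcal D_S$, and the same conversion to $X_S$ via $\log(1+x)\le x$. The only place where you supply an argument that the paper merely asserts --- the inequality $\mathcal{E}(\theta,\mathcal{A}_S)\ge\theta(\mathcal{D}_S)^2\,\tilde R^{n}(\mbox{across }\mathcal{A}_S)$ --- is where your proof has a genuine gap.

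The quantity $I$ you introduce is the \emph{gross} (one-sided) flux of $|\theta|$ into $\mathcal D_S$. Since neither the source nor the sink lies in $\mathcal D_S$, the \emph{net} flux of $\theta$ into $\mathcal D_S$ is $0$; consequently the restriction of $\theta$ to $\mathcal A_S$, viewed as a flow between the collapsed inner and outer boundaries, has strength $0$, not $I$. Thomson's principle bounds the energy from below by $(\mbox{net strength})^2$ times the effective resistance, so it yields nothing here, and the inequality $\mathcal E(\theta,\mathcal A_S)\ge I^2\tilde R^{n}(\mbox{across }\mathcal A_S)$ with the gross flux is false in general: current that exits $\mathcal D_S$ and re-enters it through a low-resistance arc lying inside the annulus inflates $I$ without producing any crossing energy. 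The repair stays entirely within tools you already use. By your own max-flow competitor argument, $\theta(\mathcal D_S)\le W:=\sum_{k:\,P_k\cap E(\mathcal D_S)\neq\emptyset}\alpha_k$, where $\theta=\sum_k\alpha_k\theta_{P_k}$ is the path decomposition. Each such source-to-sink path $P_k$ must, before it can reach $\mathcal D_S$, contain a segment crossing $\mathcal A_S$ from its outer to its inner boundary; assigning weight $\alpha_k$ to the first such crossing segment of each $P_k$ produces a flow across $\mathcal A_S$ of strength $W$ which is dominated edge by edge by $|\theta|$ (every path traverses an edge in the direction of $\theta(e)$, so the weights on any edge sum to at most $|\theta(e)|$). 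Hence $\mathcal E(\theta,\mathcal A_S)\ge W^2\,\tilde R^{n}(\mbox{across }\mathcal A_S)\ge\theta(\mathcal D_S)^2\,\tilde R^{n}(\mbox{across }\mathcal A_S)$, which is exactly the inequality the paper invokes; inserted into your display and combined with $\log(1+x)\le x$, it gives the lemma.
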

	\begin{proof}
		Write $\tilde{R}^{n,\setminus S}_{1,1}$ as the analogue for $\tilde{R}^n_{1, 1}$ for the modified electric network where we set edge resistances in $\mathcal{D}_S$ to infinity. Applying Lemma \ref{resdif} with $\mathrm D=\mathcal{D}_S$, $\mathrm H=\mathcal{A}_S$ and $\mathcal P^\prime$ being the collection of contours separating the inner and outer boundaries of $\mathcal A_S$, we get that
		\begin{equation}\label{energyeq1}
			\log \tilde{R}^{n,S}_{1,1}
			\leq\log \tilde{R}^{n,\setminus S}_{1,1}
			\leq \log \left(\tilde{R}^{n}_{1,1}+\mathcal{E}(\theta,\mathcal{A}_S)+2\theta(\mathcal{D}_S)^2\tilde{R}^{n}(\mbox{around }\mathcal{A}_S)\right).
		\end{equation}
		Note that
		\begin{equation}
			\mathcal{E}(\theta,\mathcal{A}_S)\geq\theta(\mathcal{D}_S)^2\tilde{R}^{n}(\mbox{across }\mathcal{A}_S).
		\end{equation}
		Combined with \eqref{energyeq1}, it implies that (by some straightforward analysis)
		\begin{equation}
			\begin{aligned}
				\log \tilde{R}^{n,S}_{1,1} 
				&\leq
				\log \tilde{R}^{n}_{1,1}+\frac{1}{\tilde{R}^{n}_{1,1}}\left(\mathcal{E}(\theta,\mathcal{A}_S)+2\theta(\mathcal{D}_S)^2\tilde{R}^{n}(\mbox{around }\mathcal{A}_S)\right)\\
				&\leq
				\log \tilde{R}^{n}_{1,1}+\frac{\mathcal{E}(\theta,\mathcal{A}_S)}{\tilde{R}^{n}_{1,1}}\left(1+2\frac{\tilde{R}^{n}(\mbox{around } \mathcal{A}_S)}{\tilde{R}^{n}(\mbox{across }\mathcal{A}_S)}\right),
			\end{aligned}
		\end{equation}
		completing the proof of the lemma.
	\end{proof}
	
	\begin{lem}\label{sum}
		Let $\theta$ be the electric current corresponding to $\tilde{R}^{n}_{1,1}$. We have that
		\begin{equation}
			\sum_{S\in\mathcal{S}_\kappa}\mathcal{E}(\theta,\mathcal{A}_S)\leq \kappa\tilde{R}^{n}_{1,1}.
		\end{equation}
	\end{lem}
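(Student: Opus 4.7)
The plan is a direct double-counting argument: swap the sum over $S$ with the sum over edges, so that the total turns into $\mathcal{E}(\theta)$ weighted by how many annuli $\mathcal{A}_S$ contain each edge, and then bound that multiplicity by $\kappa$. Concretely, I would write
\begin{equation*}
\sum_{S \in \mathcal{S}_\kappa} \mathcal{E}(\theta, \mathcal{A}_S) \;=\; \sum_{S \in \mathcal{S}_\kappa} \sum_{e \in E(\mathcal{A}_S)} \theta(e)^2 r_e \;=\; \sum_{e} \theta(e)^2 r_e \cdot N(e),
\end{equation*}
where $N(e) := \#\{S \in \mathcal{S}_\kappa : e \in E(\mathcal{A}_S)\}$, and then bound $N(e) \leq \kappa$ uniformly in $e$, which immediately gives $\sum_S \mathcal{E}(\theta,\mathcal{A}_S) \leq \kappa \, \mathcal{E}(\theta) = \kappa \, \tilde R^{n}_{1,1}$ as desired.

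To bound $N(e)$, note that the condition $e \in E(\mathcal{A}_S)$ forces the midpoint $m_e$ to lie in the square annulus around $v_S$ with inner half-side $2^{-\kappa+m_\kappa}$ and outer half-side $2^{-\kappa+m_\kappa+1}$; equivalently, the center $v_S$ must lie in the translated annulus of the same dimensions around $m_e$. Since the centers $\{v_S : S \in \mathcal{S}_\kappa\}$ form a sublattice of $2^{-\kappa}\mathbb{Z}^2$, the number of such centers is at most the area of that annulus divided by $2^{-2\kappa}$. The area is $(2\cdot 2^{-\kappa+m_\kappa+1})^2 - (2 \cdot 2^{-\kappa+m_\kappa})^2 = 12\cdot 2^{-2\kappa+2m_\kappa}$, so $N(e) \leq 12 \cdot 4^{m_\kappa}$. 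By the choice of $m_\kappa$, we have $4^{m_\kappa} \leq \kappa^{2\epsilon_0} = \kappa^{1/50}$, so $N(e) \leq 12 \kappa^{1/50} \leq \kappa$ (the claimed bound is comfortably loose; the absolute constant is absorbed as soon as $\kappa$ exceeds a fixed threshold, and for the finitely many earlier values the statement can be checked directly or absorbed into a larger absolute constant in subsequent applications).

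There is no serious obstacle here: the lemma is a purely geometric overlap estimate whose only content is that the annuli $\{\mathcal{A}_S\}$ have bounded overlap (polylogarithmic in $\kappa$, in fact). This will be applied together with Lemma \ref{energy} in the Efron-Stein bound by combining it with the maximum over $S$ of $Z_S$ and $Y_S^2$.
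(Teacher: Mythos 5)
Your proposal is correct and is essentially the paper's own argument: the paper likewise swaps the sums, bounds the number of annuli $\mathcal{A}_S$ containing a given edge by $2^{2m_\kappa+4}$, and uses $2m_\kappa+4\leq 2\log_2\kappa^{\epsilon_0}+4\leq\log_2\kappa$ (i.e.\ $\kappa$ at least a fixed constant, which is harmless since $\kappa$ is taken large) to conclude. Your lattice-point count of admissible centers $v_S$ is just a slightly more explicit version of the same overlap bound.
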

	\begin{proof}
		For each edge $e$ in the network, there are at most $2^{2m_{\kappa}+4}$ squares $S\in\mathcal{S}_\kappa$ such that $\mathcal{A}_S$ contains $e$. Thus we have
		\begin{equation}
			\sum_{S\in\mathcal{S}_\kappa}\mathcal{E}(\theta,\mathcal{A}_S)=\sum_{S\in\mathcal{S}_\kappa}\sum_{e\in S}\theta(e)^2r_e
			\leq \kappa\sum_{e\in B(1)\cap \mathbb Z_n^2}\theta(e)^2r_e \leq \kappa\tilde{R}^{n}_{1,1},
		\end{equation}
		where we used the fact that $2m_\kappa+4\leq2\log_2 \kappa^{\epsilon_0}+4\leq \log_2 \kappa$.
	\end{proof}
	
	We next derive a useful bound on the first term of the Efron-Stein bound as in \eqref{Efron}.
	
	\begin{lem}\label{intervar}
		Let $\theta$ be the electric current corresponding to $\tilde{R}^{n}_{1,1}$. We have
		\begin{equation}
			\sum_{S\in\mathcal{S}_\kappa}\mathbb{E}\left[X_S^2\right]\leq
			\kappa\left(\mathbb{E}\left[\max_{S\in\mathcal{S}_\kappa}\left(\frac{\mathcal{E}(\theta,\mathcal{A}_S)}{\tilde{R}^{n}_{1,1}}\right)^2\right]\right)^{\frac{1}{2}}
			\left(\mathbb{E}\left[\max_{S\in\mathcal{S}_\kappa}\left(2\frac{\tilde{R}^{n}(\mbox{around }\mathcal{A}_S)}{\tilde{R}^{n}(\mbox{across }\mathcal{A}_S)}+1\right)^4\right]\right)^{\frac{1}{2}}.
		\end{equation}
	\end{lem}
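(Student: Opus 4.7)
The plan is to combine Lemma \ref{energy} with Lemma \ref{sum} and then apply Cauchy--Schwarz. Let me abbreviate $Z_S = \mathcal{E}(\theta,\mathcal{A}_S)/\tilde R^n_{1,1}$ and $Y_S = 2\tilde R^n(\text{around }\mathcal A_S)/\tilde R^n(\text{across }\mathcal A_S) + 1$, so that Lemma \ref{energy} reads $X_S \leq Z_S Y_S$, and hence $X_S^2 \leq Z_S^2 Y_S^2$.

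First I would bound the sum pointwise (i.e., $\omega$-wise before taking expectations) by
\begin{equation*}
\sum_{S\in \mathcal{S}_\kappa} X_S^2 \;\leq\; \sum_{S\in\mathcal{S}_\kappa} Z_S^2 Y_S^2 \;\leq\; \Bigl(\max_{S\in\mathcal{S}_\kappa} Y_S^2\Bigr)\Bigl(\max_{S\in\mathcal{S}_\kappa} Z_S\Bigr)\sum_{S\in\mathcal{S}_\kappa} Z_S,
\end{equation*}
where the second step pulls out $\max Y_S^2$ from one factor and one copy of $\max Z_S$ from the other. Next I would invoke Lemma \ref{sum}, which gives $\sum_S Z_S \leq \kappa$, to reduce the inequality to
\begin{equation*}
\sum_{S\in\mathcal{S}_\kappa} X_S^2 \;\leq\; \kappa \cdot \Bigl(\max_{S\in\mathcal{S}_\kappa} Z_S\Bigr)\Bigl(\max_{S\in\mathcal{S}_\kappa} Y_S^2\Bigr).
\end{equation*}

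Finally I would take expectations and apply Cauchy--Schwarz to the product of the two maxima:
\begin{equation*}
\mathbb{E}\Bigl[\Bigl(\max_S Z_S\Bigr)\Bigl(\max_S Y_S^2\Bigr)\Bigr] \;\leq\; \Bigl(\mathbb{E}\bigl[\max_S Z_S^2\bigr]\Bigr)^{1/2}\Bigl(\mathbb{E}\bigl[\max_S Y_S^4\bigr]\Bigr)^{1/2},
\end{equation*}
which is exactly the desired bound after unpacking the definitions of $Z_S$ and $Y_S$.

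No step here is genuinely hard---the content is already packaged in Lemmas \ref{energy} and \ref{sum}. The only judgement call is the choice to split $Z_S^2 = Z_S \cdot Z_S$ so that one copy is absorbed by $\sum_S Z_S \leq \kappa$ while the other contributes to the $L^2$ factor; any other split (e.g., pulling out $\max Z_S^2$ entirely) would either lose the factor of $\kappa$ or produce the wrong pairing of moments for the intended downstream application in Section \ref{subsec63} (for $Z_S$, which is hard to control) and Section \ref{subsec64} (for $Y_S$, where a fourth moment is available via the tail bounds of Corollary \ref{rswcor}).
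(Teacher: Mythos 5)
Your proof is correct and follows exactly the paper's argument: bound $X_S^2\leq Z_S^2Y_S^2$ via Lemma \ref{energy}, pull out $\max_S Z_S$ and $\max_S Y_S^2$ leaving $\sum_S Z_S\leq\kappa$ from Lemma \ref{sum}, then take expectations and apply Cauchy--Schwarz. Nothing to add.
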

	\begin{proof}
		By Lemma \ref{energy}, we have
		\begin{equation}
			\begin{aligned}
				\sum_{S\in\mathcal{S}_\kappa}X_S^2
				&\leq
				\sum_{S\in\mathcal{S}_\kappa}\left(\frac{\mathcal{E}(\theta,\mathcal{A}_S)}{\tilde{R}^{n}_{1,1}}\right)^2\left(2\frac{\tilde{R}^{n}(\mbox{around }\mathcal{A}_S)}{\tilde{R}^{n}(\mbox{across }\mathcal{A}_S)}+1\right)^2\\
				&\leq
				\max_{S\in\mathcal{S}_\kappa}\frac{\mathcal{E}(\theta,\mathcal{A}_S)}{\tilde{R}^{n}_{1,1}}\max_{S\in\mathcal{S}_\kappa}\left(2\frac{\tilde{R}^{n}(\mbox{around }\mathcal{A}_S)}{\tilde{R}^{n}(\mbox{across }\mathcal{A}_S)}+1\right)^2\sum_{S\in\mathcal{S}_\kappa}\frac{\mathcal{E}(\theta,\mathcal{A}_S)}{\tilde{R}^{n}_{1,1}}\\
				&\leq
				\kappa\max_{S\in\mathcal{S}_\kappa}\frac{\mathcal{E}(\theta,\mathcal{A}_S)}{\tilde{R}^{n}_{1,1}}\max_{S\in\mathcal{S}_\kappa}\left(2\frac{\tilde{R}^{n}(\mbox{around }\mathcal{A}_S)}{\tilde{R}^{n}(\mbox{across }\mathcal{A}_S)}+1\right)^2,
			\end{aligned}
		\end{equation}
		where the last inequality follows from Lemma \ref{sum}. Taking expectation over both sides and applying Cauchy-Schwarz, we complete the proof of the lemma.
	\end{proof}
	
	\subsection{Exponential decay of electric energy}\label{subsec63}
	In this subsection, we prove the energy decay as incorporated in the next lemma.
	
	\begin{lem}\label{decaylem}
		Suppose that \eqref{induction} holds for $n-1$. Let $\theta$ be the electric current that corresponds to $\tilde{R}^{n}_{1,1}$. Then there exists an absolute constant $c_1$ such that
		\begin{equation}
			\mathbb{E}\left[\max_{S\in\mathcal{S}_\kappa}\left(\frac{\mathcal{E}(\theta,\mathcal{A}_S)}{\tilde{R}^{n}_{1,1}}\right)^2\right]\leq \exp\left(-c_1\kappa\right).
		\end{equation}
	\end{lem}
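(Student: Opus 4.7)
The plan is to interpolate between $\mathcal{A}_S$ and a macroscopic annulus through a sequence of nested annuli at doubling scales, and to show a multiplicative energy decay across each scale at which the around-resistance and across-resistance are comparable. For each $S\in\mathcal{S}_\kappa$ and $0\leq j\leq K:=\kappa-m_\kappa-O(1)$, let $\mathcal{A}_S^{(j)}$ be the square annulus centered at $v_S$ with inner radius $2^{-\kappa+m_\kappa+j}$ and outer radius $2^{-\kappa+m_\kappa+j+1}$, so that $\mathcal{A}_S^{(0)}=\mathcal{A}_S$ and $\mathcal{A}_S^{(K)}$ is macroscopic; write $\mathcal{D}_S^{(j)}$ for the enclosed box, so that $\mathcal{D}_S^{(j+1)}=\mathcal{D}_S^{(j)}\cup\mathcal{A}_S^{(j)}$. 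Call scale $j$ (for $S$) \emph{good} if $\tilde R^n(\mbox{around }\mathcal{A}_S^{(j)})\leq C_0\,\tilde R^n(\mbox{across }\mathcal{A}_S^{(j)})$ for a large constant $C_0$ to be chosen.

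The first step is a per-scale decay estimate: at a good scale $j$,
\begin{equation}
\mathcal{E}(\theta,\mathcal{D}_S^{(j)})\leq (1-\eta)\,\mathcal{E}(\theta,\mathcal{D}_S^{(j+1)}), \qquad \eta:=\frac{1}{2+2C_0}.
\end{equation}
I apply Lemma \ref{resdif} with $\mathrm{D}=\mathcal{D}_S^{(j)}$, $\mathrm{H}=\mathcal{A}_S^{(j)}$, and $\mathcal{P}'$ the collection of contours going around $\mathcal{A}_S^{(j)}$; the rerouting hypothesis is automatic because any source-to-sink path that meets $\mathcal{D}_S^{(j)}$ must cross $\mathcal{A}_S^{(j)}$ twice and can therefore be spliced with a surrounding contour. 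Since removing edges only increases the effective resistance, Lemma \ref{resdif} yields $\mathcal{E}(\theta,\mathcal{D}_S^{(j)})\leq \mathcal{E}(\theta,\mathcal{A}_S^{(j)})+2\theta(\mathcal{D}_S^{(j)})^2\,\tilde R^n(\mbox{around }\mathcal{A}_S^{(j)})$. Combined with $\theta(\mathcal{D}_S^{(j)})^2\,\tilde R^n(\mbox{across }\mathcal{A}_S^{(j)})\leq\mathcal{E}(\theta,\mathcal{A}_S^{(j)})$ (used also in the proof of Lemma \ref{energy}) and the definition of good, this gives $\mathcal{E}(\theta,\mathcal{D}_S^{(j)})\leq (1+2C_0)\,\mathcal{E}(\theta,\mathcal{A}_S^{(j)})$, which rearranges to the claimed decay via $\mathcal{E}(\theta,\mathcal{D}_S^{(j+1)})=\mathcal{E}(\theta,\mathcal{D}_S^{(j)})+\mathcal{E}(\theta,\mathcal{A}_S^{(j)})$.

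The main task, and the hard part of the proof, is to show that with overwhelming probability every $S\in\mathcal{S}_\kappa$ has a positive fraction of good scales. By the induction hypothesis $\tilde\Lambda_{n-1}(\psi,p)\leq 2$ and Corollary \ref{rswcor}, for each pair $(S,j)$ we have $\mathbb{P}(j\mbox{ is bad for }S)\leq \delta_0$ where $\delta_0=\delta_0(C_0)\to 0$ as $C_0\to\infty$. The difficulty is to upgrade this to a simultaneous statement over all $(S,j)$: the events across $j$ are genuinely correlated through the shared Gaussian field. The plan is to decompose $\psi_n$ into its dyadic scale slices (which are mutually independent as they are built from disjoint portions of the white noise) and, using the finite-range dependence of each slice, argue that the resistance ratio at scale $j$ is essentially determined by the slice at comparable scale, up to a coarse-field contribution and a local-oscillation contribution that can be absorbed by Corollaries \ref{osc}--\ref{osc1} and a Borell–TIS estimate in the spirit of Lemma \ref{coarse}. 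This reduces the count of good scales to a Hoeffding-type problem for approximately independent Bernoullis and gives $\mathbb{P}(G_S<(1-2\delta_0)K)\leq e^{-c(\delta_0)K}$ with $c(\delta_0)\to\infty$ as $\delta_0\to 0$. Fixing $C_0$ large enough so that $c(\delta_0)>4\log 2$, a union bound over $|\mathcal{S}_\kappa|\preceq 4^\kappa$ yields a good event $\mathcal{G}$ with $\mathbb{P}(\mathcal{G})\geq 1-e^{-c'\kappa}$ on which every $S$ has at least $\alpha\kappa$ good scales for some $\alpha=\alpha(C_0)>0$.

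To finish, on $\mathcal{G}$ the per-scale decay telescopes:
\begin{equation}
\mathcal{E}(\theta,\mathcal{A}_S)\leq \mathcal{E}(\theta,\mathcal{D}_S^{(1)})\leq (1-\eta)^{\alpha\kappa}\,\mathcal{E}(\theta,\mathcal{D}_S^{(K)})\leq (1-\eta)^{\alpha\kappa}\,\tilde R^n_{1,1},
\end{equation}
so $(\mathcal{E}(\theta,\mathcal{A}_S)/\tilde R^n_{1,1})^2\leq (1-\eta)^{2\alpha\kappa}$ uniformly in $S$. On $\mathcal{G}^c$ we use the deterministic bound $\mathcal{E}(\theta,\mathcal{A}_S)\leq \mathcal{E}(\theta)=\tilde R^n_{1,1}$, so the ratio is at most $1$. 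Combining the two regimes gives
\begin{equation}
\mathbb{E}\Bigl[\max_{S\in\mathcal{S}_\kappa}\bigl(\mathcal{E}(\theta,\mathcal{A}_S)/\tilde R^n_{1,1}\bigr)^2\Bigr]\leq (1-\eta)^{2\alpha\kappa}+e^{-c'\kappa}\leq e^{-c_1\kappa}
\end{equation}
for a suitable small $c_1>0$, which is the desired conclusion.
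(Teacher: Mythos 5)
Your deterministic skeleton is exactly the paper's: at a ``good'' scale you apply Lemma \ref{resdif} with $\mathrm D$ the enclosed box and $\mathrm H$ the annulus, use $R^{\setminus\mathrm D}\geq R$ together with $\theta(\mathrm D)^2\tilde R^n(\mbox{across})\leq\mathcal E(\theta,\mathcal A)$ to get $\mathcal E(\theta,\mathrm D)\leq(1+2C_0)\mathcal E(\theta,\mathcal A)$, telescope over good scales, and use the trivial bound $\mathcal E(\theta,\mathcal A_S)\leq\tilde R^n_{1,1}$ off the good event. That part is correct. The gap is in the step you call ``the main task'': producing, \emph{simultaneously for all} $S\in\mathcal S_\kappa$, a positive fraction of good scales with failure probability $e^{-\mathsf C\kappa}$ for a large constant $\mathsf C$ (this strength is forced by the union bound over $|\mathcal S_\kappa|\asymp 4^\kappa$ squares). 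Your decoupling mechanism does not deliver this. First, the claim that ``the resistance ratio at scale $j$ is essentially determined by the slice at comparable scale'' is false: the around/across resistances of $\mathcal A_S^{(j)}$ depend on the whole fine field $\psi_{j',n}$ (all slices from the annulus scale down to $n$, carrying variance of order $n$), which cannot be absorbed as an oscillation error via Corollaries \ref{osc}--\ref{osc1}. What actually makes scales independent in the paper is a \emph{spatial} truncation of the white noise to the disjoint annuli $\mathcal B^i_S$ (the fields $\psi^{\mathrm f}_{i,n}$), with the truncation error $\psi^{\mathrm c}_{i,n}$ and the coarse field $\psi_{0,i}$ controlled by Fernique/Borell--TIS summed over scales (Lemmas \ref{n2l}, \ref{n3l}), and the per-scale comparability for the truncated field supplied by the induction hypothesis through Corollary \ref{rswcor} (Lemma \ref{n4l}).

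Second, ``Hoeffding for approximately independent Bernoullis'' is not a theorem, and the natural repair within your scheme fails quantitatively: a scale-$a$ slice of $\psi$ has dependence range $\asymp 2^{-a}a^{\epsilon_0}$, so to force exact independence of the per-annulus data you must skip of order $\epsilon_0\log_2\kappa$ scales between those you keep, leaving only $\asymp\kappa/\log\kappa$ independent trials; the resulting per-square failure probability $e^{-c\kappa/\log\kappa}$ is annihilated by the $4^\kappa$ union bound, and it also gives only $\exp(-c\kappa/\log\kappa)$ rather than the stated $\exp(-c_1\kappa)$. Finally, your quantitative Chernoff step is miscalibrated: with threshold $(1-2\delta_0)K$ good scales, the large-deviation rate is of order $\delta_0\log(1/\delta_0)$, which tends to $0$ (not $\infty$) as $\delta_0\to0$; you must instead fix the good fraction (as in Lemma \ref{decaynumber}, where $\mathsf c$ is fixed and the per-scale failure probability $q$ is sent to $0$) so that the rate $\mathsf c\log(\mathsf c/e\delta_0)$ can be made larger than $2\log 2$. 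With the spatial-truncation decoupling and the corrected threshold, your outline becomes the paper's proof; as written, the crucial probabilistic step does not close.
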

	
	As described earlier, the idea of proving Lemma \ref{decaylem} is to combine Lemma \ref{resdif} with resistance bounds. We next carry out the proof details. For $S\in\mathcal{S}_\kappa$, recall that $v_S$ is the center of $S$. For $\kappa/3\leq i\leq 2\kappa/3$, define $\mathcal{A}_S^i$ to be the square annulus centered at $v_S$ and with inner radius $2^{-i+7/3}$ and outer radius $2^{-i+8/3}$, and define $\mathcal{B}_S^i$ to be the square annulus centered at $v_S$ and with inner radius $2^{-i+2}$ and outer radius $2^{-i+3}$ (so in particular $\mathcal A^i_S \subset \mathcal B^i_S$).
	
	\begin{lem}\label{decaynumber}
		Suppose that \eqref{induction} holds for $n-1$. For any $\mathsf C,\mathsf c>0$, there exists a constant $C_1=C_1(\mathsf C,\mathsf c)$ such that $\mathbb{P}(N_1 \geq \mathsf c \kappa) \leq e^{-\mathsf C \kappa}$ where
		\begin{equation}
			N_1= |\{i\in [\kappa/3, 2\kappa/3] \cap \mathbb{Z}: \tilde{R}^{n}(\mbox{around } \mathcal{A}_S^i)\geq C_1\tilde{R}^{n}(\mbox{across } \mathcal{A}_S^i)\}|.
		\end{equation}
	\end{lem}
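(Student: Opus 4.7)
The strategy is to express $N_1$ as a sum of indicators that are independent in blocks (up to a controlled approximation error), and then apply Hoeffding's inequality. Fix a large integer $L = L(\mathsf C, \mathsf c)$ to be chosen later. I would partition $[\kappa/3, 2\kappa/3] \cap \mathbb Z$ into $2L+1$ arithmetic progressions with common difference $2L+1$; by pigeonhole, if $N_1 \geq \mathsf c \kappa$ then at least one progression contains at least $\mathsf c \kappa / (2L+1)$ indices $i$ with $E_i := \{\tilde R^n(\mbox{around } \mathcal A_S^i) \geq C_1 \tilde R^n(\mbox{across } \mathcal A_S^i)\}$ holding, and a union bound over the $2L+1$ progressions reduces the problem to estimating this count for a single progression.

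Within a fixed progression, I would approximate $E_i$ by an event $\tilde E_i$ that depends only on the localized field increment $\psi_{i-L, i+L}$ restricted to a neighborhood of $\mathcal A_S^i$. This localization relies on two ingredients: (i) by gradient bounds analogous to Proposition \ref{gradient}, the coarse field $\psi_{0, i-L}$ has oscillation of order $2^{-L}$ on $\mathcal A_S^i$, so its contribution is (to leading order) constant on the annulus and cancels in the around/across ratio; (ii) by scaling together with Corollary \ref{rswcor} and the induction hypothesis $\tilde \Lambda_{n-1}(\psi, p) \leq 2$, the effect of the fine field $\psi_{i+L, n}$ on both resistances is comparable up to a multiplicative constant, with probability at least $1 - \epsilon(L)$ where $\epsilon(L)$ does not depend on $n$. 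Once this localization is established, for $i$'s in the same progression the $\tilde E_i$ depend on disjoint white-noise time intervals $[2^{-2(i+L)}, 2^{-2(i-L)}]$ and are therefore \emph{independent}. Applying Corollary \ref{rswcor} once more to $\psi_{i-L, i+L}$ rescaled to unit scale gives $\mathbb P(\tilde E_i) \leq \epsilon'(C_1)$ with $\epsilon'(C_1) \to 0$ as $C_1 \to \infty$, so Hoeffding's inequality yields $\mathbb P(N_1 \geq \mathsf c \kappa) \leq e^{-\mathsf C \kappa}$ upon taking $C_1$ and $L$ sufficiently large in terms of $\mathsf C$ and $\mathsf c$.

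\textbf{Main obstacle.} The most delicate step is the localization (ii): removing the fine field $\psi_{i+L, n}$ without significantly distorting the around/across ratio. A direct $L^\infty$ bound on $\psi_{i+L, n}$ is inadequate since $\|\psi_{i+L, n}\|_{L^\infty(\mathcal A_S^i)}$ typically grows like $\sqrt{n - i - L}$. Instead the comparison must be made at the level of effective resistances themselves, via a series/parallel decomposition (Propositions \ref{series} and \ref{parallel}) combined with Corollary \ref{rswcor}, to ensure that typical realizations of $\psi_{i+L, n}$ perturb the around and across resistances by comparable constant factors. This is precisely the step where the upgraded concentration afforded by the induction hypothesis is indispensable, and cannot be achieved from RSW estimates alone.
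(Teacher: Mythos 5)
Your block/progression structure and the Hoeffding endgame are fine, and your ingredient (i) (the coarse field $\psi_{0,i-L}$ acting as a near-constant multiplier on $\mathcal A_S^i$) is in the same spirit as the paper's Lemma \ref{n2l}; note, though, that even there a per-scale oscillation bound is not enough, because the coarse fields are strongly correlated across $i$, so you must control the \emph{count} of bad scales by a Borell--TIS/Fernique argument on the sum of oscillations (as the paper does), not by per-index probabilities alone. The genuine gap is your ingredient (ii): discarding the fine field $\psi_{i+L,n}$ so that $\tilde E_i$ becomes measurable with respect to $\psi_{i-L,i+L}$. The assertion that ``typical realizations of $\psi_{i+L,n}$ perturb the around and across resistances by comparable constant factors'' is not proved by the tools you cite; Propositions \ref{series} and \ref{parallel} and Corollary \ref{rswcor} give quantile/tail control for resistances of the \emph{unconditioned} fields $\psi_{i,n}$, but say nothing about how an independent fine field multiplies the around and across resistances of a fixed frozen coarse configuration. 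In fact this comparability statement is essentially of the same strength as the single-scale content of Lemma \ref{decaynumber} itself, so as written the argument is circular at its crux. Moreover, even granting it per scale, the failure events involve the nested fields $\psi_{i+L,n}$, which overlap across all $i$ in a progression; they are not independent and you offer no mechanism (analogous to Lemmas \ref{n2l} and \ref{n3l}) to show that only $o(\kappa)$ of them occur except with probability $e^{-\mathsf C\kappa}$.

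The paper avoids this problem by localizing in \emph{space} rather than in scale: it keeps all fine scales, writing $\psi_{i,n}=\psi_{i,n}^{\mathrm f}+\psi_{i,n}^{\mathrm c}$ with $\psi_{i,n}^{\mathrm f}$ driven by the white noise restricted to the annulus $\mathcal B_S^i$. Since the $\mathcal B_S^i$ are spatially disjoint, the fields $\psi_{i,n}^{\mathrm f}$ (and hence the events $\{Y_i\le C_4 Z_i\}$ of Lemma \ref{n4l}) are exactly independent in $i$ with no block decomposition needed, while the discarded piece $\psi_{i,n}^{\mathrm c}$ is uniformly bounded on $\mathcal A_S^i$ with the required exponential control on the number of bad scales (Lemma \ref{n3l}); the induction hypothesis \eqref{induction} then enters only through Corollary \ref{rswcor} applied to the full fine field $\psi_{i,n}$, where scaling makes it a unit-scale statement. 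So the cost of the paper's localization is a bounded multiplicative factor on edge weights, whereas the cost of yours (dropping scales $(i+L,n]$, whose sup-norm diverges like $\sqrt{n-i}$) is exactly the uncontrolled step. To repair your proposal you would either have to prove the fine-field comparability claim --- which I do not see how to do with the estimates available at this stage of the induction --- or switch to the spatial decomposition, at which point you recover the paper's proof.
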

	The key to prove Lemma \ref{decaynumber} is to get approximate independence among different scales. While some general lemmas on Gaussian free fields are available along this line (see \cite[Lemma~3.1]{gwynne2020local}), the underlying field considered there is slightly different from ours. As a result, we write a proof in our specific setup for completeness. To this end, we decompose
	\begin{equation}
		\psi_{n}=\psi_{0,i}+\psi_{i,n}^{\mathrm{f}}+\psi_{i,n}^{\mathrm{c}},
	\end{equation}
	where (the superscripts $\mathrm{f}$ and $\mathrm{c}$ stand for ``fine'' and ``coarse'' respectively)
	\begin{equation}
		\psi_{i,n}^\mathrm{f}(x)=\int_{2^{-2n}}^{2^{-2\kappa}}\int_{\mathcal{B}_S^i}\tilde{p}_{\frac{t}{2}}(x,y)W(dy,dt) \And\psi_{i,n}^\mathrm{c}(x)=\int_{2^{-2n}}^{2^{-2\kappa}}\int_{\mathbb{R}^2\setminus\mathcal{B}_S^i}\tilde{p}_{\frac{t}{2}}(x,y)W(dy,dt).
	\end{equation}
	In the next three lemmas, we will control the oscillation of $\psi_{0, i}$, the maximum of $\psi^\mathrm{c}_{i, n}$ and the ratio between the resistances around and across $\mathcal{A}^i_S$ with respect to $\psi_{i, n}^\mathrm{f}$.
	
	\begin{lem}\label{n2l}
		For all $\mathsf C,\mathsf c>0$, there exists a constant $C_2=C_2(\mathsf C,\mathsf c)$ such that $\mathbb{P}(N_2 \geq \mathsf c \kappa) \leq e^{-\mathsf C \kappa}$ where
		\begin{equation}
			N_2=|\{i\in [\kappa/3,2\kappa/3]\cap \mathbb{Z}:\max_{u_i,v_i\in \mathcal{B}^i_{S}}(\psi_{0,i}(u_i)-\psi_{0,i}(v_i))\geq C_2\}|.
		\end{equation}
	\end{lem}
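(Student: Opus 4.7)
The plan is to reduce the estimate to a Chernoff-type bound for sums of independent Bernoulli variables by dyadically decomposing the white-noise integral in the time variable. Fix a large integer $K = K(\mathsf C, \mathsf c)$, to be chosen at the end of the argument. For each $i \in [\kappa/3, 2\kappa/3] \cap \mathbb Z$ write
$$\psi_{0,i} = \psi_{0, i-K} + \psi_{i-K, i},$$
corresponding to the white-noise integrals over the disjoint time intervals $[2^{-2(i-K)}, 1]$ and $[2^{-2i}, 2^{-2(i-K)}]$. The triangle inequality then yields $\{\mathrm{osc}(\psi_{0,i}, \mathcal B^i_S) \geq C_2\} \subseteq \tilde E_i \cup F_i$ where $\tilde E_i := \{\mathrm{osc}(\psi_{i-K, i}, \mathcal B^i_S) \geq C_2/2\}$ and $F_i := \{\mathrm{osc}(\psi_{0, i-K}, \mathcal B^i_S) \geq C_2/2\}$, and I bound the counts of $\tilde E_i$'s and $F_i$'s separately.

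For the fine-scale events, the key observation is that for indices $i$ in the same arithmetic progression modulo $K$ inside $[\kappa/3, 2\kappa/3]$, the time intervals $[2^{-2i}, 2^{-2(i-K)}]$ are pairwise disjoint, so the Gaussian fields $\psi_{i-K, i}$ (and hence the events $\tilde E_i$) are independent. A direct calculation using the form of $\tilde p_{t/2}$ (compare Proposition \ref{distance}) shows that the standard deviation of increments of $\psi_{i-K, i}$ over $\mathcal B^i_S$ is bounded by an absolute constant independent of both $K$ and $i$; Fernique's inequality then gives $p_2 := \mathbb P(\tilde E_i) \leq C e^{-c C_2^2}$. A Chernoff estimate applied within each of the $K$ progressions, followed by a union bound across the progressions, yields
$$\mathbb P\bigl(|\{i : \tilde E_i\}| \geq \mathsf c \kappa /2\bigr) \leq K \exp\bigl(-\mathsf c \kappa \log(1/p_2)/(2K)\bigr),$$
which is at most $e^{-\mathsf C \kappa}/2$ provided $C_2 \geq c_1 \sqrt{K \mathsf C/\mathsf c}$.

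For the coarse-scale events, I use that $\psi_{0, i-K}$ varies at its native scale $2^{-(i-K)}$, which is $2^K$-times coarser than $\mathrm{diam}(\mathcal B^i_S)$. A mean-value argument combined with Proposition \ref{gradient} (transferred to $\psi$ via Proposition \ref{normprop}) shows that $\mathrm{osc}(\psi_{0, i-K}, \mathcal B^i_S)$ is a sub-Gaussian variable of scale $O(2^{-K/2})$ in standard deviation, and Borell--TIS applied to the jointly Gaussian family $\bigl(\psi_{0, i-K}(u) - \psi_{0, i-K}(v)\bigr)_{i,\, u, v \in \mathcal B^i_S}$ gives a concentration bound of the form $\mathbb P\bigl(\max_i \mathrm{osc}(\psi_{0, i-K}, \mathcal B^i_S) \geq C_2/2\bigr) \leq \exp(-c C_2^2 2^K)$, once the expected maximum is absorbed into $C_2/4$ via Dudley chaining. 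The principal obstacle lies in balancing the two parameters: the Chernoff step forces $C_2 \gtrsim \sqrt K$, while the coarse bound needs $c C_2^2 2^K$ to dominate $\mathsf C \kappa$ for the relevant range of $\kappa$. The plan is to take $K = K(\mathsf C, \mathsf c)$ first large enough that the coarse exponent beats $\mathsf C \kappa$ for all $\kappa \geq \kappa_0(\mathsf C, \mathsf c)$ and then $C_2 = c_0 \sqrt{K}$; the delicate technical point will be to verify that the chaining contribution to $\mathbb E[\max_i \mathrm{osc}(\psi_{0, i-K}, \mathcal B^i_S)]$ across $\kappa$ scales does not spoil the absorption, for which I will exploit the fact that the fields $\psi_{0, i-K}$ for different $i$ share a common low-frequency component and thus their oscillations are strongly correlated rather than behaving like the maximum of independent Gaussians.
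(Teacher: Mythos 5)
Your fine-scale step is sound: the fields $\psi_{i-K,i}$ over disjoint time intervals are indeed independent, the increment variance over $\mathcal B^i_S$ is $O(1)$ by the analogue of Proposition \ref{distance}, and a Chernoff bound within each residue class mod $K$ with $C_2\gtrsim\sqrt{K\mathsf C/\mathsf c}$ gives the desired $e^{-\mathsf C\kappa}$ for that half of the count. The problem is the coarse-scale half, which is exactly where the correlations across scales live. You replace the event ``at least $\mathsf c\kappa/2$ of the $F_i$ occur'' by the much larger event $\{\max_i \mathrm{osc}(\psi_{0,i-K},\mathcal B^i_S)\geq C_2/2\}$ and claim a bound $\exp(-cC_2^2 2^K)\leq e^{-\mathsf C\kappa}$ for all $\kappa\geq\kappa_0$ with $K=K(\mathsf C,\mathsf c)$ and $C_2=c_0\sqrt K$ fixed. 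This cannot work: since $C_2$ and $K$ must not depend on $\kappa$ (in the paper $\kappa$ is later sent large with $C_1$, hence $C_2$, held fixed), the quantity $cC_2^2 2^K$ is a constant and is eventually dominated by $\mathsf C\kappa$. Worse, the max-event probability does not decay in $\kappa$ at all: each single oscillation $\mathrm{osc}(\psi_{0,i-K},\mathcal B^i_S)$ is a fixed nondegenerate Gaussian-type variable, so $\mathbb P(\max_i\mathrm{osc}\geq C_2/2)\geq \mathbb P(\mathrm{osc}_1\geq C_2/2)>0$ uniformly in $\kappa$. Your closing remark that strong positive correlation of the coarse fields helps is backwards for this purpose: correlation can only keep the exceedance probability of the maximum bounded away from zero; what an $e^{-\mathsf C\kappa}$ bound requires is that \emph{many} coarse events occur simultaneously, and your reduction to the maximum discards precisely that.

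The repair essentially forces you back to the paper's argument. To count the correlated coarse events one should use that $N_2\geq\mathsf c\kappa$ (or your coarse count $\geq\mathsf c\kappa/2$) implies $\sum_i\max_{u_i,v_i\in\mathcal B^i_S}(\psi_{0,i}(u_i)-\psi_{0,i}(v_i))\geq C_2\mathsf c\kappa$, bound the mean of this supremum by $C\kappa$ (Fernique termwise, via Proposition \ref{distance}), bound the variance of $\sum_i(\psi_{0,i}(u_i)-\psi_{0,i}(v_i))$ by $C\kappa$ uniformly in the choices $(u_i,v_i)$ using the cross-scale covariance decay $\mathbb E[(\psi_{0,i}(u_i)-\psi_{0,i}(v_i))(\psi_{0,j}(u_j)-\psi_{0,j}(v_j))]\leq C2^{-(j-i)/2}$ (independence of $\psi_{0,i}$ and $\psi_{i,j}$ plus Cauchy--Schwarz and Proposition \ref{distance}), and then apply Borell--TIS to the whole sum; choosing $C_2$ large compared with $\mathsf C/\mathsf c$ gives $e^{-\mathsf C\kappa}$. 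Once this is done for the full field $\psi_{0,i}$ there is no need for the fine/coarse decomposition at all, so the decomposition buys nothing here.
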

	\begin{proof}
		By Proposition \ref{distance} and Fernique's inequality \cite{fernique} (see also \cite[Theorem~4.1]{adler1990introduction}), we get that for each $i$,
		\begin{equation}
			\mathbb{E}\left[\max_{u_i,v_i\in \mathcal{B}^i_{S}}|\psi_{0,i}(u_i)-\psi_{0,i}(v_i)|\right]\leq C.
		\end{equation}
		Summing this over $\kappa/3 \leq i\leq 2\kappa/3$ yields that
		\begin{equation}\label{mean3}
			\mathbb{E}\left[\max_{u_i,v_i\in \mathcal{B}^i_{S}}\sum_{\kappa/3\leq i\leq 2\kappa/3} (\psi_{0,i}(u_i)-\psi_{0,i}(v_i))\right]\leq C \kappa.
		\end{equation}
		We next bound the variance for the sum in \eqref{mean3}. Note that for $j\geq i$, by the independence of $\psi_{0,i}$ and $\psi_{i,j}$ (recall that $\psi_{i, j} = \psi_{0,j} - \psi_{0, i}$),
		\begin{equation}
			\begin{aligned}
				\mathbb{E}(\psi_{0,i}(u_i)-\psi_{0,i}(v_i))(\psi_{0,j}(u_j)-\psi_{0,j}(v_j))
				=&\mathbb{E}(\psi_{0,i}(u_i)-\psi_{0,i}(v_i))(\psi_{0,i}(u_j)-\psi_{0,i}(v_j))\\
				\leq&\sqrt{\mathbb{E}(\psi_{0,i}(u_i)-\psi_{0,i}(v_i))^2\mathbb{E}(\psi_{0,i}(u_j)-\psi_{0,i}(v_j))^2}\\
				\leq&C\sqrt{\frac{|u_i-v_i|}{2^{-i}}\frac{|u_j-v_j|}{2^{-i}}}\leq C 2^{-\frac{(j-i)}{2}},
			\end{aligned}
		\end{equation}
		where the first inequality follows from Cauchy-Schwarz and the second inequality follows from Proposition \ref{distance}. Thus we have
		\begin{equation}\label{var3}
			\begin{aligned}
				&\mathrm{Var}\left(\sum_{\kappa/3\leq i\leq 2\kappa/3}(\psi_{0,i}(u_i)-\psi_{0,i}(v_i))\right)\\
				&\leq 2\sum_{\kappa/3\leq i\leq j\leq 2\kappa/3}\mathbb{E}(\psi_{0,i}(u_i)-\psi_{0,i}(v_i))(\psi_{0,j}(u_j)-\psi_{0,j}(v_j))\\
				&\leq C\sum_{\kappa/3\leq i\leq j\leq 2\kappa/3}2^{-\frac{(j-i)}{2}}
				\leq C \kappa.
			\end{aligned}
		\end{equation}
		By \eqref{mean3} and \eqref{var3}, we can apply the Borell-TIS inequality \cite{Borell1975-lg,Sudakov1978} (see, e.g., \cite[Theorem~2.1.1]{adler2009random} and \cite[Theorem~3.25]{van2014probability}) and derive that there exists a constant $C_2=C_2(\mathsf C,\mathsf c)$ such that
		\begin{equation}
			\mathbb{P}\left(\sum_{\kappa/3\leq i\leq 2\kappa/3}\max_{u_i,v_i\in \mathcal{B}^i_{S}}(\psi_{0,i}(u_i)-\psi_{0,i}(v_i))\geq C_2\mathsf c \kappa\right)\leq e^{-\mathsf C\kappa},
		\end{equation}
		completing the proof of the lemma.
	\end{proof}
	
	\begin{lem}\label{n3l}
		For any $\mathsf C,\mathsf c>0$, there exists a constant $C_3 = C_3(\mathsf C,\mathsf c)$ such that $\mathbb{P}(N_3\geq\mathsf c \kappa)\leq e^{-\mathsf C\kappa}$ where
		\begin{equation}
			N_3=|\{i\in[\kappa/3,2\kappa/3]\cap \mathbb{Z}:\max_{u_i\in\mathcal{A}_S^i}|\psi^\mathrm{c}_{i,n}(u_i)|\geq C_3\}|.
		\end{equation}
	\end{lem}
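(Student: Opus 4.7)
The plan is to adapt the Borell-TIS argument from the proof of Lemma \ref{n2l} to the coarse field $\psi^{\mathrm{c}}_{i,n}$. First, note that if $N_3 \geq \mathsf{c} \kappa$, then
\[ \sum_{i \in J} \max_{u_i \in \mathcal{A}_S^i} |\psi^{\mathrm{c}}_{i,n}(u_i)| \geq C_3 \mathsf{c} \kappa, \]
where $J = [\kappa/3, 2\kappa/3] \cap \mathbb{Z}$. Writing $|x| = \max_{s \in \{-1, +1\}} sx$ turns the left-hand side into $\max_{(s_i, u_i)_{i \in J}} \sum_{i \in J} s_i \psi^{\mathrm{c}}_{i,n}(u_i)$, a supremum of a centered Gaussian process on the finite product space $\prod_{i \in J}(\{-1, +1\} \times \mathcal{A}_S^i)$. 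This is strictly parallel to the supremum $\max_{u_i, v_i} \sum_i (\psi_{0,i}(u_i) - \psi_{0,i}(v_i))$ treated in Lemma \ref{n2l}, so it will suffice to show that both the expectation and the pointwise variance of this supremum are $O(\kappa)$ and then invoke Borell-TIS.

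For the expectation, I would establish two input estimates: a pointwise variance bound $\mathrm{Var}(\psi^{\mathrm{c}}_{i,n}(u)) \leq C$ via a direct heat-kernel computation in the spirit of the one at the start of the proof of Lemma \ref{coarse}, and an increment bound $\mathrm{Var}(\psi^{\mathrm{c}}_{i,n}(u) - \psi^{\mathrm{c}}_{i,n}(v)) \leq C \cdot 2^i |u-v|$ for $u, v \in \mathcal{A}_S^i$ of Proposition \ref{distance}-type. Both bounds leverage the geometric separation $\geq c \cdot 2^{-i}$ between $\mathcal{A}_S^i$ and $\mathbb{R}^2 \setminus \mathcal{B}_S^i$, which allows the Gaussian decay of the heat kernel to be harnessed. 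Applying Fernique's inequality on $\mathcal{A}_S^i$ (of diameter $\preceq 2^{-i}$) yields $\mathbb{E}[\max_{u_i \in \mathcal{A}_S^i}|\psi^{\mathrm{c}}_{i,n}(u_i)|] \leq C$, and summing over $i \in J$ gives $\mathbb{E}[\max] \leq C \kappa$.

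For the pointwise variance of the sum, I would decompose
\[ \mathrm{Var}\Bigl(\sum_{i \in J} s_i \psi^{\mathrm{c}}_{i,n}(u_i)\Bigr) = \sum_i \mathrm{Var}(\psi^{\mathrm{c}}_{i,n}(u_i)) + 2 \sum_{i<j} s_i s_j\, \mathbb{E}[\psi^{\mathrm{c}}_{i,n}(u_i) \psi^{\mathrm{c}}_{j,n}(u_j)], \]
with diagonal contribution $O(\kappa)$. For the off-diagonal terms, I aim to establish $|\mathbb{E}[\psi^{\mathrm{c}}_{i,n}(u_i) \psi^{\mathrm{c}}_{j,n}(u_j)]| \leq C \cdot 2^{-c(j-i)}$ for $i < j$, paralleling the $C \cdot 2^{-(j-i)/2}$ bound used in Lemma \ref{n2l}. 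This cross-scale covariance decay is the main obstacle: unlike in Lemma \ref{n2l}, the exteriors $\mathbb{R}^2 \setminus \mathcal{B}_S^i$ for different $i$ share a common central disk containing $u_j$ for $j > i$, so disjointness of supports is not immediate. My plan is to split the covariance integral into its inner-disk and outer-annulus parts---on the outer part, $\tilde{p}_{t/2}(u_j, \cdot)$ is supported well away from the region for the relevant $t$, while on the inner-disk part, $\tilde{p}_{t/2}(u_i, \cdot)$ has very small mass due to the separation $|u_i - y| \gtrsim 2^{-i}$---and apply Cauchy-Schwarz on each piece. Once both bounds are in place, Borell-TIS gives $\mathbb{P}(\max \geq C\kappa + \lambda) \leq \exp(-\lambda^2/(2C\kappa))$; taking $\lambda = (C_3 \mathsf{c} - C)\kappa$ with $C_3 = C_3(\mathsf{C}, \mathsf{c})$ sufficiently large concludes the proof.
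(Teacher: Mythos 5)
Your proposal is correct and follows essentially the same route as the paper: reduce $\{N_3\geq \mathsf c\kappa\}$ to the event $\sum_i\max_{u_i\in\mathcal A^i_S}|\psi^{\mathrm c}_{i,n}(u_i)|\geq C_3\mathsf c\kappa$, bound the expectation by $O(\kappa)$ via heat-kernel increment estimates and Fernique, bound the variance by $O(\kappa)$ via $O(1)$ diagonal terms plus geometrically decaying cross-scale covariances, and conclude with Borell--TIS (the paper fixes the signs $s_i$ and union-bounds over the $2^\kappa$ choices, which is equivalent to your folding the signs into the index set). The only difference is in the off-diagonal bound, where the paper argues more directly than your inner/outer splitting: it bounds $\tilde p\leq p$, drops all spatial restrictions to use $\int p_{t/2}(u_i-y)p_{t/2}(u_j-y)\,dy=p_t(u_i-u_j)$, and exploits $|u_i-u_j|\gtrsim 2^{-i}$ together with the time range $t\leq 2^{-2j}$ — this sidesteps any reliance on the compact support of $\tilde p$ (which, for nearly adjacent scales, is only of size $\approx 2^{-j}j^{\epsilon_0}$ and hence not automatically ``well away''), whereas in your scheme the outer piece should likewise be handled by the Gaussian decay coming from the separation $\gtrsim 2^{-i}$ rather than by support considerations.
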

	\begin{proof}
		Similar to the proof of Lemma \ref{n2l}, we first bound the expectation and then enhance this by the concentration inequality. For $\kappa/3 \leq i\leq 2\kappa/3$, we fix $s_i\in \{-1,1\}. $ It is clear that we can write $\mathcal{A}^i_S$ as the union of four rectangles. For $x,x^{\prime}$ contained in the same rectangle, notice that for any $y\in \mathbb{R}^2\setminus \mathcal{B}^i_S$,
		\begin{equation}
			|\tilde{p}_{\frac{t}{2}}(x-y)-\tilde{p}_{\frac{t}{2}}(x^{\prime}-y)|=|\int_0^1\nabla_x\tilde{p}_{\frac{t}{2}}(x-y+s(x^{\prime}-x))\cdot (x^{\prime}-x)ds|\leq C\frac{|x-y|}{t}e^{-\frac{|x-y|^2}{2t}}|x-x^{\prime}|.
		\end{equation}
		Hence we have
		\begin{equation}
			\begin{aligned}
				\mathrm{Var}(\psi_{i,n}^\mathrm{c}(x)-\psi_{i,n}^\mathrm{c}(x^{\prime}))
				&=\pi\int^{2^{-2i}}_{2^{-2n}}\int_{\mathbb{R}^2\setminus\mathcal{B}^i_S}|\tilde{p}_{\frac{t}{2}}(x-y)-\tilde{p}_{\frac{t}{2}}(x^{\prime}-y)|^2dydt\leq C |x-x^{\prime}|.
			\end{aligned}
		\end{equation}
		Then by Fernique's inequality \cite{fernique} (see also \cite[Theorem~4.1]{adler1990introduction}) and summing over the four rectangles in $\mathcal{A}^i_S$, we have
		\begin{equation}\label{mean4.1}
			\mathbb{E}\left[\max_{u_i\in \mathcal{B}^i_{S}}s_i\psi_{i,n}^\mathrm{c}(u_i) \right]\leq C.
		\end{equation}
		Summing over all $\kappa/3 \leq i\leq 2\kappa/3$, we have
		\begin{equation}\label{mean4}
			\mathbb{E}\left[\max_{u_i\in \mathcal{B}^i_{S}}\sum_{\kappa/3\leq i\leq 2\kappa/3}s_i\psi_{i,n}^\mathrm{c}(u_i) \right]\leq C \kappa.
		\end{equation}
		We now need to bound the variance of the sum in \eqref{mean4}. To this end, we first get that
		\begin{equation}\label{var4.1}
				\mathrm{Var}(s_i\psi^\mathrm{c}_{i,n}(u_i))
				\leq\pi\int_{2^{-2n}}^{2^{-2i}}\int_{\mathbb{R}^2\setminus\mathcal{B}_S^i}[p_{\frac{t}{2}}(u_i-y)]^2dydt
				\leq C \int_{2^{-2n}}^{2^{-2i}}\frac{1}{2\pi t}e^{-\frac{2^{-2i}}{100t}}dt
				\leq C.
		\end{equation}
		For the covariance between different scales, we have for $j>i$,
		\begin{equation}\label{var4.2}
			\begin{aligned}
				\mathbb{E}(s_i\psi_{i,n}^\mathrm{c}(u_i)s_j\psi_{j,n}^\mathrm{c}(u_j))
				&\leq C\int_{2^{-2n}}^{2^{-2j}}\int_{\mathbb{R}^2}p_{\frac{t}{2}}(u_i-y)p_{\frac{t}{2}}(u_j-y)dydt\\
				&\leq  C\int_{2^{-2n}}^{2^{-2j}}\frac{1}{t}e^{-\frac{2^{-2i}}{100t}}dt=C\int_{2^{2j-2i}}^{2^{2n-2i}}\frac{1}{s}e^{-s/100}ds\leq C2^{-2(j-i)}.
			\end{aligned}
		\end{equation}
		Combining \eqref{var4.1} and \eqref{var4.2}, we have
		$$
			\mathrm{Var}\left(\sum_{\kappa/3\leq i\leq 2\kappa/3}s_i\psi_{i,n}^\mathrm{c}(u_i)\right)\leq \sum_{\kappa/3\leq i\leq 2\kappa/3}\mathrm{Var}(\psi_{i,n}^{\mathrm c}(u_i))+2\sum_{\kappa/3\leq i<j\leq 2\kappa/3}\mathbb{E}(\psi_{i,n}^{\mathrm c}(u_i)\psi_{j,n}^{\mathrm c}(u_j))\leq C \kappa.
		$$
		Combined with \eqref{mean4}, it yields by the Borell-TIS inequality \cite{Borell1975-lg,Sudakov1978} (see, e.g., \cite[Theorem~2.1.1]{adler2009random} and \cite[Theorem~3.25]{van2014probability}) that there exists a constant $C_3=C_3(\mathsf C,\mathsf c)$ such that
		\begin{equation}
			\mathbb{P}\left(\sum_{\kappa/3\leq i\leq 2\kappa/3}\max_{u_i\in \mathcal{B}^i_{S}}s_i\psi_{i,n}^\mathrm{c}(u_i)\geq C_3\mathsf c \kappa\right)\leq e^{-\mathsf C\kappa}.
		\end{equation}
		Taking a union bound over $s_i\in \{-1,1\}$ for $\kappa/2\leq i\leq 2\kappa/3$, we have
		\begin{equation}
			\begin{aligned}
				\mathbb P(N_3\geq \mathsf c \kappa)
				&\leq \mathbb{P}\left(\sum_{\kappa/3\leq i\leq 2\kappa/3}\max_{u_i\in \mathcal{B}^i_{S}}|\psi_{i,n}^\mathrm{c}(u_i)|\geq C_3\mathsf c \kappa\right)\\
				&\leq \sum_{s_i\in\{-1,1\}}\mathbb{P}\left(\sum_{\kappa/3\leq i\leq 2\kappa/3}\max_{u_i\in \mathcal{B}^i_{S}}s_i\psi_{i,n}^\mathrm{c}(u_i)\geq C_3\mathsf c \kappa\right)\leq 2^\kappa e^{-\mathsf C\kappa},
				\end{aligned}
		\end{equation}
		which completes the proof of the lemma (by properly adjusting $\mathsf C$).
	\end{proof}
	
	\begin{lem}\label{n4l}
		Suppose that \eqref{induction} holds for $n-1$. Write $Y_i$ and $Z_i$ as the effective resistances around and across $\mathcal{A}_S^i$ with respect to the field $\psi_{i,n}^{\mathrm{f}}$. Then for all $\mathsf C,\mathsf c>0$, there exists a constant $C_4=C_4(\mathsf C,\mathsf c)$, such that $\mathbb{P}(N_4\geq\mathsf c \kappa)\leq e^{-\mathsf C\kappa}$ where
		\begin{equation}
			N_4=|\{i\in [\kappa/3,2\kappa/3]\cap \mathbb{Z}:Y_i\geq C_4Z_i\}|.
		\end{equation}
	\end{lem}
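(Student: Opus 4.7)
The plan is to exploit independence across scales. The random variables $(Y_i, Z_i)$ for $i \in [\kappa/3, 2\kappa/3] \cap \mathbb{Z}$ are each measurable with respect to $\psi_{i,n}^{\mathrm{f}}$, which is constructed by integrating the white noise $W$ over the set $\mathcal{B}_S^i$ in space. Since the square annuli $\mathcal{B}_S^i$ have disjoint inner/outer radii scales for different $i$, they are pairwise disjoint in $\mathbb{R}^2$; consequently the restrictions of $W$ to these spatial regions are mutually independent, and hence the pairs $(Y_i, Z_i)$ are independent across $i$. This reduces the problem to establishing a single-scale bound of the form $\mathbb{P}(Y_i \geq C_4 Z_i) \leq \epsilon$ with $\epsilon$ arbitrarily small by taking $C_4$ large (depending on $\mathsf{C}, \mathsf{c}$), after which a standard Chernoff/Hoeffding bound for a sum of independent indicators immediately gives $\mathbb{P}(N_4 \geq \mathsf{c}\kappa) \leq e^{-\mathsf{C}\kappa}$ for $\epsilon$ sufficiently small.

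For the single-scale bound, the plan is to compare $Y_i, Z_i$ (defined via $\psi_{i,n}^{\mathrm{f}}$) to the corresponding resistances $\tilde{R}^{i,n}(\mathrm{around}\ \mathcal{A}_S^i)$ and $\tilde{R}^{i,n}(\mathrm{across}\ \mathcal{A}_S^i)$, which are defined with the full field $\psi_{i,n} = \psi_{i,n}^{\mathrm{f}} + \psi_{i,n}^{\mathrm{c}}$. Since $\log R$ is a $\gamma$-Lipschitz functional of the field in $L^{\infty}$, we have
\begin{equation*}
|\log Y_i - \log \tilde{R}^{i,n}(\mathrm{around}\ \mathcal{A}_S^i)| \vee |\log Z_i - \log \tilde{R}^{i,n}(\mathrm{across}\ \mathcal{A}_S^i)| \leq \gamma \|\psi_{i,n}^{\mathrm{c}}\|_{\mathcal{A}_S^i}.
\end{equation*}
The variance computations already carried out inside the proof of Lemma \ref{n3l} (together with Fernique/Borell--TIS at a single scale $i$) give a sub-Gaussian tail for $\|\psi_{i,n}^{\mathrm{c}}\|_{\mathcal{A}_S^i}$, so with probability at least $1-\epsilon/3$ this quantity is bounded by some $M(\epsilon)$. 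Meanwhile, Corollary \ref{rswcor} (applied in the appropriate annulus, with $T = T(\epsilon)$) controls the upper tail of $\tilde{R}^{i,n}(\mathrm{around}\ \mathcal{A}_S^i)/\tilde{\Lambda}_{n-i}(\psi,p)$ and the lower tail of $\tilde{\Lambda}_{n-i}(\psi,p) \cdot \tilde{R}^{i,n}(\mathrm{across}\ \mathcal{A}_S^i)$ by $\epsilon/3$ each.

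The induction hypothesis \eqref{induction} enters here: since $i \in [\kappa/3, 2\kappa/3]$ and we may assume $\kappa$ is large, we have $n - i \leq n-1$, so $\tilde{\Lambda}_{n-i}(\psi, p) \leq \tilde{\Lambda}_{n-1}(\psi, p) \leq 2$. Consequently, on the good event of total probability at least $1-\epsilon$, we get $Y_i / Z_i \leq C_4$ for a suitable $C_4 = C_4(\epsilon) = C_4(\mathsf{C}, \mathsf{c})$. This yields the desired single-scale bound.

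The main obstacle is bookkeeping: it is important that the Efron--Stein setup has already fixed $\psi_{i,n}^{\mathrm{f}}$ to depend only on the spatially localized piece of the white noise, so that independence across $i$ is exact rather than approximate; one must also resist the temptation to mix in $\psi_{i,n}^{\mathrm{c}}$ into the definition of $(Y_i, Z_i)$, since doing so would couple all scales through the long-range part of the field and defeat the Chernoff step. A secondary care point is that Corollary \ref{rswcor} is stated with the \emph{full} field $\psi_{i,n}$, which is why the Lipschitz comparison through $\|\psi_{i,n}^{\mathrm{c}}\|_{\mathcal{A}_S^i}$ is necessary rather than cosmetic.
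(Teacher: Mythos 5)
Your proposal is correct and follows essentially the same route as the paper: a single-scale bound obtained by comparing $(Y_i,Z_i)$ to $\tilde R^{i,n}(\mbox{around/across }\mathcal A_S^i)$ via the coarse field $\psi^{\mathrm c}_{i,n}$, invoking Corollary \ref{rswcor} together with the induction hypothesis $\tilde\Lambda_{n-i}(\psi,p)\leq 2$, and then exact independence of the $\psi^{\mathrm f}_{i,n}$ across scales plus a Hoeffding/Chernoff bound. The only (immaterial) difference is that you control $\|\psi^{\mathrm c}_{i,n}\|_{\mathcal A_S^i}$ by a sub-Gaussian tail via Borell--TIS, whereas the paper simply uses the expectation bound \eqref{mean4.1} with Markov's inequality and Lemma \ref{comp}.
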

	\begin{proof}
		By Corollary \ref{rswcor} and our assumption that \eqref{induction} holds for $n-i$ (since it holds for $n-1$), for every $q\in(0,1)$ there exists $C_5=C_5(q)$ such that
		\begin{equation}\label{n41}
			\mathbb{P}(\tilde{R}^{i,n}(\mbox{around }\mathcal{A}_S^i)\leq C_5\tilde{R}^{i,n}(\mbox{across }\mathcal{A}_S^i))\geq 1-q/2.
		\end{equation}
		Notice that by \eqref{mean4.1}, we also have
		\begin{equation}\label{n42}
			\mathbb{E}[\max_{u_i\in \mathcal{B}_S^i}\psi^{\mathrm c}_{i,n}(u_i)]\leq C.
		\end{equation}
		In addition, note that
		\begin{equation}\label{n43}
			Y_i\leq e^{\max_{u_i\in \mathcal{A}_S^i}\psi^{\mathrm c}_{i,n}(u_i)}\tilde{R}^{i,n}(\mbox{around }\mathcal{A}_S^i) \And Z_i\geq e^{-\max_{u_i\in \mathcal{A}_S^i}\psi^{\mathrm c}_{i,n}(u_i)}\tilde{R}^{i,n}(\mbox{across }\mathcal{A}_S^i).
		\end{equation}
		Combining \eqref{n41}, \eqref{n42} and \eqref{n43}, by Markov's inequality and Lemma \ref{comp}, there exists $C_4=C_4(C_5,q)$ such that
		\begin{equation}
			\mathbb{P}(Y_i\leq C_4Z_i)\geq 1-q.
		\end{equation}
		Since the events $\{Y_i\leq C_4 Z_i\}$ are independent, by Hoeffding's inequality and taking $q$ sufficiently close to $0$, it completes the proof of the lemma.
	\end{proof}
	\begin{proof}[Proof of Lemma \ref{decaynumber}]
		For  $\mathsf C,\mathsf c>0$, let $C_2, C_3, C_4$ be chosen as in Lemmas \ref{n2l}, \ref{n3l} and \ref{n4l}. Notice that on the event
		\begin{equation}
			\left\{\max_{u_i,v_i\in \mathcal{B}^i_{S}}(\psi_{0,i}(u_i)-\psi_{0,i}(v_i))\leq C_2\right\}
			\cap\left\{\max_{u_i\in\mathcal{A}_S^i}|\psi^{\mathrm c}_{i,n}(u_i)|\leq C_3\right\}
			\cap\left\{Y_i\leq C_4Z_i\right\},
		\end{equation}
		it holds that
		\begin{equation}
			\tilde{R}^{n}(\mbox{around }\mathcal{A}_S^i)\leq C_4e^{\gamma C_2+2\gamma C_3}\tilde{R}^{n}(\mbox{across }\mathcal{A}_S^i).
		\end{equation}
		So there exists $C_1 = C_1(C_2, C_3, C_4)$ such that
		\begin{equation}
			\mathbb{P}(N_1\geq 3\mathsf c \kappa)\leq \mathbb{P}(N_2\geq \mathsf c\kappa)+\mathbb{P}(N_3\geq \mathsf c\kappa)+\mathbb{P}(N_4\geq \mathsf c\kappa)\leq 3e^{-\mathsf C\kappa},
		\end{equation}
		which completes the proof of the lemma (by properly adjusting $\mathsf C$ and $\mathsf c$).
	\end{proof}
	
	\begin{proof}[Proof of Lemma \ref{decaylem}]
		Let $D_S^i$ be the box centered at $v_S$ with side length $2^{-i+3}$ (i.e., the boundary of $D_S^i$ is the same as the inner boundary of $\mathcal{B}_S^i$). Applying Lemma \ref{resdif} with $\mathrm H=\mathcal{A}^i_S$, $\mathrm D=D^i_S$ and $\mathcal P^\prime$ being the collection of contours separating the inner and outer boundaries of $\mathcal A^i_S$, we get that
		\begin{equation}
			0\leq\mathcal{E}(\theta, \mathcal{A}_S^i)+2\theta(D_S^i)^2\tilde{R}^{n}(\mbox{around }\mathcal{A}_S^i)-\mathcal{E}(\theta,D^i_S).
		\end{equation}
		Since $\theta(D^i_S)^2\tilde{R}^{n}(\mbox{across } \mathcal{A}_S^i)\leq \mathcal{E}(\theta, \mathcal{A}_S^i)$, we get on the event $\{\tilde{R}^{n}(\mbox{around }\mathcal{A}_S^i)\leq C_1\tilde{R}^{n}(\mbox{across }\mathcal{A}_S^i)\}$,
		\begin{equation}
			\mathcal{E}(\theta,D^i_S)\leq\mathcal{E}(\theta,\mathcal{A}_S^i)+2C_1\theta(D^i_S)^2\tilde{R}^{n}(\mbox{across }\mathcal{A}_S^i)\leq(2C_1+1)\mathcal{E}(\theta,\mathcal{A}_S^i).
		\end{equation}
		Noting that $\mathcal{E}(\theta,\mathcal{A}_S^i)\leq\mathcal{E}(\theta,D^{i-1}_S)-\mathcal{E}(\theta,D^i_S)$ since $D^{i}_S\cup \mathcal A^i_S\subset D^{i-1}_S$, we have
		\begin{equation}
			\mathcal{E}(\theta,D^i_S)\leq\frac{2C_1+1}{2C_1+2}\mathcal{E}(\theta,D^{i-1}_S).
		\end{equation}	
		Therefore, on the event that $\{N_1 < \mathsf c \kappa\}$ (recalling $N_1$ from Lemma \ref{decaynumber}), we have that for some $c_2=c_2(C_1)$,
		$$\frac{\mathcal{E}(\theta,\mathcal{A}_S)}{\tilde{R}^{n}_{1,1}}\leq \exp\left(-c_2\left(\frac{\kappa}{3}-\mathsf c \kappa\right)\right).$$
		Since in addition we have $\mathcal{E}(\theta,\mathcal{A}_S)\leq \tilde{R}^{n}_{1,1}$ for every realization, we apply the preceding inequality and Lemma \ref{decaynumber} for all $S\in \mathcal{S}_\kappa$, and get that
		$$\mathbb{E}\left[\max_{S\in\mathcal{S}_\kappa}\left(\frac{\mathcal{E}(\theta,\mathcal{A}_S)}{\tilde{R}^{n}_{1,1}}\right)^2\right]\leq\exp\left(-2c_2\left(\frac{\kappa}{3}-\mathsf c \kappa\right)\right)+2^{2\kappa}\exp\left(-\mathsf C\kappa\right),$$
		completing the proof of the lemma by choosing $\mathsf c < 1/3 $ and $\mathsf C>2$.
	\end{proof}
	
	\subsection{High moments for resistances}\label{subsec64}
	In this subsection we prove the following lemma.
	\begin{lem}\label{unilem1}
		Suppose that \eqref{induction} holds for $n-1$. Then for some absolute constant $C$ we have
		\begin{equation}
			\mathbb{E}\left[\max_{S\in\mathcal{S}_\kappa}\left(\frac{\tilde{R}^{n}(\mbox{around }\mathcal{A}_S)}{\tilde{R}^{n}(\mbox{across }\mathcal{A}_S)}\right)^4\right]\leq C\exp\left(3\kappa^{2/3}\right).
		\end{equation}
	\end{lem}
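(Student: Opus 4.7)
The plan is to derive a tail bound on $\tilde R^n(\mbox{around }\mathcal A_S)/\tilde R^n(\mbox{across }\mathcal A_S)$ for each fixed $S$, then union bound over $|\mathcal S_\kappa|\lesssim 2^{2\kappa}$, and integrate the resulting tail estimate to extract the fourth moment. The induction hypothesis $\tilde\Lambda_{n-1}(\psi,p)\le 2$ is crucial: it turns the bounds in Corollary \ref{rswcor} into concrete estimates that do not grow with $n$.

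For the pointwise tail, I would set $i_\ast=\kappa-m_\kappa$ and decompose $\psi_n=\psi_{0,i_\ast}+\psi_{i_\ast,n}$. Since adding the coarse field $\psi_{0,i_\ast}$ changes every edge resistance by at most a factor of $e^{\gamma\,\mathrm{osc}(\psi_{0,i_\ast},\mathcal A_S)}$, multiplying numerator and denominator uniformly, one has the deterministic inequality
\begin{equation*}
\frac{\tilde R^n(\mbox{around }\mathcal A_S)}{\tilde R^n(\mbox{across }\mathcal A_S)} \le e^{\gamma\,\mathrm{osc}(\psi_{0,i_\ast},\mathcal A_S)} \cdot \frac{\tilde R^{i_\ast,n}(\mbox{around }\mathcal A_S)}{\tilde R^{i_\ast,n}(\mbox{across }\mathcal A_S)}.
\end{equation*}
After rescaling $\mathcal A_S$ by the factor $2^{i_\ast}$ so that it becomes a unit-sized annulus (using the scaling property of $\phi$ together with Propositions \ref{mesh} and \ref{normprop} to account for the mesh change and the difference between $\phi$ and $\psi$), Corollary \ref{rswcor} combined with the induction hypothesis yields $\mathbb{P}(\tilde R^{i_\ast,n}(\mbox{around }\mathcal A_S)\ge T)\le Ce^{-c(\log T)^2/\log\log T}$, with an analogous lower tail for $\tilde R^{i_\ast,n}(\mbox{across }\mathcal A_S)$. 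Meanwhile, $\psi_{0,i_\ast}$ varies on scale $2^{-i_\ast}$, comparable to the diameter of $\mathcal A_S$, so Proposition \ref{distance} combined with Fernique's inequality and Borell--TIS gives $\mathbb{P}(\mathrm{osc}(\psi_{0,i_\ast},\mathcal A_S)\ge s)\le Ce^{-cs^2}$.

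Combining both tails through a union bound over $S\in\mathcal S_\kappa$ produces, for $T$ above an absolute threshold,
\begin{equation*}
\mathbb{P}\!\left(\max_{S\in\mathcal S_\kappa}\frac{\tilde R^n(\mbox{around }\mathcal A_S)}{\tilde R^n(\mbox{across }\mathcal A_S)}\ge T\right) \le C\,2^{2\kappa}\exp\!\bigl(-c(\log T)^2/\log\log T\bigr).
\end{equation*}
Writing $\mathbb{E}[X^4]=\int_0^\infty 4T^3\mathbb{P}(X\ge T)\,dT$ and splitting at $T_0$ where $(\log T_0)^2/\log\log T_0\asymp\kappa$, so that $\log T_0\asymp\sqrt{\kappa\log\kappa}$, both the trivial-tail contribution $T_0^4$ and the genuinely decaying tail contribution are of order $\exp(O(\sqrt{\kappa\log\kappa}))$. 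Since $\sqrt{\kappa\log\kappa}\le\kappa^{2/3}$ for $\kappa$ large (finitely many small $\kappa$ are absorbed into $C$), this yields the claimed $Ce^{3\kappa^{2/3}}$.

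The main obstacle lies in the scaling step: Corollary \ref{rswcor} is stated for annuli whose radii exceed 1, whereas $\mathcal A_S$ sits at scale $2^{-\kappa+m_\kappa}$. Handling this rigorously requires the exact scaling identity $\phi_{m,n}(x)\overset{d}{=}\phi_{m-i,n-i}(2^i x)$, the sub-Gaussian comparison of $\phi$ and $\psi$ from Proposition \ref{normprop}, and Proposition \ref{mesh} to absorb the resulting discrepancy in mesh sizes; once those pieces are assembled, the remaining union bound and tail integration are routine.
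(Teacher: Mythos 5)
Your proposal is correct, and its skeleton (a pointwise tail bound for the around/across ratio, a union bound over $\mathcal S_\kappa$, then integration of the tail) is the same as the paper's, which proves the intermediate tail estimate in Lemma \ref{unilem} and integrates it. The difference is where you cut the field. The paper decomposes $\psi_n=\psi_{0,\kappa}+\psi_{\kappa,n}$, applies Corollary \ref{rswcor} (with the induction hypothesis giving $\tilde\Lambda_{n-\kappa}(\psi,p)\le 2$) to annuli at scale $2^{-\kappa}$, and then must glue $O(2^{m_\kappa})$ such annuli via Propositions \ref{series} and \ref{parallel} to control around/across resistances of $\mathcal A_S$ (which lives at scale $2^{-\kappa+m_\kappa}$); it also controls the oscillation of $\psi_{0,\kappa}$ over $\mathcal A_S$, a Gaussian quantity with variance of order $2^{2m_\kappa}\approx\kappa^{2\epsilon_0}$, via Propositions \ref{gradient} and \ref{normprop}. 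This costs a $\kappa^{3\epsilon_0}$ factor in the exponent and forces the restriction $T>2^{\kappa^{1/2+3\epsilon_0}}$ in Lemma \ref{unilem}, losses that are harmless since any $\exp(o(\kappa))$ moment bound suffices against the $\exp(-c_1\kappa)$ decay of Lemma \ref{decaylem}. You instead cut at $i_\ast=\kappa-m_\kappa$, matching the scale of $\mathcal A_S$, so Corollary \ref{rswcor} applies to $\mathcal A_S$ directly (the scaling/mesh/$\phi$--$\psi$ issues you flag are exactly what the proofs of Proposition \ref{taillem} and Corollary \ref{rswcor} already handle, and the paper itself invokes the corollary for annuli at scale $2^{-\kappa}$ in the same spirit), no gluing is needed, and the coarse field $\psi_{0,i_\ast}$ has $O(1)$-variance oscillation over $\mathcal A_S$ by Proposition \ref{distance} plus Fernique and Borell--TIS; note also that your factor $e^{\gamma\,\mathrm{osc}}$ in the ratio bound is exactly right since the common multiplicative level of the coarse field cancels. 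Your route thus yields the cleaner tail $C2^{2\kappa}\exp(-c(\log T)^2/\log\log T)$ above an absolute threshold and the slightly stronger moment bound $\exp(O(\sqrt{\kappa\log\kappa}))$, which comfortably implies the stated $C\exp(3\kappa^{2/3})$; the only requirements to keep in mind are $n>i_\ast\ge 1$ (automatic in the inductive step, where $n>\kappa$) and that the induction hypothesis for $n-1$ indeed bounds $\tilde\Lambda_{n-i_\ast}(\psi,p)$ by monotonicity of $\tilde\Lambda$.
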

	In order to prove Lemma \ref{unilem1}, it suffices to prove the following tail estimates.
	\begin{lem}\label{unilem}
		Suppose that \eqref{induction} holds for $n-1$. Then there exist absolute constants $C,c$ such that for $T>2^{\kappa^{1/2+3\epsilon_0}}$
		\begin{equation}
			\mathbb{P}\left(\max_{S\in\mathcal{S}_\kappa}\frac{\tilde{R}^{n}(\mbox{around }\mathcal{A}_S)}{\tilde{R}^{n}(\mbox{across }\mathcal{A}_S)}>T\right)\leq C5^\kappa\exp\left(-c\frac{(\log T)^2}{\kappa^{3\epsilon_0}\log\log T}\right).
		\end{equation}
	\end{lem}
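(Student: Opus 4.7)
The plan is to reduce the maximum over $S\in\mathcal{S}_\kappa$ to a tail estimate for a single annulus by a union bound, and to obtain the single-annulus estimate by decomposing the field into coarse and fine pieces. Corollary \ref{rswcor} provides the tail for the ratio of resistances around and across a unit annulus in the fine-field frame, while the induction hypothesis \eqref{induction} at level $n-1$ allows us to substitute the quantile-ratio factor $\tilde\Lambda_{n-i}(\psi,p)$ by the constant $2$.

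Concretely, I would take $i_0 = \kappa - m_\kappa$, so that (after rescaling) $\mathcal{A}_S$ is the unit annulus with inner radius $1$ and outer radius $2$ relative to the mesh $2^{-i_0}$, and then decompose $\psi_n = \psi_{0,i_0} + \psi_{i_0,n}$. Since edge resistances in $\mathbb{Z}_n^2$ are multiplicative in the field, one has, for every $S$,
\begin{equation*}
\frac{\tilde R^n(\mbox{around }\mathcal{A}_S)}{\tilde R^n(\mbox{across }\mathcal{A}_S)} \leq e^{\gamma\,\mathrm{osc}(\psi_{0,i_0},\mathcal{A}_S)}\cdot \frac{\tilde R^{i_0,n}(\mbox{around }\mathcal{A}_S)}{\tilde R^{i_0,n}(\mbox{across }\mathcal{A}_S)}.
\end{equation*}
Corollary \ref{rswcor} applied at level $i_0$, together with $\tilde\Lambda_{n-i_0}(\psi,p)\leq \tilde\Lambda_{n-1}(\psi,p)\leq 2$ from the induction hypothesis, yields for the right factor a tail bound of the form $C\exp(-c(\log T')^2/\log\log T')$. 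For the left factor, Proposition \ref{distance} bounds variances of increments of $\psi_{0,i_0}$, and Fernique's inequality combined with Borell--TIS concentration then gives a sub-Gaussian tail for $\mathrm{osc}(\psi_{0,i_0},\mathcal{A}_S)$ whose variance proxy picks up a factor of order $\kappa^{O(\epsilon_0)}$ from enlarging $\mathcal{A}_S$ by the finite-range radius $2^{-i_0}i_0^{\epsilon_0}$ of $\psi_{i_0,n}$ (the enlargement is needed so that on the enlarged region $\psi_{i_0,n}$ is essentially measurable with respect to localized white noise, which decouples it from $\psi_{0,i_0}$).

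Writing $T = T'\cdot e^{\gamma x}$ and optimizing over the splitting parameter $x$ produces a single-annulus tail of the form
\begin{equation*}
\mathbb{P}\!\left(\frac{\tilde R^n(\mbox{around }\mathcal{A}_S)}{\tilde R^n(\mbox{across }\mathcal{A}_S)} > T\right) \leq C\exp\!\left(-c\frac{(\log T)^2}{\kappa^{3\epsilon_0}\log\log T}\right),
\end{equation*}
valid in the regime $T\geq 2^{\kappa^{1/2+3\epsilon_0}}$, and a union bound over the $|\mathcal{S}_\kappa|\leq C\cdot 4^\kappa \leq 5^\kappa$ squares yields the stated inequality. The main obstacle is careful tracking of the $\kappa^{3\epsilon_0}$ loss: it arises from balancing the (inflated) Gaussian tail of the coarse-field oscillation against the $(\log T')^2/\log\log T'$ RSW tail, and the threshold $T\geq 2^{\kappa^{1/2+3\epsilon_0}}$ is exactly what one needs so that the resulting exponential absorbs the $5^\kappa$ prefactor arising from the union bound. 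A secondary technical point is ensuring that the RSW tail from Corollary \ref{rswcor} is genuinely uniform over the scale $i_0$, which is guaranteed by the induction hypothesis applied at the appropriate level.
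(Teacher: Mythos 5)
Your proposal is correct in outline but takes a genuinely different route from the paper. The paper decomposes $\psi_n=\psi_{0,\kappa}+\psi_{\kappa,n}$ and applies Corollary \ref{rswcor} at scale $i=\kappa$ to unit-size annuli of radius $2^{-\kappa}$; since $\mathcal{A}_S$ has radius $2^{-\kappa+m_\kappa}$, it must then glue $O(2^{m_\kappa})$ translated copies via Propositions \ref{series} and \ref{parallel} (costing factors $2^{2m_\kappa}$, absorbed by the threshold on $T$), and control the coarse field $\psi_{0,\kappa}$ over the mismatched scale $2^{-\kappa}\kappa^{\epsilon_0}$ via Propositions \ref{gradient} and \ref{normprop}, which is precisely where the variance proxy $2^{2m_\kappa}\sim\kappa^{2\epsilon_0}$ and hence the $\kappa^{3\epsilon_0}$ loss enters. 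You instead decompose at $i_0=\kappa-m_\kappa$ (up to a unit shift to meet the $a>1$ convention), so that $\mathcal{A}_S$ itself is, after rescaling, an annulus of fixed aspect ratio to which Corollary \ref{rswcor} applies directly with $\tilde\Lambda_{n-i_0}(\psi,p)\leq 2$ from the induction hypothesis; this removes the gluing step entirely, and the coarse field $\psi_{0,i_0}$ only needs to be controlled over a region of diameter comparable to $2^{-i_0}$, where Proposition \ref{distance} plus Fernique and Borell--TIS give an oscillation tail with $O(1)$ variance proxy. Done carefully, your route actually yields a per-annulus bound with no $\kappa^{3\epsilon_0}$ loss at all (the stated bound then follows a fortiori after the $5^\kappa$ union bound), while the paper's route buys the convenience of quoting Corollary \ref{rswcor} for a single fixed annulus shape at the dyadic scale $\kappa$ already used throughout Section \ref{sec6}.

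Two small corrections to your narrative, neither fatal. First, no enlargement of $\mathcal{A}_S$ is needed for "decoupling": $\psi_{0,i_0}$ and $\psi_{i_0,n}$ are independent by construction, being integrals of the white noise over disjoint time intervals, and the deterministic inequality you wrote only requires $\mathrm{osc}(\psi_{0,i_0},\mathcal{A}_S)$ itself; your inflated variance proxy $\kappa^{O(\epsilon_0)}$ is a harmless over-estimate, not a necessity. Second, the threshold $T>2^{\kappa^{1/2+3\epsilon_0}}$ in the paper is not there to absorb the $5^\kappa$ prefactor (which is kept explicitly in the statement); it is used to absorb the $2^{2m_\kappa}$ factors coming from the gluing step, a step your argument does not have.
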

	\begin{proof}
		Let $\mathcal{A}$ be an annulus with inner radius $2^{-\kappa}$ and outer radius $2^{-\kappa+1}$. Since \eqref{induction} holds for $n-\kappa$, by Corollary \ref{rswcor} for some absolute constants $C,c$ we have
		\begin{equation}\label{uni3}
			\mathbb{P}\left(\tilde R^{\kappa,n}(\mbox{around } \mathcal{A})\geq T\right)\leq C\exp\left(-c\frac{(\log T)^2}{\log\log T}\right),
		\end{equation}
		\begin{equation}\label{uni4}
			\mathbb{P}\left(\tilde R^{\kappa,n}(\mbox{across } \mathcal{A})\leq T^{-1}\right)\leq C\exp\left(-c\frac{(\log T)^2}{\log\log T}\right).
		\end{equation}
		For each $\mathcal{A}_S$ where $S\in S_\kappa$, we can find at most $C2^{m_\kappa}$ copies of $\mathcal{A}$ such that the union of the paths around these copies must contain a contour around $\mathcal{A}_S$. Thus by Proposition \ref{series}, $R^{\kappa,n}(\mbox{around } \mathcal{A}_S)$ can be upper-bounded by the sum of effective resistances around these copies. Then by \eqref{uni3} and a union bound we have,
		\begin{equation}\label{uptail1}
			\mathbb{P}\left(\tilde R^{\kappa,n}(\mbox{around } \mathcal{A}_S)\right)\geq 2^{2m_\kappa}T)\leq C2^{m_\kappa}\exp\left(-c\frac{(\log T)^2}{\log\log T}\right).
		\end{equation}
		Similarly, for each $\mathcal{A}_S$ where $S\in S_\kappa$, we can find at most $C2^{m_\kappa}$ copies of $\mathcal{A}$ such that each path crossing $\mathcal{A}_S$ must cross one of these copies. By Proposition \ref{parallel} and \eqref{uni4}, we get that
		\begin{equation}\label{lowtail1}
			\mathbb{P}\left(\tilde R^{\kappa,n}(\mbox{across } \mathcal{A}_S)\leq 2^{-2m_\kappa}T^{-1}\right)\leq C2^{m_\kappa}\exp\left(-c\frac{(\log T)^2}{\log\log T}\right).
		\end{equation}
		By \eqref{uptail1}, \eqref{lowtail1} and a union bound, for some absolute constants $C,c$ we have for all $T>\exp(\kappa^{1/2+\epsilon_0})$ (we assume this in order to absorb the term $2^{2m_\kappa}$) that
		\begin{equation}\label{uni1}
			\mathbb{P}\left(\max_{S\in\mathcal{S}_\kappa}\frac{\tilde{R}^{\kappa,n}(\mbox{around }\mathcal{A}_S)}{\tilde{R}^{\kappa,n}(\mbox{across }\mathcal{A}_S)}>T\right)\leq C5^\kappa\exp\left(-c\frac{(\log T)^2}{\log\log T}\right).
		\end{equation}
		In addition, we have
		\begin{equation}\label{uni2}
			\begin{aligned}
				&\mathbb{P}\left(\max_{u,v\in \mathcal{A}_S,S\in \mathcal{S}_\kappa}(\psi_{0,\kappa}(u)-\psi_{0,\kappa}(v))>x\right)\\
				&\leq
				\mathbb{P}\left(\|\phi_\kappa-\psi_\kappa\|_{B(1)}>x/4\right)+\mathbb P\left(2^{-\kappa+m_\kappa}\|\nabla \phi_{\kappa}\|_{B(1)}\geq x/4\right)\\			
				&\leq Ce^{-cx^2}+C4^\kappa \exp\left(-c\frac{x^2}{2^{2m_\kappa}}\right),
			\end{aligned}
		\end{equation}
		where the first inequality follows from a simple union bound, and the second inequality follows from Propositions \ref{gradient} and \ref{normprop}. Combining \eqref{uni1} and \eqref{uni2} we derive that for $T>2^{\kappa^{1/2+3\epsilon_0}}$ and $\kappa$ sufficiently large, and for some absolute constants $C,c$ we have
		\begin{equation}
			\begin{aligned}
				&\mathbb{P}\left(\max_{S\in\mathcal{S}_\kappa}\frac{\tilde{R}^{n}(\mbox{around }\mathcal{A}_S)}{\tilde{R}^{n}(\mbox{across }\mathcal{A}_S)}>T\right)\\
				&\leq
				\mathbb{P}\left(\max_{S\in\mathcal{S}_\kappa}\frac{\tilde{R}^{\kappa,n}(\mbox{around }\mathcal{A}_S)}{\tilde{R}^{\kappa,n}(\mbox{across }\mathcal{A}_S)}>\sqrt{T}\right)
				+\mathbb{P}\left(\max_{u,v\in \mathcal{A}_S,S\in \mathcal{S}_{\kappa}}(\psi_{0,\kappa}(u)-\psi_{0,\kappa}(v))>\frac{1}{2}\log T\right)\\
				&\leq C5^\kappa\exp\left(-c\frac{(\log \sqrt{T})^2}{\log\log \sqrt{T}}\right)+C4^\kappa\exp\left(-c\frac{(\log T)^2}{2^{2m_\kappa}}\right)\\
				&\leq C5^\kappa\exp\left(-c\frac{(\log T)^2}{\kappa^{3\epsilon_0}\log\log T}\right),
			\end{aligned}
		\end{equation}
		where in the first inequality we used $\gamma\leq\gamma_0<1$. Thus we complete the proof.
	\end{proof}

	\begin{proof}[Proof of Lemma \ref{unilem1}]
		By Lemma \ref{unilem} and integrating the tail probability, we get that
		\begin{equation}
			\begin{aligned}
				\mathbb{E}\left[\max_{S\in\mathcal{S}_\kappa}\left(\frac{\tilde{R}^{n}(\mbox{around }\mathcal{A}_S)}{\tilde{R}^{n}(\mbox{across }\mathcal{A}_S)}\right)^4\right]
				&=\int_{0}^{\infty}4T^3\mathbb{P}\left(\max_{S\in\mathcal{S}_\kappa}\frac{\tilde{R}^{n}(\mbox{around }\mathcal{A}_S)}{\tilde{R}^{n}(\mbox{across }\mathcal{A}_S)}>T\right)dT\\
				&\preceq \exp\left(3\kappa^{2/3}\right)+5^\kappa\int_{\exp\left(\kappa^{2/3}\right)}^{\infty}4T^3\exp\left(-c_3\frac{(\log T)^2}{\kappa^{3\epsilon_0}\log\log T}\right)dT\\
				&\preceq \exp\left(3\kappa^{2/3}\right)+5^\kappa\exp\left(-c\frac{\kappa^{4/3-3\epsilon_0}}{\log\log \kappa}\right)\\
				&\preceq \exp\left(3\kappa^{2/3}\right),
			\end{aligned}
		\end{equation}
		where the implicit constant in $\preceq$ are absolute constants.
	\end{proof}
	
	\subsection{Conclusion}\label{subsec65}
	In this subsection we complete the proof of our induction. We first relate the quantile and the variance of a random variable.
	\begin{lem}\label{quanvar}
		Let $X$ be a positive and continuous random variable, and define $l(p)=\inf\{x:\mathbb{P}(X\geq x)\leq p\}.$ Then for $0<p<1/2$ we have
		\begin{equation}
			\frac{l(1-p)}{l(p)}\leq \exp\left(\frac{\sqrt{2}}{p}\sqrt{\mathrm{Var}\left[\log X\right]}\right).
		\end{equation}
	\end{lem}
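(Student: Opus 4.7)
The plan is to deduce the inequality from a second-moment (Chebyshev-type) estimate on $\log X$. Writing $\alpha = l(p)$ and $\beta = l(1-p)$, and passing to logarithms, the multiplicative quantile ratio becomes an additive gap which must be controlled by $\sigma := \sqrt{\mathrm{Var}[\log X]}$. Continuity of $X$ ensures that the infimum in the definition of $l$ is attained and that the relevant tail probabilities equal $p$ exactly, so no slack is lost when invoking the quantile inequalities.

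Let $\mu = \mathbb{E}[\log X]$, and let $A \geq B$ denote the larger and smaller of $\log\alpha$ and $\log\beta$ (so $A - B$ is the quantity to bound, corresponding to the meaningful upper-over-lower quantile ratio). By continuity of $X$, the events $E_{+} = \{\log X \geq A\}$ and $E_{-} = \{\log X \leq B\}$ each have probability $p$. I would lower-bound $\sigma^{2} = \mathbb{E}[(\log X - \mu)^{2}]$ by restricting the expectation to $E_{+} \cup E_{-}$ and splitting into cases by the location of $\mu$. If $\mu \in [B, A]$, then on $E_{-}$ one has $(\log X - \mu)^{2} \geq (\mu - B)^{2}$ and on $E_{+}$ one has $(\log X - \mu)^{2} \geq (A - \mu)^{2}$, whence
$$\sigma^{2} \geq p(\mu - B)^{2} + p(A - \mu)^{2} \geq \frac{p}{2}(A - B)^{2},$$
using the elementary inequality $u^{2} + v^{2} \geq (u + v)^{2}/2$. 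If $\mu$ lies outside $[B, A]$, then the tail event on the far side alone contributes at least $p(A - B)^{2}$ to $\sigma^{2}$, a strictly stronger bound.

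In every case one obtains $\sigma^{2} \geq \tfrac{p}{2}(A - B)^{2}$, hence $A - B \leq \sigma\sqrt{2/p}$, and exponentiation yields the claimed bound (using $\sqrt{2/p} \leq \sqrt{2}/p$ for $p \in (0,1)$). No substantive obstacle is expected: the entire argument is a short Chebyshev-style computation. The only care required is in the use of continuity to pin down the two tail probabilities as exactly $p$, and in handling all three relative positions of $\mu$ so that the worst-case constant $\sqrt{2}$ emerges from the interior case; the exterior cases produce a strictly better dependence on $p$.
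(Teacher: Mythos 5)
Your argument is correct, and it is genuinely different from the paper's. The paper proves the bound by symmetrization: with $X^*$ an independent copy of $X$, it writes $2\mathrm{Var}[\log X]=\mathbb{E}[(\log X^*-\log X)^2]$, restricts the expectation to the event $\{X^*\geq l(1-p)\}\cap\{X\leq l(p)\}$, and reads off $2\mathrm{Var}[\log X]\geq p^2\bigl(\log\frac{l(1-p)}{l(p)}\bigr)^2$ in one line, with no mention of the mean and no case analysis. You instead run a direct Chebyshev-type computation around $\mu=\mathbb{E}[\log X]$, splitting according to whether $\mu$ lies between the two log-quantiles or outside; your case analysis is sound (the two tail events have probability exactly $p$ by continuity, and only the lower bound $\geq p$ is actually needed), and the elementary inequality $u^2+v^2\geq (u+v)^2/2$ gives $\mathrm{Var}[\log X]\geq \frac{p}{2}(A-B)^2$ in the worst case. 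What each approach buys: the paper's independent-copy trick is shorter and avoids tracking the location of the mean, while your route yields the sharper constant $\sqrt{2/p}$ in place of $\sqrt{2}/p$ (a better $p$-dependence, though immaterial for the application, where $p$ is fixed); your bound on $|\log l(1-p)-\log l(p)|$ also covers both orientations of the quantile ratio, which is a harmless strengthening of the stated inequality.
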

	\begin{proof}
		If $X^*$ is an independent copy of $X$, then we have
		\begin{equation}
			\begin{aligned}
				2\mathrm{Var}\left[\log X\right]
				&=\mathbb{E}\left[(\log X^*-\log X)^2\right]
				\geq \mathbb{E}\left[(\log X^*-\log X)^2\mathds{1}_{\{X^*\geq l(1-p)\}}\mathds{1}_{\{X\leq l(p)\}}\right]\\
				&\geq p^2\left(\log\frac{l(1-p)}{l(p)}\right)^2,
			\end{aligned}
		\end{equation}
		which completes the proof.
	\end{proof}

	\begin{proof}[Proof of Proposition \ref{mainprop}]
		Our proof is by induction. We first verify the induction basis, that is, we show that \eqref{induction} holds for all $n \leq \kappa$ by choosing $\gamma_0$ sufficiently small depending on $\kappa$.	
		Note that $\mathrm{Var}(\psi_{n}(x))\leq n\log 2$, and that $\log \tilde{R}^{n}_{1,1}$ is a $\gamma$-Lipschitz function with respect to the $L^{\infty}$-norm. By the Gaussian concentration inequality (see \cite{Borell1975,Sudakov1978} and see also e.g. \cite[Theorem~3.25]{van2014probability}) we have
		\begin{equation}
			\mathrm{Var}(\log \tilde{R}^{n}_{1,1})\preceq \gamma^2n.
		\end{equation}
		Thus by Lemma \ref{quanvar} there exists an absolute constant $C$ such that for $n \leq \kappa$
		\begin{equation}\label{quaneq}
			\tilde \Lambda_n(\psi,p)\leq \exp\left(C\gamma\sqrt{\kappa}\right).
		\end{equation}
		Setting $\gamma_0$ sufficiently small such that the right-hand side of \eqref{quaneq} is bounded by 2 for all $\gamma < \gamma_0$ completes the verification of the induction basis.
		
		We now continue with the inductive step. Supposing that \eqref{induction} holds for $n-1$, we will show that it also holds for $n$. Recall \eqref{Efron} and \eqref{Efron1}. By Lemmas \ref{intervar}, \ref{decaylem} and \ref{unilem1}, there exists an absolute constant $C,c$, such that
		\begin{equation}
			\mathrm{Var}(\log \tilde{R}^{n}_{1,1})\leq C\kappa\exp\left(3\kappa^{2/3}-c\kappa\right)+C\gamma^2 \kappa.
		\end{equation}
		Therefore, by Lemma \ref{quanvar}, we have that
		$$\tilde \Lambda_n(\psi, p) \leq \exp\left(\frac{\sqrt{2}}{p}\sqrt{C\kappa\exp\left(3\kappa^{2/3}-c\kappa\right)+C\gamma^2\kappa}\right).$$
		Choosing $\kappa$ fixed and sufficiently large and then possibly decreasing the value of $\gamma_0$ (both depending $p$), we can then get that the right-hand side above is bounded by 2, completing the inductive step. In addition, noting that our choice of $\gamma_0$ eventually only depends on $p$. Thus, recalling Proposition \ref{quancom} we have $$\sup_{n}\Lambda_n(\phi,2p)<\infty.$$
		Then, we my complete the proof of the proposition by Propositions \ref{uptail} and \ref{lowtail}.
	\end{proof}
	
	\begin{prop}\label{tail}
		For any $q>0$, there exist constants $C_6=C_6(q)$ and $c_1=c_1(q)$, such that for all $T>3$ and $n,a\geq 1$,
		\begin{equation}\label{tail9}
			\mathbb{P}\left(R^{n}_{a,qa}>T\right)\leq C_6\exp\left(-c_1\frac{(\log T)^2}{\log\log T}\right),
		\end{equation}
		\begin{equation}\label{tail10}
			\mathbb{P}\left(R^{n}_{a,qa}<T^{-1}\right)\leq C_6\exp\left(-c_1\frac{(\log T)^2}{\log\log T}\right).
		\end{equation}
	\end{prop}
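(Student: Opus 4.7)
The plan is to derive Proposition \ref{tail} by combining Proposition \ref{mainprop} with Propositions \ref{uptail}, \ref{lowtail}, the RSW estimate (Proposition \ref{rsw}), and a supercritical-percolation argument in the spirit of Lemma \ref{taillem1}. Since $\sup_n \Lambda_n(\phi, p) \leq C_p$ by Proposition \ref{mainprop}, Propositions \ref{uptail} and \ref{lowtail} immediately yield
\[ \mathbb{P}(R^n_{8, 1} > T) \leq C\exp\left(-c\tfrac{(\log T)^2}{\log\log T}\right), \qquad \mathbb{P}(R^n_{1, 8} < T^{-1}) \leq C\exp\left(-c\tfrac{(\log T)^2}{\log\log T}\right), \]
uniformly in $n \geq 1$ and $T > 3$. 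Applying the RSW estimate (Proposition \ref{rsw}) then extends these bounds to any fixed aspect ratio; in particular, the building-block resistance $R^n_{3, 1}$ has tail $\mathbb{P}(R^n_{3, 1} > T) \leq C\exp(-c(\log T)^2/\log\log T)$ uniformly in $n$.

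To pass from fixed size to general $a \geq 1$, the next step is to mimic the supercritical-percolation argument of Lemma \ref{taillem1}, but now with the threshold defining \emph{open} squares depending on $T$. Concretely, in the wide regime $q \leq 1$: tile $B(a, qa)$ by unit squares, declare a square open if each of the four surrounding $3 \times 1$ hard-crossing resistances is at most $T/C_q$, so that by the previous step each square is open with probability at least $1 - p(T)$, where $p(T) \leq C\exp(-c(\log T)^2/\log\log T)$. Substituting $p = p(T)$ into the contour-counting estimate \eqref{eq-prob-P-bad} of Lemma \ref{taillem1} and re-running the same union bound produces $\Theta(a)$ disjoint left-right chains of open squares, with failure probability at most $C(q)\exp(-c(q)\,a\,(\log T)^2/\log\log T)$; by Proposition \ref{series} combined with the parallel law these chains generate a unit flow of energy at most $C(q) T$, giving \eqref{tail9} (since $a \geq 1$ makes the $a$-dependent factor harmless). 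The tall regime $q > 1$ is handled symmetrically by transposing the rectangle, and the lower tail \eqref{tail10} follows from \eqref{tail9} via the self-duality Lemma \ref{duallem}, which exchanges $q \leftrightarrow 1/q$.

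The main obstacle will be verifying that \eqref{eq-prob-P-bad} scales in the desired way once $p$ is replaced by $p(T)$: one needs $(2C_0 p(T))^{L/200}$ to dominate the combinatorial factor $\binom{2L}{L/100}$ times the $3^L$ paths-of-length-$L$ count once $T$ is sufficiently large. Since $\log(1/p(T)) \asymp (\log T)^2/\log\log T$, this dominance is immediate for large $T$; the residual small-$T$ regime in which $p(T)$ is not yet subcritical is absorbed into the constant $C_6(q)$ by simply noting that the probability in question is trivially bounded by $1$.
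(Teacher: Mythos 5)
Your overall plan (reduce to unit scale via Propositions \ref{mainprop}, \ref{uptail}, \ref{lowtail}, then handle general $a$ by a percolation/gluing argument with a $T$-dependent openness threshold, and get \eqref{tail10} by duality) is a genuinely different route from the paper, which instead reduces to $q=8$, $a=2^k$ and runs a three-case multi-scale argument: for moderate $T$ it uses Lemma \ref{taillem1} plus Proposition \ref{mainprop} directly; for large $T$ it writes $\phi_n=\phi_{0,m}+\phi_{m,n}$, applies Lemma \ref{taillem1} on the rescaled lattice together with the scaling identity $R^{n-m}_{8\cdot2^{m+k},2^{m+k},\zeta_n}\overset{d}{=}R^{m,n}_{8\cdot2^k,2^k,\zeta_n}$ and Proposition \ref{mesh}, and pays for the coarse field $\phi_{0,m}$ via Proposition \ref{max}, optimizing $2^{m+k}\asymp s^2$; for extreme $T$ it uses the global maximum of $\phi_n$. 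So the paper keeps the percolation threshold constant and buys extra probability from finer scales, whereas you keep the scale fixed and make the threshold grow with $T$.

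However, as written your key step has a genuine gap: you cannot simply ``substitute $p=p(T)$'' into \eqref{eq-prob-P-bad}. That estimate is not a statement about i.i.d.\ site percolation with parameter $p$; its two terms come from the decomposition into local fields and the coarse field, and the second term $e^{-10L}$ arises from event (a) in the proof of Lemma \ref{taillem1}, namely that a positive fraction of the well-separated boxes have $\max|\phi^i_{\mathrm c}|\geq\log C_p$ with a \emph{fixed constant} threshold $\log C_p$. This term does not improve as $T\to\infty$, so after summing over dual paths of length $L\gtrsim qa$ your failure probability is of order $\exp(-c\,qa\,(\log T)^2/\log\log T)+\exp(-c\,qa)$, and the second summand does not decay in $T$; for any $a$ with $1\leq a\lesssim(\log T)^2/\log\log T$ (a regime your unit-scale step does not cover either) the claimed bound $C(q)\exp(-c(q)a(\log T)^2/\log\log T)$ is therefore not obtained. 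The obstacle you flagged (beating the combinatorial factor) is not the real issue. Two standard fixes: (i) rerun the percolation estimate with a coarse-field threshold of order $\log T$ rather than a constant, so that Borell--TIS (as in Lemma \ref{coarse}) turns the $e^{-10L}$ term into $\exp(-cL(\log T)^2)$; or (ii) split according to $a$: for $a\leq\exp(c(\log T)^2/\log\log T)$ use a plain union bound over the $O(qa^2)$ unit tiles with per-tile threshold $\asymp qT$ (then Proposition \ref{series} and the parallel law give resistance at most $T$, with failure probability $\leq Cqa^2\exp(-c(\log T)^2/\log\log T)$, which is still summable against the chosen range of $a$), and for larger $a$ use Lemma \ref{taillem1} with its constant threshold, since there $e^{-cqa}\leq\exp(-(\log T)^2)$. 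Minor further points to tidy up: the case $qa<1$ (tile at scale $qa$ or reduce to finitely many unit rectangles), and deducing the $3\times1$ hard-crossing tail from $R^n_{8,1}$ (easiest via the monotonicity that a same-height sub-rectangle has smaller hard-crossing resistance, or via the route through Proposition \ref{rsw} used inside Lemma \ref{taillem1}).
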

	
	\begin{proof}
		It suffices to prove for $q=8$, $a=2^k$ and for all $s>2$ that
		\begin{equation}\label{con3}
			\mathbb{P}\left(R^{n}_{8\cdot2^k,2^k}\geq Ce^{Cs\sqrt{\log s}}\right)\leq e^{-cs^2}
		\end{equation}
		holds for some absolute constant $C,c$. For general $(a,q)$, \eqref{tail9} and \eqref{tail10} follow from Proposition \ref{series} and the duality as in \eqref{dual1}.
		
		We now prove \eqref{con3}. We divide our proof into three cases. 
		
		\noindent {\bf Case 1: $s\in (2,2^{k/2})$.} \eqref{con3} follows immediately from Lemma \ref{taillem1} and Proposition \ref{mainprop}.
		
		\noindent {\bf Case 2: $s\in(2^{k/2},2^{(n+k)/2})$.}
		We consider the rescaled lattice $\mathbb{Z}_{n-m}^2$ for some $m<n$. By Lemma \ref{taillem1} and Proposition \ref{mainprop},
		\begin{equation}\label{tail7}
			\mathbb{P}\left(R^{n-m}_{8\cdot2^{m+k},2^{m+k},\zeta_{n-m}}\geq C2^{2(m+k)}\right)\leq Ce^{-c2^{m+k}}.
		\end{equation}
		By Proposition \ref{mesh}, since $\zeta_n, \zeta_{n-m}\geq \sqrt{n-m}$, we have
		\begin{equation}
			\mathbb{P}\left(R^{n-m}_{8\cdot2^{m+k},2^{m+k},\zeta_n}\geq e^{C\sqrt{2^{m+k}}}\cdot C2^{2(m+k)}\right)
			\leq C2^{2(m+k)}e^{-c2^{m+k}}.
		\end{equation}
		By the scaling property of $\phi$, we know that $R^{n-m}_{8\cdot2^{m+k},2^{m+k},\zeta_n}\overset{d}{=}R^{m,n}_{8\cdot 2^k,2^k,\zeta_n}$. Thus, by possibly adjusting the values of $c, C$, we have
		\begin{equation}\label{tail8}
			\mathbb{P}\left(R^{m,n}_{8\cdot2^{k},2^{k}}\geq e^{C\sqrt{2^{m+k}}}\right)
			\leq Ce^{-c2^{m+k}}.
		\end{equation}
		Combining \eqref{tail8} and Proposition \ref{max}, we have
		\begin{equation}
			\begin{aligned}
				&\mathbb{P}\left(R^{n}_{8\cdot 2^k,2^k}\geq e^{s\sqrt{m}}e^{2C\sqrt{2^{m+k}}}\right)\\
				&\leq
				\mathbb{P}\left(\|\phi_{0,m}\|_{B(8\cdot 2^k,2^k)}\geq Cm+s\sqrt{m}\right)+\mathbb{P}\left(R^{m,n}_{8\cdot2^{k},2^{k}}\geq e^{C\sqrt{2^{m+k}}}\right)
				\leq C2^{2k}e^{-cs^2}+Ce^{-c2^{m+k}},
			\end{aligned}
		\end{equation}
		where in the first inequality we used the fact that $C \sqrt{2^{m+k}} \geq Cm$. Taking $m=\max\{i:2^i\leq s^2\}-k$, it holds that (after adjusting $c$ to absorb the term $2^{2k}$ and adjusting $C$ to absorb the term $e^{2Cs}$)
		\begin{equation}
			\mathbb{P}\left(R^{n}_{8\cdot 2^k,2^k}\geq Ce^{Cs\sqrt{\log s}}\right)\leq e^{-cs^2}.
		\end{equation}
	
		\noindent {\bf Case 3: $s\geq2^{(n+k)/2}$.} By Proposition $\ref{max}$ and a union bound, we have (recall $\gamma \leq 1$)
		$$
			\mathbb{P}\left(R^{n}_{8\cdot 2^k,2^k}\geq 8e^s\right)\leq 2^{2k+4}\mathbb{P}\left(\max_{B(1)}|\phi_{n}|\geq s\right)\leq C4^{n+k}\exp\left(-\frac{s^2n}{(n+C\sqrt{n})^2}\right)\leq C\exp\left(-c\frac{s^2}{\log s}\right),
		$$
		completing the proof of the proposition.
	\end{proof}
	
	\begin{proof}[Proof of Theorem \ref{thm1}]
		Fix $q>0$. For any $a>1$ and $n\geq 1$, and any $\eta>0$, by Proposition \ref{tail} there exists $C_{\eta,q}=C_{\eta,q}(\eta,q)$ such that $\mathbb{P}(C_{\eta,q}^{-1}\leq R^{n}_{a,qa,\zeta_n}\leq C_{\eta,q})\geq 1-\eta$. Thus, the family of distributions of $\{R^{n}_{a,qa,\zeta_n}\}_{n\in \mathbb{Z}^+,a\geq 1}$ is tight for a fixed sequence $\{\zeta_n\}_{n\geq 1}$ with $\zeta_n\geq \sqrt{n}$. We may then complete the proof by Proposition \ref{mesh}.
	\end{proof}

	\section{Tightness for the trace of the random walk}\label{sec7}
	Having established Theorem \ref{thm1}, we next turn to proving tightness for the random walk in the remaining three sections. In this section, we prove tightness for the trace of the random walk.
	
	In \textbf{Section \ref{subsec71}}, we relate exiting distributions of random walks to effective resistances. While it is well-known that exiting distributions can be characterized by resistances, the main novelty of our estimate, as incorporated in Lemma \ref{returnlem}, is that it allows to derive exiting distribution estimates with up-to-constant estimates for resistances. On the one hand, this is a desirable feature since we can only hope to control resistances up to constant factors; on the other hand, we cannot expect this to be true in full generality in light of examples in \cite{Benjamini1991,ding2013sensitivity}. As a result, our lemma is specialized to $\mathbb{Z}^2$ where planarity plays an important role in the proof.
	
	In \textbf{Section \ref{subsec72}}, we consider the exiting measure when the random walk exits from a box for the first time and we prove that the exiting measure is continuous with respect to the starting point. To this end, we will apply a lemma from \cite{gwynne2022invariance}, and we will need to verify its input that with high probability the random walk trace (before exiting the box) disconnects the boundary of the box from (a small box around) the starting point.
	
	In \textbf{Section \ref{subsec73}}, we prove Theorem \ref{thm2}, which implies the existence of sub-sequential limits for the random walk traces.
	
	The lemmas proved in Sections \ref{subsec72} and \ref{subsec73} will also play an important role in Section \ref{sec9}.
	
	\subsection{Exiting distributions via resistances}\label{subsec71}
	In this subsection, we consider deterministic networks except for Lemma \ref{exitlem2}. For notation consistency, we continue to use $P$ and $E$ to denote the probability and the expectation for the random walk.
	\begin{lem}\label{returnlem4}
		Consider a network with vertex set $\{1, 2, 3\}$. For $1\leq i<j\leq 3$, let $c_{i, j}$ be the conductance on the edge $(i, j)$ and let $R_{i,j}$ be the effective resistance between $i$ and $j$. Then,
		\begin{equation}\label{return1}
			\frac{c_{12}}{c_{12}+c_{13}}=\frac{R_{13}+R_{23}-R_{12}}{2R_{23}}.
		\end{equation}
	\end{lem}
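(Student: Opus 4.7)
The plan is to prove this identity by a direct algebraic computation; on a network with only three vertices, each effective resistance admits a closed-form expression in terms of the three edge conductances, and \eqref{return1} will reduce to a one-line cancellation.

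First I would apply the series-parallel law (Proposition \ref{res} and Corollary \ref{rescharcor}) to write each $R_{ij}$ as the parallel combination of the direct edge $(i,j)$ and the two-edge series path through the remaining vertex $k$. Writing $r_{ij} = 1/c_{ij}$, this yields
\[
\frac{1}{R_{ij}} \;=\; c_{ij} + \frac{1}{r_{ik}+r_{jk}} \;=\; c_{ij} + \frac{c_{ik}c_{jk}}{c_{ik}+c_{jk}}.
\]
Introducing the symmetric quantity $T := c_{12}c_{13} + c_{12}c_{23} + c_{13}c_{23}$, a short manipulation gives the clean formula $R_{ij} = (c_{ik}+c_{jk})/T$ for each unordered pair, where $k$ denotes the third vertex.

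Next I would substitute the three expressions for $R_{12}, R_{13}, R_{23}$ into the right-hand side of \eqref{return1}. The numerator telescopes to $R_{13}+R_{23}-R_{12} = 2c_{12}/T$, while the denominator becomes $2R_{23} = 2(c_{12}+c_{13})/T$; the factor $T$ cancels and the ratio reduces immediately to $c_{12}/(c_{12}+c_{13})$, matching the left-hand side.

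There is no genuine obstacle here — the proof is essentially three lines of algebra — so the main care is just in bookkeeping indices. As a conceptual sanity check, I would note that the identity has a probabilistic reading: the left-hand side is the probability that the random walk started at vertex $1$ takes its first step to vertex $2$, which on this three-vertex network coincides with the hitting probability $P_1(\tau_2 < \tau_3)$, and the right-hand side is precisely the classical expression of that hitting probability in terms of effective resistances (see, e.g., \cite{Lyons_Peres_2017}).
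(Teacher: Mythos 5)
Your proof is correct and follows essentially the same route as the paper: both apply the series--parallel laws to obtain $R_{ij}^{-1}=c_{ij}+(c_{ik}^{-1}+c_{jk}^{-1})^{-1}$ and then verify \eqref{return1} by direct substitution, your version merely cleaning up the algebra via the symmetric quantity $T$ so that $R_{ij}=(c_{ik}+c_{jk})/T$.
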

	\begin{proof}
		For $i\neq j$, let $k$ be the vertex different from $i$ and $j$. Applying the series law and the parallel law, we get that
		\begin{equation}\label{return2}
			R_{ij}=\frac{1}{c_{ij}+(c_{ik}^{-1}+c_{jk}^{-1})^{-1}}.
		\end{equation}
		Plugging \eqref{return2} into the right-hand side of \eqref{return1}, we complete the proof.
	\end{proof}
	By Lemma \ref{returnlem4} and the network reduction principle (see, e.g., \cite[Page~18]{ding2011cover}), we see that in any network containing vertex $v$ and vertex sets $A$ and $Z$ where $v\not\in A\cup Z$, we have that
	\begin{equation}\label{return6}
		P^v(\tau_A < \tau_Z) = \frac{R(v,Z)+R(A,Z)-R(v,A)}{2R(A,Z)}.
	\end{equation}
	
	Next we prove several lemmas for networks with $\mathbb Z^2$ as the underlying graph. We view $\mathbb{Z}^2$ as a network where each edge is associated with an edge conductance, and we consider random walks on the \emph{network} $\mathbb{Z}^2$. Since this subsection is all about $\mathbb Z^2$, we slightly abuse the notation by denoting $B(n)=[-n,n]^2\cap\mathbb{Z}^2$.
	\begin{lem}\label{returnlem}
		Let $0\leq r_1<r_2<r_3<r_4$ be four integers. Denote $\mathcal{A}=B(r_3)\setminus B(r_2)$, $\mathcal{B}=B(r_4)\setminus B(r_3)$, $A=\partial B(r_1)$ and $Z=\partial B(r_4)$. If for some constants $\mathsf C, \mathsf c>0$ we have
		\begin{equation}\label{returncon}
			\mathsf c R(A,Z)\leq R(\mbox{around }\mathcal{A})\leq \mathsf C R(\mbox{across }\mathcal{B}),
		\end{equation}
		then there exists $q=q(\mathsf c,\mathsf C)\in (0,1)$ such that for any $v\in B(r_2) \setminus B(r_1)$,
		\begin{equation}
			P^v(\tau_A<\tau_Z)\geq q.
		\end{equation}
	\end{lem}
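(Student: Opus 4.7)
The plan is to apply the network-reduction identity \eqref{return6}, which reads
\[
P^v(\tau_A<\tau_Z)=\frac{R(v,Z)+R(A,Z)-R(v,A)}{2R(A,Z)},
\]
so that the problem reduces to showing $R(v,Z)+R(A,Z)-R(v,A)\ge 2\alpha R(A,Z)$ for some explicit $\alpha=\alpha(\mathsf c,\mathsf C)>0$. My target will be the sharper geometric inequality $R(v,Z)+R(A,Z)-R(v,A)\ge 2R(\mbox{across }\mathcal B)$; granted this, the hypothesis $\mathsf c R(A,Z)\le R(\mbox{around }\mathcal A)\le\mathsf C R(\mbox{across }\mathcal B)$ immediately yields $P^v(\tau_A<\tau_Z)\ge \mathsf c/\mathsf C$.

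The key step exploits planarity of $\mathbb Z^2$: since $v,A\subset B(r_3)$ while $Z=\partial B(r_4)$ lies outside $B(r_3)$, the set $\partial B(r_3)$ is a cut that genuinely separates the interior from the exterior. Shorting $\partial B(r_3)$ to a single vertex $*$ only decreases resistances (Rayleigh monotonicity), and in the shorted graph every current between the two sides must traverse $*$, which gives a clean series decomposition
\[
R(v,Z)\ge R_{B(r_3)}(v,\partial B(r_3))+R(\mbox{across }\mathcal B),\qquad R(A,Z)\ge R_{B(r_3)}(A,\partial B(r_3))+R(\mbox{across }\mathcal B).
\]
On the other hand, $R(v,A)\le R_{B(r_3)}(v,A)$ because restricting to a subgraph can only increase resistance, and Proposition \ref{series} applied with $H_1=H_2=B(r_3)$, $A_1=\{v\}$, $Z_1=A_2=\partial B(r_3)$, $Z_2=A$ gives the triangle-type bound $R_{B(r_3)}(v,A)\le R_{B(r_3)}(v,\partial B(r_3))+R_{B(r_3)}(\partial B(r_3),A)$. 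Adding the two displayed lower bounds and subtracting the resulting upper bound on $R(v,A)$, the $R_{B(r_3)}$-terms cancel exactly, leaving $2R(\mbox{across }\mathcal B)$ as claimed.

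Plugging back in, $P^v(\tau_A<\tau_Z)\ge R(\mbox{across }\mathcal B)/R(A,Z)\ge \mathsf c/\mathsf C$ uniformly in $v\in B(r_2)\setminus B(r_1)$ (in fact uniformly in $v\in B(r_3)\setminus A$), so setting $q=\min\{\mathsf c/\mathsf C,\tfrac12\}\in(0,1)$ completes the argument. The main delicate point will be cleanly justifying the series decomposition after shorting $\partial B(r_3)$: one must verify that the auxiliary regions attached only at the shorted vertex $*$ (namely the part of $B(r_3)^\circ$ beyond $v,A$, and the exterior of $B(r_4)$ beyond $Z$) do not carry useful current, so that the shorted effective resistance really equals the sum $R_{B(r_3)}(v,\partial B(r_3))+R(\mbox{across }\mathcal B)$. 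This is where planarity of $\mathbb Z^2$ is genuinely used, since it guarantees that $\partial B(r_3)$ is a true cut and each half of the shorted network can be analyzed independently.
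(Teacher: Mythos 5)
Your reduction via \eqref{return6} is exactly the paper's starting point, and your two lower bounds obtained by shorting $\partial B(r_3)$ (Rayleigh monotonicity plus the series law at the cut vertex) are fine modulo boundary-layer conventions. The proof breaks, however, at the step that makes everything cancel: the bound $R_{B(r_3)}(v,A)\le R_{B(r_3)}(v,\partial B(r_3))+R_{B(r_3)}(\partial B(r_3),A)$. Proposition \ref{series} does not apply with $A_1=\{v\}$, $Z_1=A_2=\partial B(r_3)$, $Z_2=A$: its hypothesis requires that \emph{arbitrary} paths $P_1$ (from $v$ to $\partial B(r_3)$) and $P_2$ (from $\partial B(r_3)$ to $A$) have a union containing a path from $v$ to $A$, and this fails because $P_1$ and $P_2$ may attach to $\partial B(r_3)$ at far-apart points and be disjoint. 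The underlying inequality is genuinely false: resistance ``through a shorted intermediate set'' is not a triangle inequality (take $v-w_1$ and $w_2-a$ of unit resistance joined only by an edge $w_1-w_2$ of resistance $M$; then $R(v,\{w_1,w_2\})=R(\{w_1,w_2\},a)=1$ while $R(v,a)=M+2$). Your claimed intermediate inequality $R(v,Z)+R(A,Z)-R(v,A)\ge 2R(\mbox{across }\mathcal B)$ is asserted as a purely deterministic fact, uniformly in $v\in B(r_3)\setminus A$ and without using the ``around $\mathcal A$'' half of \eqref{returncon} anywhere in the geometry; that is a warning sign, since the whole point of the hypothesis on $R(\mbox{around }\mathcal A)$ is to control how cheaply one can reroute between the different attachment points on $\partial B(r_3)$ used by the $v$-current and the $A$-current.

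The natural repair exposes why this route cannot give the lemma as stated: to legitimately concatenate $P_1$ and $P_2$ you must insert a circuit around $\mathcal A$, and Proposition \ref{series} then yields $R(v,A)\le R_{B(r_3)}(v,\partial B(r_3))+R(\mbox{around }\mathcal A)+R_{B(r_3)}(\partial B(r_3),A)$. Feeding this into your cancellation gives only $R(v,Z)+R(A,Z)-R(v,A)\ge 2R(\mbox{across }\mathcal B)-R(\mbox{around }\mathcal A)$, which is vacuous as soon as $\mathsf C\ge 2$ in \eqref{returncon}. This is precisely the obstruction the paper's Lemma \ref{returnlem3} is designed to overcome: there the numerator is bounded below by $r(\mathsf C)\,R(\mbox{around }\mathcal A)$ via a planar decomposition of the currents into ordered strands and a careful algorithmic rerouting of only a tuned fraction of the current (Lemma \ref{returnlem7}), so that the energy gained across $\mathcal B$ beats the rerouting cost around $\mathcal A$ at matching current scales; the authors explicitly remark that such comparisons cannot be obtained from up-to-constant resistance estimates or a naive max-flow/min-cut argument. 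So while your final arithmetic from a numerator bound to $q=q(\mathsf c,\mathsf C)$ is the same as the paper's, the central resistance comparison in your proposal is unproved and, in the form stated, false in general.
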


	In order to prove Lemma \ref{returnlem}, the key ingredient is Lemma \ref{returnlem3}, for which we need the following lemmas for preparation. The proof of Lemma \ref{plainlem} will use Lemma \ref{plainlem1}.

	In what follows, by a planar graph we mean a finite graph that is embedded into $\mathbb R^2$ such that edges do not cross except at vertices, and we define the outer boundary of the planar graph to be the boundary of the face that contains infinity.
	\begin{lem}\label{plainlem}
		Let $\mathrm{G}$ be a network associated with a planar graph. Let $Z$ be the outer boundary of $\mathrm{G}$ and let $A\notin Z$ be an inner point. Consider the electric current $\theta$ from $A$ to $Z$ in the network $\mathrm{G}$. Then there exist $\alpha_1,\cdots,\alpha_k\in \mathbb{R}^+$ and paths $P_1,\cdots,P_k$ from $A$ to $Z$ such that, $P_i$'s are ranked in the clockwise (or counterclockwise) order and they do not cross each other (but may have common edges), and $\theta$ can be decomposed as the sum of flows along $P_i$ with strength $\alpha_i$.
	\end{lem}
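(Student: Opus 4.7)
The plan is to first extract an arbitrary path decomposition of $\theta$ and then rearrange it into a non-crossing one by exploiting the planar embedding.

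My starting point would be Proposition \ref{res}: applying its proof with source $A$ and sink $Z$ (the latter viewed as a single identified vertex), I would obtain self-avoiding paths $P_1,\ldots,P_k$ from $A$ to $Z$ and positive weights $\alpha_1,\ldots,\alpha_k$ with $\theta = \sum_{i=1}^{k} \alpha_i \theta_{P_i}$. A useful auxiliary observation is that the support $\{e:\theta(e)\neq 0\}$ of the electric current contains no oriented cycle $C$: if there were such a cycle (with all edges carrying flow in a consistent direction along $C$), then Kirchhoff's voltage law would force $\sum_{e\in C}\theta(e)r_e=0$, while every term on the left would be strictly positive. Hence the support of $\theta$ forms a directed acyclic graph from $A$ to $Z$, which ensures that any ``swapped'' paths still reach $Z$ without being trapped.

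The main step would be an uncrossing procedure. I would say that two paths $P_i, P_j$ \emph{cross} at a common interior vertex $v$ if, in the cyclic order of edges around $v$ induced by the planar embedding, the (incoming, outgoing) pair of $P_i$ at $v$ interleaves with that of $P_j$. Whenever such a crossing exists, I would set $\beta = \min(\alpha_i,\alpha_j)$ and replace the contribution $\alpha_i\theta_{P_i}+\alpha_j\theta_{P_j}$ with $(\alpha_i-\beta)\theta_{P_i}+(\alpha_j-\beta)\theta_{P_j}+\beta\theta_{P_i'}+\beta\theta_{P_j'}$, where $P_i'$ (respectively $P_j'$) is obtained by concatenating the prefix of $P_i$ (respectively $P_j$) up to $v$ with the suffix of $P_j$ (respectively $P_i$) from $v$, and shortcutting any repeated vertex to restore self-avoidance. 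The total flow is preserved exactly, the number of crossings should strictly decrease (via the standard planar uncrossing inequality combined with shortcutting), so the procedure would terminate in finitely many steps, producing a decomposition in which no two paths cross. The DAG property of the support of $\theta$ is used crucially both to guarantee that the swapped paths still reach $Z$ and that shortcutting does not disturb the flow accounting.

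Once the paths are pairwise non-crossing, they emanate from $A$ in a well-defined cyclic order inherited from the planar embedding of the edges at $A$; indexing them in that order would yield the desired clockwise (or counterclockwise) ranking. The hard part will be the careful bookkeeping in the uncrossing step, especially when paths share long segments of edges or when multiple crossings must be resolved in a globally consistent manner so that a suitable complexity measure (e.g., the number of crossings together with the total weighted length) strictly decreases at each iteration.
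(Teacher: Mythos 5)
Your overall strategy (extract a path decomposition of the current via Proposition \ref{res}, then planar-uncross it) is genuinely different from the paper's argument, which never uncrosses anything: the paper sweeps the vertices in order of decreasing voltage and grows a family of weighted partial paths, using Lemma \ref{plainlem1} (at every vertex the higher-voltage neighbours occupy a consecutive arc in the cyclic order) to split each incoming bundle into outgoing bundles without ever creating a crossing. However, your write-up has a genuine gap exactly where you flag ``the hard part'': you never exhibit a complexity measure that strictly decreases, and the naive crossing count does not work. After the weighted suffix swap with $\beta=\min(\alpha_i,\alpha_j)$, the surviving copy of the heavier path and the new path $P_i'$ (or $P_j'$) traverse the same suffix, so any third path that crossed that suffix once is now crossed twice; the number of path pairs that cross can therefore go up, and the standard four-endpoint uncrossing inequality only controls the local picture at the single vertex $v$. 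Worse, your notion of crossing (interleaving of the incoming/outgoing edge pair at a single common vertex) does not detect the crossings the lemma actually has to exclude: two paths may share a segment of edges and enter/exit it on topologically opposite sides, in which case no single vertex exhibits a proper interleave, yet the paths cross in the sense needed for the clockwise ranking used later (e.g.\ in Lemma \ref{returnlem7}). Without a crossing notion that handles shared segments and a potential that is monotone under the weighted swap, termination and correctness of the procedure are not established.

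Two smaller points. First, your remark about ``shortcutting any repeated vertex'' is dangerous: if shortcutting were ever performed it would remove edges from a path and destroy the exact identity $\theta=\sum_i\alpha_i\theta_{P_i}$. In fact it is never needed, but you should prove this: since the paths produced by Proposition \ref{res} follow the direction of the current, a repeated vertex $w\neq v$ on the concatenation would yield a directed closed walk $w\to v\to w$ in the support of $\theta$, contradicting your own Kirchhoff/DAG observation; this argument should be made explicit rather than hedged. Second, the final step ``order the paths by their first edge at $A$'' is not well defined when several paths share their initial edge (which the lemma explicitly allows); one must order by the first point of divergence, and one must check that non-crossing (in the stronger, shared-segment sense) makes this a consistent total cyclic order. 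The paper's voltage-sweep construction avoids all of these issues because non-crossing and the cyclic ranking are built in from the start.
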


	\begin{lem}\label{plainlem1}
		Let $\mathrm{G}$ be a network associated with a planar graph. Let $Z$ be the outer boundary of $\mathrm{G}$ and let $A\notin Z$ be an inner point. Consider the unit electric current $\theta$ from $A$ to $Z$ in the network $\mathrm{G}$, and let $f$ be the voltage with respect to $\theta$ (where $f(Z)$ is set to 0). For each vertex $x\notin A\cup Z$, the neighbors of $x$ can be arranged in the clockwise order as $y_1,\cdots y_n$, such that for some $m\leq n$ we have $f(y_i)\geq f(x)$ for $1\leq i\leq m$ and $f(y_i)\leq f(x)$ for $m+1\leq i\leq n$.
	\end{lem}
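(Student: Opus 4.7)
The plan is to argue by contradiction via planarity and the strict maximum principle. Suppose the claim fails; then in the clockwise cyclic arrangement of the neighbors of $x$, one can find four neighbors $a,b,c,d$ appearing in this order with $f(a), f(c) > f(x)$ and $f(b), f(d) < f(x)$. Indeed, failure of the prescribed two-arc structure forces both $\{y \sim x : f(y) > f(x)\}$ and $\{y \sim x : f(y) < f(x)\}$ to be non-contiguous in the cyclic ordering, with neighbors where $f(y) = f(x)$ freely assignable to either group.

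Using harmonicity of $f$ at every vertex outside $A\cup Z$ together with the maximum principle (so that $A$ is the unique maximum and $Z$ is the unique minimum level set of $f$), I would construct non-decreasing paths $P_a, P_c$ from $a, c$ to $A$ along which $f > f(x)$, and non-increasing paths $P_b, P_d$ from $b, d$ to $Z$ along which $f < f(x)$. From the closed walk formed by the edge $xa$, the path $P_a$, the reverse of $P_c$, and the edge $cx$, one then extracts a simple cycle $C$ passing through $x$ with $xa, xc$ as its two edges at $x$, and with $f > f(x)$ on $C\setminus\{x\}$; concretely, follow $P_a$ from $a$ until the first time it meets $P_c$ and close up via $P_c$.

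By the planar embedding, $C$ is a Jordan curve in $\mathbb R^2$. Since $Z$ is the outer boundary of $\mathrm G$ and $C$ contains no vertex of $Z$ (because $f = 0$ on $Z$ while $f \geq f(x) > 0$ on $C$), the set $Z$ lies in the unbounded component of $\mathbb R^2\setminus C$. The clockwise ordering $a,b,c,d$ around $x$, combined with the fact that the edges $xa$ and $xc$ locally partition the angular neighborhood of $x$ into two sectors separating $b$ from $d$, implies that exactly one of $b,d$ lies in the bounded component of $\mathbb R^2\setminus C$; say $b$ does. Then $P_b$ must meet $C$ at some vertex $v$; but $v\in C$ gives $f(v)\geq f(x)$ while $v\in P_b$ gives $f(v)\leq f(b)<f(x)$, a contradiction.

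The main obstacle I anticipate is the topological bookkeeping: extracting a genuinely simple cycle from the two ascending paths (which may share vertices well before reaching $A$) and cleanly justifying via planarity that $b$ and $d$ lie in opposite components of $\mathbb R^2\setminus C$. Both steps are routine — for the first, stop $P_a$ at its first hit of $P_c\cup\{x\}$; for the second, apply the Jordan curve theorem combined with the local picture at $x$ — but they deserve care since the entire argument rests on them.
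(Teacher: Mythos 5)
Your proposal is correct and is essentially the paper's own argument: the paper likewise assumes an alternating quadruple $y_1,z_1,y_2,z_2$ of neighbors, builds monotone-increasing paths from $y_1,y_2$ to $A$ (justified by flow conservation rather than your harmonicity/maximum-principle phrasing), closes them at their first meeting point together with the two edges at $x$ to bound a region, and derives the same voltage contradiction from the descending path out of the enclosed low neighbor. The only differences are presentational (explicit Jordan-curve/sector bookkeeping and non-decreasing versus strictly increasing paths), not mathematical.
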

	\begin{figure}			
		\centering			
		\includegraphics[width=80mm]{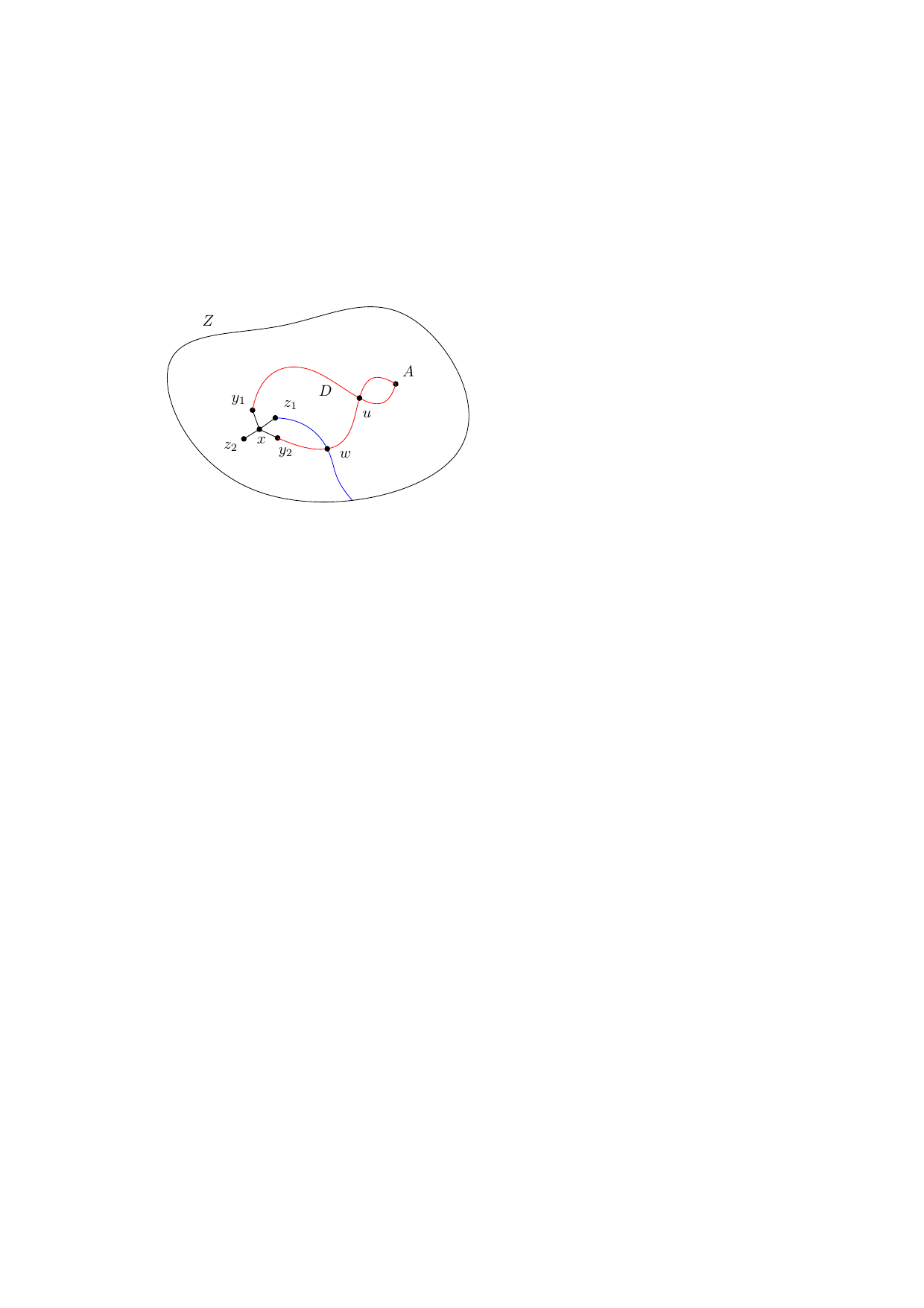}
		\caption{Illustration for the proof of Lemma \ref{plainlem1}}
		\label{fig7}
	\end{figure}
	\begin{proof}
		We prove by contradiction. Suppose that for some $x\notin A\cup Z$ the claim does not hold. Then we consider all neighbors of $x$ with voltages different from $x$. For those with voltages higher than $x$, they are not consecutive (due to our assumption that the claim does not hold for $x$), so there exist $y_1,z_1,y_2,z_2$ arranged in the clockwise order such that $f(y_i)> f(x)$ and $f(z_i)<f(x)$ for $i\in \{1,2\}$. Then for each $y_i$, there exists a self-avoiding path from $y_i$ to $A$, denoted by $\{v^i_k\}_{k\leq n_i}$, such that $v^i_0=y_i$, $v^i_{n_i}=A$ and $f(v^i_k)> f(v^i_{k-1})$ for all $1\leq k\leq n_i$ (the path exists since except at $A$ and $Z$ the total incoming flow to a vertex is equal to the total outgoing flow). Let $u$ be the first point (i.e., with minimal voltage) where these two paths intersect. Consider the domain surrounded by sub-paths $y_1u$, $y_2u$ and edges $y_1x$ and $y_2x$, and we denote this domain by $D$. Without loss of generality we assume that $z_1$ lies in this sub-domain. Since $f(x)>f(z_1)$, there exists a self-avoiding path from $z_i$ to $Z$, denoted by $\{u_k\}_{k\leq n_3}$, such that $u_0=z_1$, $u_{n_3}\in Z$ and $f(u_k)< f(u_{k-1})$ for $1\leq k\leq n_3$. This path must intersect with the boundary of $D$ at a vertex $w$ (see Figure \ref{fig7} for an illustration). Thus for some $i\in \{1,2\}$ (in particular for $i = 2$ as illustrated in Figure \ref{fig7}) we have $f(w)\leq f(z_1)<f(x)<f(y_i)\leq f(w)$, which leads to a contradiction.
	\end{proof}

	\begin{proof}[Proof of Lemma \ref{plainlem}]
		We identify $A$, and respectively $Z$, as a single point. Let $f$ be the voltage function corresponding to $\theta$ with $f(Z) = 0$. We list all vertices in $V(\mathrm{G})$ as $\{v_k\}_{0\leq k\leq n}$ such that $v_0=A$, $v_n=Z$ and $f(v_{k-1}) \geq f(v_k)$ for all $1\leq k\leq n$. We inductively define $\mathcal{P}_k$ to be sets of weighted paths satisfying the following claims: 
		\begin{enumerate}
			\item\label{claim1} the sum of weights equals to 1;
			\item\label{claim2} the paths in $\mathcal{P}_k$ do not cross with each other;
			\item\label{claim3} the voltages of the vertices along each path in $\mathcal{P}_k$ are decreasing, and the end of each path is in $\{v_k,\cdots,v_n\}$;
			\item\label{claim4} the sum of weights of paths ending in $v_k$ equals to $\sum_{v:f(v)>f(v_k)}\theta(v,v_k)$.
		\end{enumerate}
		
		For $k=1$, let $y_1^1,\cdots,y_{n_1}^1$ be all neighbors of $v_0$ with voltages less than $f(v_0)$, arranged in the clockwise order, and we set $\mathcal P_1=\{P_1^1,\cdots, P_{n_1}^1\}$, where $P_i^1$ consists of a single edge $v_0y_i$ with weight $\theta(v_0,y_i)$. It is straightforward to check that $\mathcal P_1$ satisfies all conditions above.
		
		Now for $k\geq 1$ suppose that we have defined $\mathcal{P}_k$, and we consider the vertex $v_k$. By Lemma \ref{plainlem1}, the neighbors of $v_k$ can be arranged in the clockwise order as $y^k_1,\cdots,y^k_{n_k}$, such that $f(y^k_i)\geq f(v_{k})$ for $1\leq i\leq m_k$ and $f(y^k_i)\leq f(v_k)$ for $m_k+1\leq i\leq n_k$. Note that a path with weight $w$ is equivalent to several copies of this path with a total weight of $w$. For the paths in $\mathcal{P}_{k}$ ending at $v_{k}$, since by induction hypothesis the sum of their weights equals to $$\sum_{1\leq i\leq m_k}\theta(y_i^k,v_k)=\sum_{m_k+1\leq i\leq n_k}\theta(v_k,y_i^k),$$ they can be decomposed to paths $\cup_{m_k+1\leq i\leq n_k}\mathcal{Q}_i^k$ that do not cross each other, with weights $\{w_Q\}$ such that $\sum_{Q\in \mathcal Q_i^k} w_Q=\theta (v_k,y_i^k)$. We define
		$$\mathcal{P}_{k+1}=\{P:P\in \mathcal{P}_k \mbox{ that does not end at } v_k\}\cup \cup_{i=m_k+1}^{n_k}\{Q \oplus (v_k, y_i^k): Q\in \mathcal Q_k^i\},$$
		where $\oplus$ above means the concatenation of paths. It can be checked directly that $\mathcal{P}_{k+1}$ satisfies Claims \ref{claim1}, \ref{claim2} and \ref{claim3}. For Claim \ref{claim4}, we see that the paths in $\mathcal P_{k+1}$ ending at $v_{k+1}$ can be decomposed as a union of $\mathbf P_{i, k+1}$ over $1\leq i\leq k$. Here $\mathbf{P}_{i, k+1}$ is a collection of paths of the form $P_{i, k+1}^l \oplus (v_{i}, v_{k+1})$, where $P_{i, k+1}^l$ is a path in $\mathcal P_{i}$ ending at $v_{i}$ and the sum of the weights for $P_{i, k+1}^l$ over $l$ is $\theta(v_{i}, v_{k+1})$ (and naturally we think $\mathbf P_{i, k+1} = \emptyset$ if $\theta(v_i, v_{k+1}) = 0$). Thus $\mathcal{P}_{k+1}$ satisfies Claim \ref{claim4}, completing the proof of the induction. Finally, arranging the paths in $\mathcal{P}_n$ in the clockwise (or counterclockwise) order, we complete the proof.
	\end{proof}
	
	\begin{lem}\label{returnlem3}
		Let $0\leq r_1<r_2<r_3<r_4$ be four integers. Denote $\mathcal{A}=B(r_3)\setminus B(r_2)$, $\mathcal{B}=B(r_4)\setminus B(r_3)$, $A=\partial B(r_1)$ and $Z=\partial B(r_4)$. If $R(\mbox{around }\mathcal{A})\leq \mathsf C R(\mbox{across } \mathcal{B})$ for some $\mathsf C>0$, then there exists $r=r(\mathsf C)$ such that
		\begin{equation}
			R(v,Z)+R(A,Z)-R(v,A)\geq rR(\mbox{around }\mathcal{A}).
		\end{equation}
	\end{lem}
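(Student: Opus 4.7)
The plan is to translate the claim into a voltage estimate. Let $f$ be the voltage on the sub-network inside $B(r_4)$ with $f|_A = R(A,Z)$ and $f|_Z = 0$. By the identity \eqref{return6},
\begin{equation*}
R(v,Z)+R(A,Z)-R(v,A) = 2R(A,Z)P^v(\tau_A < \tau_Z) = 2f(v),
\end{equation*}
so it suffices to show $f(v) \geq (r/2) R(\mbox{around }\mathcal{A})$ for $v \in B(r_2)\setminus B(r_1)$. Since $f$ is discrete-harmonic on $B(r_3)\setminus A$ with $f|_A = R(A,Z)$ its global maximum, the minimum principle yields $f(v) \geq m := \min_{u \in \partial B(r_3)} f(u)$; denote also $M := \max_{u \in \partial B(r_3)} f(u)$, with $u_M, u_m$ achieving the extrema.

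The strategy is to write $m = M - (M-m)$ and separately lower-bound $M$ and upper-bound the oscillation $M - m$. For the lower bound on $M$: letting $\theta$ be the unit current from $A$ to $Z$ and applying Lemma \ref{plainlem} to decompose it into non-crossing $A$-to-$Z$ paths, the net outward current $j_u$ at each $u\in \partial B(r_3)$ is non-negative. Summation by parts on $\mathcal{B}$ (using $f|_Z = 0$) gives
\begin{equation*}
\sum_{u \in \partial B(r_3)} j_u f(u) = \mathcal{E}(\theta, \mathcal{B}) \geq R(\mbox{across }\mathcal{B}),
\end{equation*}
while $\sum_u j_u = 1$, forcing $M \geq R(\mbox{across }\mathcal{B})$. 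For the oscillation, apply the variational characterization of $R_{\mathcal{A}}(u_M, u_m)$ with trial function $g = (f - m)/(M - m)$ on $\mathcal{A}$ to obtain $(M - m)^2 \leq R_{\mathcal{A}}(u_M, u_m) \cdot \mathcal{E}(f, \mathcal{A})$.

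Two comparisons then close the argument: first, $R_{\mathcal{A}}(u_M, u_m) \leq C_1 R(\mbox{around }\mathcal{A})$ using planarity (a good contour around $\mathcal{A}$ is split into two disjoint arcs joining $u_M$ and $u_m$, and the parallel law does the rest), and second, $\mathcal{E}(f, \mathcal{A}) \leq C_2 R(\mbox{across }\mathcal{B})$, where the hypothesis $R(\mbox{around }\mathcal{A}) \leq \mathsf C R(\mbox{across }\mathcal{B})$ prevents the unit flow through $\mathcal{A}$ from concentrating into narrow high-energy corridors (here RSW-type comparisons of crossing resistances across $\mathcal{A}$ and $\mathcal{B}$ will be used). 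Combining with the hypothesis yields $M - m \leq \sqrt{C_1 C_2 \mathsf C}\, R(\mbox{across }\mathcal{B})$, hence
\begin{equation*}
m \geq \bigl(1 - \sqrt{C_1 C_2 \mathsf C}\bigr) R(\mbox{across }\mathcal{B}) \geq \frac{1 - \sqrt{C_1 C_2 \mathsf C}}{\mathsf C}\, R(\mbox{around }\mathcal{A}),
\end{equation*}
giving $r = r(\mathsf C) > 0$ in the regime $\mathsf C < 1/(C_1 C_2)$; the general case is handled by an iteration/localization argument that reduces to this regime on a nested sub-annulus. The main obstacle is the first comparison $R_{\mathcal{A}}(u_M, u_m) \leq C_1 R(\mbox{around }\mathcal{A})$: a generic contour realizing $R(\mbox{around }\mathcal{A})$ need not pass through the extremal points on $\partial B(r_3)$, and the naive use of $\partial B(r_3)$ itself as a contour is too crude when the optimal around-contour lies deeper in $\mathcal{A}$. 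Overcoming this will require using Lemma \ref{plainlem} to decompose an optimizing contour flow into non-crossing paths and routing them to $u_M, u_m$ with controlled overhead---the ``delicate comparison between effective resistances'' highlighted in the introduction.
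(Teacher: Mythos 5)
Your reduction to a voltage estimate is fine as far as it goes: \eqref{return6} does give $R(v,Z)+R(A,Z)-R(v,A)=2f(v)$ for the unit-current voltage $f$ with $f|_A=R(A,Z)$, $f|_Z=0$, and the minimum principle does reduce the problem to lower-bounding $m=\min_{\partial B(r_3)}f$. But the remaining steps have genuine gaps. First, Lemma \ref{plainlem} does not give $j_u\geq 0$ on $\partial B(r_3)$: the non-crossing paths have decreasing \emph{voltage}, not decreasing radius, so they may re-enter $B(r_3)$ and the net outward flux at a boundary vertex can be negative (the conclusion $M\geq R(\mbox{across }\mathcal B)$ can be rescued by a monotonicity-of-flux argument, but not by your summation-by-parts as written). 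Second, and fatally, the comparison $R_{\mathcal A}(u_M,u_m)\leq C_1 R(\mbox{around }\mathcal A)$ is false in general: if every edge incident to $u_M$ carries a huge resistance, $R_{\mathcal A}(u_M,u_m)$ is huge while the around-resistance is unaffected, and such a vertex can perfectly well be the maximizer of $f$ on $\partial B(r_3)$; nothing in your argument exploits the extremal nature of $u_M,u_m$, and you yourself flag this step as unresolved. Third, $\mathcal E(f,\mathcal A)\leq C_2 R(\mbox{across }\mathcal B)$ is justified by appeal to ``RSW-type comparisons'', but Lemma \ref{returnlem3} is a purely deterministic statement about an arbitrary assignment of edge resistances, so no probabilistic RSW input is admissible inside its proof, and I do not see how to derive this bound from the single hypothesis $R(\mbox{around }\mathcal A)\leq \mathsf C R(\mbox{across }\mathcal B)$. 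Finally, your conclusion only produces a positive constant when $\mathsf C<1/(C_1C_2)$, and the proposed reduction of general $\mathsf C$ to this regime on a nested sub-annulus moves the ratio the wrong way: shrinking the annulus only increases the around-resistance and decreases the across-resistance, so the hypothesis deteriorates rather than improves.

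For comparison, the paper's proof avoids point-to-point resistances and voltage oscillation bounds altogether. It decomposes the unit currents from $v$ to $Z$ and from $A$ to $Z$ and a unit circulation around $\mathcal A$ into non-crossing paths and cycles via Lemma \ref{plainlem}, and then runs the combinatorial rerouting algorithm of Lemma \ref{returnlem7} to build an explicit test flow from $v$ to $A$ whose energy is at most $R(v,Z)+R(A,Z)+(M/N)^2R(\mbox{around }\mathcal A)$ minus the energy carried by a positive fraction of truncated path-tails; since for different dyadic strength classes these truncated tails are disjoint crossings of $\mathcal B$, their total energy is at least a large multiple of $(M/N)^2R(\mbox{across }\mathcal B)$, and the hypothesis $R(\mbox{around }\mathcal A)\leq \mathsf C R(\mbox{across }\mathcal B)$ then makes the subtracted term dominate for every $\mathsf C$. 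If you want to salvage your approach, the missing ingredient is precisely a replacement for your two unproved comparisons, and that replacement is essentially the content of Lemma \ref{returnlem7}.
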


	In order to prove Lemma \ref{returnlem3}, we first consider some approximation and reduction to simplify the setup. We treat $A$ and $Z$ as two nodes (where we identify all points in $Z$, and respectively in $A$, as a single point as before). Let $\theta_v$ and $\theta_A$ denote the unit electric current from $v$ to $Z$ and from $A$ to $Z$, respectively. Also let $\rho$ be the flow with strength $1$ that corresponds to $R(\mbox{around } \mathcal{A})$. We apply Lemma \ref{plainlem} to $\theta_v$ and $\theta_A$ and obtain the following: $\theta_v$ can be decomposed as the sum of flows along $P_k$ with strength $\alpha_k$ (for $1\leq k\leq m$), where $P_1, \ldots, P_m$ (for some $m\geq 1$) are paths from $v$ to $Z$ arranged in the counterclockwise order; $\theta_A$ can be decomposed as the sum of flows along $Q_l$ with strength $\beta_l$ (for $1\leq k\leq n$), where $Q_1, \ldots, Q_n$  (for some $n\geq 1$) are paths from $A$ to $Z$ arranged in the clockwise order. In addition, $\rho$ can be decomposed as the sum of flows along $C_t$ with strength $\eta_t$ (for $1\leq t\leq d$), where $C_1,\ldots ,C_d$ are cycles around $\mathcal{A}$ (for some $d\geq 1$). By considering an equivalent network we may assume that $C_t$'s are edge-disjoint.
	
	We fix an integer $N>0$ and $a>0$. We approximate each $P_k$, $Q_l$ and $C_t$ by a flow with integral multiple of $a$ as its strength, so that we obtain $\bar \theta_v, \bar \theta_A$ and $\bar \rho$ with strength greater than $1-\beta a$, where $\beta =\beta (r_4)$. Since a flow with strength $w$ can be equivalently regarded as flows on several copies of this path with a total strength of $w$, we may assume that $\alpha_k=\beta_l=N!a$, $\eta_t=Na$ for all $1\leq k\leq m$, $1\leq l\leq n$ and $1\leq t\leq d$. For each $1\leq k\leq m$, we define $P_k^{1},\ldots ,P_k^{N!}$ to be $N!$ copies of $P_k$, where each $P_k^i$ is associated with a flow of strength $\alpha_k^i=a$, and further we define
	$r_{e,P_k,i}=\mathds{1}_{\{e\in P_k^i\}}\frac{N!\bar \theta_v(e)}{\alpha_k}r_e+\mathds{1}_{\{e\notin P_k^i\}}\cdot \infty.$
	
	\begin{lem}\label{returnlem7}
		We continue to use notations as described in the preceding two paragraphs. For an integer $M$ with $0<M\leq N$, there exists a path set $\mathrm{TrP}\subset \{P^i_k: 1\leq k\leq m, 1\leq i\leq N!\}$ with $\sum_{(k,i):P_k^i\in \mathrm{TrP}} \alpha_k^i \geq M/2N$ such that the following holds. We write $\hat{P}_k^i$ as the sub-path of $P_k^i$ after its last exit of $B(r_3)$. If $\beta a<1/2$, it holds that
		\begin{equation}\label{re3}
			(1-\beta a)^2R(v,A)\leq R(v,Z)+R(A,Z)+(M/N)^2R(\mbox{around } \mathcal A)-\sum_{(k,i):P_k^i\in \mathrm{TrP}}\sum_{e\in \hat P_k^i}(\alpha_k^i)^2r_{e,P_k,i}.
		\end{equation}
	\end{lem}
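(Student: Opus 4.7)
The strategy is to exhibit an explicit flow $\tilde\theta$ from $v$ to $A$ of strength $1-\beta a$ whose Dirichlet energy is bounded by the right-hand side of \eqref{re3}; Thomson's principle then yields $(1-\beta a)^2 R(v,A)\leq \mathcal{E}(\tilde\theta)$, which is exactly \eqref{re3}. The natural starting point is $\bar\theta_v-\bar\theta_A$, which already carries strength $1-\beta a$ from $v$ to $A$. I would modify it in two steps: first, \emph{truncate} the tail $\hat P_k^i$ for each $P_k^i\in\mathrm{TrP}$ (removing its contribution on edges of $\mathcal{B}$ and creating a surplus $+a$ at the last-exit vertex $x_{k,i}\in\partial B(r_3)$ together with a deficit $-a$ at $Z$); and second, add a ``rerouting'' correction that channels each surplus along an arc of some cycle $C_t$ in $\mathcal{A}$ and then through a segment of a selected $Q_l^j$ back to the corresponding deficit. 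This correction has zero divergence everywhere, so $\tilde\theta$ remains a valid flow of strength $1-\beta a$.

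The selection of $\mathrm{TrP}$ is the heart of the argument and will rely on the planarity-based ordering from Lemma \ref{plainlem}. Since the paths $\{P_k^i\}$ appear counterclockwise around $v$ and the $\{Q_l^j\}$ clockwise around $A$, and since (under the implicit hypothesis $v\in B(r_2)$) every $P_k^i$ and every $Q_l^j$ must cross each cycle $C_t$ surrounding $\mathcal{A}$, the two families appear along each $C_t$ in compatible cyclic orders. A pigeonhole/averaging argument over the $N!$ copies of each $P_k$ then produces a subfamily $\mathrm{TrP}$ with $\sum\alpha_k^i\geq M/(2N)$ together with mutually edge-disjoint cycle-arc matchings into $\{Q_l^j\}$; since these arcs use only an $M/N$ fraction of each cycle's capacity $Na$, the total rerouting flow around $\mathcal{A}$ has strength at most $M/N$ in units of the unit flow achieving $R(\mbox{around }\mathcal{A})$, contributing the $(M/N)^2 R(\mbox{around }\mathcal{A})$ term in \eqref{re3}.

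With the construction in place, the energy $\mathcal{E}(\tilde\theta)$ decomposes into three pieces with essentially disjoint supports: the surviving part of $\bar\theta_v$ (whose energy loss is exactly $\sum_{(k,i)\in\mathrm{TrP}}\sum_{e\in\hat P_k^i}(\alpha_k^i)^2 r_{e,P_k,i}$ by the identity already verified in the setup after the definition of $r_{e,P_k,i}$), the full $\bar\theta_A$ (with $\mathcal{E}(\bar\theta_A)\leq R(A,Z)$), and the cycle arcs inside $\mathcal{A}$. The removed tails live in $\mathcal{B}$ while the cycle arcs live in $\mathcal{A}$, so the cross terms between these two pieces vanish; the residual $\bar\theta_v$--$\bar\theta_A$ cross term is absorbed into $R(v,Z)+R(A,Z)$ after paying the $(1-\beta a)^2$ factor on the left. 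The hardest part will be the combinatorial matching described in the previous paragraph: one must simultaneously obtain $\sum\alpha_k^i\geq M/(2N)$ and edge-disjointness of the reroutings within the cycle capacity $Na$, which is precisely where planarity, the integer-multiple approximation with parameter $N!$, and the hypothesis $\beta a<1/2$ enter critically.
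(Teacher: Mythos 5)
Your high-level blueprint (truncate the tails of the $P$-copies, reroute the surplus through arcs of the cycles carrying $\bar\rho$ into the $Q$-family, and apply the variational characterization to a flow of strength $1-\beta a$) is indeed the shape of the paper's argument, but the two steps you leave vague are exactly where the lemma's content lies, and your energy accounting fails as stated. The cross terms do not vanish and cannot be ``absorbed into $R(v,Z)+R(A,Z)$ after paying the $(1-\beta a)^2$ factor'': that factor only reflects the strength of your test flow and yields a slack of order $\beta a$, which vanishes as $a\to 0$, while the overlaps are order one. The supports of $\hat\theta_v$, $\bar\theta_A$ and your correction are not essentially disjoint: the $P$- and $Q$-paths may share edges anywhere in $B(r_4)$, both families must cross $\mathcal{A}$ where the cycle arcs live, and your correction's $Q$-segments run through $\mathcal{B}$, the very region containing the removed tails. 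Moreover the sign of the $\hat\theta_v$--$\bar\theta_A$ cross term is no longer favorable after truncation: summing by parts against the voltage $f_A$ of $\theta_A$, the truncation creates sinks of strength $a$ at the last-exit vertices $x_{k,i}\in\partial B(r_3)$ and contributes $+2a\sum_{(k,i)\in\mathrm{TrP}}f_A(x_{k,i})$ to the energy, a positive quantity comparable to the very term $\sum_{(k,i)}\sum_{e\in\hat P_k^i}(\alpha_k^i)^2 r_{e,P_k,i}$ you are trying to keep, and not dominated by anything in \eqref{re3}. The paper's proof avoids cross terms altogether by proving the pointwise domination $|\xi(e)|\le |\hat\theta_v(e)|\vee|\bar\theta_A(e)|\vee|(M/N)\bar\rho(e)|$ for the constructed flow, after which the three energies simply add; establishing this bound on edges shared by $P$- and $Q$-paths is one of the main points of the whole construction.

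The second gap is the selection of $\mathrm{TrP}$ together with the edge-disjoint reroutings within the cycle capacity, which you assert via ``pigeonhole/averaging over compatible cyclic orders.'' This is precisely the hard combinatorial core: the paper remarks explicitly that even a Max-Flow-Min-Cut argument does not seem to apply, because the requirement is non-conventional -- the subtracted term in \eqref{re3} must be the tail energy of the \emph{same} paths, of total strength at least $M/2N$, each truncated before its last exit of $B(r_3)$, while simultaneously the rerouting must stay below the capacities of $\hat\theta_v$, $\bar\theta_A$ and $(M/N)\bar\rho$ edge by edge. The paper achieves this by an explicit algorithm building triples $(\tilde P_k,\tilde Q_l,\tilde C_t)$ with inductive invariants (the $\tilde P_k$'s only grow, the $\tilde Q_l$'s only shrink, cycle arcs never share edges with the $\tilde P_k$'s, and on $P$--$Q$ shared edges the constructed flow is bounded by $|\bar\theta_A(e)|$), together with a termination and counting argument showing that either every cycle is used or every path is truncated inside $B(r_3)$, which is how the $M/2N$ lower bound is extracted. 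The non-crossing orderings from Lemma \ref{plainlem} are an input to that construction, not a substitute for it; without carrying out such a construction (or a genuinely different one with the same guarantees), neither the bound $\sum_{(k,i):P_k^i\in\mathrm{TrP}}\alpha_k^i\ge M/2N$ nor the capacity control behind the $(M/N)^2R(\mbox{around }\mathcal{A})$ term is established.
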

	\begin{proof}
		We remark that, although it is a natural attempt to apply Max-Flow-Min-Cut theorem to deduce Lemma \ref{returnlem7}, we do not see how this can be implemented since we have additional and rather non-conventional requirements, as incorporated in \eqref{re3}. Instead, it seems to us that a rather non-trivial proof, such as the one we carry out, is necessary.
		
		Note that $M\bar \rho/N$ can be decomposed as the sum of flows along cycles $C_1, \ldots, C_d$, each with strength $Ma$. We may then decompose each $P_k$ and $Q_l$ as the sum of flows on $N!/M$ copies of the same path, each with strength $Ma$.  With a slight abuse of notation, we will continue to use $P_k, Q_l$ to denote the copies of these paths. (We may now assume that each $P_k$ is a combination of $M$ copies in $\{P_k^1,\ldots,P_k^{N!}\}$, and for different $k$ these copies are disjoint.)
		
		The core of the proof relies on an inductive construction of a collection of triples $\mathcal T_M=\{(\tilde P_k, \tilde Q_l, \tilde C_t)\}$, where $\tilde P_{k}$ is a subset of $P_{k}$, $\tilde Q_{l}$ is a subset of $Q_{l}$, $\tilde C_t$ is a subset of $C_t$. We require that $\tilde{P}_k \cup \tilde{Q}_l \cup \tilde{C}_t$ forms a path from $v$ to $A$ (we allow $\tilde{C}_t$ to be either the empty set or a single vertex). We will occasionally slightly abuse the notation by viewing $\mathcal{T}_M$ as a collection of paths, where each path is given by the union of a triple in $\mathcal{T}_M$. For brevity, we will write $P_k$, $Q_l$ or $C_t$ is in $\mathcal T_M$ if $\tilde P_k$, $\tilde Q_l$ or $\tilde C_t$ occurs in some triple in $\mathcal T_M$. In what follows, we first describe our algorithm leading to the inductive construction, and then we analyze the algorithm and construct a flow out of $\mathcal T_M$ whose energy we then control.
		
		\noindent{\bf Description of the algorithm.}		
		We initiate by setting $\mathcal T_M=\emptyset$. In our construction, we will truncate paths, which is how we obtain $\tilde P_k$ and $\tilde Q_l$ from $P_k$ and $Q_l$ respectively. For clarity of checking, we announce up front that we will stick to the following convention: once a segment of $Q_l$ is truncated, this segment will be considered as removed throughout the algorithm (i.e., it will never be added back); in addition, once a segment of $P_k$ is added into $\mathcal{T}_M$, it will never be removed. (We will assume this as part of our inductive hypotheses and we will verify it inductively in the analysis of the algorithm.) At each step, take a path $P_k$ that is not in $\mathcal T_M$ and we consider the following {\bf procedure} with $P_k$. For convenience of description, we will describe the \emph{list} at $(x, P_k)$ for a point $x$ on some path $P_k$ as follows. Let $e$ be the edge on $P_k$ incident to $x$ whose other end point is before $x$ in the order along $P_k$. The list at $(x, P_k)$ contains all cycles (i.e., all $C_t$'s) containing $x$ with an ordering such that those cycles containing $e$ are listed before the other cycles.
		
		We walk along $P_k$ starting from $v$ (by ``walk along'' we mean to check each vertex/edge on $P_k$ in the order specified by the path). There are two scenarios as follows.
		
		\begin{figure}
			\centering
			\subfigure[Case 1.1]
			{
				\begin{minipage}[c]{0.3\textwidth}
					\centering
					\includegraphics[height=4.5cm,width=4.5cm]{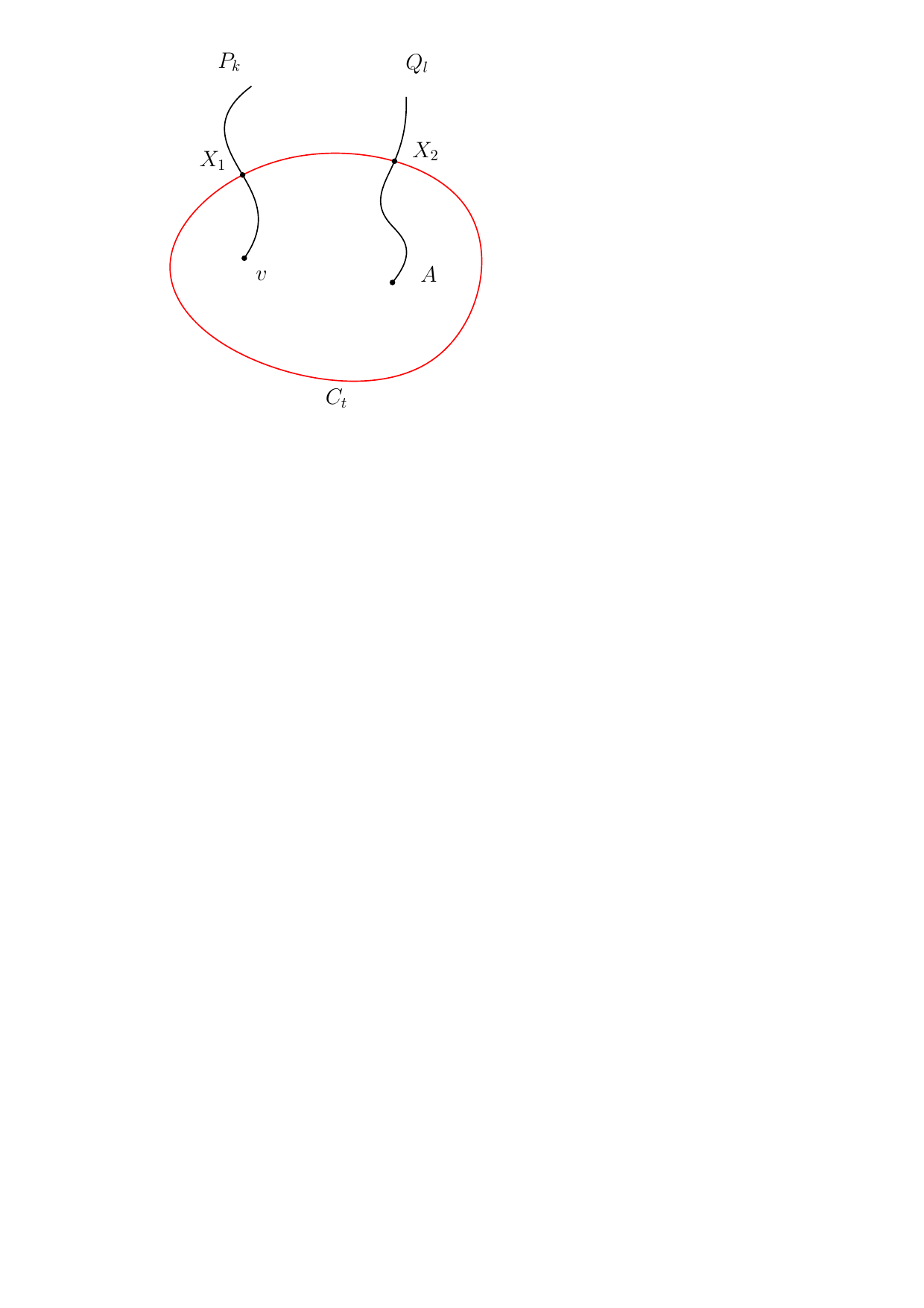}
					\label{Case11}
				\end{minipage}
			}
			\subfigure[Case 1.2(1)]
			{
				\begin{minipage}[c]{0.3\textwidth}	
					\centering			
					\includegraphics[height=4.5cm,width=4.5cm]{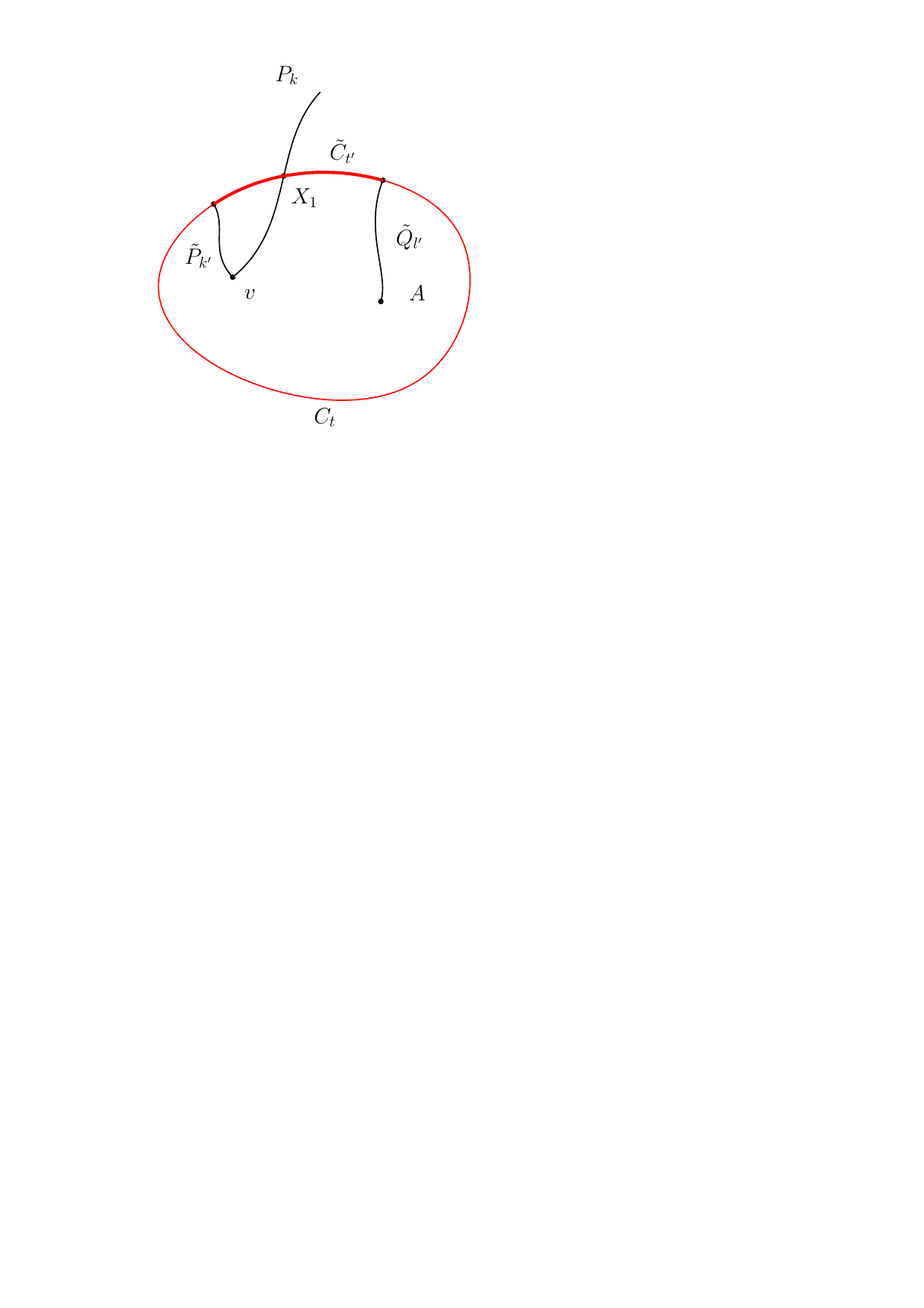}
					\label{Case12a}
				\end{minipage}
			}
			\subfigure[Case 1.2(2)]
			{
				\begin{minipage}[c]{0.3\textwidth}	
					\centering			
					\includegraphics[height=4.5cm,width=4.5cm]{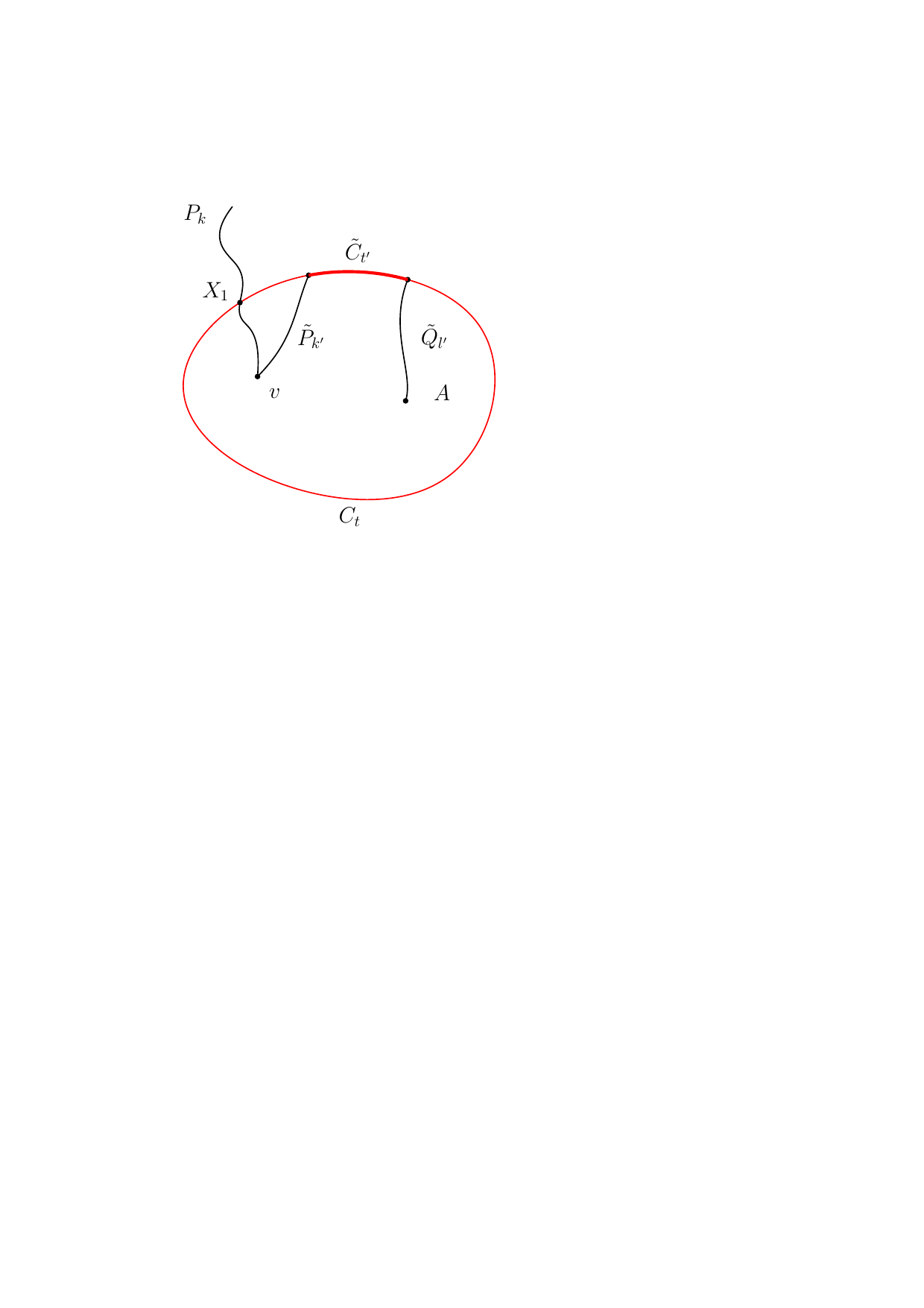}
					\label{Case12b}
				\end{minipage}
			}
			\subfigure[Case 1.3]
			{
				\begin{minipage}[c]{0.3\textwidth}
					\centering
					\includegraphics[height=4.5cm,width=4.5cm]{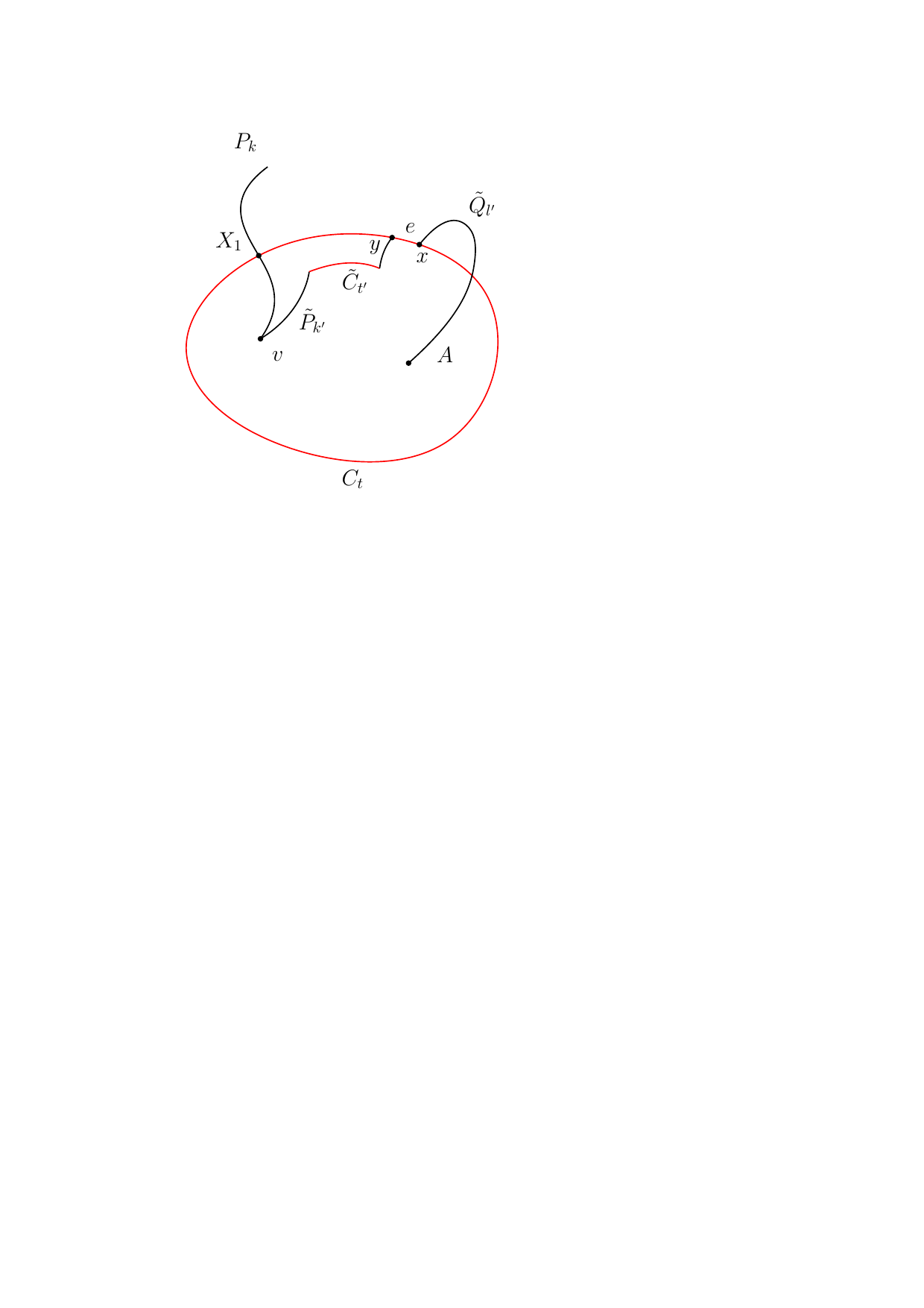}
					\label{Case13}
				\end{minipage}
			}
			\subfigure[Case 2.1]
			{
				\begin{minipage}[c]{0.3\textwidth}
					\centering
					\includegraphics{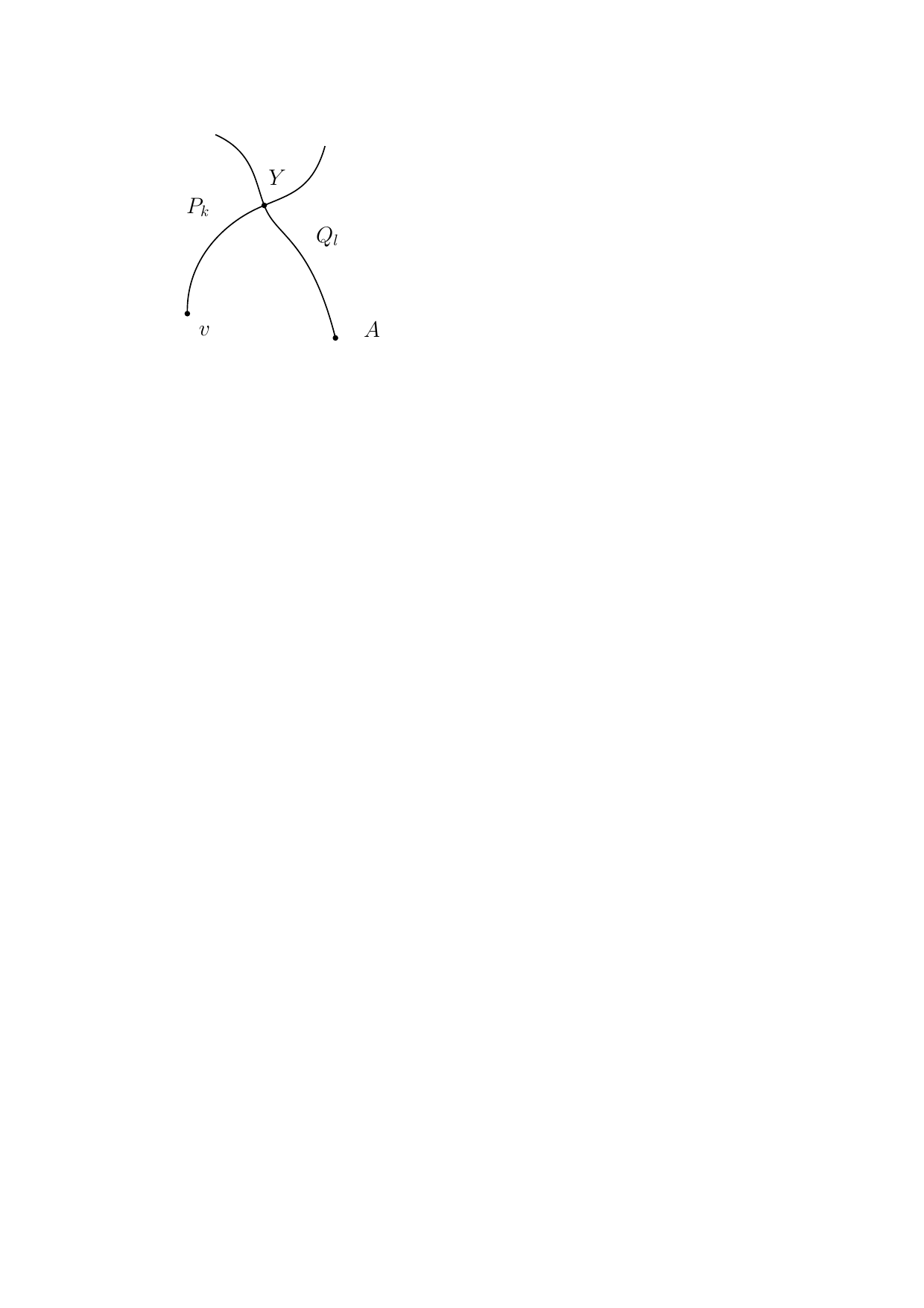}
					\label{Case21}
				\end{minipage}
			}
			\subfigure[Case 2.2]
			{
				\begin{minipage}[c]{0.3\textwidth}
					\centering
					\includegraphics{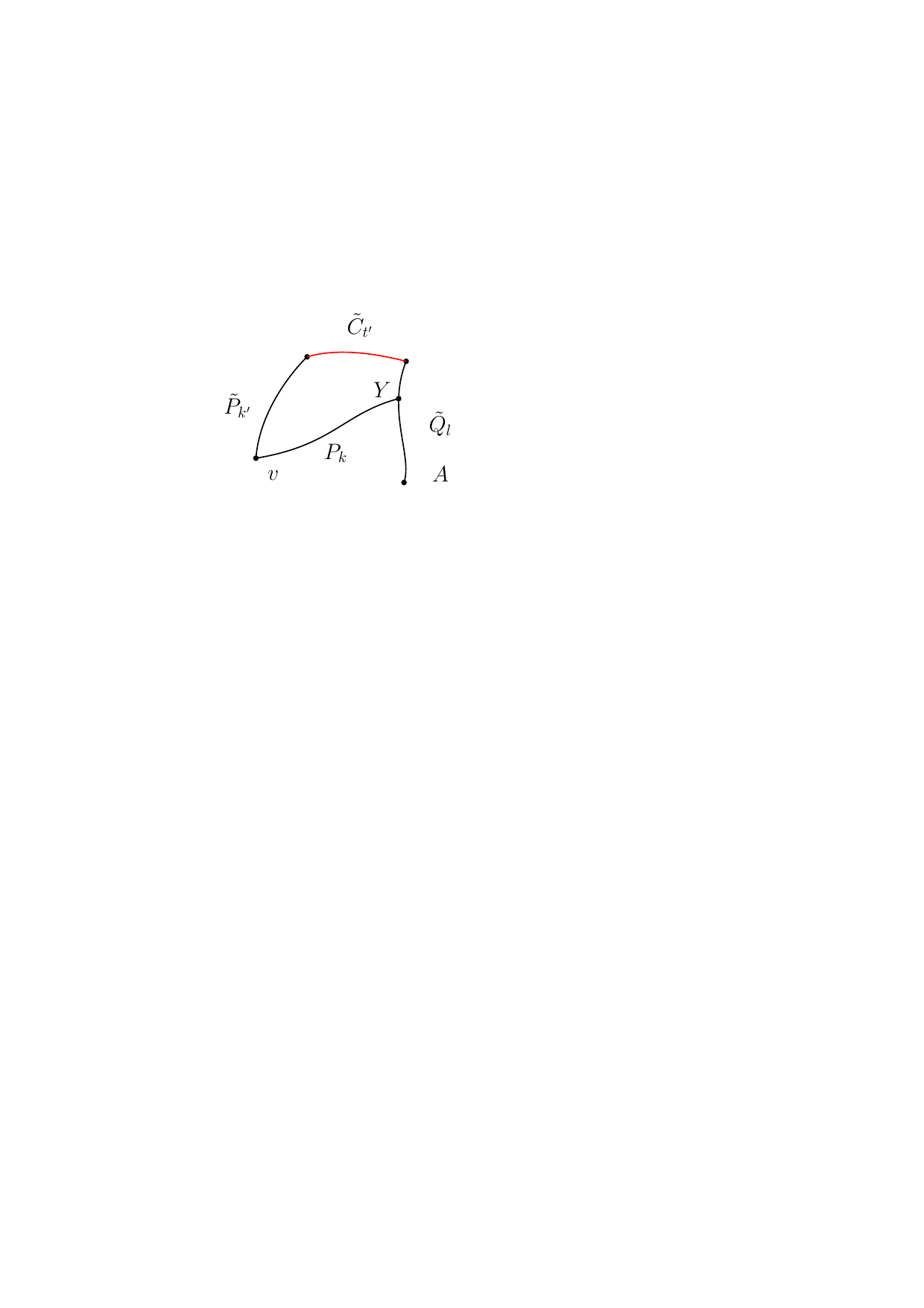}
					\label{Case22}
				\end{minipage}
			}
			\caption{Illustration for the five sub-cases in the procedure.}	
			\label{fig1}
		\end{figure}
		
		\begin{enumerate}
			\item\label{sen1} Whenever it intersects some cycle $C_t$ at some vertex $X_1$ (when with multiple choices for cycles, we will choose $C_t$ to be the first cycle in the list at $(X_1, P_k)$), we walk along $C_t$ starting from $X_1$ in the clockwise order until it first intersects some $Q_l$ that is not in $\mathcal T_M$ at some vertex $X_2$ (we allow $X_1=X_2$). We consider the arc of $C_t$ between $X_1$ and $X_2$ (along which we just walked), and divide it into the following three sub-cases.
			
			\begin{enumerate}
				\item[1.1]\label{al11} If this arc contains no common edge with all paths in $\mathcal{T}_M$, we truncate $P_k$, $Q_l$ and $C_t$ to get a path from $v$ to $A$ in the following way. We define $\tilde P_k$ to be the sub-path of $P_k$ from $v$ to $X_1$, define $\tilde C_t$ to be this arc of $C_t$ (with $X_1$ and $X_2$ as its endpoints), and define $\tilde Q_l$ to be the sub-path of $Q_l$ from $X_2$ to $A$. We then add the triple $(\tilde P_k, \tilde Q_l, \tilde C_t)$ into $\mathcal{T}_M$. (See Figure \ref{Case11} for an illustration.)
				
				\item[1.2]\label{al12} If this arc has a common edge with $\tilde C_{t^{\prime}}$ for some triple $(\tilde P_{k^\prime}, \tilde Q_{l^\prime}, \tilde C_{t^{\prime}})\in \mathcal{T}_M$, since we have assumed that $C_t$'s have no common edge, it holds that $t=t^{\prime}$. In this case, if $X_1$ is an interior point of $\tilde C_{t^{\prime}}$ (i.e., a point on $\tilde C_{t^\prime}$ different from its two endpoints), we replace the triple $(\tilde P_{k^\prime}, \tilde Q_{l^\prime}, \tilde C_{t^{\prime}})$ in $\mathcal T_M$ by $(\tilde P_{k}, \tilde Q_{l^\prime}, \tilde C_{t^{\prime}})$ (where $\tilde P_k$ denotes the sub-path of $P_k$ from $v$ to $X_1$) and then we stop our procedure with $P_k$ and continue our procedure with $P_{k^{\prime}}$ (that is, we walk along $P_{k^\prime}$ starting from $v$ and check which scenario will occur). (See Figure \ref{Case12a} for an illustration.) Otherwise (i.e,. $X_1$ is not an interior point of $\tilde C_{t^\prime}$), we consider the list at $(P_k,X_1)$: if in the list there exists $C_{t^{\prime\prime}}$ after $C_t$, we will walk along the next $C_{t^{\prime\prime}}$; if $C_t$ is the last cycle in the list, we keep walking along $P_k$ from $X_1$. (See Figure \ref{Case12b} for an illustration.)
				
				\item[1.3]\label{al13} If Case 1.2 does not occur and if this arc has a common edge $e$ with $\tilde Q_{l^{\prime}}$ for some triple $(\tilde P_{k^\prime}, \tilde Q_{l^\prime}, \tilde C_{t^{\prime}})\in \mathcal{T}_M$ (when there are multiple common edges, we choose $e$ as the first such edge in the order from $X_1$ to $X_2$), we let $x,y$ be two endpoints of $e$ such that $x$ lies between $y$ and $A$ on $\tilde Q_{l^\prime}$. We replace the triple $(\tilde P_{k^\prime}, \tilde Q_{l^\prime}, \tilde C_{t^{\prime}})$ in $\mathcal T_M$ by $(\tilde P_{k}, \tilde Q_{l^\prime}^\prime, \tilde C_{t})$, where $\tilde P_k$ denotes the sub-path of $P_k$ from $v$ to $X_1$, $\tilde Q_{l^\prime}^\prime$ denotes the sub-path of $\tilde Q_{l^\prime}$ from $x$ to $A$, and $\tilde C_t$ denote the sub-arc of $C_t$ with $X_1$ and $x$ as its endpoints. Then we stop our procedure with $P_k$ and continue our procedure with $P_{k^\prime}$. We will inductively prove later that during the procedure with $P_{k'}$ the procedure will be continued at least until we encounter $X_1$; note that $X_1$ may be included by multiple cycles and as a convention (in case of multiple cycles) we will walk along the cycle $C_{t'}$. (See Figure \ref{Case13} for an illustration.)
			\end{enumerate}	
			
			\item\label{sen2} If scenario 1 does not happen, whenever it intersects some $Q_l$ at some vertex $Y$, we consider the following two sub-cases.
			\begin{enumerate}
				\item[2.1]\label{al21} If $Q_l$ is not in $\mathcal{T}_M$, we truncate $P_k$ and $Q_l$ to get a path $\tilde P_k \cup \tilde Q_l$, where $\tilde P_k$ is the sub-path of $P_k$ from $v$ to $Y$ and $\tilde Q_l$ is the sub-path of $Q_l$ from $Y$ to $A$. We then add $(\tilde P_k,\tilde Q_l,\emptyset)$ into $\mathcal T_M$. (See Figure \ref{Case21} for an illustration.)
				
				\item[2.2]\label{al22} If $Q_l$ is in $\mathcal T_M$, we suppose that $(\tilde P_{k^\prime}, \tilde Q_{l}, \tilde C_{t^{\prime}})\in \mathcal{T}_M$. If $Y$ is not an interior point of $\tilde Q_l$, we keep walking along $P_k$. If $Y$ is an interior point of $\tilde Q_l$, we replace the triple $(\tilde P_{k^\prime}, \tilde Q_{l}, \tilde C_{t^{\prime}})$ in $\mathcal T_M$ by $(\tilde P_{k}, \tilde Q^{\prime}_{l}, \emptyset)$ (where $\tilde P_k$ denotes the sub-path of $P_k$ from $v$ to $Y$, and $\tilde Q^{\prime}_{l}$ denotes the sub-path of $\tilde Q_{l}$ from $Y$ to $A$). Then we stop our procedure with $P_k$ and continue our procedure with  $P_{k^\prime}$. We will inductively prove later that during the procedure with $P_{k'}$ the procedure will be continued at least until we encounter $X_1$; note that $X_1$ may be included by multiple cycles and as a convention (in case of multiple cycles) we will walk along the cycle $C_{t'}$. (See Figure \ref{Case22} for an illustration.)
			\end{enumerate}
		\end{enumerate}
		
		\begin{figure}
			\centering
			\includegraphics{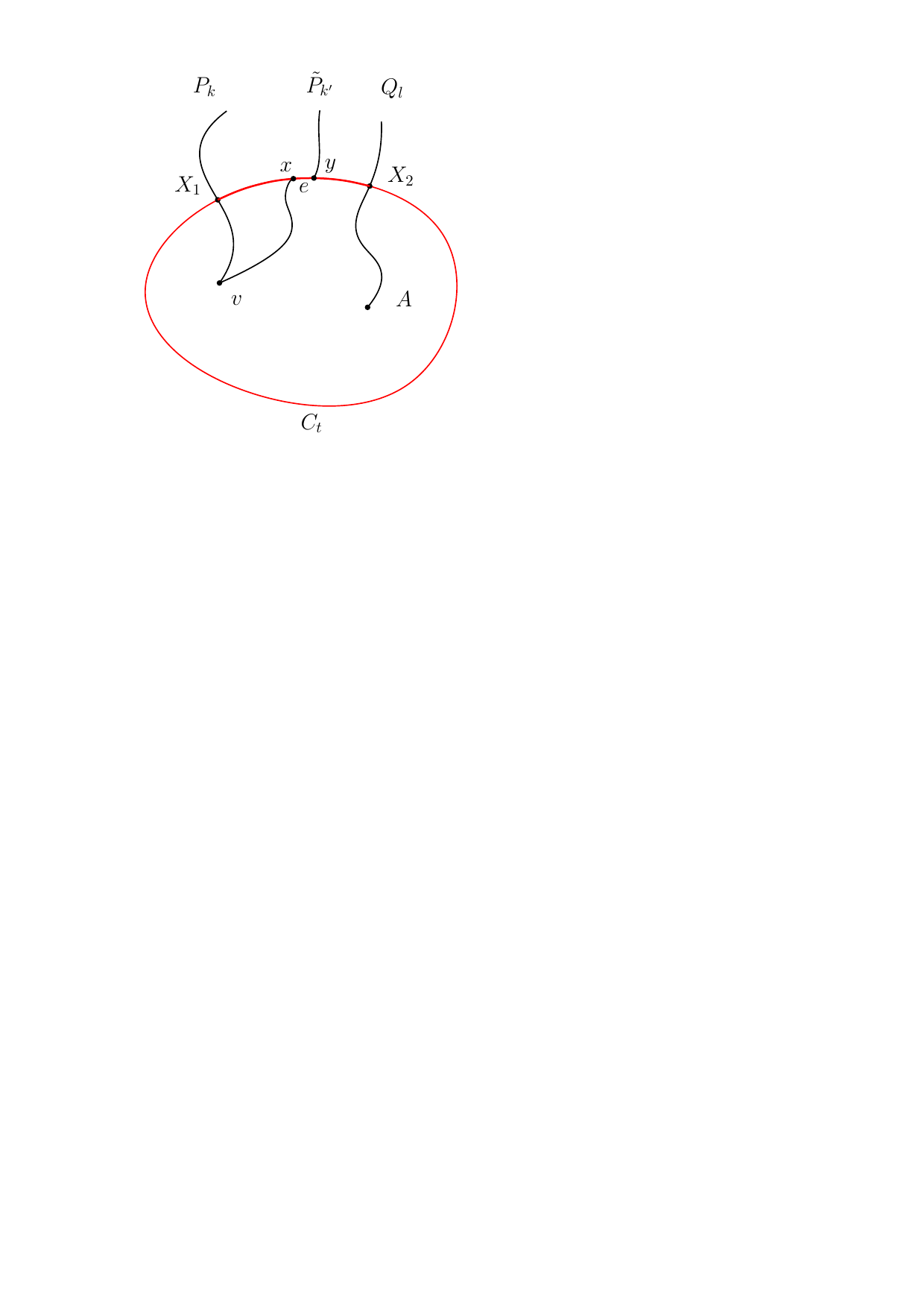}
			\caption{The case that the sub-arc of $C_t$ from $X_1$ to $X_2$ have some common edge with some $\tilde P_{k^\prime}$.}
			\label{fig10}
		\end{figure}
		
		\noindent{\bf Analysis of the algorithm.}
		We first prove that our description of the algorithm is sound. To this end, we need to show that sub-cases in each scenario capture all possibilities. Since this is obvious for the second scenario, we next focus on the first scenario. To this end, it suffices to consider the case that in the first scenario (at time $\tau_0$) when walking along some $C_t$, the arc we obtained have a common edge with some $\tilde P_{k^\prime}$ in $\mathcal T_M$. We will show that we must enter Case 1.2. We consider the last edge $e$ (with endpoints $x,y$ in the clockwise order) on this arc we have walked through that belongs to some $\tilde P_{k^\prime}$. Since $\tilde P_{k^\prime}$ might have grown during the running of the algorithm, we consider the first time (denoted by $\tau$, where $\tau<\tau_0$) when a segment of $\tilde P_{k^\prime}$ containing $e$ was added to $\mathcal T_M$. (See Figure \ref{fig10} for an illustration.) Since we assumed that $\tilde P_j$ never decreases, the sub-arc of $C_t$ from $y$ to $X_2$ contains no common edge with all $\tilde P_j$'s in $\mathcal{T}_M$ at time $\tau$. As a result, a part of the sub-arc of $C_t$ from $y$ to $X_2$ was in $\mathcal{T}_M$ at time $\tau$ (since otherwise $\tilde P_{k^\prime}$ will be matched to this part of $C_t$). Since $Q_l$ (recall that $Q_l$ is the first path that is not in $\mathcal T_M$ and intersects with $C_t$ while we are walking along $C_t$) is not in $\mathcal T_M$ at any time in $[\tau,\tau_0]$, any possible extension of this sub-arc in Case 1.2 must end before $X_2$. In addition, by the rules in Cases 1.3 and 2.2, whenever the triple that contains a sub-arc of $C_t$ between $y$ and $X_2$ is replaced, we will walk along $C_t$ in the following procedure with some path $P_{k^\prime}$ (see the convention as specified at the end of Cases 1.3 and 2.2, where $t^\prime$ corresponds to the $t$ here). In conclusion, there is always a part of the sub-arc of $C_t$ from $y$ to $X_2$ that stays in $\mathcal{T}_M$ during $[\tau,\tau_0]$. Therefore, when we are considering $P_k$ at time $\tau_0$, we must enter Case 1.2 (and more specifically, the case as in Figure \ref{Case12b} happens).
		
		We then need to check the inductive hypothesis that each $\tilde P_k$ never decreases, which then also verifies the inductive claim at the end of Cases 1.3 and 2.2. It suffices to check this property for $\tilde P_{k^\prime}$ when we decided to continue our procedure with $\tilde P_{k^\prime}$ (i.e., we show that when walking along $P_{k^\prime}$ in the procedure, we must walk past $\tilde P_{k^\prime}$). If $\tilde P_{k^\prime}$ was in a triple of $\mathcal T_M$ before the modification to $\mathcal T_M$ in Cases 1.2, 1.3 and 2.2, we claim that, after this modification during our procedure with $P_{k^\prime}$, we must walk past $\tilde P_{k^\prime}$ before ending the procedure with $P_{k^\prime}$ (and we denote this time as $\tau_1$), thereby ensuring the desired monotonicity for $\tilde P_{k^\prime}$. In order to verify the claim, suppose that for some $\tilde P^{\prime}_{k^\prime}\subsetneqq\tilde P_{k^\prime}$, $(\tilde P^{\prime}_{k^\prime}, \tilde Q_{l},\tilde C_t)$ was added into $\mathcal{T}_M$ at time $\tau_1$. Consider a previous time $\tau_2<\tau_1$ that we walked past $\tilde P^{\prime}_{k^\prime}$ to get $\tilde P_{k^\prime}$. Then at time $\tau_2$, $\tilde C_t$ did not have a common edge with all paths in $\mathcal{T}_M$, and $\tilde Q_l$ was not truncated (note that it is obvious that for any $\tilde C_t$ in $\mathcal T_M$ it has no common edge with any $\tilde Q_l \in \mathcal T_M$). This then leads to a contradiction: in all relevant cases, we would have added a triple with $\tilde P^\prime_{k^\prime}$ instead of with $\tilde P_{k^\prime}$ to $\mathcal T_M$. Thus we complete the proof of the desired claim.
		This observation ensures that $\tilde P_k$ never decreases. In addition, we mention by passing that it is obvious that $\tilde Q_l$ is decreasing.
		
		We now show that at any time of our algorithm $\tilde C_t$'s in $\mathcal{T}_M$ can not have a common edge with $\tilde{P}_k$'s in $\mathcal T_M$. In fact, when $\tilde C_t$ is first added into $\mathcal T_M$ in Case 1.1, it holds directly by the rules of Case 1.1. After $\tilde C_t$ have been added in $\mathcal T_M$, during the procedure with some $P_k$ whenever a common edge is found between $\tilde C_t$ and some $\tilde P_{k}$, as incorporated in Case 1.2 we will truncate $\tilde C_t$ and keep this claim true.
		
		We then claim that the algorithm must terminate. Note that if each step terminates, at each step a new $P_k$ will be added into $\mathcal T_M$, and thus the algorithm must terminate. Now it suffices to prove that each step must terminate. In fact, at each time when Case 1.3 or Case 2.2 occurs, $\cup \tilde Q_l$ strictly decreases. It suffices to consider each time when Case 1.2 occurs, if we keep walking along $P_k$ then $\cup \tilde P_k$ will strictly increase; if we switch our procedure with $P_k$ to the procedure with $P_{k^\prime}$, we see that $\cup \tilde C_t$ strictly decrease, so we must eventually enter Case 1.1, 1.3, 2.1 or 2.2. Therefore, either Case 1.1 or Case 2.1 eventually occurs and this step will terminate at this time. As a result, all $P_k$'s must be eventually contained in $\mathcal{T}_M$.
		
		We claim that there are a few $P_k$'s with flows of total strength $M/2N$ which have been truncated before their last exits of $B(r_3)$, and we define $\mathrm{TrP}$ to be the collection of all $P_k^i$'s contained in these paths. In the case that all $C_t$'s occur at least once in $\mathcal{T}_M$ the claim holds immediately since $M(1-\beta a)/N>M/2N$. In the case that some $C_t$ does not occur in $\mathcal{T}_M$, we claim that all $P_k$'s have been truncated before their first exit of $B(r_3)$ so that in this case $\mathrm{TrP}$ can be any collection of $P_k$'s with flows of total strength $M/2N$. If the claim does not hold, we consider the last path $P_k$ with which our procedure encounter a vertex in $C_t$ when walking along $P_k$. For Case 1.3, $P_{k^\prime}$ will encounter $C_t$ again since $P_j$'s never decrease (we use the same notation in Cases 1.2 and 1.3), and for Cases 1.1 and 1.2, $C_t$ will be added in $\mathcal{T}_M$; either of these cases leads to a contradiction (to ``last encounter'' or to ``$C_t$ not added'').
		
		\noindent{\bf Construction of the flow and the analysis of its energy.}		
		Now we send a flow $\xi$ from $v$ to $A$, with strength $a/N$ on each path in $\mathcal{T}_M$. Then the total strength is greater than $1-\beta a$. For $P_k^i\in \mathrm{TrP}$, recall that $\hat{P}_k^i$ denotes the sub-path of $P_k^i$ after its last exit of $B(r_3)$ (and we set $\hat P_k^i = \emptyset$ if $P_k^i \not\in \mathrm{TrP}$ for convenience). We write (recall that $\alpha_k^i = a$)
		$$
		\hat \theta_v(e)=\bar \theta_v(e)-\sum_{P_k^i\in \mathrm{TrP}}\alpha_k^i\cdot \mbox{sign}(\bar \theta_v(e)) \mathds{1}_{\{e\in \hat P_k^i\}}.
		$$
		We claim that 
		\begin{equation}
			|\xi(e)|\leq |\hat{\theta}_v(e)|\vee |\bar \theta_A(e)|\vee |M/N\cdot \bar \rho(e)|.
		\end{equation}
		Since the edges in $\tilde C_t$ are disjoint from edges in $\tilde P_k$'s and $\tilde Q_l$'s, it suffices to consider the edges that are contained in both $\tilde P_k$ and $\tilde Q_l$ for some $k$ and $l$. For such an edge $e$, we will in fact show that $|\xi(e)| \leq |\bar \theta_A(e)|$. To this end, we first note that until $e$ was contained in some $\tilde Q_l$ in $\mathcal T_M$, we cannot walk past $e$ when walking along any $P_k$ (this is because we will run into the first scenario or Case 2.1 before walking past $e$). Now we assume that $e$ was already contained by some $\tilde Q_l$ when implementing the procedure with some $P_k$. Then, $\tilde P_k$ can walk through $e$ only in Case 2.2, where in addition $Y$ (using the notation in Case 2.2) is not an interior point of $\tilde Q_{l}$; this is exactly when the edge $e$ is truncated from $\tilde Q_l$ (and as shown above such truncation is forever).
		So in this case it holds that $|\xi(e)|\leq |\bar \theta_A(e)|$.
		
		By \eqref{resistancedef} we have
		\begin{equation}\label{re1}
			(1-\beta a)^2R(v,A)\leq \mathcal{E}(\hat \theta_v)+\mathcal{E}(\bar \theta_A)+(M/N)^2R(\mbox{around } \mathcal{A}).
		\end{equation}
		We consider the following equivalent network. Recalling that
		$r_{e,P_k,i}=\mathds{1}_{\{e\in P_k^i\}}\frac{N!\bar \theta_v(e)}{\alpha_k}r_e+\mathds{1}_{\{e\notin P_k^i\}}\cdot \infty$, we substitute the edge $e$ by a parallel of edges with resistance $r_{e,P_k,i}$. If we send a flow from $v$ to $Z$ by assigning a flow with strength $\alpha_k^i$ on each $P_k^i$ (where the edge resistance is given by $r_{e, P_k, i}$), then the Dirichlet energy is
		$$\sum_{k,i}\sum_{e\in P_k^i}(\alpha_k^i)^2r_{e,P_k,i}=\sum_{e}|\bar \theta_v(e)|r_e\sum_{(k,i):e\in P_k^i}\alpha_k^i=\sum_{e}|\bar\theta_v(e)|^2r_e.$$
		So the above flow is indeed the electric current. Furthermore,
		\begin{equation}\label{re2}
			\begin{aligned}
				\mathcal{E}(\hat \theta_v)
				&=\sum_{e}\left(|\bar\theta_v(e)|-\sum_{(k,i):e\in \hat P_k^i}\alpha_k^i\right)^2r_e\\
				&=\sum_e |\bar\theta_v(e)|^2r_e-2\sum_{e}\left(\sum_{(k,i):e\in \hat P_k^i}\alpha_k^i\right)|\bar\theta_v(e)|r_e+\sum_e\left(\sum_{(k,i):e\in \hat P_k^i}\alpha_k^i\right)^2r_e\\
				&\leq \sum_e \bar\theta_v(e)^2r_e-\sum_{e}\left(\sum_{(k,i):e\in \hat P_k^i}\alpha_k^i\right)|\bar\theta_v(e)|r_e\\
				&=\mathcal{E}(\bar\theta_v)-\sum_{(k,i):P_k^i\in \mathrm{TrP}}\sum_{e\in \hat P_k}(\alpha_k^i)^2r_{e,P_k,i}.
			\end{aligned}
		\end{equation}
		Combining \eqref{re1} and \eqref{re2} with the fact that $\mathcal E(\bar\theta_v)\leq R(v,Z)$, $\mathcal E(\bar \theta_A)\leq R(A,Z)$, we complete the proof.
	\end{proof}
	
	\begin{proof}[Proof of Lemma \ref{returnlem3}]
		Now we inductively define disjoint index sets $I_s$ such that $\sum_{(k,i)\in I_s}\alpha_k^i=2^{s-1}/2N$. We take $\mathrm{TrP}_{2^s}$ from Lemma \ref{returnlem7} with $M=2^s$ (for $0\leq s\leq \log N$). We set $I_1=\{(k,i):P_k^i\in \mathrm{TrP}_1\}$. Suppose now we have defined $I_{s-1}$, since the $P_k^i$'s in $\mathrm{TrP}_{2^s}$ have a total strength $2^{s}/2N$, there exists an index set $I_s$ disjoint from $I_1,\ldots,I_{s-1}$ that satisfies our assumption. For all $(k,i)\in \cup_{1\leq s\leq K} I_s$ we see that $P_k^i$'s are disjoint paths crossing $\mathcal{B}$. For $K$ with $2^{K}<N$ we have
		\begin{equation}
			R(\mbox{across }\mathcal B)\leq \sum_{s=1}^{K} \left(\sum_{(k,i)\in I_s}\sum_{e\in \hat P_k^i}(\alpha_k^i)^2r_{e,P_k,i}\right)/(2^sK/2N)^2.
		\end{equation}
		So there exists $s<K$ such that 
		\begin{equation}\label{re4}
			\sum_{(k,i)\in I_s}\sum_{e\in \hat P_k^i}(\alpha_k^i)^2r_{e,P_k,i}\geq K\cdot (2^s/2N)^2R(\mbox{across }\mathcal B).
		\end{equation}
		By taking $K>4\mathsf C$ and $N>2^{K}$ (note that this implies that $N$ depends only on $\mathsf C$), combining \eqref{re3} (where we take $M=2^s$) and \eqref{re4} we have
		\begin{equation}
			(1-\beta a)^2R(v,A)\leq R(v,Z)+R(A,Z)+(2^s/N)^2R(\mbox{around } \mathcal A)-2\mathsf C\cdot(2^s/N)^2R(\mbox{across }\mathcal B).
		\end{equation}
		Since $R(\mbox{around }\mathcal{A})\leq \mathsf C R(\mbox{across } \mathcal{B})$, this completes our proof by taking $a\to 0$.
	\end{proof}
	
	\begin{proof}[Proof of Lemma \ref{returnlem}]
		Since $v\notin A\cup Z$, applying \eqref{return6}	and then applying Lemma \ref{returnlem3}, we have
		\begin{equation}
			P^v(\tau_A<\tau_Z)
			=\frac{R(v,Z)+R(A,Z)-R(v,A)}{2R(A,Z)}
			\geq\frac{rR(\mbox{around } \mathcal{A})}{2R(A,Z)}\geq q,
		\end{equation}
		for some $r=r(\mathsf C )$ and for some $q=q(\mathsf C ,\mathsf c )>0$.
	\end{proof}
	
	The following analogue of Lemma \ref{returnlem} will be useful later too.
	\begin{lem}\label{returnlem2}
		Let $0\leq r_1<r_2<r_3<r_4$ be four integers. Denote $\mathcal{A}=B(r_3)\setminus B(r_2)$, $\mathcal{B}=B(r_2)\setminus B(r_1)$, $A=\partial B(r_1)$ and $Z=\partial B(r_4)$. If for some constants $\mathsf c, \mathsf C>0$ we have
		\begin{equation}
			\mathsf c R(A,Z)\leq R(\mbox{around }\mathcal{A})\leq \mathsf C R(\mbox{across }\mathcal{B}),
		\end{equation}
		then there exists $q=q(\mathsf C, \mathsf c)\in (0,1)$ such that for any $v\in B(r_4) \setminus B(r_3)$,
		\begin{equation}
			P^v(\tau_A>\tau_Z)\geq q.
		\end{equation}
	\end{lem}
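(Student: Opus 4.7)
The plan is to mirror the proof of Lemma \ref{returnlem}, but with the roles of $A$ and $Z$ interchanged. Since $v\notin A\cup Z$, applying \eqref{return6} with source and sink swapped yields
\begin{equation*}
P^v(\tau_Z<\tau_A)=\frac{R(v,A)+R(A,Z)-R(v,Z)}{2R(A,Z)}.
\end{equation*}
So it suffices to establish a lower bound of the form $R(v,A)+R(A,Z)-R(v,Z)\geq r\, R(\mbox{around }\mathcal{A})$ for some $r=r(\mathsf{C})>0$; combining this with the hypothesis $R(\mbox{around }\mathcal{A})\geq \mathsf{c}\,R(A,Z)$ would then give $P^v(\tau_Z<\tau_A)\geq r\mathsf{c}/2=:q$, which is the conclusion.

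For the resistance inequality, I would prove the following symmetric analogue of Lemma \ref{returnlem3}: for every $v\in B(r_4)\setminus B(r_3)$, under the hypothesis $R(\mbox{around }\mathcal{A})\leq \mathsf{C}\,R(\mbox{across }\mathcal{B})$ with $\mathcal{B}=B(r_2)\setminus B(r_1)$ now lying on the inner side of $\mathcal{A}$, one has $R(v,A)+R(A,Z)-R(v,Z)\geq r\,R(\mbox{around }\mathcal{A})$. I would re-run the arguments of Lemmas \ref{plainlem}--\ref{returnlem7} after the following relabeling, which globally swaps ``inside'' and ``outside'' of the separating annulus $\mathcal{A}$: the unit electric current $\theta_v$ from $v$ to $A$ replaces the current from $v$ to $Z$; the unit current from $Z$ to $A$ replaces the current from $A$ to $Z$; the cycle decomposition of the flow around $\mathcal{A}$ is unchanged; and the now-inner annulus $\mathcal{B}$ plays the role of the outer annulus across which paths are to be rerouted. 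The planar current decomposition of Lemma \ref{plainlem} still applies, by passing to the one-point compactification and placing the point at infinity inside $B(r_1)$, so that $A$ becomes the outer boundary of the embedded planar graph.

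The main obstacle is to verify that the intricate inductive construction of the set $\mathcal{T}_M$ of triples (Cases 1.1--2.2 of the procedure, together with the bookkeeping for cycle and path truncations) and the Dirichlet-energy accounting in Lemma \ref{returnlem7} remain consistent under this swap. However, all of these steps are purely planar-graph-theoretic and are invariant under orientation-reversing homeomorphisms of the sphere; the clockwise/counterclockwise orderings, the Max-Flow-Min-Cut bounds, and the inequality $|\xi(e)|\leq |\hat\theta_v(e)|\vee |\bar\theta_A(e)|\vee |M/N\cdot \bar\rho(e)|$ all transfer verbatim modulo relabeling. Once the resistance inequality is in hand, substitution into the first display completes the proof with $q=q(\mathsf{C},\mathsf{c})=r(\mathsf{C})\mathsf{c}/2$.
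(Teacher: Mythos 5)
Your proposal is correct and follows essentially the same route as the paper: reduce via \eqref{return6} (equivalently, upper-bounding its right-hand side) and then prove the role-swapped analogue of Lemma \ref{returnlem3}, which is exactly what the paper does while omitting the details. Your additional remark on handling Lemma \ref{plainlem} via the one-point compactification so that $A$ plays the role of the outer boundary is a sensible way to justify the swap that the paper leaves implicit.
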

	\begin{proof}
		In light of \eqref{return6}, we just need to upper-bound the right-hand side of \eqref{return6}, whose proof is highly similar to that for Lemma \ref{returnlem3} (we just need to switch the roles of $A$ and $Z$). Thus, we omit further details.
	\end{proof}

	In light of Lemma \ref{returnlem2}, we have the following bound for the trace of the random walk. For $x\in \mathbb R^2$, let $ S_k(x)$ be the dyadic square in $\mathcal S_k$ which contains $x$, and let $\hat S_k(x)$ be the square with the same center as $S(x)$ and with side length $3\times2^{-k}$. For $n>0$, let $\tau^n_0=0$ and inductively let
	\begin{equation}\label{stoppingtime}
		\tau^n_i=
		\begin{cases}
			\inf\{s>\tau^n_{i-1}:X^{(n,x)}_s\in \partial \hat{S}(X^{(n)}_{\tau^n_{i-1}})\} & \mbox{if }X^{(n)}_{\tau^n_{i-1}}\in B(1), \\
			\tau^n_{i-1} & \mbox{if }X^{(n)}_{\tau^n_{i-1}}\notin B(1).
		\end{cases}
	\end{equation}
	\begin{lem}\label{exitlem2}
		For all $n>0$ and $\delta\in (0,1)$, there exists an integer $t=t(k,\delta)$ such that
		\begin{equation}
			\mathbb{P}(P(\tau^n_t=\tau^n_{t-1})\geq 1-\delta)\geq 1-\delta.
		\end{equation}
	\end{lem}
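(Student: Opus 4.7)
The plan is to reduce Lemma~\ref{exitlem2} to the statement that a uniformly elliptic Markov chain on a box of $O(4^k)$ vertices exits the box in $O(4^k\log(1/\delta))$ steps with $P$-probability $1-\delta$; the uniform ellipticity will come from a harmonic-measure estimate derived through \eqref{return6} on a high-probability event of the field.

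First I would fix a good event $\mathcal E$ for the field. Using Theorem~\ref{thm1} (or the more explicit tail bound Proposition~\ref{tail}) together with the scale invariance \eqref{scaling}, I choose $M=M(k,\delta)>0$ such that, with $\mathbb P$-probability at least $1-\delta/2$, every effective resistance relevant to the argument below lies in $[M^{-1},M]$ (this includes $R_{\hat S}(A,Z)$ for $\hat S$ a $3\cdot 2^{-k}$-box centered on a fine refinement of the dyadic grid near $B(1)$ and $A,Z$ complementary unions of sides of $\hat S$, as well as around/across resistances for small annuli inside these boxes). A union bound over the $O(4^k)$ centers and the $O(1)$ decompositions makes this feasible uniformly in $n$, thanks to the $n$-uniform tails in Proposition~\ref{tail}.

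Next, on $\mathcal E$ I would show that the coarse chain $Y_i:=X^{(n)}_{\tau^n_i}$ is uniformly elliptic at scale $2^{-k}$: there exists $q=q(k)>0$ such that whenever $Y_{i-1}\in B(1)$, the walker exits $\hat S(Y_{i-1})$ through any prescribed side with $P^{Y_{i-1}}$-probability at least $q$. This will follow from \eqref{return6} applied inside the finite network $\hat S(Y_{i-1})\cap\mathbb Z_n^2$ (with $A$ the prescribed side and $Z$ the union of the other three sides), combined with a lower bound on $R(Y_{i-1},Z)+R(A,Z)-R(Y_{i-1},A)$ in the spirit of Lemma~\ref{returnlem3}: on $\mathcal E$ both $R(Y_{i-1},A)$ and $R(Y_{i-1},Z)$ lie in $[M^{-1},M]$, and the around/across resistance comparison applied to a small annulus separating $Y_{i-1}$ from $A$ forces the numerator in \eqref{return6} to be a definite fraction of $R(A,Z)$. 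Once this is in hand, the coarse chain $Y$ can be coupled with a lazy simple random walk on the finite coarse grid $\mathcal S_k\cap B(1)$ so that at each step the coarse chain moves to the same neighbor with probability at least $q$. A standard hitting-time estimate for SRW on a $2^k\times 2^k$ grid gives an exit time bounded by $C\cdot 4^k\log(2/\delta)$ with probability at least $1-\delta/2$; setting $t(k,\delta):=\lceil Cq^{-1}\cdot 4^k\log(2/\delta)\rceil+1$ and combining with $\mathbb P(\mathcal E)\ge 1-\delta/2$ yields the lemma.

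The main obstacle is the uniform-ellipticity step. The walker's position $Y_{i-1}$ is a random vertex of $\mathbb Z_n^2$ that need not sit on the dyadic grid and could be close to $\partial B(1)$. Aligning $\hat S(Y_{i-1})$ with controlled dyadic boxes requires sandwiching it between two such boxes so that the resistance bounds on $\mathcal E$ transfer with a bounded loss; and adapting Lemma~\ref{returnlem3} to a setting where $A$ and $Z$ are complementary unions of sides of a square (rather than inner and outer boundaries of a concentric annulus) is the most delicate point, as the planar-flow surgery carried out there must be re-routed accordingly. For $Y_{i-1}$ within $O(2^{-k})$ of $\partial B(1)$ the argument is actually easier, since one further coarse step suffices to leave $B(1)$ and a direct application of \eqref{return6} gives a uniform positive probability for this event.
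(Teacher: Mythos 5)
There is a genuine gap at the heart of your argument: the ``uniform ellipticity'' step, i.e., that on your good event the walk exits $\hat S(Y_{i-1})$ through any \emph{prescribed} side with probability at least $q$. Two-sided bounds $R\in[M^{-1},M]$ on all the individual effective resistances do not control the numerator $R(v,Z)+R(A,Z)-R(v,A)$ in \eqref{return6}, which is a difference of order-one quantities and can be arbitrarily small; this is exactly the harmonic-measure sensitivity phenomenon the paper flags (resistances known up to constants do not determine exit measures up to constants). Your proposed fix---a Lemma \ref{returnlem3}-type bound with ``a small annulus separating $Y_{i-1}$ from $A$''---cannot work in the geometry you set up: in Lemma \ref{returnlem3} the annulus $\mathcal A$ encloses $A$ (and $v$) while $Z$ lies entirely outside, and the flow surgery crucially uses cycles around $\mathcal A$ that separate $\{v\}\cup A$ from $Z$. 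When $A$ is one side of $\hat S$ and $Z$ is the union of the other three sides, $A$ and $Z$ meet at the corners, so no annulus inside $\hat S$ can separate $\{v\}\cup A$ from $Z$; the mechanism is topologically unavailable, not merely ``delicate to re-route''. The same objection applies to your boundary case, where a ``direct application of \eqref{return6}'' still requires a lower bound on the numerator. Separately, even granting ellipticity, the coupling claim is wrong as stated: agreeing with a lazy SRW move with probability $q$ at each coarse step does not make the exit time comparable to the SRW's $O(4^k\log(1/\delta))$ (the remaining $1-4q$ of the kernel can produce an inward drift and exponential exit times); one would instead have to use a cruder path-following bound, which is fine for the lemma but is not what you wrote.

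For comparison, the paper avoids directional harmonic-measure control entirely. It only needs a \emph{non-return} estimate: by Theorem \ref{thm1} the around/across comparison $\mathsf c R(A,Z)\le R(\mbox{around }\mathcal A)\le \mathsf C R(\mbox{across }\mathcal B)$ holds simultaneously for annuli surrounding every $S\in\mathcal S_k$ with high $\mathbb P$-probability, and then Lemma \ref{returnlem2} (whose annulus geometry \emph{is} compatible with the flow surgery) gives that from any $v\in\partial\hat S$ the walk reaches a contour enclosing $B(1)$ before returning to $S$ with probability at least $r>0$. Hence the number of coarse steps spent at any fixed $S$ has a geometric tail, and a union bound over the $2^{2k+2}$ boxes yields $t(k,\delta)$. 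If you replace your prescribed-side ellipticity by this escape estimate (which your good event already supports), your multiscale setup reduces to the paper's proof; as written, however, the key quantitative input is unproved and its proposed derivation fails.
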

	\begin{proof}
		For each $S\in\mathcal{S}_k$ and $v\in \hat S$, there exist rational numbers $0\leq r_1<r_2<r_3<r_4$ such that $S=v_S+B(r_1)$, $\partial \hat S\subset v_S+B(r_4)\setminus B(r_3)$, and $B(1)\subset v_S+B(r_4)\setminus B(r_3)$. We define $A(S)=\partial S$, $Z(S)=\partial (v_S+B(r_4))$, $\mathcal{A}(S)=v_S+B(r_3)\setminus B(r_2)$ and $\mathcal{B}(S)=v_S+B(r_2)\setminus B(r_1)$. Then by Theorem \ref{thm1}, for any $q\in(0,1)$, there exist $C_1=C_1(q)$ and $c_1=c_1(q)$ such that
		$$\mathbb{P}(c_1R(A(S),Z(S))\leq R(\mbox{around }\mathcal{A}(S))\leq C_1R(\mbox{across }\mathcal{B}(S)))\geq 1-q.$$
		By a union bound, we have
		\begin{equation}
			\mathbb P(cR(A(S),Z(S))\leq R(\mbox{around }\mathcal{A}(S))\leq C_1R(\mbox{across }\mathcal{B}(S))\mbox{ for each } S \in \mathcal S_k)\geq 1-2^{2k+2}q.
		\end{equation}
		
		In what follows we assume that the event in the preceding inequality holds. By Lemma \ref{returnlem2}, for some $r =r (c_1,C_1)$ (thus depending only on $k,q$) and for all $v\in \partial \hat S\subset v_S+B(r_4)\setminus B(r_3)$, we have $P^v(\tau_{A(S)}>\tau_{Z(S)})\geq r$.
		By the strong Markov property, we have
		$$P\left(\exists j>i:X^{(n)}_{\tau_j^n}\in S| X^{(n)}_{\tau_i^n}\in S\right)=E\left[P^{X^{(n)}_{\tau_{i+1}^n}}(\tau_{A(S)}<\tau_{Z(S)})\right]\leq 1-r.$$
		Thus we have
		$$P(|\{i:X^{(n)}_{\tau^n_i}\in S\}|\geq l+1)\leq (1-r)^l.$$
		Taking $t=2^{2k+2}(l+1)$, we have $$P(\tau^n_t\neq \tau^n_{t-1})\leq \sum_{S\in \mathcal S_k}P(|\{i:X^{(n)}_{\tau^n_i}\in S\}|\geq l+1)\leq 2^{2k+2}(1-r)^l.$$
		Taking $q=\delta/2^{2k+2}$ and taking $l$ sufficiently large depending only on $k,r$ (thus depending only on $k,\delta$), we complete the proof from the preceding inequality.
	\end{proof}
	
	\subsection{Coupling the exit measure}\label{subsec72}
	In this subsection, we prove that the exiting measure is continuous with respect to the starting point of the random walk, as in the next lemma. We will write $d_{\mathrm{TV}}$ for the total variation distance between two random variables.
	
	\begin{lem}\label{conti}
		Let $K\subset B(1)$ be a compact set and let $\tau^x$ be the first time when $X^{(n,x)}$ hits $\partial B(1)$. For each $\mathsf C ,\epsilon >0$, there exist $\delta = \delta(K, \mathsf C , \epsilon )>0$ and $C_2=C_2(K) >0$, such that for sufficiently large $n$, it holds with $\mathbb{P}$-probability at least $1-C_2\epsilon^\mathsf C $ that 
		\begin{equation}\label{contieq}
			d_{\mathrm{TV}}(X^{(n,x)}_{\tau^x},X^{(n,y)}_{\tau^y})\leq \epsilon \mbox{ for all } x, y\in K \mbox{ with } |x-y|\leq \delta.
		\end{equation}
	\end{lem}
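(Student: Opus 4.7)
The plan is to construct a coupling of $X^{(n,x)}$ and $X^{(n,y)}$ under which the two walks coalesce onto a common trajectory with $P$-probability at least $1-\epsilon$, on a $\mathbb{P}$-event of probability at least $1 - C_2\epsilon^{\mathsf{C}}$. Once they coalesce, the strong Markov property forces the exit points from $B(1)$ to agree, and \eqref{contieq} follows immediately from the definition of total variation distance.

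The coupling is built from a planar loop argument. For each $x\in K$ and $\delta>0$, pick an intermediate scale $r=r(\delta)$ with $\delta \ll r \ll \mathrm{dist}(K,\partial B(1))$, and set $\mathcal{A}_x = B(x,2r)\setminus B(x,r)$. I will show that on a $\mathbb{P}$-event $\mathcal{G}$ of probability at least $1-C_2\epsilon^{\mathsf{C}}$, the following \emph{loop-formation property} holds uniformly in $x\in K$:
\[
P\bigl(X^{(n,x)}([0,\tau^x]) \text{ contains a simple loop } L\subset \mathcal{A}_x \text{ enclosing } B(x,r)\bigr) \geq 1-\epsilon/2.
\]
Since $\delta\leq r$, any such $L$ encloses every $y\in B(x,\delta)$, so $X^{(n,y)}$ must hit $L$ before exiting $B(1)$. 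At the first hitting point $z\in L$, I splice $X^{(n,y)}$ onto the continuation of $X^{(n,x)}$ from the first time $X^{(n,x)}$ visits $z$; the strong Markov property applied to both walks at their respective first hitting times of $z$ makes this a bona fide coupling under which $X^{(n,x)}_{\tau^x}=X^{(n,y)}_{\tau^y}$.

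To verify the loop-formation property, I use Theorem \ref{thm1} and Proposition \ref{tail} with a union bound over a $\delta$-net of $K$ and over dyadic scales $2^{-k}$ with $\delta \leq 2^{-k}\leq r$, producing $\mathcal{G}$ with $\mathbb{P}(\mathcal{G})\geq 1-C_2\epsilon^{\mathsf{C}}$ on which, for every such $x$ and $k$, the resistance around the sub-annulus $B(x,2\cdot 2^{-k})\setminus B(x,2^{-k})$ is comparable to the resistance across it, with comparison constant depending only on $\epsilon$. Iterating Lemma \ref{returnlem} with this comparison yields that $X^{(n,x)}$ makes at least $N=N(\epsilon)$ excursions across each sub-annulus with $P$-probability $\geq 1-\epsilon/4$. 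To upgrade excursions into a loop, I further partition $\mathcal{A}_x$ into a bounded number of curved angular sectors and apply Lemma \ref{returnlem} (with RSW-style resistance bounds inside each sector, via Proposition \ref{rsw}) to argue that each sector is traversed with uniformly positive $P$-probability per excursion; planarity then forces the traversals to concatenate into a topological loop around $B(x,r)$ with the desired $P$-probability.

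The main obstacle is the passage from many excursions across $\mathcal{A}_x$ to a topological loop around $x$. Excursions could a priori cluster in a narrow quenched ``corridor'' determined by the field and never wind around $B(x,r)$. The remedy is to exploit the approximate angular uniformity of resistances coming from Proposition \ref{rsw} together with Theorem \ref{thm1}, which guarantees that each angular sector of $\mathcal{A}_x$ has comparable conductance to be visited from $B(x,r)$. Controlling this uniformly in the quenched field, so that each sector is entered by $X^{(n,x)}$ with $P$-probability bounded away from $0$ per excursion, is where the up-to-constant nature of Theorem \ref{thm1} is crucial; taking sufficiently many scales $k$ in $[\log_2(1/r),\log_2(1/\delta)]$ then boosts the $P$-probability of missing any sector (hence of failing to form a loop) below $\epsilon/2$.
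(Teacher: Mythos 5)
Your second step (getting a loop in the trace of $X^{(n,x)}$ around a small neighborhood of $x$, with quantitative control coming from resistance comparisons at many dyadic scales between $\delta$ and $r$) is in the same spirit as the paper's Lemma \ref{exitlem}, but your first step contains a genuine gap: the splicing construction is not a valid coupling. The loop $L$ is a functional of the \emph{entire} path $X^{(n,x)}[0,\tau^x]$, and the point $z$ is determined by $L$ together with $X^{(n,y)}$. Consequently, on the event you condition on, the law of the continuation of $X^{(n,x)}$ after its first visit to $z$ is \emph{not} the unconditioned walk law started from $z$: it is biased by the requirement that the (possibly future) portion of the trajectory completes the loop through $z$, and $z$ itself was selected using information about that future. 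The strong Markov property therefore does not apply in the way you invoke it, and the spliced process need not have the law of $X^{(n,y)}$. This is exactly the subtlety that the paper sidesteps by quoting Lemma \ref{exitlem1} (from \cite{gwynne2022invariance}), which asserts directly that $d_{\mathrm{TV}}(X^x_{\tau^x},X^y_{\tau^y})\leq 1-P(X^x \mbox{ disconnects } y \mbox{ from } \partial B(1))$; that inequality is a nontrivial statement and is not proved by first-hitting-point splicing. To repair your argument you should either invoke that lemma (reducing your task to exactly the disconnection estimate, i.e.\ the paper's Lemma \ref{exitlem}) or supply a correct proof of the TV bound.

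A secondary weak point is the passage from ``many excursions across $\mathcal A_x$, each traversing every angular sector with positive probability'' to a topological loop. Traversals of different sectors made during different excursions are arcs that may sit at different radii and never intersect, so planarity alone does not concatenate them into a circuit. The paper's proof of Lemma \ref{exitlem} avoids this by a sequential ``tube'' argument: it chooses adjacent small squares $Y_1,\dots,Y_r$ forming a circuit in the annulus and uses Lemma \ref{returnlem} plus the strong Markov property to make the walk pass through them consecutively in one connected stretch (probability at least $q^r$ per visit to the annulus), so the disconnection is automatic from connectivity of the trace. Your sector scheme would need an analogous sequential construction guaranteeing that the crossings actually join up; as written, this step is not complete.
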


	In order to prove Lemma \ref{conti}, we will apply the following lemma from \cite{gwynne2022invariance}.
	
	\begin{lem}\label{exitlem1}
		\textup{(\hspace{-0.15mm}\cite[Lemma~3.12]{gwynne2022invariance})} Let $G$ be a connected graph and let $A\subset V(G)$ be a set such that the random walk started from any vertex of $G$ a.s. hits $A$ in finite time. For $x\in V(G)$, let $X^x$ be the 
		random walk started from $x$ and let $\tau^x$ be the first time when $X^x$ hits $A$. For $x,y\in V(G)\setminus A$,
		\begin{equation}\label{exit1}
			d_{\mathrm{TV}}(X^x_{\tau^x},X^y_{\tau^y})\leq 1-P(X^x \mbox{ disconnects } y \mbox{ from } A \mbox{ before time } \tau^x),
		\end{equation}
		where the disconnecting event is that any path from $y$ to $A$ must intersect with $X^x_t$ for some $t\leq\tau^x$.
	\end{lem}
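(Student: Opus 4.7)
The plan is to produce a coupling $(\hat X^x,\hat X^y)$ with the correct marginals under which the two exit vertices agree whenever $X^x$ disconnects $y$ from $A$ before time $\tau^x$. Granted such a coupling, the standard bound $d_{\mathrm{TV}}(U,V)\leq \mathbb P(U\neq V)$ for any coupling of $(U,V)$ immediately delivers \eqref{exit1}.

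Let me describe the coupling. First, sample $\hat X^x$ as a random walk from $x$ run until it hits $A$, and let $\mathcal D$ be the event that the trace $\hat X^x([0,\tau^x])$ disconnects $y$ from $A$ (so that every path from $y$ to $A$ meets this trace). On $\mathcal D^c$, sample $\hat X^y$ independently as a random walk from $y$ stopped at $A$; here no guarantee on the exit point is made. On $\mathcal D$, sample an auxiliary random walk $Y$ from $y$, independent of $\hat X^x$, and let $\sigma$ be the first time $Y$ meets the trace of $\hat X^x$; by the disconnection property, $\sigma$ is strictly less than the $A$-hitting time of $Y$. Writing $v:=Y_\sigma$ and $s:=\min\{t\geq 0:\hat X^x_t=v\}$, define
\[
\hat X^y_t \;=\; \begin{cases} Y_t, & 0\leq t\leq \sigma,\\ \hat X^x_{s+(t-\sigma)}, & \sigma\leq t\leq \sigma+(\tau^x-s).\end{cases}
\]
Then $\hat X^y$ terminates at $\hat X^x_{\tau^x}\in A$, so the two exit vertices coincide on $\mathcal D$.

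The main thing to verify is that the marginal of $\hat X^y$ is indeed a random walk from $y$ stopped at $A$. The point is to apply the strong Markov property on both $\hat X^x$ and $Y$. Conditioning on $\hat X^x$ and on $Y([0,\sigma])$, the hit vertex $v$ and the visit time $s$ are fixed; by the strong Markov property applied to $\hat X^x$ at the stopping time $s$, the continuation $(\hat X^x_{s+k})_{k\geq 0}$ is, conditionally on $\hat X^x_s=v$, distributed as a random walk from $v$ stopped at $A$, independent of $\hat X^x([0,s])$. Integrating out $\hat X^x([0,s])$ and the portion $\hat X^x$ played no role in (it only enters via the trace, which determined $\sigma$), the post-$\sigma$ increment of $\hat X^y$ is, conditionally on $Y([0,\sigma])$, a fresh random walk from $v=Y_\sigma$ stopped at $A$. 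Applying the strong Markov property for $Y$ at $\sigma$ in reverse, the concatenation $\hat X^y$ therefore has the law of a random walk from $y$ run to $A$, as desired. On $\mathcal D^c$ the marginal is correct by construction.

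Combining the two cases, $\mathbb P(\hat X^x_{\tau^x}\neq \hat X^y_{\tau^y})\leq \mathbb P(\mathcal D^c)=1-\mathbb P(\mathcal D)$, and \eqref{exit1} follows. The only technical subtlety is in the marginal check — specifically, in justifying that "pasting" a realized continuation of $\hat X^x$ onto $Y$ at $\sigma$ preserves the random-walk law. The cleanest way to dispatch it is to note that the joint law of $(Y([0,\sigma]),\hat X^x_s,\hat X^x_{s+1},\ldots)$, as constructed, equals the joint law of $(Y([0,\sigma]), \tilde Z_0,\tilde Z_1,\ldots)$ where $\tilde Z$ is an independent random walk from $v$ — a direct consequence of the independence of $Y$ and $\hat X^x$ together with the strong Markov property at $s$ — and then invoke the strong Markov property for $Y$ at $\sigma$ to recognise the concatenation as a single random walk from $y$. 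No properties of the underlying graph beyond connectedness and the finite-hitting assumption on $A$ are used, so the lemma holds in the generality stated.
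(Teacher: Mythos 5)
The paper does not actually prove this lemma (it is quoted verbatim from \cite[Lemma~3.12]{gwynne2022invariance}), so the only question is whether your coupling is sound, and it is not. The step that fails is the verification that $\hat X^y$ has the law of a random walk from $y$. You invoke the strong Markov property for $\hat X^x$ at the time $s$, but $s$ is the first visit to $v=Y_\sigma$, and $v$, $\sigma$ and the event $\mathcal D$ are all functions of the \emph{entire} trace $\hat X^x([0,\tau^x])$, including the segment after $s$ that you then re-use. Conditioning on $(\mathcal D,\,Y|_{[0,\sigma]},\,v)$ therefore does bias the law of $\hat X^x|_{[s,\tau^x]}$: on this event the post-$s$ segment is constrained, for example, to avoid $Y_0,\dots,Y_{\sigma-1}$ (otherwise $\sigma$ would not be the first meeting time with the trace) and to help complete the disconnection. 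Your parenthetical claim that the post-$s$ portion of $\hat X^x$ ``played no role'' in determining $\sigma$ is exactly where the argument breaks: it enters through the trace, which is what defines $\sigma$ and $v$.

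Moreover, the construction cannot be rescued by a more careful conditioning argument, because its output has the wrong one-dimensional marginal. Under your coupling the exit point of $\hat X^y$ equals $\hat X^x_{\tau^x}$ on all of $\mathcal D$, while on $\mathcal D^c$ it is an independent sample from the harmonic measure $\mu^y$ of $y$ on $A$. Hence the law of $\hat X^y_{\tau^y}$ is $P(\mathcal D)\,\mathrm{Law}\bigl(X^x_{\tau^x}\mid\mathcal D\bigr)+P(\mathcal D^c)\,\mu^y$, so correctness of the $\hat X^y$ marginal would force the exact identity $\mathrm{Law}\bigl(X^x_{\tau^x}\mid\mathcal D\bigr)=\mu^y$, i.e. that conditionally on disconnecting $y$ from $A$ the walk from $x$ exits according to the harmonic measure seen from $y$. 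There is no reason for such an identity (the lemma asserts only an inequality), and it fails for generic networks, since conditioning on the disconnection event biases the exit point of $X^x$ in a way that has nothing to do with $\mu^y$; consequently the bound $d_{\mathrm{TV}}\leq P(\text{exit points differ})$ is being applied to a pair that is not a coupling of the two exit laws, and \eqref{exit1} does not follow. A correct argument has to generate the two walks in an interleaved fashion so that every application of the Markov property occurs at a genuine stopping time, rather than pasting a tail of a fully revealed path onto $X^y$; for the actual argument see the proof of \cite[Lemma~3.12]{gwynne2022invariance}.
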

	
	In light of Lemma \ref{exitlem1}, it suffices to bound the disconnecting probability in \eqref{exit1} in order to prove Lemma \ref{conti}; this is proved in the next lemma. For a $2^{-k}\times 2^{-k}$ dyadic square $S$, recall that $\hat{S}$ is the square with the same center as $S$ with side length $3\times2^{-k}$.
	\begin{lem}\label{exitlem}
		For integers $n>l>k$ and for any $\mathsf C>0$, there exists a constant $\alpha=\alpha(\mathsf C)>0$ such that the following holds with $\mathbb P$-probability at least $1 - \exp(-\mathsf C(l-k))$. For any two points $x, y\in S\cap\mathbb Z_n^2$ with $|x-y| \leq 2^{-l}$, letting $X^x$ be the random walk started from $x$, we then have that 
		\begin{equation}\label{return7}
			P(X^x \mbox{ disconnects } y \mbox{ from } \partial \hat S \mbox{ before exiting } \hat S)\geq 1-\exp\left(-\alpha(l-k)\right).
		\end{equation}
	\end{lem}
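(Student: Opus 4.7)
The overall strategy is to exhibit many concentric annuli around $y$ on which, with high $\mathbb P$-probability, the resistances ``around'' and ``across'' are comparable, and then to argue that on each such annulus the random walk has a positive chance of tracing a loop encircling $y$; enough independent attempts across scales then produce the exponentially small failure probability. To handle uniformity over the pair $(x,y)$, we first reduce to a grid: the event that $X^x$ disconnects $y$ from $\partial \hat S$ only depends on $y$ up to perturbations much smaller than $2^{-l}$, so it suffices to fix $y = y_0$ ranging over an $O(2^{2(l-k)})$-size $2^{-l-C}$-grid covering $S$ and union bound (the dependence on $x$ is then automatic, since any good annulus around $y_0$ contains every admissible $x$). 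Consequently, we need to prove the single-$y_0$ statement with $\mathbb P$-error $\exp(-\mathsf C'(l-k))$ for $\mathsf C' = \mathsf C'(\mathsf C)$ as large as we wish (specifically $\mathsf C' > \mathsf C + 2\log 2$), which is within reach because Proposition \ref{tail} provides stretched-exponential tails with arbitrarily strong constants.

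For a fixed $y = y_0$ and each $j$ in a suitably sparse subset $J \subset \{k+C_0,\ldots,l-C_0\}$ with $|J| \gtrsim l-k$, consider the annulus $\mathcal A_j$ centered at $y$ with inner radius $2^{-j-1}$ and outer radius $2^{-j}$. By the scaling property of $\phi_n$ together with Proposition \ref{tail}, for any $q>0$ we can choose $C = C(q)$ so that
\[
G_j := \bigl\{R^n(\text{around } \mathcal A_j) \leq C\, R^n(\text{across } \mathcal A_j)\bigr\}
\]
has $\mathbb P(G_j) \geq 1 - q$ uniformly in $j$. Choosing the scales in $J$ with sufficient multiplicative gap and passing to the finite-range approximation $\psi$ via Proposition \ref{normprop}, the $\psi$-analogues of $\mathds{1}_{G_j}$ become jointly independent, so Hoeffding's inequality yields $\mathbb P(\sum_{j\in J}\mathds{1}_{G_j} \geq |J|/2) \geq 1 - \exp(-\mathsf C'(l-k))$, as required, by taking $q$ sufficiently small in terms of $\mathsf C'$.

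The key dynamical input is a single-annulus \emph{winding estimate}: on $G_j$, for every vertex $v$ in the inner boundary of $\mathcal A_j$, the conditional probability (given $\phi_n$) that the walk from $v$ traces a closed curve encircling $y$ inside $\mathcal A_j$ before exiting through the outer boundary is at least some $p = p(C) > 0$. My plan is to cut $\mathcal A_j$ along a radial segment, view the resulting domain as a topological rectangle with the two banks of the cut as short sides, and observe that comparability of around/across resistances forces a non-negligible current across the cut; via the resistance characterization \eqref{return6} this translates into a positive probability for the walk to hit the far bank of the cut before escaping to the outer boundary, and a strong-Markov iteration then upgrades this to a complete revolution. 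Given this estimate, conditional on $\phi_n$ and on the field event of the previous paragraph, the walk must cross each good $\mathcal A_j$ on its way out of $\hat S$, and by strong Markov the loop attempts in distinct annuli are independent in the walk randomness. Hence the probability of never closing a loop is at most $(1-p)^{|J|/2} \leq \exp(-\alpha(l-k))$ for some $\alpha = \alpha(\mathsf C) > 0$, and a final union bound over $y_0$ completes the argument.

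The hard part is the single-annulus winding estimate: Lemmas \ref{returnlem}--\ref{returnlem2} only yield hitting-type probabilities, whereas producing a \emph{topological loop} around $y$ requires genuine use of planarity together with the flow-decomposition technique developed in the proof of Lemma \ref{returnlem3}. I expect that step to constitute the bulk of the technical work; the remaining multi-scale percolation-style analysis parallels Section \ref{sec6}.
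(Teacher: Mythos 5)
Your multi-scale skeleton (good annuli with comparable around/across resistances, a Hoeffding-type count of good scales, strong Markov across scales, and a union bound over an $O(2^{2(l-k)})$ grid of locations) matches the paper's structure, which works with nested squares $T_i$ around a $2^{-l}$-square containing both $x$ and $y$ and invokes the argument of Lemma \ref{decaynumber} for the count of good scales. However, there is a genuine gap at exactly the step you flag as "the hard part": the single-annulus winding estimate is not proved, and the route you sketch does not follow from the paper's lemmas. Cutting $\mathcal A_j$ along a radial segment and invoking \eqref{return6} requires a lower bound on $R(v,Z)+R(A,Z)-R(v,A)$ with $A$ the far bank of the cut and $Z$ the outer boundary; the only tool of this kind is Lemma \ref{returnlem3}, which is proved (via the delicate flow-decomposition argument) only for the concentric-box geometry $A=\partial B(r_1)$, $Z=\partial B(r_4)$, and an annulus-scale comparison $R(\mbox{around }\mathcal A_j)\leq C\,R(\mbox{across }\mathcal A_j)$ alone cannot control such hitting probabilities: as the paper stresses, harmonic measure can change drastically under bounded changes of resistances, so "non-negligible current across the cut" does not translate into a uniform-in-$v$ hitting bound without an argument of Lemma \ref{returnlem3} type adapted to the cut geometry, which you do not supply. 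The paper avoids ever needing such a statement: it places a ring of $O(1)$ small boxes $Y_1,\dots,Y_r$ (side length $2^{-i-100}$) around the annulus, requires the resistance comparability at concentric boxes $B_1(Y_j)\subset\cdots\subset B_4(Y_j)$ around each small box, and applies the already-proved Lemma \ref{returnlem} at each link; iterating by the strong Markov property produces an encircling loop with probability at least $q^r$. Any correct completion of your plan would either have to reprove a returnlem3-type inequality for the cut annulus or reduce, as the paper does, to the concentric-box geometry.

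A secondary but real problem is your independence claim: choosing sparse scales $J$ and passing to $\psi$ does \emph{not} make the events $G_j$ jointly independent, because $\psi_n=\psi_{0,n}$ retains contributions from spatial scales up to order one, so its dependence range is of order one and all the nested annuli $\mathcal A_j$ around $y_0$ lie well within it. The correct mechanism is the coarse/fine decomposition used in Lemma \ref{decaynumber}: only the fine fields $\psi^{\mathrm f}_{j,n}$ supported near $\mathcal A_j$ are independent across scales, and the shared coarse parts must be controlled separately (Fernique plus Borell--TIS over all sign choices) before Hoeffding can be applied. This is fixable by following that decomposition, which is what the paper does, but as written your step from $\mathbb P(G_j)\geq 1-q$ to the exponential bound on the number of bad scales does not hold.
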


	\begin{figure}		
		\centering		
		\begin{minipage}[c]{0.45\textwidth}
			\centering
			\includegraphics[height=5.5cm,width=5.5cm]{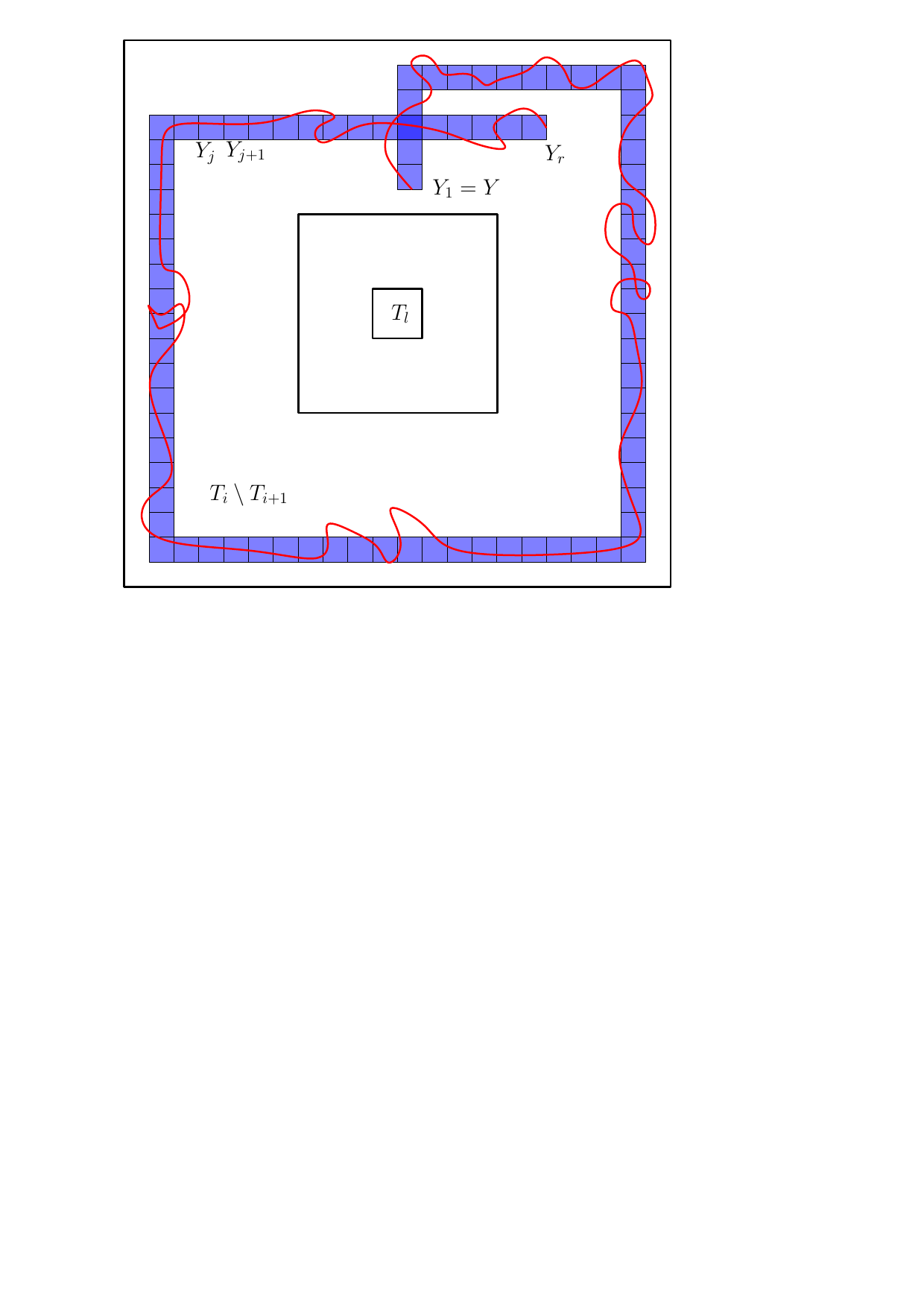}
		\end{minipage}
		\begin{minipage}[c]{0.45\textwidth}			
			\centering			
			\includegraphics[height=5.5cm]{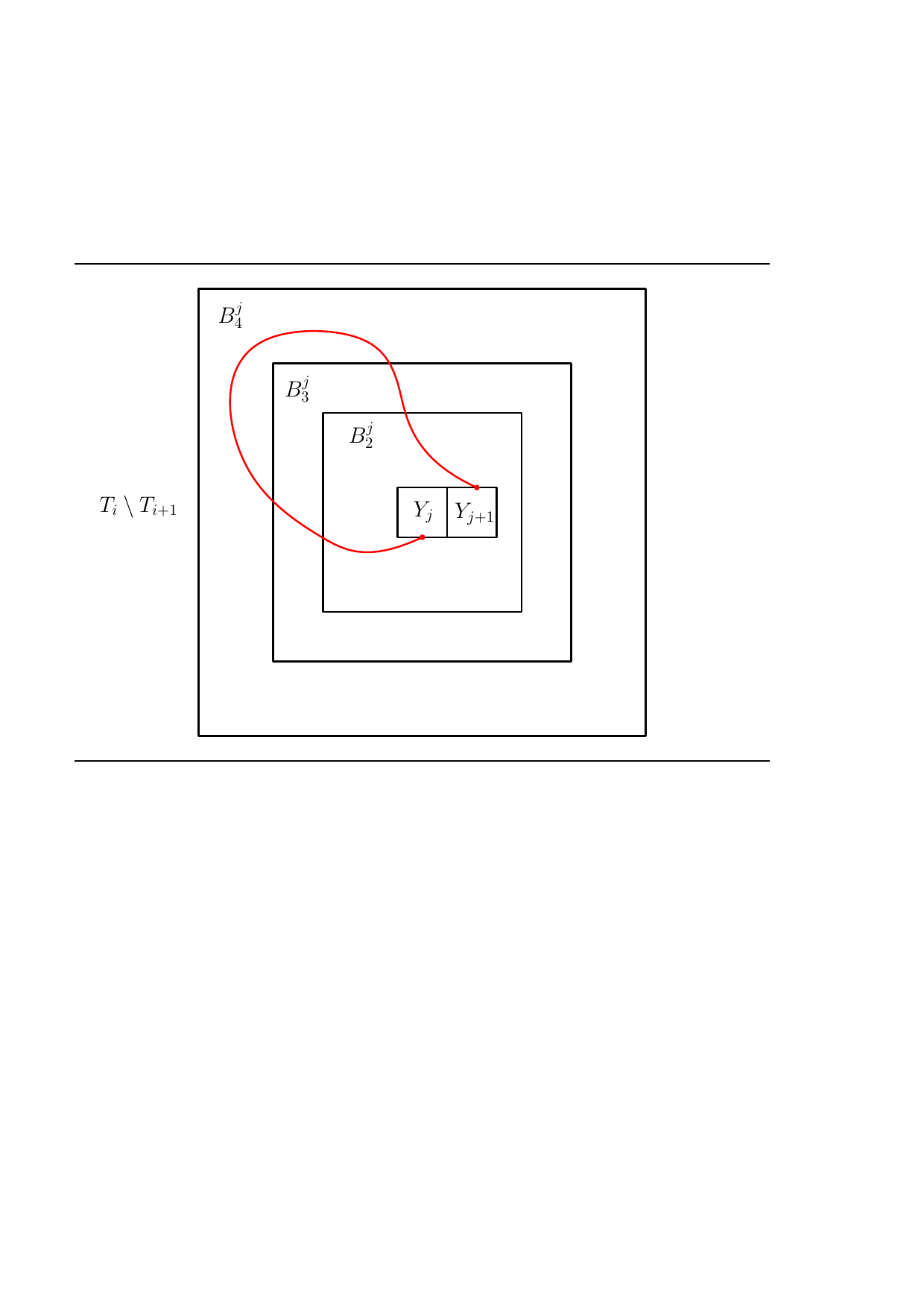}
		\end{minipage}
		\caption{Illustration of Lemma \ref{exitlem}}	
		\label{fig8}
	\end{figure}
	\begin{proof}
		For any $|x-y|\leq 2^{-l}$, we can find a $2^{-l}\times2^{-l}$ square $T_l$ (whose corners are multiples of $(2^{-l-1}, 2^{-l-1})$) such that $x,y\in T_l$. For $k\leq i\leq l$, let $T_i$ be the the concentric square of $T_l$ with side length $2^{-i}$.
		
		In the annulus $T_i\setminus T_{i+1}$, we define $\mathcal Y$ to be the set of dyadic squares with side length $2^{-i-100}$ that have $L^{\infty}$-distance at least $2^{-i-10}$ from $\partial (T_i\setminus T_{i-1})$, and define $\mathcal Y^\prime$ to be the set of dyadic squares with side length $2^{-i-100}$ that have $L^{\infty}$-distance $2^{-i-1}$ from $\partial T_i$. For $Y\in \mathcal Y$, let $B_1(Y),B_2(Y),B_3(Y),B_4(Y)$ be boxes with the same center, such that $B_1(Y)=Y$ and the side length of $B_{j+1}(Y)$ is 4 times that of $B_j(Y)$ for $j=1,2,3$. Define $E_i$ to be the event that for all $Y\in \mathcal Y$
		\begin{equation}
			c_2R(\partial B_1(Y),\partial B_4(Y))\leq R(\mbox{around } \partial B_3(Y)\setminus \partial B_2(Y))\leq C_3R(\partial B_4(Y),\partial B_3(Y)).
		\end{equation}
		Now we consider each $Y\in \mathcal{Y}^\prime$. We may choose $Y=Y_1,\ldots,Y_r$ with $r$ less than an absolute constant such that these $Y_j$'s form a partition for a ``tube'' where $Y_j$ shares an edge with $Y_{j+1}$, and that any path with Hausdorff distance at most $64\cdot 2^{-i-100}$ from the tube must disconnect $\partial T_i$ from $\partial T_{i+1}$. As a result, if the random walk visits $Y_j$ after $Y_{j+1}$ before leaving $B_4(Y_j)$, then its trace must disconnect $\partial T_i$ and $\partial T_{i+1}$. (See Figure \ref{fig8} for an illustration.) On the event $E_i$, for $1\leq j\leq r$ and for $v\in Y_{j+1}$, we apply Lemma \ref{returnlem} with $A=\partial B_1(Y_j)$ and $Z=\partial B_4(Y_j)$, and derive that
		\begin{equation}\label{exit2}
			P^v(X^v \mbox{ hits } Y_{j} \mbox{ before exiting } B_4(Y_j))\geq q,
		\end{equation}
		where $q=q(C_3,c_2)>0$. Thus by the strong Markov property we have for any $v\in \cup_{1\leq j\leq r}Y_j$,
		\begin{equation}\label{return9}
			P^v(X^v \mbox{ disconnects } T_i \mbox{ from } T_{i+1} \mbox{ before exiting } T_{i+1}\setminus T_i)\geq q^r.
		\end{equation}
		
		For any $p\in (0, 1)$, by Theorem \ref{thm1}, there exist constants $c_2=c_2(p)$ and $C_3=C_3(p)$ such that $\mathbb{P}(E_i) \geq p$ (note that here we applied a union bound over all $1\leq j\leq r$).
		Since we can choose $p$ close to 1 (depending on $\mathsf C$), we can adapt the proof of Lemma \ref{decaynumber} verbatim and get that
		\begin{equation}\label{return10}
			\mathbb{P}(\#\{k\leq i\leq l:E_i^{c} \mbox{ occurs}\}\geq (l-k)/2)\leq \exp(-\mathsf C(l-k)).
		\end{equation}
		On the event in \eqref{return10}, each time we visit $T_i\setminus T_{i+1}$ where $E_i$ occurs, by \eqref{return9} the disconnecting event in \eqref{return7} occurs with $P$-probability greater least $q^r$. Thus by the strong Markov property we have 
		$$P(X^x \mbox{ disconnects } y \mbox{ from } \partial B(1) \mbox{ before exiting } \hat S)\geq 1-(1-q^r)^{(l-k)/2}.$$
		Combined with \eqref{return10} and a union bound over $T_l$, it completes the proof.
	\end{proof}

	\begin{proof}[Proof of Lemma \ref{conti}]
		Fix $k$ such that $K\subset B(1-2^{-k})$ and assume without loss of generality that $\mathsf C \geq 10$ and $\alpha<1$. For sufficiently large $n$, applying Lemmas \ref{exitlem1} and \ref{exitlem} with $l$ such that $\exp(-\alpha(l-k))<\epsilon$, we have for each $S\in \mathcal S_k$,
		$$d_{\mathrm{TV}}(X^{(n,x)}_{\tau^x},X^{(n,y)}_{\tau^y})\leq \epsilon \mbox{ for all } x, y\in S \mbox{ with } |x-y|\leq 2^{-l}$$
		holds with $\mathbb P$-probability at least $1-\epsilon^\mathsf C$. Thus by a union bound we have: with $\mathbb{P}$-probability at least $1-2^{2k+2}\epsilon^\mathsf C$, \eqref{contieq} holds, which completes the proof by taking $\delta=2^{-l}$.
	\end{proof}
	
	\subsection{Tightness of the random walk traces}\label{subsec73}
	In this subsection, we complete the proof of Theorem \ref{thm2}, i.e., we prove tightness for the random walk traces. Recall the definition of $d_{\mathrm{CMP}}$ from Remark \ref{rmk1}. It was shown in \cite[Lemma~2.1]{aizenman1998holder} that $d_{\mathrm{CMP}}$ induces a complete metric on the set of curves viewed modulo time parameterization.
	
	For any dyadic point $x$ and any dyadic square $S$ that contains $x$, we define the random measure $\mu_n(x,S)$ to be the measure on the exit point when the random walk on $\mathbb Z_n^2$ started at $x$ exits $S$ for the first time (i.e., this is the harmonic measure/exit measure on $\partial S$). Via a diagonal argument, for any subsequence of ${X^{(n)}}$, there exists a further subsequence ${X^{(n_m)}}$ such that
	\begin{equation}\label{weakcon}
		\mu_{n_m}(x,S) \mbox{ converges weakly for all } x \mbox{ and } S.
	\end{equation}
	To lighten the notation, we write $n$ for $n_m$ in the rest of this subsection.
	
	\begin{figure}
		\centering
		\includegraphics[width=60mm]{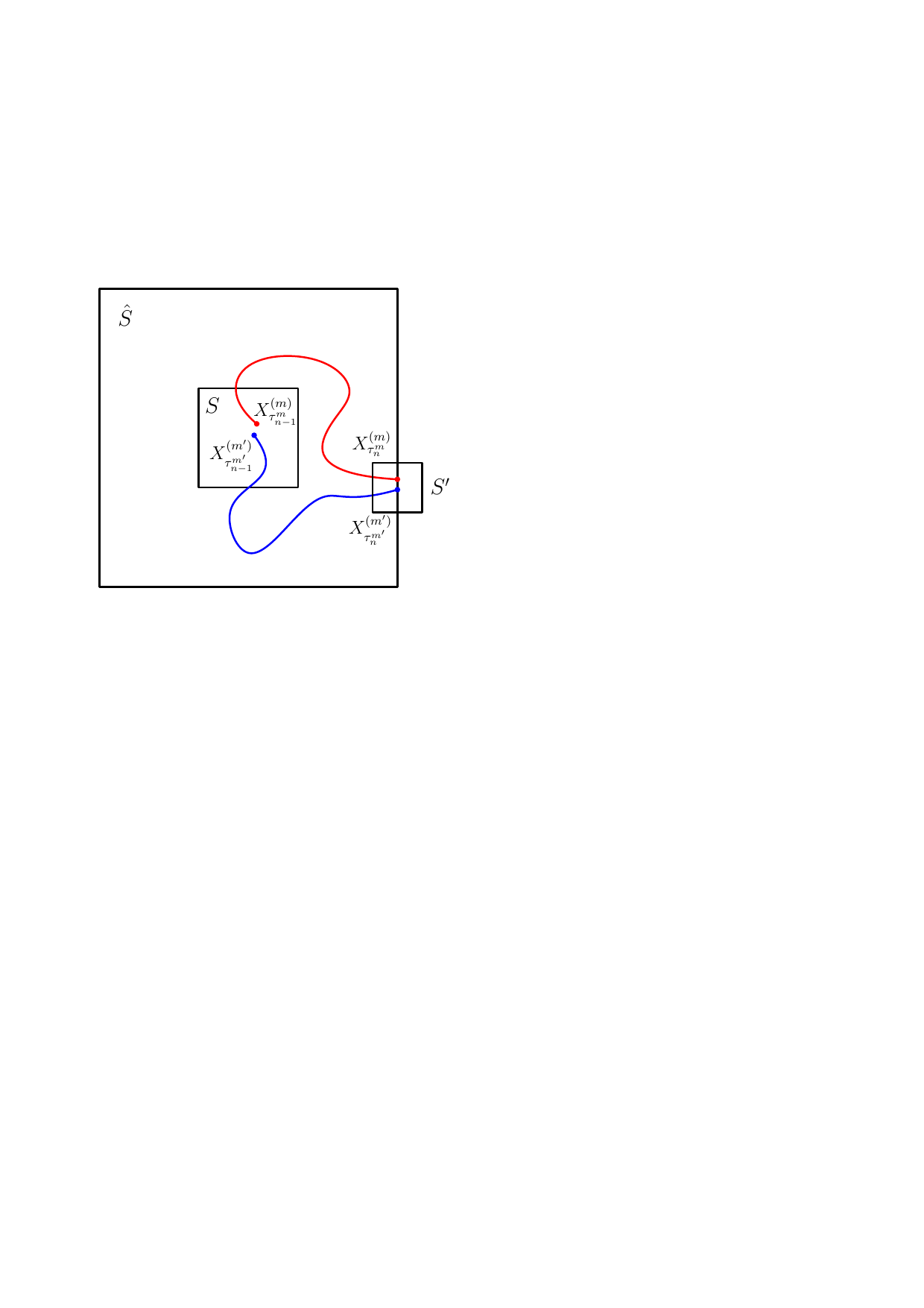}
		\caption{Coupling random walks in different scales}
		\label{fig3}
	\end{figure}
	
	\begin{proof}[Proof of Theorem \ref{thm2}]
		We fix a small $\delta$ and let $k=\lfloor\log \delta^{-1} \rfloor+1$.  Recall the definition of $\tau^n_i$ in \eqref{stoppingtime}. By Lemma \ref{exitlem2} there exists $t=t(\delta)$ such that with $\mathbb{P}$-probability at least $1-\delta/2$, 
		\begin{equation}\label{tight1}
			P(\tau^n_t=\tau^n_{t-1})\geq 1-\frac{\delta}{2}.
		\end{equation}
		
		Now we fix $\epsilon>0$ and an integer $l$ to be determined later. By \eqref{weakcon} there exists some $N=N(\epsilon,l)$ such that the following holds for all $n, n' \geq N$: for all $x\in 2^{-l}\mathbb{Z}^2$, with $\mathbb{P}$-probability at least $1-\delta/2$, it holds that
		\begin{equation}\label{tight3}
			\sum_{S^{\prime}\in \mathcal S_l}|P(X^{(n)}_{x,S}\in S^{\prime})-P(X^{(n^{\prime})}_{x,S}\in S^{\prime})|\leq \epsilon,
		\end{equation}
		where $X^{(n)}_{x,S}$ denotes the the point from which the random walk exits $S$.
		
		By Lemmas \ref{exitlem1} and \ref{exitlem}, for a square $S^{\prime}\in \mathcal S_l$ and for all $x,y\in S^{\prime}$, with $\mathbb{P}$-probability at least $1-\exp(-100(l-k))$ we have
		\begin{equation}
			d_{\mathrm{TV}}(X^{(n)}_{x,S},X^{(n)}_{y,S})\leq\exp(-\alpha(l-k))
		\end{equation}
		for some absolute constant $\alpha$. Applying a union bound over $S^\prime\in \mathcal S_l$ and three shifts of $\mathcal S_l$ by $(0,2^{-l-1})$, $(2^{-l-1},0)$, and $(2^{-l-1},2^{-l-1})$, we see that with $\mathbb P$-probability at least $1-2^{2l+4}\exp(-100(l-k))$, for all $x,y$ such that $|x-y|<2^{-l}$,
		\begin{equation}\label{tight2}
			d_{\mathrm{TV}}(X^{(n)}_{x,S},X^{(n)}_{y,S})\leq\exp(-\alpha(l-k)).
		\end{equation}
		
		Now we construct a coupling between $X^{(n)}$ and $X^{(n^{\prime})}$ (both started from the origin) inductively and we denote by $Q$ their joint measure under the coupling. With $\mathbb{P}$-probability at least $1-\delta-2^{2l+4}\exp(-100(l-k))$, we see that \eqref{tight1}, \eqref{tight3} and \eqref{tight2} hold, which we assume in what follows. We will show by induction that we can couple the two random walks up to $\tau^n_s$ and $\tau^{n^\prime}_s$ respectively such that
		\begin{equation}\label{tight5}
			Q(E_s)\geq 1-s\epsilon, \mbox{ where } E_s=\left\{|X^{(n)}_{\tau^n_{j}}-X^{(n^{\prime})}_{\tau^{n^{\prime}}_{j}}|\leq 2^{-l},\mbox{ for all } j\leq s\right\}.
		\end{equation}
		Suppose \eqref{tight5} holds for $s-1$ in place of $s$ (i.e., suppose our induction hypothesis holds), and we will next show that \eqref{tight5} holds (for $s$). By \eqref{tight3} and \eqref{tight2} we can couple the random walk on $\mathbb Z_n^2$ between time $\tau^n_{s-1}$ and $\tau^n_{s}$ and the random walk on $\mathbb Z_{n^{\prime}}^2$ between time $\tau^{n^{\prime}}_{s-1}$ and $\tau^{n^{\prime}}_{s}$ such that, 
		\begin{equation}
			Q\left(|X^{(n)}_{\tau^n_{s}}-X^{(n^{\prime})}_{\tau^{n^{\prime}}_{s}}|>2^{-l};|X^{(n)}_{\tau^n_{j}}-X^{(n^{\prime})}_{\tau^{n^{\prime}}_{j}}|\leq 2^{-l},\mbox{ for all } j<s\right)\leq \epsilon.
		\end{equation}
		Combined with our induction hypothesis that \eqref{tight5} holds for $s-1$, it follows that \eqref{tight5} holds for $s$, completing the induction procedure. Combining \eqref{tight5} with \eqref{tight1}, we get the following: if we write $\beta^{(n)}$ as the trace of $X^{(n)}$, then
		\begin{equation}
			Q\left(d_{\mathrm{CMP}}(\beta^{(n)},\beta^{(n^{\prime})})\leq 3\cdot2^{-k}\right)\geq Q\left(\left\{\tau^n_t=\tau^n_{t-1}\right\}\cap \{\tau^{n^{\prime}}_t=\tau^{n^{\prime}}_{t-1}\}\cap E_t\right)\geq 1-{\delta}-t\epsilon.
		\end{equation}
		
		Now we set $\epsilon<\delta/t$ and set $l$ sufficiently large depending on $\delta$, so that $N$ depends only on $\delta$ (recall \eqref{tight3}). Summarizing the above discussions, we arrive at the following: for each $n,n^{\prime}>N$, with $\mathbb{P}$-probability at least $1-2\delta$, there exist a coupling $Q$ such that
		\begin{equation}
			Q\left(d_{\mathrm{CMP}}(\beta^{(n)},\beta^{(n^{\prime})})\leq 3\delta\right)\geq1-2\delta.
		\end{equation}
		So for each $n,n^{\prime}>N$, if we consider the Prokhorov distance with respect to the metric $d_{\mathrm{CMP}}$, we have
		\begin{equation}
			\mathbb{P}\left(d_{\mathrm{prok}}(X^{(n)},X^{(n^{\prime})})\leq 3\delta\right)\geq 1-2\delta.
		\end{equation}
		By the completeness of the measure space on curves (as shown in \cite{aizenman1998holder}), we know that $X^{(n)}$ converges in probability (note that here $n = n_m$ is already a subsequence). So there is a further subsequence that converges $\mathbb P$-a.s. with respect to $d_{\mathrm{prok}}$, and by Prokhorov's Theorem it converges weakly $\mathbb P$-a.s.
	\end{proof}
	
	\section{Tightness of the expected exit time}\label{sec8}
	In this section we give the proof of Theorem \ref{exptime} taking Theorem \ref{thm1} as a crucial input. The key is to estimate the expected exit time, for which the main ingredient is to compute the probability of hitting a point before exiting. In light of \eqref{return6}, Lemma \ref{returnlem3} will be crucial in establishing a lower bound, and the (easier) upper bound employs Proposition \ref{series}.
	
	\subsection{Tightness of the LQG measure}
	In this subsection we will prove that, for small $\gamma$ the ``LQG measure'' (defined in \eqref{lqg} below) normalized by its expectation is typically of order 1. In fact, much more beyond the tightness (including tail behaviors) has been understood for the LQG measure thanks to works including \cite{kahane1985chaos,robert2010gaussian,rhodes2014gaussian} (see also \cite{berestycki2024gaussian}). We include a proof merely for completeness since our field, defined on a rescaled lattice, is slightly different from the continuum case in the literature. Note that our computation follows that in the cited literature.
	
	For a subset $B\subset \mathbb Z_n^2$, we define the LQG measure of $B$ by
	\begin{equation}\label{lqg}
		\pi_n(B)=\sum_{y\in B} \pi_n(y),
	\end{equation}
	where $\pi_n(y) = \sum_{e: e\sim y} e^{-\gamma \phi_n(m_e)}$. For notation convenience, we define
	\begin{equation}
		\eta_n(B)=\sum_{y\in B} e^{\gamma\phi_{n}(y)}.
	\end{equation}A direct calculation shows that 
	\begin{equation}\label{lqgex}
		\mathbb{E}[\eta_n(B)]=2^{\frac{\gamma^2n}{2}}|B|.
	\end{equation}
	For brevity, we write $\tilde{\pi}_n(B)=\pi_n(B)/\mathbb{E}[\pi_n(B)]$ and $\tilde{\eta}_n(B)=\eta_n(B)/\mathbb{E}[\eta_n(B)]$. For small $\gamma$, we can give a uniform bound on the second moment of the LQG measure in any box $B$.	
	\begin{lem}\label{lqgpo}
		For $\gamma<\sqrt{2}$, $m\in \mathbb Z$ and for every box $B$ with side length $2^{m}$, there exists a constant $C_1=C_1(\gamma)$ such that
		\begin{equation}\label{lqgpoeq}
			\mathbb{E}[\tilde{\pi}_n(B)]^2\leq C_1(2^{-\gamma^2m}\vee 1).
		\end{equation}
	\end{lem}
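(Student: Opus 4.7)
The claim is exactly the classical $L^2$-phase bound for $2$-dimensional Gaussian multiplicative chaos: $\gamma^2<2$ is precisely the threshold at which the logarithmic covariance kernel is square-integrable in $\mathbb R^2$. My plan is a direct second-moment computation for $\pi_n(B)$ using the Gaussian moment generating function together with the covariance bound \eqref{cov1}.

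First I would expand
\begin{equation*}
\mathbb{E}[\pi_n(B)^2]=\sum_{(y,e),(y',e')}\mathbb{E}\bigl[e^{-\gamma(\phi_n(m_e)+\phi_n(m_{e'}))}\bigr]=\sum_{(y,e),(y',e')}2^{\gamma^2 n}\,e^{\gamma^2 K(m_e,m_{e'})},
\end{equation*}
where the outer sum is over $(y,e),(y',e')$ with $y,y'\in B$ and $e\sim y$, $e'\sim y'$, and $K(x,z):=\mathbb{E}[\phi_n(x)\phi_n(z)]$; the second equality uses $\mathrm{Var}(\phi_n)=n\log 2$ and the Gaussian moment formula. Since $\mathbb{E}[\pi_n(B)]\asymp|B|\cdot 2^{\gamma^2 n/2}$, the problem reduces to bounding $|B|^{-2}\sum e^{\gamma^2 K(m_e,m_{e'})}$. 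Inserting the covariance bound \eqref{cov1} together with the Cauchy-Schwarz estimate $K\leq n\log 2$ gives $e^{\gamma^2 K(m_e,m_{e'})}\preceq|m_e-m_{e'}|^{-\gamma^2}\wedge 2^{\gamma^2 n}$, turning the double sum into an integral-type estimate in the separation $|m_e-m_{e'}|$.

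Next I would estimate the inner sum dyadically. For fixed $e$ and $r\in[2^{-n}\zeta_n^{-1},2^m]$, the number of edges $e'$ with $|m_e-m_{e'}|\in[r,2r]$ is $\asymp r^2\cdot 2^{2n}\zeta_n^2$ (using that the mesh of $\mathbb Z_n^2$ is $2^{-n}\zeta_n^{-1}$), and each contributes $\preceq r^{-\gamma^2}$, for an aggregate $r^{2-\gamma^2}\cdot 2^{2n}\zeta_n^2$ at scale $r$. The hypothesis $\gamma^2<2$ enters precisely here: it makes the resulting geometric series in $r$ convergent with the largest-scale term dominating, yielding $\sum_{e'}e^{\gamma^2 K(m_e,m_{e'})}\preceq 2^{m(2-\gamma^2)}\cdot 2^{2n}\zeta_n^2$ for each $e$. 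Multiplying by the $\asymp|B|\asymp 2^{2(m+n)}\zeta_n^2$ choices of $e$ and dividing by $\mathbb{E}[\pi_n(B)]^2\asymp|B|^2\cdot 2^{\gamma^2 n}$, the $\zeta_n$-factors and the $2^{\gamma^2 n}$-factor cancel to leave a net bound of $2^{-\gamma^2(m+n)}$, which for $m\geq 0$ is at most $1$ and for $m<0$ is at most $2^{-\gamma^2 m}$, as required. Near-diagonal pairs (with $|m_e-m_{e'}|$ on the order of the mesh) are handled separately by the trivial bound $e^{\gamma^2 K}\leq 2^{\gamma^2 n}$, contributing $O(2^{\gamma^2 n}/|B|)$ after normalization; the assumption $\zeta_n\geq\sqrt n$ combined with $B$ containing at least one lattice point (so $m+n\geq-\log_2\zeta_n$) keeps this within $2^{-\gamma^2 m}\vee 1$.

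I expect the main technical nuisance to be clean bookkeeping of the $\zeta_n$-factors---in particular verifying that the diagonal contribution stays within the bound at the most negative admissible value of $m$, where $B$ barely contains a lattice point---and carefully treating boundary edges of $B$ (which are counted with weight one rather than two). No conceptual obstacle arises beyond the $L^2$-phase threshold $\gamma^2<2$ itself, which is exactly what drives the convergence of the dyadic sum in the third paragraph.
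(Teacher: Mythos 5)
Your overall strategy is the same as the paper's: a direct second-moment computation via the Gaussian moment formula, the covariance bound \eqref{cov1}, and a dyadic summation over the pair separation, with $\gamma^2<2$ making the geometric series converge with its top scale dominating. The only structural difference is cosmetic: the paper computes the second moment of the vertex sum $\eta_n(B)=\sum_{y\in B}e^{\gamma\phi_n(y)}$ and then passes to $\pi_n$ by considering a few shifts of $B$, whereas you work directly with the edge-midpoint sum.

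One intermediate inequality, however, is false as stated and needs the standard cap. Since $K_n(x,z)=\pi\int_{2^{-2n}}^{1}p_t(x-z)\,dt\geq 0$, you always have $e^{\gamma^2K_n(m_e,m_{e'})}\geq 1$, while $|m_e-m_{e'}|^{-\gamma^2}\to 0$ once the separation exceeds $1$; hence the bound $e^{\gamma^2K_n}\preceq |m_e-m_{e'}|^{-\gamma^2}\wedge 2^{\gamma^2 n}$, and with it your claim that the dyadic sum is dominated by the top-scale term $2^{m(2-\gamma^2)}2^{2n}\zeta_n^2$, fails whenever $m>0$ (pairs at separation $\asymp 2^m$ alone contribute $\asymp 2^{2m}2^{2n}\zeta_n^2$ per fixed $e$). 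The repair is exactly the paper's ``$\vee 1$'': use $e^{\gamma^2K_n}\preceq (|m_e-m_{e'}|\wedge 1)^{-\gamma^2}$, split the dyadic sum at $r=1$, and observe that the scales $r\in[1,2^m]$ contribute $\preceq |B|$ per $e$, hence $O(1)$ after normalization, while the scales $r\leq 1$ contribute $\preceq 2^{2n}\zeta_n^2$ per $e$ (top scale $r\asymp1$ dominating, by $\gamma^2<2$), hence $\preceq 2^{-2m}\leq 1$ for $m\geq 0$; for $m\leq 0$ your computation is valid as written and gives $2^{-\gamma^2 m}$. Separately, your final bookkeeping has a small slip: from your own estimates the normalized bound is $2^{-\gamma^2 m}$, not $2^{-\gamma^2(m+n)}$ (the extra factor $2^{-\gamma^2 n}$ has no source); this does not affect the conclusion, since both quantities are at most $2^{-\gamma^2 m}\vee 1$.
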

	\begin{proof}
		A direct calculation shows that
		\begin{equation}
			\begin{aligned}\label{lqgne3}
				\mathbb{E}[\sum_{y\in B}\exp\left(\gamma\phi_{n}(y)\right)]^2
				&=\sum_{y,y^{\prime}\in B}\exp\left(\frac{\gamma^2}{2}\mathrm{Var}(\phi_{n}(y)+\phi_{n}(y^{\prime}))\right)\\
				&=\sum_{y,y^{\prime}\in B}\exp\left(\frac{\gamma^2}{2}(2n\log 2+2\mathbb{E}\phi_n(y)\phi_n(y^\prime))\right)\\
				&\preceq 2^{\gamma^2 n}\sum_{y,y^{\prime}\in B}\exp\left(-\gamma^2(\log|y-y^{\prime}|\vee 1)\right)\\
				&\preceq 2^{\gamma^2 n}\sum_{k\leq m+1} |B|^2(2^{-\gamma^2k}\vee 1)2^{-2(m-k)}\preceq (\mathbb{E}[\eta_n(B)])^2(2^{-\gamma^2m}\vee 1).
			\end{aligned}
		\end{equation}
		This implies that $\mathbb{E}[\tilde{\eta}(B)]^2\preceq 2^{-\gamma^2m}\vee 1$, with the implicit constant in $\preceq$ depending only on $\gamma$. Thus, by considering several shifts of the box $B$ (to address the slight difference between the left-hand sides of \eqref{lqgpoeq} and \eqref{lqgne3}) we may obtain \eqref{lqgpoeq}. 
	\end{proof}
	Next we control the negative moment.
	
	\begin{lem}\label{lqgne}
		For $\gamma<\sqrt{2}$, $m\geq 0$ and for $p$ sufficiently small, there exists a constant $C_2=C_2(\gamma,p)$ such that for every box $B$ with side length $2^{m}$,
		\begin{equation}\label{lqgneeq}
			\mathbb{E}[\tilde{\pi}_n(B)]^{-p}\leq C_2.
		\end{equation}
	\end{lem}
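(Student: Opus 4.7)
\medskip

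The plan is to prove a small-ball estimate of the form $\mathbb{P}(\tilde\pi_n(B) < \delta) \leq C\delta^{p_0}$ for some $p_0 = p_0(\gamma)>0$ and all $\delta$ small, from which \eqref{lqgneeq} follows for $p < p_0$ by writing $\mathbb{E}[\tilde\pi_n(B)^{-p}] = 1 + p\int_0^1 t^{-p-1}\mathbb{P}(\tilde\pi_n(B)<t)\,dt$. First I would reduce to the case where $B$ is a reference unit box $B_0$ in $\mathbb{Z}_N^2$ and $N$ is arbitrary: by the scaling property \eqref{scaling} of $\phi$ together with Proposition \ref{mesh} to absorb the mesh discrepancy, the law of $\tilde\pi_n(B)$ for $B$ of side $2^m$ coincides (up to controllable errors) with that of $\tilde\pi_{n+m}(B_0)$.

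The core step is a Kahane-style multi-scale recursion. Partition $B_0$ into four sub-boxes $B^{(1)},\ldots,B^{(4)}$ of half side length, and decompose $\phi_N = \phi_{0,1} + \phi_{1,N}$. By the white-noise construction, the restrictions of $\phi_{1,N}$ to enlargements of the four sub-boxes come from essentially disjoint space-time regions; after switching to the finite-range approximation $\psi$ from Section \ref{subsec32} and invoking Proposition \ref{normprop} to compare, these restrictions become (truly) independent. By the scaling computation above, each sub-box contribution from the fine field yields a normalized LQG measure $\tilde\pi^{(j)}$ whose law matches that of $\tilde\pi_{N-1}(B_0)$. Approximating $\phi_{0,1}$ as constant equal to $\phi_{0,1}(x_j)$ on each $B^{(j)}$---with the oscillation error tamed by Proposition \ref{gradient}---one obtains
\begin{equation*}
\tilde\pi_N(B_0) \;\gtrsim\; \tfrac{1}{4}\sum_{j=1}^4 e^{-\gamma\phi_{0,1}(x_j)-\gamma^2\log 2/2}\,\tilde\pi^{(j)}.
\end{equation*}
Since $\{\tilde\pi_N(B_0)<\delta\}$ forces each summand to be $<4\delta$, we get (letting $F_N(t):=\mathbb{P}(\tilde\pi_N(B_0)<t)$)
\begin{equation*}
F_N(\delta) \;\leq\; \mathbb{E}\Bigl[\prod_{j=1}^4 F_{N-1}\bigl(4\delta\,e^{\gamma\phi_{0,1}(x_j)+\gamma^2\log 2/2}\bigr)\Bigr] + (\text{oscillation/independence error}).
\end{equation*}

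To close the recursion, assume inductively $F_{N-1}(t) \leq t^{p_0}$ for $t \leq t_0$. The Gaussian integral over $(\phi_{0,1}(x_j))_{j=1}^4$, each of variance $\log 2$ with bounded pairwise covariances, yields $F_N(\delta) \leq \tilde C(p_0,\gamma)\cdot \delta^{4p_0}$ for small $\delta$, where $\tilde C$ collects factors of the form $4^{4p_0}\cdot 2^{O(p_0+p_0^2)\gamma^2}$ coming from moment generating functions of the coarse field. Since $\delta^{4p_0}\leq \delta^{p_0}$ for $\delta\leq 1$, and since $\tilde C\cdot \delta^{3p_0}\leq 1$ on a range $\delta\leq \delta_0(p_0,\gamma)$, the induction preserves $F_N(\delta)\leq \delta^{p_0}$ on this range provided $p_0$ is taken sufficiently small; for larger $\delta$ the trivial bound $F_N\leq 1$ combined with Paley--Zygmund applied to Lemma \ref{lqgpo} (giving $F_N(1/2)\leq 1 - 1/(4C_1)$) bootstraps the recursion to cover a uniform range of $\delta$. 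The base case is $F_0\leq 1$. The main obstacle is executing the recursion cleanly: the coarse-field oscillation and the residual correlation between fine fields across neighboring sub-boxes both generate error terms whose Gaussian tails (via Propositions \ref{gradient}, \ref{normprop} and Corollary \ref{osc1}) must be kept smaller than a fixed power of $\delta$, and the tracking of constants in $\tilde C$ to verify the closing condition is where the subcritical assumption $\gamma<\sqrt{2}$ is genuinely used (it ensures the multiplicative factor in the Gaussian integral does not overwhelm the gain $\delta^{3p_0}$ for any sufficiently small $p_0$).
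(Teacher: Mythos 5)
Your route is genuinely different from the paper's: the paper proves \eqref{lqgneeq} by a Girsanov/rooted-measure argument (Lemmas \ref{gir} and \ref{lqglem}) that converts the negative moment into decay of the Laplace transform $\mathbb{E}[\tilde\eta_n(B)e^{-s\tilde\eta_n(B)}]$ via FKG, yielding the bound for $p<1/(1+M\gamma^2)$; you instead propose a Kahane-style multiscale recursion for the small-ball probability, which in principle is also a valid (and potentially stronger) strategy. However, two steps do not work as written. First, the reduction to the unit box is false as stated: rescaling a box of side $2^m$ by $2^{-m}$ maps $\phi_{0,n}$ to $\phi_{m,n+m}$ on the unit box (see \eqref{scaling}), i.e.\ the coarse scales $t\in[2^{-2m},1]$ are \emph{absent}, so the law of $\tilde\pi_n(B)$ does not coincide with that of $\tilde\pi_{n+m}(B_0)$, and the discrepancy is a coarse field of variance $m\log 2$, which is not ``controllable'' uniformly in $m$. (This gap is easily repaired without scaling at all: $\tilde\pi_n(B)$ is the average of the $4^m$ normalized unit-box measures $\tilde\pi_n(B_j)$, and $t\mapsto t^{-p}$ is convex, so Jensen gives $\mathbb{E}[\tilde\pi_n(B)^{-p}]\le\max_j\mathbb{E}[\tilde\pi_n(B_j)^{-p}]$, reducing directly to the unit-box case.)

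Second, and more seriously, the closing of your induction fails as described. Your hypothesis $F_{N-1}(t)\le t^{p_0}$ holds only for $t\le t_0$, while the arguments $4\delta e^{\gamma\phi_{0,1}(x_j)+\gamma^2\log 2/2}$ exceed $t_0$ with probability bounded away from $0$ whenever $\delta$ is within a constant factor of $t_0$. If you extend the hypothesis to all $t$ in the natural way (e.g.\ $F_{N-1}(t)\le (t/t_0)^{p_0}$) and then take the Gaussian integral, the resulting constant is $\tilde C=\bigl(4^4\,2^{2\gamma^2}t_0^{-4}e^{O(\gamma^2 p_0)}\bigr)^{p_0}$, and the requirement $\tilde C\,\delta^{3p_0}\le 1$ at the top of the range $\delta\approx t_0$ becomes $t_0^3\ge 4^4\,2^{2\gamma^2}\cdots$, which is impossible for $t_0<1$ no matter how small $p_0$ is (both the cost and the gain are $p_0$-th powers of constants, so shrinking $p_0$ does not help). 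In other words, the per-step constant cost on the scale (the factor $4\cdot 2^{\gamma^2/2}$ plus the coarse-field contribution) cannot be absorbed by the gain $\delta^{3p_0}$ on a non-shrinking range. The standard repair is the Kahane/Molchan scheme: truncate the coarse field at a level $A$, so that the recursion reads $F_N(\delta)\le F_{N-1}(K\delta e^{\gamma A})^4+Ce^{-cA^2}$, and handle the resulting boundary layer $\delta\in[t_0/K',t_0]$ not by the recursion but by a uniform-in-$N$ bound $F_N(t_0)\le 1-c_0$ (your Paley--Zygmund step via Lemma \ref{lqgpo}), choosing $p_0$ small enough that $(t_0/K')^{p_0}\ge 1-c_0$ and then $K'$, $A$ appropriately; the order of these choices is delicate and is precisely what your sketch glosses over. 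Two smaller points: even for the finite-range field $\psi_{1,N}$, the restrictions to the four \emph{adjacent} half-boxes are not independent near their shared boundaries, so you must use well-separated sub-boxes (which also perturbs the exact scaling identification, since $\psi$ is not exactly scale invariant and Proposition \ref{normprop} must be invoked at each step); and the assumption $\gamma<\sqrt 2$ is not what makes the Gaussian integral converge (the recursion works for all subcritical $\gamma$) --- it enters only through the second-moment bound of Lemma \ref{lqgpo} used in the Paley--Zygmund step, exactly as in the paper.
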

	
	In order to prove Lemma \ref{lqgne}, we need a version of Girsanov's lemma. In the following lemmas, we keep using the notation for $B$ in Lemma \ref{lqgne}. Let $\mathbb{P}_n$ denote the law of $\phi_{n}$ and we let $K_n(x,y)=\mathbb{E}[\phi_{n}(x)\phi_{n}(y)]$ be the covariance function of $\phi_{n}$.
	\begin{lem}\label{gir}
		Let $x$ be sampled independently and uniformly in $B$ and let $d\mathbb{Q}_n=\tilde{\eta}_n(B)d\mathbb{P}_n$. Under $\mathbb{Q}_n$, the law of $\phi_{n}$ is the same as the law of $\phi_{n}+\gamma K_n(x,\cdot)$ under $\mathbb{P}_n$.
	\end{lem}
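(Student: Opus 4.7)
The plan is to realize $\tilde{\eta}_n(B)$ as the $x$-average of a single-point Cameron--Martin density, and then apply the classical finite-dimensional Gaussian tilt identity for each fixed $y \in B$. Using $\mathbb{E}[e^{\gamma\phi_n(y)}] = \exp(\gamma^2 \mathrm{Var}(\phi_n(y))/2) = 2^{\gamma^2 n/2}$ (independent of $y$) together with \eqref{lqgex}, one immediately writes
\[
\tilde{\eta}_n(B) \;=\; \frac{1}{|B|}\sum_{y \in B} \frac{e^{\gamma \phi_n(y)}}{\mathbb{E}[e^{\gamma \phi_n(y)}]} \;=\; \mathbb{E}_x\!\left[\frac{e^{\gamma \phi_n(x)}}{\mathbb{E}[e^{\gamma \phi_n(x)}]}\right],
\]
where $\mathbb{E}_x$ denotes expectation over $x$ uniform on $B$, independent of $\phi_n$. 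This exhibits $\tilde{\eta}_n(B)$ as an average of single-point tilt densities $D_y := e^{\gamma \phi_n(y)}/\mathbb{E}[e^{\gamma \phi_n(y)}]$.

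The main step I would carry out is the pointwise Cameron--Martin identity: for each fixed $y \in B$ and each bounded measurable cylinder functional $F(\phi_n) = f(\phi_n(z_1), \dots, \phi_n(z_k))$,
\[
\mathbb{E}^{\mathbb{P}_n}[D_y\, F(\phi_n)] \;=\; \mathbb{E}^{\mathbb{P}_n}[F(\phi_n + \gamma K_n(y,\cdot))].
\]
This is a finite-dimensional Gaussian calculation: the vector $(\phi_n(y),\phi_n(z_1),\dots,\phi_n(z_k))$ is centered Gaussian under $\mathbb{P}_n$, $D_y = \exp\!\bigl(\gamma \phi_n(y) - \tfrac{\gamma^2}{2}\mathrm{Var}(\phi_n(y))\bigr)$, and completing the square in the joint Gaussian density shifts the mean of each $\phi_n(z_j)$ by $\gamma\, \mathbb{E}[\phi_n(y)\phi_n(z_j)] = \gamma K_n(y,z_j)$ while preserving the covariance structure. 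This is precisely the standard assertion that an exponential tilt by a centered Gaussian linear functional shifts the mean by the associated covariance. Since $\phi_n$ is smooth and its law is determined by cylinder functionals, the finite-dimensional identity upgrades to a law-level statement for each fixed $y$.

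Averaging the pointwise identity over $y$ and recalling that $x$ is uniform on $B$ under $\mathbb{Q}_n$ then yields
\[
\mathbb{E}^{\mathbb{Q}_n}[F(\phi_n)] \;=\; \mathbb{E}^{\mathbb{P}_n}[\tilde{\eta}_n(B) F(\phi_n)] \;=\; \frac{1}{|B|}\sum_{y \in B} \mathbb{E}^{\mathbb{P}_n}[F(\phi_n + \gamma K_n(y,\cdot))] \;=\; \mathbb{E}_x \mathbb{E}^{\mathbb{P}_n}[F(\phi_n + \gamma K_n(x,\cdot))],
\]
which is exactly the claim of the lemma. I do not anticipate any substantive obstacle here: the statement is a direct (and essentially textbook) application of the finite-dimensional Cameron--Martin formula, and the only care needed is the bookkeeping of normalization constants and the observation that $B \subset \mathbb{Z}_n^2$ is finite, so the ``integration over $x$'' is just a discrete average and no additional regularity input on $K_n$ is required.
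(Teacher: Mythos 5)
Your proposal is correct and follows essentially the same route as the paper: decompose $\tilde{\eta}_n(B)$ into the single-point tilt densities $e^{\gamma\phi_n(z)}/Z$, apply Girsanov/Cameron--Martin for each fixed $z$ to get the mean shift $\gamma K_n(z,\cdot)$, and average over the uniformly sampled point. Spelling out the finite-dimensional completion-of-the-square instead of citing Girsanov's lemma is the only (cosmetic) difference.
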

	\begin{proof}
		Define $\mathbb{Q}_n^{z}$ by $d\mathbb{Q}_n^z=\frac{1}{Z}\exp(\gamma\phi_{n}(z))d\mathbb{P}_n$, where $Z=2^{\frac{\gamma^2n}{2}}$ is a normalizing constant. By Girsanov's lemma, the law of $\phi_n$ under $\mathbb Q^z_n$ is the same as the law of $\phi_{n}+\gamma K_n(z,\cdot)$ under $\mathbb{P}_n$. Since $\mathbb Q_n^z = \tilde \eta(\{z\}) d\mathbb P_n$, $\mathbb Q_n=\frac{1}{|B|}\sum_{z\in B}\mathbb Q_n^z$ and $x$ is uniformly and independently distributed on $B$, we complete the proof.
	\end{proof}
	
	\begin{lem}\label{lqglem}
		Let $x$ be sampled independently and uniformly in $B$. Consider the event
		\begin{equation}\label{Es}
			\mathcal{E}_s=\left\{\sum_{y\in \mathbb Z_n^2:|y-x|\leq s^{-M}}e^{\gamma^2 K_n(x,y)}e^{\gamma\phi_{n}(y)}\leq\frac{1}{s}\mathbb{E}[\eta_n(B)]\right\}.
		\end{equation}
		There exist $M=M(\gamma),s_0=s_0(\gamma)>0$ such that for all $s_0<s<2^{n/M}$, it holds that $\mathbb{P}(\mathcal{E}_s)\geq \frac{1}{2}$.
	\end{lem}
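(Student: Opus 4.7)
The proof proceeds by Markov's inequality applied to the random variable
\[
S := \sum_{y \in \mathbb{Z}_n^2:\, |y-x| \leq s^{-M}} e^{\gamma^2 K_n(x,y)} e^{\gamma \phi_n(y)}.
\]
Since $x$ (uniform on $B$) is independent of $\phi_n$ and $\mathbb{E}[e^{\gamma \phi_n(y)}]=2^{\gamma^2 n/2}$ (from $\mathrm{Var}(\phi_n(y))=n\log 2$), Fubini gives
\[
\mathbb{E}[S] \;=\; \frac{2^{\gamma^2 n/2}}{|B|}\sum_{x\in B}\sum_{y:\,|y-x|\leq s^{-M}} e^{\gamma^2 K_n(x,y)},
\]
so that $\mathbb{P}(\mathcal{E}_s^c)\leq s\,\mathbb{E}[S]/\mathbb{E}[\eta_n(B)]$. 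The task is to control the inner sum uniformly in $x$.

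Next, I would decompose dyadically. By the covariance bound \eqref{cov1}, $K_n(x,y)\leq (-\log|x-y|+1)\vee 1$, so for $y\neq x$ with $2^{-k-1}<|x-y|\leq 2^{-k}$ (and $k\leq n$) we have $e^{\gamma^2 K_n(x,y)}\lesssim 2^{\gamma^2 k}$, while the number of lattice points in that annulus is $\lesssim 2^{2(n-k)}\zeta_n^2$. For $y=x$, $e^{\gamma^2 K_n(x,x)}=2^{\gamma^2 n}$. Summing the geometric series in $k$, using $\gamma^2<2$ so that it is dominated by its largest-radius term where $2^{-k}\asymp s^{-M}$, yields
\[
\sum_{y:\,|y-x|\leq s^{-M}} e^{\gamma^2 K_n(x,y)} \;\lesssim\; (2^n\zeta_n)^2\, s^{-M(2-\gamma^2)} \;+\; 2^{\gamma^2 n},
\]
uniformly in $x\in B$.

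Plugging back in and using $\mathbb{E}[\eta_n(B)]=|B|\cdot 2^{\gamma^2 n/2}$ with $|B|\geq (2^n\zeta_n)^2$ (as $m\geq 0$), the Markov bound becomes
\[
\mathbb{P}(\mathcal{E}_s^c) \;\lesssim\; s^{1-M(2-\gamma^2)} \;+\; s\cdot 2^{(\gamma^2-2)n}\zeta_n^{-2}.
\]
Fix $M=M(\gamma)$ with $M>1/(2-\gamma^2)$: the first exponent is negative, so the first term tends to $0$ as $s\to\infty$. For the second term I invoke $s<2^{n/M}$, giving $s\cdot 2^{(\gamma^2-2)n}\leq 2^{n(1/M-(2-\gamma^2))}$ whose exponent is also negative by the same choice of $M$; hence this term is bounded by $1$ uniformly (and decays in $n$). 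Choosing $s_0=s_0(\gamma)$ sufficiently large pushes both contributions below $1/4$, yielding $\mathbb{P}(\mathcal{E}_s)\geq 1/2$.

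The calculation is essentially routine. The delicate points are (i) keeping the implicit constants independent of $n$, $\zeta_n$, and $B$ in the dyadic sum, and (ii) recognizing that the hypothesis $s<2^{n/M}$ is exactly what ensures $s^{-M}\geq 2^{-n}$, so that the ball $\{|y-x|\leq s^{-M}\}$ extends beyond the ``flat'' regime of $K_n$ (where $K_n\approx n\log 2$) and the $y=x$ atom does not overwhelm the annular contribution. Without this upper bound, the ball could essentially reduce to $\{x\}$ and the argument would break down.
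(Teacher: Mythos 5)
Your proof is correct, and it takes a genuinely different, more elementary route than the paper's. The paper never computes the first moment of $S=\sum_{y:|y-x|\leq s^{-M}}e^{\gamma^2K_n(x,y)}e^{\gamma\phi_n(y)}$ directly: it uses the \emph{lower} bound $K_n(x,y)\geq e^{-1}\log(s^M)$ on the ball $\{|y-x|\leq s^{-M}\}$ (this is where $s<2^{n/M}$, i.e. $s^{-M}\geq 2^{-n}$, enters there) to write $e^{\gamma^2K_n(x,y)}\leq Cs^{-2}e^{(\gamma^2+2e/M)K_n(x,y)}$, then interprets the resulting expectation via the size-biased/Girsanov identity of Lemma \ref{gir} as $\mathbb{E}\bigl[\eta_n(B)\sum_{y}e^{\beta\phi_n(y)}/Z\bigr]$ with $\beta=\gamma+\tfrac{2e}{M\gamma}$, and controls it by Cauchy--Schwarz plus the second-moment bound of Lemma \ref{lqgpo} (which forces $M$ large so that $\beta<\sqrt2$), finishing with Markov. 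You instead bound the annealed first moment of $S$ itself, using independence of $x$ and $\phi_n$, $\mathbb{E}e^{\gamma\phi_n(y)}=2^{\gamma^2n/2}$, the covariance bound \eqref{cov1} and a dyadic decomposition, with the hypothesis $s<2^{n/M}$ entering through $2^{-n}\leq s^{-M}$ when comparing the diagonal contribution to $\mathbb{E}[\eta_n(B)]$; this needs only $M>1/(2-\gamma^2)$ and avoids the Girsanov/second-moment machinery entirely (the larger $M$ in the paper only shrinks the admissible $p$ in Lemma \ref{lqgne}, which is harmless, so either choice serves the application). Two small points to tighten in your write-up: (i) drop the restriction ``$k\leq n$'' in the dyadic sum, since the bound $K_n(x,y)\leq(-\log|x-y|+1)\vee1$ holds for every $y\neq x$; as written, lattice points $y\neq x$ with $|y-x|\leq 2^{-n-1}$ (which exist, the spacing being $2^{-n}\zeta_n^{-1}$) are covered neither by your annuli nor by the $y=x$ atom --- they contribute $O(\zeta_n^2 2^{\gamma^2 n})$, which is indeed dominated by $(2^n\zeta_n)^2s^{-M(2-\gamma^2)}$ precisely because $2^{-n}\leq s^{-M}$, so the fix is immediate. (ii) For the second error term it is cleaner to use $2^{(\gamma^2-2)n}\leq s^{-M(2-\gamma^2)}$ directly, making both terms $O_\gamma\bigl(s^{1-M(2-\gamma^2)}\bigr)$; your ``choose $s_0$ large'' argument does work, but only because the range $s_0<s<2^{n/M}$ is vacuous unless $n>M\log_2 s_0$, and that should be said explicitly.
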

	\begin{proof}
		By \eqref{cov1}, for $|x-y|\leq s^{-M}$ and $\delta=2^{-n}$ we have $$K_n(x,y)\geq \int_{s^{-2M}/2}^{s^{-2M}/2\delta^2}\frac{1}{2 s}e^{-s}ds\geq  e^{-1}\int_{s^{-2M}/2}^{1/2}\frac{1}{2 s}ds=e^{-1}\log (s^M),$$ where we used $s^{-M}\geq \delta$. As a result, we have that
		\begin{equation}\label{lqgne1}
			\sum_{y\in \mathbb Z_n^2:|y-x|\leq s^{-M}}e^{\gamma^2 K_n(x,y)}e^{\gamma\phi_{n}(y)}\leq Cs^{-2}\sum_{y\in \mathbb Z_n^2:|y-x|\leq s^{-M}}e^{(\gamma^2+\frac{2e}{M}) K_n(x,y)}e^{\gamma\phi_{n}(y)}.
		\end{equation}
		By Lemma \ref{gir} with parameter $\beta=\gamma+\frac{2e}{M\gamma}$,
		\begin{equation}\label{lqgne2}
			\begin{aligned}
				\mathbb{E}\left[\sum_{y\in B}e^{(\gamma^2+\frac{2e}{M}) K_n(x,y)}e^{\gamma\phi_{n}(y)}\right]
				&=\mathbb{E}\left[\eta_n(B)\frac{\sum_{y\in B}\exp\left(\beta\phi_{n}(y)\right)}{Z}\right]\\
				&\leq\left(\mathbb{E}[\eta_n(B)^2]\right)^{\frac{1}{2}}\left(\mathbb{E}\left[\frac{\sum_{y\in B}\exp\left(\beta\phi_{n}(y)\right)}{Z}\right]^{2}\right)^{\frac{1}{2}},
			\end{aligned}
		\end{equation}
		where $Z=\mathbb{E}\sum_{y\in B}\exp\left(\beta\phi_{n}(y)\right)$ is the normalizing constant, and the inequality follows from Cauchy-Schwartz. For $M$ sufficiently large, depending only on $\gamma$, we have $\beta<\sqrt{2}$. Applying Lemma \ref{lqgpo} with $\beta$ in place of $\gamma$, we get that
		\begin{equation}
			\mathbb{E}\left[\sum_{y\in\mathbb Z_n^2:|y-x|\leq s^{-M}}e^{(\gamma^2+\frac{2e}{M}) K_n(x,y)}e^{\gamma\phi_{n}(y)}\right]\preceq\left(\mathbb{E}[\eta_n(B)^2]\right)^{\frac{1}{2}}= O(\mathbb E(\eta_n(B))),
		\end{equation}
		where the second transition also follows from Lemma \ref{lqgpo} (see \eqref{lqgne3}). Then by Lemma \ref{lqgpo} and Markov's inequality, there exists $s_0$ such that for all $s_0<s<2^{n/M}$, we have $\mathbb{P}(\mathcal{E}_s)\geq \frac{1}{2}$.		
	\end{proof}
	
	\begin{proof}[Proof of Lemma \ref{lqgne}]
		By Lemma \ref{gir}, if we let $x$ be sampled uniformly in $B$, it holds that
		\begin{equation}\label{lqgne4}
			\mathbb{E}\left[\exp\left(-s\frac{\sum_{y\in B}e^{\gamma^2K_n(x,y)}e^{\gamma\phi_{n}(y)}}{\mathbb{E}[\eta_n(B)]}\right)\right]=\mathbb{E}_{\mathbb{Q}_n}\left[\exp\left(-s\tilde{\eta}_n(B)\right)\right]=\mathbb{E}\left[\tilde{\eta}_n(B)\exp\left(-s\tilde{\eta}_n(B)\right)\right].
		\end{equation}
		Since $\sum_{y\in B} e^{\gamma^2 K_n(x, y)} e^{\gamma \phi_n(y)} \leq \sum_{y\in\mathbb Z_n^2:|y-x|\leq s^{-M}} e^{\gamma^2 K_n(x, y)} e^{\gamma \phi_n(y)}+ C s^{M \gamma^2} \eta_n(B)$ for some absolute constant $C\geq 1$, we get that on the event $\mathcal E_s$ (recall \eqref{Es})
		\begin{equation}
			s\sum_{y\in B}e^{\gamma^2 K_n(x,y)}e^{\gamma\phi_{n}(y)}\leq \mathbb{E}[\eta_n(B)]+Cs^{1+M\gamma^2}\eta_n(B).
		\end{equation}
		Then for $s_0<s<2^{n/M}$ we have
		\begin{equation}\label{lqgne5}
			\begin{aligned}
				\mathbb{E}\left[\exp\left(-s\frac{\sum_{y\in B}e^{\gamma^2K_n(x,y)}e^{\gamma\phi_{n}(y)}}{\mathbb{E}[\eta_n(B)]}\right)\right]
				&\geq
				\mathbb{E}\left[\exp\left(-s\frac{\sum_{y\in B}e^{\gamma^2K_n(x,y)}e^{\gamma\phi_{n}(y)}}{\mathbb{E}[\eta_n(B)]}\right)\mathds{1}_{\mathcal{E}_s}\right]\\
				&\succeq \mathbb{E}\left[\exp\left(-Cs^{1+M\gamma^2}\tilde{\eta}_n(B)\right)\mathds{1}_{\mathcal{E}_s}\right]\\
				&\succeq
				\mathbb{E}\left[\exp\left(-Cs^{1+M\gamma^2}\tilde{\eta}_n(B)\right)\right],
			\end{aligned}
		\end{equation}
		where the last inequality used the FKG inequality and Lemma \ref{lqglem} (note that $\mathcal E_s$ is a decreasing event). For $s\geq 2^{n/M}$ we have $$K_n(x,y)\leq (\mathbb{E}[\phi_n(x)^2]\mathbb{E}[\phi_n(y)^2])^{1/2}=n\log 2\leq \log (s^M).$$
		This implies that for $s \geq 2^{n/M}$
		\begin{equation}\label{lqgne6}
			\mathbb{E}\left[\exp\left(-s\frac{\sum_{y\in B}e^{\gamma^2K_n(x,y)}e^{\gamma\phi_{n}(y)}}{\mathbb{E}[\eta_n(B)]}\right)\right]\geq \mathbb{E}\left[\exp\left(-s^{1+M\gamma^2}\tilde \eta_n(B)\right)\right].
		\end{equation}
		Combining \eqref{lqgne5} and \eqref{lqgne6} we get that \eqref{lqgne5} in fact holds for all $s > s_0$ (recall $C\geq 1$).
		Combined with \eqref{lqgne4}, it yields that 
		$$\mathbb{E}\left[\exp\left(-Cs^{1+M\gamma^2}\tilde \eta_n(B)\right)\right]\preceq \mathbb{E}\left[\tilde{\eta}_n(B)\exp\left(-s\tilde{\eta}_n(B)\right)\right].$$
		In light of the preceding inequality, we can derive from the inequality $qe^{-\lambda q} \leq e^{-1}\lambda^{-1}$ (applied with $q = \tilde \eta_n(B)$ and $\lambda=s$) that
		\begin{equation}
			\mathbb{E}\left[\exp\left(-Cs^{1+M\gamma^2}\tilde{\eta}_n(B)\right)\right]\leq\frac{C}{s}
		\end{equation}
		holds for some absolute constant $C$ and for all $s>s_0$. Therefore, for $p<\frac{1}{1+ M\gamma^2}$, we get that
		\begin{equation}
			\mathbb{E}[\tilde{\eta}_n(B)]^{-p}\leq C\int_0^{\infty}t^{p-1}\mathbb{E}\exp(-t\tilde{\eta}_n(B))dt\preceq1+\int_{t_0}^{\infty}t^{p-1-\frac{1}{1+M\gamma^2}} dt\preceq 1,
		\end{equation}
		where $t_0$ is a constant depending on $s_0$ (thus only on $\gamma$). Thus, by considering several shifts of the box $B$ we may obtain \eqref{lqgneeq}. 
	\end{proof}
	
	\subsection{Tightness of the expected exit time}
	In this subsection we will prove the tightness of the expected exit time after normalization. Recall that $\tau_n$ denotes the hitting time of $\partial B(1)$ (that is, the exit time of $B(1)$) by the random walk $X^{(n)}$ on $\mathbb Z_n^2$. We will need the following notation.
	\begin{defi}
		For a network $\mathrm{G}$, a vertex set $V\subset V(\mathrm{G})$ and two vertices $x,y\in V(\mathrm{G})$, we define the Green's function of the random walk stopped upon exiting $V$ by
		\begin{equation}
			G_V(x,y)=E^x[\mbox{number of times that } X^x \mbox{ hits } y\mbox{ before exiting }V].
		\end{equation}
	\end{defi}
	By \eqref{return6} and \cite[Proposition~2.20]{Lyons_Peres_2017}, if we write $Z=V(\mathrm{G})\setminus V$, we have
	\begin{equation}\label{greencha}
		G_V(x,y)=R(x,Z)\pi(y)P^y(\tau_x<\tau_Z)=\frac{1}{2}\pi(y)(R(x,Z)+R(y,Z)-R(x,y)),
	\end{equation}
	where $\pi(y)$ is the sum of conductances over all edges incident to $y$. In particular, we write $G_n(x,y)=G_{B(1)}(x,y)$ with respect to the random walk $X^{(n)}$ on $\mathbb Z_n^2$. Note that $G_n(x,y)=0$ if either $x$ or $y$ is not in $B(1)$. In this subsection, $G_B$ is with respect to the network on $\mathbb Z_n^2$ and we denote $|B| = |B\cap \mathbb Z^2_n|$.
	
	\begin{lem}\label{green}
		For all $\gamma<\gamma_0$ the following holds: for all $q \in (0,1)$, $b>a>0$ and $m \geq 0$, there exists a constant $C_5=C_5(q,a,b)$ such that, for any square annulus $B$ with inner radius $a\cdot 2^m$ and with outer radius $b\cdot 2^m$,
		\begin{equation}
			\mathbb{P}\left(\sum_{y\in B}G_B(x,y)\leq C_5 2^{\frac{\gamma^2n}{2}}|B|,\mbox{ for all } x\in B\right)\geq 1-q.
		\end{equation}
		Furthermore, one can choose $C_5=o(q^{-0.9})$ as $q\to 0$.
	\end{lem}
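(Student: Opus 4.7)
The strategy is to apply the identity \eqref{greencha}, which gives $G_B(x,y) \le \pi(y) R(x,Z)$ and hence $\sum_{y \in B} G_B(x,y) \le \pi(B) \cdot R(x,Z)$ for every $x \in B$. It then suffices to bound $\pi(B)$ and $\max_{x \in B} R(x,Z)$ separately on high-probability events.

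For $\pi(B)$, I would apply Lemma \ref{lqgpo} to a box of side length $\asymp b \cdot 2^m$ containing $B$, obtaining $\mathbb{E}[\tilde\pi_n(B)^2] \le C(\gamma)$; a second-moment Markov inequality then gives $\pi(B) \le C q^{-1/2} \cdot 2^{\gamma^2 n/2} |B|$ with probability at least $1 - q/2$. This yields the leading $q^{-1/2}$ factor in $C_5$. For $\max_{x \in B} R(x,Z)$, I would exploit the bounded aspect ratio $b/a$ of the annulus: every interior point has both components of $Z$ (the inner boundary $\partial B(a \cdot 2^m)$ and the outer boundary $\partial B(b \cdot 2^m)$) within distance $O((b-a) \cdot 2^m)$. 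I cover $B$ by $O_{a,b}(1)$ axis-aligned rectangular ``radial tubes'' $T_1, \ldots, T_K$ of bounded aspect ratio, arranged so that every $y \in B$ lies in some $T_j$ and each $T_j$ has two opposite short sides $E^j_1, E^j_2$ contained in $Z$ (one on the inner boundary, one on the outer). For each $y \in T_j$, Proposition \ref{series} (with $H_1 = T_j$, $A_1 = \{y\}$, $Z_1 = E^j_1 \cup E^j_2$) gives $R(y,Z) \le R_{T_j}(y, E^j_1 \cup E^j_2)$; splitting a unit flow from $y$ into fractions $\alpha$ and $1-\alpha$ toward the two short sides and optimizing in $\alpha$ yields
\[
R_{T_j}(y, E^j_1 \cup E^j_2) \le \frac{2 R_{T_j}(y, E^j_1) R_{T_j}(y, E^j_2)}{R_{T_j}(y, E^j_1) + R_{T_j}(y, E^j_2)},
\]
and each factor $R_{T_j}(y, E^j_i)$ is controlled via the hard crossing of a sub-rectangle of $T_j$ cut across $y$ by Proposition \ref{rsw} and the tail bounds of Proposition \ref{tail}. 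Choosing the threshold $\exp(O_{a,b}(\sqrt{\log(1/q)}))$ and union bounding over the $O_{a,b}(1)$ sub-rectangles then gives $\max_{x \in B} R(x,Z) \le \exp(O_{a,b}(\sqrt{\log(1/q)}))$ with probability at least $1 - q/2$. Combining the two bounds, $C_5 \lesssim q^{-1/2} \exp(O(\sqrt{\log(1/q)})) = o(q^{-0.9})$ as $q \to 0$.

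The main obstacle is the uniform control of $R(y,Z)$ over all $y \in B$. A bound through only a single boundary component of $Z$ would diverge logarithmically (since 2D random walks are recurrent and single-source-to-distant-side resistances grow like $\log$ even under GFF weights), and it is precisely the two-sided structure of $Z$ that permits a bounded resistance via flow-splitting through both ends of the tube. Carefully arranging the tube cover so every $y \in B$ sits in some tube crossing the annulus radially, and then showing that each $R_{T_j}(y, E^j_i)$ for an arbitrary interior point $y$ is controlled uniformly by a hard-crossing quantile rather than a logarithmically-growing single-point-to-side quantity, is the technical crux; this step leverages RSW at bounded aspect ratios together with the flexibility afforded by having two opposite short sides in $Z$.
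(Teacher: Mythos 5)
Your proposal has a genuine gap at its central step: the claim that $\max_{x\in B}R^n(x,Z)$ can be bounded by an $n$-independent constant (of size $\exp(O(\sqrt{\log(1/q)}))$). This is false. The effective resistance from a \emph{single lattice point} to the boundary grows like the number of dyadic scales between the mesh size $2^{-n}\zeta_n^{-1}$ and the macroscopic scale $2^m$, i.e.\ like $n$: the unit current emanating from $y$ must cross each of the $\asymp n$ disjoint nested annuli around $y$, each of which has across-resistance of order $1$ (by Theorem \ref{thm1}), so $R^n(y,Z)\gtrsim n$ with high probability. Splitting the flow between the two boundary components of the annulus does not help, because the divergence comes from the immediate neighborhood of the source $y$, where the flow is concentrated regardless of its eventual destination; the harmonic-mean bound you write only halves a quantity that is itself of order $n$. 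Likewise, $R_{T_j}(y,E^j_i)$ is \emph{not} controlled by a hard-crossing quantile: attaching the single point $y$ to a crossing costs a point-to-crossing resistance which again accumulates $\Theta(1)$ per scale over $\asymp n$ scales. Consequently the bound $\sum_y G_B(x,y)\le \pi_n(B)\,R^n(x,Z)$ you start from inevitably loses a factor of order $n$ and cannot yield the lemma.

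The paper avoids this by never discarding the term $-R^n(x,y)$ in \eqref{greencha}. It uses $R^n(x,y)\ge R^n(x,\partial B_x)+R^n(y,\partial B_y)$ for disjoint dyadic boxes $B_x,B_y$ at the scale of $|x-y|$, so that
$R^n(x,Z)+R^n(y,Z)-R^n(x,y)\le (R^n(x,Z)-R^n(x,\partial B_x))+(R^n(y,Z)-R^n(y,\partial B_y))$;
the divergent ``near-point'' parts of the two point-to-boundary resistances are cancelled, and what remains is only a sum of hard-crossing resistances over the scales between $|x-y|$ and $2^m$ (Proposition \ref{series}), of size $\preceq C_6 e^{C_0\gamma t}\exp(t^{2/3})$ when $|x-y|\asymp 2^{m-t}$ on the event $E^n_k(C_6,B)$. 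Pairing this with the multiscale control $F^n_k(C_7,B)$ of the LQG mass of dyadic sub-boxes (rather than a single second-moment bound on $\pi_n(B)$, which is the part of your argument that is essentially right, cf.\ Lemma \ref{lqgpo}), and grouping the $y$'s by their dyadic distance to $x$, gives a geometric series in $t$ for small $\gamma$, uniformly in $x$. This cancellation structure is the crux of the paper's proof and is missing from your proposal; without it, the tube/flow-splitting construction cannot close the argument.
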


	In order to prove Lemma \ref{green}, we first define some typical events. We fix an absolute constant $C_0>\log 100(\log 4+C^\prime)$, where $C^\prime$ is the absolute constant in Proposition \ref{max}. We write $\mathcal{R}_k(B)$ as the collection of all dyadic rectangles with dimensions $2^{m-k+1}\times2^{m-k}$ or $2^{m-k}\times2^{m-k+1}$ that intersect with $B$. For $S\in \mathcal{R}_k(B)$, we define $R^n(S, \mathrm{hard})$ to be the effective resistance between the shorter sides of $S$ in $\mathbb Z_n^2$. For $C_6>0$, we define
	\begin{equation}\label{green5}
		E_k^{n}(C_6,B)=\left\{R^n(S, \mathrm{hard})\leq C_6e^{C_0\gamma k}\exp(k^{2/3}), \mbox{ for all } S\in \mathcal{R}_k(B)\right\}.
	\end{equation}
	Also define $\mathcal D_k(B)$ to be the set of all dyadic boxes in $B$ with side length $2^{m-k}$, and for $C_7 > 0$ define
	\begin{equation}\label{green7}
		F^n_k(C_7,B)=\left\{\pi_n(B_k)\leq C_72^{\frac{3k}{2}}2^{\frac{\gamma^2n}{2}}|B_k|,\mbox{ for all } B_k\in \mathcal{D}_k(B)\right\}.
	\end{equation}
	Then we define 
	\begin{equation}\label{green10}
		\mathcal U_n(C_6,C_7,B)=\bigcap_{k\geq 1}(E^n_k(C_6,B)\cap F^n_k(C_7,B)).
	\end{equation}
	We prove in the following lemma that $\mathcal U_n(C_6,C_7,B)$ is a typical event for suitable choice of $C_6,C_7$.
	
	\begin{lem}\label{greenlem1}
		For each $m\geq 0$ and $q\in (0, 1)$ and for $a, b>0$ there exist constants (possibly depending on $a, b$) $C_6=O(q^{-0.2})$, $C_7=O(q^{-2/3})$ such that $\mathbb P(\mathcal U_n(C_6,C_7,B))\geq 1-q$ for all $n$ and for all square annulus $B$ with inner radius $a2^m$ and with outer radius $b2^m$. 
	\end{lem}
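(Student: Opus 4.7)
The plan is to control $\mathbb P(E^n_k(C_6,B)^c)$ and $\mathbb P(F^n_k(C_7,B)^c)$ separately for each $k\geq 1$ and then sum in $k$, requiring each sum to be at most $q/2$. In each case the threshold defining the event is chosen so that, after a union bound over the $O_{a,b}(4^k)$ dyadic rectangles or boxes intersecting $B$, the per-scale contribution decays geometrically (for $F^n_k$) or super-polynomially (for $E^n_k$). Note that $|\mathcal R_k(B)|, |\mathcal D_k(B)|\leq C_{a,b}\,4^k$ holds uniformly in $m$, so the final constants depend only on $a,b,\gamma,q$.

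The bound on $F^n_k$ is essentially Markov plus union bound: for any $B_k\in\mathcal D_k(B)$ of side length $2^{m-k}$, Lemma \ref{lqgpo} gives $\mathbb E[\tilde\pi_n(B_k)^2]\leq C_\gamma(2^{\gamma^2(k-m)}\vee 1)$, so Markov's inequality yields $\mathbb P(\tilde\pi_n(B_k)>C_7\,2^{3k/2})\leq C_\gamma C_7^{-2}(2^{\gamma^2(k-m)}\vee 1)\,2^{-3k}$. A union bound over $\mathcal D_k(B)$ combined with summation in $k$ (using $\gamma^2<1$) produces $\sum_k\mathbb P(F^n_k(C_7,B)^c)\leq C_{a,b,\gamma}C_7^{-2}$, which is $\leq q/2$ for $C_7\geq C/\sqrt q$; in particular $C_7=O(q^{-1/2})=O(q^{-2/3})$.

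The bound on $E^n_k$ will require a scaling reduction. Set $j=(k-m)_+$ and rescale space by $2^j$, so that a rectangle $S$ of dimensions $2^{m-k+1}\times 2^{m-k}$ becomes the unit rectangle $B(1,1/2)$. Using the decomposition $\phi_n=\phi_{0,j}+\phi_{j,n}$ together with the scaling identity from \eqref{scaling} (which gives $\phi_{j,n}(2^{-j}\cdot)\overset{d}{=}\phi_{n-j}$), one obtains in distribution $R^n(S,\mathrm{hard})\leq e^{\gamma\|\phi_{0,j}\|_{B(1)}}R^{n-j}_{1,1/2,\zeta_n}$, with the mesh-parameter shift absorbed by Proposition \ref{mesh} (note that when $j=0$, i.e.\ $k\leq m$, no rescaling is needed and the coarse correction is trivial). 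Writing $T=C_6 e^{C_0\gamma k}e^{k^{2/3}}$, one splits
\[
\mathbb P(R^n(S,\mathrm{hard})>T)\leq \mathbb P\!\left(R^{n-j}_{1,1/2}>T e^{-\gamma x}\right)+\mathbb P\!\left(\|\phi_{0,j}\|_{B(1)}>x\right)
\]
with $x=C_0k/2$. The first term is controlled by the $\exp(-c(\log T')^2/\log\log T')$-tail of Proposition \ref{tail}, yielding decay $\exp(-c(\log C_6+C_0\gamma k/2+k^{2/3})^2/\log\log(\cdot))$. The second is controlled by Proposition \ref{max}: since $j\leq k$, the choice of $C_0$ in \eqref{green5} (satisfying $C_0>\log 100(\log 4+C')$) is exactly what is needed to force this bound to decay like $4^{-3j}$, which comfortably beats the $4^k$ union bound. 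After the union bound over $\mathcal R_k(B)$ and summation over $k$, the total is $\leq C\exp(-c(\log C_6)^2/\log\log C_6)$, which is $\leq q/2$ whenever $\log C_6\gtrsim \sqrt{\log(1/q)\log\log(1/q)}$; this is $o(q^{-\epsilon})$ for every $\epsilon>0$, so $C_6=O(q^{-0.2})$ suffices.

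The main obstacle is the scaling reduction for $E^n_k$: one must simultaneously interlock the field decomposition $\phi_n=\phi_{0,j}+\phi_{j,n}$, the spatial rescaling that makes $\phi_{j,n}(2^{-j}\cdot)$ equidistributed with $\phi_{n-j}$, the Gaussian concentration of the coarse field from Proposition \ref{max}, and the mesh comparison of Proposition \ref{mesh}, and then calibrate $C_0$ so that the coarse-field tail dominates the $4^k$ union-bound factor---this is precisely where the absolute-constant choice $C_0>\log 100(\log 4+C')$ from \eqref{green5} is used. Once this reduction is in place, the tail estimates of Section \ref{sec5} and the moment bounds of Section \ref{subsec65} do the rest, and the $F^n_k$ side is a routine second-moment argument.
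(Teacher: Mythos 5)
Your overall architecture (split each scale into a coarse field plus a fine field, treat the fine-field resistance by the tail estimate of Proposition \ref{tail} after rescaling, treat the LQG events by Markov plus Lemma \ref{lqgpo}, then union-bound over the $O_{a,b}(4^k)$ rectangles/boxes and sum in $k$) is the same as the paper's, and your treatment of $F^n_k$ is correct. But there is a genuine gap in the coarse-field part of the $E^n_k$ estimate: you take the coarse threshold $x=C_0k/2$, which does not depend on $C_6$ (equivalently, on $q$). The event $\{\|\phi_{0,j}\|>C_0k/2\}$ then has a probability that is a \emph{fixed} positive constant in the worst case: e.g.\ for $m=0$ and $k=j=1$ it is $\mathbb P(\|\phi_{0,1}\|_{B(1)}>C_0/2)$, a number depending only on the absolute constant $C_0$. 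Consequently $\sum_k\mathbb P(E^n_k(C_6,B)^c)$ is bounded below by a constant independent of $C_6$, and your claimed total bound $C\exp(-c(\log C_6)^2/\log\log C_6)$ is false; no choice of $C_6$ can push the failure probability below $q/2$ once $q$ is smaller than that constant, so the lemma (which must hold for every $q\in(0,1)$) does not follow. Relatedly, your justification that the Proposition \ref{max} bound ``decays like $4^{-3j}$, which comfortably beats the $4^k$ union bound'' is not right as stated when $j\ll k$; what actually saves summability there is that the threshold $\sim k$ is large relative to the variance $\sim j$, giving a bound of the shape $4^je^{-cC_0^2k^2/j}\le 4^je^{-cC_0^2k}$ --- but this still carries no $q$-dependence, which is the real problem.

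The fix is exactly the paper's choice of threshold: split with $\max_B\phi_{0,j}\geq C_0k+\tfrac12\log C_6$ rather than $C_0k/2$ (compare \eqref{green2}--\eqref{gre3}), so the coarse contribution picks up a factor $C_6^{-50}4^{-k}$ and becomes tunable through $C_6=O(q^{-0.2})$, while the fine-field term is still bounded by the tail of Proposition \ref{tail} at level $C_6^{1/2}e^{k^{2/3}}$. You should also handle the range $k>m+n$ separately: there $j=k-m>n$, so neither $\phi_{j,n}$ nor $R^{n-j}$ makes sense, and one bounds $R^n(S,\mathrm{hard})$ directly by the maximum of $\phi_n$ as in \eqref{green4}, again with the $\log C_6$ term in the threshold.
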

	\begin{proof}
		We first control the event  $E_k^n(C_6, B)$ by dividing $k$ into three cases. For $0<k\leq m$, by Proposition \ref{tail} and a union bound over $\mathcal{R}_k(B)$ we have 
		\begin{equation}\label{green1}
			\mathbb{P}((E^n_k(C_6,B))^c)\leq C2^{2k}\exp\left(-c\frac{(\log C_6+k^{2/3})^2}{\log(\log C_6+k^{2/3})}\right).
		\end{equation}
		For $m< k\leq m+n$, using the fact $\gamma<1$ we have
		\begin{equation}\label{green2}
			\begin{aligned}
				(E^n_k(C_6,B))^c&\subset \{\max_{x\in B}\phi_{k-m}(x)\geq C_0k+\frac{1}{2}\log C_6\} \\
				&\cup \{\exists S\in \mathcal{R}_k(B):R^{k-m,n}(S, \mathrm{hard})\leq C_6^{1/2}\exp(k^{2/3})\},
			\end{aligned}
		\end{equation}
		where $R^{k-m,n}(S, \mathrm{hard})$ denotes the effective resistance between the shorter sides of $S$ but with $\phi_{k-m,n}$ instead of $\phi_n$. Recall the absolute constant $C^\prime$ in Propositions \ref{max}. By a union bound, we get the following by noting that $C_0>10(\log 4+C^\prime)$:
		\begin{equation}\label{gre3}
			\begin{aligned}
				\mathbb{P}\left(\max_{x\in B}\phi_{k-m}(x)\geq C_0k+\frac{1}{2}\log C_6\right)
				&\leq (b+1)^22^{2m}\mathbb{P}\left(\max_{x\in B(1)}\phi_{k-m}(x)\geq C_0k+\frac{1}{2}\log C_6\right)\\
				&\leq (b+1)^2C2^{2m}4^{k-m}\exp \left(-\frac{(C_0k+\frac{1}{2}\log C_6)^2}{\log 4\cdot(\sqrt{k-m}+C^\prime)^2}\right)\\
				&\leq C(b+1)^2C_6^{-50}4^{-k},
			\end{aligned}
		\end{equation}
		where in the last inequality we used the fact that $(C_0k+\frac{1}{2}\log C_6)^2\geq (100C_0k+50\log C_6)C_0k/100\geq \log 4(100k+50\log C_6)(\sqrt{k-m}+C^\prime)^2.$
		In addition, by Lemma \ref{tail} and the scaling property of $\phi_n$, we have
		\begin{equation}\label{gre4}
			\mathbb{P}\left(R^{k-m,n}_{2,1}\leq C_6^{1/2}\exp(k^{2/3}) \right)\leq C\exp\left(-c\frac{(\log C_6^{1/2}+k^{2/3})^2}{\log(\log C_6^{1/2}+k^{2/3})}\right).
		\end{equation}
		Combining \eqref{green2}, \eqref{gre3}, \eqref{gre4} with a union bound over $\mathcal{R}_k(B)$ we get that
		\begin{equation}\label{green3}
			\mathbb{P}((E^n_k(C_6,B))^c)\leq C(b+1)^2C_6^{-50}4^{-k}+C(b+1)^2\exp\left(-c\frac{(\log C_6+k^{2/3})^2}{\log(\log C_6+k^{2/3})}\right).
		\end{equation}
		And for $k>m+n$, by Proposition \ref{max} we have similarly that
		\begin{equation}\label{green4}
			\begin{aligned}
				\mathbb{P}((E^n_k(C_6,B))^c)&\leq \mathbb{P}\left(\max_{x\in B}\phi_{n}(x)\geq C_0k+\log C_6\right)\\
				&\leq C2^{2m}(b+1)^2\mathbb{P}\left(\max_{x\in B(1)}\phi_{n}(x)\geq 10(n+m)+\log C_6\right)\\
				&\leq 
				C2^{2m}(b+1)^2 \exp \left(-\frac{(C_0k+\log C_6)^2}{\log 4\cdot (\sqrt{n}+C^\prime)^2}\right)\leq C(b+1)^2C_6^{-50}4^{-k}.
			\end{aligned}
		\end{equation}
		Next we control the probability of the event $F_k^n(C_7, B)$. By Lemma \ref{lqgpo}, we apply Markov's inequality and a union bound and we get that
		\begin{equation}\label{gre2}
			\begin{aligned}
				\mathbb{P}((F^n_k(C_7,B))^c)&\leq 2^{2k}(b+1)^2\frac{1}{C_7^22^{3k}}\mathbb{E}(\tilde{\pi}_n(B_k))^2\\
				&\leq C(b+1)^2C_7^{-2}2^{-k}\cdot(2^{-\gamma^2(m-k)}\vee 1)\leq C(b+1)^2C_7^{-2}2^{-(1-\gamma^2)k},
			\end{aligned}
		\end{equation}
		where we used the fact $m\geq 0$ in the last inequality. By taking $C_6,C_7$ sufficiently large (depending only on $q$) such that $C_6=O(q^{-0.2})$ and $C_7=O(q^{-2/3})$, we get from \eqref{green1}, \eqref{green3}, \eqref{green4} and \eqref{gre2} that
		$$
			\begin{aligned}
				&\mathbb{P}\left(\mathcal U_n(C_6,C_7,B)\right)\\
				&\geq 1-(b+1)^2\sum_{k\geq1}\left(C_6^{-50}4^{-k}+C2^{2k}\exp\left(-c\frac{(\log C_6+k^{2/3})^2}{\log(\log C_6+k^{2/3})}\right)+CC_7^{-2}2^{-(1-\gamma^2)k}\right) \geq 1-q,
			\end{aligned}
		$$
		which completes the proof.
	\end{proof}

	\begin{lem}\label{greenlem2}
		For all $b>a>0$, $m\in \mathbb Z$ and any square annulus $B$ with inner radius $a\cdot 2^m$ and with outer radius $b\cdot 2^m$, there exists $\alpha=\alpha(a,b)$ such that, on the event $\mathcal U_n(C_6,C_7,B)$ we have
		\begin{equation}
			\sum_{y\in B}G_{B}(x,y)\leq C\alpha \cdot C_6C_7 2^{\frac{\gamma^2n}{2}}|B|, \mbox{ for all } x\in B.
		\end{equation}
	\end{lem}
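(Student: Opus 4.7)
My plan is to use the Green's function identity from \eqref{greencha}, which gives
\[
\sum_{y\in B} G_B(x,y) \;=\; R(x,Z)\sum_{y\in B}\pi_n(y)\,P^y(\tau_x<\tau_Z)\;\leq\; R(x,Z)\cdot \pi_n(B),
\]
where the final inequality uses $P^y(\tau_x<\tau_Z)\le 1$. The lemma then reduces to bounding both factors on the event $\mathcal U_n(C_6,C_7,B)$, using the two kinds of information built into $\mathcal U_n$: the bound $F_1^n(C_7,B)$ on the LQG measure and the bounds $\{E_k^n(C_6,B)\}_{k\geq 1}$ on crossing resistances.

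The estimate on $\pi_n(B)$ is the routine half: since the square annulus $B$ can be covered by a number (depending only on $a,b$) of dyadic boxes in $\mathcal D_1(B)$, applying the pointwise bound supplied by $F_1^n(C_7,B)$ to each box and summing yields
\[
\pi_n(B)\;\leq\; C'(a,b)\cdot C_7\cdot 2^{\gamma^2 n/2}|B|.
\]

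The harder half is to show $R(x,Z)\le \alpha(a,b)\,C_6$ uniformly in $x\in B$. The key observation is that $B$ is an annulus whose radial width $(b-a)2^m$ is comparable to $2^m$ up to a factor depending only on $a,b$, so one can fix a single scale $k_0=k_0(a,b)$ such that a dyadic rectangle $S\in \mathcal R_{k_0}(B)$ has long dimension $\sim (b-a)2^m$. Then for every $x\in B$ one can choose such an $S$ containing $x$, with long axis radial and with both shorter sides contained in $Z=\mathbb Z_n^2\setminus B$. Since the effective resistance from an interior vertex of a rectangle to the union of its two short sides is bounded by a constant multiple of the hard-crossing resistance (a standard electric-network monotonicity statement, provable from Propositions \ref{parallel} and \ref{series}), on the event $E_{k_0}^n(C_6,B)$ we conclude
\[
R(x,Z)\;\leq\; R(x,\,S\cap Z)\;\leq\; C\cdot R^n(S,\mathrm{hard})\;\leq\; C\cdot C_6 e^{C_0\gamma k_0}\exp(k_0^{2/3})\;=\;\alpha(a,b)\cdot C_6.
\]
Combining the two bounds gives the desired $\sum_{y\in B}G_B(x,y)\le C\alpha\cdot C_6 C_7\cdot 2^{\gamma^2 n/2}|B|$.

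The main obstacle I anticipate is the uniform bound on $R(x,Z)$, especially for vertices $x$ near the corners of the annulus where the "radial direction" is ambiguous, and for vertices $x$ lying in the bulk of a rectangle $S$ rather than on one of its short sides. The former will be handled by a short case analysis that splits the annulus into four side-strips (each with a natural radial orientation) and four corner regions, and uses Proposition \ref{series} to chain two adjacent scale-$k_0$ rectangles when a single one fails to cover $x$; the latter I will justify by the network-reduction monotonicity stated above, which is clean because the scale $k_0$ depends only on $a,b$, so all constants remain $O_{a,b}(C_6)$ throughout.
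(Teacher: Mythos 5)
Your reduction $\sum_{y\in B}G_B(x,y)\leq R^n(x,Z)\,\pi_n(B)$ via $P^y(\tau_x<\tau_Z)\leq 1$ is algebraically valid, but the second half of your plan fails: on $\mathcal U_n(C_6,C_7,B)$ one cannot bound $R^n(x,Z)$ by $\alpha(a,b)\,C_6$ uniformly in $x\in B$, because $x$ is a single vertex of the fine lattice $\mathbb Z_n^2$. To connect a point source to the macroscopic boundary the current must cross on the order of $n+\log_2\zeta_n$ nested annuli between the lattice scale $2^{-n}\zeta_n^{-1}$ and the scale $2^m$, and each such annulus contributes an across-resistance of order $1$ (this is a cut-set/Nash--Williams type lower bound, cf.\ Proposition \ref{conchar}), so $R^n(x,Z)$ typically diverges linearly in $n$. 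The auxiliary ``monotonicity'' you invoke --- that the resistance from an interior vertex of a rectangle to its short sides is comparable to the hard-crossing resistance --- is also false in two dimensions: the side-to-side resistance is small because the current can spread in parallel along the whole side, whereas a point source forces all current through the microscopic neighborhood of $x$. Since your scheme multiplies a quantity of size $\gtrsim n$ by $\pi_n(B)\asymp 2^{\gamma^2 n/2}|B|$, it loses an unbounded factor and cannot yield the lemma; equivalently, the trivial bound $P^y(\tau_x<\tau_Z)\leq 1$ discards exactly the cancellation that makes the statement true, since for most $y$ this hitting probability is of order $1/n$.

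The paper's proof keeps that cancellation by working with the full identity in \eqref{greencha}, $G_B(x,y)=\tfrac12\pi_n(y)\bigl(R^n(x,Z)+R^n(y,Z)-R^n(x,y)\bigr)$, and decomposing the sum over $y$ according to the dyadic distance $\|x-y\|_\infty\approx 2^{m-t}$. Choosing disjoint boxes $B_x,B_y$ of side length $2^{m-t+1}$ containing $x,y$, one has $R^n(x,y)\geq R^n(x,\partial B_x)+R^n(y,\partial B_y)$, so the bracket is at most $[R^n(x,Z)-R^n(x,\partial B_x)]+[R^n(y,Z)-R^n(y,\partial B_y)]$; the divergent small-scale part of the point resistances cancels, and the remaining differences involve only the $\leq t$ coarse scales, which are bounded by chaining hard crossings via Proposition \ref{series} and the events $E_k^n(C_6,B)$, giving $O\bigl((b+1)C_6e^{C_0\gamma t}\exp(t^{2/3})\bigr)$. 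Multiplying by the measure bound $F_t^n(C_7,B)$ for the box of side $\approx 2^{m-t}$ around $x$ and summing over $t$, the factor $2^{3t/2}\cdot 2^{-2t}$ beats $e^{C_0\gamma t}\exp(t^{2/3})$ for $\gamma$ small, which is precisely why $\mathcal U_n$ is built from crossing and measure bounds at \emph{all} scales $k\geq 1$ rather than a single scale $k_0(a,b)$. Your proposal would need to be reorganized along these lines to close the gap.
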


	\begin{proof}
		Write $Z=\partial B$. For all $x,y\in B$, by \eqref{greencha} we have
		\begin{equation}
			G_{B}(x,y)=R^n(x,Z)\pi_n(y)P^y(\tau_x<\tau_Z)=\frac{1}{2}\pi_n(y)(R^n(x,Z)+R^n(y,Z)-R^n(x,y)).
		\end{equation}
		For each $x,y\in B$, consider the smallest integer $t$ such that $\|x-y\|_{\infty}\geq 2^{m-t+1}$. Then we can find two disjoint dyadic boxes $B_x$ and $B_y$ with side length $2^{m-t+1}$ such that $x\in B_x$ and $y\in B_y$. Thus we have
		\begin{equation}\label{green12}
			R^n(x,y)\geq R^n(x,\partial B_x)+R^n(y,\partial B_y).
		\end{equation}
		Now, note that there exist dyadic rectangles $S^{k,i}_x\in \mathcal{R}_k(B)$ for $1\leq k \leq t$ and $1\leq i\leq \lfloor b\rfloor+4$, such that the union of (arbitrary) hard crossings through all these dyadic rectangles as well as path from $x$ to $\partial B_x$ contains an arbitrary path from $x$ to $Z$ (see Figure \ref{fig4} for an illustration). Similar definitions also apply to $y$. Thus, on the event $\mathcal U_n(C_6,C_7,B)$, for some absolute constant $C$ we have that
		\begin{equation}
				R^n(x,Z)-R^n(x,\partial B_x)\leq \sum_{1\leq k\leq t, 1\leq i\leq \lfloor b\rfloor+4}R^n(S_x^k, \mathrm{hard})
				\leq C(b+1)\cdot C_6e^{C_0\gamma t}\exp(t^{2/3}),
		\end{equation}
		where the first inequality follows from  Proposition \ref{series}, and the second inequality follows from \eqref{green5}. 
		Combined with the fact that (recall \eqref{green12})
		\begin{equation}
			R^n(x,Z)+R^n(y,Z)-R^n(x,y)\leq R^n(x,Z)-R^n(x,\partial B_x)+R^n(y,Z)-R^n(y,\partial B_y),
		\end{equation}
		it yields that (recall \ref{green7})
		\begin{equation}
			\begin{aligned}
				\sum_{y\in \mathbb Z_n^2:2^{m-t}\leq \|x-y\|_{\infty}\leq (b+1)2^{m-t+1}} G_B(x,y)
				&\leq
				C(b+1)^3C_6e^{C_0\gamma t}\exp(t^{2/3})C_7 2^{\frac{3t}{2}}2^{\frac{\gamma^2n}{2}}2^{2(m-t)}2^{2n}\zeta_n^2\\
				&\leq 
				C(b+1)^3C_6C_7 2^{\frac{\gamma^2n}{2}-(C_0\gamma-\frac{1}{4})t}2^{2m+2n}\zeta_n^2,
			\end{aligned}
		\end{equation}
		where in the last inequality we used the fact that $\exp(t^{2/3})\leq C2^{\frac{t}{4}}$.
		So we can deduce that on the event $\mathcal U_n(C_6,C_7,B)$, if $\gamma<\gamma_0 <(4C_0)^{-1}$ we have that
		\begin{equation}
			\sum_{y\in B}G_B(x,y)\leq C(b+1)^3\cdot C_6C_7\sum_{t\geq1} 2^{\frac{\gamma^2n}{2}-(C_0\gamma-\frac{1}{4})t}2^{2m+2n}\zeta_n^2\leq C(b+1)^3\cdot C_6C_72^{\frac{\gamma^2n}{2}}2^{2m+2n}\zeta_n^2
		\end{equation}
		for some absolute constant $C$. Since $|B| = (b^2-a^2)2^{2m+2n} \zeta_n^2$, this then completes the proof.
	\end{proof}

	\begin{proof}[Proof of Lemma \ref{green}]
		We take $C_6,C_7$ from Lemma \ref{greenlem1}. Noting that $C_6C_7=o(q^{-0.9})$, we complete the proof by Lemma \ref{greenlem2}.
	\end{proof}
	\begin{figure}
		\centering
		\includegraphics[width=60mm]{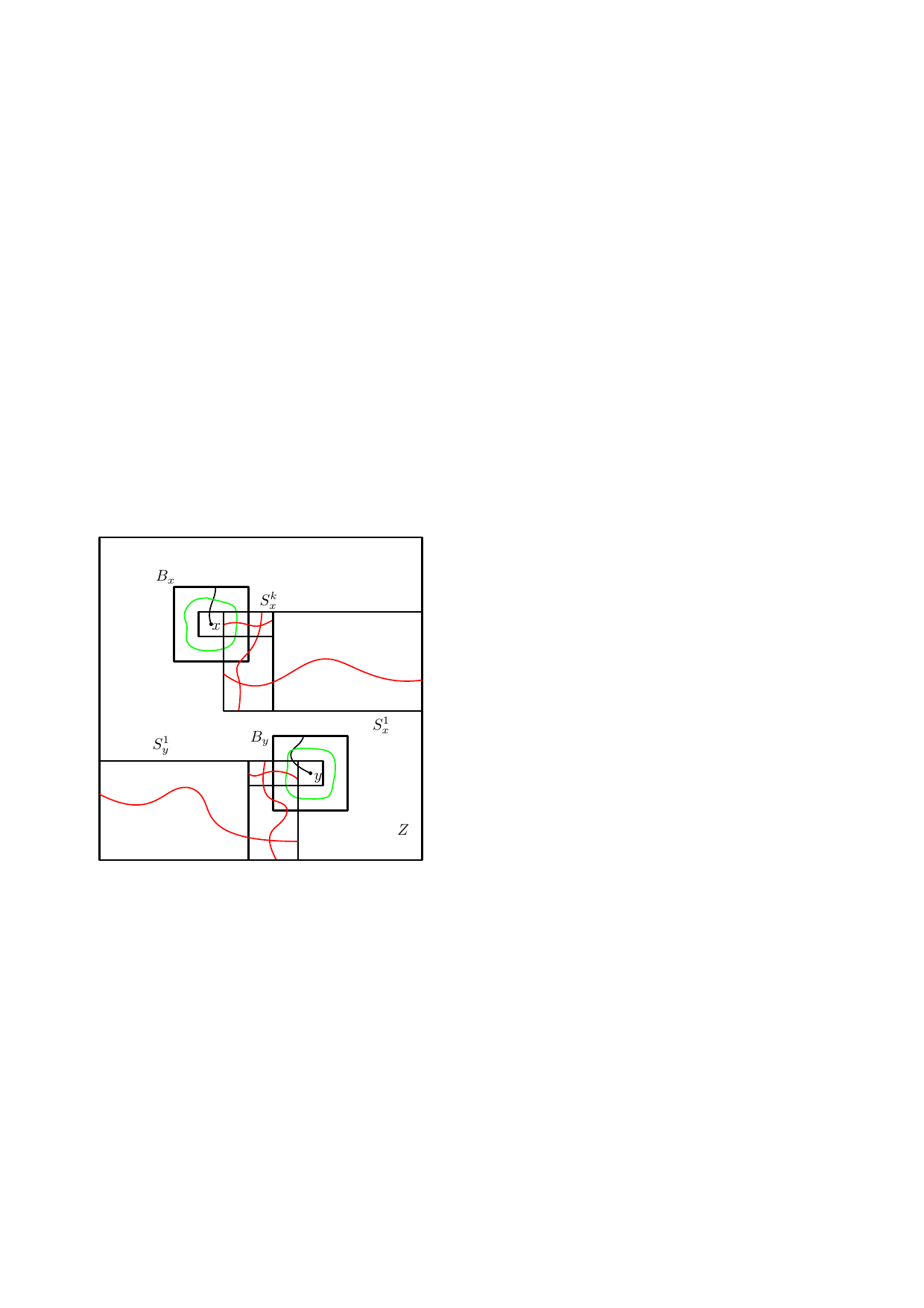}
		\caption{Connection of paths}
		\label{fig4}
	\end{figure}
	
	Lemma \ref{green} gives an upper bound for the exit time of a box. Now we use Lemma \ref{returnlem3} to get a lower bound on the exit time.
	
	\begin{lem}\label{greenlem3}
		Fix $0<a<d<b$. For $\gamma<\gamma_0$ and for all $m\geq 0$ and $q\in (0,1)$, there exists $\beta = \beta(q, a, b, d)$, such that the following holds for all dyadic annulus $B$ with inner radius $a 2^m$ and with outer radius $b 2^m$. If we let $\mathcal C\subset B$ be the boundary of the square concentric to $B$ with radius $d 2^m$, then we have
		\begin{equation}
			\mathbb{P}\left(\sum_{y\in B} G_B(x,y)\geq \beta2^{\frac{\gamma^2n}{2}}|B|, \mbox{ for all } x\in \mathcal C\right)\geq 1-q.
		\end{equation}
	\end{lem}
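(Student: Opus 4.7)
The plan is to combine the identity \eqref{greencha} with Lemma \ref{returnlem3} applied in a macroscopic sub-box of $B$ centered at $x$, and then to sum. Set $e=\min(d-a,b-d)/4$ and, for each $x\in\mathcal C$, let $B_x:=x+B(4e\cdot 2^m)\subset B$, with concentric annuli $\mathcal A_x:=x+B(2e\cdot 2^m)\setminus B(e\cdot 2^m)$ and $\mathcal B_x:=x+B(4e\cdot 2^m)\setminus B(2e\cdot 2^m)$ inside it, and let $Z_x':=\partial B_x=x+\partial B(4e\cdot 2^m)$. Applying Lemma \ref{returnlem3} in the sub-network on $B_x$ with $r_1=0$ (so that $A=\{x\}$ and $R(v,A)=R(y,x)$), on the event that $R^n(\text{around }\mathcal A_x)\leq \mathsf C\, R^n(\text{across }\mathcal B_x)$ we obtain
$$R^n_{B_x}(y,Z_x')+R^n_{B_x}(x,Z_x')-R^n_{B_x}(x,y)\geq r\,R^n(\text{around }\mathcal A_x)\qquad\text{for every } y\in B_x.$$

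The next step is to transfer this bound to the full annulus $B$. Since $B_x\subset B$ and $Z_x'$ is a cut-set separating $B_x\setminus Z_x'$ from $\partial B$ in $B$, the restriction to $B_x$ of any unit flow in $B$ from $y$ (or from $x$) to $\partial B$ is a unit flow to $Z_x'$ in the sub-network with no larger energy; hence $R^n_B(y,\partial B)\geq R^n_{B_x}(y,Z_x')$ and the same with $x$ in place of $y$. Together with the inclusion bound $R^n_B(x,y)\leq R^n_{B_x}(x,y)$ (more paths available in $B$), this gives $R^n_B(x,\partial B)+R^n_B(y,\partial B)-R^n_B(x,y)\geq r\,R^n(\text{around }\mathcal A_x)$ for every $y\in B_x$. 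Plugging into \eqref{greencha} and summing only over $y\in B_x$,
$$\sum_{y\in B}G_B(x,y)\geq \tfrac{r}{2}\,R^n(\text{around }\mathcal A_x)\cdot\pi_n(B_x).$$

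To conclude, on the event on which additionally (a) $R^n(\text{around }\mathcal A_x)\geq c_1$ (by the RSW lower bound of Corollary \ref{rswcor}) and (b) $\pi_n(B_x)\geq c_2\cdot 2^{\gamma^2 n/2}|B_x|$ (by Lemma \ref{lqgne} applied to $\pi_n$ together with Markov's inequality on negative moments), the right-hand side is at least $\beta\cdot 2^{\gamma^2 n/2}|B|$ for a suitable $\beta=\beta(q,a,b,d)>0$, since $|B_x|/|B|$ is a positive constant depending only on $a,b,d$. To get this simultaneously for all $x\in\mathcal C$, I would discretize $\mathcal C$ by a grid $\mathcal C^\#$ of $O(1)$ points (with spacing proportional to $e\cdot 2^m$), union-bound the three good events over $x^\#\in\mathcal C^\#$ with parameters so that each holds with probability at least $1-q/|\mathcal C^\#|$, and for a general $x\in\mathcal C$ close to some $x^\#$ transfer the RSW and LQG events at $x^\#$ (with slightly enlarged annuli and box) to $x$, using the Gaussian control on $\mathrm{osc}(\phi_n,\cdot,\cdot)$ from Corollary \ref{osc1}.

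The main obstacle is applying Lemma \ref{returnlem3} with the degenerate choice $A=\{x\}$ and transferring its conclusion from the sub-network $B_x$ to the annulus $B$. The monotonicity $R^n_B(\cdot,\partial B)\geq R^n_{B_x}(\cdot,Z_x')$ is what allows the disconnected boundary $\partial B$ to replace the lemma's concentric outer boundary $Z_x'$, and it crucially depends on the inclusion $B_x\subset B$, which is precisely what forces the hypothesis $a<d<b$. If one prefers to avoid the $r_1=0$ endpoint of Lemma \ref{returnlem3}, one may take a small positive $r_1$ and absorb the extra terms $R(x,A)$ via the triangle-type bound from Proposition \ref{series} at the cost of slightly enlarging $\mathcal A_x$.
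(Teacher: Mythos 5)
Your strategy is essentially the paper's: apply Lemma \ref{returnlem3} with the degenerate choice $A=\{x\}$ in a macroscopic sub-box of $B$, pass from the sub-box to $B$ by monotonicity (the paper phrases this as monotonicity of $G_V(x,y)$ in $V$, which is what your flow-restriction argument proves), and then lower-bound the product of the around-resistance and the LQG mass of the hole using the resistance tightness and Lemma \ref{lqgne}, with a union bound over boundedly many locations; this is exactly the content of the event $\mathcal V_n$ and Lemma \ref{greenlem4}. One local inaccuracy: Lemma \ref{returnlem3} yields the inequality only for $y$ lying inside the hole of the annulus $\mathcal A$ (this is how it feeds into Lemma \ref{returnlem}, and the proof via Lemma \ref{returnlem7} needs every path from $v$ to $Z$ to meet the contours around $\mathcal A$), so you may only sum over $y\in x+B(e\cdot 2^m)$ rather than all $y\in B_x$; this is harmless for the final bound but should be stated, and then \eqref{greencha} is used exactly as you intend.

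The genuine problem is the uniformity-in-$x$ step. With grid spacing of order $e\cdot 2^m$, the annuli $\mathcal A_x$ and $\mathcal A_{x^\#}$ are supported on different edge sets displaced by a macroscopic amount, so their around/across resistances are not related by any oscillation bound on the field; moreover Corollary \ref{osc1} only controls oscillations over the mesh scale $2^{-n+1}\zeta_n^{-1}$, not over distances of order $e2^m$, and if you shrank the spacing to mesh scale to make it applicable, the union bound over $\sim 2^{m+n}\zeta_n$ grid points would fail because each good event only holds with a constant (not $n$-dependent) probability. The fix, which is what the paper does by covering $\mathcal C$ with $O(1)$ boxes $Y_i$ and nesting $B_1(Y_i)\subset\cdots\subset B_4(Y_i)$, is to center the annuli at the grid points once and for all and to note that your own application of Lemma \ref{returnlem3} with $A=\{x\}$ is valid simultaneously for every $x$ in the hole of the $x^\#$-centered annulus; then the three good events per grid point serve all nearby $x$ at once and no transfer estimate is needed. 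Relatedly, your item (a) cites the wrong tool: Corollary \ref{rswcor} gives an upper tail for the around-resistance and a lower tail for the across-resistance, not the lower bound $R^n(\mbox{around }\mathcal A_x)\geq c_1$ you need; that lower bound follows from Theorem \ref{thm1}/Proposition \ref{tail}, e.g.\ because every contour around the annulus contains a crossing of one of the surrounding rectangles, so the around-resistance dominates a rectangle-crossing resistance whose lower tail is controlled.
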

	Note that $\mathcal C$ can be covered by dyadic boxes $Y_1,\ldots, Y_r$ with centers $v_1,\ldots,v_r$ and side length $\lambda 2^m$, where $\lambda,r$ are constants depending only on $a,b,d$. For each box $Y_i$, let $B_1(Y_i),B_2(Y_i),B_3(Y_i),B_4(Y_i)$ be boxes with the same center, such that $B_1(Y_i)=Y_i$ and the side length of $B_{j+1}(Y_i)$ is 4 time that of $B_j(Y_i)$ for $j=1, 2, 3$. By taking sufficiently small $\lambda$ we may assume $B_4(Y_i)\subset B$. We consider the event 
	\begin{equation}\label{green11}
		\begin{aligned}
			&\mathcal V_n(C_8,C_9,B)=\left\{R^n(\mbox{around }B_3(Y_i)\setminus B_2(Y_i))\tilde \pi_n (B_2(Y_i)\setminus B_1(Y_i))\geq C_8, \mbox{ for all } 1\leq i \leq r\right\}\\
			&\cap \left\{R^n(\mbox{around } B_3(Y_i)\setminus B_2(Y_i))\leq C_9 R^n(\mbox{across }B_4(Y_i)\setminus B_3(Y_i)),\mbox{ for all }1\leq i\leq r\right\}.
		\end{aligned}
	\end{equation}

	\begin{lem}\label{greenlem4}
		For all $b>a>0$, $m\in \mathbb Z$, and for any square annulus $B$ with inner radius $a\cdot 2^m$ and with outer radius $b\cdot 2^m$, there exists $c_1 = c_1(C_9)$ and $\xi=\xi(a,b,d)$ such that on the event $\mathcal V_n(C_8,C_9,B)$ we have
		\begin{equation}
			\sum_{y\in B} G_B(x,y)\geq \xi c_1C_82^{\frac{\gamma^2n}{2}}|B|, \mbox{ for all } x\in \mathcal C.
		\end{equation}
	\end{lem}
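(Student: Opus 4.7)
The strategy is to lower bound $\sum_{y \in B} G_B(x, y)$ by restricting the sum to a well-chosen neighborhood of $x$. For each $x \in \mathcal{C}$, pick an index $i$ with $x \in Y_i$ (possible since $\{Y_1, \ldots, Y_r\}$ covers $\mathcal{C}$), and set $Z = \partial B$, $\mathcal{A} = B_3(Y_i) \setminus B_2(Y_i)$, $\mathcal{B} = B_4(Y_i) \setminus B_3(Y_i)$; by the choice of $\lambda$ we have $B_4(Y_i) \subset B$. The key input is the symmetric Green's function identity from \eqref{greencha}, namely $G_B(x, y) = \frac{1}{2} \pi_n(y)[R^n(x, Z) + R^n(y, Z) - R^n(x, y)]$. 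On the event $\mathcal{V}_n(C_8, C_9, B)$, the second defining clause supplies exactly the hypothesis $R(\text{around } \mathcal{A}) \leq C_9 R(\text{across } \mathcal{B})$ of Lemma \ref{returnlem3}.

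Applying Lemma \ref{returnlem3} with $A = \{x\}$ (taking $r_1 = 0$ in its statement and translating coordinates so that $x$ is at the origin) yields $R^n(x, Z) + R^n(y, Z) - R^n(x, y) \geq r R(\text{around } \mathcal{A})$ for every vertex $y$, where $r = r(C_9) > 0$. Substituting back gives the pointwise bound $G_B(x, y) \geq \tfrac{r}{2} \pi_n(y) R(\text{around } \mathcal{A})$. Summing over $y \in B_2(Y_i) \setminus B_1(Y_i)$ and invoking the first defining clause of $\mathcal{V}_n$, which states that $R(\text{around } \mathcal{A}) \tilde{\pi}_n(B_2(Y_i) \setminus B_1(Y_i)) \geq C_8$, I obtain
\[
\sum_{y \in B} G_B(x, y) \;\geq\; \frac{r C_8}{2}\, \mathbb{E}\bigl[\pi_n(B_2(Y_i) \setminus B_1(Y_i))\bigr].
\]
A direct computation using $\mathbb{E}[e^{-\gamma \phi_n(m_e)}] = 2^{\gamma^2 n/2}$ (since $\phi_n(m_e)$ is centered Gaussian with variance $n \log 2$) yields $\mathbb{E}[\pi_n(B_2(Y_i) \setminus B_1(Y_i))] \asymp 2^{\gamma^2 n/2} |B_2(Y_i) \setminus B_1(Y_i)|$, and the ratio $|B_2(Y_i) \setminus B_1(Y_i)|/|B|$ is a constant depending only on $\lambda$ and the aspect ratio of $B$, hence on $a, b, d$. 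This produces the required bound with $c_1 = r/2 = c_1(C_9)$ and $\xi = \xi(a, b, d)$.

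The delicate step is the application of Lemma \ref{returnlem3} with $A$ taken to be the single vertex $\{x\}$ rather than the boundary of a nontrivial box. This is accommodated by allowing $r_1 = 0$ in that lemma (so $B(0) = \{0\}$): its proof manipulates $A$ and $Z$ only as identified nodes and relies solely on the annular structure of $\mathcal{A}$ and $\mathcal{B}$, so it transfers verbatim. Should one prefer to avoid this extension, an alternative is to apply the lemma with $A = \partial Y_i$ and transfer the conclusion to $A = \{x\}$ by two applications of the triangle inequality for effective resistance; this introduces a correction of order $R^n(x, \partial Y_i)$ that would need to be controlled uniformly in $x \in Y_i$, e.g.\ by adding a crossing-resistance clause inside $Y_i$ to the definition of $\mathcal{V}_n$.
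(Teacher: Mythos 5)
Your overall strategy is the same as the paper's: apply Lemma \ref{returnlem3} with $A=\{x\}$ (the $r_1=0$ case, which the paper also uses), combine with the Green's function identity \eqref{greencha}, restrict the sum to $y\in B_2(Y_i)\setminus B_1(Y_i)$, and finish with the first clause of $\mathcal V_n$ plus $\mathbb E[\pi_n(B_2(Y_i)\setminus B_1(Y_i))]\asymp 2^{\gamma^2 n/2}|B_2(Y_i)\setminus B_1(Y_i)|\geq \xi' \cdot 2^{\gamma^2 n/2}|B|$. However, there is a genuine gap in the step where you invoke Lemma \ref{returnlem3} with $Z=\partial B$. That lemma is stated (and proved) only for the concentric geometry $A=\partial B(r_1)$, $Z=\partial B(r_4)$, with $\mathcal B=B(r_4)\setminus B(r_3)$ the annulus immediately inside $Z$; its proof via Lemmas \ref{plainlem} and \ref{returnlem7} uses planarity with $Z$ as the \emph{outer} boundary and uses that the truncated tails $\hat P_k^i$ of the current paths, taken after the last exit of $B(r_3)$, must cross $\mathcal B$ before reaching $Z$. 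In your application $Z=\partial B$ is the complement of the big square annulus $B$: it is not $\partial B_4(Y_i)$, it is not concentric with $Y_i$, and it even has two components (the hole and the exterior), so identifying it as a single node destroys the outer-boundary planar structure the proof relies on. Citing the lemma "with $Z=\partial B$" is therefore out of its scope; your appendix remark about extending to $r_1=0$ addresses only the $A=\{x\}$ issue, not this one. (A minor related slip: the lemma's conclusion holds for $v$ inside the hole of $\mathcal A$, not "for every vertex $y$"; for $y$ near $\partial B$ the inequality is false. This is harmless only because you later sum over $y\in B_2(Y_i)\setminus B_1(Y_i)$.)

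The gap is fixable, and the fix is exactly the ingredient the paper uses that you omitted: monotonicity in the domain. Apply \eqref{greencha} and Lemma \ref{returnlem3} inside $B_4(Y_i)$, where $Z=\partial B_4(Y_i)$ matches the lemma's hypotheses verbatim, obtaining $G_{B_4(Y_i)}(x,y)\geq \tfrac{c_1}{2}\pi_n(y)R^n(\mbox{around }B_3(Y_i)\setminus B_2(Y_i))$ for $x\in Y_i$, $y\in B_2(Y_i)\setminus B_1(Y_i)$; then use that the Green's function is increasing in the domain, $G_B(x,y)\geq G_{B_4(Y_i)}(x,y)$ (equivalently, $\tau_{\partial B}\geq \tau_{\partial B_4(Y_i)}$ so $P^y(\tau_x<\tau_{\partial B})\geq P^y(\tau_x<\tau_{\partial B_4(Y_i)})$ and $R(x,\partial B)\geq R(x,\partial B_4(Y_i))$), and conclude as you do. With that one-line insertion your argument coincides with the paper's proof.
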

	\begin{proof}	
		In what follows we assume that the event $\mathcal V_n(C_8, C_9, B)$ holds. By Lemma \ref{returnlem3} (with $r_1=0$ so that $x$ is the boundary of a box), there exists $c_1=c_1(C_9)$ such that for all $y\in B_2(Y_i)\setminus B_1(Y_i)$ and $x\in Y_i$,
		$$R^n(y,\partial B_4(Y_i))+R^n(x,\partial B_4(Y_i))-R^n(x,y)\geq c_1R^n(\mbox{around } B_3(Y_i)\setminus B_2(Y_i)).$$
		Thus by \eqref{greencha}, since the Green's function $G_V$ is increasing with respect to $V$, for $x\in Y_i$ we have
		\begin{equation}
			\begin{aligned}
				\sum_{y\in B}G_B(x,y)&\geq \sum_{y\in B_2(Y_i)\setminus B_1(Y_i)} G_{B_4(Y_i)}(x,y)\\
				&\geq c_1\sum_{y\in B_2(Y_i)\setminus B_1(Y_i)}\pi_n(y) R^n(\mbox{around } B_3(Y_i)\setminus B_2(Y_i))\geq \xi c_1C_82^{\frac{\gamma^2n}{2}}|B|,
			\end{aligned}
		\end{equation}
		where the last inequality follows from the first requirement in \eqref{green11}, for some $\xi=\xi(a,b,d)$. This completes the proof.
	\end{proof}

	\begin{proof}[Proof of Lemma \ref{greenlem3}]
		By Lemma \ref{lqgne} and Markov's inequality, for some $\delta=\delta(q)$ we have
		\begin{equation}
			\mathbb P(\tilde \pi_n(B_2(Y_i)\setminus B_1(Y_i))\geq \delta \mbox{ for all }1\leq i\leq r)\geq 1-q/2.
		\end{equation} 
		Thus by Theorem \ref{thm1}, for some $C_8,C_9$ depending only on $q,a,b,d$, we have
		\begin{equation}\label{green13}
			\mathbb P\left(\mathcal V_n(C_8,C_9,B)\right)\geq 1-q.
		\end{equation}
		Then by Lemma \ref{greenlem4} we complete the proof.
	\end{proof}
	
	We may directly deduce Theorem \ref{exptime} from Lemmas \ref{green} and \ref{greenlem3}.
	\begin{proof}[Proof of Theorem \ref{exptime}]
		Recall $\chi_n =2^{(2+\gamma^2/2)n}\zeta_n^2$ and
		\begin{equation}
			E^0\tau_n=\sum_{y\in B(1)}G_n(0,y).
		\end{equation}
		By taking $a=0,b=1,m=0$ in Lemma \ref{green}, for all $q\in (0,1)$ we get that 
		\begin{equation}\label{green8}
			\mathbb{P}(E^0\tau_n\leq C_q2^{\frac{\gamma^2n}{2}}|B(1)|)\geq 1-q/2.
		\end{equation}
		Let $B$ be the square annulus centered at $(0,1/3)$ with inner radius $1/4$ and outer radius $1/2$, and let $\mathcal C$ be the boundary of the square centered at $(0,1/3)$ with radius $1/3$ (so we see that $B\subset B(1)$ and $0\in \mathcal C$).  We then apply Lemma \ref{greenlem3} with such choice of $B$ and $\mathcal C$, and we get that for all $q\in (0, 1)$ there exists $\beta = \beta (q)$ such that
		\begin{equation}\label{green9}
			\mathbb{P}(E^0\tau_n\geq \beta2^{\frac{\gamma^2n}{2}}|B(1)|)\geq 1-q/2,
		\end{equation}
		where we used the fact that $G_n(x,y)\geq G_B(x,y)$ since $B\subset B(1)$. Combining \eqref{green8} and \eqref{green9}, we can complete the proof.
	\end{proof}
	
	Next we prove a bound on the second moment for the exit time for later use.
	\begin{lem}\label{exitsec}
		Let $\mathrm{G}$ be a graph and $B\subset V(\mathrm{G})$, and let $\tau_B$ be first time for the random walk started at $x$ to exit $B$. Then
		\begin{equation}
			E\tau_B^2\leq \sum_{y,z\in B}G_{B}(x,y)G_B(y,z).
		\end{equation}
	\end{lem}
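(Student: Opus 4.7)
The plan is to exploit the representation $\tau_B = \sum_{t=0}^{\infty}\mathds{1}_{\{t<\tau_B\}}$, which expresses the exit time as a count of visits to $B$. Squaring gives $\tau_B^2 = \sum_{s,t\geq 0}\mathds{1}_{\{s<\tau_B\}}\mathds{1}_{\{t<\tau_B\}}$, and the natural strategy is to split this double sum according to whether $s\leq t$ or $s>t$ and then apply the strong Markov property at the smaller of the two times; each resulting contribution will factor into a product of two Green's-function terms.

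Concretely, I would first note that for $s\leq t$ and $y,z\in B$, the event $\{X_s=y,\,s<\tau_B\}$ already forces the walk to have stayed in $B$ up to time $s$, so by the strong Markov property at time $s$,
\begin{equation*}
P^x(X_s=y,\,X_t=z,\,s<\tau_B,\,t<\tau_B) = P^x(X_s=y,\,s<\tau_B)\cdot P^y(X_{t-s}=z,\,t-s<\tau_B).
\end{equation*}
Summing over $s\leq t$ and over $y,z\in B$, and using the definition $G_B(u,v)=\sum_{r\geq 0}P^u(X_r=v,\,r<\tau_B)$, one obtains
\begin{equation*}
\sum_{0\leq s\leq t} P^x(s<\tau_B,\,t<\tau_B) = \sum_{y,z\in B} G_B(x,y)G_B(y,z).
\end{equation*}

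Combining this with the symmetric identity for $s\geq t$ (which yields the same double sum after relabeling $y\leftrightarrow z$) and subtracting the diagonal $s=t$ contribution, which equals $E^x\tau_B$, yields the standard identity
\begin{equation*}
E^x\tau_B^2 = 2\sum_{y,z\in B}G_B(x,y)G_B(y,z) - E^x\tau_B,
\end{equation*}
from which the lemma follows by dropping the nonnegative subtracted term (and, if one is strict about the stated constant, absorbing the residual factor of $2$ into the constants used in subsequent sections where this bound is invoked). There is no essential obstacle here: this is a standard computation of the second moment of an exit time, and the only input needed is the strong Markov property and the definition of $G_B$.
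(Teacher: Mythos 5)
Your proof is correct and follows essentially the same route as the paper: expand $\tau_B$ as a sum of indicators over times (equivalently, occupation times of vertices in $B$), split the resulting double sum at the earlier time, and apply the strong Markov property there to factor each term into two Green's functions. You are also right to flag the factor of $2$: the sharp identity is $E^x\tau_B^2 = 2\sum_{y,z\in B}G_B(x,y)G_B(y,z) - E^x\tau_B$, and indeed the paper's own computation ends with the bound $2\sum_{y\in B}G_B(x,y)\sum_{z\in B}G_B(y,z)$, so the factor $2$ is simply dropped in the lemma statement (and is harmless where the bound is invoked later, e.g.\ in the estimates $E(\tau_x^i)^2\leq 2C_1^2 2^{-2i+2}\chi_n^2$), but the inequality as literally stated should carry it.
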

	\begin{proof}
		Let $X$ be a random walk started from $x$. Notice that
		\begin{equation}
			\tau_B=\sum_{y\in B}\sum_{i=0}^{\infty}\mathds{1}_{\{i<\tau_n,X_i=y\}}.
		\end{equation}
		So it holds that
		\begin{equation}
			\begin{aligned}
				E\tau_B^2
				&\leq 2\sum_{y,z\in B}\sum_{i\leq j}P^x(X_i=y,X_j=z,i\leq j\leq \tau_B)\\
				&= 2\sum_{y,z\in B}\sum_{i\leq j}P^x(X_i=y,i\leq \tau_B)P^{y}(X_{j-i}=z,j-i\leq \tau_B)\\
				&=2\sum_{y\in B}G_B(x,y)\sum_{z\in B}G_B(y,z),
			\end{aligned}
		\end{equation}
		completing the proof of the lemma.
	\end{proof}
	
	\section{Tightness with time parameterization}\label{sec9}
	In this section, we will prove Theorem \ref{thm3}, the tightness of the random walk with suitable time parameterization. In Section \ref{sec8}, it was proved that the expected exit time has the order $\chi_n=2^{(2+\gamma^2/2)n}\zeta_n^2$. Thus, when considering the scaling limit it is natural to scale the time by a factor of $\chi_n$. Afterwards, by the Arzela-Ascoli Theorem it suffices to prove the equicontinuity of paths and laws.
	
	In \textbf{Section \ref{subsec91}}, we first prove in Lemma \ref{equilem1} that with high probability, the exit time from a small box typically has the order of its expectation. By Lemmas \ref{green}, \ref{greenlem3} and the Paley-Zygmund inequality, we see that with positive probability the exit time is lower-bounded by its expectation up to constant. The key point in Lemma \ref{equilem1} is to enhance this to an estimate with high probability, via a multi-scale analysis. In addition, we bound the exit time from a large box in Lemma \ref{equilem4}, which then conveniently allows us to consider the random walk stopped upon exiting a large box. Based on Lemmas \ref{equilem1} and \ref{equilem4}, we prove equicontinuity of paths in Lemma \ref{equilem2}.
	
	In \textbf{Section \ref{subsec92}}, we couple two random walks with starting points close to each other using a similar method as in Section \ref{sec7} (see Section \ref{subsec73}). This is incorporated in Lemma \ref{equilem3}, yielding the equicontinuity of laws for random walks with respect to starting points.
	
	Recall that for $x\in \mathbb Z_n^2$, $X^{(n,x)}$ denotes the random walk on $\mathbb Z_n^2$ started from $x$. Also, recall that $\hat{X}^{(n,x)}:[0,\infty)\to \mathbb{R}^2$ is the piecewise linear interpolation of the process $t\to X^{(n,x)}_{\chi_nt}$.
	
	We will prove the tightness of the random function
	$P^{(n)}$
	from $\mathbb{R}^2$ to the space of probability measures on continuous curves $[0,\infty)\to \mathbb{R}^2$. Before proceeding to the proof, we recall a few metrics from Remark \ref{rmk2}, and we point out that these are the same metrics employed in \cite{berestycki2022random}. We emphasize that in this section our metric on the curve space does depend on the time parameterization.
	
	\subsection{Equicontinuity of paths}\label{subsec91}
	In what follows, we often drop the superscripts $(x, n)$ from $P$ for notation convenience when it is clear from the context.
	\begin{lem}\label{equilem1}
		For $k\geq 0$, let $S$ and $\hat{S}$ be dyadic squares with the same center and with side lengths $2^{-k}$ and $3\cdot2^{-k}$, respectively. For all $\epsilon>0$, there exists $\delta = \delta(\epsilon) > 0$ such that the following holds for all sufficiently large $n$: with $\mathbb P$-probability at least $1-\epsilon$ we have
		\begin{equation}\label{equi12}
			P^{(n,x)}(X^{(n,x)} \mbox{ exits }\hat{S} \mbox{ before time }\delta 2^{-2k}\chi_n)<\epsilon\mbox{ for all } x\in S.
		\end{equation}
	\end{lem}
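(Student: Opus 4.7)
The plan is to combine the Green's function estimates of Section \ref{sec8} with the second-moment bound of Lemma \ref{exitsec} to obtain, via Paley--Zygmund, a constant-order lower bound on $P^{(n,x)}(\tau_{\hat{S}}\geq cT/2)$ for $T:=2^{-2k}\chi_n$, and then amplify this into a bound of the form $\geq 1-\epsilon$ by iterating the strong Markov property through a nested family of intermediate scales.

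First, by the scaling property \eqref{scaling} and Proposition \ref{mesh}, the problem at scale $2^{-k}$ is equivalent in distribution to a problem at the unit scale with the discretization parameter shifted by $k$, under which $T$ plays the role of $\chi_{n-k}$. Applying Lemma \ref{green} to $\hat{S}$ and Lemma \ref{greenlem3} to a suitable annulus inside $\hat{S}$ containing $S$, I get with $\mathbb{P}$-probability at least $1-\epsilon/2$ that
\begin{equation*}
cT\leq E^x\tau_{\hat{S}}\leq CT\quad\text{for all }x\in S,\qquad E^y\tau_{\hat{S}}\leq CT\quad\text{for all }y\in\hat{S}.
\end{equation*}
Lemma \ref{exitsec} then yields $E^x\tau_{\hat{S}}^2\leq\bigl(\sup_{y\in\hat{S}}\sum_z G_{\hat{S}}(y,z)\bigr)\sum_y G_{\hat{S}}(x,y)\leq(CT)^2$, and Paley--Zygmund gives
\begin{equation*}
P^{(n,x)}\bigl(\tau_{\hat{S}}\geq cT/2\bigr)\geq\alpha\quad\text{for all }x\in S,
\end{equation*}
for some constant $\alpha=\alpha(c,C)>0$.

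The main obstacle is amplifying this $O(1)$ lower bound into the desired $1-\epsilon$ bound. For this, I would introduce a nested family $S=S_0\subset S_1\subset\cdots\subset S_N=\hat{S}$ with $N=N(\epsilon)$ a large but fixed integer, whose side lengths interpolate between $2^{-k}$ and $3\cdot 2^{-k}$. With $\mathbb{P}$-probability close to $1$, Theorem \ref{thm1} and a union bound over the $N$ scales guarantee that the hypotheses of Lemma \ref{returnlem} hold simultaneously at each interface $S_{j-1}\subset S_{j+1}$, so the walk returns from $S_j$ to $S_{j-1}$ before hitting $\partial S_{j+1}$ with a uniform positive probability. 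Iterating these returns via the strong Markov property produces $\gtrsim N$ effectively independent ``trials'', at each of which the Paley--Zygmund estimate provides a fresh probability $\geq\alpha$ that the next sojourn in $\hat{S}$ lasts $\geq cT/2$; the total probability of observing at least one such long sojourn is then $\geq 1-(1-\alpha)^N > 1-\epsilon$ for $N=N(\epsilon)$ large. Choosing $\delta<c/4$ makes $\delta T$ much smaller than any such sojourn, giving $P^{(n,x)}(\tau_{\hat{S}}\leq\delta T)\leq\epsilon$. The main technical subtlety lies in ensuring that the per-scale Paley--Zygmund constants do not degrade uncontrollably as the intermediate boxes become close to one another, and that the successive returns yield genuine independence after strong Markov -- both of which should be controlled by a careful choice of the intermediate scales together with the uniform resistance bounds from Theorem \ref{thm1}.
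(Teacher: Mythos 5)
Your first step---the Green's function bounds of Section \ref{sec8} combined with Lemma \ref{exitsec} and Paley--Zygmund to get $P^{(n,x)}(\tau_{\hat S}\geq cT/2)\geq\alpha$ with $T=2^{-2k}\chi_n$---is sound in outline (modulo the fact that the lower bound of Lemma \ref{greenlem3} is stated for starting points on a fixed contour $\mathcal C$ inside an annulus, not for all $x\in S$, so the annuli must already be adapted to the location of $x$), and it is indeed the paper's basic building block. The genuine gap is the amplification step. The events you use at the successive return times $\sigma_1<\sigma_2<\cdots$, namely $A_j=\{\tau_{\hat S}-\sigma_j\geq cT/2\}$ (``the next sojourn in $\hat S$ lasts $\geq cT/2$''), are \emph{nested}: since $\tau_{\hat S}-\sigma_j$ decreases in $j$, one has $A_1\supset A_2\supset\cdots$, so ``at least one long sojourn'' is just $A_1$ and the $N$ trials give no boost beyond the single Paley--Zygmund bound $\alpha$. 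Nor can the strong Markov property be iterated as you suggest: $A_j^c$ is not measurable at the time of the next trial (it depends on the entire future up to $\tau_{\hat S}$), so the failure probabilities do not multiply---all the trials draw on the same global time budget. Indeed, the conclusion your scheme would deliver, $P^{(n,x)}(\tau_{\hat S}\geq cT/2)\geq 1-(1-\alpha)^N$ with a duration threshold that does not shrink with $\epsilon$, is already false for $\gamma=0$ (simple random walk), where the rescaled exit time converges to the Brownian exit time of $\hat S$ and puts a fixed positive mass below $cT/2$; consistently, the paper's statement keeps $\delta=\delta(\epsilon)\to0$.

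To amplify one needs long-duration events supported on \emph{disjoint} time intervals, and each such event can then only be guaranteed a duration at the scale of the region in which that sojourn occurs; this is what forces $\delta$ to shrink with $\epsilon$, and it is exactly the paper's route (Lemma \ref{equilem5} and the proof of Lemma \ref{equilem1}). There one takes $m=m(\epsilon)$ geometrically separated annuli $\mathcal A^i$ around the dyadic box of side $2^{-k-m}$ containing $x$; the Section \ref{sec8} estimates transferred to each scale, plus Lemma \ref{exitsec} and Paley--Zygmund, give at every visit to the contour of a ``good'' scale a fresh probability $\alpha$ that the crossing of that single annulus lasts at least $c_12^{-3}2^{-2(k+m)}\chi_n$; these crossings are disjoint in time, so strong Markov legitimately yields failure probability $(1-\alpha)^{m/3}$, and one sets $\delta\asymp 2^{-2m}$. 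Independence of the fine fields across scales plus Hoeffding makes the environment event hold for a third of the scales with probability $1-e^{-\mathsf C m}$, which is what survives the union bound over the $2^{2m}$ small boxes needed for uniformity in $x\in S$. Your single-scale family of shells between side lengths $2^{-k}$ and $3\cdot2^{-k}$ cannot play this role: with $N$ shells the gaps have width of order $2^{-k}/N$, so any disjoint-in-time sojourn you can certify there has duration $o_N(1)\cdot T$ rather than $cT/2$ (and the constants of Lemmas \ref{green} and \ref{greenlem3} degrade with the aspect ratio of the thin shells), so fixing ``$\delta<c/4$'' independently of the number of trials cannot work.
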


	We first prove a lemma for the random environment. Let $m\geq 0$ be an integer to be determined. Let $T\subset S$ be a dyadic box with side length $2^{-m-k}$. For $k+3\leq i\leq k+m$, define $\mathcal{A}^i_T$ to be the square annulus concentric with $T$ which has inner radius $2^{-i+7/3}$ and outer radius $2^{-i+8/3}$, define $\mathcal{B}^i_T$ to be the concentric square annulus with inner radius $2^{-i+2}$ and outer radius $2^{-i+3}$, and define $\mathcal{C}^i_T$ to be the boundary of the concentric square with radius $2^{-i+5/2}$. (See Figure \ref{fig12} for an illustration.)
	
	\begin{figure}
		\begin{minipage}[c]{0.48\textwidth}
			\centering
			\includegraphics[width=60mm]{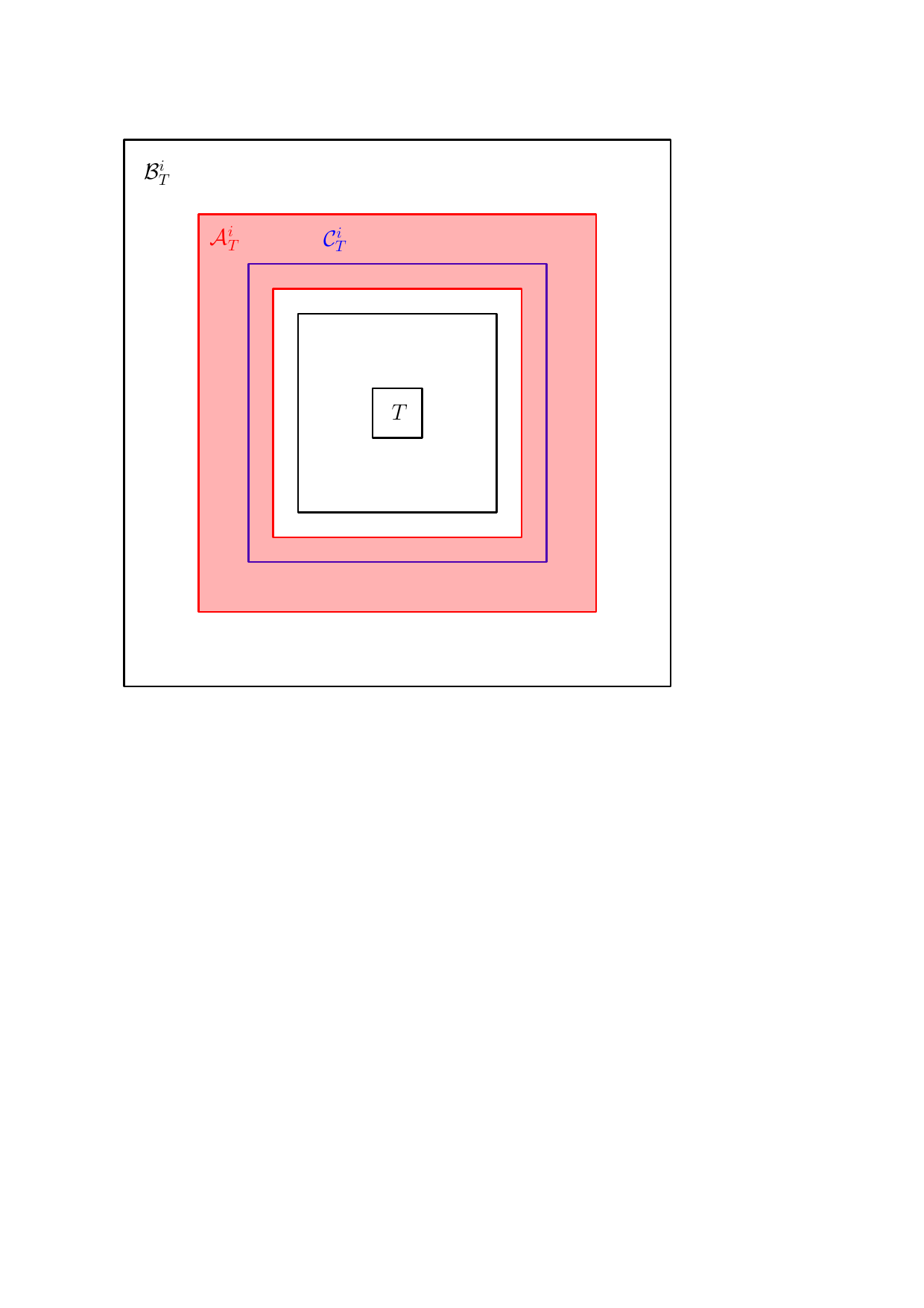}
			\caption{Illustration of annuli.}
			\label{fig12}
		\end{minipage}
		\begin{minipage}[c]{0.48\textwidth}
			\centering
			\includegraphics[width=60mm]{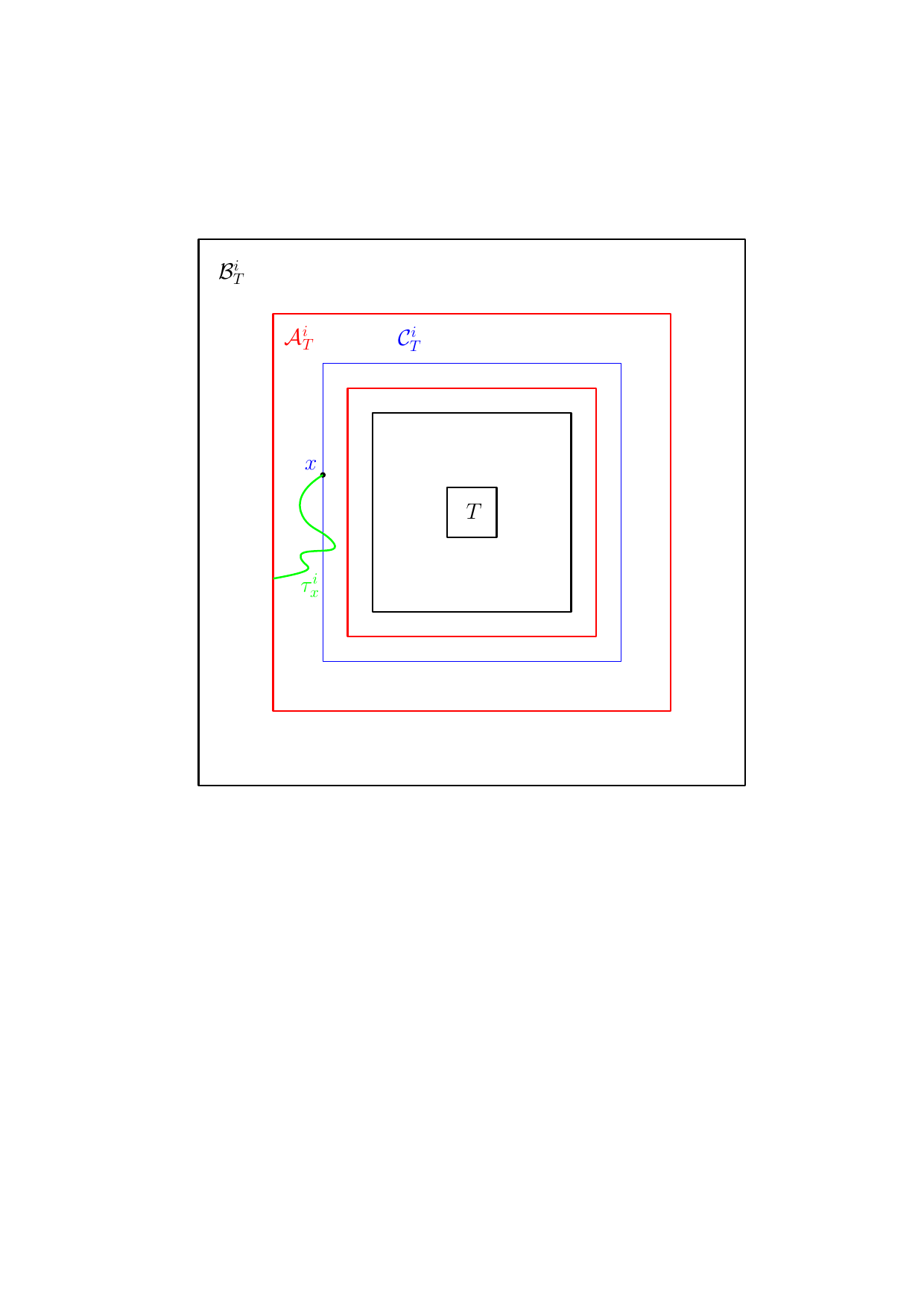}
			\caption{Illustration of a random walk.}
			\label{fig13}
		\end{minipage}
	\end{figure}
	
	\begin{lem}\label{equilem5}
		For all $\mathsf C ,\mathsf c >0$, there exist constants $C_1=C_1(\mathsf C ,\mathsf c),c_1=c_1(\mathsf C ,\mathsf c)$ such that for all $\gamma<\gamma_0$, if we define $I_T$ to be the set of $i$ with $k+3\leq i\leq k+m$ such that
		\begin{equation}\label{equi29}
			\sum_{y\in \mathcal{A}^i_T}G_n(x,y)\geq c_12^{-2i}\chi_n \mbox{ for all } x\in \mathcal C^i_T \And \sum_{y\in \mathcal{A}^i_T}G_n(x,y)\leq C_12^{-2i}\chi_n \mbox{ for all } x\in \mathcal A^i_T,
		\end{equation}
		then we have
		\begin{equation}\label{equi15}
			\mathbb{P}(|I_T|\geq \mathsf c  m)\geq 1-\exp(-\mathsf C m).
		\end{equation}
	\end{lem}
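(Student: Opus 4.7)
The plan is to follow the template of Lemma \ref{decaynumber}, combining a per-scale estimate of the event in \eqref{equi29} with a multi-scale decoupling argument and a Chernoff-type concentration. The lower bound at a single scale $i$ is immediate: Lemma \ref{greenlem3} applied to the annulus $\mathcal{A}^i_T$ with contour $\mathcal{C}^i_T$ (which sits strictly inside since $7/3<5/2<8/3$) yields $\sum_{y\in\mathcal{A}^i_T}G_{\mathcal{A}^i_T}(x,y)\geq c_1\,2^{-2i}\chi_n$ for $x\in\mathcal{C}^i_T$ with probability at least $1-q$, and the monotonicity $G_n\geq G_{\mathcal{A}^i_T}$ transfers this to $G_n$. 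The upper bound at a single scale is more subtle since $G_n$ is killed at $\partial B(1)$: using the representation $G_n(x,y)=R^n(x,\partial B(1))\,\pi_n(y)\,P^y(\tau_x<\tau_{\partial B(1)})$, I would combine Lemma \ref{lqgpo} (to control the LQG mass of $\mathcal{A}^i_T$) with hitting-probability bounds derived from Lemma \ref{returnlem3} (whose required resistance conditions hold with high probability by Theorem \ref{thm1}). The logarithmic-in-scale growth of $R^n(x,\partial B(1))$ is cancelled by the logarithmic-in-scale decay of $\sum_{y\in\mathcal{A}^i_T}\pi_n(y)P^y(\tau_x<\tau_{\partial B(1)})$, yielding $\sum_{y\in\mathcal{A}^i_T}G_n(x,y)\leq C_1\,2^{-2i}\chi_n$ for all $x\in\mathcal{A}^i_T$ with probability at least $1-q$.

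For the multi-scale step, decompose $\phi_n=\phi_{0,i}+\phi^{\mathrm{f}}_{i,n}+\phi^{\mathrm{c}}_{i,n}$ exactly as in Section \ref{subsec63}, with $\phi^{\mathrm{f}}_{i,n}$ restricting the white-noise integration on $t\in(2^{-2n},2^{-2i})$ to $\mathcal{B}^i_T$ and $\phi^{\mathrm{c}}_{i,n}$ to its complement. The annuli $\{\mathcal{B}^i_T\}_{i=k+3}^{k+m}$ are disjoint, so the fields $\{\phi^{\mathrm{f}}_{i,n}\}_i$ are mutually independent. Adapting Lemmas \ref{n2l} and \ref{n3l} with $\mathcal{A}^i_T,\mathcal{B}^i_T$ in place of $\mathcal{A}^i_S,\mathcal{B}^i_S$, outside an event $\mathcal{G}$ of probability $\exp(-\mathsf{C}m)$ at most $\mathsf{c}m/2$ indices fail either $\max_{u,v\in\mathcal{B}^i_T}|\phi_{0,i}(u)-\phi_{0,i}(v)|\leq C$ or $\max_{u\in\mathcal{A}^i_T}|\phi^{\mathrm{c}}_{i,n}(u)|\leq C$ for an absolute $C$. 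Conditionally on $\mathcal{G}$, for each remaining ``coarse-good'' scale $i$ the event in \eqref{equi29} is determined, up to multiplicative factors absorbable into $C_1,c_1$, by $\phi^{\mathrm{f}}_{i,n}$ alone, and by the per-scale estimate its conditional failure probability is at most $q$. Since the $\phi^{\mathrm{f}}_{i,n}$ are independent across $i$, a Chernoff bound on the per-scale Bernoulli indicators of success gives \eqref{equi15} provided $q$ is chosen sufficiently small depending on $\mathsf{c},\mathsf{C}$.

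The main obstacle is the upper bound in the per-scale step: unlike Lemmas \ref{green} and \ref{greenlem3}, where the walk is killed on the boundary of the same annulus, here the walk may repeatedly re-enter $\mathcal{A}^i_T$ before escaping to $\partial B(1)$, and planar recurrence makes the naive per-step return probability close to $1$. Making the sketched cancellation rigorous requires both a uniform (in $x$) multi-scale bound on $R^n(x,\partial B(1))$ obtained by chaining Theorem \ref{thm1} through concentric dyadic annuli, and a careful use of Lemma \ref{returnlem3} to express and estimate the harmonic-measure factor $P^y(\tau_x<\tau_{\partial B(1)})$ as a ratio of such effective resistances; it is precisely here that the delicate resistance comparisons of Lemma \ref{returnlem3}, rather than just up-to-constant estimates, become essential.
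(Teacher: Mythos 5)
Your multi-scale skeleton (decompose $\phi_n=\phi_{0,i}+\phi^{\mathrm f}_{i,n}+\phi^{\mathrm c}_{i,n}$ with $\phi^{\mathrm f}_{i,n}$ supported on the disjoint annuli $\mathcal B^i_T$, control the coarse parts by analogues of Lemmas \ref{n2l} and \ref{n3l}, and run Hoeffding over the independent fine fields) is exactly the paper's, and your lower-bound input at a single scale (Lemma \ref{greenlem3}/\ref{greenlem4} applied to $\mathcal A^i_T$ with contour $\mathcal C^i_T$, transferred by scaling) also matches. The gap is in your per-scale upper bound. You read the quantity as the Green's function killed at $\partial B(1)$ and propose to bound $\sum_{y\in\mathcal A^i_T}G_n(x,y)$ via $G_n(x,y)=R^n(x,\partial B(1))\pi_n(y)P^y(\tau_x<\tau_{\partial B(1)})$, claiming that the growth of $R^n(x,\partial B(1))$ in the number of scales cancels against the decay of the hitting probabilities. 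It does not cancel at the strength you need: by \eqref{return6} the numerator $R^n(y,\partial B(1))+R^n(x,\partial B(1))-R^n(x,y)$ is comparable to the resistance from scale $2^{-i}$ out to $\partial B(1)$, which (each dyadic scale contributing order one by Theorem \ref{thm1}) is of order the number of intervening scales; equivalently, the walk returns to $\mathcal A^i_T$ roughly that many times before reaching $\partial B(1)$. So the expected occupation time of $\mathcal A^i_T$ before exiting $B(1)$ carries an extra factor growing linearly in the scale separation (already for $\gamma=0$ it is $\asymp i\,2^{-2i}\chi_n$ when $S$ is at unit distance from $\partial B(1)$), and no constant $C_1=C_1(\mathsf C,\mathsf c)$ uniform in $i$, $k$, $m$ can be extracted this way. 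A second, related problem is that with this reading the event \eqref{equi29} is not a local functional of the environment: even after conditioning on coarse-goodness, the excursions of the walk outside $\mathcal A^i_T$ before hitting $\partial B(1)$ see the field far from $\mathcal B^i_T$, so the per-scale success indicators are not functions of $\phi^{\mathrm f}_{i,n}$ alone and the independence needed for your Chernoff step breaks down.

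What the induction actually needs (see how Lemma \ref{equilem1} uses \eqref{equi29}: Paley--Zygmund for the exit time $\tau^i_x$ of $\mathcal A^i_T$, via Lemma \ref{exitsec}) is the pair of bounds for the Green's function killed upon exiting the local annulus $\mathcal A^i_T$, and this is what the paper proves: it never returns to $G_{B(1)}$ at scale $i$. Concretely, the per-scale input is the pair of \emph{environment} events $\mathcal U^{\mathrm f}_{i,n}$ and $\mathcal V^{\mathrm f}_{i,n}$ on $\mathcal A^i_T$ (the fine-field versions of \eqref{green10} and \eqref{green11}), whose probabilities are controlled by the exact scaling \eqref{scaling} together with Lemma \ref{greenlem1} and \eqref{green13}, plus Lemma \ref{comp} to pass between $\phi_{i,n}$ and $\phi^{\mathrm f}_{i,n}$; these events are measurable with respect to the fine field only, so Hoeffding applies to them directly. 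Only afterwards, on the intersection with the coarse-good scales, are the Green's function bounds deduced \emph{deterministically} from Lemmas \ref{greenlem2} and \ref{greenlem4} (the constant shift $e^{\pm\gamma\phi_{0,i}}$ cancels in the resistance-times-LQG-mass expressions). Replacing your step two with this localization -- i.e., proving \eqref{equi29} for the annulus-killed Green's function via Lemma \ref{green}'s machinery at scale $2^{-i}$, rather than attempting a bound on $G_{B(1)}$ -- repairs both the false cancellation and the independence issue, and brings your argument back to the paper's proof.
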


	\begin{proof}
		We consider the decomposition of $\phi_{n}$, similar to the decomposition in subsection \ref{subsec63}, as follows:
		\begin{equation}
			\phi_{n}=\phi_{0,i}+\phi_{i,n}^{\mathrm{f}}+\phi_{i,n}^{\mathrm{c}},
		\end{equation}
		where 
		\begin{equation}
			\phi_{i,n}^{\mathrm f}(x)=\int_{2^{-2n}}^{2^{-2k}}\int_{\mathcal{B}^i_T}p_{\frac{t}{2}}(x,y)W(dy,dt) \And\phi_{i,n}^{\mathrm c}(x)=\int_{2^{-2n}}^{2^{-2k}}\int_{\mathbb{R}^2\setminus\mathcal{B}^i_T}p_{\frac{t}{2}}(x,y)W(dy,dt).
		\end{equation}
		By analogues of Lemmas \ref{n2l} and \ref{n3l} (where we consider the field $\phi$ instead of $\psi$, and the proofs can be adapted verbatim), for each pair $(\mathsf C ,\mathsf c)$ there exists $C_2= C_2(\mathsf C ,\mathsf c)$ such that the following holds. If we define $N_T$ to be the number of $i$ with $k+3 \leq i\leq k+m$ such that
		\begin{equation}\label{equi4}
			\max_{u_i,v_i\in\mathcal{B}^i_T}\left(\phi_{0,i}(u_i)-\phi_{0,i}(v_i)\right)< C_2 \And \max_{u_i\in\mathcal{A}^i_T}|\phi^{\mathrm c}_{i,n}(u_i)|< C_2,
		\end{equation}
		then we have
		\begin{equation}\label{equi13}
			\mathbb{P}(N_T\geq \mathsf c  m)\geq 1-\exp(-\mathsf C m).
		\end{equation}
		We define $\mathcal U_{i,n}(C_3,C_4,B)$ and $\mathcal U_{i,n}^{\mathrm f}(C_3,C_4,B)$ as $\mathcal U_n(C_3, C_4, B)$ (recall \eqref{green10}) but with $\phi_{i,n}$ and $\phi_{i,n}^\mathrm{f}$ in place of $\phi_{n}$, respectively; we define $\mathcal V_{i,n}(C_5,C_6,B)$ and $\mathcal V_{i,n}^{\mathrm f}(C_5,C_6,B)$ as $\mathcal V_n(C_5, C_6, B)$ (recall \eqref{green11}) but with $\phi_{i,n}$ and $\phi_{i,n}^\mathrm{f}$ in place of $\phi_{n}$, respectively. By the scaling invariance of $\phi$, we know that $\phi_{i,n}$ on $\mathcal{A}^i_T$ has the same law as $\phi_{n-i}$ on $2^i\mathcal{A}^i_T$. Thus by Lemma \ref{greenlem1}, for all $q\in (0,1)$ there exist $C_7=C_7(q)$ and $C_8=C_8(q)$ such that
		\begin{equation}\label{equi27}
			\mathbb{P}(\mathcal U_{i,n}(C_7,C_8,\mathcal{A}^i_T))\geq 1-q/4.
		\end{equation}
		Similarly, since $\phi_{i,n}$ on $\mathcal{A}^i_T$ has the same law as $\phi_{n-i}$ on $2^i\mathcal{A}^i_T$, by \eqref{green13}, for all $q\in (0,1)$ there exist $C_9=C_9(q)$ and $C_{10}=C_{10}(q)$ such that
		\begin{equation}\label{equi28}
			\mathbb{P}(\mathcal V_{i,n}(C_9, C_{10},\mathcal{A}^i_T))\geq 1-q/4.
		\end{equation}
		Combining \eqref{equi27} and \eqref{equi28} we get that
		\begin{equation}
			\mathbb{P}(\mathcal U_{i,n}(C_7,C_8,\mathcal{A}^i_T)\cap \mathcal V_{i,n}(C_9,C_{10},\mathcal{A}^i_T))\geq 1-q/2.
		\end{equation}
		Thus by \eqref{equi4} and Markov's inequality, we can apply Lemma \ref{comp} to get that for some $C_3,C_4,C_5,C_6$ depending only on $q$,
		\begin{equation}
			\mathbb{P}(\mathcal U_{i,n}^{\mathrm f} (C_3,C_4,\mathcal{A}^i_T)\cap \mathcal V_{i,n}^{\mathrm f} (C_5,C_6,\mathcal{A}^i_T))\geq 1-q.
		\end{equation}
		We define $I^{\mathrm f}_T$ to be the set of $i$ with $k+3 \leq i\leq k+m$ such that
		\begin{equation}\label{equi11}
			 \mathcal U_{i,n}^{\mathrm f} (C_3,C_4,\mathcal{A}^i_T)\cap \mathcal V_{i,n}^{\mathrm f} (C_5,C_6,\mathcal{A}^i_T) \mbox{ occurs}.
		\end{equation}
		Note that $|I^{\mathrm f}_T|$ is a sum of independent variables by the independence of the field $\{\phi^{\mathrm{f}}_{i, n}
		(v): v\in \mathcal A^i_T\}$ over $i$. Thus, we can apply the Hoeffding's inequality and obtain the following: for $q$ sufficiently close to $1$ depending only on $(\mathsf C, \mathsf c)$, we have
		\begin{equation}\label{equi14}
			\mathbb{P}(|I^{\mathrm f}_T|\geq \mathsf c m)\geq 1-\exp(-\mathsf C m).
		\end{equation}		
		Now for all $i$ with $k+3 \leq i\leq k+m$ such that \eqref{equi4} and \eqref{equi11} hold, we must have $$\mathcal U_{i,n}(e^{\gamma C_2}C_3,e^{\gamma C_2}C_4,\mathcal{A}^i_T)\cap \mathcal V_{i,n} (e^{-\gamma C_2}C_5,e^{\gamma C_2}C_6,\mathcal{A}^i_T) \mbox{ occurs}.$$ In addition, if we take an arbitrary point $u_i\in \mathcal{A}^i_T$, by $\eqref{equi4}$, \eqref{green10} and \eqref{green11} we must have $$\mathcal U_{n}(e^{\gamma \phi_i(u_i)+\gamma C_2}e^{\gamma C_2}C_3,e^{-\gamma \phi_i(u_i)+\gamma C_2}e^{\gamma C_2}C_4,\mathcal{A}^i_T)\cap \mathcal V_{n} (e^{-2\gamma C_2}C_5,e^{2\gamma C_2}C_6,\mathcal{A}^i_T) \mbox{ occurs}.$$
		On this event, by Lemmas \ref{greenlem2} and \ref{greenlem4} we see that \eqref{equi29} holds with $C_1=100C^\prime e^{4\gamma C_2}C_3C_4$ and $c_1=0.01c_2e^{-2\gamma C_2}C_5$, where $C^\prime$ is an absolute constant we take from Lemma \ref{greenlem2} and $c_2=c_2(e^{2\gamma C_2}C_6)$ (note that the factor of $e^{\gamma \phi_i(u_i)}$ and $e^{-\gamma \phi_i(u_i)}$ cancel when computing the Green's function).
		Thus we can complete the proof by \eqref{equi13} and \eqref{equi14}.
	\end{proof}
	
	\begin{proof}[Proof of Lemma \ref{equilem1}]
		We take constants $C_1,c_1$ from Lemma \ref{equilem5}. Taking $\mathsf C = 3$, $\mathsf c = 2/3$ and $m$ sufficiently large, we see that with $\mathbb P$-probability at least $1-\epsilon$, $|I_T|\geq m/3$ holds for all dyadic boxes $T\subset S$. On this event, for all $x\in S$, let $T\subset S$ be the dyadic box with side length $2^{-m-k}$ that contains $x$. We write $\tau^i_x$ as the exit time of $\mathcal{A}^i_T$ by the random walk started at $x$. (See Figure \ref{fig13} for an illustration.) Then by Lemma \ref{exitsec}, for each $i\in I_T$ and $x\in \mathcal{C}^i$, we have $E\tau_x^i\geq c_1 2^{-2i-2}\chi_n$ and $E(\tau^i_x)^2\leq 2C_1^22^{-2i+2}\chi_n^2$. By Paley-Zygmund's inequality, we get that for $i\in I_T$
		\begin{equation}
			P(\tau_x^i\geq c_1 2^{-2i-3}\chi_n)\geq 2^{-10}c_1^2/C_1^2,\mbox{ for all } x\in \mathcal{C}^i_T.
		\end{equation}
		Thus by the strong Markov property,
		\begin{equation}
			P(X^{(n,x)} \mbox{ exits }\hat{S} \mbox{ before time }c_1 2^{-3}2^{-2(k+m)} \chi_n)<(1-2^{-10}c_1^2/C_1^2)^{m/3},\mbox{ for all } x\in S.
		\end{equation}
		This then completes the proof of the lemma by taking $m$ sufficiently large.
	\end{proof}
	
	The following lemma, as well as its proof, is highly similar to that of Lemma \ref{equilem1}.
	\begin{lem}\label{equilem4}
		For each $L>1$, there exists a sufficiently large $l$ depending only on $L$ such that the following holds for all $n \geq 1$. With $\mathbb P$-probability at least $1-1/3L$ we have 
		\begin{equation}
			P(X^{(n,x)} \mbox{ exits }B(3\cdot 2^l) \mbox{ before time }L\chi_n)<1/3L,\mbox{ for all } x\in B(2^l).
		\end{equation}
	\end{lem}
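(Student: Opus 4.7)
The plan is to mirror the proof of Lemma \ref{equilem1}, now taking $k = -l$ so that $B(2^l)$ and $B(3\cdot 2^l)$ play the roles of $S$ and $\hat S$ respectively. Let $m$ be a positive integer to be chosen below (depending on $L$), and partition $B(2^l)$ into dyadic subsquares $T$ of side length $2^{l-m}$. For each such $T$ I introduce the concentric annuli $\mathcal{A}^i_T$ and middle circles $\mathcal{C}^i_T$ for indices $-l+3 \le i \le -l+m$, exactly as in the setup preceding Lemma \ref{equilem5}. The outer radius of the largest annulus equals $2^{l-1/3} < 3\cdot 2^l$, so every $\mathcal{A}^i_T$ is contained in $B(3\cdot 2^l)$.

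I first apply Lemma \ref{equilem5} (whose proof goes through verbatim for negative $k$, as the only requirement is $i \le n$, which is automatic once $n \ge m - l$; the finitely many small $n$ can be handled by absorbing them into the eventual choice of $l$) with $\mathsf c = 2/3$ and a constant $\mathsf C = \mathsf C(L)$ to be fixed. For each $T$, this yields $\mathbb P(|I_T|\ge 2m/3)\ge 1-\exp(-\mathsf C m)$. Union-bounding over the $O(2^{2m})$ dyadic squares $T \subset B(2^l)$ and choosing $\mathsf C$ sufficiently large in terms of $L$, I obtain with $\mathbb P$-probability at least $1-1/(3L)$ the event
\[
\mathcal G := \{|I_T|\ge 2m/3 \text{ for every dyadic } T \subset B(2^l) \text{ of side length } 2^{l-m}\}.
\]

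On $\mathcal G$, fix any $x \in B(2^l)$ and pick the dyadic $T \ni x$. Exactly as in the proof of Lemma \ref{equilem1}, Lemma \ref{exitsec} combined with the bounds in \eqref{equi29} gives $E^y\tau^i \ge c_1 2^{-2i-2}\chi_n$ and $E^y(\tau^i)^2 \le C C_1^2 2^{-4i}\chi_n^2$ for each $i \in I_T$ and $y \in \mathcal C^i_T$, where $\tau^i$ is the exit time of $\mathcal A^i_T$. Paley-Zygmund then yields $P^y(\tau^i \ge c_1 2^{-2i-3}\chi_n) \ge p_0$ for some $p_0 = p_0(c_1, C_1) > 0$. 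Since the $\mathcal A^i_T$ are nested and contained in $B(3\cdot 2^l)$, the walk started at $x$ must, for every $i$, hit $\mathcal C^i_T$ before first leaving $\mathcal A^i_T$ through its outer boundary, hence before exiting $B(3\cdot 2^l)$. Applying the strong Markov property successively at these hitting times (from the innermost index $i=-l+m$ outward), I conclude
\[
P\bigl(X^{(n,x)} \text{ exits } B(3\cdot 2^l) \text{ before time } c_1 2^{-3}\cdot 2^{2(l-m)}\chi_n\bigr) \le (1-p_0)^{2m/3}.
\]

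Finally, I choose $m = m(L)$ large enough so that $(1-p_0)^{2m/3} \le 1/(3L)$, which forces $m = O(\log L)$, and then take $l = l(L) \ge m + \tfrac{1}{2}\log_2(8L/c_1)$, ensuring $c_1 2^{-3}\cdot 2^{2(l-m)}\chi_n \ge L\chi_n$. Both thresholds depend only on $L$, and together they deliver the claimed estimate. The only real point of care is verifying that Lemma \ref{equilem5} is valid at $k<0$; this is immediate from its proof, so no new ideas are needed beyond those already developed for Lemma \ref{equilem1}.
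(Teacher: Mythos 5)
Your random-walk part of the argument (Paley--Zygmund from the Green's function bounds, the strong Markov chaining over nested annuli, and the choice of $m$ and then $l\geq m+\tfrac12\log_2(8L/c_1)$ so that the guaranteed sojourn time exceeds $L\chi_n$) is sound and matches the structure of the paper's proof. The genuine gap is the claim that Lemma \ref{equilem5} ``goes through verbatim for negative $k$''. With $l>m$ all of your indices $i\in[-l+3,-l+m]$ are negative, i.e.\ all annuli $\mathcal A^i_T$ are macroscopic, and at those scales the proof of Lemma \ref{equilem5} breaks down at its core: the field $\phi_n$ contains no scales above $1$ (the white-noise integral runs over $t\in[2^{-2n},1]$), so for $i<0$ there is no coarse field $\phi_{0,i}$ to control via the analogues of Lemmas \ref{n2l} and \ref{n3l}, and, more importantly, the scaling step ``$\phi_{i,n}$ on $\mathcal A^i_T$ has the same law as $\phi_{n-i}$ on $2^i\mathcal A^i_T$'', which is how Lemma \ref{equilem5} imports the Green's function estimates of Lemma \ref{greenlem1} and \eqref{green13} at unit scale, is simply unavailable: rescaling a box of side $2^{|i|}$ down to unit size turns $\phi_n$ into a field missing its top scales, not into $\phi_{n-i}$. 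Likewise the independence over $i$ that feeds Hoeffding cannot be obtained from the scale decomposition at macroscopic scales. So the key environment estimate \eqref{equi15}, in the regime you need it, is exactly what remains unproved; the requirement is not merely $i\le n$.

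The repair is not hard but it is genuinely different from a verbatim rerun of Lemma \ref{equilem5}, and it is what the paper does in Lemmas \ref{equilem6} and \ref{equilem7}: at macroscopic scales one applies Lemma \ref{greenlem1} and \eqref{green13} \emph{directly} to the annuli (they are stated for annuli of radii $a2^m,b2^m$ with $m\ge 0$, uniformly in $m$), obtains independence across $i$ from the \emph{spatial} disjointness of the regions $\mathcal B^i$ (white noise restricted to disjoint sets), and controls the remaining spatially-coarse part of the field on $\mathcal A^i$ via Lemma \ref{coarse}, exploiting that $\mathcal A^i$ is at distance of order at least a constant from the complement of $\mathcal B^i$. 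If you replace your appeal to ``Lemma \ref{equilem5} at $k<0$'' by this three-step substitute (direct Green's function input, spatial independence, Lemma \ref{coarse} for the coarse field), the rest of your argument goes through; as written, however, the proposal rests on an extension that is false as stated.
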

	We fix $m>0$ whose value will be determined later. Let $Y$ be a dyadic box with side length $1$. For $i\in \mathbb Z$ with $1\leq i\leq m$, define $\mathcal{A}^i_Y$ to be the square annulus concentric with $Y$ which has inner radius $2\cdot 8^i$ and outer radius $4\cdot 8^{i}$, define $\mathcal{B}^i_Y$ to be the square annulus concentric with $Y$ which has inner radius $8^i$ and outer radius $8^{i+1}$, and define $\mathcal{C}^i_Y$ to be the boundary of the concentric square with radius $3\cdot 8^i$. In addition, we define the field
	\begin{equation}
		\phi^i(x)=\int_{2^{-2n}}^{1}\int_{\mathcal{B}^i_Y}p_{\frac{t}{2}}(x,y)W(dy,dt) \And \phi_{\mathrm c}=\phi_{n}-\phi^i.
	\end{equation}
	Just as in Lemma \ref{equilem1}, we will apply a multi-scale analysis argument. The following technical lemma gives the ``almost'' independence of the field $\phi_n$ in $\mathcal{A}^i_Y$.
	
	\begin{lem}\label{equilem6}
		For all box $Y$ with side length 1 and for all $\mathsf C,\mathsf c>0$, there exists a constant $C_{11} = C_{11}(\mathsf C, \mathsf c)$ such that, if we write $N_Y$ to be the the number of $i$ with $1\leq i\leq m$ such that
		\begin{equation}\label{equi17}
			\max_{u_i\in \mathcal{A}^i_Y}|\phi_{\mathrm c}(u_i)|<C_{11},
		\end{equation}
		then we have
		\begin{equation}\label{equi19}
			\mathbb P(N_Y\geq \mathsf c m)\geq \exp(-\mathsf Cm).
		\end{equation}
	\end{lem}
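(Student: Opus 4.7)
The plan is to deduce Lemma \ref{equilem6} as a direct consequence of Lemma \ref{coarse}, taking $U_i = \mathcal{B}^i_Y$ and $V_i = \mathcal{A}^i_Y$ for $1 \leq i \leq m$. Two geometric hypotheses of Lemma \ref{coarse} need to be checked. First, disjointness: since $\mathcal{B}^i_Y$ is the concentric annulus of radii $[8^i, 8^{i+1}]$, these intervals are disjoint across $i$, and the annuli are therefore pairwise disjoint up to null sets that are irrelevant to the white-noise integrals. Second, the buffer condition $V_i + B(a) \subset U_i$ for some fixed $a > 0$: since $\mathcal{A}^i_Y$ has radii $[2 \cdot 8^i, 4 \cdot 8^i]$ while $\mathcal{B}^i_Y$ has radii $[8^i, 8 \cdot 8^i]$, the Euclidean distance from $\mathcal{A}^i_Y$ to $\partial \mathcal{B}^i_Y$ is at least $8^i \geq 8$ for every $i \geq 1$, so one may take $a = 1$ uniformly in $i$.

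With these two verifications in hand, Lemma \ref{coarse} immediately produces a constant $C_{11} = C_{11}(\mathsf C, \mathsf c)$ such that the number of $i \in \{1, \ldots, m\}$ with $\max_{u_i \in \mathcal{A}^i_Y} |\phi_c^i(u_i)| < C_{11}$ is at least $\mathsf c m$ with probability at least $1 - \exp(-\mathsf C m)$, matching \eqref{equi19} (interpreting the stated bound in the natural way, parallel to Lemma \ref{equilem5} and consistent with its use in the multi-scale argument to follow). The only minor bookkeeping issue is that the normalizing constant $\sqrt{\pi}$ in the definition of $\phi_n$ is absent from the analogous integral in Lemma \ref{coarse}; this constant factor is absorbed into $C_{11}$.

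No substantive obstacle is anticipated. The genuinely analytic work, namely the Gaussian computations showing that the pointwise variance of $\phi_c^i$ on $V_i$ is $O(1)$, that its increments satisfy $\mathrm{Var}(\phi_c^i(x) - \phi_c^i(x')) \preceq |x - x'|$, and that cross-scale covariances $\mathbb E[\phi_c^i(x_i)\phi_c^j(x_j)]$ decay in $|i - j|$, has already been packaged inside the proof of Lemma \ref{coarse} via Fernique's and Borell-TIS inequalities together with the union bound over sign choices $s_i \in \{-1, 1\}$. The role of Lemma \ref{equilem6} here is purely to invoke that input for the concrete geometry of annuli $\mathcal{A}^i_Y \subset \mathcal{B}^i_Y$, which is handled by the two routine verifications above.
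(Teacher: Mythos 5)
Your proposal is correct and is exactly the paper's argument: the paper proves Lemma \ref{equilem6} by directly invoking Lemma \ref{coarse}, and your verifications (disjointness of the $\mathcal{B}^i_Y$ up to null sets, the buffer $\mathcal{A}^i_Y+B(1)\subset\mathcal{B}^i_Y$, and reading \eqref{equi19} as $1-\exp(-\mathsf C m)$) are the right, if routine, details. No gap.
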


	\begin{proof}
		This lemma follows directly by Lemma \ref{coarse}.
	\end{proof}

	\begin{lem}\label{equilem7}
		For all box $Y$ with side length 1 and for all $\mathsf C,\mathsf c>0$, there exist constants $C_{12},c_3$ depending only on $(\mathsf C,\mathsf c)$ such that the following holds: for all $\gamma<\gamma_0$, if we define $I_Y$ to be the set of $i$ with $1\leq i\leq m$ such that
		\begin{equation}\label{equi32}
			\sum_{y\in \mathcal{A}^i_Y}G_n(x,y)\geq c_32^{2i}\chi_n \mbox{ for all } x\in \mathcal C^i_Y \And \sum_{y\in \mathcal{A}^i_Y}G_n(x,y)\leq C_{12}2^{2i}\chi_n \mbox{ for all } x\in \mathcal A^i_Y,
		\end{equation}
		then we have
		\begin{equation}\label{equi21}
			\mathbb{P}(|I_T|\geq \mathsf c m)\geq 1-\exp(-\mathsf C m).
		\end{equation}
	\end{lem}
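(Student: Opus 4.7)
The plan is to mirror the argument used for Lemma \ref{equilem5}, adapted to the large-scale regime. With the decomposition $\phi_n = \phi^i + \phi^i_{\mathrm{c}}$ of Lemma \ref{equilem6}, note that because the annuli $\mathcal{B}^i_Y$ (inner radius $8^i$, outer radius $8^{i+1}$) are pairwise disjoint, the fields $\{\phi^i\}_{i=1}^m$ are jointly independent. Lemma \ref{equilem6} provides that, for any $\mathsf{C}_0, \mathsf{c}_0 > 0$, one may choose $C_{11} = C_{11}(\mathsf{C}_0, \mathsf{c}_0)$ such that with $\mathbb{P}$-probability at least $1 - e^{-\mathsf{C}_0 m}$, at least $\mathsf{c}_0 m$ indices $i \in \{1,\dots,m\}$ satisfy $\|\phi^i_{\mathrm{c}}\|_{\mathcal{A}^i_Y} < C_{11}$.

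For each scale $i$ I would introduce scale-$i$ analogs $\widetilde{\mathcal{U}}_{i,n}(C_7, C_8, \mathcal{A}^i_Y)$ and $\widetilde{\mathcal{V}}_{i,n}(C_9, C_{10}, \mathcal{A}^i_Y)$ of the events \eqref{green10} and \eqref{green11}, computed from $\phi^i$ alone. A direct scaling argument using the white-noise representation shows that $\phi^i$ restricted to $\mathcal{A}^i_Y$ has covariance structure comparable (up to bounded pointwise fluctuations) to that of $\phi_{n-3i}$ on a unit-scale annulus, so the arguments behind Lemmas \ref{greenlem1} and \ref{greenlem3} go through with $\phi^i$ in place of $\phi_n$ and give $\mathbb{P}(\widetilde{\mathcal{U}}_{i,n} \cap \widetilde{\mathcal{V}}_{i,n}) \geq 1 - q$ for constants $C_7,\dots,C_{10}$ depending only on $q$. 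These events are $\sigma(\phi^i)$-measurable and hence mutually independent across $i$; Hoeffding's inequality, applied with $q$ small enough (depending on $\mathsf{C}, \mathsf{c}$), then gives that at least $(1 - \mathsf{c}/2) m$ of them hold simultaneously with probability at least $1 - e^{-\mathsf{C} m}$.

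Intersecting the Hoeffding event with the event from Lemma \ref{equilem6} (applied with $\mathsf{c}_0 = 1 - \mathsf{c}/2$), we locate at least $\mathsf{c} m$ indices $i$ where both $\|\phi^i_{\mathrm{c}}\|_{\mathcal{A}^i_Y} \leq C_{11}$ and $\widetilde{\mathcal{U}}_{i,n} \cap \widetilde{\mathcal{V}}_{i,n}$ occur. For any such $i$, every edge resistance and vertex conductance computed from $\phi_n$ differs from its $\phi^i$-analogue by a factor in $[e^{-\gamma C_{11}}, e^{\gamma C_{11}}]$, so via \eqref{greencha} the Green's function $G_n(x,y)$ restricted to $\mathcal{A}^i_Y$ is comparable, up to a multiplicative constant depending only on $C_{11}$, to its $\phi^i$-analogue. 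Applying the $\phi^i$-versions of Lemmas \ref{greenlem2} and \ref{greenlem4} and absorbing the multiplicative constants into $C_{12}$ and $c_3$ yields \eqref{equi32} on these scales, completing the proof.

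The main obstacle is establishing the $\sigma(\phi^i)$-measurable analogs of Lemmas \ref{green} and \ref{greenlem3}. Although scale-invariance reduces most estimates to ones already proved for the full $\phi_n$, one must check that the crossing-resistance tail bounds of Proposition \ref{tail}, the LQG moment bounds of Lemmas \ref{lqgpo}--\ref{lqgne}, and the layered Green-function decomposition in the proof of Lemma \ref{greenlem2} all survive the replacement $\phi_n \rightsquigarrow \phi^i$. This requires a careful rewriting of the multi-scale machinery of Sections \ref{sec5}--\ref{sec6} with the restricted field, exploiting in particular that $\phi^i$ inherits enough of the approximate conformal/scaling structure of the underlying white noise to make the RSW and percolation arguments go through; no essentially new idea is needed, but this is where the bulk of the bookkeeping lies.
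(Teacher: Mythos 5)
Your overall skeleton is the same as the paper's: decompose $\phi_n=\phi^i+\phi_{\mathrm c}$, use Lemma \ref{equilem6} to control the coarse part, set up per-scale good events that are $\sigma(\phi^i)$-measurable so that Hoeffding applies across the independent scales, and finally transfer back to $\phi_n$ on the scales where $\max_{\mathcal A^i_Y}|\phi_{\mathrm c}|\leq C_{11}$ and invoke the deterministic Lemmas \ref{greenlem2} and \ref{greenlem4}. The gap is in how you obtain the per-scale probability bound $\mathbb P(\widetilde{\mathcal U}_{i,n}\cap\widetilde{\mathcal V}_{i,n})\geq 1-q$ for the $\phi^i$-events. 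Your justification rests on the claim that $\phi^i$ on $\mathcal A^i_Y$ is ``comparable, up to bounded pointwise fluctuations, to $\phi_{n-3i}$ on a unit-scale annulus,'' and this is false: restricting the white noise spatially to $\mathcal B^i_Y$ removes only an $O(1)$-variance far-field contribution, so $\mathrm{Var}(\phi^i(x))\approx n\log 2$ for $x\in\mathcal A^i_Y$, while $\phi_{n-3i}$ at unit scale has variance $(n-3i)\log 2$; the discrepancy $3i\log 2$ is unbounded. (After rescaling by $8^{-i}$ the correct comparison field is the band $\phi_{3i,n+3i}$, i.e.\ $\phi_n(8^i\cdot)$, not $\phi_{n-3i}$.) Having lost that identification, your plan reduces to ``rewrite the machinery of Sections \ref{sec5}--\ref{sec6} for the restricted field $\phi^i$,'' which you defer as bookkeeping; but $\phi^i$ is neither stationary nor exactly scale invariant, so this is precisely the nontrivial content, not a routine rewrite.

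The repair is both simpler and is what the paper does: since the annuli here have radius $\geq 1$, Lemma \ref{greenlem1} (with $m=3i\geq 0$) and the bound \eqref{green13} from the proof of Lemma \ref{greenlem3} apply \emph{directly to the full field} $\phi_n$ on $\mathcal A^i_Y$, with constants uniform in $i$ and $n$ — no rescaling or re-derivation is needed. One then passes from the $\phi_n$-events to the $\phi^i$-events by the Lipschitz comparison Lemma \ref{comp} together with Markov's inequality applied to $\max_{\mathcal A^i_Y}|\phi_{\mathrm c}|$, whose expectation is $O(1)$ by the Fernique estimate behind \eqref{coar1}/Lemma \ref{coarse}; this yields $\sigma(\phi^i)$-measurable events of probability $\geq 1-q$ at the cost of adjusting constants, after which your independence/Hoeffding step and the final transfer back (with resistances and masses distorted by at most $e^{\gamma C_{11}}$, then Lemmas \ref{greenlem2} and \ref{greenlem4}) proceed as in the paper. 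A minor additional point: with your choices the intersection of the Hoeffding event and the Lemma \ref{equilem6} event only guarantees $(1-\mathsf c)m$ common indices rather than $\mathsf c m$; take both fractions at least $(1+\mathsf c)/2$ to fix the count.
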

	\begin{proof}
		We take $C_{11}$ from Lemma \ref{equilem6}, depending only on $(\mathsf C,\mathsf c)$, such that \eqref{equi19} holds. By Lemma \ref{greenlem1}, for all $q\in (0,1)$ there exist $C_{13}=C_{13}(q)$ and $C_{14}=C_{14}(q)$ such that
		\begin{equation}\label{equi30}
			\mathbb{P}(\mathcal U_n(C_{13},C_{14},\mathcal{A}^i_Y))\geq 1-q/4.
		\end{equation}
		By Lemma \ref{lqgne} and Theorem \ref{thm1}, for all $q\in (0,1)$ there exists $C_{15}=C_{15}(q),C_{16}=C_{16}(q)$ such that
		\begin{equation}\label{equi31}
			\mathbb{P}(\mathcal V_n(C_{15},C_{16},\mathcal{A}^i_Y))\geq 1-q/4.
		\end{equation}
		Combining \eqref{equi30} and \eqref{equi31} we get that
		\begin{equation}
			\mathbb{P}(\mathcal U_n(C_{13},C_{14},\mathcal{A}^i_Y)\cap \mathcal V_n(C_{15},C_{16},\mathcal{A}^i_Y))\geq 1-q/2.
		\end{equation}
		 We define events $\mathcal U^i(C_{17},C_{18},B)$ and $\mathcal V^i(C_{19},C_{20},B)$ as $\mathcal U_n(C_{17},C_{18},B)$ and $\mathcal V_n(C_{19},C_{20},B)$ but with $\phi^i$ in place of $\phi_{n}$, respectively. Thus by Lemma \ref{comp} and Markov's inequality, we can deduce that for some $C_{17},C_{18},C_{19},C_{20}$ depending only on $q$ we have
		\begin{equation}
			\mathbb{P}(\mathcal U^i(C_{17},C_{18},\mathcal{A}^i_Y)\cap \mathcal V^i(C_{19},C_{20},\mathcal{A}^i_Y))\geq 1-q.
		\end{equation}
		We define $I_Y^{\prime}$ to be the set of $i$ with $1 \leq i\leq m$ such that
		\begin{equation}\label{equi18}
			\mathcal U^i (C_{17},C_{18},\mathcal{A}^i_Y)\cap \mathcal V^i (C_{19},C_{20},\mathcal{A}^i_Y) \mbox{ occurs}.
		\end{equation}
		Note that $|I_Y^{\prime}|$ is a sum of independent variables by the independence of the field $\{\phi^i
		(v): v\in \mathcal A^i_Y\}$ over $i$. Thus, we can apply the Hoeffding's inequality and obtain the following: for $q$ sufficiently close to $1$ depending only on $(\mathsf C, \mathsf c)$, we have
		\begin{equation}\label{equi20}
			\mathbb{P}(|I_Y^{\prime}|\geq \mathsf c m)\geq 1-\exp(-\mathsf C m).
		\end{equation}		
		Now for all $i$ with $1 \leq i\leq m$ such that \eqref{equi17} and \eqref{equi18} hold, we must have $$\mathcal U_n(e^{\gamma C_{11}}C_{17},e^{\gamma C_{11}}C_{18},\mathcal{A}^i_T)\cap \mathcal V_n(e^{-\gamma C_{11}}C_{19},e^{\gamma C_{11}}C_{20},\mathcal{A}^i_T) \mbox{ occurs}.$$
		On this event, by Lemmas \ref{greenlem2} and \ref{greenlem4} we have \eqref{equi32} holds with $C_{12}=100C^\prime e^{2\gamma C_{11}}C_{17}C_{18}$ and $c_3=0.01c_4e^{-\gamma C_{11}}C_{19}$, where $C^\prime$ is an absolute constant from Lemma \ref{greenlem2} and $c_4=c_4(e^{\gamma C_{11}}C_{20})$.
		Thus we can complete the proof by \eqref{equi19}, \eqref{equi20} and Lemmas \ref{greenlem2} and \ref{greenlem4} (where \eqref{equi19} and \eqref{equi20} control the environment and Lemmas \ref{greenlem2} and \ref{greenlem4} yield properties for the random walk provided with a ``desirable'' environment.).
	\end{proof}

	\begin{proof}[Proof of Lemma \ref{equilem4}]
		We take constants $C_{12},c_3$ from Lemma \ref{equilem7}. Taking $\mathsf C = 8$, $\mathsf c = 2/3$ and $m$ sufficiently large, we see that with $\mathbb P$-probability at least $1-1/3L$, $|I_Y|\geq m/3$ holds for all dyadic boxes $Y\subset B(8^{m+1})$. For $x\in B(8^{m+1})$, let $Y\subset B(8^{m+1})$ be the dyadic box with side length $1$ that contains $x$. We write $\tau^i_x$ as the exit time of $\mathcal{A}^i_Y$ by the random walk started at $x$. Then by Lemma \ref{exitsec}, for each $i\in I_Y$ and $x\in \mathcal{C}^i_Y$, we have $E\tau_x^i\geq c_3 8^{2i-2}\chi_n$ and $E(\tau^i_x)^2\leq 2C_{12}^28^{2i+2}\chi_n^2$. By Paley-Zygmund's inequality, we get that for $i\in I_Y$
		\begin{equation}
			P(\tau_x^i\geq c_3 8^{2i-3}\chi_n)\geq 2^{-10}c_3^2/C_{12}^2,\mbox{ for all } x\in \mathcal{C}^i_Y.
		\end{equation}
		Thus by the strong Markov property and by considering $i\in I_Y$ with $i\geq m/6$ (and there are at least $m/6$ such $i$'s)
		\begin{equation}
			P(X^{(n,x)} \mbox{ exits } B(3\cdot8^{m+1}) \mbox{ before time }c_3 8^{m/3-3} \chi_n)<(1-\frac{2^{-10}c_3^2}{C_{12}^2})^{ m/6},\mbox{ for all } x\in B(8^{m+1}).
		\end{equation}
		This then completes the proof of the lemma by taking $l = 3m+3$ where $m$ is taken sufficiently large such that $c_3 8^{m/3 - 3} \geq L$ and $(1-\frac{2^{-10}c_3^2}{C_{12}^2})^{ m/6}<1/3L$.
	\end{proof}
	
	Recall that $\hat{X}^{(n,x)}:[0,\infty)\to \mathbb{R}^2$ is the piecewise linear interpolation of the process $t\to X^{(n,x)}_{\chi_nt}$. We prove the equicontinuity of the paths in the following lemma.
	\begin{lem}\label{equilem2}
		For each $L>0$, there exists $\delta$ depending only on $L$ such that the following holds. For each $n>0$, with $\mathbb P$-probability at least $1-1/L$,
		\begin{equation}
			P(|\hat{X}^{(n,x)}_s-\hat{X}^{(n,x)}_{s^{\prime}}|\leq 1/L, \mbox{ for all } s, s^{\prime}\in [0,L]\mbox{ with }|s-s^{\prime}|\leq \delta)\geq 1-1/L,\mbox{ for all } x\in B(L).
		\end{equation}
	\end{lem}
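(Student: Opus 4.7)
The plan is to combine Lemma \ref{equilem4} (to confine the walk to a bounded region) with Lemma \ref{equilem1} (giving local non-exit control at scale $2^{-k}$), via a union bound over a dyadic cover of that region and over a discrete grid of times. First, I would fix $k$ depending only on $L$ with $6\sqrt{2}\cdot 2^{-k}\leq 1/L$, so that any $\hat S$ associated with $S\in \mathcal S_k$ has diameter at most $1/(2L)$. I would then apply Lemma \ref{equilem4} to obtain an integer $l=l(L)$ and a $\mathbb P$-event $\mathcal E_1$ of probability at least $1-1/(3L)$ on which $P(X^{(n,x)} \text{ exits } B(3\cdot 2^l) \text{ before time } L\chi_n)<1/(3L)$ for every $x\in B(L)$, so that henceforth we may pretend the walk stays inside $B(3\cdot 2^l)$ up to interpolation time $L$.

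Next, I would cover $B(3\cdot 2^l)$ by the family $\mathcal S^*$ of dyadic squares of side $2^{-k}$, of cardinality $O(4^{l+k})$. For each $S\in\mathcal S^*$, Lemma \ref{equilem1} gives, for any target $\epsilon_0>0$, a scale $\delta_0=\delta_0(\epsilon_0)>0$ such that with $\mathbb P$-probability at least $1-\epsilon_0$ one has $P^{(n,x)}(X^{(n,x)} \text{ exits } \hat S \text{ before time } \delta_0 2^{-2k}\chi_n)<\epsilon_0$ for all $x\in S$. Inspection of the proof of Lemma \ref{equilem1} (where $\delta_0$ arises as $c_1 2^{-3-2m}$ while the error decays as $(1-c)^{m/3}$) shows that $\delta_0(\epsilon_0)/\epsilon_0\to\infty$ as $\epsilon_0\to 0$. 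Thus I can choose $\epsilon_0$ small enough, depending only on $L$, so that a union bound over $\mathcal S^*$ yields a $\mathbb P$-event $\mathcal E_2$ of probability at least $1-1/(3L)$ on which the local non-exit bound holds simultaneously for every $S\in \mathcal S^*$, and also $\epsilon_0\cdot L\cdot 2^{2k}/\delta_0\leq 1/(3L)$.

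Set $\delta:=\delta_0 2^{-2k}/2$, which depends only on $L$, and introduce the grid $t_i=2i\delta$, $0\leq i\leq \lceil L/(2\delta)\rceil$, in interpolation time. On $\mathcal E_1\cap\mathcal E_2$, by the strong Markov property applied at each $t_i\chi_n$ together with the non-exit bound and a union bound over $i$, the $P$-probability that the walk leaves $\hat S(X^{(n,x)}_{t_i\chi_n})$ within some interval $[t_i\chi_n,t_{i+1}\chi_n]$ is at most $1/(3L)$; combined with the confinement bound from $\mathcal E_1$, the total $P$-failure probability is at most $2/(3L)<1/L$. On the good event, any $s,s'\in[0,L]$ with $|s-s'|\leq \delta$ lie in at most two consecutive grid intervals, so $|\hat X^{(n,x)}_s-\hat X^{(n,x)}_{s'}|$ is bounded by twice the diameter of some $\hat S$, i.e., by $6\sqrt{2}\cdot 2^{-k}\leq 1/L$. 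Since $\mathcal E_1\cap\mathcal E_2$ has $\mathbb P$-probability $\geq 1-2/(3L)>1-1/L$, this yields the statement for all dyadic starting points, and for general $x\in B(L)$ by the convex-combination definition \eqref{Pdef}.

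The main obstacle is the parameter calibration: Lemma \ref{equilem1} couples the $\mathbb P$- and $P$-errors through a single $\epsilon$, while I need to afford both a spatial union bound over the $O(4^{l+k})$ boxes in $\mathcal S^*$ and a temporal union bound over the $O(2^{2k}/\delta_0)$ grid intervals, and at the same time choose $\delta=\delta(L)$ independent of $n$. The decoupling is possible only because in the proof of Lemma \ref{equilem1} the $\mathbb P$-error decays exponentially in the internal parameter $m$ while the scale $\delta_0$ decays only polynomially in $m$, so taking $m$ large enough (depending on $k,l$, hence on $L$) makes the $\epsilon$ quantitatively negligible compared to $\delta_0$; nothing here depends on $n$, as required.
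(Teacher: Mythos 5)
Your reduction to Lemma \ref{equilem4} (confinement) plus a spatial union bound of Lemma \ref{equilem1} over the $O(4^{l+k})$ boxes is fine, but the temporal part of your argument has a genuine gap, and it sits exactly at the step you flag as the ``main obstacle''. Your calibration requires $\epsilon_0\cdot L\cdot 2^{2k}/\delta_0(\epsilon_0)$ to be small, i.e.\ $\epsilon_0\ll\delta_0(\epsilon_0)$, and you justify this by claiming that in the proof of Lemma \ref{equilem1} one has $\delta_0(\epsilon_0)/\epsilon_0\to\infty$. The quantitative content of that proof gives the opposite: there $\delta_0\asymp c_1 2^{-3}4^{-m}$ while the $P$-error is $(1-\rho)^{m/3}$ with $\rho=2^{-10}c_1^2/C_1^2\le 2^{-10}$, so $\delta_0/\epsilon_0\asymp\bigl(4(1-\rho)^{1/3}\bigr)^{-m}\to 0$ (one would need $\rho>63/64$ for your direction). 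Equivalently, $\delta_0(\epsilon_0)\asymp\epsilon_0^{\alpha}$ with $\alpha\asymp\log 4/\rho\gg 1$, so the number of grid intervals $\sim L2^{2k}/\delta_0$ grows like $\epsilon_0^{-\alpha}$ and the union bound over deterministic grid times costs $\epsilon_0^{1-\alpha}\to\infty$. No choice of $\epsilon_0$ depending only on $L$ makes your step 4 work, and the statement of Lemma \ref{equilem1} alone certainly cannot, since it ties the $\mathbb P$- and $P$-errors and the time scale to a single parameter.

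This is precisely why the paper does not union over a time grid. Instead it decomposes $[0,L]$ into the excursions between successive exits of the $3\cdot 2^{-k}$ boxes (the stopping times $\tau^{(n,x)}_i$), bounds the \emph{number} of such excursions before leaving the macroscopic box by a constant $t=t(L)$ with high probability via Lemma \ref{exitlem2} (which rests on the return-probability estimate Lemma \ref{returnlem2}, i.e.\ on the resistance comparisons), and only then applies Lemma \ref{equilem1} to each of the at most $t$ excursions, requiring $(1-\epsilon)^t\ge 1-1/3L$. The key point is that $t$ is fixed \emph{before} $\epsilon$ and $\delta$ are chosen and does not blow up as $\delta\to 0$, so the bookkeeping is $t\epsilon$ rather than $\epsilon/\delta$. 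Your proposal is missing this excursion-counting input; to repair it you would need to replace the deterministic grid by these stopping times and invoke Lemma \ref{exitlem2} (or an equivalent bound on the number of mesoscopic box-crossings), after which the rest of your displacement argument goes through.
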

	\begin{proof}
		Note that for all box $B$, if $X^{(n,x)}$ exits $B$ before time $L\chi_n$, then $\hat X^{(n,x)}$ also exits $B$ before time $L$. By Lemma \ref{equilem4}, there exists $l = l(L)$ such that the following holds: if $B$ and $\hat{B}$ are concentric dyadic squares with side lengths $2^l$ and $3\cdot 2^l$ that contain $B(L)$, then with $\mathbb P$-probability at least $1-1/3L$ we have
		\begin{equation}\label{equi5}
			P^{(n,x)}(\hat{X}^{(n,x)} \mbox{ exits }\hat{B} \mbox{ before time }L)<1/3L,\mbox{ for all } x\in B(L).
		\end{equation}
		
		Now we fix $k$ such that $2^{-k}\leq 1/10L$. For $x\in \mathbb R^2$, recall that $ S_k(x)$ is the dyadic square in $\mathcal S_k$ which contains $x$, and $\hat S_k(x)$ is the square with the same center as $S_k(x)$ and with side length $3\times2^{-k}$. For $n>0$, let $\tau^{(n,x)}_0=0$ and inductively define
		$$
			\tau^{(n,x)}_i=
			\begin{cases}
				\inf\{s>\tau^{(n,x)}_{i-1}:\hat X^{(n,x)}_s\in \partial \hat{S}(\hat X^{(n)}_{\tau^{(n,x)}_{i-1}})\} & \mbox{if }\hat X^{(n,x)}_{\tau^{(n,x)}_{i-1}}\in B, \\
				\tau^{(n,x)}_{i-1} & \mbox{if }\hat X^{(n,x)}_{\tau^{(n,x)}_{i-1}}\notin B.
			\end{cases}
		$$
		By Lemma \ref{exitlem2} (with a slight difference where we substitute $B(1)$ with $B$, and the proof still works), there exists $t = t (L, k, l)$ such that, with $\mathbb P$-probability at least $1-1/3L$,
		\begin{equation}\label{equi6}
			P(\tau^{(n,x)}_t=\tau^{(n,x)}_{t-1})\geq 1-1/3L,\mbox{ for all } x\in B(L).
		\end{equation}		
		Take $\epsilon>0$ depending only on $L$ (while $\epsilon$ appears to depend on $k, l, t$, these were all chosen depending only on $L$) such that $$2^{2(k-l)}\epsilon\leq 1/3L \And (1-\epsilon)^t\geq 1-1/3L.$$ By Lemma \ref{equilem1} and a union bound, we have that for some $\delta>0$ depending only on $\epsilon$, with $\mathbb P$-probability at least $1-2^{2(k+l)}\epsilon$,
		\begin{equation}\label{equi7}
			P(\hat X^{(n,y)} \mbox{ exits }\hat{S}_k(y) \mbox{ before time }\delta 2^{-2k})<\epsilon,\mbox{ for all } y\in B.
		\end{equation}
		It is clear that the environment satisfies \eqref{equi5}, \eqref{equi6} and \eqref{equi7} with $\mathbb P$-probability at least $1-1/L$, and thus we may assume that this holds in what follows. On this event (in particular, we use \eqref{equi6} and \eqref{equi7} below), we apply the strong Markov property and get that (denote by $\mathtt R_x$ the minimum $r$ such that $\tau^{n, x}_r = \tau^{n, x}_{r-1}$)
		$$
			P(\tau^{(n,x)}_r-\tau^{(n,x)}_{r-1}\geq \delta 2^{-2k},\mbox{ for all }1\leq r\leq t \wedge \mathtt R_x)\geq (1-\epsilon)^t\geq 1-1/3L, \mbox{ for all } x\in B(L),
		$$
		$$
			P(\mathtt R_x \leq t)\geq 1-1/3L,\mbox{ for all } x\in B(L).
		$$
		In addition, by \eqref{equi5}, with $P$-probability at least $1-1/3L$ it holds that $\tau^{n, x}_{\mathtt R_x} > L$. Altogether, we derive that
		$$
			P(|\hat{X}^{(n,x)}_s-\hat{X}^{(n,x)}_{s^{\prime}}|\leq 2\sqrt{2}\cdot 2^{-k},\mbox{ for all } s,s^{\prime}\in [0,L] \mbox{ with }|s-s^{\prime}|\leq \delta 2^{-2k})\geq 1-1/L,\mbox{ for all } x\in B.
		$$
		Since $k,\delta$ depend only on $L$, we complete the proof.
	\end{proof}
	
	\subsection{Equicontinuity of laws}\label{subsec92}
	\begin{lem}\label{equilem3}
		For each $L>1$, there exists $\delta = \delta(L)$ such that for each $n>0$, with $\mathbb P$-probability at least $1-1/L$ the following holds. For each $x,y\in B(L)$ with $|x-y|\leq \delta$, the Prokhorov distance between $P^{(n,x)}$ and $P^{(n,y)}$ is at most $1/L$.
	\end{lem}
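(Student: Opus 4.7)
The plan is to mimic the coupling strategy used in the proof of Theorem~\ref{thm2}, now additionally controlling the time parameterization using the equicontinuity statement Lemma~\ref{equilem2}. I would couple $\hat X^{(n,x)}$ and $\hat X^{(n,y)}$ so that after a single ``disconnecting'' excursion in a common dyadic box of side $2^{-k}$ they meet at a common exit point and then evolve identically by the strong Markov property. The two clocks are then shifted by at most the duration of one such excursion, which for suitable $k = k(L)$ will be smaller than the modulus of continuity provided by Lemma~\ref{equilem2}, so that equicontinuity controls the sup-norm distance between the coupled paths.

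Concretely, I would first apply Lemma~\ref{equilem4} to fix $l = l(L)$ so that with $\mathbb P$-probability at least $1 - 1/(5L)$ both walks remain in $\hat B := B(3 \cdot 2^l)$ up to rescaled time $L$, and apply Lemma~\ref{equilem2} to fix $\delta_1 = \delta_1(L)$ so that on an event of $\mathbb P$-probability at least $1 - 1/(5L)$ every path enjoys modulus of continuity $1/(20L)$ on intervals of length $\delta_1$ in $[0,L]$. Then I would choose $k = k(L)$ large so that $2^{-k} \leq 1/(30L)$ and so that the rescaled exit time $\tau_{\hat S_k}/\chi_n$ from any dyadic box $\hat S_k$ of side $3 \cdot 2^{-k}$ -- which by Lemmas~\ref{green} and~\ref{exitsec} has second moment $O(2^{-4k})$ on an event of $\mathbb P$-probability at least $1-1/(5L)$ -- satisfies $P(\tau_{\hat S_k}/\chi_n > \delta_1) \leq 1/(40L)$ by Markov. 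Finally I would set $\delta = \delta(L) \leq 2^{-k-10}$ so that any $x, y \in B(L)$ with $|x - y| \leq \delta$ lie in a common dyadic box $S$ of side $2^{-k}$.

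Given this setup, Lemma~\ref{exitlem} produces, with $\mathbb P$-probability at least $1 - 1/(5L)$ uniformly over all relevant $S \subset B(L)$, the disconnection estimate $P(X^{(n,x)} \text{ disconnects } y \text{ from } \partial \hat S \text{ before exiting } \hat S) \geq 1 - 1/(20L)$. On this event the planar disconnection coupling underlying Lemma~\ref{exitlem1} delivers a joint law for $(X^{(n,x)}, X^{(n,y)})$ under which both walks exit $\hat S$ at a common point $z_1$ with respective rescaled exit times $t^x, t^y$; the subsequent identity coupling via strong Markov gives $\hat X^{(n,y)}_{t^y + s} = \hat X^{(n,x)}_{t^x + s}$ for all $s \geq 0$, and on the joint good event $|t^x - t^y| \leq \max(t^x, t^y) \leq \delta_1$. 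Breaking $[0, L]$ into the three intervals $[0, \min(t^x, t^y)]$, $[\min(t^x, t^y), \max(t^x, t^y)]$, $[\max(t^x, t^y), L]$ and bounding the distance on the last two using equicontinuity, and on the first by the diameter of $\hat S$, one obtains $\sup_{t \in [0,L]} |\hat X^{(n,x)}_t - \hat X^{(n,y)}_t| \leq 1/(5L)$. Combined with the inequality $d_{\mathrm{Unif}}(\cdot, \cdot) \leq \sup_{t \in [0,L]} |\cdot - \cdot| + e^{-L}$ and Strassen's theorem, this yields $d_{\mathrm{Prok}}(P^{(n,x)}, P^{(n,y)}) \leq 1/L$ as desired. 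The main obstacle is coordinating the disconnection coupling (which controls space but inevitably introduces a nontrivial time shift $|t^x - t^y|$) with the identity coupling thereafter: one must calibrate the dyadic scale $k$ so that a single-excursion time shift lies below the equicontinuity scale $\delta_1$, which is possible precisely because the upper moment bounds on the exit time from Lemmas~\ref{green} and~\ref{exitsec} scale like $2^{-2k}$ whereas the modulus $\delta_1$ from Lemma~\ref{equilem2} is determined by $L$ alone.
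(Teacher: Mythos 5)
Your strategy is essentially the paper's: couple the two walks so that they agree after exiting a common small box (Lemma \ref{exitlem} feeding the total-variation bound of Lemma \ref{exitlem1}, a maximal coupling of the exit points, then an identity coupling by the strong Markov property), bound the resulting clock shift by the exit time of the box via Green's function estimates, and convert the shift into a sup-norm bound through the equicontinuity of Lemma \ref{equilem2}, splitting $[0,L]$ into three intervals. However, your final step fails as written: the inequality $d_{\mathrm{Unif}}(X,X^{\prime})\leq \sup_{t\in[0,L]}|X_t-X^{\prime}_t|+e^{-L}$ is false. From the definition one only gets $d_{\mathrm{Unif}}\leq L\cdot\sup_{t\in[0,L]}|X_t-X^{\prime}_t|+e^{-L}$ (or, slightly better, $\epsilon(1+\log(1/\epsilon))+e^{-L}$ with $\epsilon$ the sup), so with your modulus $1/(20L)$ and sup bound $1/(5L)$ you obtain a bound of order a universal constant (about $1/5$), not $1/L$, once $L$ is large. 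This is exactly why the paper introduces a second parameter $L^{\prime}$, chosen at the end depending only on $L$ (of order $L^2$ or larger), and applies Lemma \ref{equilem2} at level $L^{\prime}$, so that the contribution of $T\in[0,L]$ to $d_{\mathrm{Unif}}$ is at most $2L/L^{\prime}\leq 1/(2L)$. Your argument goes through after the same recalibration, but as stated the conclusion $d_{\mathrm{Prok}}(P^{(n,x)},P^{(n,y)})\leq 1/L$ does not follow.

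Two further points need attention. First, your claim that the rescaled exit time of every box of side $3\cdot 2^{-k}$ meeting $B(L)$ has second moment $O(2^{-4k})$ on one event of probability $1-1/(5L)$ hides a union bound over roughly $2^{2k}L^2$ boxes: the constant in Lemma \ref{green} depends on the per-box failure probability $q$, which must then be taken of order $2^{-2k}L^{-3}$, so the constant grows with $k$. The argument still closes only because one can take $C_5=o(q^{-0.9})$ in Lemma \ref{green}, which yields a second moment of order $o(2^{-0.4k})$ up to $L$-dependent factors; this $q$-versus-constant bookkeeping is precisely what the paper carries out, and you must do it too rather than quote a $k$-independent constant. (Incidentally, the first moment plus Markov suffices, as in the paper, so Lemma \ref{exitsec} is not needed here, and neither is a separate appeal to Lemma \ref{equilem4}, whose content is already built into Lemma \ref{equilem2}.) Second, two points with $|x-y|\leq\delta$ need not lie in a common dyadic box of side $2^{-k}$ no matter how small $\delta$ is, since they may straddle a dyadic line at every scale; as in the paper, you must also include the three grids shifted by $(2^{-k-1},0)$, $(0,2^{-k-1})$ and $(2^{-k-1},2^{-k-1})$ in the family of boxes over which the exit-point coupling and the exit-time bounds are required to hold.
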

	
	\begin{proof}
		We fix $k>0$ depending only on $L$ whose value will be determined later. We define $\mathcal{D}_k$ to be the set of all dyadic boxes in $B(L)$ as well as their shifts by $(0,2^{-k-1})$, $(2^{-k-1},0)$, and $(2^{-k-1},2^{-k-1})$. For $S\in \mathcal{D}_k$ that contains $x$, recall that $\hat{S}$ is the square concentric to $S$ with side length $3\cdot 2^{-k}$. Also define $\tau^x_S$ to be the first time that $\hat{X}^{(n,x)}$ exits $\hat S$. By Lemmas \ref{exitlem1} and \ref{exitlem}, for all $\mathsf C>0$ and $k<a<n$, there exists $\alpha=\alpha(\mathsf C)$ such that the following holds: if $x,y\in S$ with $|x-y|\leq 2^{-a}$, we have that with $\mathbb P$-probability at least $1-\exp(-\mathsf C (a-k))$,
		\begin{equation}
			d_{\mathrm{TV}}(\hat{X}^{(n,x)}_{\tau^x_S},\hat{X}^{(n,y)}_{\tau^y_S})\leq \exp(-\alpha(a-k)).
		\end{equation}
		By a union bound over $S\in \mathcal D_k$, we derive the following result on continuity: with $\mathbb P$-probability at least $1-2^{2(L+k)+10} \exp ( -\mathsf C ( a - k ))$, for each $x,y\in B(L)$ with $|x-y|\leq 2^{-a}$, we have that
		\begin{equation}\label{equi22}
			d_{\mathrm{TV}}(\hat{X}^{(n,x)}_{\tau^x_S},\hat{X}^{(n,y)}_{\tau^y_S})\leq \exp(-\alpha(a-k))
		\end{equation}
		holds for all $S \in \mathcal D_k$ such that $x,y\in S$ (note that by our definition of $\mathcal D_k$ such $S$ exists). Assuming the environment satisfies \eqref{equi22} for all $x,y\in B(L)$ with $|x-y|\leq 2^{-a}$ and for all corresponding $S$, by the strong Markov property we can construct a coupling $(\Omega,\mathcal{F},Q)$ between $\hat{X}^{(n,x)}_{\tau^x_S+t}$ and $\hat{X}^{(n,y)}_{\tau^y_S+t}$ such that 
		\begin{equation}
			Q(\hat{X}^{(n,x)}_{\tau^x_S+t}=\hat{X}^{(n,y)}_{\tau^y_S+t}, \mbox{ for all } t\geq 0)\geq 1-\exp(-\alpha(a-k)).
		\end{equation}
		By taking $\mathsf C=10$, $\delta=2^{-a}$ and $a$ sufficiently large depending only on $L$, we have that with $\mathbb P$-probability at least $1-1/3L$,
		\begin{equation}\label{equi8}
			\begin{aligned}
				&\mbox{for all }x,y\in B(L) \mbox{ with } |x-y|\leq 2^{-a} \mbox{ and }S \mbox{ that contains }x,y,\\ \mbox{ there exists}&\mbox{ a coupling }Q \mbox{ such that } Q(\hat{X}^{(n,x)}_{\tau^x_S+t}=\hat{X}^{(n,y)}_{\tau^y_S+t}, \mbox{ for all } t\geq 0)\geq 1-1/4L.
			\end{aligned}
		\end{equation}
		
		Now let $L^\prime > 3L$ depend only on $L$, whose value will be determined later. By Lemma \ref{equilem2}, there exists $\beta = \beta(L^\prime)$ such that with $\mathbb P$-probability at least $1-1/3L$
		\begin{equation}\label{equi9}
			P(|\hat{X}^{(n,x)}_s-\hat{X}^{(n,x)}_{s^{\prime}}|\leq 1/L^{\prime}, \mbox{ for all } s, s^{\prime}\in [0,L]\mbox{ with }|s-s^{\prime}|\leq \beta)\geq 1-1/4L,\mbox{ for all } x\in B(L^{\prime}).
		\end{equation}
		
		By Lemma \ref{greenlem1}, there exists some $C_q=o(q^{-0.9})$ depending only on $q$ such that for each $S\in \mathcal D_k$, with $\mathbb P$-probability at least $1-q$,
		\begin{equation}\label{equi23}
			E\tau^x_S\leq C_q2^{-2k},\mbox{ for all } x\in S.
		\end{equation}
		By a union bound over $S\in \mathcal D_k$, we see that with $\mathbb P$-probability at least $1 - 10 \cdot 2^{2k} L^2 q$ we have \eqref{equi23} holds for all $S\in \mathcal D_k$. Therefore, on this event, we can apply Markov's inequality (to $\tau^x_S$) and deduce that
		\begin{equation}
			P(\tau^x_S\leq 4C_qL2^{-2k})\geq 1-1/4L,\mbox{ for all } x\in B(L).
		\end{equation}
		Take $q$ sufficiently small depending only on $L$ such that $10\cdot2^{2k}L^2q=1/4L$. Since $C_q=o(q^{-0.9})$, for some absolute constant $C$ we have $4C_qq^{0.9}\leq C$ for all $q\in (0,1)$, and thus we have
		\begin{equation}
			4C_qL2^{-2k}=4C_q\cdot q\cdot (q\cdot 2^{2k})^{-1}\cdot L\leq Cq^{0.1}\cdot (q\cdot 2^{2k})^{-1}L\leq 100C2^{-0.2k} L^4,
		\end{equation}
		where in the last inequality we used the fact that $q\leq 2^{-0.2k}$. By taking $k$ sufficiently large depending only on $L$ such that $100C2^{-0.2k} L^4\leq \beta$ and $4\cdot 2^{-k}\leq 1/L^{\prime}$ (the latter condition on $k$ is to be used in \eqref{equi24} below), we then get that with $\mathbb P$-probability at least $1-1/3L$,
		\begin{equation}\label{equi10}
			P(\tau^x_S\leq \beta)\geq 1-1/4L,\mbox{ for all } x\in B(L).
		\end{equation}
		
		On the intersection of the events in \eqref{equi8}, \eqref{equi9} and \eqref{equi10}, 
		for all $S\in \mathcal D_k$ and for all $x, y\in S$, there exists a coupling $Q$, such that the following hold with $Q$-probability at least $1-1/L$: 
		\begin{equation}\label{equi33}
			\hat{X}^{(n,x)}_{\tau^x_S+t}=\hat{X}^{(n,y)}_{\tau^y_S+t}, \mbox{ for all } t\geq 0;
		\end{equation}
		\begin{equation}\label{equi34}
			|\hat{X}^{(n,x)}_s-\hat{X}^{(n,x)}_{s^{\prime}}|\leq 1/L^{\prime}, \mbox{ for all } s, s^{\prime}\in [0,L]\mbox{ with }|s-s^{\prime}|\leq \beta;
		\end{equation}
		\begin{equation}\label{equi35}
			\tau^x_S\leq \beta \And \tau^y_S\leq \beta.
		\end{equation}
		We may assume without loss that $\tau^x_S\leq \tau^y_S$ (since otherwise an analogue of the following holds) and derive that: for $t\leq \tau^x_S\leq \beta$,
		\begin{equation}\label{equi24}
			|\hat X^{(n,x)}_t-\hat X^{(n,y)}_t|\leq 4\cdot 2^{-k}\leq 1/L^{\prime};
		\end{equation}
		for $\tau^x_S\leq t\leq \tau^y_S\leq \beta$, since $|t-\tau^x_S|\leq \beta$,
		\begin{equation}\label{equi25}
			|\hat X^{(n,x)}_t-\hat X^{(n,y)}_t|\leq |\hat X^{(n,x)}_{\tau^x_S}-\hat X^{(n,y)}_t|+|\hat X^{(n,x)}_t-\hat X^{(n,x)}_{\tau^x_S}|\leq 4\cdot 2^{-k}+1/L^{\prime}\leq 2/L^{\prime};
		\end{equation}
		for $\tau^y_S\leq t\leq L$, writing $t=\tau^x_S+u=\tau^y_S+v$, we then have $|u-v|\leq \beta$, and thus we have (by \eqref{equi33} and \eqref{equi34})
		\begin{equation}\label{equi26}
			|\hat X^{(n,x)}_t-\hat X^{(n,y)}_t|= |\hat X^{(n,x)}_{\tau^x_S+u}-\hat X^{(n,y)}_{\tau^y_S+v}|=|\hat X^{(n,x)}_{\tau^x_S+u}-\hat X^{(n,x)}_{\tau^x_S+v}|\leq 1/L^{\prime}.
		\end{equation}
		Combining \eqref{equi24}, \eqref{equi25} and \eqref{equi26}, we can then obtain that on the intersection of the events in \eqref{equi33}, \eqref{equi34} and \eqref{equi35},
		\begin{equation}
			d_{\mathrm{Unif}}(\hat X^{(n,x)},\hat X^{(n,y)})=\int_0^{\infty}e^{-T}\wedge \sup_{t\in[0,T]}|\hat{X}^{(n,x)}_t-\hat{X}^{(n,y)}_t|\leq \int_0^{L}\frac{2}{L^{\prime}}dT+\int_{L}^{\infty}e^{-T}dT.
		\end{equation}	
		This implies that we can take $L^\prime$ sufficiently large depending only on $L$ such that on the intersection of \eqref{equi8}, \eqref{equi9} and \eqref{equi10} we have that for all $x,y\in B(L)$ with $|x-y|\leq 2^{-a}$,
		\begin{equation}
			\exists \mbox{ coupling } Q \mbox{ such that }Q(d_{\mathrm{Unif}}(\hat X^{(n,x)},\hat X^{(n,y)})\leq 1/L)\geq 1-1/L.
		\end{equation}
		Since the intersection of \eqref{equi8}, \eqref{equi9} and \eqref{equi10} occurs with $\mathbb P$-probability at least $1-1/L$, we complete the proof by the definition of the Prokhorov distance.
	\end{proof}
	
	\subsection{Conclusion}
	In this subsection we consider the space $C(\mathbb{R}^2,\mathrm{Prob}(C([0,\infty), \mathbb{R}^2)))$ of continuous functions $\mathbb{R}^2\to \mathrm{Prob}(C([0,\infty), \mathbb{R}^2))$, so that we can apply the Arzela-Ascoli theorem. We will write $\{P^x\}_{x\in \mathbb R^2}$ for an element in $C(\mathbb{R}^2,\mathrm{Prob}(C([0,\infty), \mathbb{R}^2)))$, where $P^x\in \mathrm{Prob}(C([0,\infty), \mathbb{R}^2))$. We will need the following lemma, which is similar to \cite[Lemma~6.3]{berestycki2022random}. We include a proof merely for completeness since our metric associated with the space $C(\mathbb{R}^2,\mathrm{Prob}(C([0,\infty), \mathbb{R}^2)))$ is slightly different.
	\begin{lem}\label{equilem8}
		Let $\mathcal{K}\subset C(\mathbb{R}^2,\mathrm{Prob}(C([0,\infty), \mathbb{R}^2)))$. Then $\mathcal{K}$ is pre-compact  (i.e., the closure of $\mathcal{K}$ is compact) if the following conditions hold.
		\begin{enumerate}
			\item\label{con1} For each $L>0$ and $x\in \mathbb{R}^2$, there exists $\delta$ depending only on $x,L$ such that the following holds. For each $P = \{P^x\}_{x\in \mathbb R^2}\in\mathcal{K}$, if $X$ is sampled from $P^x$, then with $P^x$-probability at least $1-1/L$,
			\[|X_s-X_{s^{\prime}}|\leq 1/L, \mbox{ for all } s, s^{\prime}\in [0,L]\mbox{ with }|s-s^{\prime}|\leq \delta.\]			
			\item\label{con2} For each $L>1$, there exists $\delta$ depending only on $L$ such that the following holds. For each $P = \{P^x\}_{x\in \mathbb R^2}\in\mathcal{K}$ and for $x,y\in B(L)$ with $|x-y|\leq \delta$, the Prokhorov distance between $P^x$ and $P^y$ is at most $1/L$.
		\end{enumerate}
	\end{lem}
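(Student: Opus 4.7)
The plan is to prove Lemma \ref{equilem8} by applying the Arzela-Ascoli theorem at two different levels: first, at the level of path space, to deduce from Condition \ref{con1} that the slice $\{P^x : P \in \mathcal K\}$ is pre-compact in $\mathrm{Prob}(C([0,\infty), \mathbb R^2))$ for each fixed $x$; and second, at the level of function space, using Condition \ref{con2} together with a diagonal argument to build a candidate limit continuous on all of $\mathbb R^2$.

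For the first step, fix $x \in \mathbb R^2$ and notice that Condition \ref{con1} is exactly the modulus-of-continuity criterion for tightness on $C([0,L], \mathbb R^2)$: the starting point $X_0 = x$ is deterministic, and the oscillation estimate $\sup_{|s-s'|\leq \delta, s,s' \in [0,L]}|X_s-X_{s'}|\leq 1/L$ holds with $P^x$-probability $\geq 1 - 1/L$ uniformly in $P \in \mathcal K$. Taking $L$ through a suitable sequence and using the definition of $d_{\mathrm{Unif}}$, this gives tightness of $\{P^x : P \in \mathcal K\}$ on $C([0,\infty), \mathbb R^2)$ with the local uniform metric. By Prokhorov's theorem this family is pre-compact in $(\mathrm{Prob}(C([0,\infty), \mathbb R^2)), d_{\mathrm{Prok}})$.

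Given any sequence $\{P_n\} \subset \mathcal K$, I fix a countable dense set $D = \{x_k\}_{k \geq 1} \subset \mathbb R^2$ and apply the first step together with a standard diagonal extraction: pass to a subsequence (still denoted $\{P_n\}$) such that $P_n^{x_k}$ converges in $d_{\mathrm{Prok}}$ to some $Q^{x_k}$ for every $k$. Condition \ref{con2} is preserved in the limit, so the map $x_k \mapsto Q^{x_k}$ is uniformly continuous on $D \cap B(L)$ for every $L$ and hence extends uniquely to a continuous function $Q : \mathbb R^2 \to \mathrm{Prob}(C([0,\infty), \mathbb R^2))$.

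It remains to upgrade pointwise convergence on $D$ to convergence in the metric $d$. Fix $L$ and let $\delta = \delta(L)$ be the modulus from Condition \ref{con2}, and choose a finite $\delta$-net $\{x_{k_1}, \dots, x_{k_N}\} \subset D \cap B(L+1)$ of $B(L)$. For any $x \in B(L)$, a triangle inequality
\[
d_{\mathrm{Prok}}(P_n^x, Q^x) \leq d_{\mathrm{Prok}}(P_n^x, P_n^{x_{k_j}}) + d_{\mathrm{Prok}}(P_n^{x_{k_j}}, Q^{x_{k_j}}) + d_{\mathrm{Prok}}(Q^{x_{k_j}}, Q^x)
\]
with $x_{k_j}$ the nearest net point to $x$ bounds the left side by $2/L + o_n(1)$, using Condition \ref{con2} on the outer two terms and the finitely many pointwise convergences on the middle term. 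Hence $\sup_{x \in B(L)} d_{\mathrm{Prok}}(P_n^x, Q^x) \to 0$ for every $L$, and since the integrand in the definition \eqref{metric} of $d$ is bounded by $e^{-L}$, dominated convergence gives $d(P_n, Q) \to 0$. The only genuine subtlety is bookkeeping the local-uniform structure of $d$; once that is set up, no step is hard.
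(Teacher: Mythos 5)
Your proposal is correct and follows essentially the same route as the paper: Condition \ref{con1} gives tightness of $\{P^x\}_{P\in\mathcal K}$ for each fixed $x$ (hence pre-compactness via Prokhorov), and Condition \ref{con2} provides the equicontinuity needed to pass to a locally uniformly convergent subsequence, which converges in the metric $d$. The only difference is cosmetic: the paper invokes the Arzela--Ascoli theorem (plus a diagonal argument over $L$) as a black box, whereas you re-prove it by hand via a countable dense set, diagonal extraction, and a $\delta$-net triangle-inequality argument.
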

	\begin{proof}
		We first check that for each $x$, $\{P^x\}_{P\in\mathcal{K}}$ is pre-compact. To this end, for $\zeta\in(0,1)$ and for $k>0$, let $\delta_k$ be as in Condition \eqref{con1} for $L=L_k=2^k/\zeta$. By taking a union bound over all $k$, we get that if $P\in\mathcal{K}$ and $X$ is sampled from $P^x$ then
		it holds with $P^x$-probability at least $1-\zeta$ that
		\begin{equation}\label{pre1}
			|X_s-X_{s^{\prime}}|\leq 1/L_k, \mbox{ for all } s, s^{\prime}\in [0,L_k]\mbox{ with }|s-s^{\prime}|\leq \delta_k, \mbox{ for all } k\geq 0.
		\end{equation}
		Let $A_\zeta \subset C([0, \infty), \mathbb R^2)$ be the collection of paths started at $x$ and satisfying \eqref{pre1}. Then $A_\zeta$ is pre-compact via a diagonal argument. Since $P^x(A_{\zeta})\geq 1-\zeta$, we can deduce that $\{P^x\}_{P\in \mathcal{K}}$ is tight, and thus is pre-compact by Prokhorov's Theorem.
		
		Now since Condition \eqref{con2} holds, for each sequence $\{P_n\}\subset \mathcal{K}$, since $\{P^x_n\}_{n\geq 1}$ is pre-compact for each fixed $x$, by the Arzela-Ascoli theorem, for each $L>0$ there exists a subsequence $\{P_{n_k}\}$ that converges uniformly in $B(L)$. Thus by a diagonal argument there exists a further subsequence that converges uniformly in $B(L)$ for all $L>0$, and thus converges in the metric $d$ (as in \eqref{metric}). So $\mathcal{K}$ is pre-compact.
	\end{proof}
	
	\begin{proof}[Proof of Theorem \ref{thm3}]
		Recall the definition of $P^{(n)}$ in \eqref{Pdef}, which directly gives the continuity of $P^{(n)}$ with respect to $d_{\mathrm{prok}}$. Thus we have $P^{(n)}\in C(\mathbb{R}^2,\mathrm{Prob}(C([0,\infty), \mathbb{R}^2)))$.
		
		For each $\zeta>0$, by Lemmas \ref{equilem2} and \ref{equilem3}, for $L_k=2^{k}/\zeta$ there exists $\delta_k$ such that, for each $n$ the following holds with $\mathbb P$-probability at least $1-1/L_k$:
		\begin{equation}\label{pre3}
			P^{(n)}(|\hat{X}^{(n,x)}_s-\hat{X}^{(n,x)}_{s^{\prime}}|\leq 1/L, \mbox{ for all } s, s^{\prime}\in [0,L]\mbox{ with }|s-s^{\prime}|\leq \delta_k)\geq 1-1/L,\mbox{ for all } x\in B(L_k),
		\end{equation}
		and in addition,
		\begin{equation}\label{pre4}
			d_{\mathrm{Prok}}(P^{(n,x)},P^{(n,y)})\leq 1/L_k, \mbox{ for all }x,y\in B(L_k) \mbox{ with }|x-y|\leq \delta_k.
		\end{equation}
		
		Now we define $\mathcal{K}_{\zeta}$ be the subset of $C(\mathbb{R}^2,\mathrm{Prob}(C([0,\infty), \mathbb{R}^2)))$ satisfying \eqref{pre3} and \eqref{pre4} for all $k\geq 0$.	By Lemma \ref{equilem8} we get that $\mathcal{K}_{\zeta}$ is pre-compact.
		Taking a union bound over all $k\geq 0$, we get that $\mathbb{P}(P^{(n)}\in \mathcal{K}_{\zeta})\geq 1-\zeta$ for all $n>0$. So $\{P^{(n)}\}$ is tight.
	\end{proof}
	
	\bibliography{tightnessbib}

\begin{thebibliography}{ABRK24}

\bibitem[AB99]{aizenman1998holder}
Michael Aizenman and Almut Burchard.
\newblock {Hölder regularity and dimension bounds for random curves}.
\newblock {\em Duke Mathematical Journal}, 99(3):419--453, 1999.

\bibitem[ABRK24]{armstrong2024superdiffusive}
Scott Armstrong, Ahmed Bou-Rabee, and Tuomo Kuusi.
\newblock {Superdiffusive central limit theorem for a Brownian particle in a
  critically-correlated incompressible random drift}.
\newblock {\em arXiv preprint arXiv:2404.01115}, 2024.

\bibitem[Adl90]{adler1990introduction}
Robert~J Adler.
\newblock {An introduction to continuity, extrema, and related topics for
  general Gaussian processes}.
\newblock IMS, 1990.

\bibitem[AK24]{armstrong2024renormalization}
Scott Armstrong and Tuomo Kuusi.
\newblock {Renormalization group and elliptic homogenization in high contrast}.
\newblock {\em arXiv preprint arXiv:2405.10732}, 2024.

\bibitem[AKM19]{Armstrong_2019}
Scott Armstrong, Tuomo Kuusi, and Jean-Christophe Mourrat.
\newblock {\em {Quantitative Stochastic Homogenization and Large-Scale
  Regularity}}.
\newblock Springer International Publishing, 2019.

\bibitem[AT09]{adler2009random}
Robert~J Adler and Jonathan~E Taylor.
\newblock {\em Random fields and geometry}.
\newblock Springer Science \& Business Media, 2009.

\bibitem[BDG20]{biskup2020return}
Marek Biskup, Jian Ding, and Subhajit Goswami.
\newblock {Return probability and recurrence for the random walk driven by
  two-dimensional Gaussian free field}.
\newblock {\em Communications in Mathematical Physics}, 373(1):45--106, 2020.

\bibitem[Ben91]{Benjamini1991}
Itai Benjamini.
\newblock {Instability of the Liouville property for quasi-isometric graphs and
  manifolds of polynomial volume growth}.
\newblock {\em Journal of Theoretical Probability}, 4(3):631--637, Jul 1991.

\bibitem[Ber13]{Berestycki2013DiffusionIP}
Nathana{\"e}l Berestycki.
\newblock {Diffusion in planar Liouville quantum gravity}.
\newblock {\em Annales De L Institut Henri Poincare-probabilites Et
  Statistiques}, 51:947--964, 2013.

\bibitem[BG22]{berestycki2022random}
Nathana{\"e}l Berestycki and Ewain Gwynne.
\newblock {Random walks on mated-CRT planar maps and Liouville Brownian
  motion}.
\newblock {\em Communications in Mathematical Physics}, 395(2):773--857, 2022.

\bibitem[Bis11]{biskup2011recent}
Marek Biskup.
\newblock {Recent progress on the Random Conductance Model}.
\newblock {\em Probability Surveys}, 8:294--373, 2011.

\bibitem[Bor75a]{Borell1975-lg}
Christer Borell.
\newblock The {Brunn-Minkowski} inequality in gauss space.
\newblock {\em Inventiones mathematicae}, 30(2):207--216, June 1975.

\bibitem[Bor75b]{Borell1975}
Christer Borell.
\newblock {The Brunn-Minkowski inequality in Gauss space}.
\newblock {\em Inventiones mathematicae}, 30(2):207--216, Jun 1975.

\bibitem[BP24]{berestycki2024gaussian}
Nathana{\"e}l Berestycki and Ellen Powell.
\newblock {Gaussian free field and Liouville quantum gravity}.
\newblock {\em arXiv preprint arXiv:2404.16642}, 2024.

\bibitem[DD19]{ding2019lioville}
Jian Ding and Alexander Dunlap.
\newblock {Liouville first-passage percolation: Subsequential scaling limits at
  high temperature}.
\newblock {\em The Annals of Probability}, 47(2):690--742, 2019.

\bibitem[DD20]{ding2020subsequential}
Jian Ding and Alexander Dunlap.
\newblock {Subsequential scaling limits for Liouville graph distance}.
\newblock {\em Communications in Mathematical Physics}, 376(2):1499--1572,
  2020.

\bibitem[DDDF20]{ding2020tightness}
Jian Ding, Julien Dub{\'e}dat, Alexander Dunlap, and Hugo Falconet.
\newblock Tightness of {Liouville} first passage percolation for $\gamma\in(0,
  2)$.
\newblock {\em Publications math{\'e}matiques de l'IH{\'E}S}, 132(1):353--403,
  2020.

\bibitem[DF20]{dubedat2020liouville}
Julien Dub{\'e}dat and Hugo Falconet.
\newblock {Liouville metric of star-scale invariant fields: tails and Weyl
  scaling}.
\newblock {\em Probability Theory and Related Fields}, 176(1):293--352, 2020.

\bibitem[DG20]{ding2020tightness1}
Jian Ding and Ewain Gwynne.
\newblock {Tightness of supercritical Liouville first passage percolation}.
\newblock {\em Journal of the European Mathematical Society, to appear}, 2005,
  2020.

\bibitem[DG23]{ding2023uniqueness}
Jian Ding and Ewain Gwynne.
\newblock {Uniqueness of the critical and supercritical Liouville quantum
  gravity metrics}.
\newblock {\em Proceedings of the London Mathematical Society},
  126(1):216--333, 2023.

\bibitem[DGS21]{ding2021distance}
Jian Ding, Ewain Gwynne, and Avelio Sep{\'u}lveda.
\newblock {The distance exponent for Liouville first passage percolation is
  positive}.
\newblock {\em Probability Theory and Related Fields}, 181(4):1035--1051, 2021.

\bibitem[DLP12]{ding2011cover}
Jian Ding, James~R. Lee, and Yuval Peres.
\newblock Cover times, blanket times, and majorizing measures.
\newblock {\em Annals of Mathematics}, 175(3):1409--1471, 2012.

\bibitem[DP13]{ding2013sensitivity}
Jian Ding and Yuval Peres.
\newblock {Sensitivity of mixing times}.
\newblock {\em Electronic Communications in Probability}, 18:1--6, 2013.

\bibitem[DS84]{doyle1984random}
Peter~G Doyle and J~Laurie Snell.
\newblock {\em Random walks and electric networks}, volume~22.
\newblock American Mathematical Soc., 1984.

\bibitem[DZZ19]{ding2019heat}
Jian Ding, Ofer Zeitouni, and Fuxi Zhang.
\newblock {Heat Kernel for Liouville Brownian Motion and Liouville Graph
  Distance}.
\newblock {\em Communications in Mathematical Physics}, 371(2):561–618, May
  2019.

\bibitem[Fer75]{fernique}
X.~Fernique.
\newblock Regularite des trajectoires des fonctions aleatoires gaussiennes.
\newblock In P.~L. Hennequin, editor, {\em Ecole d'Et{\'e} de Probabilit{\'e}s
  de Saint-Flour IV---1974}, pages 1--96, Berlin, Heidelberg, 1975. Springer
  Berlin Heidelberg.

\bibitem[GM20]{gwynne2020local}
Ewain Gwynne and Jason Miller.
\newblock {Local metrics of the Gaussian free field}.
\newblock In {\em Annales de l'Institut Fourier}, volume~70, pages 2049--2075,
  2020.

\bibitem[GM21]{gwynne2021existence}
Ewain Gwynne and Jason Miller.
\newblock {Existence and uniqueness of the Liouville quantum gravity metric for
  $\gamma\in$(0, 2)}.
\newblock {\em Inventiones mathematicae}, 223(1):213--333, 2021.

\bibitem[GMS22]{gwynne2022invariance}
Ewain Gwynne, Jason Miller, and Scott Sheffield.
\newblock An invariance principle for ergodic scale-free random environments.
\newblock {\em Acta Mathematica}, 228(2):303--384, 2022.

\bibitem[GRV16]{garban2016liouville}
Christophe Garban, R{\'e}mi Rhodes, and Vincent Vargas.
\newblock {Liouville Brownian motion}.
\newblock {\em The Annals of Probability}, 44(4):3076--3110, 2016.

\bibitem[Kah85]{kahane1985chaos}
Jean-Pierre Kahane.
\newblock Sur le chaos multiplicatif.
\newblock {\em Ann. Sci. Math. Qu{\'e}bec}, 9(2):105--150, 1985.

\bibitem[LP17]{Lyons_Peres_2017}
Russell Lyons and Yuval Peres.
\newblock {\em {Probability on Trees and Networks}}.
\newblock Cambridge Series in Statistical and Probabilistic Mathematics.
  Cambridge University Press, 2017.

\bibitem[RGV14]{garban2014on}
R{\'e}mi Rhodes, Christophe Garban, and Vincent Vargas.
\newblock {On the heat kernel and the Dirichlet form of Liouville Brownian
  motion}.
\newblock {\em Electronic Journal of Probability}, 19:1--25, 2014.

\bibitem[RV10]{robert2010gaussian}
Raoul Robert and Vincent Vargas.
\newblock {Gaussian multiplicative chaos revisited}.
\newblock {\em The Annals of Probability}, 38(2):605--631, 2010.

\bibitem[RV14]{rhodes2014gaussian}
R{\'e}mi Rhodes and Vincent Vargas.
\newblock {Gaussian multiplicative chaos and applications: A review}.
\newblock {\em Probability Surveys}, 11:315--392, 2014.

\bibitem[She07]{sheffield2007gaussian}
Scott Sheffield.
\newblock Gaussian free fields for mathematicians.
\newblock {\em Probab. Theory Related Fields}, 139(3-4):521--541, 2007.

\bibitem[ST78]{Sudakov1978}
V.~N. Sudakov and B.~S. Tsirel'son.
\newblock Extremal properties of half-spaces for spherically invariant
  measures.
\newblock {\em Journal of Soviet Mathematics}, 9(1):9--18, Jan 1978.

\bibitem[VH14]{van2014probability}
Ramon Van~Handel.
\newblock Probability in high dimension.
\newblock {\em Lecture Notes (Princeton University)}, 2(3):2--3, 2014.

\bibitem[WP20]{werner2020lecture}
Wendelin Werner and Ellen Powell.
\newblock {Lecture notes on the Gaussian free field}.
\newblock {\em arXiv preprint arXiv:2004.04720}, 2020.

\end{thebibliography}
\end{document}